\documentclass[a4paper,12pt]{article}
\usepackage{tikz}
\usetikzlibrary{arrows}
\usetikzlibrary{calc}
\usetikzlibrary{positioning}
\usetikzlibrary{cd}
\usepackage{graphicx,amsmath,amssymb,amsthm,amscd,mathrsfs,mathabx,stmaryrd}
\usepackage[margin=2.5cm]{geometry}

\newtheorem{thm}{Theorem}[section]
\newtheorem{df}[thm]{Definition}
\newtheorem{prop}[thm]{Proposition}
\newtheorem{lem}[thm]{Lemma}
\newtheorem{cor}[thm]{Corollary}
\newtheorem{rem}[thm]{Remark}

\newtheorem{conj}[thm]{Conjecture}
\newtheorem{prob}[thm]{Problem}

\pagestyle{plain}

\title{Deformation of moduli spaces 
of meromorphic $G$-connections on $\mathbb{P}^{1}$ via unfolding of irregular singularities
}
\date{}

\author{Kazuki Hiroe\footnote{
	The author is supported by JSPS KAKENHI Grant Number 20K03648.}\\
Department of Mathematics and Informatics, Chiba University\\
1-33, Yayoi-cho, Inage-ku, Chiba-shi, Chiba, 263-8522 JAPAN\\
email: {\tt kazuki@math.s.chiba-u.ac.jp}
}

\usepackage{imakeidx}
\makeindex[name=cN, title=Notation]

\begin{document}
\maketitle
\begin{abstract}
Unfolding singular points in linear differential equations 
is a classical technique for studying the properties of irregular 
singularities by relating them to regular singularities. 
In this paper, we propose a general framework for unfolding 
unramified irregular singularities of meromorphic connections
on the trivial principal $G$-bundle  
over $\mathbb{P}^{1}$.

One of our main results is the description of the unfolding of singularities 
in terms of deformations of their moduli spaces.
We show that every moduli space of irreducible meromorphic $G$-connections 
with unramified irregular singularities on $\mathbb{P}^{1}$ can be deformed 
into a moduli space of irreducible Fuchsian $G$-connections on $\mathbb{P}^{1}$.

Furthermore, we study the unfolding of additive Deligne-Simpson problems, 
in which the unfolding of irregular singularities naturally generates a family 
of such problems.
As an application of our main result, 
we prove that a Deligne-Simpson problem for $G$-connections 
with unramified irregular singularities admits a solution 
if and only if every unfolded Deligne-Simpson problem 
in the family admits a simultaneous solution.

We also provide a combinatorial and diagrammatic framework 
of the unfolding process in terms of spectral types and unfolding diagrams. 
Finally, we address a conjecture proposed by Oshima 
concerning the existence of irreducible $G$-connections
that realize prescribed spectral types and their unfoldings. 
Our main result gives an affirmative answer to this conjecture.
\end{abstract}

\if0
\keywords{
Moduli spaces of $G$-connections, 
Irregular singularities of meromorphic connections, 
Delinge-Simpson problem, Deformation of moduli spaces of connections.\\
\MSC{32G34, 34M35, 34M15, 53D30}



}
\fi

\tableofcontents

\section{Introduction}
Unfolding singular points in linear differential equations 
is a classical technique used to study the properties of irregular 
singularities by deforming them into regular ones. 
It is therefore natural to expect that moduli spaces of meromorphic 
connections with irregular singularities admit deformations into moduli 
spaces of meromorphic Fuchsian connections, that is, connections with 
only regular singularities. 
In this paper, we consider moduli spaces of meromorphic connections 
on the trivial principal $G$-bundle over $\mathbb{P}^1$ with unramified irregular 
singularities, and we show that each such moduli space admits a deformation 
whose generic fibers are moduli spaces of Fuchsian connections on $\mathbb{P}^1$. 
We also apply this deformation to address several existence problems concerning 
irreducible linear differential equations on $\mathbb{P}^1$.

\subsection{Classical and recent examples}\label{sec:classnew}
There are many classical and recent examples of the confluence 
and unfolding of singular points in differential equations on 
the Riemann sphere $\mathbb{P}^1$. 
As a first example, let us recall the differential equation for 
the Gauss hypergeometric function, which has regular singularities 
at $0$, $1$, and $\infty$. 
It is classically known that the confluence of the singularities at 
$1$ and $\infty$ yields the differential equation for the Kummer confluent 
hypergeometric function, which has a regular singularity at $0$ 
and an irregular singularity at $\infty$. 
Furthermore, the confluence of the singularities at $0$ and $\infty$ 
leads to the differential equation for the Hermite-Weber function, 
which has an irregular singularity at $\infty$. 
This process is illustrated by the following diagram, 
where the arrows represent the confluence of singularities:
 \vspace{3mm}
\begin{center}
    \begin{tikzpicture}[auto]
        \node[shape=rectangle, draw] (a) at (-3, 0) {Gauss};
        \node[shape=rectangle, draw] (b) at (0, 0) {Kummer};
        \node[shape=rectangle, draw] (c) at (3.5, 0) {Hermite-Weber};
        \draw[->] (a) to (b);
        \draw[->] (b) to (c);
\end{tikzpicture}.
\end{center}\vspace{3mm}

\noindent 
Similarly, in the case of the Heun differential equation, 
it is known that several differential equations with irregular 
singularities arise through the confluence of singular points: 
the confluent Heun differential equation, the biconfluent Heun differential equation, 
the doubly-confluent Heun differential equation, 
and the triconfluent Heun differential equation:
\vspace{3mm}
\begin{center}
    \begin{tikzpicture}[auto]
        \node[shape=rectangle, draw] (a) at (-10, 0) {Heun};
        \node[shape=rectangle, draw] (b) at (-7, 0) {confluent Heun};
        \node[shape=rectangle, draw] (c) at (-4, 1) {biconfluent Heun};
        \node[shape=rectangle, draw] (d) at (-4, -1) {doubly-confluent Heun};
        \node[shape=rectangle, draw] (e) at (-1, 0) {triconfluent Heun};
        \draw[->] (a) to (b);
        \draw[->] (b) to (c);
        \draw[->] (b) to (d);
        \draw[->] (c) to (e);
        \draw[->] (d) to (e);
\end{tikzpicture}.
\end{center}\vspace{3mm}

\noindent
In the more recent papers \cite{Oshi1} and \cite{Oshi2}, 
Oshima constructed versal unfolding families of unramified 
irregular singularities of differential equations without 
accessory parameters. In \cite{KNS}, Kawakami, Nakamura, 
and Sakai studied linear Fuchsian differential equations 
on $\mathbb{P}^1$ that are related to four-dimensional 
Painlev\'e-type equations, 
and compiled lists of linear differential equations obtained from 
these Fuchsian equations via confluence of singular points. 
Here is an example from the list in \cite{KNS}:
\begin{center}
    \begin{tikzpicture}[auto]
        \node[shape=rectangle, draw, font=\small, inner sep=1.2] (a) at (-10, 0) {22,22,22,211};
        \node[shape=rectangle, draw, font=\small, inner sep=1.2] (b) at (-7, -1) {(2)(11),22,22};
        \node[shape=rectangle, draw, font=\small, inner sep=1.2] (c) at (-7, 1) {(2)(2),22,211};
        \node[shape=rectangle, draw, font=\small, inner sep=1.2] (d) at (-4, -1.5) {(2)(2),(2)(11)};
        \node[shape=rectangle, draw, font=\small, inner sep=1.2] (e) at (-4, 0) {((2))((11)),22};
        \node[shape=rectangle, draw, font=\small, inner sep=1.2] (f) at (-4, 1.5) {((2))((2)),211};
        \node[shape=rectangle, draw, font=\small, inner sep=1.2] (g) at (-1, 0) {(((2)))(((11)))};
        \draw[->] (a) to (b);
        \draw[->] (a) to (c);
        \draw[->] (b) to (d);
        \draw[->] (b) to (e);
        \draw[->] (c) to (d);
        \draw[->] (c) to (e);
        \draw[->] (c) to (f);
        \draw[->] (e) to (g);
        \draw[->] (f) to (g);
        \draw[->] (d) to (g);
\end{tikzpicture}.
\end{center}\vspace{3mm}

\noindent
In this diagram, the collections of partitions of the integer $4$,
with parentheses, represent the spectral types at the singular points 
of differential equations of rank $4$. 
For further details, we refer to the original paper \cite{KNS}. 
We will also introduce a generalization of spectral types for meromorphic 
$G$-connections later in this paper (see Sections \ref{sec:spunf}, \ref{sec:spectral} and 
\ref{sec:exII}).

The purpose of this paper is to propose a general framework for the unfolding 
of singular points in linear differential equations on $\mathbb{P}^1$, 
encompassing both classical and recent examples such as those mentioned above. 
Furthermore, since in these examples the number of accessory parameters is 
preserved under unfolding, it is natural to expect that the unfolding induces 
deformations of the spaces of accessory parameters, that is, 
the moduli spaces of meromorphic connections. 
In this paper, we describe the unfolding of singular points in differential 
equations as deformations of their moduli spaces.

\subsection{Unfolding of  
canonical forms for meromorphic \(G\)-connections on the formal punctured disk}
Before discussing the deformation of moduli spaces of connections, 
we introduce the deformation of canonical forms of meromorphic connections 
on the formal punctured disk. 

Let $G$ be a connected linear reductive algebraic group defined over $\mathbb{C}$. 
As recalled in Section \ref{sec:HTLBV}, 
according to the formal classification theory developed by Hukuhara, Turrittin, 
Levelt, Babbitt, and Varadarajan, meromorphic $G$-connections 
on the formal punctured disk are classified by canonical forms, i.e., 
$\mathfrak{g}$-valued meromorphic $1$-forms
\[
    H\,dz=\left(\frac{H_{k}}{z^{k}}+\cdots+ 
    \frac{H_{1}}{z}+H_{\mathrm{res}}\right)\frac{dz}{z}
\]
defined on ramified coverings of the punctured disk, 
where the $H_i$ are mutually commuting semisimple elements and 
$H_{\mathrm{res}}$ commutes with all $H_i$. 
In other words, for any meromorphic $G$-connection $\nabla$ 
on the formal punctured disk, 
there exists a $\mathfrak{g}$-valued $1$-form 
$H\,dz$
of the above type such that 
$\nabla$ is gauge-equivalent to $H\,dz$ on a suitable ramified covering. 
In this paper, we focus on the unramified case.

We now explain the unfolding of irregular singularities of unramified canonical 
forms. Let $H$ be a canonical form as above. 
For $\mathbf{c}=(c_{0},c_{1},\ldots,c_{k})\in \mathbb{C}^{k+1}$, 
define
\[
    H(\mathbf{c})
    =\left(\frac{H_{k}}{(z-c_{1})(z-c_{2})
    \cdots (z-c_{k})}+\cdots+ 
    \frac{H_{1}}{(z-c_{1})}+H_{\mathrm{res}}\right)\frac{1}{(z-c_{0})}
\]
and call this function on $\mathbb{C}^{k+1}$ {\em unfolding} of $H$. 
Clearly, $H(\mathbf{0})=H$, so $H(\mathbf{c})$ defines a deformation of $H$. 
Moreover, we can write $H(\mathbf{c})$ as a partial fraction decomposition:
\[
    H(\mathbf{c})=
    \frac{A^{[0]}}{z-c_{0}}+\frac{A^{[1]}}{z-c_{1}}+\cdots 
    +\frac{A^{[k]}}{z-c_{k}}
\]
which represents a sum of regular singular canonical forms for generic $\mathbf{c}$, 
i.e., when $c_{i}\neq c_{j}$ for $i\neq j$. 
Thus, $H(\mathbf{c})$ provides a deformation of the unramified irregular 
canonical form $H$ into a sum of regular singular canonical forms.

To analyze this deformation in more detail, 
we introduce a stratification of $\mathbb{C}^{k+1}$
associated with partitions of the index set 
$\{0,1,\ldots,k\}$.
For a partition $\mathcal{I}=I_{0}\sqcup I_{2}\sqcup \cdots \sqcup I_{r}=\{0,1,\ldots,k\}$, 
define the subset
\begin{align*}
	C(\mathcal{I})&=
	\left\{
		(c_{0},c_{1},\ldots,c_{k})\in \mathbb{C}^{k+1}\,\middle|\,
		\begin{array}{ll}
			c_{i}=c_{j}&\text{if }i,j\in I_{l}\text{ for some }l,\\
			c_{i}\neq c_{j}&\text{otherwise}
		\end{array}
	\right\},\\
	&\cong \left\{
		(c_{I_{0}},c_{I_{1}},\ldots,c_{I_{r}})\in \mathbb{C}^{r+1}\mid 
		c_{I_{i}}\neq c_{I_{j}}\text{ for }i\neq j
		\right\}
\end{align*}
These subsets define a stratification of $\mathbb{C}^{k+1}$:
\[
\mathbb{C}^{k+1}=\bigsqcup_{\mathcal{I}\colon 
\text{partitions of }\{0,1,\ldots,k\}}\mathcal{C}(\mathcal{I}),
\] 
see Section \ref{sec:partstra}.

Then for $\mathbf{c}\in \mathcal{C}(\mathcal{I})$,
we have the partial fraction decomposition
\[
	H(\mathbf{c})=
	\sum_{j=0}^{r}
    \left(
        \frac{A^{[j]}_{k_{j}}}{(z-c_{I_{j}})^{k_{j}}}
        +\cdots+ 
        \frac{A^{[j]}_{1}}{z-c_{I_{j}}}
        +A^{[j]}_{0}
	\right)\frac{1}{z-c_{I_{j}}}
\] 
which is again a sum of unramified canonical forms, where  
the pole orders correspond to  the cardinalities $\sharp I_{i}$ of the components in the partition $\mathcal{I}$.

Thus  
for each fixed $\mathbf{c}\in \mathbb{C}^{k+1}$,
there exists a unique stratum $\mathcal{C}(\mathcal{I})$ 
containing $\mathbf{c}$,
and 
we can interpret the deformed canonical form $H(\mathbf{c})$
as the collection 
\[
    H(\mathbf{c})=\left(
        \left(
            \frac{A^{[j]}_{k_{j}}}{(z-c_{I_{j}})^{k_{j}}}
            +\cdots+ 
            \frac{A^{[j]}_{1}}{z-c_{I_{j}}}
            +A^{[j]}_{0}
        \right)\frac{1}{z-c_{I_{j}}}\right)_{j=0,\ldots,r}
\] 
of canonical forms at $z=c_{I_{j}}$ for $j=0,\ldots,r$.
Therefore, the family  
$(H(\mathbf{c}))_{\mathbf{c}\in \mathbb{C}^{k+1}}$
can be viewed as 
a family of collections of unramifed canonical forms, 
organized according to the stratification of $\mathbb{C}^{k+1}$.

On the unique closed stratum $\mathcal{C}(\{0,\ldots,k\})$,
corresponding to the trivial partition, 
$H(\mathbf{c})$ takes the form  
\[
H(\mathbf{c})=\left(\frac{H_{k}}{(z-c_{0})^{k}}+\cdots+ 
\frac{H_{1}}{(z-c_{0})}+H_{\mathrm{res}}\right)\frac{1}{(z-c_{0})}
\]
which is essentially same as the original canonical form $H$.
On
the 
unique open stratum
$\mathcal{C}(\{0\}\sqcup\cdots\sqcup\{k\})$,
corresponding to the finest partition, 
the $H(\mathbf{c})$ is a collection 
of regular singular canonical forms.  
On the other strata, the $H(\mathbf{c})$ represent collections of 
canonical forms with intermediate singularities.

\subsection{Statement of the main theorem}
Now we explain the main theorem of this paper.
Let $G$ be a connected reductive linear algebraic group defined over 
$\mathbb{C}$.
Let $|D|=\{a_{1},a_{2},\ldots,a_{d}\}\subset \mathbb{P}^{1}$
be a finite set, 
and let $D=\sum_{i=1}^{d}(k_{a_{i}}+1)a_{i}$
be an effective divisor with non negative integers $k_{a_i}$.
Consider a collection $\mathbf{H}=(H^{(a)})_{a\in |D|}$
of unramified canonical forms
\[
    H^{(a)}\,dz_{a}=\left(\frac{H^{(a)}_{k_{a}}}{z_{a}^{k_{a}}}+\cdots+ 
    \frac{H^{(a)}_{1}}{z_{a}}+H^{(a)}_{\mathrm{res}}\right)\frac{dz_{a}}{z_{a}}
\]
where $z_{a}$ are coordinate 
centered at $a\in D$.
Following \cite{Boa1} by Boalch,
we define 
the moduli space $\mathcal{M}^{\mathrm{ir}}_{\mathbf{H}}$ 
of irreducible meromorphic connections 
on the trivial $G$-bundle over $\mathbb{P}^{1}$
with local canonical form $H^{(a)}$ 
at each singular points $a\in |D|$, see Definition \ref{df:irredmod}.
If nonempty, $\mathcal{M}^{\mathrm{ir}}_{\mathbf{H}}$ is a holomorphic 
symplectic orbifold, see Proposition \ref{prop:orbifold}.

For each $a\in |D|$, consider the deformation 
$H^{(a)}(\mathbf{c}^{(a)})$ of the canonical form $H^{(a)}$,
with parameter space $\mathbb{C}^{k_{a}+1}$ stratified as 
in the previous section.
These stratifications induce a product stratification on  
$\prod_{a\in |D|}\mathbb{C}^{k_{a}+1}$.
For any 
fixed $\mathbf{c}=(\mathbf{c}^{(a)})_{a\in |D|}\in \prod_{a\in |D|}\mathbb{C}^{k_{a}+1}$,
there exists a unique stratum $\prod_{a\in |D|}\mathcal{C}(\mathcal{I}^{(a)})$
containing $\mathbf{c}$, and we regard 
$\mathbf{H}(\mathbf{c})=(H^{(a)}(\mathbf{c}^{(a)}))_{a\in |D|}$
as a collection of canonical forms. 

Thus 
we can define
the moduli space $\mathcal{M}^{\mathrm{ir}}_{\mathbf{H}(\mathbf{c})}$ 
of irreducible meromorphic $G$-connections associated to the 
collection  $\mathbf{H}(\mathbf{c})$. 
Then the following holds:
\begin{thm}[Theorem \ref{thm:main1}]\label{thm:ourthm}
	Suppose $\mathcal{M}_{\mathbf{H}}^{\mathrm{ir}}\neq \emptyset$.
    Then there exists an open neighborhood $\widetilde{\mathbb{B}}_{\mathbf{H}}$
    of $\mathbf{0}$ in $\prod_{a\in |D|}\mathbb{C}^{k_{a}+1}$
    and a complex orbifold $\mathcal{M}_{\mathbf{H},\widetilde{\mathbb{B}}_{\mathbf{H}}}^{\mathrm{ir}}$
    with a holomorphic map $\pi_{\widetilde{\mathbb{B}}_{\mathbf{H}}} \colon \mathcal{M}_{\mathbf{H},\widetilde{\mathbb{B}}_{\mathbf{H}}}^{\mathrm{ir}}
    \rightarrow \widetilde{\mathbb{B}}_{\mathbf{H}}$
    satisfying the following.
	\begin{enumerate}
		\item 
        The orbifold $\mathcal{M}_{\mathbf{H},\widetilde{\mathbb{B}}_{\mathbf{H}}}^{\mathrm{ir}}$
		is a deformation of $\mathcal{M}_{\mathbf{H}}^{\mathrm{ir}}$, i.e.,
        $\pi_{\widetilde{\mathbb{B}}_{\mathbf{H}}}\colon \mathcal{M}_{\mathbf{H},\widetilde{\mathbb{B}}_{\mathbf{H}}}^{\mathrm{ir}}
		\rightarrow \widetilde{\mathbb{B}}_{\mathbf{H}}$
		is a surjective submersion and
		we have 
		\[
			\mathcal{M}_{\mathbf{H}}^{\mathrm{ir}}\cong \pi_{\widetilde{\mathbb{B}}_{\mathbf{H}}}^{-1}(\mathbf{0}).
		\] 
        \item Every stratum $\prod_{a\in |D|}\mathcal{C}(\mathcal{I}^{(a)})$
        of $\prod_{a\in |D|}\mathbb{C}^{k_{a}+1}$ has the nonempty intersection with 
        the base space $\widetilde{\mathbb{B}}_{\mathbf{H}}$.
		\item For each $\mathbf{c}\in \widetilde{\mathbb{B}}_{\mathbf{H}}$
		the fiber $\pi_{\widetilde{\mathbb{B}}_{\mathbf{H}}}^{-1}(\mathbf{c})$ is isomorphic to an open dense 
		subspace of $\mathcal{M}_{\mathbf{H}(\mathbf{c})}^{\mathrm{ir}}$.
	\end{enumerate}
\end{thm}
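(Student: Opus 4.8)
The plan is to realise all of the moduli spaces occurring in the statement --- $\mathcal{M}_{\mathbf{H}}^{\mathrm{ir}}$ and, for each $\mathbf{c}$, $\mathcal{M}_{\mathbf{H}(\mathbf{c})}^{\mathrm{ir}}$ --- uniformly as complex symplectic reductions in the formulation of \cite{Boa1}, and then to carry out one such reduction \emph{relatively} over the parameter space. Recall from Definition \ref{df:irredmod} that $\mathcal{M}_{\mathbf{H}}^{\mathrm{ir}}=\mu^{-1}(\mathbf{0})^{\mathrm{ir}}/G$, where $\prod_{a\in|D|}\widetilde{\mathcal{O}}_{H^{(a)}}$ is the product of the extended orbits attached to the canonical forms $H^{(a)}$, the moment map $\mu$ encodes that the local connection germs glue to a global meromorphic $G$-connection on $\mathbb{P}^{1}$ (the residue-sum condition), $G$ acts by global constant gauge transformations, and $\mathrm{ir}$ denotes the open irreducible locus; Proposition \ref{prop:orbifold} records that on this locus stabilizers are central and $\mathbf{0}$ is a clean value of $\mu$, so the quotient is a holomorphic symplectic orbifold.

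\emph{Local step.} For each $a\in|D|$ I would upgrade the unfolding $H^{(a)}(\mathbf{c}^{(a)})$ of the preceding subsection to a family of extended orbits: a complex manifold $\widetilde{\mathcal{O}}_{H^{(a)}}^{\mathrm{unf}}$ with a holomorphic submersion onto $\mathbb{C}^{k_{a}+1}$ whose fibre over $\mathbf{0}$ is $\widetilde{\mathcal{O}}_{H^{(a)}}$ and whose fibre over $\mathbf{c}^{(a)}\in\mathcal{C}(\mathcal{I}^{(a)})$ is an open dense symplectic submanifold of $\prod_{j}\widetilde{\mathcal{O}}_{H^{(a)}(\mathbf{c}^{(a)})_{j}}$, the $H^{(a)}(\mathbf{c}^{(a)})_{j}$ being the canonical forms produced by the partial fraction decomposition at the cluster points. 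This should be possible because the leading semisimple coefficients and the residue datum of each $H^{(a)}(\mathbf{c}^{(a)})_{j}$ are obtained from the fixed commuting semisimple elements $H^{(a)}_{i}$, $H^{(a)}_{\mathrm{res}}$ by a transformation holomorphic in $\mathbf{c}^{(a)}$ (rational with poles only along the diagonals $c^{(a)}_{i}=c^{(a)}_{j}$), so the extended orbits fit together into a manifold; the substance is checking smoothness and the submersion property across the boundaries of the strata, where the pole orders of the unfolded forms --- hence the dimension-type of the extended orbits --- jump. This is precisely the formal-disk content developed earlier in the paper, which I would invoke here.

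\emph{Global step.} The moment map $\mu$ extends to a $\mathfrak{g}^{\ast}$-valued map over $\mathbb{B}:=\prod_{a\in|D|}\mathbb{C}^{k_{a}+1}$ on the fibre product $\widetilde{\mathcal{O}}^{\mathrm{unf}}$ of the $\widetilde{\mathcal{O}}_{H^{(a)}}^{\mathrm{unf}}$, since the gluing condition is fibrewise; set $\mathcal{M}_{\mathbf{H},\mathbb{B}}^{\mathrm{ir}}:=\mu^{-1}(\mathbf{0})^{\mathrm{ir}}/G$. Fibrewise, $\mathbf{0}$ is a clean value of $\mu$ on the irreducible locus and stabilizers are central (the argument of Proposition \ref{prop:orbifold}); since moreover $\widetilde{\mathcal{O}}^{\mathrm{unf}}\to\mathbb{B}$ is a submersion, the combined map $(\mu,\mathrm{pr}_{\mathbb{B}})$ is submersive along $\mu^{-1}(\mathbf{0})^{\mathrm{ir}}$, so $\mathcal{M}_{\mathbf{H},\mathbb{B}}^{\mathrm{ir}}$ is a complex orbifold with a holomorphic submersion $\pi$ to $\mathbb{B}$ whose fibre over $\mathbf{0}$ is $\mathcal{M}_{\mathbf{H}}^{\mathrm{ir}}$ by construction. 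As $\mathcal{M}_{\mathbf{H}}^{\mathrm{ir}}\neq\emptyset$ and $\pi$ is open, its image contains an open neighbourhood $\widetilde{\mathbb{B}}_{\mathbf{H}}$ of $\mathbf{0}$; put $\mathcal{M}_{\mathbf{H},\widetilde{\mathbb{B}}_{\mathbf{H}}}^{\mathrm{ir}}:=\pi^{-1}(\widetilde{\mathbb{B}}_{\mathbf{H}})$ and $\pi_{\widetilde{\mathbb{B}}_{\mathbf{H}}}:=\pi|_{\mathcal{M}_{\mathbf{H},\widetilde{\mathbb{B}}_{\mathbf{H}}}^{\mathrm{ir}}}$, which gives (1). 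For (2): $\mathbf{0}$ lies in the closure of every stratum $\prod_{a}\mathcal{C}(\mathcal{I}^{(a)})$, since the trivial partition coarsens every partition and hence $\mathbf{0}\in\overline{\mathcal{C}(\mathcal{I}^{(a)})}$ for each $a$, so any neighbourhood of $\mathbf{0}$ meets every stratum. For (3): $\pi_{\widetilde{\mathbb{B}}_{\mathbf{H}}}^{-1}(\mathbf{c})$ is the reduction by $G$ of $\prod_{a}$(fibre of $\widetilde{\mathcal{O}}_{H^{(a)}}^{\mathrm{unf}}$ over $\mathbf{c}^{(a)}$), which by the local step is an open dense symplectic submanifold of $\prod_{a}\prod_{j}\widetilde{\mathcal{O}}_{H^{(a)}(\mathbf{c})_{j}}$; reducing an open dense $G$-invariant submanifold (whose complement has positive codimension) and intersecting with the irreducible locus yields an open dense suborbifold of $\mathcal{M}_{\mathbf{H}(\mathbf{c})}^{\mathrm{ir}}$, as claimed.

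\emph{Main obstacle.} The heart of the matter is the local step: realising $\widetilde{\mathcal{O}}_{H^{(a)}}^{\mathrm{unf}}$ as a smooth family with the stated fibres. Over each open stratum the fibre is visibly a product of extended orbits of genuine canonical forms, so the difficulty is concentrated at the stratum boundaries, where pole orders jump and the extended orbits change dimension-type: one must check that the total space stays smooth there, that projection to $\mathbb{B}$ stays submersive, and that the map from a degenerating fibre into the product of extended orbits is an open dense embedding rather than merely a bijection onto a constructible set. A subsidiary point, needed for the smoothness of $\mathcal{M}_{\mathbf{H},\mathbb{B}}^{\mathrm{ir}}$ and for item (1), is that openness of the irreducible locus and cleanness of $\mathbf{0}$ propagate through the family and interact correctly with the single group $G$ acting globally, so that the quotient is an orbifold with locally constant central stabilizer type; this follows from the proof of Proposition \ref{prop:orbifold} applied fibrewise, but will have to be stated with care.
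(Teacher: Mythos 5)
Your global step reproduces the paper's architecture almost verbatim (relative symplectic reduction over the parameter space, openness of the submersion image to get $\widetilde{\mathbb{B}}_{\mathbf{H}}$, and $\mathbf{0}$ lying in the closure of every stratum for item (2)), and your reductions of statements (1)--(3) to the ``local step'' are sound. The problem is that the local step, which you correctly identify as the heart of the matter, is exactly what is not proved: you assert that the families $\widetilde{\mathcal{O}}_{H^{(a)}}^{\mathrm{unf}}$ ``fit together into a manifold'' because the coefficients of the unfolded canonical forms depend holomorphically on $\mathbf{c}^{(a)}$, but holomorphy of the coefficients does not by itself produce a smooth total space: the orbits over different strata have different isomorphism types, and the naive union $\bigsqcup_{\mathbf{c}}\prod_j\mathbb{O}_{H^{(a)}(\mathbf{c}^{(a)})_j}$ is not visibly a manifold fibred submersively over $\mathbb{C}^{k_a+1}$. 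Since the entire theorem rests on this construction, the proposal as written is a correct reduction of the theorem to an unproved and genuinely nontrivial lemma.

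The paper's resolution, which is the real content of Sections \ref{sec:triangle}--\ref{sec:deformn} and of Proposition \ref{prop:deformtrunc}, is a triangular decomposition: Proposition \ref{prop:pullback2form} and Theorem \ref{thm:triangdecompfb} exhibit the truncated orbit $\mathbb{O}_{H}$ as $G\times_{L_1}\bigl(\prod_{l=1}^{k}(N_{l,l+1}(\mathbb{C}[z]_{l})_{1}\times\mathfrak{u}_{l,l+1}(\mathbb{C}[z^{-1}]_{l-1}))\times\mathbb{O}_{H_{\mathrm{res}}}^{L_{1}}\bigr)$, i.e.\ as the quotient by a free $L_1$-action of a \emph{fixed} product of vector spaces and nilpotent groups. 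The deformation family $\mathbb{O}_{H,\mathbb{B}_H}$ is then literally the trivial bundle with this fibre; what varies holomorphically in $\mathbf{c}$ is only the algebraic structure --- the ring $\widehat{\Gamma}(\mathbf{c})$, the group laws on $N_{l,l+1}(\widehat{\Gamma}(\mathbf{c})_l)_1$ via Dynkin's formula, the coadjoint actions, and the immersion into $\mathfrak{g}(\widehat{\mathrm{L}}(\mathbf{c}))$ that defines the deformed moment map. Smoothness and submersivity over all of $\mathbb{B}_{H}$, including across stratum boundaries, are thereby automatic, and the dimension bookkeeping is consistent precisely because of the $\delta$-constancy of Corollary \ref{cor:deltaconst}. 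The identification of the fibre over $\mathbf{c}\in\mathcal{C}(\mathcal{I})$ with an open dense subset of $\prod_j\mathbb{O}_{H(\mathbf{c})_j}$ is then obtained from the Chinese remainder theorem together with a second triangular decomposition (Theorem \ref{thm:triangdecompfb2}) of the Zariski-open Bruhat-cell orbit $\mathbb{O}_{H(\mathbf{c})_j}^{G_0(\mathbb{C}[z]_{k_j})}$ of each regular-singular piece; this pins down exactly \emph{which} open dense subset appears, something your proposal leaves unspecified. Without this (or an equivalent) construction, the ``main obstacle'' you flag remains open and the proof is incomplete.
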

In particular, if $\mathbf{c}\in \prod_{a\in |D|}\mathcal{C}(\{0\}\sqcup\cdots\sqcup \{k_{a}\})\cap \widetilde{\mathbb{B}}_{\mathbf{H}}$,
then $\mathcal{M}_{\mathbf{H}(\mathbf{c})}^{\mathrm{ir}}$ is a moduli space of Fuchsian $G$-connections.
Therefore, this theorem shows
that every (non-empty) moduli space $\mathcal{M}_{\mathbf{H}}^{\mathrm{ir}}$
of $G$-connections with unramified irregular singularities admits a deformation to a moduli space of Fuchsian
$G$-connections.
This raises the following natural question:
\vspace{3mm}

	\noindent\textit{What kind of moduli spaces $\mathcal{M}_{\mathbf{H}(\mathbf{c})}^{\mathrm{ir}}$ 
	of meromorphic connections 
	appear on the other strata of $\widetilde{\mathbb{B}}_{\mathbf{H}}$?}
\vspace{3mm}\\
As observed in the examples from Section \ref{sec:classnew},  
various types of differential equations 
emerge during the unfolding of irregular singular points. 
For example, unfolding the triconfluent Heun equation yields
the biconfluent Heun, doubly-confluent Heun,
confluent Heun, and finally the classical Heun equation.
Therefore, the question concerning the deformation of moduli spaces 
of connections can be reformulated as the following 
natural problem regarding the unfolding of irregular singularities in    
differential equations:\vspace{3mm}

	\noindent\textit{What kind of differential equations can arise from a given equation 
	through the unfolding of its irregular singularities?}
	\vspace{3mm}\\
In the next section, in response to the above questions,
we shall extract discrete data from the canonical 
forms $\mathbf{H}$, called the spectral types,
and introduce the unfolding diagram of these spectral types.
Together, the spectral types and their unfolding diagrams 
provide a combinatorial and diagrammatic framework for understanding
how
the singularities and local canonical forms 
of $G$-connections change and relate to one another
during the deformation of the moduli space $\mathcal{M}_{\mathbf{H}}^{\mathrm{ir}}$.

We now make some remarks on related previous work.
The relationship between the confluence or unfolding of 
singular points in linear differential equations and 
the deformation of their moduli spaces has  
already been studied by several researchers,
particularly in the case of $G=\mathrm{GL}_{n}$.
In \cite{Ina}, Inaba 
constructed a one-parameter deformation of 
moduli spaces of meromorphic connections with unramified irregular 
singularities and discussed applications
to their isomonodromic deformations.
The main theorem above generalizes 
Theorem 2.11 in \cite{Ina},
where Inaba considered only a special class of unramified irregular singularities,
referred to as  
{\em generic} unramified irregular singularities.
In \cite{GMR} Gaiur, Mazzocco, and Rubtsov studied
the confluence of singular points in linear differential equations 
and the deformation of phase spaces of their isomonodromic deformations
from a different perspective, and further explored 
their quantizations.

Analyzing Stokes structures 
is another important application of the unfolding, which is not 
addressed in this paper.
There is a substantial body of classical and recent work on this topic: 
see for example, \cite{Scha} by Sch\"afke, \cite{Ram} by Ramis,
\cite{Zhan} by Zhang, \cite{Gul} by Glutsuk,
\cite{LaRo} by Lambert-Rousseau, \cite{HuRo} by 
Hurtubise-Rousseau, \cite{Kl} by Klime\v{s},
and the references therein. 
\subsection{Spectral types and unfolding diagrams}\label{sec:spunf}
Let  $H\,dz=\left(\sum_{i=1}^{k}H_{i}/z^{i}+H_{\mathrm{res}}\right)dz/z$
be an unramified canonical form.
Let $H_{\mathrm{res}}=H_{0}+J_{0}$ be the Jordan decomposition 
of $H_{\mathrm{res}}$ with the semisimple $H_{0}$ and the nilpotent $J_{0}$.
Fix a set of simple roots $\Pi$ of $G$.
Then, under a suitable normalization via the action of the Weyl group of $G$,
we associate to $H$ a sequence of subsets of simple roots defined by
\[
    \Pi_{i}:=\{\alpha\in \Pi\mid \alpha(H_{k})=\cdots=\alpha(H_{i+1})=\alpha(H_{i})=0\}
\] 
for $i=0,1,\ldots,k$. 
Let $L_{i}$ be the Levi subgroup of $G$ associated with $\Pi_{i}$.
Then the pair 
\[
    \mathrm{sp}(H):=(\Pi_{k}\supset \cdots \supset \Pi_{1}\supset \Pi_{0};[J_{0}])
\]
consisting of the sequence of subsets of simple roots 
and the $L_{1}$-orbit of $J_{0}$, is called the {\em spectral type} 
of the canonical form $H$.
For a collection $\mathbf{H}=(H^{(a)})_{a\in |D|}$ of canonical forms, 
we define the collection of spectral types 
$\mathrm{sp}(\mathbf{H}):=(\mathrm{sp}(H^{(a)}))_{a\in |D|}$.
Alternatively,
without reference to a specific canonical form, we define 
an abstract spectral type as 
a pair $(\Pi_{k}\supset \cdots \supset \Pi_{1}\supset \Pi_{0};[J_{0}])$,
where $J_{0}$ is a nilpotent element in the Lie algebra $\mathfrak{l}_{0}$ of the Levi subgroup $L_{0}$.
For more details, see Section \ref{sec:spectral}.

We now describe how the unfolding $H(\mathbf{c})$
of $H$ is reflected in its spectral type $\mathrm{sp}(H)$.
First we  introduce the unfolding of spectral types.
Let $S=(\Pi_{k}\supset \cdots \supset \Pi_{1}\supset \Pi_{0};[J_{0}])$
be an abstract spectral type.
Consider the set $\mathcal{P}_{[k+1]}$
of all partitions of the index set $\{0,1,\ldots,k\}$,
partially ordered by the refinement of partitions.
For a partition $\mathcal{I}=I_{0}\sqcup \cdots \sqcup I_{r}=\{0,1,\ldots,k\}$,
each component $I_{j}$ defines a subsequence $\Pi_{I_{j}}$
of $\Pi_{k}\supset \cdots \supset \Pi_{1}\supset \Pi_{0}$
by reindexing.
Without loss of generality,
we may assume that $0\in I_{0}$.
To the partition $\mathcal{I}$,
we associate the collection of spectral types $S^\mathcal{I}=(S^{I_{j}})_{j=0,1,\ldots,r}$
defined by 
\[
    S^{I_{0}}:=(\Pi_{I_{0}}; [J_{0}])\quad \text{and}\quad 
    S^{I_{j}}:=(\Pi_{I_{j}}; [0])\text{ for }
    j=1,\ldots,r,
\]
which corresponds to the decomposition of 
the sequence $\Pi_{k}\supset \cdots \Pi_{1}\supset \Pi_{0}$
with regard to the partition $\mathcal{I}$.
The following shows that this decomposition actually describes 
the unfolding $\mathbf{H}(\mathbf{c})$ 
of the collection $\mathbf{H}=(H^{(a)})_{a\in |D|}$ .
\begin{prop}[Corollary \ref{cor:specdecomp}]\label{prop:introprp}
Let  $\widetilde{\mathbb{B}}_{\mathbf{H}}$ be as  
in Theorem \ref{thm:ourthm}. 
Then 
for each stratum $\prod_{a\in |D|}\mathcal{C}(\mathcal{I}^{(a)})$ of $\prod_{a\in |D|}\mathbb{C}^{k_{a}+1}$
and each 
$\mathbf{c}\in \widetilde{\mathbb{B}}_{\mathbf{H}}\cap \prod_{a\in |D|}\mathcal{C}(\mathcal{I}^{(a)})$,
we have 
\[
	\mathrm{sp}(\mathbf{H}(\mathbf{c}))=(\mathrm{sp}(H^{(a)})^{\mathcal{I}^{(a)}})_{a\in |D|}.
\]
\end{prop}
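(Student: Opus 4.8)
\emph{Reduction and setup.} Both sides of the asserted equality are collections of abstract spectral types indexed by the pairs $(a,j)$ with $a\in|D|$ and $j$ ranging over the blocks of $\mathcal{I}^{(a)}$, so it suffices to fix one singular point (dropped from the notation), one block $I_j=\{i_{[j,0]}<\cdots<i_{[j,k_j]}\}$ of $\mathcal{I}=\mathcal{I}^{(a)}$, and to show that the spectral type of the unfolded canonical form $H(\mathbf{c})$ at the point $\gamma_j:=c_{i_{[j,0]}}$ equals $S^{I_j}$; write $\Pi'_{k_j}\supset\cdots\supset\Pi'_0$ and $[J'_0]$ for its chain and Jordan datum. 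Put $H(\mathbf{c})=Q(z)\big/\prod_{i=0}^{k}(z-c_i)$ with $Q(z)=\sum_{i=1}^{k}H_i\prod_{l=i+1}^{k}(z-c_l)+H_{\mathrm{res}}\prod_{l=1}^{k}(z-c_l)$, a $\g$-valued polynomial of degree $\le k$ with coefficients $\C$-linear in $H_1,\dots,H_k,H_{\mathrm{res}}$. On $\mathcal{C}(\mathcal{I})$ the denominator equals $w^{\#I_j}D_j(w)$ near $\gamma_j$, where $w:=z-\gamma_j$ and $D_j(0)\neq0$, so the Laurent coefficients $A^{[j]}_0,\dots,A^{[j]}_{k_j}$ of $H(\mathbf{c})$ at $\gamma_j$ are obtained from the Taylor coefficients $Q_p:=\tfrac1{p!}Q^{(p)}(\gamma_j)$ by an invertible lower-triangular change with diagonal entry $1/D_j(0)$; hence $\mathrm{span}_{\C}\{A^{[j]}_{k_j},\dots,A^{[j]}_m\}=\mathrm{span}_{\C}\{Q_0,\dots,Q_{k_j-m}\}$ for $1\le m\le k_j$, and the Jordan decomposition of $A^{[j]}_0$ is read off from $Q_0,\dots,Q_{k_j}$.

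\emph{Combinatorics of the $Q_p$.} Expanding $Q(\gamma_j+w)=\sum_{i=1}^{k}H_i\,w^{n_i}g_i(w)+H_{\mathrm{res}}\,w^{n_{\mathrm{res}}}g_{\mathrm{res}}(w)$, with $n_i:=\#\{l\in I_j:l>i\}$ and $g_i(w):=\prod_{l>i,\,l\notin I_j}(w+\gamma_j-c_l)$ (likewise the residue term), one reads off: (i) $n_i=k_j-s$ for $i$ in the $s$-th slot of $I_j$ (that is, $i_{[j,s]}\le i<i_{[j,s+1]}$), so $\{i:n_i\le p\}=\{i:i\ge i_{[j,k_j-p]}\}$ and the constant term of $g_{i_{[j,k_j-p]}}$ is a nonzero product of the differences $\gamma_j-c_l$; (ii) if $0\notin I_j$ then $n_{\mathrm{res}}=k_j+1>k_j$, so $H_{\mathrm{res}}$ is absent from $Q_0,\dots,Q_{k_j}$ and $A^{[j]}_0$ is a $\C$-combination of the pairwise commuting semisimple elements $H_1,\dots,H_k$, hence semisimple; (iii) if $0\in I_j$ then $n_{\mathrm{res}}=k_j$, $H_{\mathrm{res}}$ enters only $Q_{k_j}$, and the relevant constants multiply to $1$, giving $A^{[j]}_0=(\C\text{-combination of }H_1,\dots,H_k)+H_{\mathrm{res}}$, so $\mathrm{nilp}(A^{[j]}_0)=J_0$.

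\emph{The two inclusions.} By (i), $\mathrm{span}_{\C}\{Q_0,\dots,Q_{k_j-m}\}\subseteq W_m:=\mathrm{span}_{\C}\{H_i:i\ge i_{[j,m]}\}$; as $\Pi_{i_{[j,m]}}$ is by definition the set of roots annihilating $W_m$, this already gives $\Pi_{i_{[j,m]}}\subseteq\Pi'_m$ for all $m$ and all $\mathbf{c}\in\mathcal{C}(\mathcal{I})$, and with (ii)--(iii) it gives $[J'_0]=[0]$ if $0\notin I_j$ and $[J'_0]=[J_0]$ if $0\in I_j$. For the reverse inclusion $\Pi'_m\subseteq\Pi_{i_{[j,m]}}$ fix a root $\alpha\notin\Pi_{i_{[j,m]}}$ and set $i_1:=\max\{i:\alpha(H_i)\neq0\}$; then $i_1\ge i_{[j,m]}$, hence $n_{i_1}=k_j-\mathrm{slot}(i_1)\le k_j-m$. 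Suppose, for contradiction, that $\mathbf{0}$ lies in the closure of $Z_\alpha:=\{\mathbf{c}\in\mathcal{C}(\mathcal{I}):\alpha(Q_0)=\cdots=\alpha(Q_{k_j-m})=0\}$ (at the residue level $m=0$, add the equation $\alpha(\text{semisimple part of }A^{[j]}_0)=0$); by the curve selection lemma there is an analytic arc $\gamma(t)$ into $\mathcal{C}(\mathcal{I})$ with $\gamma(0)=\mathbf{0}$ and $\gamma(t)\in Z_\alpha$ for $t>0$, so each $\gamma_j(t)-c_l(t)$ ($l\notin I_j$) is a Puiseux series $a_lt^{b_l}+\cdots$ with $a_l\neq0$, $b_l>0$. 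Along this arc, the terms contributing to $\alpha(Q_{n_{i_1}})$ are indexed by those $i\le i_1$ in the same slot as $i_1$, the $i$-th having $t$-order $\sum_{l>i,\,l\notin I_j}b_l$; the term $i=i_1$ has strictly smallest order (lowering $i$ only adjoins factors $\gamma_j-c_l$ with $i<l\le i_1\notin I_j$) and leading coefficient $\alpha(H_{i_1})\prod_{l>i_1,\,l\notin I_j}a_l\neq0$, so $\alpha(Q_{n_{i_1}})(\gamma(t))\not\equiv0$ --- a contradiction. (In the residual cases where $i_1$ sits in slot $0$ of a block containing $0$, one argues instead that $\alpha(\text{semisimple part of }A^{[0]}_0)(\gamma(t))$ blows up like $\alpha(H_{i_1})\prod_{l\le i_1}(\gamma_0-c_l)^{-1}$, or equals $\alpha(H_0)\neq0$ exactly.) Thus $\mathbf{0}\notin\overline{Z_\alpha}$; intersecting the resulting neighbourhoods over the finitely many $\alpha$, $m$, $j$, $\mathcal{I}$ and $a$, and replacing the $\widetilde{\mathbb{B}}_{\mathbf{H}}$ of Theorem~\ref{thm:ourthm} by a smaller neighbourhood inside it --- harmless, since every stratum has $\mathbf{0}$ in its closure, so the three conclusions of that theorem persist --- we obtain $\Pi'_m=\Pi_{i_{[j,m]}}$ and hence $\mathrm{sp}(H(\mathbf{c}))=S^{I_j}$ throughout $\widetilde{\mathbb{B}}_{\mathbf{H}}\cap\prod_{a}\mathcal{C}(\mathcal{I}^{(a)})$.

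\emph{Main obstacle.} The only genuinely delicate step is the reverse inclusion in the previous paragraph: unlike $\Pi_{i_{[j,m]}}\subseteq\Pi'_m$, it is \emph{not} a formal consequence of the span identity, because whenever $\{i_{[j,m]},\dots,k\}\not\subseteq I_j$ the space $\Pi'_m$ is cut out by strictly fewer linear equations than $\Pi_{i_{[j,m]}}$, so one must genuinely use the degeneration $\mathbf{c}\to\mathbf{0}$ rather than linear algebra over $\C$; the crux is that $\alpha(Q_{n_{i_1}})$ possesses a single dominant term at $\mathbf{0}$ which survives along every arc and thereby forces the vanishing of enough of the $\alpha(H_i)$. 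Everything else --- the reduction to a single block, the triangular passage from the $A^{[j]}_m$ to the $Q_p$, the combinatorial formula for the $Q_p$, the inclusion $\Pi_{i_{[j,m]}}\subseteq\Pi'_m$, and the matching of Jordan data --- is routine bookkeeping, though the two special situations (the block containing $0$, and the residue level, where $H_{\mathrm{res}}$ and $H_0$ intervene) need to be handled with a little care.
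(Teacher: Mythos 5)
Your proof is correct, and its skeleton coincides with the paper's: reduce to one singular point and one block of the partition, compute the Laurent coefficients $A^{[j]}_\nu$ at $z=c_{i_{[j,0]}}$ by partial fractions (the paper's Proposition \ref{prop:hyper}), get the easy inclusion $\Pi_{i_{[j,\nu]}}\subseteq\Pi'_\nu$ from the span identity, and read the Jordan datum off the residue term exactly as in equation (\ref{eq:numzero}). Where you diverge is in the "delicate" reverse inclusion. The paper does not argue by contradiction along arcs: it defines, already in Section \ref{sec:openset}, the explicit polynomials $f_\alpha^{(i)}(\mathbf{c})=\sum_{j=i}^{d(\alpha)}\alpha(H_j)\prod_{\nu=j+1}^{d(\alpha)}(x_i-x_\nu)$, sets $\mathbb{B}_H$ to be the complement of their zero loci, notes $f_\alpha^{(i)}(\mathbf{0})=\alpha(H_{d(\alpha)})\neq 0$ so that $\mathbf{0}\in\mathbb{B}_H$, and then in Proposition \ref{prop:hyper} computes $\alpha((A^{[j]}_\mu)_{\mathrm{ss}})$ as $f_\alpha^{(i_{[j,\mu]})}(\mathbf{c})$ divided by a manifestly nonzero product. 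Your curve-selection/Puiseux argument that $\mathbf{0}\notin\overline{Z_\alpha}$ is a softer repackaging of the same dominant-term observation (your surviving term $\alpha(H_{i_1})\prod a_l$ is exactly the paper's $f_\alpha^{(i)}(\mathbf{0})=\alpha(H_{d(\alpha)})$), so it is correct but buys less: the paper gets a single Zariski-open set $\mathbb{B}_H$, valid uniformly over all strata, with no shrinking. Relatedly, your final step of replacing $\widetilde{\mathbb{B}}_{\mathbf{H}}$ by a smaller neighbourhood is unnecessary in the paper's setup and, taken literally, means you have proved the conclusion only on a subset of the $\widetilde{\mathbb{B}}_{\mathbf{H}}$ of Theorem \ref{thm:ourthm}: that set is constructed in Theorem \ref{thm:main1} as the image of a projection landing inside $\mathbb{B}_{\mathbf{H}}=\prod_a\mathbb{B}_{H^{(a)}}$, so it is already contained in the good locus and no further shrinking is needed. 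This is a cosmetic rather than mathematical defect, but you should either cite that containment or phrase your conclusion as an existence statement for some neighbourhood.
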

That is, 
on each stratum $\prod_{a\in |D|}\mathcal{C}(\mathcal{I}^{(a)})$,
the spectral types of $\mathbf{H}(\mathbf{c})$ 
are all constant,
and the spectral type
on the stratum
is obtained by decomposing the original spectral type 
$\mathrm{sp}(\mathbf{H})$
with respect to the partitions $\mathcal{I}^{(a)}$, for each $a\in |D|$.

We now introduce a diagrammatic description of
the unfolding 
of spectral types.
Let 
\[
	\mathbf{S}=(S_{i})_{i=1,\ldots,d}=(\Pi^{(i)}_{k_{i}}\supset \cdots\supset \Pi^{(i)}_{1}\supset \Pi^{(i)}_{0};[J^{(i)}_{0}])_{i=1,\ldots,d}
\]
be a collection of abstract spectral types.
The product
 $\prod_{i=1}^{d}\mathcal{P}_{[k_{i}+1]}$ naturally becomes a poset as well, i.e.,
for $(\mathcal{I}^{(i)})_{i=1,\ldots,d},(\mathcal{J}^{(i)})_{i=1,\ldots,d}\in \prod_{i=1}^{d}\mathcal{P}_{[k_{i}+1]}$,
we have $(\mathcal{I}^{(i)})_{i=1,\ldots,d}\le (\mathcal{J}^{(i)})_{i=1,\ldots,d}$ 
if and only if $\mathcal{I}^{(i)}\le \mathcal{J}^{(i)}$ for all $i=1,\ldots,d.$
Thus we obtain the Hasse diagram of the poset $\prod_{i=1}^{d}\mathcal{P}_{[k_{i}+1]}$.
Then we attach to each vertex $(\mathcal{I}^{(i)})_{i=1,\ldots,d}\in \prod_{i=1}^{d}\mathcal{P}_{[k_{i}+1]}$
the corresponding unfolding $\mathbf{S}^{(\mathcal{I}^{(i)})_{i=1,2,\ldots,d}}$
of $\mathbf{S}$.
Then we call the resulting diagram the {\em unfolding diagram} of $\mathbf{S}$.
We notice that there may appear same collections of abstract spectral types in several vertices,
namely,
it may happen $\mathbf{S}^{(\mathcal{I}^{(i)})_{i=1,\ldots,d}}=\mathbf{S}^{(\mathcal{J}^{(i)})_{i=1,\ldots,d}}$
for some $(\mathcal{I}^{(i)})_{i=1,\ldots,d}, (\mathcal{J}^{(i)})_{i=1,\ldots,d}
\in \prod_{i=1}^{d}\mathcal{P}_{[k_{i}+1]}$.
Therefore by identifying these same collections of abstract spectral types
we can obtain the smaller diagram, which we call the {\em reduced unfolding diagram}
of $\mathbf{S}$. See Section \ref{sec:unfspc} for the detail.

Let us see some examples.
Let $G=\mathrm{GL}_{2}$
and we consider
the canonical form $H$ with 
$\mathrm{sp}(H)=(\emptyset\supset \emptyset\supset\emptyset\supset\emptyset;[0])$
which is the canonical form for the triconfluent Heun equation,
see Section \ref{sec:exI}.
Then the deformation of the moduli space $\mathcal{M}_{(H)}^{\mathrm{ir}}$
given in Theorem \ref{thm:ourthm} is depicted by the 
following reduced unfolding diagram of the spectral type,\vspace{3mm}\\
\tiny
\begin{center}
	\begin{tikzpicture}[auto]
		\node[shape=rectangle, draw] (a) at (-8, 0) {$(\emptyset;[0]),(\emptyset;[0]),(\emptyset;[0]),(\emptyset;[0])$};
		\node[shape=rectangle, draw] (b) at (-4, 0) {$(\emptyset;[0]),(\emptyset;[0]),(\emptyset\supset\emptyset;[0])$};
		\node[shape=rectangle, draw] (c) at (-1, 1) {$(\emptyset\supset \emptyset ;[0]),(\emptyset\supset\emptyset;[0])$};
		\node[shape=rectangle, draw] (d) at (-1, -1) {$(\emptyset;[0]),(\emptyset\supset\emptyset\supset\emptyset;[0])$};
		\node[shape=rectangle, draw] (e) at (1.5, 0) {$(\emptyset\supset \emptyset\supset\emptyset\supset\emptyset;[0])$};
		\draw[->] (a) to (b);
		\draw[->] (b) to (c);
		\draw[->] (b) to (d);
		\draw[->] (c) to (e);
		\draw[->] (d) to (e);
	\end{tikzpicture}.
\end{center}
\normalsize\vspace{3mm}

\noindent
We notice that this diagram coincides with the diagram 
in Section \ref{sec:classnew}
which describes the confluence of singular points of Heun equations.
Therefore we can say that the deformation of $\mathcal{M}_{(H)}^{\mathrm{ir}}$
recovers the confluence procedure of the Heun equation.

Also let  $G=\mathrm{GL}_{4}$
and we consider the canonical form $H$ with 
\[
	\mathrm{sp}(H)=\left(\{e_{12},e_{34}\}\supset\{e_{12},e_{34}\}\supset\{e_{12},e_{34}\}\supset \{e_{12}\};[0]\right).
\]
Here $e_{ij}$ for $1\le i<j\le 4$ are standard simple roots of $\mathrm{GL}_{4}$.
Then the deformation of the moduli space $\mathcal{M}_{(H)}^{\mathrm{ir}}$
is depicted by the 
reduced unfolding diagram,\vspace{3mm}\\
\begin{center}
    \begin{tikzpicture}[auto]
        \node[shape=rectangle, draw, font=\tiny, inner sep=1.2] (a) at (-8, 0.5) {$\begin{array}{l}\left(\{e_{12},e_{34}\};[0]\right),\\\left(\{e_{12},e_{34}\};[0]\right),\\ \left( \{e_{12},e_{34}\};[0]\right),\\\left(\{e_{12}\};[0]\right)\end{array}$};
        \node[shape=rectangle, draw, font=\tiny, inner sep=1.2] (b) at (-6.5, -1) {$\begin{array}{l}\left(\{e_{12},e_{34}\};[0]\right),\\ \left(\{e_{12},e_{34}\};[0]\right),\\ \left( \{e_{12},e_{34}\}\supset\{e_{12}\};[0]\right)\end{array}$};
        \node[shape=rectangle, draw, font=\tiny, inner sep=1.2] (c) at (-6.5, 2) {$\begin{array}{l}\left(\{e_{12},e_{34}\}\supset\{e_{12},e_{34}\};[0]\right),\\ \left(\{e_{12},e_{34}\};[0]\right),\\ \left(\{e_{12}\};[0]\right)\end{array}$};
        \node[shape=rectangle, draw, font=\tiny, inner sep=1.2] (d) at (-2.5, -1) {$\begin{array}{l}\left(\{e_{12},e_{34}\}\supset\{e_{12},e_{34}\};[0]\right),\\ \left( \{e_{12},e_{34}\}\supset\{e_{12}\};[0]\right)\end{array}$};
        \node[shape=rectangle, draw, font=\tiny, inner sep=1.2] (e) at (-2.3, 1) {$\begin{array}{l}\left(\{e_{12},e_{34}\}\supset\{e_{12},e_{34}\}\supset\{e_{12}\};[0]\right),\\ \left( \{e_{12},e_{34}\};[0]\right)\end{array}$};
        \node[shape=rectangle, draw, font=\tiny, inner sep=1.2] (f) at (-2.3, 3) {$\begin{array}{l}\left(\{e_{12},e_{34}\}\supset\{e_{12},e_{34}\}\supset\{e_{12},e_{34}\};[0]\right),\\ \left(\{e_{12}\};[0]\right)\end{array}$};
        \node[shape=rectangle, draw, font=\tiny, inner sep=1.2] (g) at (1.4, 0) {$\left(\{e_{12},e_{34}\}\supset\{e_{12},e_{34}\}\supset\{e_{12},e_{34}\}\supset \{e_{12}\};[0]\right)$};
        \draw[->] (a) to (b);
        \draw[->] (a) to (c);
        \draw[->] (b) to (d);
        \draw[->] (b) to (e);
        \draw[->] (c) to (d);
        \draw[->] (c) to (e);
        \draw[->] (c) to (f);
        \draw[->] (e) to (g);
        \draw[->] (f) to (g);
        \draw[->] (d) to (g);
\end{tikzpicture}.
\end{center}\vspace{3mm}

\noindent
This diagram also reproduces the one presented in Section \ref{sec:classnew}
which illustrates the confluence of singular points in the
differential equation studied in \cite{KNS}.
Furthermore, it  can be verified that the other diagrams describing 
the confluence of singular points, as lisked by  
Kawakami-Nakamura-Sakai in \cite{KNS}
can likewise be recovered from our reduced unfolding diagrams.

Moreover  moduli spaces $\mathcal{M}_{\mathbf{H}}^{\mathrm{ir}}$  
with $\mathrm{dim}\mathcal{M}_{\mathbf{H}}^{\mathrm{ir}}=0$
correspond to differential equations without accessory parameters.
In such cases, 
the deformations of  $\mathcal{M}_{\mathbf{H}}^{\mathrm{ir}}$ constructed in 
Theorem \ref{thm:ourthm} recover the versal unfolding families of differential equations
without accessory parameters proposed by \cite{Oshi2}.

Therefore one can conclude that 
the deformation constructed in Theorem \ref{thm:ourthm}
gives a natural generalization of  
many classical and recent known examples of confluent/unfolding families 
of linear differential equations on $\mathbb{P}^{1}$,
and also a natural translation of the 
theory for the unfolding of singular points of differential 
equations to the theory for deformation of their moduli spaces of meromorphic connections.

\subsection{Additive Deligne-Simpson problem}\label{sec:intads}
We now explain an application of Theorem \ref{thm:ourthm}
to an existence problem for irreducible $G$-connections,
known as  the additive Deligne-Simpson problem.

The additive Deligne-Simpson problem 
asks whether there exists an irreducible Fuchsian differential equation 
\[
    \frac{dY}{dz}=\sum_{i=1}^{r}\frac{A_{i}}{z-a_{i}}Y\quad (A_{i}\in M_{n}(\mathbb{C}))
\]
on $\mathbb{P}^{1}$,
such that 
the residue matrices $A_{i}$ at $z=a_{i}$ for  $i=1,\ldots,r$ and $A_{\infty}:=-\sum_{i=1}^{r}A_{i}$
at $z=\infty$, lie 
in prescribed conjugacy classes $C_{i}\subset  M_{n}(\mathbb{C})$ for $i\in \{1,\ldots,r,\infty\}$.
This problem has been studied by Deligne, Simpson, Kostov 
and many others,
and was finally solved by Crawley-Boevey in \cite{C} using the theory of 
quiver representations. A detailed historical account of the  
problem is provided in \cite{Kos} by Kostov. 
There are many generalizations of this problem. 
For 
instance, Boalch \cite{Boarx} , Hiroe-Yamakawa \cite{HY}, and Hiroe \cite{H2}
have studied similar problems for linear differential equations
with unramified irregular singularities. Related problems for 
equations with ramified irregular singularities have been discussed
in \cite{KLMNS} by Kulkarni-Livesay-Matherne-Nguyen-Sage
and \cite{LSN} by Livesay-Sage-Nguyen.
Furthermore, in \cite{JY} by Jakob-Yun a similar problem 
for $G$-connections with ramified irregular singularities is discussed.

We now propose the following generalization of 
the additive Deligne-Simpson problem to $G$-connections,
extending the problems considered in \cite{Boarx}, \cite{HY}, and \cite{H2}.
Assume,under the projective transformation, that the of 
singularities is  $|D|=\{a_{1},\ldots,a_{d}\}\subset \mathbb{C}$.
\begin{prob}[Additive Deligne-Simpson problem for $\mathbf{H}$]\normalfont
	Let us take a collection of unramified canonical forms
	$\mathbf{H}=(H^{(a)})_{a\in |D|}$
	satisfying that the sum of residues 
    $
		\sum_{a\in |D|}H^{(a)}_{\mathrm{res}}
	$ is contained in the semisimple part $\mathfrak{g}_{\mathrm{ss}}$ of $\mathfrak{g}$.	
	The problem is to  find an irreducible meromorphic connection over the trivial $G$-bundle on $\mathbb{P}^{1}$
	\[
	\nabla_{A}=\sum_{a\in |D|}\sum_{i=0}^{k_{a}}\frac{A^{(a)}_{i}}{(z-a)^{i}}\frac{dz}{z-a}\quad (A^{(a)}_{i}\in \mathfrak{g})
	\]
    with singularities only on $|D|$
	such that 
	\[
		\sum_{i=0}^{k_{a}}\frac{A^{(a)}_{i}}{(z-a)^{i+1}}\in \mathbb{O}_{H^{(a)}}
	\]
	for all $a\in |D|$.
\end{prob}
Here $\mathbb{O}_{H^{(a)}}$ stands for the $G(\mathbb{C}[\![z_{a}]\!])$-orbit through 
$H^{(a)}\in \mathfrak{g}(\mathbb{C}(\!(z_{a}^{-1})\!))$, see Section \ref{sec:HTLBV} for the detail.

As in the previous sections, 
we consider a family of canonical forms
 $\mathbf{H}(\mathbf{c})_{\mathbf{c}\in U}=((H^{(a)}(\mathbf{c}^{(a)}))_{a\in |D|})_{\mathbf{c}\in U}
$ parametrized by $\mathbf{c}\in U$, where $U\subset \prod_{a\in |D|}\mathbb{C}^{k_{a}+1}$
is  an open neighborhood
of the origin $\mathbf{0}$. 
Fibers $(H^{(a)}(\mathbf{c}^{(a)}))_{a\in |D|}$ 
on each stratum of $U\subset \prod_{a\in |D|}\mathbb{C}^{k_{a}+1}$ can be seen 
as collections of canonical forms 
and their spectral types are described
by the unfolding of the spectral types of $\mathbf{H}$
as we saw in Proposition \ref{prop:introprp}.
Therefore one can ask whether
there exists a family of irreducible $G$-connections
which realizes the deformation $\mathbf{H}(\mathbf{c})$
of $\mathbf{H}$.
\begin{prob}[Additive Deligne-Simpson problem for an unfolding 
	family of canonical forms]\normalfont
	Let $U$ be an open neighborhood of $\mathbf{0}\in \prod_{a\in |D|}\mathbb{C}^{k_{a}+1}$.
	Let $(\mathbf{H}(\mathbf{c}))_{\mathbf{c}\in U}$ be 
	the unfolding of a collection of canonical forms $\mathbf{H}=(H^{(a)})_{a\in |D|}$
	as above.
	Then find a family of meromorphic $G$-connections on $\mathbb{P}^{1}$,
	\[
	\nabla_{A(\mathbf{c})}=A(\mathbf{c})\,dz
	\]
	satisfying all the following conditions.
	\begin{enumerate}
		\item The $\mathfrak{g}$-valued $1$-form 
		$A(\mathbf{c})\,dz$ depends holomorphically  on $\mathbf{c}\in U$. 
		\item For each $\mathbf{c}\in U$, 
		 $\nabla_{A(\mathbf{c})}$ is a solution 
		of the additive Deligne-Simpson problem for 
		$\mathbf{H}(\mathbf{c})$.
	\end{enumerate}
\end{prob}
In other words, we seek 
a holomorphic family $\nabla_{A(\mathbf{c})}=A(\mathbf{c})\,dz$
of irreducible $G$-connections that  
simultaneously solve 
all the Deligne-Simpson problems associated with 
the deformed canonical forms $\mathbf{H}(\mathbf{c})$ for $\mathbf{c}\in U$.

As a direct consequence of Theorem \ref{thm:ourthm},
we obtain the following result:
if the additive Deligne-Simpson problem for $\mathbf{H}$ has a soluiton,
then the every additive Deligne-Simpson problem in the unfolding family
$(\mathbf{H}(\mathbf{c}))_{\mathbf{c}\in U}$ also has a simultaneous solution. 
\begin{thm}[Theorem \ref{thm:addDS}]\label{thm:addDS0}
	Let $\mathbf{H}$ be a collection of unramified canonical forms as above.
    Let $\nabla_{A}=A\,dz$ be a solution of the additive Deligne-Simpson problem 
    for $\mathbf{H}$.
    Then there exists an open neighborhood $U$ of $\mathbf{0}\in \prod_{a\in |D|}\mathbb{C}^{k_{a}+1}$
    and a holomorphic family of $G$-connections $\nabla_{A(\mathbf{c})}=A(\mathbf{c})\,dz$\ $(\mathbf{c}\in U)$ on $\mathbb{P}^{1}$
    such that 
    \begin{itemize}
    \item $\nabla_{A(\mathbf{0})}=\nabla_{A}$,
    \item $\nabla_{A(\mathbf{c})}=A(\mathbf{c})\,dz$ is a solution of 
    the additive Deligne-Simpson for the family $(\mathbf{H}(\mathbf{c}))_{\mathbf{c}\in U}$.
	\end{itemize}
    In particular, the following are equivalent.
	\begin{enumerate}
		\item The additive Deligne-Simpson problem for $\mathbf{H}$ has a solution.
		\item There exists an open neighborhood $U$ of $\mathbf{0}\in\prod_{a\in |D|}\mathbb{C}^{k_{a}+1}$ and 
		the additive Deligne-Simpson problem for the family $(\mathbf{H}(\mathbf{c}))_{\mathbf{c}\in U}$
		has a solution.
	\end{enumerate}
\end{thm}
\subsection{A conjecture by Oshima}\label{sec:intspc}
We now explain a similar 
existence problem proposed by Oshima, which is closely related 
to the additive Deligne-Simpson problem.

Let us consider a collection $\mathbf{S}=(S_{i})_{i=1,\ldots,d}$
of abstract spectral types.
A collection $\mathbf{S}$ is said to be {\em irreducibly realizable}
if there exists an irreducible meromorphic $G$-connection 
whose spectral type is $\mathbf{S}$.
More precisely, $\mathbf{S}$ is irreducibly realizable 
if and only if there exists a collection $\mathbf{H}=(H^{(a_{i})})_{i=1,\ldots,d}$
of unramified canonical forms such that 
\begin{enumerate}
\item $\mathbf{S}=\mathrm{sp}(\mathbf{H})$,
\item the additive Deligne-Simpson problem for $\mathbf{H}$ has a solution.
\end{enumerate}
In this case, a solution $\nabla_{A}=A\,dz$ of this additive Deligne-Simpson 
problem is called a {\em realization} of $\mathbf{S}$.

Furthermore, 
the collection $\mathbf{S}=(S_{i})_{i=1,2,\ldots,d}$
is 
said to be {\em versally realizable}
if there exists a collection $\mathbf{H}=(H^{(a_{i})})_{i=1,2,\ldots,d}$
of unramified canonical forms 
such that:
\begin{enumerate}
	\item $\mathbf{S}=\mathrm{sp}(\mathbf{H})$,
	\item there exists an open neighborhood $U$ of $\mathbf{0}\in \prod_{i=1}^{d}\mathbb{C}^{k_{i}+1}$ 
	such that the unfolding family $(\mathbf{H}(\mathbf{c}))_{\mathbf{c}\in U}$
	of $\mathbf{H}$ realizes the unfolding diagram of $\mathbf{S}$, 
	namely, 
	for each $(\mathcal{I}^{(i)})_{i=1,\ldots,d}\in \prod_{i=1}^{d}\mathcal{P}_{[k_{i}+1]}$
	and $\mathbf{c}\in U\cap \prod_{i=1}^{d}\mathcal{C}(\mathcal{I}^{(i)})$,
	we have
	\[
		\mathrm{sp}(\mathbf{H}(\mathbf{c}))=\mathbf{S}^{(\mathcal{I}^{(i)})_{i=1,\ldots,d}},
	\]
	\item 
	 for every $\mathbf{c}\in U$,
	 the additive Deligne-Simpson problem for $\mathbf{H}(\mathbf{c})$ admits a solution.
\end{enumerate}

Oshima proposed the following conjecture
in \cite{Oshi1} and \cite{Oshi2},
which asserts that 
the realizability of 
$\mathbf{S}$
is equivalent to the versal realizability of 
it, and moreover equivalent to the 
realizability of the {\em finest unfolding} of $\mathbf{S}$,
\begin{align*}
	&\mathbf{S}^{\mathrm{reg}}:=\mathbf{S}^{(\{0\}\sqcup\cdots \sqcup\{k_{i}\})_{i=1,\ldots,d}}\\
	&=\Big((\Pi^{(1)}_{0};[J^{(1)}_{0}]),(\Pi^{(1)}_{1};[0]),\ldots &,(\Pi^{(1)}_{k_{1}};[0]),\ldots,
	 (\Pi^{(d)}_{0};[J^{(d)}_{0}]),(\Pi^{(d)}_{1};[0]),\ldots,(\Pi^{(d)}_{k_{d}};[0])\Big).
\end{align*}
Here we note that 
the finest unfolding $\mathbf{S}^{\mathrm{reg}}$ is the collection of spectral types for regular singular canonical forms
associated to the 
finest partitions $\{0\}\sqcup\cdots \sqcup\{k_{i}\}=\{0,\ldots,k_{i}\}$, $i=1,\ldots,d$. 
\begin{conj}[Conjecture \ref{conj:osh}, Oshima \cite{Oshi1}, \cite{Oshi2}]\label{conj:osh0}\normalfont
	Let $\mathbf{S}=(S_{i})_{i=1,\ldots,d}$
	 be a 
	 collection of abstract spectral types.
	Then the following are equivalent.
	\begin{enumerate}
		\item The collection $\mathbf{S}$ is irreducibly realizable.
		\item The collection $\mathbf{S}$ is versally realizable.
		\item The finest unfolding $\mathbf{S}^{\mathrm{reg}}$ of $\mathbf{S}$ is irreducibly realizable.
	\end{enumerate}
\end{conj}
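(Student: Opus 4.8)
The plan is to derive all three equivalences as consequences of Theorem \ref{thm:ourthm} (Theorem \ref{thm:main1}) applied to a suitably chosen collection $\mathbf{H}$ of canonical forms with $\mathrm{sp}(\mathbf{H})=\mathbf{S}$. Such a collection exists by the construction in Section \ref{sec:spectral}: given the abstract spectral type data $(\Pi^{(i)}_{k_i}\supset\cdots\supset\Pi^{(i)}_0;[J^{(i)}_0])$ one builds semisimple elements $H^{(i)}_j$ realizing the prescribed vanishing pattern of roots, with the residual term being the relevant Levi-nilpotent; the freedom in the choice of eigenvalues can, and should, be used to make things generic with respect to irreducibility (e.g. avoiding resonances and bad sums of residues), which matters for the realizability statements. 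Fix one such $\mathbf{H}$.

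First I would prove (1)$\Rightarrow$(2). If $\mathbf{S}$ is irreducibly realizable, then by definition there is a collection $\mathbf{H}$ with $\mathrm{sp}(\mathbf{H})=\mathbf{S}$ such that $\mathcal{M}^{\mathrm{ir}}_{\mathbf{H}}\neq\emptyset$ — indeed a realization $d_A$ gives a point of $\mathcal{M}^{\mathrm{ir}}_{\mathbf{H}}$, equivalently a solution of the additive Deligne-Simpson problem for $\mathbf{H}$. Now apply Theorem \ref{thm:ourthm}: there is an open neighborhood $\widetilde{\mathbb{B}}_{\mathbf{H}}$ of $\mathbf{0}$, a complex orbifold $\mathcal{M}^{\mathrm{ir}}_{\mathbf{H},\widetilde{\mathbb{B}}_{\mathbf{H}}}$, and a surjective submersion onto $\widetilde{\mathbb{B}}_{\mathbf{H}}$. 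Since $\pi_{\widetilde{\mathbb{B}}_{\mathbf{H}}}$ is a submersion with $\mathcal{M}^{\mathrm{ir}}_{\mathbf{H}}\cong\pi^{-1}(\mathbf{0})$ nonempty, one can choose a local holomorphic section through any point of $\pi^{-1}(\mathbf{0})$ corresponding to the given realization; unwinding the construction of $\mathcal{M}^{\mathrm{ir}}_{\mathbf{H},\widetilde{\mathbb{B}}_{\mathbf{H}}}$ in terms of $\mathfrak{g}$-valued $1$-forms, this section is exactly a holomorphic family $d_{A(\mathbf{c})}=d+A(\mathbf{c})\,dz$ of the kind demanded in the additive Deligne-Simpson problem for the unfolding family, i.e. the content of Theorem \ref{thm:addDS0}. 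Hence $\mathbf{S}$ is versally realizable, possibly after shrinking $U$ to $\widetilde{\mathbb{B}}_{\mathbf{H}}$.

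Next, (2)$\Rightarrow$(3) is essentially specialization to the open stratum. If $\mathbf{S}$ is versally realizable via $\mathbf{H}$ and a neighborhood $U$, then by part (2) of Theorem \ref{thm:ourthm} the open stratum $\prod_{i}\mathcal{C}(\{0\}\sqcup\cdots\sqcup\{k_i\})$ meets $U$; picking $\mathbf{c}$ in this intersection, the solution $d_{A(\mathbf{c})}$ of the family Deligne-Simpson problem restricts to a solution of the ordinary Deligne-Simpson problem for $\mathbf{H}(\mathbf{c})$, and by Corollary \ref{cor:specdecomp} (the Proposition quoted above) $\mathrm{sp}(\mathbf{H}(\mathbf{c}))=\mathbf{S}^{(\{0\}\sqcup\cdots\sqcup\{k_i\})_i}=\mathbf{S}^{\mathrm{reg}}$, which exhibits $\mathbf{S}^{\mathrm{reg}}$ as irreducibly realized. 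Finally, (3)$\Rightarrow$(1): apply the already-proven implications (1)$\Rightarrow$(2)$\Rightarrow$(3) to the collection $\mathbf{S}^{\mathrm{reg}}$ in place of $\mathbf{S}$. Here the key observation is that the unfolding operation is, on the level of index-set partitions, transitive in the appropriate sense: the finest partitions cannot be refined further, so $(\mathbf{S}^{\mathrm{reg}})^{\mathrm{reg}}=\mathbf{S}^{\mathrm{reg}}$, and more importantly $\mathbf{S}$ itself appears in the unfolding diagram of $\mathbf{S}^{\mathrm{reg}}$ only in a degenerate combinatorial sense — so one instead argues directly: a realization of $\mathbf{S}^{\mathrm{reg}}$ is a Fuchsian $G$-connection with singular points $c_{i_{[j,0]}}$; by choosing these collision points to coalesce according to the partition recovering $\mathbf{S}$ and invoking the $\mathbf{c}\to\mathbf{0}$ direction of Theorem \ref{thm:ourthm}, one obtains a point of $\mathcal{M}^{\mathrm{ir}}_{\mathbf{H}}$, hence realizes $\mathbf{S}$.

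The main obstacle I anticipate is precisely this last implication (3)$\Rightarrow$(1): Theorem \ref{thm:ourthm} is a statement about deforming \emph{away} from $\mathbf{0}$ (starting from a nonempty central fiber and spreading out), so using it to go \emph{back} from the generic Fuchsian fiber to the special irregular fiber requires knowing that the total space $\mathcal{M}^{\mathrm{ir}}_{\mathbf{H},\widetilde{\mathbb{B}}_{\mathbf{H}}}$ — and in particular $\pi^{-1}(\mathbf{0})$ — is nonempty in the first place, which is what one is trying to prove. The resolution must come from the internal construction of $\mathcal{M}^{\mathrm{ir}}_{\mathbf{H},\widetilde{\mathbb{B}}_{\mathbf{H}}}$ (as carried out for Theorem \ref{thm:main1}): one should show that the relevant moduli construction, performed relatively over all of $\widetilde{\mathbb{B}}_{\mathbf{H}}$, produces a space that is nonempty over $\mathbf{0}$ as soon as it is nonempty over some point of the open stratum — for instance via properness/flatness of the relative moduli morphism, or via an explicit limiting argument on the level of connection matrices $A(\mathbf{c})$ as the poles collide, with irreducibility being an open condition preserved in the limit by a genericity choice on $\mathbf{H}$. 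Once that nonemptiness-at-$\mathbf{0}$ statement is in hand, all three equivalences follow formally from Theorem \ref{thm:ourthm} and Corollary \ref{cor:specdecomp} as sketched above.
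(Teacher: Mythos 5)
The statement you are proving is stated in the paper as a \emph{conjecture}, and the paper itself does not prove all of it: what the paper establishes (Theorem \ref{thm:specdiag} and Corollary \ref{cor:conjosh}) is only that $1\Leftrightarrow 2$ and that $1\Rightarrow 3$; the remaining implication $3\Rightarrow 1$ is not proved here but is deferred to \cite{H3}, and only for $G=\mathrm{GL}_{n}$. Your treatment of $1\Rightarrow 2$ and $2\Rightarrow 3$ is essentially the paper's own argument: take a realizing collection $\mathbf{H}$, apply Theorem \ref{thm:addDS} (a local holomorphic section of the submersion $\pi_{\widetilde{\mathbb{B}}_{\mathbf{H}}}$ through the given point of $\pi^{-1}(\mathbf{0})$), and use Lemma \ref{lem:zeronbd} together with Corollary \ref{cor:specdecomp} to see that every stratum, in particular the open one, is hit and carries the expected unfolded spectral type. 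That part is fine.

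The genuine gap is your sketch of $3\Rightarrow 1$, and you have correctly located where it breaks but not repaired it. Theorem \ref{thm:main1} constructs the family $\mathcal{M}^{\mathrm{ir}}_{\mathbf{H},\widetilde{\mathbb{B}}_{\mathbf{H}}}$ only \emph{under the hypothesis} $\mathcal{M}^{\mathrm{ir}}_{\mathbf{H}}\neq\emptyset$, so it cannot be invoked to produce a point of the central fiber from a point of a generic fiber. None of the suggested repairs survive scrutiny within this paper's framework: there is no properness or flatness statement for $\pi_{\widetilde{\mathbb{B}}_{\mathbf{H}}}$ (the fibers over $\mathbf{c}\neq\mathbf{0}$ are only open \emph{dense} subsets of $\mathcal{M}^{\mathrm{ir}}_{\mathbf{H}(\mathbf{c})}$, so a given Fuchsian realization of $\mathbf{S}^{\mathrm{reg}}$ need not even lie in the image of the family); and the claim that ``irreducibility is an open condition preserved in the limit'' is backwards --- precisely because irreducibility is open (Proposition \ref{prop:HM} identifies it with stability), a limit of irreducible connections as the poles collide can degenerate to a reducible one, and controlling this is the actual content of the missing implication. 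The paper handles this honestly by proving only Corollary \ref{cor:conjosh} and citing \cite{H3} for $3\Rightarrow 1$ in the $\mathrm{GL}_{n}$ case, where a quite different (quiver-theoretic/combinatorial) argument is needed. Your proposal should either restrict its claim to $1\Leftrightarrow 2$ and $1\Rightarrow 3$, or supply an independent argument for $3\Rightarrow 1$ that does not rest on the deformation machinery of Theorem \ref{thm:main1}.
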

In \cite{Oshi1} and \cite{Oshi2}, Oshima proved  
this conjecture when $G=\mathrm{GL}_{n}$ and 
$
-2\le \mathrm{rig}(\mathbf{S})\le 2$.
Here $\mathrm{rig}(\mathbf{S})$ is the index of rigidity of 
$\mathbf{S}$ which will be explained in 
Section \ref{sec:indrig}.

With regard to this conjecture,
we show the following as 
a corollary of 
Theorems \ref{thm:ourthm} and \ref{thm:addDS0}.
\begin{thm}[Theorem \ref{thm:specdiag}]
Let $\mathbf{S}=(S_{i})_{i=1,\ldots,d}$ be a collection of abstract spectral types.
Suppose that $\mathbf{S}$ is irreducibly realizable 
and let $\nabla_{A}=A\,dz$ be a realization of $\mathbf{S}$.
Then there exists an open neighborhood $U$ of $\mathbf{0}\in \prod_{i=1}^{d}\mathbb{C}^{k_{a_{i}}+1}$
and a holomorphic family of $G$-connections $\nabla_{A(\mathbf{c})}=A(\mathbf{c})\,dz$\ $(\mathbf{c}\in U)$ on $\mathbb{P}^{1}$
such that 
\begin{itemize}
\item $\nabla_{A(\mathbf{0})}=\nabla_{A}$,
\item for each $(\mathcal{I}_{i})_{i=1,\ldots,d}\in \prod_{i=1}^{d}\mathcal{P}_{[k_{i}+1]}$
and $\mathbf{c}\in U\cap \prod_{i=1}^{d}\mathcal{C}(\mathcal{I}^{(i)})$,
$\nabla_{A(\mathbf{c})}=A(\mathbf{c})\,dz$ is a realization of 
$\mathbf{S}^{(\mathcal{I}_{i})_{i=1,\ldots,d}}$.
\end{itemize}
In particular, the following are equivalent.
\begin{enumerate}
\item The collection $\mathbf{S}$ of abstract spectral types is irreducibly 
realizable.
\item For every $(\mathcal{I}^{(i)})_{i=1,\ldots,d}\in \prod_{i=1}^{d}\mathcal{P}_{[k_{i}+1]}$,
the collection $\mathbf{S}^{(\mathcal{I}^{(i)})_{i=1,\ldots,d}}$
is irreducibly realizable.
\end{enumerate}
\end{thm}
As a result we obtain the following.
\begin{cor}
	In Conjecture \ref{conj:osh0}, $1$ and $2$ are equivalent and 
	the implication $1 \Rightarrow 3$ holds true.
\end{cor}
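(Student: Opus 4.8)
The plan is to read off the Corollary directly from Theorems~\ref{thm:addDS0} and \ref{thm:specdiag}, unwinding the definitions of \emph{irreducibly realizable} and \emph{versally realizable}; no new construction is required.

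\emph{Equivalence of (1) and (2).} I would fix a collection $\mathbf{H}=(H^{(a_i)})_{i=1,\ldots,d}$ of unramified canonical forms with $\mathrm{sp}(\mathbf{H})=\mathbf{S}$. The ``in particular'' part of Theorem~\ref{thm:addDS0} states that, for this fixed $\mathbf{H}$, the additive Deligne--Simpson problem for $\mathbf{H}$ has a solution if and only if there is an open neighborhood $U$ of $\mathbf{0}$ such that the additive Deligne--Simpson problem for the family $(\mathbf{H}(\mathbf{c}))_{\mathbf{c}\in U}$ has a solution. Applying this equivalence for every choice of $\mathbf{H}$ with $\mathrm{sp}(\mathbf{H})=\mathbf{S}$ yields exactly the assertion ``$\mathbf{S}$ is irreducibly realizable $\iff$ $\mathbf{S}$ is versally realizable''. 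The only points to verify along the way are the identity $\mathbf{H}(\mathbf{0})=\mathbf{H}$ — which is what lets a ``versal'' family specialize at $\mathbf{c}=\mathbf{0}$ to a genuine realization of $\mathbf{S}$ — and the fact that, by definition, every solution of these Deligne--Simpson problems is an irreducible connection, so irreducibility survives specialization.

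\emph{The implication (1)$\Rightarrow$(3).} Assume $\mathbf{S}$ is irreducibly realizable, say $d_A=d+A\,dz$ is a realization. Theorem~\ref{thm:specdiag} then provides an open neighborhood $U$ of $\mathbf{0}$ and a holomorphic family $d_{A(\mathbf{c})}=d+A(\mathbf{c})\,dz$ with $d_{A(\mathbf{0})}=d_A$ such that, for each $(\mathcal{I}^{(i)})_{i=1,\ldots,d}\in\prod_{i=1}^{d}\mathcal{P}_{[k_i+1]}$ and each $\mathbf{c}\in U\cap\prod_{i=1}^{d}\mathcal{C}(\mathcal{I}^{(i)})$, the connection $d_{A(\mathbf{c})}$ is a realization of $\mathbf{S}^{(\mathcal{I}^{(i)})_{i}}$. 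I would specialize to the finest partition $(\mathcal{I}^{(i)})_i=(\{0\}\sqcup\cdots\sqcup\{k_i\})_{i=1,\ldots,d}$, for which $\mathbf{S}^{(\mathcal{I}^{(i)})_i}=\mathbf{S}^{\mathrm{reg}}$ by definition. By Theorem~\ref{thm:ourthm}(2) the open stratum attached to this partition meets the base neighborhood, so $U\cap\prod_i\mathcal{C}(\{0\}\sqcup\cdots\sqcup\{k_i\})\neq\emptyset$; choosing any $\mathbf{c}$ in it produces a realization $d_{A(\mathbf{c})}$ of $\mathbf{S}^{\mathrm{reg}}$, so $\mathbf{S}^{\mathrm{reg}}$ is irreducibly realizable, which is (3). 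Equivalently, one may simply invoke the ``in particular'' clause of Theorem~\ref{thm:specdiag} with the finest partition.

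There is no substantive obstacle here: the proof is purely a matter of matching the quantifiers in the definitions of realizability to the existence statements in Theorems~\ref{thm:addDS0} and \ref{thm:specdiag}, shrinking neighborhoods if needed to make them compatible, and recalling the built-in irreducibility of solutions. I stress that this gives only the stated partial result toward Conjecture~\ref{conj:osh0}: the remaining direction (3)$\Rightarrow$(1) is genuinely harder, is not delivered by Theorems~\ref{thm:ourthm} and \ref{thm:addDS0}, and is left open in general (known only for $G=\mathrm{GL}_n$ with $-2\le\mathrm{rig}(\mathbf{S})\le 2$, due to Oshima).
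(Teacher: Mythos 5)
Your proposal is correct and follows essentially the same route as the paper, which likewise deduces the equivalence of (1) and (2) directly from Theorem~\ref{thm:addDS0} and the implication $1\Rightarrow 3$ from Theorem~\ref{thm:specdiag} specialized to the finest partition. The only cosmetic remark is that the nonemptiness of $U\cap\prod_i\mathcal{C}(\{0\}\sqcup\cdots\sqcup\{k_i\})$ is Lemma~\ref{lem:zeronbd} rather than Theorem~\ref{thm:ourthm}(2), but as you note this step is already packaged into the ``in particular'' clause of Theorem~\ref{thm:specdiag}.
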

In the case of $G=\mathrm{GL}_{n}$,
the implication $3\Rightarrow 1$ in Conjecture \ref{conj:osh0}
is shown
in \cite{H3}.
Therefore combining these results,
we obtain the complete affirmative solution of the conjecture by Oshima 
in the case of $G=\mathrm{GL}_{n}$. 
\subsection{A further discussion--Unfolding is symplectic and birational}
In this article, we have constructed unfolding families of moduli spaces of meromorphic 
$G$-connections. 
As a counterpart to this construction via the Riemann–Hilbert correspondence,
 an important problem is to construct unfolding families of wild character 
 varieties. 
 In fact, in \cite{Kl2} by Klimes and \cite{PR} by Paul and Ramis, 
 the unfolding of Stokes data of the wild character variety for Painlev\'e V 
 was studied. They showed that there exists an unfolding map from the wild character 
 variety for Painlev\'e V to the character variety for Painlev\'e VI,
and moreover, that this map is symplectic and birational.
Also they stated as a conjecture that 
similar unfolding maps exist for other wild character varieties of Painlev\'e equations,
and that these maps are symplectic and birational as well.

Building on these preceding works, 
we discussed the unfolding of wild character varieties in another paper 
with Yamakawa \cite{HYp}. 
There, we presented a general construction of unfolding maps from 
wild character varieties with unramified irregular singularities 
to character varieties with only regular singularities, 
not only for $\mathrm{SL}_2$ but also for any reductive linear algebraic group,
and moreover, we proved that these unfolding maps are symplectic and birational,
as a natural generalization of the results in \cite{Kl2} and \cite{PR}, and an 
affirmative answer of their above conjecture.
\subsection*{Acknowledgements}
This project began around 2018 
when the conference in honor of Toshio Oshima 
was held at Josai University.
Since then, the author has had many opportunities 
to give talks in conferences and seminars, 
discuss with many researchers, and 
receive many valuable comments.
The author thanks all of 
them, Philip Boalch, Masahiro Futaki,
Sampei Hirose, Michiaki Inaba, Kohei Iwaki, Shingo Kamimoto, Tatsuki Kuwagaki, 
Frank Loray,
Takuro Mochizuki, Hiraku Nakajima, Vladimir
Rubtsov, Masa-Hiko Saito, Shinji Sasaki, Yoshitsugu Takei, and Daisuke Yamakawa.
Especially, the author 
expresses his gratitude to Hiroshi Kawakami, Akane Nakamura, and 
Hidetaka Sakai for their work breaking a new ground for the unfolding of singularities 
of differential equations. 
The author thanks Shunya Adachi for his careful reading of the draft of this article.
And again, the author thanks Daisuke Yamakawa for many valuable discussions and
comments on a draft of this article.
Finally, the author expresses his deepest gratitude to Toshio Oshima for his inspiring works and
many valuable discussions.

\section{Meromorphic \(G\)-connections on the formal punctured disk}
In this section, we 
 recall the local and formal classification theory
of meromorphic $\mathfrak{g}$-valued connections, 
following the work of Babbitt and Varadarajan \cite{BV},
which generalizes the classical theory developed by 
Hukuhara \cite{Huk}, Turrittin \cite{Tur}, and also by Levelt \cite{Lev}.
We also introduce certain numerical invariants 
associated with the canonical forms of  meromorphic connections on the formal punctured 
disk, namely $\delta$-invariants and spectral types.  

Let us begin by fixing some notation.
For an indeterminate $z$, $\mathbb{C}[z]$, $\mathbb{C}[\![z]\!]$, and 
$\mathbb{C}(\!(z)\!)$ denote the ring of polynomials, formal power 
series, and field of formal Laurent series in $z$ over 
$\mathbb{C}$, respectively. 
The {\em order} of $f=\sum_{i=m}^{\infty}a_{i}z^{i}\in \mathbb{C}(\!(z)\!)$
is defined as the smallest integer $n\in \mathbb{Z}$
such that $a_{n}\neq 0$ and is denoted by
\(
	\index[cN]{$\underset{z=0}{\mathrm{ord\,}}(f)$}\underset{z=0}{\mathrm{ord\,}}(f):=\mathrm{min\,}\{n\in \mathbb{Z}\mid a_{n}\neq 0\}.	
\)
We denote the ideal of a commutative ring $\mathcal{R}$ with the generators
$r_{1},r_{2},\ldots,r_{l}\in \mathcal{R}$ by $\langle r_{1},r_{2},\ldots,r_{l}\rangle_{\mathcal{R}}$
or simply $\langle r_{1},r_{2},\ldots,r_{l}\rangle$.

Throughout this paper, let
$G$ denote 
a fixed connected reductive affine algebraic group defined over $\mathbb{C}$, and 
let $\mathfrak{g}$ denotes its Lie algebra.
Let $H$ be a closed subgroup of $G$ and $\mathfrak{h}$ its Lie algebra.
Let $\mathcal{R}$ be a unital $\mathbb{C}$-algebra. Then 
$H(\mathcal{R})$ denotes the group of $\mathcal{R}$-valued points of $H$.
That is, for the coordinate ring 
\index[cN]{$\mathcal{O}(H)$}$\mathcal{O}(H)$ of $H$,
we define $H(\mathcal{R}):=\mathrm{Hom}_{\mathbb{C}\text{-}\mathrm{alg}}(\mathcal{O}(H),\mathcal{R})$
whose group multiplication comes from the comultiplication of the Hopf algebra 
$\mathcal{O}(H)$.
We also define the corresponding Lie algebra by $\mathfrak{h}(\mathcal{R}):=\mathfrak{h}\otimes_{\mathbb{C}}\mathcal{R}$.
The Lie bracket on $\mathfrak{h}(\mathcal{R})$
is defined by $[X_{1}\otimes r_{1},X_{2}\otimes r_{2}]:=[X_{1},X_{2}]\otimes r_{1}r_{2}$
for $X_{1},X_{2}\in \mathfrak{h}$ and $r_{1},r_{2}\in \mathcal{R}$.
Here we notice that 
in the definition of $\mathfrak{h}(\mathcal{R})$,
the 
$\mathbb{C}$-algebra $\mathcal{R}$ need not to be unital.
Thus we can consider the 
Lie algebra $\mathfrak{h}(\mathcal{R})$ 
even for non-unital $\mathbb{C}$-algebra $\mathcal{R}$.

We fix an embedding $G\hookrightarrow \mathrm{GL}_{N}$ throughout  this paper.
We frequently write $G(\mathbb{C})=G$ if there is no ambiguity.

For a group $H$ acting on a set $M$, the stabilizer subgroup for $m\in M$
is denoted by \index[cN]{$\mathrm{Stab}_{H}(m)$}
$\mathrm{Stab}_{H}(m):=\{h\in H\mid h\cdot m=m\}$
or \index[cN]{$H_{m}$}$H_{m}$ for short.

For a Lie algebra $\mathfrak{h}$ and $X\in \mathfrak{h}$, \index[cN]{$\mathfrak{h}_{X}$}
$\mathfrak{h}_{X}$ denotes the 
kernel of the linear map $\mathrm{ad}(X)\in \mathrm{End}_{\mathbb{C}}(\mathfrak{h})$.

\subsection{Hukuhara-Turrittin theory for $\mathfrak{g}$-valued connections}\label{sec:HTLBV}
Let us introduce $G$-connections on
the formal punctured disk. 
\begin{df}[Formal meromorphic $G$-connection]\normalfont
A $\mathfrak{g}$-valued $1$-form $A\,dz$ with $A\in \mathfrak{g}(\mathbb{C}(\!(z)\!))$ 
is called 
a {\em formal meromorphic $G$-connection on
the formal punctured disk}.

In particular when $\underset{z=0}{\mathrm{ord\,}}A=-1$, we say that the formal connection $A\,dz$
has a {\em regular singularity of the first kind} at $z=0$.
Sometimes we omit the word, first kind, and just call regular singularity.
\end{df}
Let us recall equivalence relations for $G$-connections.
Let $\mathcal{R}$ be either $\mathbb{C}[\![z]\!]$ or $\mathbb{C}(\!(z)\!)$.
Under the following realizations
\begin{align*}
	\mathrm{GL}_{N}(\mathbb{C}[\![z]\!])&=\left\{g=\sum_{i=0}^{\infty}g_{i}z^{i}\,\middle|\, g_{i}\in M_{N}(\mathbb{C}),\ \mathrm{det}(g_{0})\neq 0\right\},\\
	\mathrm{GL}_{N}(\mathbb{C}(\!(z)\!))&=\left\{g=\sum_{i=m}^{\infty}g_{i}z^{i}\,\middle|\, g_{i}\in M_{N}(\mathbb{C}),\ m\in \mathbb{Z},\ \mathrm{det}(g_{m})\neq 0\right\},
\end{align*}
we can consider 
the derivation $\frac{d}{dz}g$ for $g\in \mathrm{GL}_{N}(\mathcal{R})$
defined by the term-wise differentiation. 
Recall that we have fixed an embedding $\rho\colon G\hookrightarrow \mathrm{GL}_{N}$.
It is known that there exists a map \index[cN]{$\delta_{G}$}
$\delta_{G}\colon G(\mathcal{R})\rightarrow \mathfrak{g}(\mathcal{R})$
such that 
\[
	d\rho(\delta_{G}(g))=\left(\frac{d}{dz}\rho(g)\right)\rho(g)^{-1}\quad (g\in G(\mathcal{R})).
\]
Let us note that the map $\delta_{G}$ is independent of the choice of the embedding $\rho$.
For more detail, we refer the section 1.6 in \cite{BV}.

\begin{df}[Gauge transformation]\normalfont
Let $\mathcal{R}$ be either $\mathbb{C}[\![z]\!]$ or $\mathbb{C}(\!(z)\!)$. 
For $g\in G(\mathcal{R})$, we can define the 
transformation for $G$-connections by 
\[
	A\,dz\longmapsto \mathrm{Ad}(g)(A)\,dz+\delta_{G}(g)\,dz,
\]
which is called the {\em gauge transformation} over $\mathcal{R}$.
\end{df}

We now recall the classification theory of $G$-connections under the gauge transformation.
\begin{df}[Canonical form]\label{df:canform}\normalfont
	An element $H\in \mathfrak{g}(\mathbb{C}(\!(z)\!))$
	is called a {\em canonical form} if 
	$H\,dz$ is of the  form 
	\[
		H\,dz=\left(\sum_{i=1}^{k}H_{i}z^{-i}+H_{\mathrm{res}}\right)\frac{dz}{z}	
	\]
	with semisimple elements $H_{i}$
	satisfying $[H_{i},H_{j}]=0$ and $[H_{i},H_{\mathrm{res}}]=0$
	for $1\le i,j\le k$. \index[cN]{$H_{\mathrm{res}}$}
	
	In particular \index[cN]{$H_{\mathrm{irr}}$}
	$\displaystyle H_{\mathrm{irr}}:=\sum_{i=1}^{k}H_{i}z^{-(i+1)}$
	is called the {\em irregular part} of the canonical form $H$.
	If the irregular part is zero, then $H$ is called a {\em regular singular} canonical form.
\end{df}

Babbit-Varadarajan showed that any meromorphic $G$-connections 
are reduced to canonical forms under the gauge transformation,
as a natural generalization of the classical results established by Hukuhara, Turrittin, and Levelt.
\begin{thm}[Babbitt-Varadarajan \cite{BV}, cf. Hukukara \cite{Huk}, Turrittin \cite{Tur}, and Levelt \cite{Lev}]\label{thm:htbv}
	Let us
	consider a meromorphic $G$-connection $A\,dz$
	with $A\in \mathfrak{g}(\mathbb{C}(\!(z)\!))$.
	There exists a field extension $\mathbb{C}(\!(t)\!)$ of $\mathbb{C}(\!(z)\!)$
	satisfying $t^{q}=z$ with $q\in \mathbb{Z}_{\ge 1}$,
	and $A\,dz$ is transformed to $H\,dt$ with a canonical form 
	$H\in \mathfrak{g}(\mathbb{C}(\!(t)\!))$
	via a gauge transformation over $\mathbb{C}(\!(t)\!)$.

	In this case $H\,dt$ is called a {\em canonical form} of $A\,dz$.
	The minimal $q\in \mathbb{Z}_{\ge 1}$  
	to transform $A\,dz$ to a canonical form is called the {\em ramification index of $A\,dz$}.
	If in particular $q=1$, $A\,dz$ is called an {\em unramified connection}.
\end{thm}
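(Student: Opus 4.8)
The plan is to argue by induction on a complexity invariant of $d_{A}$, reducing the theorem to a sequence of ``shearing'' gauge transformations controlled by the root datum of $G$; this is the strategy of \cite{BV}, extending Hukuhara--Turrittin--Levelt from $\mathrm{GL}_{N}$ to reductive $G$. One first disposes of the regular singular case $\ord_{z=0}A=-1$: writing $A\,dz=(A_{\mathrm{res}}+zB(z))\,dz/z$ with $B\in\mathfrak{g}(\mathbb{C}[\![z]\!])$, one kills the holomorphic tail order by order via transformations $g_{j}=\exp(z^{j}u_{j})$, where at step $j$ the element $u_{j}$ solves $(\ad(A_{\mathrm{res}}^{\mathrm{ss}})-j)u_{j}=(\text{known term})$; this is solvable once $A_{\mathrm{res}}$ has been put, by the $\mathfrak{g}$-analogue of Levelt's theorem, into a form for which $\ad(A_{\mathrm{res}}^{\mathrm{ss}})-j$ is invertible for all integers $j\ge 1$. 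This yields the regular singular canonical form $H_{\mathrm{res}}\,dz/z$ with $q=1$, and also settles the abelian base case of the induction.

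For the genuinely irregular case write $A\,dz=(A_{k}z^{-k}+\cdots+A_{1}z^{-1}+A_{\mathrm{res}})\,dz/z$ with $A_{k}\neq 0$ and $k\ge 1$, and let $A_{k}=S+N$ be its Jordan decomposition. Suppose $S\neq 0$. Put $\mathfrak{l}=\Ker(\ad S)\subsetneq\mathfrak{g}$, a proper Levi subalgebra with connected reductive group $L\subset G$, and use the decomposition $\mathfrak{g}=\mathfrak{l}\oplus\im(\ad S)$ on whose second summand $\ad(S)$ is invertible. Iterated gauge transformations $g=\exp(u)$ with $u$ valued in $\im(\ad S)$ over $\mathbb{C}[\![z]\!]$ (or over $\mathbb{C}[\![z^{1/q}]\!]$ if fractional powers have already been introduced) remove the $\im(\ad S)$-component of $A$ order by order, each step amounting to solving $[S,u]=(\text{current component})$. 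Thus $d_{A}$ becomes gauge equivalent to an $\mathfrak{l}$-valued connection; since $S$ is central in $\mathfrak{l}$ one splits off $Sz^{-(k+1)}\,dz$ and applies the inductive hypothesis (on $\dim\mathfrak{g}$, which has strictly dropped) to the remaining $L$-connection. Reassembling produces commuting semisimple coefficients $H_{i}$ together with a residue $H_{\mathrm{res}}$, the commutation relations being automatic because each stage lands in the centralizer of everything extracted before it.

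It remains to treat $S=0$, i.e.\ $A_{k}=N$ a nonzero nilpotent, which is where Turrittin's ramification enters. Fixing an $\mathfrak{sl}_{2}$-triple $(N,\rho,N^{-})$ with $\rho$ a rational semisimple element of a maximal torus of $G$, a ramified shearing of the form $g=t^{\rho/q}$ with $t^{q}$ a suitable power of $z$ conjugates $Nz^{-k}$ into a term whose leading part in $t$ is no longer nilpotent or is of strictly lower pole order; one checks that this strictly decreases a normalized Katz-type irregularity invariant, so that after finitely many such steps one falls back into the case $S\neq 0$ or into the regular singular case and concludes by induction. The ramification indices so produced divide a fixed integer depending only on the root system of $G$ (bounded, for instance, through the embedding $G\hookrightarrow\mathrm{GL}_{N}$ by $N!$), so that the minimal $q$ --- the ramification index of $d_{A}$ --- is finite.

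The step I expect to be the main obstacle is genuinely $G$-theoretic: one must guarantee that every shearing, and in particular the fractional-power element $t^{\rho/q}$ and the integer shearings used to normalize the residue, actually lie in $G(\mathbb{C}(\!(t)\!))$ rather than merely in $\mathrm{GL}_{N}(\mathbb{C}(\!(t)\!))$, which forces the whole argument to be phrased intrinsically in terms of a maximal torus and the root datum of $G$, together with a careful choice of complexity invariant making the interleaving of semisimple splittings and nilpotent ramifications provably terminate. Granting the structural machinery of \cite{BV} for the first point and a Katz-type invariant for the second, what remains is the order-by-order linear algebra sketched above.
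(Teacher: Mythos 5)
The paper offers no proof of this theorem: it is imported verbatim from Babbitt--Varadarajan \cite{BV} (building on Hukuhara, Turrittin and Levelt), so there is no in-paper argument to compare yours against. Your outline --- Levelt normalization of the residue in the regular singular case, splitting $\mathfrak{g}=\ker(\ad S)\oplus\im(\ad S)$ to descend to a proper Levi when the leading coefficient has nonzero semisimple part, and ramified shearing by $t^{\rho/q}$ attached to an $\mathfrak{sl}_{2}$-triple in the nilpotent case --- is exactly the strategy of the cited reference, and the two obstacles you flag (that the shearing elements lie in $G(\mathbb{C}(\!(t)\!))$ rather than just $\mathrm{GL}_{N}(\mathbb{C}(\!(t)\!))$, and the choice of a terminating complexity invariant interleaving the two reduction steps) are precisely the points that \cite{BV} is devoted to resolving; since the paper treats the theorem as a black box, deferring those points to \cite{BV} is consistent with how the paper itself uses the result.
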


Now we take an unramified canonical form
\[
	H\,dz=\left(\sum_{i=1}^{k}H_{i}z^{-i}+H_{\mathrm{res}}\right)\frac{dz}{z}.
\]
Then the isomorphism class of $H\,dz$ under the gauge  transformation 
of $G(\mathbb{C}[\![z]\!])$ is parametrized by the orbit \index[cN]{$O_{H}$}
\[
	O_{H}:=\left\{
	\mathrm{Ad}(g)(H)+\delta_{G}(g)\in \mathfrak{g}(\mathbb{C}(\!(z)\!))\,\middle|\, g\in G(\mathbb{C}[\![z]\!])
	\right\}.
\]

As an analogue of $O_{H}$, Boalch introduced the truncated orbit of $H$
in \cite{Boa1} which can be regarded as a coadjoint orbit of a finite-dimensional complex Lie group as it will be explained later.

\begin{df}[Truncated orbit of canonical form]\normalfont
Let us take an unramified canonical form $H$ and regard it as an element in $\mathfrak{g}(\mathbb{C}(\!(z)\!)/\mathbb{C}[\![z]\!])$.
 Then the orbit of $H$ under the adjoint action of $G(\mathbb{C}[\![z]\!])$, \index[cN]{$\mathbb{O}_{H}$}
 \[
	\mathbb{O}_{H}:=\left\{\mathrm{Ad}(g)H\in \mathfrak{g}(\mathbb{C}(\!(z)\!)/\mathbb{C}[\![z]\!])\,\middle|\, g\in G(\mathbb{C}[\![z]\!])\right\}
 \]
 is called the {\em truncated orbit} of $H$.
\end{df}

We can compare these orbits $O_{H}$ and $\mathbb{O}_{H}$ as follows.
\begin{prop}\label{prop:generic}
	Let $\pi\colon \mathbb{C}(\!(z)\!)\rightarrow \mathbb{C}(\!(z)\!)/\mathbb{C}[\![z]\!]$
	be the natural projection.
	Let us take an unramified canonical form $H\in \mathfrak{g}(\mathbb{C}(\!(z)\!))$ as above.
	Let us consider the Lie subalgebra 
	$\mathfrak{g}_{H_{\text{irr}}}=\{X\in \mathfrak{g}\mid [X,H_{i}]=0\text{ for all }i=1,2,\ldots,k\}$ of $\mathfrak{g}$.
	
	Suppose that $\mathrm{ad}_{\mathfrak{g}_{H_{\text{irr}}}}(H_{0})\in \mathrm{End}_{\mathbb{C}}(\mathfrak{g}_{H_{\text{irr}}})$ has 
	no nonzero integers as its eigenvalues.
	Then 
	 $X\in O_{H}$ if and only if $\pi(X)\in \mathbb{O}_{H}$.
\end{prop}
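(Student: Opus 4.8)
The plan is to compare the two orbits by analyzing how a gauge transformation $g \in G(\mathbb{C}[\![z]\!])$ changes a canonical form, and to show that under the hypothesis on the eigenvalues of $\mathrm{ad}_{\mathfrak{g}_{H_{\mathrm{irr}}}}(H_{0})$ the "regular-part contribution" $\delta_{G}(g)$ can always be absorbed. First I would observe that the implication $X \in O_{H} \Rightarrow \pi(X) \in \mathbb{O}_{H}$ is essentially formal: if $X = \mathrm{Ad}(g)(H) + \delta_{G}(g)$ with $g \in G(\mathbb{C}[\![z]\!])$, then since $\delta_{G}(g) \in \mathfrak{g}(\mathbb{C}[\![z]\!])$ (a power series, no pole), applying $\pi$ kills it and leaves $\pi(X) = \pi(\mathrm{Ad}(g)(H)) = \mathrm{Ad}(g)(\pi(H))$, which is exactly in $\mathbb{O}_{H}$. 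So the content is the converse.

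For the converse, suppose $\pi(X) = \mathrm{Ad}(g)(\pi H)$ for some $g \in G(\mathbb{C}[\![z]\!])$; equivalently $X - \mathrm{Ad}(g)(H) \in \mathfrak{g}(\mathbb{C}[\![z]\!])$, i.e.\ $X = \mathrm{Ad}(g)(H) + R$ for some holomorphic $R \in \mathfrak{g}(\mathbb{C}[\![z]\!])$. The goal is to modify $g$ by a further gauge transformation $h \in G(\mathbb{C}[\![z]\!])$ with $h \equiv 1 \bmod z$ so that the holomorphic remainder gets traded for the $\delta_{G}$-term. Concretely, I would look for $h = \exp(z Y_{1} + z^{2} Y_{2} + \cdots)$ and compute $h[ \mathrm{Ad}(g)(H) + R ]$ order by order in $z$; the key mechanism is that conjugating $H = (\sum H_{i} z^{-i} + H_{\mathrm{res}})/z$ by $\exp(z^{m} Y)$ produces, at the relevant orders, terms of the shape $[Y, H_{k}] z^{m-k-1} + \cdots$, so that solving away a prescribed holomorphic term at order $z^{j}$ amounts to solving linear equations $\mathrm{ad}(H_{k})(Y) = (\text{given})$, $\ldots$, and finally $(\mathrm{ad}(H_{0}) - j)(Y) = (\text{given})$ on the centralizer $\mathfrak{g}_{H_{\mathrm{irr}}}$. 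This is where the hypothesis enters: on the complement of $\mathfrak{g}_{H_{\mathrm{irr}}}$ the operators $\mathrm{ad}(H_{i})$ are already invertible enough to absorb the off-centralizer components, and on $\mathfrak{g}_{H_{\mathrm{irr}}}$ the operator $\mathrm{ad}_{\mathfrak{g}_{H_{\mathrm{irr}}}}(H_{0}) - j$ is invertible precisely because $j$ ranges over positive integers and $\mathrm{ad}_{\mathfrak{g}_{H_{\mathrm{irr}}}}(H_{0})$ has no nonzero integer eigenvalues. Thus one inductively constructs $h$ killing $R$ entirely, which shows $X \in O_{H}$.

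The main obstacle I anticipate is the bookkeeping of the recursive construction of $h$: one must carefully split each Lie-algebra coefficient according to the decomposition $\mathfrak{g} = \mathfrak{g}_{H_{\mathrm{irr}}} \oplus \bigl(\mathrm{im}\,\mathrm{ad}(H_{\mathrm{irr}})\text{-type complement}\bigr)$, check that the lower-order pole terms $H_{k}, \ldots, H_{1}$ can be used to clear the off-centralizer part at the correct order before the $H_{0}$-term is used to clear the centralizer part, and verify convergence of the construction in $\mathbb{C}[\![z]\!]$ (which is automatic here since at each step we only adjust one more coefficient $Y_{m}$ and the construction is triangular in $m$). One should also confirm that the resulting infinite product $\prod \exp(z^{m} Y_{m})$ genuinely defines an element of $G(\mathbb{C}[\![z]\!])$ via the fixed embedding $G \hookrightarrow \mathrm{GL}_{N}$, using that $\delta_{G}$ and $\mathrm{Ad}$ restrict well to $G$; but this is standard once the formal computation goes through. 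A cross-check is that when $H$ is regular singular ($H_{\mathrm{irr}} = 0$, $\mathfrak{g}_{H_{\mathrm{irr}}} = \mathfrak{g}$) the statement reduces to the classical fact that a regular singular connection with non-resonant residue is gauge-equivalent over $\mathbb{C}[\![z]\!]$ to its residue, i.e.\ to the Levelt normal form.
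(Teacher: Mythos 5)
Your argument is correct in outline, but you should know that the paper's own proof of this proposition is essentially two lines: the direction $X\in O_{H}\Rightarrow\pi(X)\in\mathbb{O}_{H}$ is exactly your observation that $\delta_{G}(g)\in\mathfrak{g}\otimes\mathbb{C}[\![z]\!]$ is killed by $\pi$, and the converse is obtained by citing Lemma 6.2.1 and Proposition 9.3.2 of Babbitt--Varadarajan \cite{BV}, which is precisely the formal reduction statement you sketch. So in substance the two arguments coincide; what you propose is a self-contained proof of the cited input, namely the inductive construction of $h=\prod_{m\ge 1}e^{z^{m}Y_{m}}$ killing the holomorphic remainder by splitting each coefficient along $\mathfrak{g}=\mathfrak{g}_{H_{k}}\oplus\mathrm{ad}(H_{k})(\mathfrak{g})$ and so on down to $\mathfrak{g}_{H_{\mathrm{irr}}}$, where $\mathrm{ad}(H_{\mathrm{res}})-m$ is inverted using the non-resonance hypothesis (note that the hypothesis is stated for the semisimple part $H_{0}$, but $\mathrm{ad}(J_{0})$ is a commuting nilpotent perturbation on $\mathfrak{g}_{H_{\mathrm{irr}}}$, so invertibility is unaffected). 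The trade-off is the usual one: the citation keeps the paper short, while your route makes the role of the non-resonance condition visible. If you write it out, two points deserve care: first, it is cleaner to gauge by $g^{-1}$ at the outset so the connection becomes $H+\widetilde{R}$ with $\widetilde{R}$ holomorphic, rather than carrying $\mathrm{Ad}(g)(H)+R$ through the induction; second, when clearing the component of the order-$z^{j}$ term lying in $\mathrm{ad}(H_{i})(\mathfrak{g})$ you must choose $Y$ inside the iterated centralizer $\mathfrak{g}_{H_{k}}\cap\cdots\cap\mathfrak{g}_{H_{i+1}}$ with exponent $m=j+i+1$, so that $e^{z^{m}Y}$ perturbs neither the principal part of $H$ nor the orders already cleared --- your phrase about using the pole terms ``at the correct order'' gestures at this, but it is exactly the place where the bookkeeping can fail if done carelessly.
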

\begin{proof}
	Lemma 6.2.1 and Proposition 9.3.2 in \cite{BV}
	show that 
	if $\pi(X)\in \mathbb{O}_{H}$, then our assumption implies 
	that there exists $g\in
	G(\mathbb{C}[\![z]\!])$ such that
	$\mathrm{Ad}(g)X+\delta_{G}(g)=H$, i.e., $X\in O_{H}$.

	Conversely, we assume that $X\in O_{H}$. Then there exists $g\in G(\mathbb{C}[\![z]\!])$
	such that $X=\mathrm{Ad}(g)H+\delta_{G}(g)$. On the other hand, we have 
	$\mathrm{Ad}(g)H+\delta_{G}(g)
	\equiv \mathrm{Ad}(g)H\quad (\mathrm{mod }\ \mathbb{C}[\![z]\!])$
	since $\delta_{G}(g)\in \mathfrak{g}\otimes \mathbb{C}[\![z]\!]$.
	Thus $\pi(X)\in \mathbb{O}_{H}$.
\end{proof}
\begin{df}[Truncated canonical form]\normalfont
	Let $H\,dz$ be an unramified canonical form.
	Let $A\,dz$ with $A\in \mathfrak{g}(\mathbb{C}(\!(z)\!))$
	be a meromorphic $G$-connection.
	If $\pi(A)\in \mathbb{O}_{H}$,
	then we call $H$ the 
	{\em truncated canonical form} of $A\,dz$.
	In this case, we say that $A\,dz$ has the unramified 
	truncated canonical form $H$.

	If $H$ satisfies the condition in Proposition \ref{prop:generic},
	the truncated canonical form of $A\,dz$ coincides with the 
	usual canonical form appears in Theorem \ref{thm:htbv}.
\end{df}

\subsection{Spectral types of canonical forms}\label{sec:spectral}
To an unramified canonical form 
\[
		H\,dz=\left(\sum_{i=1}^{k}H_{i}z^{-i}+H_{\mathrm{res}}\right)\frac{dz}{z},
\]
we associate the following discrete data, called a spectral type.

Let $H_{\mathrm{res}}=H_{0}+J_{0}$ be the Jordan decomposition 
with the semisimple $H_{0}$ and nilpotent $J_{0}$.
Since $H_{0},H_{1},\ldots,H_{k}$ are all
semisimple and mutually commuting, the direct sum
$\bigoplus_{i=0}^{k}\mathbb{C}H_{i}$ forms a toral 
Lie subalgebra of $\mathfrak{g}$, 
i.e., one consisting entirely of semisimple elements.
Thus we can choose a maximal toral subalgebra,
namely, a Cartan subalgebra \index[cN]{$\mathfrak{t}$}$\mathfrak{t}$
containing this toral subalgebra.
Then we obtain the root space decomposition of $\mathfrak{g}$,
\[
	\mathfrak{g}=\mathfrak{t}\oplus \bigoplus_{\alpha\in \Phi}\mathfrak{g}_{\alpha},	
\] 
where 
$\mathfrak{g}_{\alpha}:=\left\{
		X\in \mathfrak{g}\mid \mathrm{Ad}(t)X=\alpha(t)X,\,t\in \mathfrak{t}
	\right\}
$
are root spaces with respect to $\alpha\in \mathfrak{t}^{*}$, 
and $\Phi$ is the set of roots, i.e., $\Phi:=\{\alpha\in \mathfrak{t}^{*}\mid \mathfrak{g}_{\alpha}\neq 0\}$.
We also fix a simple system \index[cN]{$\Pi$}$\Pi$
of roots $\Phi$ and denote the 
associated positive and negative systems 
by $\Phi^{+}$ and $\Phi^{-}$ respectively. 

Recall that $\mathfrak{t}$
has a fundamental domain $D$ under the action of the Weyl group $W$ of $G$.
It is defined as 
\[
	\mathcal{D}:=
	\{X\in \mathfrak{t}\,\mid
	\mathfrak{Re\,}\alpha(X)\ge 0 \text{ and if }
	\mathfrak{Re\,}\alpha(X)=0\text{ then }\mathfrak{Im\,}\alpha(X)\ge 0
	\text{ for any }\alpha\in \Pi
	\}.	
\]
The $W$-action allows us to assume 
that $H_{k}\in \mathcal{D}$.
We define the subset 
\[
	\Pi_{k}:=\{\alpha\in \Phi\mid \alpha(H_{k})=0\}
\]
of $\Pi$.
Similarly under the action of $W_{k}:=\mathrm{Stab}_{W}(H_{k})$, we may assume that $H_{k-1}$ belongs to 
\[
	\mathcal{D}_{k}:=
	\{X\in \mathfrak{t}\mid 
	\mathfrak{Re\,}\alpha(X)\ge 0 \text{ and if }
	\mathfrak{Re\,}\alpha(X)=0\text{ then }\mathfrak{Im\,}\alpha(X)\ge 0
	\text{ for any }\alpha\in \Pi_{k}
	\}.	
\]
We then define the subset of $\Pi_{k}$ by
\[
	\Pi_{k-1}:=\{\alpha\in \Pi_{k}\mid \alpha(H_{k-1})=0\}
	=\{\alpha\in \Pi\mid \alpha(H_{k})=\alpha(H_{k-1})=0\}.
\]
Inductively for $i=0,1,\ldots,k-2$,
we may assume that  
$H_{i}$ belongs to
\[
	\mathcal{D}_{i+1}:=\{X\in \mathfrak{t}\mid 
	\mathfrak{Re\,}\alpha(X)\ge 0 \text{ and if }
	\mathfrak{Re\,}\alpha(X)=0\text{ then }\mathfrak{Im\,}\alpha(X)\ge 0
	\text{ for any }\alpha\in \Pi_{i+1}
	\}	
\]
under the action of $W_{i+1}:=\mathrm{Stab}_{W_{i+2}}(H_{i+1})$.
We then define the subset of $\Pi_{i+1}$ by 
\begin{align*}
	\Pi_{i}&:=\{\alpha\in \Pi_{i+1}\mid \alpha(H_{i})=0\}\\
	&=\{\alpha\in \Pi_{i+2}\mid \alpha(H_{i+1})=\alpha(H_{i})=0\}\\
	&=\cdots\\
	&=\{\alpha\in \Pi\mid \alpha(H_{k})=\cdots=\alpha(H_{i+1})=\alpha(H_{i})=0\}.
\end{align*}
Therefore we obtain the sequence 
\[
	\Pi=\Pi_{k+1} \supset \Pi_{k}\supset \cdots \supset \Pi_{0}
\]
of the set of simple roots associated with the canonical form $H$.

For each \index[cN]{$\Pi_{i}$}$\Pi_{i}$, we can consider the subset of roots 
\[
\Phi_{i}:=\left(\bigoplus_{\alpha\in \Pi_{i}}\mathbb{Z}\alpha\right)\cap \Phi
\]
generated by $\Pi_{i}$ and then define the Lie subalgebra\index[cN]{$\mathfrak{l}_{i}$}
\[
	\mathfrak{l}_{i}:=\mathfrak{t}\oplus \bigoplus_{\alpha\in \Phi_{i}}\mathfrak{g}_{\alpha}
	=\{X\in \mathfrak{l}_{i+1}\mid [X,H_{i}]=0\}.
\]

Thus we also obtain the sequence of Lie subalgebras 
\[
	\mathfrak{g}=\mathfrak{l}_{k+1}\supset \mathfrak{l}_{k}\supset \cdots 
	\supset \mathfrak{l}_{0}	
\]
of $\mathfrak{g}$.
For each pair $l,m$ of positive integers with $0\le l< m\le k$,
$\mathfrak{l}_{l}$ can be seen as the Levi subalgebra of 
the parabolic subalgebra of $\mathfrak{l}_{m}$, \index[cN]{$\mathfrak{p}_{l,m}$}
\[
	\mathfrak{p}_{l,m}:=\mathfrak{l}_{l}\oplus \bigoplus_{\Phi_{m}^{+}\backslash \Phi_{l}}\mathfrak{g}_{\alpha}.
\]
Let us denote the nilpotent radical of $\mathfrak{p}_{l,m}$ and its opposite by  \index[cN]{$\mathfrak{u}_{l,m}$} \index[cN]{$\mathfrak{n}_{l,m}$}
\begin{align*}
	\mathfrak{u}_{l,m}&:=\bigoplus_{\Phi_{m}^{+}\backslash \Phi_{l}}\mathfrak{g}_{\alpha},&
	\mathfrak{n}_{l,m}&:=\bigoplus_{\Phi_{m}^{-}\backslash \Phi_{l}}\mathfrak{g}_{\alpha}
\end{align*}
respectively.
Here we put $\Phi_{m}^{\pm}:=\Phi_{m}\cap \Phi^{\pm}$.
Then we have the decomposition 
\[
	\mathfrak{l}_{m}=\mathfrak{n}_{l,m}\oplus \mathfrak{l}_{l}\oplus \mathfrak{u}_{l,m}.	
\]
Especially when $m=k$
we simply write \index[cN]{$\mathfrak{p}_{l}$}$\mathfrak{p}_{l}:=\mathfrak{p}_{l,k}$, \index[cN]{$\mathfrak{u}_{l}$}$\mathfrak{u}_{l}:=\mathfrak{u}_{l,k}$, 
\index[cN]{$\mathfrak{n}_{l}$}$\mathfrak{n}_{l}:=\mathfrak{n}_{l,k}$. 
Also we denote the analytic subgroups of $G=G(\mathbb{C})$ corresponding to $\mathfrak{l}_{l}$, $\mathfrak{p}_{l,m}$,
$\mathfrak{u}_{l,m}$, $\mathfrak{n}_{l,m}$, $\mathfrak{u}_{l}$, and $\mathfrak{n}_{l}$ by \index[cN]{$L_{i}$}$L_{i}$, 
\index[cN]{$P_{l,m}$}$P_{l,m}$, \index[cN]{$U_{l,m}$} 
$U_{l,m}$, \index[cN]{$N_{l,m}$}$N_{l,m}$, \index[cN]{$U_{l}$}$U_{l}$, and  \index[cN]{$N_{l}$}$N_{l}$ respectively.

\begin{df}[Spectral type]\normalfont
	For the unramified canonical form $H\,dz$, the {\em spectral type} of $H$
	is the 
	pair \index[cN]{$\mathrm{sp}(H)$}$\mathrm{sp}(H):=(\Pi_{H};[J_{0}])$ of the above sequence
	\[
		\Pi_{H}\colon \Pi_{k} \supset \Pi_{k-1}\supset \cdots \supset \Pi_{0}	
	\]
	of subsets of $\Pi$, and the orbit \index[cN]{$[J_{0}]$}$[J_{0}]$ of $J_{0}$ under the adjoint action of $L_{1}$.
	Here we note that $J_{0}\in \mathfrak{l}_{0}$ since it commute with all $H_{i}$ for $i=0,1,\ldots,k$.

	If a meromorphic $G$-connection $A\,dz$
	has the above $H$ as the unramified truncated canonical form,
	then we also call $(\Pi_{H},[J_{0}])$ the {\em spectral type of $A\,dz$}.

	Without referring a canonical form $H$, we can consider 
	the pair $(\Pi_{k}\supset \Pi_{k-1}\supset\cdots\supset \Pi_{0};[J_{0}])$
	of a sequence of subsets of $\Pi$ and $L_{1}$-orbit of 
	a nilpotent element $J_{0}\in \mathfrak{l}_{0}$.
	We call this pair an {\em abstract spectral type}.
\end{df}

\subsection{$\delta$-invariants of canonical forms}\label{sec:deltainv}
We introduce some other invariants of a canonical form
\[
		H\,dz=\left(\sum_{i=1}^{k}H_{i}z^{-i}+H_{\mathrm{res}}\right)\frac{dz}{z}.
\]
\begin{df}[Irregularities of canonical forms]\normalfont
For the above canonical form $H\,dz$, the quantity\index[cN]{$\mathrm{Irr\,}(H)$}
\[
	\mathrm{Irr\,}(H):=\sum_{i=1}^{k}(\mathrm{dim\,}G-\mathrm{dim\,}L_{i})	
\]
is called the {\em irregularity} of $H\,dz.$
\end{df}
This quantity is analogous to the Komatsu-Malgrange irregularity 
for connections on the punctured disk.
\begin{rem}\normalfont
	Let us consider the case $G=\mathrm{GL}_{n}$
	and take a 
	$\mathrm{GL}_{n}$-connection $A\,dz$
	with the above unramified canonical form $H\,dz.$
	Then $A\,dz$
	induces the covariant exterior derivative 
	\[
	d_{A}=d+A\,dz\colon \mathfrak{gl}_{n}(\mathbb{C}(\!(z)\!))
	\rightarrow \mathfrak{gl}_{n}(\mathbb{C}(\!(z)\!))\otimes_{\mathbb{C}(\!(z)\!)}
	\mathbb{C}(\!(z)\!)\,dz
	\]
	on $\mathfrak{gl}_{n}(\mathbb{C}(\!(z)\!))$
	by the adjoint action.
	We may rewrite
	\[
		H\,dz=\left(\begin{pmatrix}
			h_{1}(z^{-1})&&&\\
			&h_{2}(z^{-1})&&\\
			&&\ddots&\\
			&&&h_{n}(z^{-1})
		\end{pmatrix}	
		+H_{\mathrm{res}}\right)\frac{dz}{z}
	\]
	with polynomials $h_{i}(t)\in t\,\mathbb{C}[t]$ for $i=1,2,\ldots,n.$
	Then it is known that the Komatsu-Malgrange irregularity of $d_{A}$
	equals the sum of degrees of the polynomials,
	\[
		\sum_{i=1}^{n}\sum_{j=1}^{n}\mathrm{deg\,}(h_{i}(t)-h_{j}(t)).	
	\]
	A straightforward computation yields the following equations
	\[
		\sharp\{(i,j)\in \{1,2,\ldots,n\}^{\times 2}\mid 
		\mathrm{deg\,}(h_{i}(t)-h_{j}(t))=l
		\}
		=\dim L_{l+1}-\dim L_{l}	
	\]
	for $l=1,2,\ldots,k$. Here, $\sharp A$ denotes the cardinality of a set $A$.
	Thus we obtain 
	\[
		\sum_{i=1}^{n}\sum_{j=1}^{n}\mathrm{deg\,}(h_{i}(t)-h_{j}(t))
		=\sum_{l=1}^{k}l\cdot (\mathrm{dim\,}L_{l+1}-\mathrm{dim\,}L_{l})
		=\sum_{i=1}^{k}(\mathrm{dim\,}G-\mathrm{dim\,}L_{i})=\mathrm{Irr}(H).
	\]
\end{rem}

\begin{df}[$\delta$-invariants of canonical forms]\normalfont
	For the above canonical form $H\,dz$,
	the quantity \index[cN]{$\delta(H)$}
	\[
		\delta(H):=\mathrm{dim\,}G+ \mathrm{Irr\,}(H)-\mathrm{dim\,}\mathrm{Stab}_{G}(H)	
	\]
	is called the {\em $\delta$-invariant} of $H\,dz$. 
\end{df}
\begin{df}[Truncated $\delta$-invariant]\normalfont
	Let $\nabla_{A}=A\,dz$ 
	be a meromorphic $G$-connection with the unramified 
	truncated canonical form $H$, namely,
	$A\in \mathbb{O}_{H}$.
	Then define the {\em truncated $\delta$-invariant} of $\nabla_{A}$ by \index[cN]{$\delta^{\mathrm{tr}}(\nabla_{A})$}
	\(
		\delta^{\mathrm{tr}}(\nabla_{A}):=\delta(H).	
	\)
\end{df}
\begin{rem}\normalfont
	In the case $G=\mathrm{GL}_{n}$,
	the quantity 
	\[
		\delta(d_{A}):=\mathrm{rk\,}(d_{A})+\mathrm{Irr\,}(d_{A})-
	\mathrm{dim}_{\mathbb{C}}\mathrm{Ker\,}(d_{A})
	\]
	appears as an important invariant 
	for a connection 
	\[
	d_{A}\colon \mathfrak{gl}_{n}(\mathbb{C}(\!(z)\!))
	\rightarrow \mathfrak{gl}_{n}(\mathbb{C}(\!(z)\!))\otimes_{\mathbb{C}(\!(z)\!)}
	\mathbb{C}(\!(z)\!)\,dz.
	\]
	Since we can check that this invariant coincides with 
	our $\delta$-invariant 
	under the assumption in Proposition \ref{prop:generic} in the case of $G=\mathrm{GL}_{n}$,
	we may regard our $\delta$ is 
	an analogue of this classical quantity.

	Indeed, this classical quantity appears
	as a local part of Euler characteristic 
	of a meromorphic connection on a compact Riemann surface,
	see Deligne \cite{Del}, Arinkin \cite{Ari}, and  Malgrange \cite{Mal} for the detail.
	Furthermore, a relevance of this quantity
	and the $\delta$-invariant of singularities of a plane curve germ 
	is pointed out in \cite{H4}.
\end{rem}
Let us notice that the invariants $\mathrm{Irr\,}(H)$
and $\delta(H)$  depend only on the spectral type 
$(\Pi_{H},[J_{0}])$ of $H\,dz$.
Therefore we can also define the 
$\delta$-invariant $\delta(S)$ for an abstract spectral type $S$ as well.

\section{Truncated orbits of unramified canonical forms}
The truncated orbit  $\mathbb{O}_{H}$ 
of an unramified canonical form 
\[
	H\,dz=\left(\frac{H_{k}}{z^{k}}+\cdots+\frac{H_{1}}{z}+H_{\mathrm{res}}\right)\frac{dz}{z}
\]
was introduced in the paper \cite{Boa1} by Boalch and its many fundamental properties 
were studied there, see also 
a subsequent paper \cite{HY} by 
Hiroe and Yamakawa, and  
for the case of $G$-connections, we also refer Yamakawa's paper \cite{Yam}.
In this section, among these properties, we shall recall that $\mathbb{O}_{H}$ can been seen as a coadjoint orbit 
of a finite-dimensional complex Lie group.
We shall also give a formula for the dimension of $\mathbb{O}_{H}$ in terms of the $\delta$-invariant 
of $H$, which was introduced in the previous section.

\subsection{Affine algebraic groups over the ring of formal power series}
Following Babbitt and Varadarajan \cite{BV}, we recall some fundamental structures of $G(\mathbb{C}[\![z]\!])$.
Recall that $\mathbb{C}[\![z]\!]$ is a local ring with the maximal ideal \index[cN]{$\mathfrak{m}_{z}$}
$\mathfrak{m}_{z}:=\langle z\rangle$.
The projection map 
$\pi\colon \mathbb{C}[\![z]\!]\rightarrow \mathbb{C}[\![z]\!]/\mathfrak{m}_{z}\cong \mathbb{C}$
admits a section given by the inclusion $\iota\colon \mathbb{C}\hookrightarrow \mathbb{C}[\![z]\!]$. 
Thus  
the induced group homomorphisms
$\pi_{*}\colon G(\mathbb{C}[\![z]\!])\rightarrow G$ and 
$\iota_{*}\colon G\rightarrow G(\mathbb{C}[\![z]\!])$
satisfy 
$\pi_{*}\circ \iota_{*}=\mathrm{id}_{G}$, and 
we have 
the short exact sequence 
\[
	1\rightarrow \mathrm{Ker\,}\pi_{*}\rightarrow G(\mathbb{C}[\![z]\!])\xrightarrow[]{\pi_{*}}  G\rightarrow 1
\]
with the right splitting $\iota_{*}$.
Then by putting  \index[cN]{$G(\mathbb{C} \llbracket z\rrbracket)_{1}$}
$G(\mathbb{C}[\![z]\!])_{1}:=\mathrm{Ker\,}\pi_{*}$,
we obtain the semidirect product decomposition
\[
	G(\mathbb{C}[\![z]\!])\cong 	G\ltimes G(\mathbb{C}[\![z]\!])_{1}.
\] 
Since $\mathrm{GL}_{N}(\mathbb{C}[\![z]\!])$ is written as
\[
	\mathrm{GL}_{N}(\mathbb{C}[\![z]\!])=
	\left\{
		\sum_{i=0}^{\infty}g_{i}z^{i}\,\middle|\, g_{0}\in \mathrm{GL}_N(\mathbb{C}), g_{i}\in M_{N}(\mathbb{C}), i=1,2,\ldots
	\right\},
\]
the normal subgroup $\mathrm{GL}_{N}(\mathbb{C}[\![z]\!])_{1}$ has the explicit description
\[
	\mathrm{GL}_{N}(\mathbb{C}[\![z]\!])_{1}=
	\left\{
		\sum_{i=0}^{\infty}g_{i}z^{i}\in \mathrm{GL}(\mathbb{C}[\![z]\!])
		\,\middle|\, g_{0}=I_{N}
	\right\}.
\]
Here $I_{N}$ denotes the identity matrix of size $N$.
Therefore under the embedding $G\hookrightarrow \mathrm{GL}_{N}$,
we can regard 
\begin{equation*}\label{eq:trunc}
	G(\mathbb{C}[\![z]\!])_{1}=\mathrm{GL}_{N}(\mathbb{C}[\![z]\!])_{1}\cap G(\mathbb{C}[\![z]\!]).	
\end{equation*}

Let us consider the Lie algebra \index[cN]{$\mathfrak{g}(\mathbb{C}[\llbracket z\rrbracket)_{1}$}$\mathfrak{g}(\mathbb{C}[\![z]\!])_{1}$ of $G(\mathbb{C}[\![z]\!])_{1}$. 
Then we have 
the semidirect product decomposition 
\[
	\mathfrak{g}(\mathbb{C}[\![z]\!])\cong \mathfrak{g}\oplus \mathfrak{g}(\mathbb{C}[\![z]\!])_{1}.	
\]
Since $\mathfrak{g}(\mathbb{C}[\![z]\!])_{1}$ is the kernel of the projection
\[
\mathfrak{g}(\mathbb{C}[\![z]\!])\ni \sum_{i=0}^{\infty}X_{i}z^{i}
\longmapsto
X_{0}\in \mathfrak{g},
\]
we have 
\[
	\mathfrak{g}(\mathbb{C}[\![z]\!])_{1}=\left\{
		\sum_{i=0}^{\infty}X_{i}z^{i}\in \mathfrak{g}(\mathbb{C}[\![z]\!])\,\middle|\,
		X_{0}=0
	\right\},
\]
and can identify 
\[
	\mathfrak{g}(\mathbb{C}[\![z]\!])_{1}\cong \mathfrak{g}\otimes_{\mathbb{C}}\mathfrak{m}_{z}.
\]
Here we regard $\mathfrak{m}_{z}$ as a (non-unital) $\mathbb{C}$-algebra.

Let us give an explicit description of elements in 
$G(\mathbb{C}[\![z]\!])_{1}$.
For a non-negative integer $l\in \mathbb{Z}_{\ge 0}$,
we consider the quotient ring \index[cN]{$\mathbb{C}[z]_{l}$}
\[
\mathbb{C}[z]_{l}:=\mathbb{C}[\![z]\!]/\langle z^{l+1}\rangle
\cong \mathbb{C}[z]/\langle z^{l+1}\rangle,
\]
which 
is a finite-dimensional $\mathbb{C}$-algebra with 
the unique maximal ideal \index[cN]{$\mathfrak{m}_{z}^{(l)}$}$\mathfrak{m}_{z}^{(l)}:=\langle z\rangle_{\mathbb{C}[z]_{l}}$.
We identify 
$\mathbb{C}[z]_{l}$
with the 
space of polynomials $\{\sum_{i=0}^{l}a_{i}z^{i}\mid a_{i}\in \mathbb{C}\}\cong\mathbb{C}^{l+1}$
of degree at most $l$ as $\mathbb{C}$-vector spaces.
Then the projection map 
$\pi_{l}\colon \mathbb{C}[z]_{l}\rightarrow \mathbb{C}[z]_{l}/\mathfrak{m}_{z}^{(l)}\cong \mathbb{C}$
gives the split short exact sequence 
\[
	1\rightarrow \mathrm{Ker\,}(\pi_{l})_{*}\rightarrow G(\mathbb{C}[z]_{l})\xrightarrow[]{(\pi_{l})_{*}}G\rightarrow 1	
\]
and the semidirect product decomposition
\[
	G(\mathbb{C}[z]_{l})=G\ltimes G(\mathbb{C}[z]_{l})_{1}	
\]
by putting \index[cN]{$G(\mathbb{C}[z]_{l})_{1}$}$G(\mathbb{C}[z]_{l})_{1}:=\mathrm{Ker\,}(\pi_{l})_{*}$.
Similarly as above,
the Lie algebra \index[cN]{$\mathfrak{g}(\mathbb{C}[z]_{l})_{1}$}$\mathfrak{g}(\mathbb{C}[z]_{l})_{1}$
of $G(\mathbb{C}[z]_{l})_{1}$
is isomorphic to $\mathfrak{g}\otimes_{\mathbb{C}}\mathfrak{m}_{z}^{(l)}$.

Since the ring of formal power series can be obtained as the projective limit of $\mathbb{C}[z]_{l}$, i.e.,
$\mathbb{C}[\![z]\!]=\displaystyle \varprojlim_{l}\mathbb{C}[z]_{l}$, we have 
\[
	G(\mathbb{C}[\![z]\!])
		=\mathrm{Hom}_{\mathbb{C}\text{-}\mathrm{alg}}(\mathcal{O}(G),\varprojlim_{l}\mathbb{C}[z]_{l})
		\cong \varprojlim_{l}\mathrm{Hom}_{\mathbb{C}\text{-}\mathrm{alg}}(\mathcal{O}(G),\mathbb{C}[z]_{l})
		=\varprojlim_{l}G(\mathbb{C}[z]_{l}).
\]
On the other hand, the induced sequence  
\[
	1\longrightarrow \varprojlim_{l}\mathrm{Ker\,}(\pi_{l})_{*}
	\longrightarrow \varprojlim_{l}G(\mathbb{C}[z]_{l})
	\longrightarrow \varprojlim_{l}G=G
\]
is also exact since the projective limit functor is left exact.
Also since the right most arrow coincides with $\pi_{*}$, this is moreover a short exact sequence.
Thus we have 
$
G(\mathbb{C}[\![z]\!])_{1}=\mathrm{Ker\,}\pi_{*}\cong \displaystyle\varprojlim_{l}(\pi_{l})_{*}=\displaystyle\varprojlim_{l}G(\mathbb{C}[z]_{l})_{1}.
$
\begin{lem}
	The finite-dimensional complex Lie group $G(\mathbb{C}[z]_{l})_{1}$
	is a connected and simply connected nilpotent Lie group.
\end{lem}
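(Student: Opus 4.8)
The plan is to establish the three assertions—that $G(\mathbb{C}[z]_{l})_{1}$ is a Lie group, that it is nilpotent, and that it is connected and simply connected—by exploiting the fact that $G(\mathbb{C}[z]_{l})_{1}$ has a well-understood Lie algebra, namely $\mathfrak{g}(\mathbb{C}[z]_{l})_{1}\cong \mathfrak{g}\otimes_{\mathbb{C}}\mathfrak{m}_{z}^{(l)}$, which is a \emph{nilpotent} Lie algebra: since $\mathfrak{m}_{z}^{(l)}=\langle z\rangle_{\mathbb{C}[z]_{l}}$ satisfies $(\mathfrak{m}_{z}^{(l)})^{l+1}=0$, iterated brackets land in $\mathfrak{g}\otimes (\mathfrak{m}_{z}^{(l)})^{m}$ and therefore vanish once $m>l$. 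The strategy is then to invoke the exponential correspondence for nilpotent Lie algebras: for a finite-dimensional nilpotent Lie algebra $\mathfrak{n}$ over $\mathbb{C}$, the Baker–Campbell–Hausdorff series terminates and endows the underlying affine space with a polynomial group law, giving a connected, simply connected, unipotent algebraic group whose Lie algebra is $\mathfrak{n}$; such a group is automatically nilpotent.

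First I would make the identification of $G(\mathbb{C}[z]_{l})_{1}$ with this BCH group explicit. Using the fixed embedding $G\hookrightarrow \mathrm{GL}_{N}$, an element of $G(\mathbb{C}[z]_{l})_{1}$ is a matrix $g=I_{N}+g_{1}z+\cdots+g_{l}z^{l}$ with entries in $\mathbb{C}[z]_{l}$ lying in $G$, i.e.\ $I_{N}+X$ with $X\in \mathfrak{gl}_{N}\otimes \mathfrak{m}_{z}^{(l)}$ nilpotent (as $X^{l+1}=0$). Hence the matrix logarithm $\log(I_{N}+X)=\sum_{m\ge 1}(-1)^{m-1}X^{m}/m$ is a finite sum, and $\exp$ and $\log$ are mutually inverse polynomial maps between $\mathfrak{gl}_{N}(\mathbb{C}[z]_{l})_{1}$ and $\mathrm{GL}_{N}(\mathbb{C}[z]_{l})_{1}$. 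One then checks $\log$ restricts to a bijection $G(\mathbb{C}[z]_{l})_{1}\xrightarrow{\sim}\mathfrak{g}(\mathbb{C}[z]_{l})_{1}$; this is where I would use that $G$ is a closed (reductive, in particular algebraic) subgroup, so its Lie algebra is a Lie subalgebra cut out by the same polynomial/closedness conditions, and that $\exp$ of a nilpotent element of $\mathfrak{g}\otimes \mathfrak{m}_{z}^{(l)}$ lies in $G(\mathbb{C}[z]_{l})_{1}$ because truncating the exponential series amounts to a point of the algebraic group $G$ with values in $\mathbb{C}[z]_{l}$. Granting this, $G(\mathbb{C}[z]_{l})_{1}$ is, as a variety, an affine space $\mathfrak{g}\otimes \mathfrak{m}_{z}^{(l)}\cong \mathbb{C}^{(l)\cdot\dim G}$, hence irreducible and simply connected; its group law transported via $\log$ is BCH, which terminates and is polynomial because $\mathfrak{g}(\mathbb{C}[z]_{l})_{1}$ is nilpotent. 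Therefore $G(\mathbb{C}[z]_{l})_{1}$ is a connected, simply connected nilpotent Lie group.

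The main obstacle I anticipate is purely the bookkeeping of the last identification: verifying that $\log$ maps $G(\mathbb{C}[z]_{l})_{1}$ \emph{into} $\mathfrak{g}(\mathbb{C}[z]_{l})_{1}$ and $\exp$ maps it back, for a general closed subgroup $G$ rather than $\mathrm{GL}_{N}$ itself. I would handle this by the standard device: both $\exp$ and $\log$ are given, on the relevant nilpotent range, by polynomial maps that are functorial in the coefficient ring, and $G(\mathbb{C}[z]_{l})_{1}=\mathrm{GL}_{N}(\mathbb{C}[z]_{l})_{1}\cap G(\mathbb{C}[z]_{l})$ while $\mathfrak{g}(\mathbb{C}[z]_{l})_{1}=\mathfrak{gl}_{N}(\mathbb{C}[z]_{l})_{1}\cap \mathfrak{g}(\mathbb{C}[z]_{l})$; since $G$ is the zero locus in $\mathrm{GL}_{N}$ of the ideal defining it and $\mathfrak{g}$ is the corresponding linearization, compatibility of $\exp/\log$ with these defining equations in the nilpotent (hence ``one-parameter subgroup detecting'') setting gives the claim. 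Alternatively, one may argue abstractly that $G(\mathbb{C}[z]_{l})_{1}$ is the kernel of $G(\mathbb{C}[z]_{l})\to G$, and as a unipotent algebraic group over $\mathbb{C}$ it is automatically connected and simply connected, with its nilpotency inherited from that of its Lie algebra; I would present the concrete $\exp/\log$ argument as it is self-contained and reusable later.
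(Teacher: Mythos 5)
Your proof is correct, and it rests on the same underlying fact as the paper's (the exponential correspondence for nilpotent/unipotent groups), but the logical route is reversed. The paper first observes that $\mathfrak{g}(\mathbb{C}[z]_{l})_{1}\cong\mathfrak{g}\otimes_{\mathbb{C}}\mathfrak{m}_{z}^{(l)}$ is nilpotent, then notes that $\mathrm{GL}_{N}(\mathbb{C}[z]_{l})_{1}\cong M_{N}(\mathbb{C})^{\oplus l}$ is a connected, simply connected nilpotent Lie group, and concludes by citing Corollary 1.2.2 of Corwin--Greenleaf: a Lie subgroup of a connected, simply connected nilpotent Lie group is itself connected and simply connected. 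This citation is precisely what lets the paper sidestep the ``bookkeeping'' you flag as your main obstacle, namely verifying that $\log$ carries $G(\mathbb{C}[z]_{l})_{1}$ bijectively onto $\mathfrak{g}(\mathbb{C}[z]_{l})_{1}$; the paper only establishes that explicit $\exp/\log$ identification afterwards (its Proposition on the infinite product expansion), as a \emph{consequence} of simple connectedness via Theorem 1.2.1 of the same reference. You instead prove the identification with affine space first and read off connectedness, simple connectedness, and nilpotency from it. Your version is more self-contained and, as you note, reusable, but it does require the standard fact that for a unipotent element of an algebraic group in characteristic zero the (terminating) logarithm lands in the Lie algebra and conversely --- equivalently, your alternative observation that $\ker(G(\mathbb{C}[z]_{l})\to G)$ is a unipotent algebraic group over $\mathbb{C}$, hence isomorphic as a variety to affine space. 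Either way the argument closes; just be sure to actually invoke one of these two justifications rather than leaving the restriction of $\exp/\log$ to $G$ as an unproved ``one then checks.''
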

\begin{proof}
Since $\mathfrak{m}_{z}^{(l)}$ is a nilpotent ideal, 
the Lie algebra $\mathfrak{g}(\mathbb{C}[z]_{l})_{1}\cong \mathfrak{g}\otimes_{\mathbb{C}}\mathfrak{m}_{z}^{(l)}$
is also nilpotent.
Thus $G(\mathbb{C}[z]_{l})_{1}$ is a nilpotent Lie group.
Also 
since 
\[
	\mathrm{GL}_{N}(\mathbb{C}[z]_{l})_{1}\cong\left\{I_{N}+X_{1}z+\cdots X_{l}z^{l}\,\middle|\,X_{i}\in M_{N}(\mathbb{C})\right\}\cong M_{N}(\mathbb{C})^{\oplus l},
\]
is connected and simply connected nilpotent Lie group,
it follows from Corollary 1.2.2 in \cite{CorGre} that
the Lie subgroup $G(\mathbb{C}[z]_{l})_{1}$
is connected and simply connected nilpotent Lie group as well.
\end{proof}
The nilpotent Lie algebra $\mathfrak{g}(\mathbb{C}[z]_{l})_{1}$ is 
equipped with the gradation with respect to the degree of $z$ in $\mathbb{C}[z]_{l}$.
Namely if we set 
\[\mathfrak{g}^{[i]}:=\{Xz^{i}\in \mathfrak{g}(\mathbb{C}[z]_{l})_{1}\mid X\in \mathfrak{g}\}
= 
\mathfrak{g}\otimes_{\mathbb{C}}
\left((\mathfrak{m}_{z}^{(l)})^{i}/(\mathfrak{m}_{z}^{(l)})^{i-1}\right)
\]
for $i=1,2,\ldots,l$, then 
we have the natural decomposition
\[
	\mathfrak{g}(\mathbb{C}[z]_{l})_{1}=\bigoplus_{i=1}^{l}\mathfrak{g}^{[i]}
\]
satisfying 
\[
	[\mathfrak{g}^{[i]},\mathfrak{g}^{[j]}]\subset \mathfrak{g}^{[i+j]}
\]
for $i,j=1,2,\ldots,l.$

\begin{prop}\label{prop:explicitdesc}
	The exponential map $\mathrm{exp}\colon \mathfrak{g}(\mathbb{C}[z]_{l})_{1}\rightarrow G(\mathbb{C}[z]_{l})_{1}$
	is biholomorphic. 
	Furthermore, the map 
	\[
		\mathfrak{g}(\mathbb{C}[z]_{l})_{1}=\bigoplus_{i=1}^{l}\mathfrak{g}^{[i]}\ni (Z_{i})_{i=1,2,\ldots,l}
		\longmapsto 	e^{Z_{l}}\cdots e^{Z_{2}}e^{Z_{1}}
		\in G(\mathbb{C}[z]_{l})_{1}
	\]
	is biholomorphic.
\end{prop}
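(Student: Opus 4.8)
The statement to be proved is Proposition \ref{prop:explicitdesc}: that the exponential map $\exp\colon \mathfrak{g}(\mathbb{C}[z]_{l})_{1}\to G(\mathbb{C}[z]_{l})_{1}$ is biholomorphic, and moreover that the ``staircase'' map $(Z_{i})_{i}\mapsto e^{Z_{l}}\cdots e^{Z_{2}}e^{Z_{1}}$ is biholomorphic.

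The plan is to exploit that $G(\mathbb{C}[z]_{l})_{1}$ is a connected, simply connected nilpotent Lie group (by the preceding lemma) and that the whole situation is a closed algebraic/holomorphic subgroup of the corresponding $\mathrm{GL}_{N}$-picture, where everything can be computed by hand. First I would recall the standard fact (e.g. Corollary 1.2.2 in \cite{CorGre}, already cited in the excerpt) that for a connected simply connected nilpotent Lie group the exponential map is a diffeomorphism — in fact, since everything here is polynomial, a biholomorphism. Concretely, embedding via $G\hookrightarrow \mathrm{GL}_N$ one has $\mathrm{GL}_N(\mathbb{C}[z]_l)_1\cong\{I_N+X_1z+\cdots+X_lz^l\}$, on which $\exp(Xz+\cdots)$ and $\log(I_N+\cdots)$ are mutually inverse \emph{polynomial} maps in the coefficients (all higher terms vanish modulo $z^{l+1}$, so the power series are finite). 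Thus $\exp$ is a biholomorphism $\mathfrak{gl}_N(\mathbb{C}[z]_l)_1\to \mathrm{GL}_N(\mathbb{C}[z]_l)_1$, and it restricts to a biholomorphism on the Zariski-closed subvariety cut out by the defining equations of $G$; that the restriction lands in $G(\mathbb{C}[z]_l)_1$ and is surjective onto it follows since $G(\mathbb{C}[z]_l)_1$ is connected and $\exp(\mathfrak{g}(\mathbb{C}[z]_l)_1)$ is an open neighborhood of the identity that is also closed (again by nilpotency), hence all of it.

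For the second, refined statement I would argue by induction on $l$, using the grading $\mathfrak{g}(\mathbb{C}[z]_l)_1=\bigoplus_{i=1}^{l}\mathfrak{g}^{[i]}$ with $[\mathfrak{g}^{[i]},\mathfrak{g}^{[j]}]\subset\mathfrak{g}^{[i+j]}$. The key structural point is that $\mathfrak{g}^{[l]}$ is central in $\mathfrak{g}(\mathbb{C}[z]_l)_1$ (any bracket into it lands in degree $\geq l+1=0$), so $\exp(\mathfrak{g}^{[l]})=G(\mathbb{C}[z]_l)_{1}\cap$ (kernel of reduction mod $z^l$) is a central, closed, connected, simply connected subgroup, with quotient $G(\mathbb{C}[z_{l-1}])_1$. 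For $(Z_i)_{i=1,\dots,l}$ write $g=e^{Z_l}\cdot h$ with $h=e^{Z_{l-1}}\cdots e^{Z_1}$; since $e^{Z_l}$ is central the map factors as the product of the central biholomorphism $Z_l\mapsto e^{Z_l}$ onto the central subgroup and, by the inductive hypothesis, the biholomorphism $(Z_{l-1},\dots,Z_1)\mapsto h$ onto $G(\mathbb{C}[z]_{l-1})_1$ (lifted via a splitting of the central extension, which exists holomorphically because the extension is trivial as a fiber bundle — indeed $\exp$ already trivializes it). Composing with the group multiplication $G(\mathbb{C}[z]_l)_1\cong (\text{central subgroup})\times G(\mathbb{C}[z]_{l-1})_1$ as a product of \emph{varieties} (the semidirect/extension structure makes multiplication a biholomorphism onto $G(\mathbb{C}[z]_l)_1$), we conclude that the staircase map is biholomorphic; the base case $l=1$ is just $\exp\colon\mathfrak{g}^{[1]}\to G(\mathbb{C}[z]_1)_1$, already handled.

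The main obstacle I anticipate is purely bookkeeping: making precise that the various ``exponential coordinate'' maps and the group-multiplication maps are not merely diffeomorphisms but \emph{biholomorphisms} (i.e. regular in the algebraic structure), which amounts to checking that the relevant power series for $\exp$, $\log$, and Baker--Campbell--Hausdorff all truncate to polynomials in these nilpotent quotients — a consequence of $(\mathfrak{m}_z^{(l)})^{l+1}=0$ — and tracking the grading carefully through the induction so that the central-extension splitting is manifestly holomorphic. None of this is deep; the substantive input (nilpotent Lie groups have biholomorphic $\exp$) is the cited Corollary 1.2.2 of \cite{CorGre}, and the rest is assembling it compatibly with the $z$-adic grading.
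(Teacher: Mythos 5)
Your proposal is correct, and both halves rest on the same two ingredients the paper uses: the standard fact that $\exp$ is a biholomorphism for the connected, simply connected nilpotent group $G(\mathbb{C}[z]_{l})_{1}$ (the paper cites Theorem 1.2.1 of \cite{CorGre}), and the grading $[\mathfrak{g}^{[i]},\mathfrak{g}^{[j]}]\subset\mathfrak{g}^{[i+j]}$. Where you diverge is in how the staircase map is inverted. The paper works entirely inside the fixed group $G(\mathbb{C}[z]_{l})_{1}$ and peels off the factors from the \emph{bottom}: writing $g=e^{Z}$, it splits $Z=Z_{1}+Z^{(1)}$ with $Z_{1}\in\mathfrak{g}^{[1]}$, uses the Campbell--Baker--Hausdorff formula to see $e^{-Z_{1}}e^{Z}=e^{Z^{(2)}}$ with $Z^{(2)}\in\bigoplus_{i\ge 2}\mathfrak{g}^{[i]}$, and iterates; the inverse map is thus constructed as an explicit holomorphic algorithm. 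You instead induct on $l$ and peel off the \emph{top} factor, using that $\mathfrak{g}^{[l]}$ is central and that the projection to $G(\mathbb{C}[z]_{l-1})_{1}$ kills exactly $\exp(\mathfrak{g}^{[l]})$. Both work; the paper's version avoids any discussion of quotient groups, while yours isolates the central-extension structure, which is arguably cleaner conceptually. One caution on your phrasing: the extension $1\to\exp(\mathfrak{g}^{[l]})\to G(\mathbb{C}[z]_{l})_{1}\to G(\mathbb{C}[z]_{l-1})_{1}\to 1$ admits no natural \emph{group-theoretic} splitting (there is no ring section of $\mathbb{C}[z]_{l}\to\mathbb{C}[z]_{l-1}$), so ``lifted via a splitting of the central extension'' should be read only as: the $Z_{i}$ for $i<l$ obtained from the inductive inverse in the quotient lift canonically to $\mathfrak{g}^{[i]}\subset\mathfrak{g}(\mathbb{C}[z]_{l})_{1}$, after which $Z_{l}$ is recovered from $g\cdot(e^{Z_{l-1}}\cdots e^{Z_{1}})^{-1}\in\exp(\mathfrak{g}^{[l]})$. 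With that reading your argument is complete and all maps involved are visibly holomorphic.
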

\begin{proof}
The first assertion follows from the well-known fact for nilpotent Lie groups, see Theorem 1.2.1 in \cite{CorGre}.

The second assertion follows from the graded structure of $\mathfrak{g}(\mathbb{C}[z]_{l})_{1}$ as follows.
Let us take $g\in G(\mathbb{C}[z]_{l})_{1}$. Then the first assertion tells us that 
there uniquely exists $Z\in \mathfrak{g}(\mathbb{C}[z]_{l})_{1}$ such that 
$g=e^{Z}$.
Then according to the grading, we can uniquely write 
$Z=Z_{1}+Z^{(1)}$ by $Z_{1}\in \mathfrak{g}^{[1]}$ and $Z^{(1)}\in \bigoplus_{i=2}^{l}\mathfrak{g}^{[i]}$.
Thus the Campbell-Baker-Hausdorff formula shows that there uniquely 
exists $Z^{(2)}\in \bigoplus_{i=2}^{l}\mathfrak{g}^{[i]}$ such that 
\[
	e^{-Z_{1}}e^{Z}=e^{Z^{(2)}}.	
\]
Similarly for $Z^{(2)}$ we have the unique decomposition 
$Z^{(2)}=Z_{2}+Z^{(3)}$ by $Z_{2}\in \mathfrak{g}^{[2]}$ and $X^{(3)}\in \bigoplus_{i=3}^{l}\mathfrak{g}^{[i]}$.
Then moreover 
there uniquely 
exists $Z^{(3)}\in \bigoplus_{i=3}^{l}\mathfrak{g}^{[i]}$ such that 
\[
	e^{-Z_{2}}e^{-Z_{1}}e^{Z}=e^{Z^{(3)}}.	
\]
This procedure allows us to find 
$(Z_{i})_{i=1,2,\ldots,l}\in \bigoplus_{i=1}^{l}\mathfrak{g}^{[i]}$
uniquely from $g\in G(\mathbb{C}[z]_{l})_{1}$, and thus we obtain the well-defined map 
\[
	G(\mathbb{C}[z]_{l})_{1}\ni g\longmapsto \sum_{i=1}^{l}Z_{i}\in \mathfrak{g}(\mathbb{C}[z]_{l})_{1}	
\]
which is holomorphic by the construction. 
And obviously this map gives the inverse map of the above map.
\end{proof}

From this proposition, we obtain the following explicit description of $G(\mathbb{C}[\![z]\!])_{1}$.
\begin{prop}[Proposition 9.3.1 in \cite{BV}]
	Each $g(z)\in G(\mathbb{C}[\![z]\!])_{1}$
has the unique infinite product expansion,
\[
	g(z)=\prod_{l=1}^{\infty}e^{X_{l}z^{l}}:=\lim_{l\to \infty}e^{X_{l}z^{l}}\cdots e^{X_{2}z^{2}}e^{X_{1}z},
	\quad (X_{l}\in \mathfrak{g}).
\]
\end{prop}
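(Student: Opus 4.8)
The plan is to deduce this directly from Proposition \ref{prop:explicitdesc} by passing to the projective limit over the finite truncations $G(\mathbb{C}[z]_{l})_{1}$. Recall from the discussion immediately preceding the lemma above that we have the identification $G(\mathbb{C}[\![z]\!])_{1}\cong \varprojlim_{l}G(\mathbb{C}[z]_{l})_{1}$, where the transition maps are induced by the natural surjections $\mathbb{C}[z]_{l'}\twoheadrightarrow \mathbb{C}[z]_{l}$ for $l'\ge l$. Thus an element of $G(\mathbb{C}[\![z]\!])_{1}$ is the same thing as a compatible system $(g_{l})_{l\ge 1}$ with $g_{l}\in G(\mathbb{C}[z]_{l})_{1}$, and it suffices to produce, for a given $g(z)\in G(\mathbb{C}[\![z]\!])_{1}$, a compatible system of tuples $(X_{1},\ldots,X_{l})\in \mathfrak{g}^{\oplus l}$ whose partial products realize the $g_{l}$.

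First I would fix $l$ and apply Proposition \ref{prop:explicitdesc} to $g_{l}\in G(\mathbb{C}[z]_{l})_{1}$: there is a unique tuple $(Z^{(l)}_{1},\ldots,Z^{(l)}_{l})\in\bigoplus_{i=1}^{l}\mathfrak{g}^{[i]}$ with $g_{l}=e^{Z^{(l)}_{l}}\cdots e^{Z^{(l)}_{1}}$, and I write $Z^{(l)}_{i}=X^{(l)}_{i}z^{i}$ with $X^{(l)}_{i}\in\mathfrak{g}$. The key step is to check that this decomposition is \emph{stable under truncation}, i.e. $X^{(l')}_{i}=X^{(l)}_{i}$ whenever $i\le l\le l'$. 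For this, observe that the transition map $G(\mathbb{C}[z]_{l'})_{1}\to G(\mathbb{C}[z]_{l})_{1}$ kills each factor $e^{X^{(l')}_{j}z^{j}}$ with $j>l$ (since $z^{j}\equiv 0$ modulo $z^{l+1}$) and carries each factor with $j\le l$ to $e^{X^{(l')}_{j}z^{j}}\in G(\mathbb{C}[z]_{l})_{1}$; hence $g_{l}=e^{X^{(l')}_{l}z^{l}}\cdots e^{X^{(l')}_{1}z}$, and the uniqueness clause of Proposition \ref{prop:explicitdesc} at level $l$ forces $X^{(l')}_{i}=X^{(l)}_{i}$ for all $i\le l$. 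Setting $X_{l}:=X^{(l)}_{l}$, the partial products $e^{X_{l}z^{l}}\cdots e^{X_{1}z}\in G(\mathbb{C}[z]_{l})_{1}$ then form a compatible system, so they define an element of $\varprojlim_{l}G(\mathbb{C}[z]_{l})_{1}=G(\mathbb{C}[\![z]\!])_{1}$ whose image in each $G(\mathbb{C}[z]_{l})_{1}$ is $g_{l}$; therefore this element is $g(z)$ itself, which is exactly the asserted infinite product expansion.

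For the uniqueness of the expansion, I would suppose $g(z)=\prod_{l\ge 1}e^{X_{l}z^{l}}=\prod_{l\ge 1}e^{Y_{l}z^{l}}$. Reducing modulo $z^{l+1}$ for each $l$ and invoking the uniqueness part of Proposition \ref{prop:explicitdesc} in $G(\mathbb{C}[z]_{l})_{1}$, an induction on $l$ gives $X_{i}=Y_{i}$ for all $i\le l$, and letting $l\to\infty$ yields $X_{l}=Y_{l}$ for all $l$.

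The only genuinely non-formal point is the stability of the level-$l$ decomposition under truncation in the second paragraph; everything else is bookkeeping with the inverse limit already set up in the text. That stability, however, requires no new computation: it follows purely from the fact that the transition maps are induced by ring homomorphisms and that the factors $e^{X_{j}z^{j}}$ with $j>l$ become trivial modulo $z^{l+1}$, so one never has to reopen the Campbell-Baker-Hausdorff manipulations used in the proof of Proposition \ref{prop:explicitdesc}.
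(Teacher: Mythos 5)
Your proposal is correct and follows essentially the same route as the paper, whose proof simply cites Proposition \ref{prop:explicitdesc} together with the identification $G(\mathbb{C}[\![z]\!])_{1}\cong \varprojlim_{l}G(\mathbb{C}[z]_{l})_{1}$; you have merely made explicit the compatibility of the level-$l$ decompositions under the transition maps, which the paper leaves implicit. The key observation you isolate --- that the factors $e^{X_{j}z^{j}}$ with $j>l$ die under truncation because the transition maps are induced by ring homomorphisms --- is exactly the content hidden in the paper's one-line proof.
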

\begin{proof}
	This follows from 
	Proposition \ref{prop:explicitdesc} and the isomorphism $G(\mathbb{C}[\![z]\!])_{1}\cong \displaystyle\varprojlim_{l}G(\mathbb{C}[z]_{l})_{1}$.
\end{proof}

Let us notice that the natural projection $\pi_{l}\colon G(\mathbb{C}[\![z]\!])_{1}
\rightarrow G(\mathbb{C}[z]_{l})_{1}$
is surjective.
Indeed, Proposition \ref{prop:explicitdesc} says that every element in
$G(\mathbb{C}[z]_{l})_{1}$ is of the form $e^{X_{l}z^{l}}\cdots e^{X_{2}z^{2}}e^{X_{1}z}$
for $X_{i}\in \mathfrak{g}$ 
which is the projection image of  
$e^{X_{l}z^{l}}\cdots e^{X_{2}z^{2}}e^{X_{1}z}\in G(\mathbb{C}[\![z]\!])_{1}.$ 
Thus 
since $G(\mathbb{C}[\![z]\!])=G\ltimes G(\mathbb{C}[\![z]\!])_{1}$
and $G(\mathbb{C}[z]_{l})=G\ltimes G(\mathbb{C}[z]_{l})_{1}$,
the projection
$\pi_{l}\colon G(\mathbb{C}[\![z]\!])
\rightarrow G(\mathbb{C}[z]_{l})$
is surjective as well.

\subsection{Truncated orbit as coadjoint orbit}\label{sec:trucoad}
The truncated orbit $\mathbb{O}_{H}$ of an unramified canonical form $H\in \mathfrak{g}(\mathbb{C}(\!(z)\!)/\mathbb{C}[\![z]\!])$ is defined as 
the orbit under the action of $G(\mathbb{C}[\![z]\!])$.
We shall explain that $\mathbb{O}_{H}$ can be seen as a coadjoint orbit 
of a finite-dimensional complex Lie group.
Before explaining this fact, we prepare some notation.
For a positive integer $l$, we consider a $\mathbb{C}[\![z]\!]$-module \index[cN]{$\mathbb{C}[z^{-1}]_{l}$}
\[
	\mathbb{C}[z^{-1}]_{l}:=z^{-(l+1)}\mathbb{C}[\![z]\!]/\mathbb{C}[\![z]\!]
\]
which can be also seen 
 as a module over the quotient ring $\mathbb{C}[z]_{l}$. 
Then for a Lie subalgebra $\mathfrak{h}\subset \mathfrak{g}$,
we can define the $\mathbb{C}$-vector space \index[cN]{$\mathfrak{h}(\mathbb{C}[z^{-1}]_{l})$}
\[
	\mathfrak{h}(\mathbb{C}[z^{-1}]_{l}):=\mathfrak{h}\otimes_{\mathbb{C}}\mathbb{C}[z^{-1}]_{l}
\]
on which the action of the Lie algebra $\mathfrak{h}(\mathbb{C}[z]_{l})$ is naturally defined via 
the Lie algebra structure of $\mathfrak{h}$ and the $\mathbb{C}[z]_{l}$-module structure of $\mathbb{C}[z^{-1}]_{l}$.

We shall frequently use the following basis and identifications as $\mathbb{C}$-vector spaces,
\begin{align*}
	\mathbb{C}[z^{-1}]_{l}&\cong \left\{\sum_{i=0}^{l}\frac{a_{i}}{z^{i+1}}\,\middle|\, a_{i}\in \mathbb{C}\right\},&
	\mathbb{C}[z]_{l}&\cong \left\{\sum_{i=0}^{l}a_{i}z^{i}\,\middle|\, a_{i}\in \mathbb{C}\right\}.
\end{align*}
According to these identifications, we set 
$f(0):=a_{0}$ for $f(z)=\sum_{i=0}^{l}a_{i}z^{i}\in \mathbb{C}[z]_{l}$ and 
$\underset{z=0}{\mathrm{res\,}}g(z):=b_{0}$ for $g(z)=\sum_{i=0}^{l}\frac{b_{i}}{z^{i+1}}\in \mathbb{C}[z^{-1}]_{l}$.

Let us introduce a non-degenerate bilinear pairing 
on $\mathfrak{g}(\mathbb{C}[z]_{l})\times \mathfrak{g}(\mathbb{C}[z^{-1}]_{l})$ as follows.
Recall that the trace form 
\begin{equation}\label{eq:traceform}
	M_{N}(\mathbb{C}[z]_{l})\times M_{N}(\mathbb{C}[z^{-1}]_{l})\ni
(A,B)\mapsto \underset{z=0}{\mathrm{res\,}}(\mathrm{tr}(AB))\in \mathbb{C}
\end{equation}
is non-degenerate.
Since we have the embeddings $\mathfrak{g}(\mathbb{C}[z]_{l})\hookrightarrow 
\mathfrak{gl}_{N}(\mathbb{C}[z]_{l})$
and $\mathfrak{g}(\mathbb{C}[z^{-1}]_{l})\hookrightarrow 
\mathfrak{gl}_{N}(\mathbb{C}[z^{-1}]_{l})$
induced by $G\hookrightarrow \mathrm{GL}_{N}$,
this trace pairing defines the non-degenerate pairing 
on $\mathfrak{g}(\mathbb{C}[z]_{l})\times \mathfrak{g}(\mathbb{C}[z^{-1}]_{l})$.
This non-degenerate pairing moreover gives us the identification 
\[
\mathfrak{g}(\mathbb{C}[z]_{l})^{*}\cong \mathfrak{g}(\mathbb{C}[z^{-1}]_{l}).
\]

We can consider the canonical form $H$ as an element in $\mathfrak{g}(\mathbb{C}[z^{-1}]_{l})$
by regarding $\mathfrak{g}(\mathbb{C}[z^{-1}]_{l})$ as a subspace of 
$\mathfrak{g}(\mathbb{C}(\!(z)\!)/\mathbb{C}[\![z]\!])$. 
Then we compare the orbits of $H$ under the actions 
of $G(\mathbb{C}[\![z]\!])$ and $G(\mathbb{C}[z]_{l})$ as follows.
The natural projection map 
$p_{l}\colon \mathbb{C}[\![z]\!]\rightarrow \mathbb{C}[z]_{l}$
induces the short exact sequence 
\[
	1\rightarrow \mathrm{Ker\,}(p_{l})_{*}\rightarrow G(\mathbb{C}[\![z]\!])
	\xrightarrow[]{(p_{l})_{*}}	G(\mathbb{C}[z]_{l})\rightarrow 1.
\]
As we previously remarked, the projection map  
$(p_{l})_{*}\colon G(\mathbb{C}[\![z]\!])
\rightarrow G(\mathbb{C}[z]_{l})$ is surjective.
Since 
$\mathrm{Ker\,}(p_{k})_{*}$ is contained in the stabilizer group $\mathrm{Stab}_{G(\mathbb{C}[\![z]\!])}(H)$
of $H$, we also have the exact sequence
\[
	1\rightarrow \mathrm{Ker\,}(p_{k})_{*}\rightarrow \mathrm{Stab}_{G(\mathbb{C}[\![z]\!])}(H)
	\xrightarrow[]{(p_{k})_{*}|_{\mathrm{Stab}_{G(\mathbb{C}[\![z]\!])}(H)}}	\mathrm{Stab}_{G(\mathbb{C}[z]_{k})}(H)
	\rightarrow 1.
\]
Therefore we obtain the isomorphism 
\[
	G(\mathbb{C}[\![z]\!])/\mathrm{Stab}_{G(\mathbb{C}[\![z]\!])}(H)\cong G(\mathbb{C}[z]_{k})/\mathrm{Stab}_{G(\mathbb{C}[z]_{k})}(H),
\]
which allows us to regard the truncated orbit $\mathbb{O}_{H}$
as the $G(\mathbb{C}[z]_{k})$-orbit through $H\in \mathfrak{g}(\mathbb{C}[z^{-1}]_{k})$.
Namely, $\mathbb{O}_{H}$ can be regarded as the   
$G(\mathbb{C}[z]_{k})$-coadjoint orbit through $H\in \mathfrak{g}(\mathbb{C}[z]_{k})^{*}$
under the identification 
$\mathfrak{g}(\mathbb{C}[z]_{k})^{*}\cong \mathfrak{g}(\mathbb{C}[z^{-1}]_{k}).$

\subsection{$\delta$-invariant and dimension of truncated orbit}
We shall show that $\delta$-invariants of unramified canonical forms
represent dimensions of corresponding truncated orbits.

Let us consider 
the stabilizer of $H$.
\begin{prop}[Lemma 3.3 in \cite{Yam}]\label{prop:stabh}
	The stabilizer 
	of $H\in \mathfrak{g}(\mathbb{C}[z]_{k})^{*}$
	in $G(\mathbb{C}[z]_{k})_{1}$
	is of the following form,
	\[
		\mathrm{Stab}_{(G(\mathbb{C}[z]_{k})_{1})}(H)=
		\left\{
			e^{X_{k}z^{k}}\cdots e^{X_{2}z^{2}}e^{X_{1}z}\in G(\mathbb{C}[z]_{k})_{1}\,\middle|\,
			X_{i}\in \mathfrak{l}_{i},\ i=1,\ldots,k
		\right\}
		.
	\]
	We moreover have
	\[
		\mathrm{Stab}_{G(\mathbb{C}[z]_{k})}(H)=\mathrm{Stab}_{G}(H)\ltimes \mathrm{Stab}_{G(\mathbb{C}[z]_{k})_{1}}(H).	
	\]
\end{prop}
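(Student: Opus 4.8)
The plan is to compute the stabilizer of $H$ inside the unipotent group $G(\mathbb{C}[z]_{k})_{1}$ by working degree by degree in $z$, using the infinite-product (here finite-product) expansion $g = e^{X_{k}z^{k}}\cdots e^{X_{1}z}$ from Proposition \ref{prop:explicitdesc}. First I would make explicit the coadjoint action on $H = \sum_{i=1}^{k} H_{i} z^{-(i+1)} + H_{\mathrm{res}} z^{-1}$ (viewed in $\mathfrak{g}(\mathbb{C}[z^{-1}]_{k})$): for $Y = Xz^{j} \in \mathfrak{g}^{[j]}$ one has, under the trace pairing \eqref{eq:traceform}, that $\mathrm{Ad}^{*}(e^{Xz^{j}})H$ is obtained from $H$ by adding the truncation of $\mathrm{ad}(Xz^{j})H = [X, H]z^{j} = \sum_{i}[X,H_{i}]z^{j-i-1} + \ldots$ plus higher-order corrections from the Baker–Campbell–Hausdorff expansion, all of which land in strictly more negative... (strictly higher) powers of $z^{j'}$ with $j' > j$ relative to the leading term. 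The key structural point is that the $z^{j}$-graded piece $\mathfrak{g}^{[j]}$ acts on $H$ with leading effect governed by $\mathrm{ad}(\cdot)$ applied to the \emph{most singular} coefficient it can reach, namely $H_{k-j}$ (the pole of order $k-j+1$), so that the condition for $g$ to stabilize $H$ decouples into conditions on each $X_{j}$ separately, once one peels off factors from the top degree $j=k$ downward.

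The main step is then an induction on the degree. Writing $g = e^{X_{k}z^{k}}\cdots e^{X_{1}z}$, one observes that $e^{X_{k}z^{k}}$ acts on $H$ by adding $[X_{k}, H_{\mathrm{res}}]\,z^{k-1}$ — wait, more precisely the term of $\mathrm{ad}(X_{k}z^{k})H$ surviving truncation at $z^{-1}$ is $[X_{k}, H_{1}]z^{k-1}\cdot z^{-2} = [X_{k},H_{1}]z^{k-3}$, and in general the lowest-degree (most constraining) surviving term involves $[X_{k}, H_{1}]$ reaching the $z^{-2}$-coefficient, but since $k\geq 1$ this lands in non-negative powers unless $k=1$; the correct bookkeeping is that $e^{X_{j}z^{j}}$ changes the coefficient of $z^{-(i+1)}$ by $[X_{j}, H_{i+j}]$ plus contributions from lower factors. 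Requiring invariance of the coefficient of $z^{-(i+1)}$ for all $i$ forces, starting from $i$ large (the $z^{-1}$-coefficient is $H_{\mathrm{res}}$, governed by $[X_{j}, H_{\mathrm{res}}]$ and by $[X_{j},H_{j}]$ etc.), that $[X_{j}, H_{i}] = 0$ for all $i \geq \max(1, \ldots)$; carefully chasing indices shows $X_{j}$ must kill $H_{j}, H_{j+1}, \ldots, H_{k}$, i.e. $X_{j} \in \mathfrak{g}_{H_{j}}\cap\cdots\cap\mathfrak{g}_{H_{k}}$. By the inductive construction of the $\Pi_{i}$ in Section \ref{sec:spectral}, this intersection is exactly $\mathfrak{l}_{j}$ (note $\mathfrak{l}_{j} = \{X\in\mathfrak{l}_{j+1}\mid [X,H_{j}]=0\}$ and $\mathfrak{l}_{k+1}=\mathfrak{g}$, so $\mathfrak{l}_{j} = \bigcap_{i\geq j}\mathfrak{g}_{H_{i}}$). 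This gives the containment $\subseteq$; the reverse containment $\supseteq$ is the easy direction, since if $X_{i}\in\mathfrak{l}_{i}$ then $[X_{i}, H_{m}]=0$ for all $m\geq i$, and one checks directly that such factors fix $H$ (the potentially obstructive cross-terms $[X_{i}, H_{\mathrm{res}}]z^{i}$ contribute to $z^{-1+i}$ which is non-singular, hence truncated to zero, so they do not affect $H$ modulo $\mathbb{C}[\![z]\!]$; and $[X_i, H_m]=0$ for $m\ge i$ kills the singular contributions).

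For the semidirect product decomposition $\mathrm{Stab}_{G(\mathbb{C}[z]_{k})}(H) = \mathrm{Stab}_{G}(H)\ltimes \mathrm{Stab}_{G(\mathbb{C}[z]_{k})_{1}}(H)$, I would use the splitting $G(\mathbb{C}[z]_{k}) = G\ltimes G(\mathbb{C}[z]_{k})_{1}$ already established: write $g = g_{0}\cdot g_{1}$ with $g_{0}\in G$, $g_{1}\in G(\mathbb{C}[z]_{k})_{1}$; since $g_{0}$ acts on $\mathfrak{g}(\mathbb{C}[z^{-1}]_{k})$ preserving the grading by powers of $z^{-1}$ while $g_{1}$ acts by operators that only increase the power of $z$, the condition $g\cdot H = H$ forces $g_{0}\cdot H_{i} = H_{i}$ for the leading part — in particular $g_{0}\in\mathrm{Stab}_{G}(H_{k})\cap\cdots$, and chasing down, $g_{0}\in\mathrm{Stab}_{G}(H)$ — and then $g_{1}\in\mathrm{Stab}_{G(\mathbb{C}[z]_{k})_{1}}(H)$ follows. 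Conversely $\mathrm{Stab}_{G}(H)$ normalizes $\mathrm{Stab}_{G(\mathbb{C}[z]_{k})_{1}}(H)$ since it normalizes $G(\mathbb{C}[z]_{k})_{1}$ and fixes $H$, giving the semidirect product. I expect the main obstacle to be the careful index bookkeeping in the degree-by-degree induction: isolating precisely which BCH cross-terms survive the truncation modulo $\mathbb{C}[\![z]\!]$ and verifying that they are exactly the ones controlled by $\mathrm{ad}(X_{j})H_{i}$ with $i\geq j$, so that the stabilizer condition cleanly reads off the sequence $\mathfrak{l}_{1}\supseteq\cdots\supseteq\mathfrak{l}_{k}$ rather than something coarser. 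This is essentially the content of Lemma 3.3 in \cite{Yam}, so I would also feel free to cite that directly for the computation and only sketch the reduction.
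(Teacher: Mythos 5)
Your proposal follows essentially the same route as the paper's proof: expand $g=e^{X_kz^k}\cdots e^{X_1z}$ via Proposition \ref{prop:explicitdesc}, write the coadjoint action coefficient by coefficient, induct downward from the top pole order to force $X_j\in\mathfrak{l}_j$, and obtain the second assertion from the splitting $G(\mathbb{C}[z]_k)=G\ltimes G(\mathbb{C}[z]_k)_1$. The one ingredient you gesture at but do not supply --- precisely the point you flag as the ``main obstacle'' --- is the mechanism that decouples the equations. At stage $i$ the invariance of the coefficient of $z^{-(k-i)-1}$ yields a \emph{sum} of bracket terms equal to zero, and to conclude that each term vanishes separately the paper uses the direct sum decompositions $\mathfrak{g}=\mathfrak{g}_{H_k}\oplus\mathrm{ad}(H_k)(\mathfrak{g})$ and $\mathfrak{l}_m=(\mathfrak{l}_m)_{H_{m-1}}\oplus\mathrm{ad}(H_{m-1})(\mathfrak{l}_m)$, valid because the $H_m$ are semisimple: the new term $\mathrm{ad}(X_i)(H_k)$ lies in $\mathrm{ad}(H_k)(\mathfrak{g})$ while the terms already controlled by the induction lie in $\mathfrak{l}_k=\mathfrak{g}_{H_k}$, so both vanish, and one then iterates inside $\mathfrak{l}_k$ against $H_{k-1}$, and so on. Without this semisimplicity argument the index bookkeeping alone does not close the induction. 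Your intermediate exponent arithmetic (the $z^{k-3}$ computation) is off, but you self-correct to the right statement that $e^{X_jz^j}$ shifts the coefficient of $z^{-(i+1)}$ by $\mathrm{ad}(X_j)(H_{i+j})$ plus higher-order contributions, which matches the paper's formula. For the semidirect product your claim that $g_0$ fixes all the $H_i$ ``for the leading part'' is slightly too quick: the paper alternates between the two factors ($g_1$ preserves $H_k$, hence $g_0\in L_k$, hence $\mathrm{Ad}^*(g_1)(H)\in\mathfrak{l}_k(\mathbb{C}[z^{-1}]_k)$, hence the exponents of $g_1$ lie in $\mathfrak{l}_k$, hence $g_1$ also preserves $H_{k-1}$, \ldots), but this is the same idea carried out carefully and is not a conceptual gap.
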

\begin{proof}
	Let us recall that 
	for 
	\[
	g=e^{z^{k}X_{k}}\cdots e^{zX_{1}}\in G(\mathbb{C}[z]_{k})_{1}
	\text{ and }
	B=\sum_{i=0}^{k}B_{i}z^{-i-1}\in \mathfrak{g}(\mathbb{C}[z^{-1}]_{k}),
	\]
	we have $\mathrm{Ad}^{*}(g)(B)=\sum_{i=0}^{k}C_{i}z^{-i-1}$ with 
	\[
	C_{i}
	=\sum_{l_{1},l_{2},\ldots,l_{k}\ge 0}
	\frac{\mathrm{ad}(X_{k})^{l_{k}}\circ \cdots\circ \mathrm{ad}(X_{2})^{l_{2}}\circ \mathrm{ad}(X_{1})^{l_{1}}
	(B_{i+l_{1}+2l_{2}+\cdots +kl_{k}})}
	{l_{1}!l_{2}!\cdots l_{k}!}.
	\]
	Let us  apply this formula to the case $g\in \mathrm{Stab}_{G(\mathbb{C}[z]_{k})_{1}}(H)$
	and $B=H$.
	Then the equation $C_{k-1}=H_{k-1}$ implies
	\[
		H_{k-1}=H_{k-1}+\mathrm{ad}(X_{1})(H_{k})
	\] 
	which gives us $X_{1}\in \mathfrak{l}_{k}$.
	Secondly, the equation $C_{k-2}=H_{k-2}$ implies 
	\[
		H_{k-2}=H_{k-2}+\mathrm{ad}(X_{1})(H_{k-1})+\mathrm{ad}(X_{2})(H_{k}).	
	\]
	Let us recall the decomposition 
	$\mathfrak{g}=\mathfrak{g}_{H_{k}}\oplus \mathrm{ad}(H_{k})(\mathfrak{g})$
	and the equation $\mathfrak{g}_{H_{k}}=\mathfrak{l}_{k}$.
	Here we recall that $\mathfrak{g}_{H_{k}}$ denotes the 
	kernel of $\mathrm{ad}(H_{k})\in \mathrm{End}_{\mathbb{C}}(\mathfrak{g})$.
	Then since $\mathrm{ad}(X_{1})(H_{k-1})\in \mathfrak{l}_{k}$ and $\mathrm{ad}(X_{2})(H_{k})
	\in \mathrm{ad}(H_{k})(\mathfrak{g})$,
	$\mathrm{ad}(X_{1})(H_{k-1})=\mathrm{ad}(X_{2})(H_{k})=0$.
	This implies $X_{1}\in \mathfrak{l}_{k-1}$ and $X_{2}\in \mathfrak{l}_{k}$.
	
	Similarly, for $i>2$ under the assumption $X_{j}\in \mathfrak{l}_{k-i+j+1}$ for $j=1,2,\ldots,i-1$
	the equation $C_{k-i}=H_{k-i}$
	implies 
	\begin{multline*}
		\sum_{l_{1},l_{2},\ldots,l_{i-1}> 0}
		\frac{\mathrm{ad}(X_{i-1})^{l_{i-1}}\circ \cdots\circ \mathrm{ad}(X_{2})^{l_{2}}\circ \mathrm{ad}(X_{1})^{l_{1}}
		(H_{k-i+l_{1}+2l_{2}+\cdots +(i-1)l_{i-1}})}
		{l_{1}!l_{2}!\cdots l_{i-1}!}\\
		+\mathrm{ad}(X_{i})(H_{k})=0.
	\end{multline*}
	From the assumption, the first term belongs to $\mathfrak{l}_{k}$.
	The above argument shows $\mathrm{ad}(X_{i})(H_{k})=0$, i.e.,
	$X_{i}\in \mathfrak{l}_{k}$.
	Further this implies that 
	\begin{multline*}
		\sum_{l_{1},l_{2},\ldots,l_{i-2}> 0}
		\frac{\mathrm{ad}(X_{i-1})^{l_{i-1}}\circ \cdots\circ \mathrm{ad}(X_{2})^{l_{2}}\circ \mathrm{ad}(X_{1})^{l_{1}}
		(H_{k-i+l_{1}+2l_{2}+\cdots +(i-2)l_{i-2}})}
		{l_{1}!l_{2}!\cdots l_{i-2}!}\\
		+\mathrm{ad}(X_{i-1})(H_{k-1})=0.
	\end{multline*}
	Here we used the equation $\mathrm{ad}(X_{i-1})\circ \mathrm{ad}(X_{1})(H_{k})=\mathrm{ad}(X_{i-1})(H_{k})=0$
	which follows from the assumption $X_{1}\in \mathfrak{l}_{k-i+2}$
	and $X_{i-1}\in \mathfrak{l}_{k}$.
	Then since the first term belongs to $\mathfrak{l}_{k-1}$,
	the decomposition $\mathfrak{l}_{k}=(\mathfrak{l}_{k})_{H_{k-1}}\oplus \mathrm{ad}(H_{k-1})(\mathfrak{l}_{k})$
	and the equation $(\mathfrak{l}_{k})_{H_{k-1}}=\mathfrak{l}_{k-1}$
	imply that $\mathrm{ad}(X_{i-1})(H_{k-1})$, i.e., $X_{i-1}\in \mathfrak{l}_{k-1}$.
	Iterating this procedure, we obtain 
	$X_{j}\in \mathfrak{l}_{k-i+j}$ for $j=1,2,\ldots,i$.

	Thus inductively, we obtain $X_{j}\in \mathfrak{l}_{j}$ for $j=1,2,\ldots,k$ which shows 
	the desired equation for $\mathrm{Stab}_{G(\mathbb{C}[z]_{k})_{1}}(H)$.
	
	Let us see the second assertion. 
	Take $g\in \mathrm{Stab}_{G(\mathbb{C}[z]_{k})}(H)$
	and decompose $g=g_{0}\cdot g_{1}$ by
	$g_{0}\in G$ and $g_{1}\in G(\mathbb{C}[z]_{k})_{1}$.
	Then 
	since the constant term of $g_{1}$ is the identity, we have 
	\[
		\mathrm{Ad}^{*}(g_{1})(H)-H\equiv 0\quad (\mathrm{mod\,}z^{-k}),
	\]
	and moreover $g_{0}\in\mathrm{Stab}_{G}(H_{k})=L_{k}$.
	This implies that 
	\[
		\mathrm{Ad}^{*}(g_{1})(H)=\mathrm{Ad}^{*}(g_{0}^{-1})(H)\in \mathfrak{l}_{k}(\mathbb{C}[z^{-1}]_{k})
	\]
	which further shows 
	\[
		g_{1}\in \left\{
			e^{X_{k}z^{k}}\cdots e^{X_{2}z^{2}}e^{X_{1}z}\in G(\mathbb{C}[z]_{k})_{1}\,\middle|\,
			X_{i}\in \mathfrak{l}_{k},\ i=1,\ldots,k
		\right\}.	
	\]
	Then since $H_{k}$ is in the center of $\mathfrak{l}_{k}$, we have  
	\[
	\mathrm{Ad}^{*}(g_{1})(H)-H\equiv 0\quad (\mathrm{mod\,}z^{-(k-1)}),
	\]
	and moreover $g_{0}\in \mathrm{Stab}_{G}(H_{k})\cap \mathrm{Stab}_{G}(H_{k-1})=L_{k-1}$.
	Therefore we have 
	\[
		\mathrm{Ad}^{*}(g_{1})(H)=\mathrm{Ad}^{*}(g_{0}^{-1})(H)\in \mathfrak{l}_{k-1}(\mathbb{C}[z^{-1}]_{k})
	\]
	which further shows 
	\[
		g_{1}\in \left\{
			e^{X_{k}z^{k}}\cdots e^{X_{2}z^{2}}e^{X_{1}z}\in G(\mathbb{C}[z]_{k})_{1}\,\middle|\,
			X_{k}\in \mathfrak{l}_{k},\,X_{i}\in \mathfrak{l}_{k-1},\ i=1,\ldots,k-1
		\right\}.	
	\]
	Here we recall that $e^{X_{k}z^{k}}$ with $X_{k}\in \mathfrak{l}_{k}$
	is in $\mathrm{Stab}_{G(\mathbb{C}[z]_{k})_{1}}(H)$ by the first assertion.
	By iterating this procedure, we obtain the second assertion.
\end{proof}
\begin{cor}\label{cor:specdimorb}
Let $H\,dz\in \mathfrak{g}(\mathbb{C}[z^{-1}]_{k})\,dz$ be an unramified canonical form.
Then we have 
\[
	\mathrm{dim\,}\mathbb{O}_{H}=\delta(H).	
\]
\end{cor}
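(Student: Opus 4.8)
The plan is to compute $\dim \mathbb{O}_{H}$ as the dimension of a homogeneous space for a finite-dimensional complex Lie group and then to match it termwise with $\delta(H)$. First I would invoke the identification established in Section \ref{sec:trucoad}: the truncated orbit $\mathbb{O}_{H}$ is the $G(\mathbb{C}[z]_{k})$-orbit through $H\in \mathfrak{g}(\mathbb{C}[z^{-1}]_{k})$ under the coadjoint action on the finite-dimensional vector space $\mathfrak{g}(\mathbb{C}[z^{-1}]_{k})$. Since this is an orbit of an algebraic (in particular, finite-dimensional Lie) group, the orbit map induces a biholomorphism $G(\mathbb{C}[z]_{k})/\mathrm{Stab}_{G(\mathbb{C}[z]_{k})}(H)\xrightarrow{\ \sim\ }\mathbb{O}_{H}$, so
\[
	\dim \mathbb{O}_{H}=\dim G(\mathbb{C}[z]_{k})-\dim \mathrm{Stab}_{G(\mathbb{C}[z]_{k})}(H).
\]
It then remains to evaluate the two terms on the right.

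For the ambient group, the semidirect product decomposition $G(\mathbb{C}[z]_{k})=G\ltimes G(\mathbb{C}[z]_{k})_{1}$ together with the identification $\mathfrak{g}(\mathbb{C}[z]_{k})_{1}\cong \mathfrak{g}\otimes_{\mathbb{C}}\mathfrak{m}_{z}^{(k)}$ and $\dim_{\mathbb{C}}\mathfrak{m}_{z}^{(k)}=k$ give $\dim G(\mathbb{C}[z]_{k})=(k+1)\dim G$. For the stabilizer, Proposition \ref{prop:stabh} provides the further semidirect decomposition $\mathrm{Stab}_{G(\mathbb{C}[z]_{k})}(H)=\mathrm{Stab}_{G}(H)\ltimes \mathrm{Stab}_{G(\mathbb{C}[z]_{k})_{1}}(H)$ and describes $\mathrm{Stab}_{G(\mathbb{C}[z]_{k})_{1}}(H)$ as the set of elements $e^{X_{k}z^{k}}\cdots e^{X_{1}z}$ with $X_{i}\in \mathfrak{l}_{i}$. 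Applying Proposition \ref{prop:explicitdesc}, the product map $(X_{i})_{i=1,\dots,k}\mapsto e^{X_{k}z^{k}}\cdots e^{X_{1}z}$ is a biholomorphism of $\bigoplus_{i=1}^{k}\mathfrak{g}^{[i]}$ onto $G(\mathbb{C}[z]_{k})_{1}$; restricting it to the linear subspace cut out by the conditions $X_{i}\in \mathfrak{l}_{i}$ exhibits $\mathrm{Stab}_{G(\mathbb{C}[z]_{k})_{1}}(H)$ as a submanifold of dimension $\sum_{i=1}^{k}\dim \mathfrak{l}_{i}=\sum_{i=1}^{k}\dim L_{i}$. Hence $\dim \mathrm{Stab}_{G(\mathbb{C}[z]_{k})}(H)=\dim \mathrm{Stab}_{G}(H)+\sum_{i=1}^{k}\dim L_{i}$.

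Assembling the pieces,
\[
	\dim \mathbb{O}_{H}=(k+1)\dim G-\dim \mathrm{Stab}_{G}(H)-\sum_{i=1}^{k}\dim L_{i}
	=\dim G+\sum_{i=1}^{k}(\dim G-\dim L_{i})-\dim \mathrm{Stab}_{G}(H),
\]
and the middle sum is exactly $\mathrm{Irr}(H)$ by definition, so the right-hand side equals $\delta(H)=\dim G+\mathrm{Irr}(H)-\dim \mathrm{Stab}_{G}(H)$, which is the desired identity.

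The argument is essentially bookkeeping once Propositions \ref{prop:stabh} and \ref{prop:explicitdesc} are in hand. The only point that deserves care is that $\mathrm{Stab}_{G(\mathbb{C}[z]_{k})_{1}}(H)$, a priori merely a closed subgroup of $G(\mathbb{C}[z]_{k})_{1}$, really has dimension $\sum_{i}\dim L_{i}$ and not something smaller; this is precisely where the explicit biholomorphic product parametrization of $G(\mathbb{C}[z]_{k})_{1}$ is used, since it converts the defining conditions $X_{i}\in \mathfrak{l}_{i}$ into coordinate-wise linear conditions whose solution set is manifestly a submanifold of the expected dimension. One could alternatively compute the Lie algebra of the stabilizer by differentiation, but the product coordinates make the dimension count immediate.
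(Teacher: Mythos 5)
Your proof is correct and follows essentially the same route as the paper: identify $\mathbb{O}_{H}$ with $G(\mathbb{C}[z]_{k})/\mathrm{Stab}_{G(\mathbb{C}[z]_{k})}(H)$, compute $\dim G(\mathbb{C}[z]_{k})=(k+1)\dim G$, read off the stabilizer dimension from Proposition \ref{prop:stabh}, and do the arithmetic. The extra care you take in justifying $\dim \mathrm{Stab}_{G(\mathbb{C}[z]_{k})_{1}}(H)=\sum_{i=1}^{k}\dim L_{i}$ via the product parametrization of Proposition \ref{prop:explicitdesc} is a welcome but minor elaboration of what the paper leaves implicit.
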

\begin{proof}
	Recall that 
	\[
		\mathrm{dim\,}G(\mathbb{C}[z]_{k})=\mathrm{dim\,}\mathfrak{g}(\mathbb{C}[z]_{k})
		=\mathrm{dim\,}\mathfrak{g}\cdot \mathrm{dim\,}\mathbb{C}[z]_{k}
		=(k+1)\cdot\mathrm{dim\,}G.
	\]
	Therefore from Proposition \ref{prop:stabh} we have 
	\begin{align*}
		\mathrm{dim\,}\mathbb{O}_{H}&=\mathrm{dim\,}G(\mathbb{C}[z]_{k})-\mathrm{dim\,}\mathrm{Stab}_{G(\mathbb{C}[z]_{k})}(H)\\
		&=(k+1)\cdot \mathrm{dim\,}G-\left(\sum_{i=1}^{k}\mathrm{dim\,}L_{i}+\mathrm{dim\,}\mathrm{Stab}_{G}(H)\right)\\
		&=\mathrm{dim\,}G+\sum_{i=1}^{k}(\mathrm{dim\,}G-\mathrm{dim\,}L_{i})-\mathrm{dim\,}\mathrm{Stab}_{G}(H)\\
		&=\mathrm{dim\,}G+\mathrm{Irr\,}(H)-\mathrm{dim\,}\mathrm{Stab}_{G}(H)
		=\delta(H).
	\end{align*}
\end{proof}

\section{Meromorphic $G$-connections on $\mathbb{P}^{1}$ with unramified canonical forms}\label{sec:rigidindex}
Let us consider meromorphic connections on the trivial $G$-bundle 
over $\mathbb{P}^{1}$ and 
define the invariant of them, called the index of rigidity.
\subsection{Meromorphic $G$-connections on $\mathbb{P}^{1}$ and index of rigidity}\label{sec:indrig}
Let us fix a finite set \index[cN]{$\left\lvert D \right\rvert$}$|D|:=\{a_{1},a_{2},\ldots,a_{d}\}$ 
of points in $\mathbb{P}^{1}$, and  
define an effective divisor \index[cN]{$D$}$D:=\sum_{a\in \{a_{1},\ldots,a_{d}\}}(k_{a}+1)\cdot a$ 
with \index[cN]{$k_{a}$}$k_{a}\in\mathbb{Z}_{\ge 0}$.
Let \index[cN]{$z_{a}$}$z_{a}$ for $a\in |D|$ be coordinate functions on $\mathbb{P}^{1}$
centered at $a$.
Let \index[cN]{$\mathcal{O}_{\mathbb{P}^{1}}$}$\mathcal{O}_{\mathbb{P}^{1}}$ be the sheaf of regular functions on $\mathbb{P}^{1}$.
Let \index[cN]{$\mathcal{O}_{\mathbb{P}^{1},D}$}$\mathcal{O}_{\mathbb{P}^{1},D}$ and \index[cN]{$\varOmega_{\mathbb{P}^{1},D}$}$\varOmega_{\mathbb{P}^{1},D}$ denote 
the sheaves of rational functions and of rational 1-forms associated to $D$ respectively.

Recalling that a connection on the trivial $G$-bundle
corresponds to a $\mathfrak{g}$-valued 1-form,
we 
take a $\mathfrak{g}$-valued meromorphic algebraic $1$-form on $\mathbb{P}^{1}$\index[cN]{$\nabla_{A}$}
\[
	\nabla_{A}=A\,dz\in \mathfrak{g}\otimes_{\mathbb{C}}\varOmega_{\mathbb{P}^{1},D}(\mathbb{P}^{1})
\]
and call it a {\em meromorphic connection on the trivial $G$-bundle over $\mathbb{P}^{1}$}
or 
{\em meromorphic $G$-connection on $\mathbb{P}^{1}$} shortly.

By applying a projective transformation on $\mathbb{P}^{1}$, we may assume that $|D|\subset \mathbb{C}$.
Then we can write 
\[
	\nabla_{A}=\sum_{a\in |D|}\sum_{i=0}^{k_{a}}\frac{A^{(a)}_{i}}{(z-a)^{i}}\frac{dz}{z-a},\quad (A_{i}^{(a)}\in \mathfrak{g}).
\]
Here we note that the residues satisfy the relation
\begin{equation}\label{eq:rescond}
	\sum_{a\in |D|}A^{(a)}_{0}=0	
\end{equation}
since $\nabla_{A}$ is regular at $\infty$.
Now we moreover suppose that $\nabla_{A}$ has 
unramified truncated canonical form \index[cN]{$H^{(a)}$}$H^{(a)}\in \mathfrak{g}(\mathbb{C}[z_{a}^{-1}]_{k_{a}})$
at each singular point $a\in |D|$. Then the collection \index[cN]{$\mathbf{H}$}$\mathbf{H}=(H^{(a)})_{a\in |D|}$
of canonical forms should satisfy the following relation which is an analogue of the Fuchs relation.
\begin{prop}\label{prop:resss}
	Let us take a meromorphic $G$-connection 
	\[
		\sum_{a\in |D|}\sum_{i=0}^{k_{a}}\frac{A^{(a)}_{i}}{(z-a)^{i}}\frac{dz}{z-a}
	\]
	on $\mathbb{P}^{1}$  as above.
	Let us suppose that there exists a collection of canonical forms $\mathbf{H}=(H^{(a)})_{a\in |D|}
	\in \prod_{a\in |D|}\mathfrak{g}(\mathbb{C}[z_{a}^{-1}]_{k_{a}})$
	such that 
	\[
		\sum_{i=0}^{k_{a}}\frac{A^{(a)}_{i}}{(z-a)^{i+1}}\in \mathbb{O}_{H^{(a)}},\quad a\in |D|.
	\]
	Then we have 
	\[
		\sum_{a\in |D|}H^{(a)}_{\mathrm{res}}\in \mathfrak{g}_{\mathrm{ss}}.	
	\]
	Here \index[cN]{$\mathfrak{g}_{\mathrm{ss}}$}$\mathfrak{g}_{\mathrm{ss}}:=[\mathfrak{g},\mathfrak{g}]$ is the semisimple part of the reductive Lie algebra $\mathfrak{g}$.
\end{prop}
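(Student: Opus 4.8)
The plan is to combine the residue relation \eqref{eq:rescond} with a single structural fact about truncated orbits, namely that the coefficient of $z_a^{-1}$ in any element of $\mathbb{O}_{H^{(a)}}$ equals $H^{(a)}_{\mathrm{res}}$ up to the semisimple part $\mathfrak{g}_{\mathrm{ss}}=[\mathfrak{g},\mathfrak{g}]$. So I would first establish the claim that
\[
	\mathbb{O}_{H}\subseteq H+\mathfrak{g}_{\mathrm{ss}}\big(\mathbb{C}(\!(z)\!)/\mathbb{C}[\![z]\!]\big)
\]
for every unramified canonical form $H$, where $\mathfrak{g}_{\mathrm{ss}}(\mathcal R)=\mathfrak{g}_{\mathrm{ss}}\otimes_{\mathbb{C}}\mathcal R$ is the subspace of $\mathfrak{g}(\mathcal R)$ cut out by the ideal $\mathfrak{g}_{\mathrm{ss}}\subseteq\mathfrak{g}$. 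Granting this, I extract from the hypothesis $\sum_{i=0}^{k_a}A^{(a)}_i(z-a)^{-i-1}\in\mathbb{O}_{H^{(a)}}$ the coefficient of $z_a^{-1}$, which is $A^{(a)}_0$ on the left and $H^{(a)}_{\mathrm{res}}$ on the right; the claim then gives $A^{(a)}_0\in H^{(a)}_{\mathrm{res}}+\mathfrak{g}_{\mathrm{ss}}$ for each $a\in|D|$.

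To prove the claim, I would use the reductive decomposition $\mathfrak{g}=\mathfrak{z}(\mathfrak{g})\oplus\mathfrak{g}_{\mathrm{ss}}$, so that $\mathfrak{g}/\mathfrak{g}_{\mathrm{ss}}$ is abelian. For $Y,X\in\mathfrak{g}$ one has $\mathrm{Ad}(e^{Y})X=X+[Y,X]+\tfrac12[Y,[Y,X]]+\cdots\equiv X\pmod{\mathfrak{g}_{\mathrm{ss}}}$, and since $G$ is connected, it is generated by one-parameter subgroups; hence $\mathrm{Ad}(g)X-X\in\mathfrak{g}_{\mathrm{ss}}$ for all $g\in G$. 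By Section \ref{sec:trucoad}, $\mathbb{O}_{H}$ is the orbit of $H\in\mathfrak{g}(\mathbb{C}[z^{-1}]_{k})$ under $G(\mathbb{C}[z]_{k})=G\ltimes G(\mathbb{C}[z]_{k})_{1}$, so it suffices to treat the constant factor $G$ (done, applied coefficientwise) and the connected unipotent factor $G(\mathbb{C}[z]_{k})_{1}$, whose elements are products of exponentials $e^{Xz^{l}}$ ($X\in\mathfrak{g}$, $l\ge1$) and whose adjoint action modifies $H$ only by iterated brackets, hence by an element of $\mathfrak{g}_{\mathrm{ss}}(\mathbb{C}[z^{-1}]_{k})$. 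Alternatively, one can read the claim directly off the explicit formula for the residue coefficient $C_{0}$ displayed in the proof of Proposition \ref{prop:stabh}: every summand of $C_{0}$ other than $B_{0}=H_{\mathrm{res}}$ carries at least one factor $\mathrm{ad}(X_{j})$ and therefore lies in $\mathfrak{g}_{\mathrm{ss}}$, with the remaining conjugation by the constant factor $g_{0}\in G$ handled as above.

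Finally, I would sum the relations $A^{(a)}_{0}\equiv H^{(a)}_{\mathrm{res}}\pmod{\mathfrak{g}_{\mathrm{ss}}}$ over $a\in|D|$ and invoke \eqref{eq:rescond}, which gives $\sum_{a\in|D|}A^{(a)}_{0}=0$, to conclude $\sum_{a\in|D|}H^{(a)}_{\mathrm{res}}\in\mathfrak{g}_{\mathrm{ss}}$. I do not expect a genuine obstacle; the only step requiring care is the claim above, specifically checking that the adjoint action of the whole group $G(\mathbb{C}[z_{a}]_{k_{a}})$ — not just its unipotent part — acts trivially on the abelianization $\mathfrak{g}/\mathfrak{g}_{\mathrm{ss}}$ of the coefficient Lie algebra, which ultimately rests on $G$ being connected.
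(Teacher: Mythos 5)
Your proof is correct, but it runs in the opposite direction from the paper's. The paper argues dually via the trace pairing: it first identifies $\mathfrak{g}_{\mathrm{ss}}^{\bot}=\mathfrak{z}$ with respect to the trace form, then pairs the residue identity $\sum_{a}A^{(a)}_{0}=0$ against an arbitrary central $X$ and uses $\mathrm{Ad}(g_{a}^{-1})(X)=X$ together with the invariance of the trace form to move the conjugation off of $H^{(a)}$, concluding $\sum_{a}\underset{z=a}{\mathrm{res\,}}(H^{(a)})\in\mathfrak{z}^{\bot}=\mathfrak{g}_{\mathrm{ss}}$. You instead prove the primal statement $\mathrm{Ad}^{*}(g)(H)-H\in\mathfrak{g}_{\mathrm{ss}}\otimes\mathbb{C}[z^{-1}]_{k}$, which pins down each $A^{(a)}_{0}$ modulo $\mathfrak{g}_{\mathrm{ss}}$ before summing. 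Both arguments ultimately rest on the same fact --- that the adjoint action of the connected group is trivial in the $\mathfrak{z}$-direction --- but your version isolates a reusable structural statement about truncated orbits (the coadjoint action only perturbs a canonical form within $\mathfrak{g}_{\mathrm{ss}}$ coefficientwise), whereas the paper's version avoids having to decompose $G(\mathbb{C}[z]_{k})$ into exponentials and a constant factor by letting the non-degeneracy of the trace form do that work. Your reduction of the unipotent part to the explicit formula for $C_{0}$ in the proof of Proposition \ref{prop:stabh} is sound (every summand other than $B_{0}$ carries at least one $\mathrm{ad}$ and so lands in $[\mathfrak{g},\mathfrak{g}]$), and the constant factor is indeed handled by connectedness of $G$; no gap.
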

\begin{proof}
	Firstly, note that computations in this proof will be carried out in  
	$\mathrm{GL}_{N}(\mathbb{C}[z_{a}]_{k_{a}})$, $M_{N}(\mathbb{C}[z_{a}]_{k_{a}})$,
	and 
	$M_{N}(\mathbf{C}[z_{a}^{-1}]_{k_{a}})$
	through the embedding $G\hookrightarrow \mathrm{GL}_{N}$.
	Since $\sum_{i=0}^{k_{a}}\frac{A^{(a)}_{i}}{(z-a)^{i+1}}\in \mathbb{O}_{H^{(a)}}$, for each $a\in |D|$,
	we can find $g_{a}\in 	G(\mathbb{C}[z_{a}]_{k_{a}})$
	so that $\mathrm{Ad}^{*}(g_{a})(H^{(a)})=\sum_{i=0}^{k_{a}}\frac{A^{(a)}_{i}}{(z-a)^{i+1}}$.

	Let $\mathfrak{z}$ be the center of $\mathfrak{g}$. Let $\mathfrak{g}_{\mathrm{ss}}^{\bot}$ be the orthogonal complement of $\mathfrak{g}_{\mathrm{ss}}$ with respect to the trace pairing.
	Then $\mathfrak{g}_{\mathrm{ss}}^{\bot}= \mathfrak{z}$. Indeed, 
	for $X\in \mathfrak{g}_{\mathrm{ss}}^{\bot}$ and $Y_{1},Y_{2}\in \mathfrak{g}$, 
	we have 
	\[
		\mathrm{tr}([X,Y_{1}]\cdot Y_{2})=\mathrm{tr}(X\cdot [Y_{1},Y_{2}])=0	
	\]
	since $[Y_{1},Y_{2}]\in \mathfrak{g}_{ss}=[\mathfrak{g},\mathfrak{g}]$. Thus since the trace pairing
	is non-degenerate for $\mathfrak{g}$, we have 
	$[X,Y_{1}]=0$, i.e.,
	$\mathfrak{g}_{\mathrm{ss}}^{\bot}\subset \mathfrak{z}$. 
	Conversely, recalling that $\mathfrak{g}=\mathfrak{g}_{\mathrm{ss}}^{\bot}\oplus \mathfrak{g}_{\mathrm{ss}}\subset \mathfrak{z}\oplus \mathfrak{g}_{\mathrm{ss}}=\mathfrak{g}$,
	we obtain $\mathfrak{g}_{\mathrm{ss}}^{\bot}= \mathfrak{z}$.
	Then for $X\in \mathfrak{g}_{\mathrm{ss}}^{\bot}=\mathfrak{z}$, we have 
	\begin{align*}
		0&=\mathrm{tr}(X\cdot 0)=
		\mathrm{tr}\left(X\cdot \sum_{a\in |D|}A^{(a)}_{0}\right)=
		\mathrm{tr}\left(X\cdot \sum_{a\in |D|}\underset{z=a}{\mathrm{res\,}}\left(\sum_{i=0}^{k_{a}}\frac{A^{(a)}_{i}}{(z-a)^{i+1}}\right)\right)\\
		&=\mathrm{tr}\left(X\cdot \sum_{a\in |D|}\underset{z=a}{\mathrm{res\,}}(\mathrm{Ad}(g_{a})(H^{(a)}))\right)
		=
		\sum_{a\in |D|}\underset{z=a}{\mathrm{res\,}}\mathrm{tr}(X\cdot  \mathrm{Ad}(g_{a})(H^{(a)}))\\
		&=\sum_{a\in |D|}\underset{z=a}{\mathrm{res\,}}\mathrm{tr}(\mathrm{Ad}(g_{a}^{-1})(X)\cdot H^{(a)})
		=\sum_{a\in |D|}\underset{z=a}{\mathrm{res\,}}\mathrm{tr}(X \cdot H^{(a)})\\
		&=\mathrm{tr}\left(X\cdot \sum_{a\in |D|}\underset{z=a}{\mathrm{res\,}}(H^{(a)})\right).
	\end{align*}
	This equation implies that $\sum_{a\in |D|}\underset{z=a}{\mathrm{res\,}}(H^{(a)})\in \mathfrak{z}^{\bot}=\mathfrak{g}_{\mathrm{ss}}$
	as desired.
\end{proof}
\begin{df}[Index of rigidity of $\mathbf{H}$]\normalfont
	Let $\mathbf{H}=(H^{(a)})_{a\in |D|}$ be a collection of canonical forms satisfying 
	$\sum_{a\in |D|}\underset{z=a}{\mathrm{res\,}}(H^{(a)})\in \mathfrak{g}_{\mathrm{ss}}.$
	The integer defined by \index[cN]{$\mathrm{rig\,}(\mathbf{H})$}
	\[
		\mathrm{rig\,}(\mathbf{H}):=2\,\mathrm{dim\,}G-\sum_{a\in |D|}\delta(H^{(a)})
	\]
	is called the {\em index of rigidity of $\mathbf{H}$}.
\end{df}
Here we note that $\mathrm{rig\,}(\mathbf{H})$ depends only on 
the spectral type $\mathrm{sp}(\mathbf{H})$ of $\mathbf{H}$.
Thus we can define the index of rigidity $\mathrm{rig\,}(\mathbf{S})$
for a collection $\mathbf{S}$ of abstract spectral types as well.

We can also associate this quantity to meromorphic $G$-connections on $\mathbb{P}^{1}$. 
\begin{df}[Truncated index of rigidity]\normalfont
	Let $\mathbf{H}=(H^{(a)})_{a\in |D|}$ be a collection of canonical forms as above.
	Let $\displaystyle \nabla_{A}=A\,dz$, $A\,dz\in \varOmega^{\mathfrak{g}}_{\mathbb{P}^{1},D}(\mathbb{P}^{1})$,
	be a meromorphic $G$-connection on  $\mathbb{P}^{1}$ with 
	the truncated canonical form $H^{(a)}$ at each $a\in |D|$.
	Then the {\em truncated index of rigidity} of $\nabla_{A}$ is 
	defined by \index[cN]{$\mathrm{rig}^{\mathrm{tr}}(\nabla_{A})$}
	\[
		\mathrm{rig}^{\mathrm{tr}}(\nabla_{A}):=\mathrm{rig\,}(\mathbf{H}).	
	\]
\end{df}
This is an analogue of the index of rigidity originally defined by Katz in \cite{Katz} for local systems on $\mathbb{P}^{1}\backslash \{n\text{-points}\}$.
Under the Riemann-Hilbert correspondence, one can also define the index of rigidity for meromorphic connections, see Arinkin's paper \cite{Ari} and its references for instance,
and also see the paper  \cite{JY} by Jakob and Yun for $G$-connections.

\subsection{Example I: Heun equation and its confluent equations}\label{sec:exI}
In this and the next sections, we restrict our interest to the case $G=\mathrm{GL}_{n}$ 
and consider some explicit differential equations on $\mathbb{P}^{1}$,
and then see their canonical forms and spectral types and indices of rigidity.

The first example is the Heun-type equation
\[	
	\frac{dY}{dz}=\left(\frac{A^{(0)}_{0}}{z}+\frac{A^{(1)}_{0}}{z-1}+\frac{A^{(t)}_{0}}{z-t}\right)Y,\quad (A^{(a)}_{0}\in \mathfrak{gl}_{2}=M_{2}(\mathbb{C}),\,a\in \{0,1,t\}).
\]
This is an analogue 
of the usual Heun differential equation which is a 2nd order 
scalar differential equation with $4$ regular singular points, $z=0,1,t,\infty$.
We regard this differential equation as a $\mathrm{GL}_{2}$-connection 
on $\mathbb{P}^{1}$ and see its canonical forms at singular points.
For simplicity we may assume that eigen-values of 
each   
residue matrices $A^{(0)}_{0},\,A^{(1)}_{0},\,A^{(t)}_{0},\,A^{(\infty)}_{0}:=-(A^{(0)}_{0}+A^{(1)}_{0}+A^{(t)}_{0})$
does not differ by integers.
Then the canonical forms at these singular points are of the forms  
\[
	H^{(a)}=\begin{pmatrix}\theta^{(a)}_{1} & \\ &\theta^{(a)}_{2} \end{pmatrix}z_{a}^{-1}\quad 
	(a\in \{0,1,t,\infty\})	
\]
where $\theta^{(a)}_{1}$ and $\theta^{(a)}_{2}$ are distinct eigen-values 
of $A_{a}$.

Similarly we consider the confluent Heun-type equation
\[	
	\frac{dY}{dz}=\left(\frac{A^{(0)}_{0}}{z}+\frac{A^{(1)}_{0}}{z-1}-A^{(\infty)}_{1}\right)Y
\] 
with the canonical forms 
\begin{align*}
	H^{(a)}&=\begin{pmatrix}\theta^{(a)}_{1} & \\ &\theta^{(a)}_{2} \end{pmatrix}z_{a}^{-1}\quad 
	(a\in \{0,1\}),\\
	H^{(\infty)}&=	
	\begin{pmatrix}\alpha^{(\infty)}_{1} & \\ &\alpha^{(\infty)}_{2} \end{pmatrix}\zeta^{-2}
	+\begin{pmatrix}\theta^{(\infty)}_{1} & \\ &\theta^{(\infty)}_{2} \end{pmatrix}\zeta^{-1}
\end{align*}
where $\zeta:=1/z$ and we assume $\theta^{(a)}_{1}- \theta^{(a)}_{2}\notin\mathbb{Z}$ for $a\in \{0,1\}$ and 
 $\alpha^{(\infty)}_{1}\neq \alpha^{(\infty)}_{2}$.
Also consider the doubly-confluent Heun-type equation
\[	
	\frac{dY}{dz}=\left(\frac{A^{(0)}_{1}}{z^{2}}+\frac{A^{(0)}_{0}}{z}-A^{(\infty)}_{1}\right)Y
\] 
with the canonical forms 
\begin{align*}
	H^{(0)}&=
	\begin{pmatrix}\alpha^{(0)}_{1} & \\ &\alpha^{(0)}_{2} \end{pmatrix}z^{-2}
	+\begin{pmatrix}\theta^{(0)}_{1} & \\ &\theta^{(0)}_{2} \end{pmatrix}z^{-1},\\
	H^{(\infty)}&=	
	\begin{pmatrix}\alpha^{(\infty)}_{1} & \\ &\alpha^{(\infty)}_{2} \end{pmatrix}\zeta^{-2}
	+\begin{pmatrix}\theta^{(\infty)}_{1} & \\ &\theta^{(\infty)}_{2} \end{pmatrix}\zeta^{-1}
\end{align*}
where $\alpha^{(a)}_{1}\neq \alpha^{(a)}_{2}$ for $a=0,\infty$.
Further consider the biconfluent Heun-type equation 
\[	
	\frac{dY}{dz}=\left(\frac{A^{(0)}_{0}}{z}-A^{(\infty)}_{1}-A^{(\infty)}_{2}z\right)Y
\] 
with the canonical forms 
\begin{align*}
	H^{(0)}&=\begin{pmatrix}\theta^{(0)}_{1} & \\ &\theta^{(0)}_{2} \end{pmatrix}z^{-1},\\
	H^{(\infty)}&=	
	\begin{pmatrix}\alpha^{(\infty)}_{1} & \\ &\alpha^{(\infty)}_{2} \end{pmatrix}\zeta^{-3}
	+\begin{pmatrix}\beta^{(\infty)}_{1} & \\ &\beta^{(\infty)}_{2} \end{pmatrix}\zeta^{-2}
	+\begin{pmatrix}\theta^{(\infty)}_{1} & \\ &\theta^{(\infty)}_{2} \end{pmatrix}\zeta^{-1}
\end{align*}
where we assume $\theta^{(0)}_{1}-\theta^{(0)}_{2}\notin\mathbb{Z}$ and $\alpha^{(\infty)}_{1}\neq \alpha^{(\infty)}_{2}$.
Finally consider the triconfluent Heun-type equation
\[	
	\frac{dY}{dz}=\left(-A^{(\infty)}_{3}z^{2}-A^{(\infty)}_{2}z-A^{(\infty)}_{1}\right)Y
\] 
with the canonical form
\begin{align*}
	&H^{(\infty)}=\\	
	&\begin{pmatrix}\alpha^{(\infty)}_{1} & \\ &\alpha^{(\infty)}_{2} \end{pmatrix}\zeta^{-4}
	+\begin{pmatrix}\beta^{(\infty)}_{1} & \\ &\beta^{(\infty)}_{2} \end{pmatrix}\zeta^{-3}+
	\begin{pmatrix}\gamma^{(\infty)}_{1} & \\ &\gamma^{(\infty)}_{2} \end{pmatrix}\zeta^{-2}
	+\begin{pmatrix}\theta^{(\infty)}_{1} & \\ &\theta^{(\infty)}_{2} \end{pmatrix}\zeta^{-1}
\end{align*}
where we assume 
$\alpha^{(\infty)}_{1}\neq \alpha^{(\infty)}_{2}$.

Let us consider the spectral types and indices of rigidity of these canonical forms.
Since we are in the case $G=\mathrm{GL}_{2}$, 
as a set of simple roots $\Pi$, we can take $\Pi=\{e_{12}:=e_{1}-e_{2}\}$
where $e_{i}$ are $i$-th projection of $\mathbb{C}^{2}\cong \mathfrak{t}$.
Then for the Heun-type equation,
\[
	\mathrm{sp\,}(H^{(a)})=\left(\emptyset;[0]\right)\quad (a\in \{0,1,t,\infty\}),	
\]
for the confluent Heun-type equation,
\[
	\mathrm{sp\,}(H^{(a)})=\left(\emptyset;[0]\right)\ (a\in \{0,1\}),
	\quad 
	\mathrm{sp\,}(H^{(\infty)})=\left(\emptyset\supset \emptyset;[0]\right),
\]
for the doubly-confluent Heun-type equation,
\[
	\mathrm{sp\,}(H^{(0)})=\left(\emptyset\supset \emptyset;[0]\right),
	\quad 
	\mathrm{sp\,}(H^{(\infty)})=\left(\emptyset\supset \emptyset;[0]\right),
\]
for the biconfluent Heun-type equation,
\[
	\mathrm{sp\,}(H^{(0)})=\left(\emptyset;[0]\right),
	\quad 
	\mathrm{sp\,}(H^{(\infty)})=\left(\emptyset\supset \emptyset\supset \emptyset;[0]\right),
\] 
and finally for the triconfluent Heun-type equation,
\[
	\mathrm{sp\,}(H^{(\infty)})=\left(\emptyset\supset \emptyset\supset \emptyset\supset \emptyset;[0]\right).
\] 

It can be checked that all these cases have the same index of rigidity 
\[
	\mathrm{rig\,}(\mathbf{H})=0.	
\]
\subsection{Example II}\label{sec:exII}
The Heun-type equations in the previous section 
have the spectral types of quite simple forms 
since they are $\mathrm{GL}_{2}$-connections, i.e.,
the set of simple root $\Pi$ is the singleton set.
Here we shall consider the following little more complicated example
\[	
	\frac{dY}{dz}=\left(\frac{A^{(0)}_{0}}{z}+\frac{A^{(1)}_{0}}{z-1}+\frac{A^{(t)}_{0}}{z-t}\right)Y,\quad (A^{(a)}_{0}\in \mathfrak{gl}_{4}=M_{4}(\mathbb{C}),\,a\in \{0,1,t\})
\]
with the truncated canonical forms,
\begin{align*}
	H^{(a)}&=\begin{pmatrix}\theta^{(a)}_{1}I_{2} & \\ &\theta^{(a)}_{2}I_{2} \end{pmatrix}z_{a}^{-1}\quad 
	(a\in \{0,1,t\}),&
	H^{(\infty)}&=	
	\begin{pmatrix}\theta^{(\infty)}_{1}I_{2} & &\\ &\theta^{(\infty)}_{2}&\\&& \theta^{(\infty)}_{3}\end{pmatrix}\zeta^{-1}
\end{align*}
where $I_{2}$ is the identity matrix of size $2$ and 
we assume $\theta^{(a)}_{i}\neq \theta^{(a)}_{j}$ for $i\neq j$ and $a\in \{0,1,t,\infty\}$.
This differential equation appears in \cite{Bo} by Boalch and 
\cite{Sak} by Sakai to construct a higher dimensional analogue of the Painlev\'e VI equation, called the matrix 
Painlev\'e VI system in \cite{KNS}  by Kawakami-Nakamura-Sakai.
Furthermore, in the same paper, Kawakami-Nakamura-Sakai
classified  confluent type equations with unramified irregular singularities
arising from the above Fuchsian differential equation.
Now we recall these confluent type equations.

\noindent
\textbf{The confluent equation of type I}:
\[	
	\frac{dY}{dz}=\left(\frac{A^{(0)}_{1}}{z^{2}}+\frac{A^{(0)}_{0}}{z}+\frac{A^{(1)}_{0}}{z-1}\right)Y,\quad (A^{(a)}_{i}\in \mathfrak{gl}_{4}=M_{4}(\mathbb{C}),\,a\in \{0,1\})
\]
with the truncated canonical forms,
\begin{align*}
	H^{(0)}&=\begin{pmatrix}\alpha^{(0)}_{1}I_{2} & \\ &\alpha^{(0)}_{2}I_{2} \end{pmatrix}z^{-2}+
	\begin{pmatrix}\theta^{(0)}_{1}I_{2} & \\ &\theta^{(0)}_{2}I_{2} \end{pmatrix}z^{-1},	
	\\
	H^{(1)}&=\begin{pmatrix}\theta^{(1)}_{1}I_{2} & \\ &\theta^{(1)}_{2}I_{2} \end{pmatrix}(z-1)^{-1},\quad\quad
	H^{(\infty)}=	
	\begin{pmatrix}\theta^{(\infty)}_{1}I_{2} & &\\ &\theta^{(\infty)}_{2}&\\&& \theta^{(\infty)}_{3}\end{pmatrix}\zeta^{-1}
\end{align*}
where  
we assume $\alpha^{(0)}_{1}\neq \alpha^{(0)}_{2}$ and 
$\theta^{(a)}_{i}\neq \theta^{(a)}_{j}$ for $i\neq j$ and $a\in \{1,\infty\}$.

\noindent
\textbf{The confluent equation of type II}:
\[	
	\frac{dY}{dz}=\left(\frac{A^{(0)}_{0}}{z}+\frac{A^{(1)}_{0}}{z-1}-A^{(\infty)}_{1}\right)Y,\quad (A^{(a)}_{i}\in \mathfrak{gl}_{4}=M_{4}(\mathbb{C}),\,a\in \{0,1,\infty\})
\]
with the truncated canonical forms,
\begin{align*}
	H^{(a)}&=\begin{pmatrix}\theta^{(a)}_{1}I_{2} & \\ &\theta^{(a)}_{2}I_{2} \end{pmatrix}z_{a}^{-1}\quad (a\in \{0,1\}),\\
	H^{(\infty)}&=	\begin{pmatrix}\alpha^{(\infty)}_{1}I_{2} & \\ &\alpha^{(\infty)}_{2}I_{2} \end{pmatrix}\zeta^{-2}+
	\begin{pmatrix}\theta^{(\infty)}_{1}I_{2} & &\\ &\theta^{(\infty)}_{2}&\\&& \theta^{(\infty)}_{3}\end{pmatrix}\zeta^{-1}
\end{align*}
where 
we assume $\alpha^{(\infty)}_{1}\neq \alpha^{(\infty)}_{2}$,
$\theta^{(a)}_{1}\neq \theta^{(a)}_{2}$ for  $a\in \{0,1\}$,
and $\theta^{(\infty)}_{2}\neq \theta^{(\infty)}_{3}$.

\noindent
\textbf{The doubly-confluent equation}:
\[	
	\frac{dY}{dz}=\left(\frac{A^{(0)}_{1}}{z^{2}}+\frac{A^{(0)}_{0}}{z}-A^{(\infty)}_{1}\right)Y,\quad (A^{(a)}_{i}\in \mathfrak{gl}_{4}=M_{4}(\mathbb{C}),\,a\in \{0,\infty\})
\]
with the truncated canonical forms,
\begin{align*}
	H^{(0)}&=\begin{pmatrix}\alpha^{(0)}_{1}I_{2} & \\ &\alpha^{(0)}_{2}I_{2} \end{pmatrix}z^{-2}+
	\begin{pmatrix}\theta^{(0)}_{1}I_{2} & \\ &\theta^{(0)}_{2}I_{2} \end{pmatrix}z^{-1},	
	\\
	H^{(\infty)}&=	\begin{pmatrix}\alpha^{(\infty)}_{1}I_{2} & \\ &\alpha^{(\infty)}_{2}I_{2} \end{pmatrix}\zeta^{-2}+
	\begin{pmatrix}\theta^{(\infty)}_{1}I_{2} & &\\ &\theta^{(\infty)}_{2}&\\&& \theta^{(\infty)}_{3}\end{pmatrix}\zeta^{-1}
\end{align*}
where 
we assume $\alpha^{(a)}_{1}\neq \alpha^{(a)}_{2}$,
 for  $a\in \{0,\infty\}$ and $\theta^{(\infty)}_{2}\neq \theta^{(\infty)}_{3}$.

 \noindent
\textbf{The biconfluent equation of type I}:
\[	
	\frac{dY}{dz}=\left(\frac{A^{(0)}_{2}}{z^{3}}+\frac{A^{(0)}_{1}}{z^{2}}+\frac{A^{(0)}_{0}}{z}\right)Y,\quad (A^{(a)}_{i}\in \mathfrak{gl}_{4}=M_{4}(\mathbb{C}),\,a\in \{0,1\})
\]
with the truncated canonical forms,
\begin{align*}
	H^{(0)}&=\begin{pmatrix}\alpha^{(0)}_{1}I_{2} & \\ &\alpha^{(0)}_{2}I_{2} \end{pmatrix}z^{-3}+
	\begin{pmatrix}\beta^{(0)}_{1}I_{2} & \\ &\beta^{(0)}_{2}I_{2} \end{pmatrix}z^{-2}+
	\begin{pmatrix}\theta^{(0)}_{1}I_{2} & \\ &\theta^{(0)}_{2}I_{2} \end{pmatrix}z^{-1},	
	\\
	H^{(\infty)}&=	
	\begin{pmatrix}\theta^{(\infty)}_{1}I_{2} & &\\ &\theta^{(\infty)}_{2}&\\&& \theta^{(\infty)}_{3}\end{pmatrix}\zeta^{-1}
\end{align*}
where  
we assume $\alpha^{(0)}_{1}\neq \alpha^{(0)}_{2}$ and 
$\theta^{(\infty)}_{i}\neq \theta^{(\infty)}_{j}$ for $i\neq j$.

\noindent
\textbf{The biconfluent equation of type II}:
\[	
	\frac{dY}{dz}=\left(\frac{A^{(0)}_{0}}{z}-A^{(\infty)}_{2}z-A^{(\infty)}_{1}\right)Y,\quad (A^{(a)}_{i}\in \mathfrak{gl}_{4}=M_{4}(\mathbb{C}),\,a\in \{0,\infty\})
\]
with the truncated canonical forms,
\begin{align*}
	&H^{(0)}=\begin{pmatrix}\theta^{(0)}_{1}I_{2} & \\ &\theta^{(0)}_{2}I_{2} \end{pmatrix}z^{-1},\\
	&H^{(\infty)}=\\	
	&\begin{pmatrix}\alpha^{(\infty)}_{1}I_{2} & \\ &\alpha^{(\infty)}_{2}I_{2} \end{pmatrix}\zeta^{-2}+
	\begin{pmatrix}\beta^{(\infty)}_{1}I_{2} & \\ &\beta^{(\infty)}_{2}I_{2} \end{pmatrix}\zeta^{-2}+
	\begin{pmatrix}\theta^{(\infty)}_{1}I_{2} & &\\ &\theta^{(\infty)}_{2}&\\&& \theta^{(\infty)}_{3}\end{pmatrix}\zeta^{-1}
\end{align*}
where 
we assume $\alpha^{(\infty)}_{1}\neq \alpha^{(\infty)}_{2}$, 
$\theta^{(0)}_{1}\neq \theta^{(0)}_{2}$ and $\theta^{(\infty)}_{2}\neq \theta^{(\infty)}_{3}$.

\noindent
\textbf{The triconfluent equation}:
\[	
	\frac{dY}{dz}=\left(-A^{(\infty)}_{3}z^{2}-A^{(\infty)}_{2}z-A^{(\infty)}_{1}\right)Y,\quad (A^{(a)}_{i}\in \mathfrak{gl}_{4}=M_{4}(\mathbb{C}),\,a\in \{0,\infty\})
\]
with the truncated canonical form,
\begin{align*}
	H^{(\infty)}&=	\begin{pmatrix}\alpha^{(\infty)}_{1}I_{2} & \\ &\alpha^{(\infty)}_{2}I_{2} \end{pmatrix}\zeta^{-4}+
	\begin{pmatrix}\beta^{(\infty)}_{1}I_{2} & \\ &\beta^{(\infty)}_{2}I_{2} \end{pmatrix}\zeta^{-3}\\
	&\quad\quad+
	\begin{pmatrix}\gamma^{(\infty)}_{1}I_{2} & \\ &\gamma^{(\infty)}_{2}I_{2} \end{pmatrix}\zeta^{-2}+
	\begin{pmatrix}\theta^{(\infty)}_{1}I_{2} & &\\ &\theta^{(\infty)}_{2}&\\&& \theta^{(\infty)}_{3}\end{pmatrix}\zeta^{-1}
\end{align*}
where 
we assume $\alpha^{(\infty)}_{1}\neq \alpha^{(\infty)}_{2}$ and $\theta^{(\infty)}_{2}\neq \theta^{(\infty)}_{3}$.

Let us consider the spectral types and indices of rigidity of these canonical forms.
Since we are in the case $G=\mathrm{GL}_{4}$, 
as a set of simple roots $\Pi$ we can take $\Pi=\{e_{12},e_{23},e_{34}\}$
where $e_{ij}:=e_{i}-e_{j}$ and  $e_{i}$ are $i$-th projection of $\mathbb{C}^{4}\cong \mathfrak{t}$.
Then for the first Fuchsian equation, the collection of spectral types is
\[
	\mathrm{sp\,}(H^{(a)})=\left(\{e_{12},e_{34}\};[0]\right)\quad (a\in \{0,1,t\}),\quad 
	\mathrm{sp\,}(H^{(\infty)})=\left(\{e_{12}\};[0]\right),
\]
also for the confluent equation of type I,
\begin{align*}
	\mathrm{sp\,}(H^{(0)})&=\left(\{e_{12},e_{34}\}\supset \{e_{12},e_{34}\};[0]\right),&	
	\mathrm{sp\,}(H^{(1)})&=\left(\{e_{12},e_{34}\};[0]\right),\\
	\mathrm{sp\,}(H^{(\infty)})&=\left(\{e_{12}\};[0]\right),
\end{align*}
for the confluent equation of type II,
\begin{align*}
	\mathrm{sp\,}(H^{(a)})&=\left(\{e_{12},e_{34}\};[0]\right)\ (a\in \{0,1\}),& 	
	\mathrm{sp\,}(H^{(\infty)})&=\left(\{e_{12},e_{34}\}\supset \{e_{12}\};[0]\right),
\end{align*}
for the doubly-confluent equation,
\begin{align*}
	\mathrm{sp\,}(H^{(0)})&=\left(\{e_{12},e_{34}\}\supset \{e_{12},e_{34}\};[0]\right),&
	\mathrm{sp\,}(H^{(\infty)})&=\left(\{e_{12},e_{34}\}\supset \{e_{12}\};[0]\right),
\end{align*}
for the biconfluent equation of type I,
\begin{align*}
	\mathrm{sp\,}(H^{(0)})&=\left(\{e_{12},e_{34}\}\supset\{e_{12},e_{34}\}\supset \{e_{12},e_{34}\};[0]\right),&
	\mathrm{sp\,}(H^{(\infty)})&=\left(\{e_{12}\};[0]\right),
\end{align*}
for the biconfluent equation of type II,
\begin{align*}
	\mathrm{sp\,}(H^{(0)})&=\left(\{e_{12},e_{34}\};[0]\right),&
	\mathrm{sp\,}(H^{(\infty)})&=\left(\{e_{12},e_{34}\}\supset\{e_{12},e_{34}\}\supset \{e_{12}\};[0]\right),
\end{align*}
 and finally for the triconfluent  equation,
\[
	\mathrm{sp\,}(H^{(\infty)})=\left(\{e_{12},e_{34}\}\supset\{e_{12},e_{34}\}\supset\{e_{12},e_{34}\}\supset \{e_{12}\};[0]\right).
\] 

It can be checked that all these cases have the same index of rigidity 
\[
	\mathrm{rig\,}(\mathbf{H})=-2.	
\]

\section{$\delta$-constant deformation of canonical forms
}\label{sec:unfoldHTL}
In this section, we shall introduce a deformation of 
unramified canonical forms which preserves their $\delta$-invariants.

Let us consider an unramified canonical form
\[
	H\,dz=\left(\frac{H_{k}}{z^{k}}+\cdots+\frac{H_{1}}{z}+H_{\mathrm{res}}\right)\frac{dz}{z}
	\in \mathfrak{g}(\mathbb{C}[z^{-1}]_{k})dz.
\]
Let $H_{\mathrm{res}}=H_{0}+J_{0}$ be the 
Jordan decomposition with the semisimple $H_{0}$ and nilpotent $J_{0}$.
We may assume $H$ satisfies the assumptions in Section \ref{sec:spectral}.

\subsection{Complement of hypersurfaces associated with $H$}\label{sec:openset}
To each simple root $\alpha\in \Pi\backslash \Pi_{0}$, we associate the positive integer
\[
	d(\alpha):=\mathrm{max}\{i\in \{1,\ldots,k\}\mid \alpha\notin \Pi_{i}\}.
\]
Then it follows that  
$\alpha(H_{d(\alpha)})\neq 0$, and $\alpha(H_{i})=0$ for all $d(\alpha) < j\le k$.	
We also define the following 
polynomials 
\[
	f_{\alpha}^{(i)}(x_{0},x_{1},\ldots,x_{k}):=\sum_{j=i}^{d(\alpha)}\left(\alpha(H_{j})\prod_{\nu=j+1}^{d(\alpha)}(x_{i}-x_{\nu})\right)
\]
for $i=0,1,\ldots,d(\alpha)$.
Here we formally put $\prod_{\nu=d(\alpha)+1}^{d(\alpha)}(x_{i}-x_{\nu}):=1$.
Then we denote the complement of hypersurfaces defined by these polynomials $f_{\alpha}^{(i)}(x)$ by\index[cN]{$\mathbb{B}_{H}$}
\begin{equation*}\label{eq:deformationspace}
	\mathbb{B}_{H}:=\left\{\mathbf{c}\in \mathbb{C}^{k+1}\,\middle|\, f_{\alpha}^{(i)}(\mathbf{c})\neq 0\text{ for all }i=0,1,\ldots,d(\alpha),\,
	\alpha\in \Pi\backslash \Pi_{0}
	\right\}.
\end{equation*}
Since $f_{\alpha}^{(i)}(0,0,\ldots,0)=\alpha(H_{d(\alpha)})\neq 0$,
we have $\mathbf{0}\in \mathbb{B}_{H}$, thus in particular, $\mathbb{B}_{H}\neq \emptyset$.

\subsection{Stratification of $\mathbb{C}^{k+1}$ associated to partitions}\label{sec:partstra}
Let\index[cN]{$\mathcal{I}$}
$\mathcal{I}\colon I_{1},I_{2},\ldots,I_{r}$  be a partition of $\{0,1,2,\ldots,k\}$,
i.e., the direct sum decomposition
\[
	I_{0}\sqcup I_{1}\sqcup\cdots\sqcup I_{r}=\{0,1,2,\ldots,k\}
\]
with nonempty direct summands \index[cN]{$I_{i}$}$I_{i}$, $i=0,1,\ldots,r$.
Without loss of generality, we may assume that  $0\in I_{0}$.
Let us introduce an embedding of $\mathbb{C}^{r+1}$ into $\mathbb{C}^{k+1}$ associated to $\mathcal{I}$,
\[
	\iota_{\mathcal{I}}\colon \mathbb{C}^{r+1}\ni\mathbf{c}=(c_{I_{0}},c_{I_{1}},\ldots,c_{I_{r}})\mapsto 
	(\iota_{\mathcal{I}}(\mathbf{c})_{0},\iota_{\mathcal{I}}(\mathbf{c})_{1},\ldots,\iota_{\mathcal{I}}(\mathbf{c})_{k})\in \mathbb{C}^{k+1}
\]
by setting 
$\iota_{\mathcal{I}}(\mathbf{c})_{i}:=c_{I_{l}}$ if $i\in I_{l}.$
Let us denote the configuration space of $r+1$ points in $\mathbb{C}$ by 
\[
	C_{r+1}(\mathbb{C}):=\{(c_{I_{0}},c_{I_{1}},\ldots,c_{I_{r}})\in\mathbb{C}^{r+1}\mid c_{I_{i}}\neq c_{I_{j}}\text{ for }i\neq j\}.	
\]
We consider the subspace of $\mathbb{C}^{k+1}$, \index[cN]{$C(\mathcal{I})$}
\[
	C(\mathcal{I}):=\iota_{\mathcal{I}}(C_{r+1}(\mathbb{C}))=
	\left\{
		(c_{0},c_{1},\ldots,c_{k})\in \mathbb{C}^{k+1}\,\middle|\,
		\begin{array}{ll}
			c_{i}=c_{j}&\text{if }i,j\in I_{l}\text{ for some }l,\\
			c_{i}\neq c_{j}&\text{otherwise}
		\end{array}
	\right\}.
\]
Then we obtain the direct sum decomposition of $\mathbb{C}^{k+1}$,
\[
		\mathbb{C}^{k+1}=\bigsqcup_{\mathcal{I}\in \mathcal{P}_{[k+1]}}C(\mathcal{I}),
\]
where \index[cN]{$\mathcal{P}_{[k+1]}$} $\mathcal{P}_{[k+1]}$ is the set of all partitions of $\{0,1,\ldots,k\}$.
\begin{df}[$\mathcal{P}$-decomposed space]\label{df:decompsp}\normalfont
	Let $\mathcal{P}$ be a poset.
	An {\em $\mathcal{P}$-decomposition} of a Hausdorff paracompact topological 
	space $X$ is a locally finite correction $(S_{i})_{i\in \mathcal{I}}$
	of disjoint locally closed manifolds such that 
	\begin{enumerate}
		\item $\displaystyle X=\bigsqcup_{i\in \mathcal{I}}S_{i}$,
		\item $\displaystyle S_{i}\cap \overline{S_{j}}\neq\emptyset\ \Longleftrightarrow\ S_{i}\subset \overline{S_{j}}\ \Longleftrightarrow\ 
		i\le j$.
	\end{enumerate}
	We call the space $X$ an {\em $\mathcal{P}$-decomposed space} and each $S_{i}$ a {\em piece} of $X$.
\end{df}
According to the natural ordering of $\mathcal{P}_{[k+1]}$ defined by the refinement of partitions,
the above decomposition defines a $\mathcal{P}_{[k+1]}$-decomposed space structure on $\mathbb{C}^{k+1}$.
This decomposed space structure on $\mathbb{C}^{k+1}$ moreover
defines the stratification. Thus we call the pieces of $\mathbb{C}^{k+1}$ the {\em strata} of 
the stratified space $\mathbb{C}^{k+1}$.

The lemma below assures that the open subset $\mathbb{B}_{H}\subset \mathbb{C}^{k+1}$ defined above 
intersects with 
every stratum $\mathcal{C}(\mathcal{I})$.
Therefore the decomposition $\mathbb{B}_{H}=\bigsqcup _{\mathcal{I}\in \mathcal{P}_{[k+1]}}\mathcal{C}(\mathcal{I})\cap \mathbb{B}_{H}$ 
defines the stratification on $\mathbb{B}_{H}$.
\begin{lem}\label{lem:zeronbd}
	Let $U$ be an open neighborhood of $\mathbf{0}\in \mathbb{C}^{k+1}$.
	Then  $U\cap \mathcal{C}(\mathcal{I})\neq \emptyset$ for 
	any $\mathcal{I}\in \mathcal{P}_{[k+1]}$.
\end{lem}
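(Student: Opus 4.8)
The plan is to exhibit, for an arbitrary partition $\mathcal{I}\in\mathcal{P}_{[k+1]}$, an explicit one‑parameter family of points lying in the stratum $\mathcal{C}(\mathcal{I})$ and tending to $\mathbf{0}$. Since $U$ contains a ball around $\mathbf{0}$, cofinitely many members of the family lie in $U$, and any one of them witnesses $U\cap\mathcal{C}(\mathcal{I})\neq\emptyset$.

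First I would unwind the definition of the stratum using the parametrization already introduced. Write $\mathcal{I}\colon I_{0}\sqcup I_{1}\sqcup\cdots\sqcup I_{r}=\{0,1,\ldots,k\}$ with $0\in I_{0}$, and recall that $\mathcal{C}(\mathcal{I})=\iota_{\mathcal{I}}(C_{r+1}(\mathbb{C}))$, where $\iota_{\mathcal{I}}\colon\mathbb{C}^{r+1}\to\mathbb{C}^{k+1}$ is the \emph{linear} (hence continuous) map sending $\mathbf{a}=(a_{0},\ldots,a_{r})$ to the tuple whose $i$-th entry equals $a_{l}$ for the unique $l$ with $i\in I_{l}$; in particular $\iota_{\mathcal{I}}(\mathbf{0})=\mathbf{0}$. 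If $\mathbf{a}$ has pairwise distinct coordinates, then two entries of $\iota_{\mathcal{I}}(\mathbf{a})$ coincide if and only if the corresponding indices lie in a common block of $\mathcal{I}$, so $\iota_{\mathcal{I}}(\mathbf{a})\in\mathcal{C}(\mathcal{I})$. Now, for $\varepsilon\in\mathbb{C}$, set $\mathbf{a}(\varepsilon):=(0,\varepsilon,2\varepsilon,\ldots,r\varepsilon)\in\mathbb{C}^{r+1}$. When $\varepsilon\neq 0$ the numbers $0,\varepsilon,\ldots,r\varepsilon$ are pairwise distinct, so $\mathbf{a}(\varepsilon)\in C_{r+1}(\mathbb{C})$ and hence $\iota_{\mathcal{I}}(\mathbf{a}(\varepsilon))\in\mathcal{C}(\mathcal{I})$; on the other hand $\iota_{\mathcal{I}}(\mathbf{a}(\varepsilon))\to\iota_{\mathcal{I}}(\mathbf{0})=\mathbf{0}$ as $\varepsilon\to 0$ by continuity of $\iota_{\mathcal{I}}$. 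Since $U$ is open and contains $\mathbf{0}$, there is $\varepsilon_{0}\neq 0$ with $\iota_{\mathcal{I}}(\mathbf{a}(\varepsilon_{0}))\in U$, and this point lies in $U\cap\mathcal{C}(\mathcal{I})$.

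I do not expect a genuine obstacle: the assertion is elementary and purely topological once the parametrization $\iota_{\mathcal{I}}$ of the stratum is in hand. The only point requiring a moment's care is the verification that $\iota_{\mathcal{I}}$ carries the configuration space $C_{r+1}(\mathbb{C})$ into (indeed onto) $\mathcal{C}(\mathcal{I})$, i.e.\ that distinctness of the source coordinates is equivalent to the prescribed pattern of coincidences among the target coordinates; this is immediate from the definition of $\iota_{\mathcal{I}}$. With the lemma proved, the decomposition $\mathbb{B}_{H}=\bigsqcup_{\mathcal{I}\in\mathcal{P}_{[k+1]}}\mathcal{C}(\mathcal{I})\cap\mathbb{B}_{H}$ indeed meets every stratum and so gives the desired stratification of $\mathbb{B}_{H}$.
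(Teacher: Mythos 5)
Your proof is correct and follows essentially the same route as the paper: both reduce the claim to the parametrization $\iota_{\mathcal{I}}\colon C_{r+1}(\mathbb{C})\to\mathcal{C}(\mathcal{I})$ with $\iota_{\mathcal{I}}(\mathbf{0})=\mathbf{0}$ and then find a configuration point in $\iota_{\mathcal{I}}^{-1}(U)$. The only cosmetic difference is that the paper invokes density of the Zariski-open set $C_{r+1}(\mathbb{C})$ in $\mathbb{C}^{r+1}$, whereas you exhibit the explicit family $(0,\varepsilon,\ldots,r\varepsilon)$ tending to $\mathbf{0}$; both are fine.
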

\begin{proof}
	Let us take $\mathcal{I}\in \mathcal{P}_{[k+1]}$.
	Then the inclusion map $\iota_{\mathcal{I}}\colon \mathbb{C}^{r+1}\hookrightarrow \mathbb{C}^{k+1}$
	defined above satisfies $\mathbf{0}\in \mathrm{Im\,}\iota_{\mathcal{I}}$.
	Thus $\iota_{\mathcal{I}}^{-1}(U)\neq \emptyset$. 
	The configuration space $C_{r+1}(\mathbb{C})$ of $r+1$ points in $\mathbb{C}$
	is a Zariski open subset of the connected space $\mathbb{C}^{r+1}$ 
	and thus it is dense in $\mathbb{C}^{r+1}$.
	Therefore we have 
	\[
		\emptyset\neq C_{r+1}(\mathbb{C})\cap \iota_{\mathcal{I}}^{-1}(U)
		=\iota_{\mathcal{I}}^{-1}(\mathcal{C}(\mathcal{I}))\cap \iota_{\mathcal{I}}^{-1}(U)
		=\iota_{\mathcal{I}}^{-1}(\mathcal{C}(\mathcal{I})\cap U),
	\]
	which shows $\mathcal{C}(\mathcal{I})\cap U\neq \emptyset$ as desired.
\end{proof}

\subsection[Unfolding of canonical forms]{Unfolding of canonical forms and decomposition of the spectral types}\label{sec:unfoldspec}

\begin{df}[Deformation of unramified canonical forms]\normalfont
Let $H\,dz$ be the canonical form as above.
Then the  {\em unfolding} of $H$ is the function from $\mathbb{C}^{k+1}$
to $\mathfrak{g}(\mathbb{C}(z))$
of the form,\index[cN]{$H(\mathbf{c})$}
\begin{multline*}
	H(c_{0},c_{1},\ldots,c_{k}):=\\
	\left(\frac{H_{k}}{(z-c_{1})(z-c_{2})\cdots(z-c_{k})}
	+\frac{H_{k-1}}{(z-c_{1})(z-c_{2})\cdots(z-c_{k-1})}+\cdots 
	+H_{\mathrm{res}}\right)
	\times \frac{1}{z-c_{0}}.
\end{multline*}
\end{df}

Let us take a partition 
$\mathcal{I}\colon I_{0},I_{1},\ldots,I_{r} \in \mathcal{P}_{[k+1]}$, and 
consider $H(\mathbf{c})$ for $\mathbf{c}\in C(\mathcal{I})$.
Since $H(\mathbf{c})$ has poles at 
$c_{I_{0}},c_{I_{1}},\ldots,c_{I_{r}}$
 as the function of $z$,
we obtain the partial fraction decomposition 
of $H(\mathbf{c})$,
\[
	H(\mathbf{c})=
	\sum_{j=0}^{r}\sum_{\nu=0}^{k_{j}}\frac{A^{[j]}_{\nu}}{(z-c_{I_{j}})^{\nu+1}}
	\quad (A^{[j]}_{\nu}\in \mathfrak{g}).
\]
This is the sum of the canonical forms  
\index[cN]{$H(\mathbf{c})_{j}$}
\[
	H(\mathbf{c})_{j}:=\sum_{\nu=0}^{k_{j}}\frac{A^{[j]}_{\nu}}{(z-c_{I_{j}})^{\nu+1}}
\]
at $z=c_{I_{j}}$ for $j=0,1,\ldots,r$. 
The coefficients $A^{[j]}_{\nu}\in \mathfrak{g}$
are linear combinations of $H_{i}$ and $H_{\text{res}}$
which might be complicated to write down explicitly.
However, the spectral types of $H(\mathbf{c})_{j}$
are rather easily described in terms of the partition $\mathcal{I}$ as follows.

\begin{prop}\label{prop:hyper}
	Let $(\Pi_{k}\supset \Pi_{k-1}\supset \cdots \supset \Pi;[J_{0}])$
	be the spectral type of $H$.
	Suppose that $\mathbf{c}\in \mathbb{B}_{H}\cap C(\mathcal{I})$,
	and write $I_{j}=\{i_{[j,0]}<i_{[j,1]}<\cdots < i_{[j,k_{j}]}\}$.
	Then we have 
		\[
			\Pi_{i_{[j,\nu]}}=\left\{\alpha\in \Pi\,\middle|\, 
			\alpha((A^{[j]}_{\mu})_{\mathrm{ss}})=0\text{ for all }\mu=\nu,\nu+1,\ldots,k_{j}\right\}
		\]
		for $\nu=0,1,\ldots,k_{j},\,j=0,1,\ldots,r.$
		Here \index[cN]{$X_{\mathrm{ss}}$}
		$X_{\mathrm{ss}}$ denotes the semisimple part in the Jordan decomposition of $X\in \mathfrak{g}$.
\end{prop}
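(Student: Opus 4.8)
The plan is to compute the partial fraction coefficients $A^{[j]}_{\nu}$ explicitly enough to understand the action of each simple root $\alpha\in\Pi$, and then to exploit the hypothesis $\mathbf{c}\in\mathbb{B}_{H}$ to ensure that no accidental cancellations occur. First I would fix a block $I_{j}=\{i_{[j,0]}<\cdots<i_{[j,k_{j}]}\}$ and localize the computation of $H(\mathbf{c})$ at the pole $z=c_{i_{[j,0]}}$. Writing $w=z-c_{i_{[j,0]}}$, the factor $1/\bigl((z-c_{1})\cdots(z-c_{m})\bigr)$ appearing in the $H_{m}$-term can be expanded as a Laurent series in $w$; the key observation is that only the indices $\nu\in I_{j}$ with $c_{i_{[j,\nu]}}=c_{i_{[j,0]}}$ contribute a pole of order $\ge 2$ at $w=0$, while the remaining factors $(z-c_{\mu})$ with $\mu$ in other blocks are invertible in $\mathbb{C}[\![w]\!]$ and contribute only scalar multiples. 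Consequently each $A^{[j]}_{\nu}$ is a $\mathbb{C}$-linear combination of the $H_{i}$ for $i\in I_{j}$, $i\ge i_{[j,\nu]}$ (plus a contribution of $H_{\mathrm{res}}$ only in the case $j=0$, $\nu=0$), with coefficients that are rational functions of the $c$'s, and whose "leading" coefficient — the one multiplying $H_{i_{[j,\nu']}}$ for the largest relevant index — is, up to a nonzero scalar coming from the inter-block factors, exactly the polynomial $f_{\alpha}^{(i)}$ specialized appropriately. I would make this precise by induction on $k_{j}-\nu$, peeling off the highest-order pole first.

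Next I would use this triangular structure to prove the two inclusions of the claimed equality for each $\nu$. For "$\supseteq$": if $\alpha\in\Pi_{i_{[j,\nu]}}$ then by definition of the spectral type $\alpha(H_{i})=0$ for all $i\ge i_{[j,\nu]}$ with $i\in I_{j}$ (indeed for all $i\ge i_{[j,\nu]}$ in the original chain), and since each $A^{[j]}_{\mu}$ with $\mu\ge\nu$ is a combination of such $H_{i}$'s, $\alpha$ annihilates the semisimple part of $A^{[j]}_{\mu}$; here one uses that $\alpha$ vanishing on a commuting family of semisimple elements implies it vanishes on the semisimple part of any linear combination (the family $\{H_{i}\}_{i\in I_{j},\,i\ge i_{[j,\nu]}}$ together generates a torus, and the combination lies in the common centralizer, so its Jordan decomposition stays inside). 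For "$\subseteq$": suppose $\alpha((A^{[j]}_{\mu})_{\mathrm{ss}})=0$ for all $\mu\ge\nu$ but $\alpha\notin\Pi_{i_{[j,\nu]}}$, so there is a largest index $i^{*}\ge i_{[j,\nu]}$ in $I_{j}$ with $\alpha(H_{i^{*}})\ne 0$; equivalently $d(\alpha)$ restricted to the block equals $i^{*}$. Looking at the coefficient $A^{[j]}_{\mu}$ for $\mu$ equal to the position of $i^{*}$ in $I_{j}$, the triangular formula shows $\alpha(A^{[j]}_{\mu})$ equals (nonzero scalar)$\,\cdot\,f_{\alpha}^{(i^{*})}(\mathbf{c})$, which is nonzero precisely because $\mathbf{c}\in\mathbb{B}_{H}$, contradicting the hypothesis. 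One must take a little care that $\alpha(A^{[j]}_{\mu})=\alpha((A^{[j]}_{\mu})_{\mathrm{ss}})$, which again follows since $A^{[j]}_{\mu}$ lies in the centralizer of the relevant torus and $\alpha$ is a weight.

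The main obstacle, I expect, is the bookkeeping in identifying the coefficient of the top relevant $H_{i}$ in $A^{[j]}_{\mu}$ with the polynomial $f_{\alpha}^{(i)}$ — i.e.\ matching the inductive Laurent-expansion computation to the specific product $\prod_{\nu=j+1}^{d(\alpha)}(x_{i}-x_{\nu})$ in the definition of $f_{\alpha}^{(i)}$, including tracking which variables $x_{\nu}$ survive (those in other blocks, which are $\ne c_{i_{[j,0]}}$ on $C(\mathcal{I})$) versus which collapse. It will help to first treat the case where $\mathcal{I}$ is the trivial partition $\{0,1,\ldots,k\}$, recovering the original ungrouped statement that $\Pi_{i}=\{\alpha\mid\alpha((A_{\mu})_{\mathrm{ss}})=0,\ \mu\ge i\}$ on $\mathbb{B}_{H}$ directly from the definition of $\mathbb{B}_{H}$, and then observe that for a general partition the computation at the pole $c_{i_{[j,0]}}$ is formally identical to the trivial-partition computation for a shorter canonical form built from $(H_{i})_{i\in I_{j}}$, with the extra inter-block factors contributing only invertible scalars — so the general case reduces to the already-established special case once the scalar factors are checked to be nonzero, which is exactly the content of $f_{\alpha}^{(i)}(\mathbf{c})\ne 0$.
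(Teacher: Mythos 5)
Your overall strategy (partial--fraction expansion at each pole, two inclusions, nonvanishing of the $f_{\alpha}^{(i)}(\mathbf{c})$ on $\mathbb{B}_{H}$) matches the paper's, but the structural claim on which your converse inclusion rests is wrong. You assert that each $A^{[j]}_{\nu}$ is a $\mathbb{C}$-linear combination of the $H_{i}$ with $i\in I_{j}$ only. This is false: the $H_{m}$-term of $H(\mathbf{c})$ contributes to the coefficient of $(z-c_{i_{[j,0]}})^{-(\mu+1)}$ whenever its pole order $\sharp\left(I_{j}\cap\{0,\ldots,m\}\right)$ at $c_{i_{[j,0]}}$ is at least $\mu+1$, i.e.\ whenever $m\ge i_{[j,\mu]}$, with no requirement that $m\in I_{j}$. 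The correct expansion, which is what the paper establishes, is
\[
	(A^{[j]}_{\mu})_{\mathrm{ss}}=\sum_{i=i_{[j,\mu]}}^{i_{[j,\mu+1]}-1}\frac{H_{i}}{\prod_{0\le l\le i,\ l\notin I_{j}}(c_{i_{[j,0]}}-c_{l})}
	+\left(\text{combination of }H_{i}\text{ for }i\ge i_{[j,\mu+1]}\right),
\]
and the ``diagonal'' range $i_{[j,\mu]}\le i< i_{[j,\mu+1]}$ typically contains indices outside $I_{j}$.

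This error is fatal to your ``$\subseteq$'' step. From $\alpha\notin\Pi_{i_{[j,\nu]}}$ you infer a largest $i^{*}\ge i_{[j,\nu]}$ \emph{in $I_{j}$} with $\alpha(H_{i^{*}})\ne 0$; but the hypothesis only gives $d(\alpha)\ge i_{[j,\nu]}$, and $d(\alpha)$ need not lie in $I_{j}$, so such an $i^{*}$ may not exist. Concretely, take $k=2$, $\mathcal{I}\colon\{0,2\}\sqcup\{1\}$, and $\alpha$ with $\alpha(H_{2})=\alpha(H_{0})=0$ but $\alpha(H_{1})\ne 0$: then no index of $I_{0}=\{0,2\}$ works, yet $\alpha\notin\Pi_{0}$ and indeed $\alpha\left((A^{[0]}_{0})_{\mathrm{ss}}\right)=\alpha(H_{1})/(c_{0}-c_{1})\ne 0$, detected only through $H_{1}$ with $1\notin I_{0}$. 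The repair is the paper's argument: choose $\mu\in\{\nu,\ldots,k_{j}\}$ with $i_{[j,\mu]}\le d(\alpha)<i_{[j,\mu+1]}$ (this exists because $d(\alpha)\ge i_{[j,\nu]}$), observe that all terms with $i>d(\alpha)$ are killed by $\alpha$, and identify the surviving expression with $f_{\alpha}^{(i_{[j,\mu]})}(\mathbf{c})$ divided by a nonzero product --- a sum over the whole range $i_{[j,\mu]}\le i\le d(\alpha)$, not a single leading coefficient attached to some $H_{i^{*}}$ with $i^{*}\in I_{j}$. For the same reason your proposed reduction to ``a shorter canonical form built from $(H_{i})_{i\in I_{j}}$'' does not go through. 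Your ``$\supseteq$'' direction is fine, since $\alpha\in\Pi_{i_{[j,\nu]}}$ annihilates every $H_{i}$ with $i\ge i_{[j,\nu]}$, whether or not $i\in I_{j}$, and your observation that the Jordan decomposition commutes with these combinations (because $J_{0}$ centralizes all the $H_{i}$) is correct.
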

\begin{proof}
Recalling that 
$
		A_{\mu}^{[j]}=\underset{z=c_{I_{j}}}{\mathrm{res}}\left((z-c_{I_{j}})^{\mu}H(\mathbf{c})\right),
$
we have
\begin{align*}
	&A_{\mu}^{[j]}=\\
	&\ \sum_{i=i_{[j,\mu]}}^{i_{[j,\mu+1]}-1}\frac{H_{i}}{\prod_{\substack{0\le l\le i\\
	l\notin I_{j}}}(c_{I_{j}}-c_{l})}
	+\left(
		\text{linear combination of }H_{i}\text{ for } i_{[j,\mu+1]}\le i\le k\right)
\end{align*}
for $(\mu,j)\neq (0,0)$, and 
\begin{equation}\label{eq:numzero}
	\begin{aligned}
		&A_{0}^{[0]}=\\
		&\ H_{\text{res}}+\sum_{i=1}^{i_{[0,1]}-1}\frac{H_{i}}{\prod_{\substack{0\le l\le i\\
		l\notin I_{0}}}(c_{I_{0}}-c_{l})}
		+\left(
			\text{linear combination of }H_{i}\text{ for } i_{[0,1]}\le i\le k\right).	
	\end{aligned}	
\end{equation}
Here we formally put $i_{[j,k_{j}+1]}:=k+1$.
Thus, in any cases, semisimple parts are written by  
\begin{align*}
	\begin{aligned}
	&(A_{\mu}^{[j]})_{\mathrm{ss}}=\\
	&\quad \sum_{i=i_{[j,\mu]}}^{i_{[j,\mu+1]}-1}\frac{H_{i}}{\prod_{\substack{0\le l\le i\\
	l\notin I_{j}}}(c_{I_{j}}-c_{l})}
	+\left(
		\text{linear combination of }H_{i}\text{ for } i_{[j,\mu+1]}\le i\le k\right)
	\end{aligned}	
\end{align*}
for all $\mu=0,1,\ldots,k$ and $j=0,1,\ldots,r$.

Therefore if  $\alpha(H_{i})=0$ for all $i\ge i_{[j,\mu]}$, then $\alpha((A_{\mu}^{[j]})_{\mathrm{ss}})=0$.
Namely, we have the inclusion 
\[
	\Pi_{i_{[j,\nu]}}\subset \left\{\alpha\in \Pi\,\middle|\, 
	\alpha((A^{[j]}_{\mu})_{\mathrm{ss}})=0\text{ for all }\mu=\nu,\nu+1,\ldots,k_{j}\right\}.
\]

To obtain the converse inclusion, we take a simple root $\alpha \notin \Pi_{i_{[j,\nu]}}$.
Then we can find $\mu\in \{\nu,\nu+1,\ldots,k_{j}\}$ such that 
$i_{[j,\mu]}\le d(\alpha)< i_{[j,\mu+1]}$.
Since we have $\alpha(H_{i})=0$ for all $d(\alpha)<i \le k$,
we obtain
\begin{align*}
	\alpha((A^{[j]}_{\mu})_{\mathrm{ss}})&=\sum_{i=i_{[j,\mu]}}^{i_{[j,\mu+1]}-1}\frac{\alpha(H_{i})}{\prod_{\substack{0\le l\le i\\
	l\notin I_{j}}}(c_{I_{j}}-c_{l})}
	=\sum_{i=i_{[j,\mu]}}^{d(\alpha)}\frac{\alpha(H_{i})}{\prod_{\substack{0\le l\le i\\
	l\notin I_{j}}}(c_{I_{j}}-c_{l})}\\
	&=\sum_{i=i_{[j,\mu]}}^{d(\alpha)}\frac{\alpha(H_{i})\prod_{l=i+1}^{d(\alpha)}(c_{I_{j}}-c_{l})}
	{\prod_{\substack{0\le l\le d(\alpha)\\
	l\notin I_{j}}}(c_{I_{j}}-c_{l})}
	=\frac{f_{\alpha}^{(i_{[j,\mu]})}(\mathbf{c})}
	{\prod_{\substack{0\le l\le d(\alpha)\\
	l\notin I_{j}}}(c_{I_{j}}-c_{l})}\neq 0.
\end{align*}
Here we used the assumption $\mathbf{c}\in \mathbb{B}_{H}$.
Therefore it follows that 
$$\alpha\notin \left\{\alpha\in \Pi\,\middle|\, 
\alpha((A^{[j]}_{\mu})_{\mathrm{ss}})=0\text{ for all }\mu=k_{j},k_{j}-1,\ldots,\nu	\right\}$$
as desired.
\end{proof}
Therefore we obtain the following decomposition of the spectral type of $H$.
\begin{cor}\label{cor:specdecomp}
	Suppose that $\mathbf{c}\in \mathbb{B}_{H}\cap C(\mathcal{I})$.
	Then the spectral types of $H(\mathbf{c})_{j}$
	are
	\[
	\begin{array}{ll}
		(\Pi_{I_{0}};[J_{0}])&\text{ if }j=0,\\
		(\Pi_{I_{j}};[0])&\text{otherwise}.	
	\end{array}
	\]
	Here $\Pi_{I_{j}}$ denotes the subsequence indexed by $I_{j}=\{i_{[j,0]}<i_{[j,1]}<\cdots < i_{[j,k_{j}]}\}$, 
	\[
		\Pi_{i_{[j,k_{j}]}}\supset \cdots \supset \Pi_{i_{[j,1]}}\supset \Pi_{i_{[j,0]}},
	\]
	of $\Pi_{k}\supset\Pi_{k-1}\supset \cdots \supset \Pi_{0}$.
\end{cor}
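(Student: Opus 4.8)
The plan is to deduce Corollary \ref{cor:specdecomp} directly from Proposition \ref{prop:hyper}, which has already done all the real work. Proposition \ref{prop:hyper} gives, for $\mathbf{c}\in \mathbb{B}_{H}\cap C(\mathcal{I})$ and for every block $I_j$ of the partition and every $\nu=0,1,\ldots,k_j$, the identity
\[
	\Pi_{i_{[j,\nu]}}=\left\{\alpha\in \Pi\,\middle|\,\alpha((A^{[j]}_{\mu})_{\mathrm{ss}})=0\text{ for all }\mu=\nu,\nu+1,\ldots,k_j\right\}.
\]
On the other hand, the recipe in Section \ref{sec:spectral} for computing the spectral type of a canonical form $K\,dz=\left(\sum_{i=1}^{m}K_i z^{-i}+K_{\mathrm{res}}\right)dz/z$ produces exactly the sequence of subsets of $\Pi$ whose $\nu$-th term is $\{\alpha\in\Pi\mid \alpha(K_m)=\cdots=\alpha(K_\nu)=0\}$, together with the $L_1$-orbit of the nilpotent part of $K_{\mathrm{res}}$. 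So the first step is simply to match these two descriptions for the canonical form $H(\mathbf{c})_j=\sum_{\nu=0}^{k_j}A^{[j]}_\nu(z-c_{i_{[j,0]}})^{-\nu-1}$ at $z=c_{i_{[j,0]}}$, whose "$K_\nu$" is $A^{[j]}_\nu$ (for $\nu\ge 1$) and whose "$K_{\mathrm{res}}$" is $A^{[j]}_0$.

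First I would record that $H(\mathbf{c})_j$ is indeed an unramified canonical form at $z=c_{i_{[j,0]}}$: this was already observed right after the definition of the unfolding, the $A^{[j]}_\nu$ for $\nu\ge 1$ being semisimple (they are, by the formula in the proof of Proposition \ref{prop:hyper}, linear combinations of the mutually commuting semisimple $H_i$), mutually commuting, and commuting with $A^{[j]}_0$. Then I would invoke Proposition \ref{prop:hyper}: its right-hand side is, by definition, precisely the subset of $\Pi$ that the algorithm of Section \ref{sec:spectral} attaches to the $\nu$-th step of $H(\mathbf{c})_j$, once one notes that $(A^{[j]}_\mu)_{\mathrm{ss}}=A^{[j]}_\mu$ for $\mu\ge 1$. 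Hence the sequence of subsets in $\mathrm{sp}(H(\mathbf{c})_j)$ is exactly
\[
	\Pi_{i_{[j,k_j]}}\supset\cdots\supset\Pi_{i_{[j,1]}}\supset\Pi_{i_{[j,0]}},
\]
which is what the corollary denotes $\Pi^{I_j}$.

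It remains to identify the nilpotent data. For $j\neq 0$, equation \eqref{eq:numzero} is not used; instead one uses the formula for $A^{[j]}_0$ with $(j,\mu)=(j,0)\neq(0,0)$, which shows $A^{[j]}_0$ is a linear combination of the semisimple $H_i$, hence itself semisimple, so its nilpotent part is $0$ and $[J_0]$ should be replaced by $[0]$. For $j=0$, equation \eqref{eq:numzero} exhibits $A^{[0]}_0=H_{\mathrm{res}}+(\text{a semisimple element commuting with }H_{\mathrm{res}})$; since $H_{\mathrm{res}}=H_0+J_0$ is its Jordan decomposition, adding a commuting semisimple element changes only the semisimple part, so the nilpotent part of $A^{[0]}_0$ is again $J_0$, and the relevant Levi $L_1$ for $H(\mathbf{c})_0$ — built from $\Pi_{i_{[0,1]}}$ — contains the original $L_1$'s Levi up to the identification of sequences, so the orbit class is unchanged. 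This is the one place needing a short argument, but it is routine. I expect no genuine obstacle here: the corollary is essentially a translation of Proposition \ref{prop:hyper} into the language of spectral types, the only care being the bookkeeping of which $\Pi_{i_{[j,\nu]}}$ indexes which step and the handling of the nilpotent part in the $j=0$ versus $j\neq 0$ cases.
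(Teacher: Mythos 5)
Your proposal is correct and follows the same route as the paper, which presents the corollary as an immediate consequence of Proposition \ref{prop:hyper} without a separate proof. The extra details you supply — that $A^{[j]}_{\mu}$ is semisimple for $(\mu,j)\neq(0,0)$ and that the nilpotent part of $A^{[0]}_{0}$ equals $J_{0}$ because one adds a commuting semisimple element to $H_{\mathrm{res}}=H_{0}+J_{0}$ — are exactly the observations the paper uses (the latter appears explicitly in the proof of Proposition \ref{prop:stabdecomp}), so your write-up fills in the intended argument faithfully.
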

\subsection{$\delta$-constant deformation of canonical form}
Let us show that the unfolding $H(\mathbf{c})$ of the canonical form $H$ preserves 
its $\delta$-invariant.
\begin{prop}\label{prop:stabdecomp}
	Suppose that $\mathbf{c}\in \mathbb{B}_{H}\cap C(\mathcal{I})$.
	Write $I_{j}=\{i_{[j,0]}<i_{[j,1]}<\cdots < i_{[j,k_{j}]}\}$ for the components of the partition $\mathcal{I}$.	
	Then we have the following descriptions of the stabilizers of $H(\mathbf{c})_{j}$.
	\begin{enumerate}
		\item For $j=0,1,\ldots,r$ we have 
		\begin{multline*}
			\mathrm{Stab}_{G(\mathbb{C}[z_{c_{I_{j}}}]_{k_{j}})_{1}}(H(\mathbf{c})_{j})\\
			=
			\left\{
				e^{X_{k_{j}} z_{c_{I_{j}}}^{k_{j}}}\cdots e^{X_{1} z_{c_{I_{j}}}}\in G(\mathbb{C}[z_{c_{I_{j}}}]_{k_{j}})_{1}\,\middle|\,
				X_{\nu}\in \mathfrak{l}_{i_{[j,\nu]}},\ \nu=1,\ldots,k_{j}
			\right\}.
		\end{multline*}
		\item For $j=1,2,\ldots,r$, we have 
		\[
			\mathrm{Stab}_{G}(H(\mathbf{c})_{j})=L_{i_{[j,0]}}.	
		\]
		For $j=0$, we also have
		\[
			\mathrm{Stab}_{G}(H(\mathbf{c})_{0})=\mathrm{Stab}_{G}(H).
		\]
	\end{enumerate}
\end{prop}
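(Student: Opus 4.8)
The plan is to reduce both assertions to Proposition \ref{prop:stabh}, applied not to $H$ but to each of the smaller canonical forms $H(\mathbf{c})_j=\sum_{\nu=0}^{k_j}A^{[j]}_\nu(z-c_{i_{[j,0]}})^{-\nu-1}$ produced by the partial fraction decomposition. First I would record that $H(\mathbf{c})_j$ really is an unramified canonical form at $z=c_{i_{[j,0]}}$: from the formulas for the residues $A^{[j]}_\mu$ in the proof of Proposition \ref{prop:hyper}, for $(\mu,j)\neq(0,0)$ the coefficient $A^{[j]}_\mu$ is a $\mathbb{C}$-linear combination of $H_{i_{[j,\mu]}},\dots,H_k$, hence semisimple, and all of these mutually commute since $H_1,\dots,H_k$ do; whereas $A^{[0]}_0=H_{\mathrm{res}}+(\text{a linear combination of }H_1,\dots,H_k)$ has semisimple part a linear combination of $H_0,\dots,H_k$ and nilpotent part precisely $J_0$, which commutes with everything. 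In particular the nilpotent residue part of $H(\mathbf{c})_j$ is $0$ for $j\ge 1$ and $J_0$ for $j=0$, in accordance with Corollary \ref{cor:specdecomp}.

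For assertion (1): Corollary \ref{cor:specdecomp} (equivalently Proposition \ref{prop:hyper}) identifies the sequence of subsets of $\Pi$ attached to $H(\mathbf{c})_j$ as $\Pi_{i_{[j,k_j]}}\supset\cdots\supset\Pi_{i_{[j,1]}}\supset\Pi_{i_{[j,0]}}$, so the Levi subalgebras associated to its spectral type are $\mathfrak{l}_{i_{[j,k_j]}}\supset\cdots\supset\mathfrak{l}_{i_{[j,0]}}$. I would then apply the first assertion of Proposition \ref{prop:stabh} verbatim to the canonical form $H(\mathbf{c})_j$, with $k$ replaced by $k_j$, the coordinate $z$ replaced by $z_{c_{i_{[j,0]}}}$, and this Levi filtration in place of $\mathfrak{l}_k\supset\cdots\supset\mathfrak{l}_0$; this gives exactly the claimed description of $\mathrm{Stab}_{G(\mathbb{C}[z_{c_{i_{[j,0]}}}]_{k_j})_1}(H(\mathbf{c})_j)$, namely $X_\nu\in\mathfrak{l}_{i_{[j,\nu]}}$.

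For assertion (2): since $G$ acts on $H(\mathbf{c})_j$ coefficient by coefficient, $\mathrm{Stab}_G(H(\mathbf{c})_j)$ is the common centralizer of $A^{[j]}_0,\dots,A^{[j]}_{k_j}$; because $\mathrm{Ad}(g)$ respects the Jordan decomposition, this equals the intersection of the centralizer of the span $\mathfrak{s}_j\subseteq\mathfrak{t}$ of the semisimple parts $(A^{[j]}_\nu)_{\mathrm{ss}}$ with the centralizer of the nilpotent part of $A^{[j]}_0$. The centralizer in the connected reductive group $G$ of a subspace of a Cartan subalgebra is a connected Levi subgroup, and by Proposition \ref{prop:hyper} (taking $\nu=0$) its set of simple roots is precisely $\Pi_{i_{[j,0]}}$; since a standard Levi is determined by its set of simple roots, this centralizer is $L_{i_{[j,0]}}$. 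For $j\ge 1$ the nilpotent part is $0$, so $\mathrm{Stab}_G(H(\mathbf{c})_j)=L_{i_{[j,0]}}$. For $j=0$ we obtain $\mathrm{Stab}_G(H(\mathbf{c})_0)=L_{i_{[0,0]}}\cap\mathrm{Stab}_G(J_0)=L_0\cap\mathrm{Stab}_G(J_0)$, using $i_{[0,0]}=0$; running the identical Jordan-decomposition argument on $H$ itself gives $\mathrm{Stab}_G(H)=L_0\cap\mathrm{Stab}_G(J_0)$, so the two coincide.

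The step I expect to be the main obstacle is the passage from Lie algebras to groups in (2): one must invoke the structural fact that the centralizer of a set of commuting semisimple elements in a connected reductive group is connected — so that it really is the analytic subgroup $L_{i_{[j,0]}}$ rather than merely some group sharing its Lie algebra — and one must keep careful track, through Proposition \ref{prop:hyper}, of which simple-root subset cuts out which Levi. The remaining index bookkeeping relating the block $I_j=\{i_{[j,0]}<\cdots<i_{[j,k_j]}\}$ to the Levi filtration is routine.
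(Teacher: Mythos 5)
Your proposal is correct and follows essentially the same route as the paper: assertion (1) is obtained by feeding the Levi filtration identified in Proposition \ref{prop:hyper}/Corollary \ref{cor:specdecomp} into Proposition \ref{prop:stabh}, and assertion (2) rests on Proposition \ref{prop:hyper} together with the uniqueness of the Jordan decomposition of $A^{[0]}_{0}$, whose nilpotent part is $J_{0}$. The only cosmetic difference is in the $j=0$ case, where you compute both $\mathrm{Stab}_{G}(H(\mathbf{c})_{0})$ and $\mathrm{Stab}_{G}(H)$ symmetrically as $L_{0}\cap \mathrm{Stab}_{G}(J_{0})$, whereas the paper routes the same Jordan-decomposition argument through the intermediate identity $\mathrm{Stab}_{L_{i_{[0,1]}}}(A_{0}^{[0]})=\mathrm{Stab}_{L_{1}}(H_{\mathrm{res}})$; the passage from the simple-root computation to the full Levi $L_{i_{[j,0]}}$ that you flag as the delicate point is taken in exactly the same form in the paper's proof.
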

\begin{proof}
	The first assertion 
	follows directly from Propositions \ref{prop:stabh} and  \ref{prop:hyper}.
	
	For $j=1,2,\ldots,r$,
	Proposition \ref{prop:hyper} 
	shows 
	\[
			\left\{\alpha\in \Pi\,\middle|\, 
			\alpha(A^{[j]}_{\mu})=0\text{ for all }\mu=0,1,\ldots,k_{j}\right\}
			=\Pi_{i_{[j,0]}},
	\]
	which implies $\mathrm{Stab}_{G}(H(\mathbf{c})_{j})=L_{i_{[j,0]}}$.

	For $j=0$, let us recall the equation $(\ref{eq:numzero})$.
	Then it follows that the stabilizer of $A_{0}^{[0]}-J_{0}$
	in $L_{i_{[0,1]}}$ is $L_{1}\subset L_{i_{[0,1]}}$.
	Here we recall that $H_{\mathrm{res}}=J_{0}+H_{0}$ is the Jordan decomposition.
	This implies that $\mathrm{Stab}_{L_{i_{[0,1]}}}(A_{0}^{[0]})=\mathrm{Stab}_{L_{1}}(H_{\text{res}})$, where we postpone the proof
	for a while.
	Then 
	we obtain 
	\begin{align*}
		\mathrm{Stab}_{G}(H(\mathbf{c})_{0})&=\mathrm{Stab}_{G}((H(\mathbf{c})_{0})_{\text{res}})\cap \mathrm{Stab}_{G}((H(\mathbf{c})_{0})_{\text{irr}})\\
		&=\mathrm{Stab}_{G}(A_{0}^{[0]})\cap L_{i_{[0,1]}}
		=\mathrm{Stab}_{L_{1}}(H_{\text{res}})=\mathrm{Stab}_{G}(H),
	\end{align*}
	which shows the second assertion for $j=0$.

	Thus we finally show the equation $\mathrm{Stab}_{L_{i_{[0,1]}}}(A_{0}^{[0]})=\mathrm{Stab}_{L_{1}}(H_{\text{res}})$.
	Firstly, we can obtain the inclusion $\mathrm{Stab}_{L_{i_{[0,1]}}}(A_{0}^{[0]})\subset 
	\mathrm{Stab}_{L_{i_{[0,1]}}}(A_{0}^{[0]}-H_{\text{res}})$ as follows.
	The equation $(\ref{eq:numzero})$ shows that the  
	nilpotent part of the Jordan decomposition of 
	$A_{0}^{[0]}$ equals $J_{0}$.
	Thus for $l\in \mathrm{Stab}_{L_{i_{[0,1]}}}(A_{0}^{[0]})$, we obtain
	\begin{align*}
	\mathrm{Ad}(l)(A_{0}^{[0]}-H_{\mathrm{res}})
	&=\mathrm{Ad}(l)(A_{0}^{[0]}-J_{0}-H_{0})\\
	&=\mathrm{Ad}(l)(A_{0}^{[0]}-J_{0})-\mathrm{Ad}(l)(H_{0})\\
	&=A_{0}^{[0]}-J_{0}-H_{0}=A_{0}^{[0]}-H_{\mathrm{res}}.
	\end{align*}
	Here the third equation follows from the uniqueness of the Jordan decomposition 
	and the fact that 
	$H_{0}$ is stabilized by $L_{i_{[0,1]}}$.
	Thus we have the desired inclusion $\mathrm{Stab}_{L_{i_{[0,1]}}}(A_{0}^{[0]})\subset 
	\mathrm{Stab}_{L_{i_{[0,1]}}}(A_{0}^{[0]}-H_{\mathrm{res}})$.
	
	On the other hand, the equation $(\ref{eq:numzero})$
	tells us that $\mathrm{Stab}_{L_{i_{[0,1]}}}(A_{0}^{[0]}-H_{\mathrm{res}})=L_{1}$.
	Thus obviously we have  $\mathrm{Stab}_{L_{1}}(H_{\text{res}})\subset 
	\mathrm{Stab}_{L_{i_{[0,1]}}}(A_{0}^{[0]})$.
	Moreover the relation 
	$\mathrm{Stab}_{L_{i_{[0,1]}}}(A_{0}^{[0]})\subset \mathrm{Stab}_{L_{i_{[0,1]}}}(A_{0}^{[0]}-H_{\text{res}})$ 
	gives the converse inclusion.
\end{proof}
\begin{cor}\label{cor:deltaconst}
	Let us take $\mathcal{I}\in \mathcal{P}_{[k+1]}$ 
	and consider $H(\mathbf{c})$ for $\mathbf{c}\in C(\mathcal{I})\cap \mathbb{B}_{H}$.
	Let  $H(\mathbf{c})=\sum_{j=0}^{r}H(\mathbf{c})_{j}$ be 
	the partial fraction decomposition 
of $H(\mathbf{c})$ as above.
	Then we have the equation
	\[
		\sum_{i=0}^{r}\delta(H(\mathbf{c})_{j})=\delta(H).
	\]
\end{cor}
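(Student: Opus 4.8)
The plan is to evaluate both sides through the defining formula $\delta(\,\cdot\,)=\dim G+\mathrm{Irr}(\,\cdot\,)-\dim\mathrm{Stab}_{G}(\,\cdot\,)$ and to reduce the claimed identity to elementary bookkeeping on the Levi subgroups $L_{1},\dots,L_{k}$ and the index set $\{0,1,\dots,k\}$. First I would observe that each piece $H(\mathbf{c})_{j}$ is again a genuine unramified canonical form: by the formulas for $A^{[j]}_{\mu}$ in the proof of Proposition~\ref{prop:hyper} its coefficients are linear combinations of the mutually commuting semisimple $H_{1},\dots,H_{k}$ and of $H_{\mathrm{res}}$, hence they commute, and the irregular coefficients $A^{[j]}_{\mu}$ with $\mu\ge 1$ are semisimple. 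Thus $\delta(H(\mathbf{c})_{j})$ is defined, and as noted in Section~\ref{sec:deltainv} it depends only on $\mathrm{sp}(H(\mathbf{c})_{j})$, which by Corollary~\ref{cor:specdecomp} is $(\Pi^{I_{j}};[\,\cdot\,])$ with underlying chain $\Pi_{i_{[j,k_{j}]}}\supset\cdots\supset\Pi_{i_{[j,0]}}$.

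For the irregularity terms, the Levi subgroups attached to that chain are exactly $L_{i_{[j,1]}},\dots,L_{i_{[j,k_{j}]}}$, so $\mathrm{Irr}(H(\mathbf{c})_{j})=\sum_{\nu=1}^{k_{j}}(\dim G-\dim L_{i_{[j,\nu]}})$. Summing over $j$ and using $\bigsqcup_{j}I_{j}=\{0,1,\dots,k\}$ together with $i_{[0,0]}=0$ and $i_{[j,0]}\ne 0$ for $j\ge 1$, the index set $\{\,i_{[j,\nu]}\mid 0\le j\le r,\ 1\le\nu\le k_{j}\,\}$ is precisely $\{1,\dots,k\}\setminus\{\,i_{[j,0]}\mid 1\le j\le r\,\}$, so that
\[
\sum_{j=0}^{r}\mathrm{Irr}(H(\mathbf{c})_{j})=\mathrm{Irr}(H)-\sum_{j=1}^{r}\bigl(\dim G-\dim L_{i_{[j,0]}}\bigr).
\]

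Finally, Proposition~\ref{prop:stabdecomp}(2) gives $\mathrm{Stab}_{G}(H(\mathbf{c})_{j})=L_{i_{[j,0]}}$ for $j=1,\dots,r$ and $\mathrm{Stab}_{G}(H(\mathbf{c})_{0})=\mathrm{Stab}_{G}(H)$, so $\sum_{j=0}^{r}\dim\mathrm{Stab}_{G}(H(\mathbf{c})_{j})=\dim\mathrm{Stab}_{G}(H)+\sum_{j=1}^{r}\dim L_{i_{[j,0]}}$. Combining the three counts, the expected computation is
\begin{align*}
\sum_{j=0}^{r}\delta(H(\mathbf{c})_{j})
&=(r+1)\dim G+\sum_{j=0}^{r}\mathrm{Irr}(H(\mathbf{c})_{j})-\sum_{j=0}^{r}\dim\mathrm{Stab}_{G}(H(\mathbf{c})_{j})\\
&=(r+1)\dim G+\mathrm{Irr}(H)-\sum_{j=1}^{r}\dim G+\sum_{j=1}^{r}\dim L_{i_{[j,0]}}-\dim\mathrm{Stab}_{G}(H)-\sum_{j=1}^{r}\dim L_{i_{[j,0]}}\\
&=\dim G+\mathrm{Irr}(H)-\dim\mathrm{Stab}_{G}(H)=\delta(H),
\end{align*}
the terms $\sum_{j=1}^{r}\dim L_{i_{[j,0]}}$ cancelling and $(r+1)\dim G$ reducing to $\dim G$ against the $r$ copies of $\dim G$ extracted from the irregularity sum. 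As an alternative one can run the same count with $\delta=\dim\mathbb{O}$ via Corollary~\ref{cor:specdimorb}, using the shape of $\mathrm{Stab}_{G(\mathbb{C}[z]_{k_{j}})_{1}}(H(\mathbf{c})_{j})$ from Proposition~\ref{prop:stabdecomp}(1) and Proposition~\ref{prop:stabh}. The only step requiring care is this index-set combinatorics: one must check that every $L_{i}$ with $1\le i\le k$ is counted exactly once among the pieces, the block minima $i_{[j,0]}$ for $j\ge 1$ being precisely cancelled by the stabilizer contributions while the distinguished block $I_{0}$ containing $0$ carries $\mathrm{Stab}_{G}(H)$ unchanged; there is no genuine analytic obstacle.
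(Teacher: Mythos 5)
Your proposal is correct and follows essentially the same route as the paper's own proof: both compute $\delta$ termwise from the spectral-type data of Corollary \ref{cor:specdecomp} and the stabilizer identification of Proposition \ref{prop:stabdecomp}, then cancel the $\dim L_{i_{[j,0]}}$ contributions against the stabilizer terms. Your write-up actually spells out the index-set bookkeeping more explicitly than the paper does, but the argument is the same.
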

\begin{proof}
	By Corollary \ref{cor:specdecomp} and Proposition \ref{prop:stabdecomp},
	we have 
	\begin{align*}
		\sum_{i=0}^{r}\delta(H(\mathbf{c})_{j})&=
		\sum_{i=1}^{r}(\mathrm{dim\,}G+\sum_{j=1}^{k_{i}}(\mathrm{dim\,}G-L_{i_{[i,j]}})-\mathrm{dim\,}L_{i_{[i,0]}})\\
		&\quad +(\mathrm{dim\,}G+\sum_{j=1}^{k_{0}}(\mathrm{dim\,}G-L_{i_{[0,j]}})-\mathrm{dim\,}\mathrm{Stab}_{G}(H))\\
		&=\mathrm{dim\,}G-\sum_{l=1}^{k}(\mathrm{dim\,}G-\mathrm{dim\,}L_{l})-\mathrm{Stab}_{G}(H)\\
		&=\mathrm{dim\,}G-\mathrm{Irr\,}(H)-\mathrm{Stab}_{G}(H)=\delta(H).
	\end{align*}
\end{proof}
\subsection{$\mathrm{rig}$-constant deformation of a family of canonical forms}\label{sec:riginv}
As well as in Section \ref{sec:rigidindex},
let us take an effective divisor 
$D=\sum_{a\in \{a_{1},\ldots,a_{d}\}}(k_{a}+1)\cdot a$
in $\mathbb{C}$ and a collection of canonical forms
$\mathbf{H}=(H^{(a)})_{a\in |D|}\in \prod_{a\in |D|}\mathfrak{g}(\mathbb{C}[z_{a}^{-1}]_{k_{a}})$
with the relation
$\sum_{a\in |D|}H^{(a)}_{\mathrm{res}}\in \mathfrak{g}_{\mathrm{ss}}.$
Then we can consider the unfolding\index[cN]{$\mathbf{H}(\mathbf{c})$}
\[
	\mathbf{H}(\mathbf{c}):=(H^{(a)}(\mathbf{c}_{a}))_{\mathbf{c}=(\mathbf{c}_{a})_{a\in |D|}\in \prod_{a\in |D|}\mathbb{C}^{k_{a}+1}}	
\] 
of the collection $\mathbf{H}$. 

Recall 
the stratification $\mathbb{C}^{k_{a}+1}=\bigsqcup_{\mathcal{I}^{(a)}
\in \mathcal{P}_{[k_{a}+1]}}C(\mathcal{I}^{(a)})$
of each $\mathbb{C}^{k_{a}+1}$,
then the product $\prod_{a\in |D|}\mathbb{C}^{k_{a}+1}$ also has the stratification with 
the strata $\prod_{a\in |D|}C(\mathcal{I}^{(a)})$
for $(\mathcal{I}^{(a)})\in \prod_{a\in |D|}\mathcal{P}_{[k_{a}+1]}$.

Let us take a stratum $\prod_{a\in |D|}C(\mathcal{I}^{(a)})$
of $\prod_{a\in |D|}\mathbb{C}^{k_{a}+1}$
associated with the collection of partitions \index[cN]{$\mathcal{I}^{(a)}$}
$\mathcal{I}^{(a)}\colon I^{(a)}_{0},\ldots,I^{(a)}_{r_{a}}
\in \mathcal{P}_{[k_{a}+1]}$ for $a\in |D|$.
Then for $\mathbf{c}=(\mathbf{c}_{a})_{a\in |D|}\in \prod_{a\in |D|}C(\mathcal{I}_{a})$,
we can write
$H^{(a)}(\mathbf{c}_{a})=\sum_{i=0}^{r_{a}}H^{(a)}(\mathbf{c}_{a})_{i}$ \index[cN]{$H^{(a)}(\mathbf{c}_{a})_{i}$}
according to the partial fraction decomposition
and regard $\mathbf{H}(\mathbf{c})$ as the collection of canonical forms 
\[
	\mathbf{H}(\mathbf{c})=(H^{(a)}(\mathbf{c}_{a})_{i})_{\substack{i=0,1,\ldots,r_{a}\\a\in |D|}}.
\]
\begin{lem}
	Let $(\mathcal{I}^{{(a)}})_{a\in |D|}\in \prod_{a\in |D|}\mathcal{P}_{[k_{a}+1]}$ be a collection 
	of partitions.
	We write the components of the partition by 
	$\mathcal{I}^{(a)}\colon I^{(a)}_{0},\ldots,I^{(a)}_{r_{a}}$.
	Let us take $\mathbf{c}=(\mathbf{c}_{a})_{a\in |D|}\in \prod_{a\in |D|}\mathcal{C}(\mathcal{I}^{(a)})$
	and write $\mathbf{c}_{a}=(c^{(a)}_{0},\ldots,c^{(a)}_{k_{a}})$.

	Then the  collection $\mathbf{H}(\mathbf{c})=(H^{(a)}(\mathbf{c}_{a})_{i})_{\substack{i=0,1,\ldots,r_{a}\\a\in |D|}}$
	of canonical forms satisfies the residue condition  
	\[
		\sum_{a\in |D|}\sum_{j=0}^{r_{a}}\underset{z_{a}-c^{(a)}_{I^{(a)}_{j}}}{\mathrm{res\,}}(H^{(a)}(\mathbf{c}_{a})_{i})
		\in \mathfrak{g}_{\mathrm{ss}}.	
	\]
\end{lem}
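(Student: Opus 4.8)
The plan is to reduce the statement to a purely local identity at each singular point and then add up. The crucial claim is that for each fixed $a\in |D|$ the residues of the pieces of the unfolded form add back to the original residue:
\[
	\sum_{j=0}^{r_{a}}\underset{z_{a}=c^{(a)}_{i^{(a)}_{[j,0]}}}{\mathrm{res\,}}\bigl(H^{(a)}(\mathbf{c}_{a})_{j}\bigr)=H^{(a)}_{\mathrm{res}}.
\]
Granting this, summing over $a\in |D|$ and using the hypothesis $\sum_{a\in |D|}H^{(a)}_{\mathrm{res}}\in \mathfrak{g}_{\mathrm{ss}}$ gives the lemma at once, since the left-hand residues are exactly the coefficients $A^{[j]}_{0}$ appearing in the partial fraction decomposition recorded before Proposition \ref{prop:hyper}.

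To establish the local identity, fix $a$ and regard $\zeta:=z_{a}$ as an affine coordinate on $\mathbb{P}^{1}$. Since $\mathbf{c}_{a}\in\mathcal{C}(\mathcal{I}^{(a)})$, the values $p_{j}:=c^{(a)}_{i^{(a)}_{[j,0]}}$ for $j=0,\ldots,r_{a}$ are pairwise distinct, and from the explicit form of the unfolding the rational function $\zeta\mapsto H^{(a)}(\mathbf{c}_{a})$ has poles exactly at $\zeta=p_{0},\ldots,p_{r_{a}}$, with $H^{(a)}(\mathbf{c}_{a})_{j}$ its principal part at $\zeta=p_{j}$; in particular its partial fraction decomposition has no polynomial part, because directly from the definition $H^{(a)}(\mathbf{c}_{a})=H^{(a)}_{\mathrm{res}}\,\zeta^{-1}+O(\zeta^{-2})$ as $\zeta\to\infty$. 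I would then apply the residue theorem on $\mathbb{P}^{1}$ to the $\mathfrak{g}$-valued (hence, via the fixed embedding $G\hookrightarrow\mathrm{GL}_{N}$, $M_{N}(\mathbb{C})$-valued) rational $1$-form $H^{(a)}(\mathbf{c}_{a})\,d\zeta$: its only poles lie at $p_{0},\ldots,p_{r_{a}}$ and at $\zeta=\infty$; the residue at $p_{j}$ equals $\underset{\zeta=p_{j}}{\mathrm{res\,}}H^{(a)}(\mathbf{c}_{a})_{j}$ because the remaining pieces are holomorphic there, while the expansion at infinity gives residue $-H^{(a)}_{\mathrm{res}}$ at $\zeta=\infty$. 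Setting the total sum of residues equal to zero yields the local identity.

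I expect the only delicate points to be bookkeeping: confirming that the poles of $H^{(a)}(\mathbf{c}_{a})$ on the stratum $\mathcal{C}(\mathcal{I}^{(a)})$ are precisely the $p_{j}$ (immediate from the partial fraction decomposition), and keeping the sign convention at $\zeta=\infty$ straight. Neither is a genuine obstacle. If one prefers to avoid invoking the residue theorem for $\mathfrak{g}$-valued forms, an equivalent and even shorter route is to compare the coefficient of $\zeta^{-1}$ in the Laurent expansion at $\zeta=\infty$ on both sides of $H^{(a)}(\mathbf{c}_{a})=\sum_{j=0}^{r_{a}}H^{(a)}(\mathbf{c}_{a})_{j}$, which directly reads $H^{(a)}_{\mathrm{res}}=\sum_{j=0}^{r_{a}}\underset{\zeta=p_{j}}{\mathrm{res\,}}H^{(a)}(\mathbf{c}_{a})_{j}$, and then sum over $a\in |D|$ and apply the hypothesis on $\mathbf{H}$.
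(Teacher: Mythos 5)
Your proof is correct and follows essentially the same route as the paper: the paper's one-line proof invokes its residue formula (Lemma \ref{lem:residue}), which states exactly your local identity that the sum of the finite residues of $H^{(a)}(\mathbf{c}_{a})$ equals the coefficient of $(z-c_{0}^{(a)})^{-1}$ in the standard basis, namely $H^{(a)}_{\mathrm{res}}$, and then sums over $a\in|D|$. Your derivation of that identity via the residue theorem on $\mathbb{P}^{1}$ (equivalently, the $\zeta^{-1}$ coefficient at infinity) is just a clean way of carrying out the ``direct computation'' the paper leaves implicit.
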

\begin{proof}
	Lemma \ref{lem:residue} in the latter section tells us that 
	\[
		\sum_{a\in |D|}\sum_{j=0}^{r_{a}}\underset{z_{a}=c^{(a)}_{I^{(a)}_{j}}}{\mathrm{res\,}}H^{(a)}(\mathbf{c}_{a})_{i}
		=\sum_{a\in |D|}\underset{z=a}{\mathrm{res\,}}(H^{(a)})\in \mathfrak{g}_{\mathrm{ss}}.	
	\]
\end{proof}
Now we can consider the index of rigidity of $\mathbf{H}(\mathbf{c})$,
\[
	\mathrm{rig\,}(\mathbf{H}(\mathbf{c}))=2\,\mathrm{dim\,}G-\sum_{a\in |D|}\sum_{i=0}^{r_{a}}\delta(H^{(a)}(\mathbf{c}_{a})_{i}).
\]
For each $a\in |D|$,
let \index[cN]{$\mathbb{B}_{H^{(a)}}$}$\mathbb{B}_{H^{(a)}}$ be the complement of hypersurfaces 
associated with the canonical form $H^{(a)}$ defined 
in Section \ref{sec:openset}. 
Then we consider the product of them,\index[cN]{$\mathbb{B}_{\mathbf{H}}$}
\[
	\mathbb{B}_{\mathbf{H}}:=\prod_{a\in |D|}\mathbb{B}_{H^{(a)}}.	
\]
\begin{cor}
	For any $\mathbf{c}\in \prod_{a\in |D|}\mathbb{B}_{H^{(a)}}$,
	we have 
	\[
		\mathrm{rig\,}(\mathbf{H}(\mathbf{c}))=\mathrm{rig\,}(\mathbf{H}).
	\]
\end{cor}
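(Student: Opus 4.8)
The plan is to deduce this immediately from Corollary~\ref{cor:deltaconst}, applied separately at each point of $|D|$. First I would recall that for each $a\in|D|$ the space $\mathbb{C}^{k_a+1}$ is the disjoint union of the strata $C(\mathcal{I}^{(a)})$ over $\mathcal{I}^{(a)}\in\mathcal{P}_{[k_a+1]}$, so that a given $\mathbf{c}_a\in\mathbb{C}^{k_a+1}$ lies in exactly one of them. Fixing $\mathbf{c}=(\mathbf{c}_a)_{a\in|D|}\in\prod_{a\in|D|}\mathbb{B}_{H^{(a)}}$, let $\mathcal{I}^{(a)}\colon I^{(a)}_0,\ldots,I^{(a)}_{r_a}$ be the unique partition with $\mathbf{c}_a\in C(\mathcal{I}^{(a)})$; then $\mathbf{c}_a\in\mathbb{B}_{H^{(a)}}\cap C(\mathcal{I}^{(a)})$, and the partial fraction decomposition $H^{(a)}(\mathbf{c}_a)=\sum_{i=0}^{r_a}H^{(a)}(\mathbf{c}_a)_i$ into canonical forms is exactly the one to which Corollary~\ref{cor:deltaconst} applies.

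Next I would invoke Corollary~\ref{cor:deltaconst} with the canonical form $H^{(a)}$, the partition $\mathcal{I}^{(a)}$, and the point $\mathbf{c}_a\in C(\mathcal{I}^{(a)})\cap\mathbb{B}_{H^{(a)}}$, obtaining
\[
	\sum_{i=0}^{r_a}\delta\bigl(H^{(a)}(\mathbf{c}_a)_i\bigr)=\delta\bigl(H^{(a)}\bigr)
\]
for every $a\in|D|$. Summing over $a\in|D|$ and subtracting from $2\,\mathrm{dim\,}G$ then yields
\[
	\mathrm{rig\,}\bigl(\mathbf{H}(\mathbf{c})\bigr)
	=2\,\mathrm{dim\,}G-\sum_{a\in|D|}\sum_{i=0}^{r_a}\delta\bigl(H^{(a)}(\mathbf{c}_a)_i\bigr)
	=2\,\mathrm{dim\,}G-\sum_{a\in|D|}\delta\bigl(H^{(a)}\bigr)
	=\mathrm{rig\,}(\mathbf{H}),
\]
which is the assertion.

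There is no genuine obstacle here: the statement is a purely formal consequence of the $\delta$-constant property of Corollary~\ref{cor:deltaconst} together with the definition of the index of rigidity as $2\,\mathrm{dim\,}G$ minus the total $\delta$-invariant. The only bookkeeping point worth stating is that $\mathrm{rig\,}(\mathbf{H}(\mathbf{c}))$ is defined in the first place, i.e.\ that the collection $\mathbf{H}(\mathbf{c})$ satisfies the required residue condition; this is precisely the content of the preceding lemma, so no further argument is needed.
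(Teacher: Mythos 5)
Your proposal is correct and follows exactly the paper's route: the paper proves this corollary by declaring it a direct consequence of Corollary~\ref{cor:deltaconst}, and your argument simply spells out that deduction (stratum identification, applying the $\delta$-constancy at each $a\in|D|$, summing, and noting the residue condition from the preceding lemma). No issues.
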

\begin{proof}
	This is a direct consequence of Corollary \ref{cor:deltaconst}.
\end{proof}
\subsection{Examples}\label{sec:exIII}
Let us consider the canonical form 
\begin{align*}
	H&=	
	\begin{pmatrix}\alpha_{1} & \\ &\alpha_{2} \end{pmatrix}z^{-4}
	+\begin{pmatrix}\beta_{1} & \\ &\beta_{2} \end{pmatrix}z^{-3}+
	\begin{pmatrix}\gamma_{1} & \\ &\gamma_{2} \end{pmatrix}z^{-2}
	+\begin{pmatrix}\theta_{1} & \\ &\theta_{2} \end{pmatrix}z^{-1}
\end{align*}
in $\mathfrak{gl}_{2}(\mathbb{C}[z^{-1}]_{3})\,dz$
with $\alpha_{1}\neq \alpha_{2}$ which is the canonical form of 
the triconfluent Heun-type equation in Section \ref{sec:exI},
and also consider its unfolding 
\begin{align*}
	H(\mathbf{c})
	&=	
	\frac{\begin{pmatrix}\alpha_{1} & \\ &\alpha_{2} \end{pmatrix}}{(z-c_{0})(z-c_{1})(z-c_{2})(z-c_{3})}
	+\frac{\begin{pmatrix}\beta_{1} & \\ &\beta_{2} \end{pmatrix}}{(z-c_{0})(z-c_{1})(z-c_{2})}\\
	&\quad\quad  +
	\frac{\begin{pmatrix}\gamma_{1} & \\ &\gamma_{2} \end{pmatrix}}{(z-c_{0})(z-c_{1})}
	+\frac{\begin{pmatrix}\theta_{1} & \\ &\theta_{2} \end{pmatrix}}{z-c_{0}}.
\end{align*}
On the original setting in Section \ref{sec:exI},
$H$ is the canonical form at $z=\infty$.
Here we however consider at $z=0$ under some projective transformation 
of $\mathbb{P}^{1}$ for simplicity.
Let us see the partial fraction decomposition of $H(\mathbf{c})$
on each stratum of $\mathbb{B}_{H}\subset \mathbb{C}^{4}$.

Let us take the trivial partition $\mathcal{I}\colon \{0,1,2,3\}$.
Then for $\mathbf{c}\in \mathcal{C}(\mathcal{I})\cap \mathbb{B}_{H}$,
\begin{align*}
	H(\mathbf{c})
	=	
	\begin{pmatrix}\alpha_{1} & \\ &\alpha_{2} \end{pmatrix}z_{c_{0}}^{-4}
	\begin{pmatrix}\beta_{1} & \\ &\beta_{2} \end{pmatrix}z_{c_{0}}^{-3}
	+
	\begin{pmatrix}\gamma_{1} & \\ &\gamma_{2} \end{pmatrix}z_{c_{0}}^{-2}
	+\begin{pmatrix}\theta_{1} & \\ &\theta_{2} \end{pmatrix}z_{c_{0}}^{-1},
\end{align*}
which is the canonical form of the triconfluent Heun-type equation.

Let us consider the partition $\mathcal{I}\colon \{0\}\sqcup \{1,2,3\}$.
Then Proposition \ref{prop:hyper} and its proof imply that 
for $\mathbf{c}\in \mathcal{C}(\mathcal{I})\cap \mathbb{B}_{H}$, we have 
\begin{align*}
	&H(\mathbf{c})
	=	
	\begin{pmatrix}\theta^{(c_{0})}_{1}(\mathbf{c}) & \\ &\theta^{(c_{0})}_{2}(\mathbf{c}) \end{pmatrix}z_{c_{0}}^{-1}+
	\\& 
	\begin{pmatrix}\alpha^{(c_{1})}_{1}(\mathbf{c}) & \\ &\alpha^{(c_{1})}_{2}(\mathbf{c}) \end{pmatrix}z_{c_{1}}^{-3}+
	\begin{pmatrix}\beta^{(c_{1})}_{1}(\mathbf{c}) & \\ &\beta^{(c_{1})}_{2}(\mathbf{c}) \end{pmatrix}z_{c_{1}}^{-2}+
	\begin{pmatrix}\theta_{(c_{1})}^{(1)}(\mathbf{c}) & \\ &\theta^{(c_{1})}_{2}(\mathbf{c}) \end{pmatrix}z_{c_{1}}^{-1},
\end{align*}
with some $\alpha^{(c_{i})}_{j}(\mathbf{c}),\,\beta^{(c_{i})}_{j}(\mathbf{c}),\,\theta^{(c_{i})}_{j}(\mathbf{c})\in \mathbb{C}$
satisfying
$\theta^{(c_{0})}_{1}(\mathbf{c})\neq \theta^{(c_{0})}_{2}(\mathbf{c})$
and $\alpha^{(c_{1})}_{1}(\mathbf{c})\neq \alpha^{(c_{1})}_{2}(\mathbf{c})$.
The canonical forms in the right hand side are those of the biconfluent Heun-type equation.
We obtain the same canonical forms for the partitions 
$\{1\}\sqcup \{0,2,3\}$, $\{2\}\sqcup \{0,1,3\}$, and $\{3\}\sqcup \{0,1,2\}$
as well.

Let us consider the partition $\mathcal{I}\colon \{0,1\}\sqcup \{2,3\}$.
Then for $\mathbf{c}\in \mathcal{C}(\mathcal{I})\cap \mathbb{B}_{H}$,
\begin{align*}
	H(\mathbf{c})
	=&	
	\begin{pmatrix}\alpha^{(c_{0})}_{1}(\mathbf{c}) & \\ &\alpha^{(c_{0})}_{2}(\mathbf{c}) \end{pmatrix}z_{c_{0}}^{-2}+
	\begin{pmatrix}\beta_{(c_{0})}^{(1)}(\mathbf{c}) & \\ &\beta^{(c_{0})}_{2}(\mathbf{c}) \end{pmatrix}z_{c_{0}}^{-1}
	+\\&
	\begin{pmatrix}\alpha^{(c_{2})}_{1}(\mathbf{c}) & \\ &\alpha^{(c_{2})}_{2}(\mathbf{c}) \end{pmatrix}z_{c_{2}}^{-2}+
	\begin{pmatrix}\beta_{(c_{2})}^{(1)}(\mathbf{c}) & \\ &\beta^{(c_{2})}_{2}(\mathbf{c}) \end{pmatrix}z_{c_{2}}^{-1},
\end{align*}
with $\alpha^{(c_{i})}_{1}(\mathbf{c})\neq \alpha^{(c_{i})}_{2}(\mathbf{c})$
for $i=0,2$.
The canonical forms in the right hand side are those of the doubly-confluent Heun-type equation.
We can obtain the same canonical forms for the partitions 
$\{0,2\}\sqcup \{1,3\}$, and $\{0,3\}\sqcup \{1,2\}$
as well.

Let us consider the partition $\mathcal{I}\colon \{0\}\sqcup\{1\}\sqcup \{2,3\}$.
Then for $\mathbf{c}\in \mathcal{C}(\mathcal{I})\cap \mathbb{B}_{H}$,
\begin{align*}
	H(\mathbf{c})
	=&	
	\begin{pmatrix}\theta^{(c_{0})}_{1}(\mathbf{c}) & \\ &\theta^{(c_{0})}_{2}(\mathbf{c}) \end{pmatrix}z_{c_{0}}^{-1}+
	\\
	&\begin{pmatrix}\theta^{(c_{1})}_{1}(\mathbf{c}) & \\ &\theta^{(c_{1})}_{2}(\mathbf{c}) \end{pmatrix}z_{c_{1}}^{-1}+
	\\& 
	\begin{pmatrix}\alpha^{(c_{2})}_{1}(\mathbf{c}) & \\ &\alpha^{(c_{2})}_{2}(\mathbf{c}) \end{pmatrix}z_{c_{2}}^{-2}+
	\begin{pmatrix}\theta_{(c_{2})}^{(1)}(\mathbf{c}) & \\ &\theta^{(c_{2})}_{2}(\mathbf{c}) \end{pmatrix}z_{c_{2}}^{-1},
\end{align*}
with
$\theta^{(c_{i})}_{1}(\mathbf{c})\neq \theta^{(c_{i})}_{2}(\mathbf{c})$ for $i=0,1$
and $\alpha^{(c_{2})}_{1}(\mathbf{c})\neq \alpha^{(c_{2})}_{2}(\mathbf{c})$.
The canonical forms in the right hand side are those of the confluent Heun-type equation.
We can obtain the same canonical forms for the partitions 
$\{0\}\sqcup \{2\}\sqcup \{1,3\}$, $\{0\}\sqcup \{3\}\sqcup \{1,2\}$, 
$\{0,1\}\sqcup \{2\}\sqcup \{3\}$, $\{0,2\}\sqcup \{1\}\sqcup \{3\}$,
and $\{0,3\}\sqcup \{1\}\sqcup \{2\}$
as well.

Finally consider the finest partition $\mathcal{I}\colon \{0\}\sqcup\{1\}\sqcup \{2\}\sqcup\{3\}$.
Then for $\mathbf{c}\in \mathcal{C}(\mathcal{I})\cap \mathbb{B}_{H}$,
\begin{align*}
	H(\mathbf{c})
	=\sum_{i=0}^{3}
	\begin{pmatrix}\theta^{(c_{i})}_{1}(\mathbf{c}) & \\ &\theta^{(c_{i})}_{2}(\mathbf{c}) \end{pmatrix}z_{c_{i}}^{-1},
	\end{align*}
with
$\theta^{(c_{i})}_{1}(\mathbf{c})\neq \theta^{(c_{i})}_{2}(\mathbf{c})$ for $i=0,1,2,3$.
The canonical forms in the right hand side are those of the Heun-type equation.
Therefore all canonical forms for confluent Heun-type equations 
in Section \ref{sec:exI} can be obtained from 
the unfolding of the canonical form of the triconfluent Heun-type equation.

Let us see another example,
\begin{align*}
	H=&	\begin{pmatrix}\alpha_{1}I_{2} & \\ &\alpha_{2}I_{2} \end{pmatrix}z^{-4}+
	\begin{pmatrix}\beta_{1}I_{2} & \\ &\beta_{2}I_{2} \end{pmatrix}z^{-3}	
	\\
	&\quad\quad +\begin{pmatrix}\gamma_{1}I_{2} & \\ &\gamma_{2}I_{2} \end{pmatrix}z^{-2}+
	\begin{pmatrix}\theta_{1}I_{2} & &\\ &\theta_{2}&\\&& \theta_{3}\end{pmatrix}z^{-1}
	\in \mathfrak{gl}_{4}(\mathbb{C}[z^{-1}]_{3})\,dz
\end{align*}
with $\alpha_{1}\neq \alpha_{2}$ and $\theta_{2}\neq \theta_{3}$
which is the canonical form 
of the triconfluent equation in Section \ref{sec:exII}, 
and also consider  its unfolding 
\begin{align*}
	H(\mathbf{c})
	&=	
	\frac{\begin{pmatrix}\alpha_{1}I_{2} & \\ &\alpha_{2}I_{2} \end{pmatrix}}{(z-c_{0})(z-c_{1})(z-c_{2})(z-c_{3})}
	+\frac{\begin{pmatrix}\beta_{1}I_{2} & \\ &\beta_{2}I_{2} \end{pmatrix}}{(z-c_{0})(z-c_{1})(z-c_{2})}\\
	&\quad\quad  +
	\frac{\begin{pmatrix}\gamma_{1}I_{2} & \\ &\gamma_{2}I_{2} \end{pmatrix}}{(z-c_{0})(z-c_{1})}
	+\frac{\begin{pmatrix}\theta_{1}I_{2} & &\\ &\theta_{2}&\\&& \theta_{3}\end{pmatrix}}{z-c_{0}}.
\end{align*}

Take the trivial partition $\mathcal{I}\colon \{0,1,2,3\}$,
then for $\mathbf{c}\in \mathcal{C}(\mathcal{I})\cap \mathbb{B}_{H}$,
\begin{align*}
	H(\mathbf{c})
	&=	
	\begin{pmatrix}\alpha_{1}I_{2} & \\ &\alpha_{2}I_{2} \end{pmatrix}(z-c_{0})^{-4}
	+\begin{pmatrix}\beta_{1}I_{2} & \\ &\beta_{2}I_{2} \end{pmatrix}(z-c_{0})^{-3}\\
	&\quad\quad  +
	\begin{pmatrix}\gamma_{1}I_{2} & \\ &\gamma_{2}I_{2} \end{pmatrix}(z-c_{0})^{-2}
	+\begin{pmatrix}\theta_{1}I_{2} & &\\ &\theta_{2}&\\&& \theta_{3}\end{pmatrix}(z-c_{0})
\end{align*}
which is the canonical form of the triconfluent equation in Section \ref{sec:exII}.

Let us consider the partition $\mathcal{I}\colon \{0\}\sqcup \{1,2,3\}$.
Then 
for $\mathbf{c}\in \mathcal{C}(\mathcal{I})\cap \mathbb{B}_{H}$, we have \small
\begin{align*}
	&H(\mathbf{c})=\\
	&	
	\frac{\begin{pmatrix}\theta_{1}^{(c_{0})}(\mathbf{c})I_{2} & &\\ &\theta_{2}^{(c_{0})}(\mathbf{c})&\\&& \theta_{3}^{(c_{0})}(\mathbf{c})\end{pmatrix}}{z_{c_{0}}}+\\
	& 
	\frac{\begin{pmatrix}\alpha^{(c_{1})}_{1}(\mathbf{c})I_{2} & \\ &\alpha^{(c_{1})}_{2}(\mathbf{c})I_{2} \end{pmatrix}}{z_{c_{1}}^{3}}+
	\frac{\begin{pmatrix}\beta^{(c_{1})}_{1}(\mathbf{c})I_{2} & \\ &\beta^{(c_{1})}_{2}(\mathbf{c})I_{2} \end{pmatrix}}{z_{c_{1}}^{2}}+
	\frac{\begin{pmatrix}\theta_{1}^{(c_{1})}(\mathbf{c})I_{2} & &\\ &\theta_{2}^{(c_{1})}(\mathbf{c})I_{2}\end{pmatrix}}{z_{c_{1}}},
	\end{align*}
\normalsize
with $\alpha^{(c_{1})}_{1}(\mathbf{c})\neq \alpha^{(c_{1})}_{2}(\mathbf{c})$ and 
$\theta^{(c_{0})}_{i}(\mathbf{c})\neq \theta^{(c_{0})}_{j}(\mathbf{c})$ for $i\neq j$.
The canonical forms in the right hand side are those of the biconfluent equation of type I 
in Section \ref{sec:exII}.

Let us consider the partition $\mathcal{I}\colon \{0,1,2\}\sqcup \{3\}$.
Then  
for $\mathbf{c}\in \mathcal{C}(\mathcal{I})\cap \mathbb{B}_{H}$,  \scriptsize
\begin{align*}
	&H(\mathbf{c})=\\
	&\frac{\begin{pmatrix}\alpha^{(c_{0})}_{1}(\mathbf{c})I_{2} & \\ &\alpha^{(c_{0})}_{2}(\mathbf{c})I_{2} \end{pmatrix}}{z_{c_{0}}^{3}}+
	\frac{\begin{pmatrix}\beta^{(c_{0})}_{1}(\mathbf{c})I_{2} & \\ &\beta^{(c_{0})}_{2}(\mathbf{c})I_{2} \end{pmatrix}}{z_{c_{0}}^{2}}+
	\frac{\begin{pmatrix}\theta_{1}^{(c_{0})}(\mathbf{c})I_{2} & &\\ &\theta_{2}^{(c_{0})}(\mathbf{c})&\\&& \theta_{3}^{(c_{0})}(\mathbf{c})\end{pmatrix}}{z_{c_{0}}}+\\
	&	
	\frac{\begin{pmatrix}\theta^{(c_{3})}_{1}(\mathbf{c})I_{2} & \\ &\theta^{(c_{3})}_{2}(\mathbf{c})I_{2} \end{pmatrix}}{z_{c_{3}}},
\end{align*}\normalsize
with 
$\alpha^{(c_{0})}_{1}(\mathbf{c})\neq \alpha^{(c_{0})}_{2}(\mathbf{c})$,
$\theta^{(c_{0})}_{2}(\mathbf{c})\neq \theta^{(c_{0})}_{3}(\mathbf{c})$,
and 
$\theta^{(c_{3})}_{1}(\mathbf{c})\neq \theta^{(c_{3})}_{2}(\mathbf{c})$.
The canonical forms in the right hand side are those of the biconfluent equation of type II 
in Section \ref{sec:exII}.
We can obtain the same canonical forms for the partitions 
$\{0,2,3\}\sqcup \{1\}$ and  $\{0,1,3\}\sqcup \{2\}$
as well.

Let us consider the partition $\mathcal{I}\colon \{0,1\}\sqcup \{2,3\}$.
Then for $\mathbf{c}\in \mathcal{C}(\mathcal{I})\cap \mathbb{B}_{H}$,
\begin{align*}
	H(\mathbf{c})
	=&	
	\frac{\begin{pmatrix}\alpha^{(c_{0})}_{1}(\mathbf{c})I_{2} & \\ &\alpha^{(c_{0})}_{2}(\mathbf{c})I_{2} \end{pmatrix}}{z_{c_{0}}^{2}}+
	\frac{\begin{pmatrix}\theta_{1}^{(c_{0})}(\mathbf{c})I_{2} & &\\ &\theta_{2}^{(c_{0})}(\mathbf{c})&\\&& \theta_{3}^{(c_{0})}(\mathbf{c})\end{pmatrix}}{z_{c_{0}}}
	+\\&
	\frac{\begin{pmatrix}\alpha^{(c_{2})}_{1}(\mathbf{c})I_{2} & \\ &\alpha^{(c_{2})}_{2}(\mathbf{c})I_{2} \end{pmatrix}}{z_{c_{2}}^{2}}+
	\frac{\begin{pmatrix}\theta_{1}^{(c_{2})}(\mathbf{c})I_{2} & \\ &\theta^{(c_{2})}_{2}(\mathbf{c})I_{2} \end{pmatrix}}{z_{c_{2}}},
\end{align*}
with $\alpha^{(c_{i})}_{1}(\mathbf{c})\neq \alpha^{(c_{i})}_{2}(\mathbf{c})$
for $i=0,2$ and $\theta^{(c_{0})}_{2}(\mathbf{c})\neq \theta^{(c_{0})}_{3}(\mathbf{c})$.
The canonical forms in the right hand side are those of the doubly-confluent equation
in Section \ref{sec:exII}.
We can obtain the same canonical forms for the partitions 
$\{0,2\}\sqcup \{1,3\}$, and $\{0,3\}\sqcup \{1,2\}$
as well.

Let us consider the partition $\mathcal{I}\colon \{0\}\sqcup\{1\}\sqcup \{2,3\}$.
Then for $\mathbf{c}\in \mathcal{C}(\mathcal{I})\cap \mathbb{B}_{H}$,
\begin{align*}
	&H(\mathbf{c})
	=\\&	
	\frac{\begin{pmatrix}\theta_{1}^{(c_{0})}(\mathbf{c})I_{2} & &\\ &\theta_{2}^{(c_{0})}(\mathbf{c})&\\&& \theta_{3}^{(c_{0})}(\mathbf{c})\end{pmatrix}}{z_{c_{0}}}+
	\\
	&
	\frac{\begin{pmatrix}\theta^{(c_{1})}_{1}(\mathbf{c})I_{2} & \\ &\theta^{(c_{1})}_{2}(\mathbf{c})I_{2} \end{pmatrix}}{z_{c_{1}}}+
	\\& 
	\frac{\begin{pmatrix}\alpha^{(c_{2})}_{1}(\mathbf{c})I_{2} & \\ &\alpha^{(c_{2})}_{2}(\mathbf{c})I_{2} \end{pmatrix}}{z_{c_{2}}^{2}}+
	\frac{\begin{pmatrix}\theta_{(c_{2})}^{(1)}(\mathbf{c})I_{2} & \\ &\theta^{(c_{2})}_{2}(\mathbf{c})I_{2} \end{pmatrix}}{z_{c_{2}}},
\end{align*}
with
$\theta^{(c_{i})}_{j}(\mathbf{c})\neq \theta^{(c_{i})}_{k}(\mathbf{c})$ for $i=0,1$ and $j\neq k$
and $\alpha^{(c_{2})}_{1}(\mathbf{c})\neq \alpha^{(c_{2})}_{2}(\mathbf{c})$.
The canonical forms in the right hand side are those of the confluent equation of type I
in Section \ref{sec:exII}.
We can obtain the same canonical forms for the partitions 
$\{0\}\sqcup \{2\}\sqcup \{1,3\}$, $\{0\}\sqcup \{3\}\sqcup \{1,2\}$ as well.

Let us consider the partition $\mathcal{I}\colon \{0,1\}\sqcup \{2\}\sqcup \{3\}$.
Then for $\mathbf{c}\in \mathcal{C}(\mathcal{I})\cap \mathbb{B}_{H}$,
\begin{align*}
	&H(\mathbf{c})
	=
	\\& 
	\frac{\begin{pmatrix}\alpha^{(c_{0})}_{1}(\mathbf{c})I_{2} & \\ &\alpha^{(c_{0})}_{2}(\mathbf{c})I_{2} \end{pmatrix}}{z_{c_{0}}^{2}}+
	\frac{\begin{pmatrix}\theta_{1}^{(c_{0})}(\mathbf{c})I_{2} & &\\ &\theta_{2}^{(c_{0})}(\mathbf{c})&\\&& \theta_{3}^{(c_{0})}(\mathbf{c})\end{pmatrix}}{z_{c_{0}}}
	\\&	
	\frac{\begin{pmatrix}\theta_{1}^{(c_{2})}(\mathbf{c})I_{2} & &\\ &\theta_{2}^{(c_{2})}(\mathbf{c})I_{2}\end{pmatrix}}{z_{c_{2}}}+
	\\
	&
	\frac{\begin{pmatrix}\theta^{(c_{3})}_{1}(\mathbf{c})I_{2} & \\ &\theta^{(c_{3})}_{2}(\mathbf{c})I_{2} \end{pmatrix}}{z_{c_{3}}},
\end{align*}
with
$\alpha^{(c_{0})}_{1}(\mathbf{c})\neq \alpha^{(c_{0})}_{2}(\mathbf{c})$,
$\theta^{(c_{0})}_{2}(\mathbf{c})\neq \theta^{(c_{0})}_{3}(\mathbf{c})$,
and 
$\theta^{(c_{i})}_{1}(\mathbf{c})\neq \theta^{(c_{i})}_{2}(\mathbf{c})$ for $i=2,3$.
The canonical forms in the right hand side are those of the confluent equation of type II 
in Section \ref{sec:exII}.
We can obtain the same canonical forms for the partitions 
$\{0,2\}\sqcup \{1\}\sqcup \{3\}$
and $\{0,3\}\sqcup \{1\}\sqcup \{2\}$
as well.

Finally consider the finest partition $\mathcal{I}\colon \{0\}\sqcup\{1\}\sqcup \{2\}\sqcup \{3\}$.
Then for $\mathbf{c}\in \mathcal{C}(\mathcal{I})\cap \mathbb{B}_{H}$,
\begin{align*}
	H(\mathbf{c})
	=
	\frac{\begin{pmatrix}\theta_{1}^{(c_{0})}(\mathbf{c})I_{2} & &\\ &\theta_{2}^{(c_{0})}(\mathbf{c})&\\&& \theta_{3}^{(c_{0})}(\mathbf{c})\end{pmatrix}}{z_{c_{0}}}+
	\sum_{i=1}^{3}
	\frac{\begin{pmatrix}\theta^{(c_{i})}_{1}(\mathbf{c})I_{2} & \\ &\theta^{(c_{i})}_{2}(\mathbf{c})I_{2} \end{pmatrix}}{z_{c_{i}}},
	\end{align*}
with
$\theta^{(c_{i})}_{j}(\mathbf{c})\neq \theta^{(c_{i})}_{k}(\mathbf{c})$ for $i=0,1,2,3$ and $j\neq k$.
The canonical forms in the right hand side are those of the first equation in Section \ref{sec:exII}.

Therefore as well as the Heun-type equations,
the unfolding of $H$ recovers the canonical forms for all the confluent equations appeared in Section \ref{sec:exII}.

\section{Additive Deligne-Simpson problem and a conjecture by Oshima}
In the previous section, we introduced 
a deformation of unramified canonical forms and 
showed that this deformation preserves some important invariants of the canonical forms.
In this section, we propose a problem to find   
a deformation of irreducible meromorphic $G$-connection
defined over $\mathbb{P}^{1}$ which realizes 
the given deformation of canonical forms. 

\subsection{Additive Deligne-Simpson problem for unfolding families of 
canonical forms}
Let us consider an  effective divisor  
$D=\sum_{a\in \{a_{1},\ldots,a_{d}\}}(k_{a}+1)\cdot a$
as in Section \ref{sec:rigidindex}.
\begin{df}[Irreducible connection, cf. Arinkin \cite{Ari0}]\normalfont
	For 
	a  meromorphic $G$-connection $\nabla_{A}=A\,dz$ with a 
	$\mathfrak{g}$-valued meromorphic $1$-form $A\,dz\in \varOmega^{\mathfrak{g}}_{\mathbb{P}^{1},D}(\mathbb{P}^{1})$,
	we say the $\nabla_{A}$ is {\em irreducible} if 
	there is no proper parabolic subalgebra $\mathfrak{p}$ of $\mathfrak{g}$ such that 
	$A\,dz\in \mathfrak{p}\otimes_{\mathbb{C}}\varOmega_{\mathbb{P}^{1},D}(\mathbb{P}^{1})$ 
\end{df}

We now consider the following problem which asks the existence of 
an irreducible $G$-connection with the prescribed local isomorphism classes.
We assume $|D|\subset \mathbb{C}$ under a projective transformation for simplicity.
\begin{prob}[Additive Deligne-Simpson problem for $\mathbf{H}$]\label{prob:addDS}\normalfont
	Let us take a collection of unramified canonical forms
	$\mathbf{H}=(H^{(a)})_{a\in |D|}\in \prod_{a\in |D|}\mathfrak{g}(\mathbb{C}[z_{a}^{-1}]_{k_{a}})$
	satisfying the residue condition
	\(\displaystyle 
		\sum_{a\in |D|}H^{(a)}_{\mathrm{res}}\in \mathfrak{g}_{\mathrm{ss}}.
	\)
	Then find an irreducible meromorphic connection on the trivial $G$-bundle over $\mathbb{P}^{1}$ 
	\[
	\nabla_{A}=\sum_{a\in |D|}\sum_{i=0}^{k_{a}}\frac{A^{(a)}_{i}}{(z-a)^{i}}\frac{dz}{z-a}
	\]
	with singularities only on $|D|$
	such that 
	\[
		\sum_{i=0}^{k_{a}}\frac{A^{(a)}_{i}}{(z-a)^{i+1}}\in \mathbb{O}_{H^{(a)}}
	\]
	for all $a\in |D|$.
\end{prob}
As it is explained in Section \ref{sec:intads},
this problem was considered by Deligne, Simpson, and Kostov,
and after their pioneering works, there are many subsequent 
developments around this problem,
see \cite{C}, \cite{Kos}, \cite{Boarx}, \cite{HY}, \cite{H2}, \cite{KLMNS}, \cite{LSN}, and \cite{JY}.

Now we also consider a generalization of this problem as follows.
For a collection $\mathbf{H}=(H^{(a)})_{a\in |D|}\in \prod_{a\in |D|}\mathfrak{g}(\mathbb{C}[z_{a}^{-1}]_{k_{a}})$
 of unramified canonical forms
satisfying the residue condition $\sum_{a\in |D|}H^{(a)}_{\mathrm{res}}\in \mathfrak{g}_{\mathrm{ss}}$,
let $\mathbf{H}(\mathbf{c})=(H^{(a)}(\mathbf{c}_{a}))_{\mathbf{c}=(\mathbf{c}_{a})\in \prod_{a\in |D|}\mathbb{C}^{k_{a}+1}}$
be the unfolding of $\mathbf{H}$.
Then as in Section \ref{sec:riginv}, 
for each $\mathbf{c}\in \prod_{a\in |D|}\mathbb{C}^{k_{a}+1}$,
we regard $\mathbf{H}(\mathbf{c})$
as the collection of the canonical forms $\mathbf{H}(\mathbf{c})=(H^{(a)}(\mathbf{c}_{a})_{i})_{\substack{i=0,1,\ldots,r_{a}\\a\in |D|}}$.
\begin{prob}[Additive Deligne-Simpson problem for an unfolding 
	family of canonical forms]\label{prob:addDSfamily}\normalfont
	Let $U$ be an open neighborhood of $\mathbf{0}\in \prod_{a\in |D|}\mathbb{C}^{k_{a}+1}$.
	Let $(\mathbf{H}(\mathbf{c}))_{\mathbf{c}\in U}$ be 
	the unfolding of a collection of canonical forms $\mathbf{H}=(H^{(a)})_{a\in |D|}$
	as above.
	Then find a family of meromorphic connections on the trivial $G$-bundle over $\mathbb{P}^{1}$,
	\[
	\nabla_{A(\mathbf{c})}=A(\mathbf{c})\,dz
	\]
	satisfying all the following conditions.
	\begin{enumerate}
		\item The $\mathfrak{g}$-valued $1$-form 
		$A(\mathbf{c})\,dz$ depends holomorphically  on $\mathbf{c}\in U$. 
		\item For each $\mathbf{c}\in U$, 
		 $\nabla_{A(\mathbf{c})}$ is a solution 
		to the additive Deligne-Simpson problem for 
		$\mathbf{H}(\mathbf{c})$.
	\end{enumerate}
\end{prob}
In regard to Problems \ref{prob:addDS} and \ref{prob:addDSfamily},
we obtain the following theorem whose proof is postponed until Section \ref{sec:deform}.
\begin{thm}\label{thm:addDS}
	Let $\mathbf{H}$ be a collection of unramified canonical forms as above.
    Let $\nabla_{A}=A\,dz$ be a solution to the additive Deligne-Simpson problem 
    for $\mathbf{H}$.
    Then there exists an open neighborhood $U$ of $\mathbf{0}\in \prod_{a\in |D|}\mathbb{C}^{k_{a}+1}$
    and a holomorphic family of $G$-connections $\nabla_{A(\mathbf{c})}=A(\mathbf{c})\,dz$\ $(\mathbf{c}\in U)$ on $\mathbb{P}^{1}$
    such that 
    \begin{itemize}
    \item $\nabla_{A(\mathbf{0})}=\nabla_{A}$,
    \item $\nabla_{A(\mathbf{c})}=A(\mathbf{c})\,dz$ is a solution to
    the additive Deligne-Simpson problem for the family $(\mathbf{H}(\mathbf{c}))_{\mathbf{c}\in U}$.
	\end{itemize}
    In particular, the following are equivalent.
	\begin{enumerate}
		\item The additive Deligne-Simpson problem for $\mathbf{H}$ has a solution.
		\item There exists an open neighborhood $U$ of $\mathbf{0}\in\prod_{a\in |D|}\mathbb{C}^{k_{a}+1}$ and 
		the additive Deligne-Simpson problem for the family $(\mathbf{H}(\mathbf{c}))_{\mathbf{c}\in U}$
		has a solution.
	\end{enumerate}
\end{thm}

\subsection{Unfolding of spectral types}\label{sec:unfspc}
As we saw in Corollary \ref{cor:specdecomp},
the unfolding $H(\mathbf{c})$ of an unramified canonical form $H$
induces decompositions of the spectral type of $H$.
We give a diagrammatic description of these decompositions.

According to the decomposition in Corollary \ref{cor:specdecomp},
we define the unfolding of abstract spectral types as follows.
Let $S$ be an abstract spectral type as above.
Take a partition $\mathcal{I}\colon I_{0},\ldots,I_{r}\in \mathcal{P}_{[k+1]}$ of the index set $\{0,1,\ldots,k\}$,
and assume $0\in I_{0}$.
Then we
define the collection \index[cN]{$S^{\mathcal{I}}$}$S^{\mathcal{I}}=(S^{I_{j}})_{j=1,\ldots,r}$ \index[cN]{$S^{I_{j}}$}of spectral types
by 
\begin{align*}
    &S^{I_{0}}:=(\Pi_{I_{0}}; [J_{0}]),
    &S^{I_{j}}:=(\Pi_{I_{j}}; [0]),\ 
    j=1,\ldots,r,
\end{align*}
where for each $j=0,1,\ldots,r$, $\Pi_{I_{j}}$ 
is the subsequences of $\Pi_{k}\supset \cdots \Pi_{0}$ indexed by $I_{j}$, namely,
$\Pi_{I_{j}}$ stands for the subsequence 
$(\Pi_{i_{[j,k_{j}]}}\supset \Pi_{i_{[j,k_{j}-1]}}\supset \cdots \Pi_{i_{[j,0]}})$ with 
indices in $I_{j}=\{i_{[j,0]}<\cdots <i_{[j,k_{j}-1]}<i_{[j,k_{j}]}\}$.
We call this collection $S^{\mathcal{I}}=(S^{I_{j}})_{j=1,\ldots,r}$ 
an {\em unfolding} of $S$ associated to $\mathcal{I}$.

Also for a collection \index[cN]{$\mathbf{S}$}
\[
\mathbf{S}=(S_{i})_{i=1,2,\ldots,d}=
\left((\Pi^{(i)}_{k_{i}}\supset \Pi^{(i)}_{k_{i}-1}\supset\cdots \supset\Pi^{(i)}_{0};[J^{(i)}_{0}])\right)_{i=1,2,\ldots,d}
\]
of abstract spectral types,
we can define the unfolding of $\mathbf{S}$
\[
	\mathbf{S}^{(\mathcal{I}^{(i)})_{i=1,2,\ldots,d}}:=(S_{i}^{I_{j}^{(i)}})_{\substack{i=1,2,\ldots,d,\\ j=1,2,\ldots,r_{i}}}
\]
of $\mathbf{S}$ associated to $(\mathcal{I}^{(i)})_{i=1,2,\ldots,d}=(I_{0}^{(i)}\sqcup\cdots \sqcup I_{r_{i}}^{(i)})_{i=1,2,\ldots,d}\in \prod_{i=1}^{d}\mathcal{P}_{[k_{i}+1]}$.

In particular for the finest partitions 
$(\{0\}\sqcup\cdots \sqcup \{k_{i}\})_{i=1,\ldots,d}\in \prod_{i=1}^{d}\mathcal{P}_{[k_{i}+1]}$,
the corresponding unfolding \index[cN]{$\mathbf{S}^{\mathrm{reg}}$}\small
\begin{align*}
	&\mathbf{S}^{\mathrm{reg}}:=\mathbf{S}^{(\{0\}\sqcup\cdots \{k_{i}\})_{i=1,\ldots,d}}=\\
	&\ \left((\Pi^{(1)}_{0};[J^{(1)}_{0}]),(\Pi^{(1)}_{1};[0]),\ldots,(\Pi^{(1)}_{k_{1}};[0]),\ldots,(\Pi^{(d)}_{0};[J^{(d)}_{0}]),(\Pi^{(d)}_{1};[0]),\ldots,(\Pi^{(d)}_{k_{d}};[0])\right)
\end{align*}\normalsize
of $
\mathbf{S}$
which is a collection of spectral types for regular singular canonical forms 
is called 
the {\em finest unfolding} of $\mathbf{S}$.

Let us introduce the unfolding diagram of a collection $\mathbf{S}$
of spectral types.
Recall that  $\prod_{i=1}^{d}\mathcal{P}_{[k_{i}+1]}$ becomes a product poset, i.e.,
for $(\mathcal{I}^{(i)})_{i=1,\ldots,d},(\mathcal{J}^{(i)})_{i=1,\ldots,d}\in \prod_{i=1}^{d}\mathcal{P}_{[k_{i}+1]}$,
we have $(\mathcal{I}^{(i)})_{i=1,\ldots,d}\le (\mathcal{J}^{(i)})_{i=1,\ldots,d}$ 
if and only if $\mathcal{I}^{(i)}\le \mathcal{J}^{(i)}$ for all $i=1,\ldots,d.$
Thus we obtain the Hasse diagram of the poset $\prod_{i=1}^{d}\mathcal{P}_{[k_{i}+1]}$.
Then we attach to each vertex $(\mathcal{I}^{(i)})_{i=1,\ldots,d}\in \prod_{i=1}^{d}\mathcal{P}_{[k_{i}+1]}$
the corresponding unfolding $\mathbf{S}^{(\mathcal{I}^{(i)})_{i=1,2,\ldots,d}}$
of $\mathbf{S}$.
Then we call the resulting diagram the {\em unfolding diagram} of $\mathbf{S}$.

For example, let us recall 
the spectral type of the triconfluent Heun-type equation 
$(\emptyset\supset \emptyset\supset\emptyset\supset\emptyset;[0])$.
Then we obtain the following unfolding diagram.\vspace{3mm}\\
\tiny
\begin{center}
    \begin{tikzpicture}[auto]
		\node[shape=rectangle, draw] (a1) at (-7, 0) {$(\emptyset;[0]),(\emptyset;[0]),(\emptyset;[0]),(\emptyset;[0])$};
		\node[shape=rectangle, draw] (b1) at (-4, -2.5) {$(\emptyset;[0]),(\emptyset;[0]),(\emptyset\supset\emptyset;[0])$};
		\node[shape=rectangle, draw] (b2) at (-4, -1.5) {$(\emptyset;[0]),(\emptyset;[0]),(\emptyset\supset\emptyset;[0])$};
		\node[shape=rectangle, draw] (b3) at (-4, -0.5) {$(\emptyset;[0]),(\emptyset;[0]),(\emptyset\supset\emptyset;[0])$};
		\node[shape=rectangle, draw] (b4) at (-4, 0.5) {$(\emptyset;[0]),(\emptyset;[0]),(\emptyset\supset\emptyset;[0])$};
		\node[shape=rectangle, draw] (b5) at (-4, 1.5) {$(\emptyset;[0]),(\emptyset;[0]),(\emptyset\supset\emptyset;[0])$};
		\node[shape=rectangle, draw] (b6) at (-4, 2.5) {$(\emptyset;[0]),(\emptyset;[0]),(\emptyset\supset\emptyset;[0])$};
        \node[shape=rectangle, draw] (c1) at (-1, 0) {$(\emptyset;[0]),(\emptyset\supset\emptyset\supset\emptyset;[0])$};
        \node[shape=rectangle, draw] (c2) at (-1, 2) {$(\emptyset\supset \emptyset ;[0]),(\emptyset\supset\emptyset;[0])$};
        \node[shape=rectangle, draw] (c3) at (-1, -2) {$(\emptyset;[0]),(\emptyset\supset\emptyset\supset\emptyset;[0])$};
        \node[shape=rectangle, draw] (e) at (2.2, 0) {$(\emptyset\supset \emptyset\supset\emptyset\supset\emptyset;[0])$};
		\draw[->] (a1) to (node cs:name=b1,anchor=west);
		\draw[->] (a1) to (node cs:name=b2,anchor=west);
		\draw[->] (a1) to (node cs:name=b3,anchor=west);
		\draw[->] (a1) to (node cs:name=b4,anchor=west);
		\draw[->] (a1) to (node cs:name=b5,anchor=west);
		\draw[->] (a1) to (node cs:name=b6,anchor=west);
		\draw[->] (b1) to (c3);
		\draw[->] (b2) to (c3);
		\draw[->] (b3) to (c1);
		\draw[->] (b4) to (c1);
		\draw[->] (b5) to (c2);
		\draw[->] (b6) to (c2);
		\draw[->] (c1) to (e);
        \draw[->] (c2) to (e);
		\draw[->] (c3) to (e);
\end{tikzpicture}
\end{center}
\normalsize\vspace{3mm}

\noindent
In this diagram, we notice that same collections of spectral types appear in several vertices.
This implies that we can reduce the diagram to a smaller one as follows.

Let $\mathbf{S}=(S_{i})_{i=1,2,\ldots,d}$ be a collection 
of abstract spectral types as above.
Then there exist 
positive integers $0\le l^{(i)}_{1}<l^{(i)}_{2}<\cdots <l^{(i)}_{t_{i}}=k_{i}$
such that 
\[
	\Pi^{(i)}_{k_{i}}=\cdots =\Pi^{(i)}_{l^{(i)}_{t_{i}-1}+1}\supsetneq 
	\cdots \supsetneq\Pi^{(i)}_{l^{(i)}_{2}}\cdots=\Pi^{(i)}_{l^{(i)}_{1}+1}
	\supsetneq\Pi^{(i)}_{l^{(i)}_{1}}=\cdots=\Pi^{(i)}_{0}
\]
for $i=1,2,\ldots,d$.
Let us consider the 
subgroup $\mathfrak{S}_{l^{(i)}_{0}+1}\times \mathfrak{S}_{l^{(i)}_{1}-l^{(i)}_{0}}
\times \cdots \mathfrak{S}_{l^{(i)}_{t_{i}}-l^{(i)}_{t_{i}-1}}$
of the symmetric group $\mathfrak{S}_{k_{i}+1}$
of degree $k_{i}+1$
which acts on $\{0,1,\ldots,k_{i}\}$
as the permutation.
Then we can define quotient 
posets 
\[
\mathcal{P}_{[k_{i}+1]}^{S_{i}}:=\mathcal{P}_{[k_{i}+1]}/(\mathfrak{S}_{l^{(i)}_{0}+1}\times \mathfrak{S}_{l^{(i)}_{1}-l^{(i)}_{0}}
\times \cdots \mathfrak{S}_{l^{(i)}_{t_{i}}-l^{(i)}_{t_{i}-1}})
\]
and the product $\prod_{i=1}^{d}\mathcal{P}_{[k_{i}+1]}^{S_{i}}$ of them. 
If  $(\mathcal{I}_{i})_{i=1,\ldots,d}, (\mathcal{J}_{i})_{i=1,\ldots,d}
\in \prod_{i=1}^{d}\mathcal{P}_{[k_{i}+1]}$
are equal in the quotient $\prod_{i=1}^{d}\mathcal{P}_{[k_{i}+1]}^{S_{i}}$,
then we have $\mathbf{S}^{(\mathcal{I}_{i})_{i=1,\ldots,d}}=\mathbf{S}^{(\mathcal{J}_{i})_{i=1,\ldots,d}}.$
Thus the following diagram is well-defined.
Namely, let us consider the Hasse diagram 
of the poset $\prod_{i=1}^{d}\mathcal{P}_{[k_{i}+1]}^{S_{i}}$
and attach to the each vertex 
$[(\mathcal{I}_{i})_{i=1,\ldots,d}]$
the unfolding $\mathbf{S}^{(\mathcal{I}_{i})_{i=1,\ldots,d}}$
of $\mathbf{S}$.
We call this diagram the {\em reduced unfolding diagram} of $\mathbf{S}$.

For the above spactral type $(\emptyset\supset \emptyset\supset\emptyset\supset\emptyset;[0])$,
the reduced unfolding diagram is drawn as follows.\vspace{3mm}\\
\tiny
\begin{center}
	\begin{tikzpicture}[auto]
		\node[shape=rectangle, draw] (a) at (-8, 0) {$(\emptyset;[0]),(\emptyset;[0]),(\emptyset;[0]),(\emptyset;[0])$};
		\node[shape=rectangle, draw] (b) at (-4, 0) {$(\emptyset;[0]),(\emptyset;[0]),(\emptyset\supset\emptyset;[0])$};
		\node[shape=rectangle, draw] (c) at (-1, 1) {$(\emptyset\supset \emptyset ;[0]),(\emptyset\supset\emptyset;[0])$};
		\node[shape=rectangle, draw] (d) at (-1, -1) {$(\emptyset;[0]),(\emptyset\supset\emptyset\supset\emptyset;[0])$};
		\node[shape=rectangle, draw] (e) at (1.5, 0) {$(\emptyset\supset \emptyset\supset\emptyset\supset\emptyset;[0])$};
		\draw[->] (a) to (b);
		\draw[->] (b) to (c);
		\draw[->] (b) to (d);
		\draw[->] (c) to (e);
		\draw[->] (d) to (e);
	\end{tikzpicture}
\end{center}
\normalsize\vspace{3mm}
	
Also consider the spectral type 
$
\left(\{e_{12},e_{34}\}\supset\{e_{12},e_{34}\}\supset\{e_{12},e_{34}\}\supset \{e_{12}\};[0]\right)
$
of 
the triconfluent  equation in Section \ref{sec:exII}.
Then we obtain the reduced unfolding diagram,\vspace{3mm}\\
\begin{center}
    \begin{tikzpicture}[auto]
        \node[shape=rectangle, draw, font=\tiny, inner sep=1.2] (a) at (-8, 0.5) {$\begin{array}{l}\left(\{e_{12},e_{34}\};[0]\right),\\\left(\{e_{12},e_{34}\};[0]\right),\\ \left( \{e_{12},e_{34}\};[0]\right),\\\left(\{e_{12}\};[0]\right)\end{array}$};
        \node[shape=rectangle, draw, font=\tiny, inner sep=1.2] (b) at (-7, -1) {$\begin{array}{l}\left(\{e_{12},e_{34}\};[0]\right),\\ \left(\{e_{12},e_{34}\};[0]\right),\\ \left( \{e_{12},e_{34}\}\supset\{e_{12}\};[0]\right)\end{array}$};
        \node[shape=rectangle, draw, font=\tiny, inner sep=1.2] (c) at (-7, 2) {$\begin{array}{l}\left(\{e_{12},e_{34}\}\supset\{e_{12},e_{34}\};[0]\right),\\ \left(\{e_{12},e_{34}\};[0]\right),\\ \left(\{e_{12}\};[0]\right)\end{array}$};
        \node[shape=rectangle, draw, font=\tiny, inner sep=1.2] (d) at (-2.5, -1) {$\begin{array}{l}\left(\{e_{12},e_{34}\}\supset\{e_{12},e_{34}\};[0]\right),\\ \left( \{e_{12},e_{34}\}\supset\{e_{12}\};[0]\right)\end{array}$};
        \node[shape=rectangle, draw, font=\tiny, inner sep=1.2] (e) at (-2.3, 1) {$\begin{array}{l}\left(\{e_{12},e_{34}\}\supset\{e_{12},e_{34}\}\supset\{e_{12}\};[0]\right),\\ \left( \{e_{12},e_{34}\};[0]\right)\end{array}$};
        \node[shape=rectangle, draw, font=\tiny, inner sep=1.2] (f) at (-2.3, 3) {$\begin{array}{l}\left(\{e_{12},e_{34}\}\supset\{e_{12},e_{34}\}\supset\{e_{12},e_{34}\};[0]\right),\\ \left(\{e_{12}\};[0]\right)\end{array}$};
        \node[shape=rectangle, draw, font=\tiny, inner sep=1.2] (g) at (1.4, 0) {$\left(\{e_{12},e_{34}\}\supset\{e_{12},e_{34}\}\supset\{e_{12},e_{34}\}\supset \{e_{12}\};[0]\right)$};
        \draw[->] (a) to (b);
        \draw[->] (a) to (c);
        \draw[->] (b) to (d);
        \draw[->] (b) to (e);
        \draw[->] (c) to (d);
        \draw[->] (c) to (e);
        \draw[->] (c) to (f);
        \draw[->] (e) to (g);
        \draw[->] (f) to (g);
        \draw[->] (d) to (g);
\end{tikzpicture}.
\end{center}

\subsection{A conjecture by Oshima}
Let us explain a conjecture proposed by Oshima in the paper \cite{Oshi2}
which asks the existence 
of differential equations on $\mathbb{P}^{1}$
with a prescribed collection of spectral types.  
A collection 
\[
\mathbf{S}=(S_{i})_{i=1,2,\ldots,d}=
\left((\Pi^{(i)}_{k_{i}}\supset \Pi^{(i)}_{k_{i}-1}\supset\cdots \supset\Pi^{(i)}_{0};[J^{(i)}_{0}])\right)_{i=1,2,\ldots,d}
\]
of abstract spectral types are said to be {\em irreducibly realizable}
when there exists an irreducible  meromorphic connection on the trivial $G$-bundle 
over $\mathbb{P}^{1}$ with poles only at $\{a_{1},a_{2},\ldots,a_{d}\}\subset \mathbb{P}^{1}$,
which moreover has the spectral type $(\Pi^{(i)}_{k}\supset \Pi^{(i)}_{k-1}\supset\cdots \supset\Pi^{(i)}_{0};[J^{(i)}_{0}])$
at each $z=a_{i}$ for $i=1,2,\ldots,d$.
This can be restated as follows.
A  collection $\mathbf{S}=(S_{i})_{i=1,2,\ldots,d}$
of abstract spectral types is irreducibly realizable 
if and only if there exists a collection $\mathbf{H}=(H^{(a_{i})})_{i=1,2,\ldots,d}$
of unramified canonical forms 
satisfying that
\begin{itemize}
	\item $\mathrm{sp\,}(\mathbf{H})=\mathbf{S}$, where $\mathrm{sp\,}(\mathbf{H}):=(\mathrm{sp\,}(H^{(a_{i})}))_{i=1,2,\ldots,d}$,
	\item the additive Deligne-Simpson problem for $\mathbf{H}$ has a solution.
\end{itemize}
We call this solution $A\,dz$ a {\em realization} of $\mathbf{S}$.

For a collection $\mathbf{S}=(S_{i})_{i=1,2,\ldots,d}$ of 
spectral types, we can consider the 
reduced unfolding diagram of $\mathbf{S}$ as in the previous section.
Then we say that the unfolding diagram is {\em irreducibly realizable}
if there exists a  collection $\mathbf{H}=(H^{(a_{i})})_{i=1,2,\ldots,d}$
of unramified canonical forms 
satisfying 
\begin{itemize}
	\item there exists an open neighborhood $U$ of $\mathbf{0}\in \prod_{i=1}^{d}\mathbb{C}^{k_{i}+1}$
	such that   
	the deformation $\mathbf{H}(\mathbf{c})_{\mathbf{c}\in U}$
	realizes the unfolding diagram, namely, 
	for each $(\mathcal{I}^{(i)})_{i=1,\ldots,d}\in \prod_{i=1}^{d}\mathcal{P}_{[k_{i}+1]}$
	and $\mathbf{c}\in U\cap (\prod_{i=1}^{d}\mathcal{C}(\mathcal{I}^{(i)}))$,
	we have 
	\[
		\mathrm{sp}(\mathbf{H}(\mathbf{c}))=\mathbf{S}^{(\mathcal{I}^{(i)})_{i=1,\ldots,d}}.
	\]
	\item there exists an open neighborhood of $\mathbf{0}\in \mathbb{B}_{\mathbf{H}}$ and the additive Deligne-Simpson 
	problem for the family $(\mathbf{H}(\mathbf{c}))_{\mathbf{c}\in U}$ has a solution.
\end{itemize}
In \cite{Oshi2}, $\mathbf{S}$ is said to be {\em versally realizable}
if the unfolding diagram of $\mathbf{S}$ is irreducibly realizable.

Then as a corollary of Theorem \ref{thm:addDS} we obtain the following.
\begin{thm}\label{thm:specdiag}
	Let $\mathbf{S}=(S_{i})_{i=1,\ldots,d}$ be a collection of abstract spectral types.
	Suppose that $\mathbf{S}$ is irreducibly realizable 
	and let $\nabla_{A}=A\,dz$ be a realization of $\mathbf{S}$.
	Then there exists an open neighborhood $U$ of $\mathbf{0}\in \prod_{i=1}^{d}\mathbb{C}^{k_{i}+1}$
	and a holomorphic family of $G$-connections $\nabla_{A(\mathbf{c})}=A(\mathbf{c})\,dz$\ $(\mathbf{c}\in U)$ on $\mathbb{P}^{1}$
	such that 
	\begin{itemize}
	\item $\nabla_{A(\mathbf{0})}=\nabla_{A}$,
	\item for each $(\mathcal{I}_{i})_{i=1,\ldots,d}\in \prod_{i=1}^{d}\mathcal{P}_{[k_{i}+1]}$
	and $\mathbf{c}\in U\cap \prod_{i=1}^{d}\mathcal{C}(\mathcal{I}^{(i)})$,
	$\nabla_{A(\mathbf{c})}=A(\mathbf{c})\,dz$ is a realization of 
	$\mathbf{S}^{(\mathcal{I}_{i})_{i=1,\ldots,d}}$.
	\end{itemize}
	In particular, the following are equivalent.
	\begin{enumerate}
	\item The collection $\mathbf{S}$ of abstract spectral types is irreducibly 
	realizable.
	\item For every $(\mathcal{I}^{(i)})_{i=1,\ldots,d}\in \prod_{i=1}^{d}\mathcal{P}_{[k_{i}+1]}$,
	the collection $\mathbf{S}^{(\mathcal{I}^{(i)})_{i=1,\ldots,d}}$
	is irreducibly realizable.
	\item The unfolding diagram of $\mathbf{S}$ is irreducibly realizable.
	\end{enumerate}
\end{thm}
\begin{proof}
	Conditions $2$ and $3$ are obviously equivalent.
	From $2$ or $3$, $1$ immediately follows.
	Suppose $1$ holds. Then there exists a collection 
	$\mathbf{H}=(H^{(i)})_{i=1,\ldots,d}$ of unramified canonical forms 
	such that $\mathrm{sp\,}(\mathbf{H})=\mathbf{S}$
	and the additive Deligne-Simpson problem 
	for $\mathbf{H}$ admits a solution $\nabla_{A}=A\,dz$.
	Then Theorem \ref{thm:addDS}
	shows that there exists an open neighborhood $U$ of $\mathbf{0}\in \prod_{i=1}^{d}\mathbb{C}^{k_{i}+1}$
    and a holomorphic family of $G$-connections $\nabla_{A(\mathbf{c})}=A(\mathbf{c})\,dz$\ $(\mathbf{c}\in U)$ on $\mathbb{P}^{1}$
    such that 
    $\nabla_{A(\mathbf{0})}=\nabla_{A}$, and
    $\nabla_{A(\mathbf{c})}=A(\mathbf{c})\,dz$ is a solution to
    the additive Deligne-Simpson problem for the family $(\mathbf{H}(\mathbf{c}))_{\mathbf{c}\in U}$.
	Let $\mathbb{B}_{\mathbf{H}}=\prod_{i=1}^{d}\mathbb{B}_{H^{(i)}}$
	be open neighborhood of $\mathbf{0}\in \prod_{i=1}^{d}\mathbb{C}^{k_{i}+1}$
	defined in Section \ref{sec:riginv}
	and set
	$V:=U\cap \mathbb{B}_{\mathbf{H}}\neq \emptyset.$
	Then since $V$ is an open neighborhood of $\mathbf{0}\in \prod_{i=1}^{d}\mathbb{C}^{k_{i}+1}$
	Lemma \ref{lem:zeronbd} assures that 
	$V\cap \prod_{i=1}^{d}\mathcal{C}(\mathcal{I}^{(i)})\neq \emptyset$
	for any $(\mathcal{I}^{(i)})_{i=1,\ldots,d}\in \prod_{i=1}^{d}\mathcal{P}_{[k_{i}+1]}$.
	By Corollary \ref{cor:specdecomp}
	we have 
	$\mathrm{sp}(\mathbf{H}(\mathbf{c}))=\mathbf{S}^{(\mathcal{I}^{(i)})_{i=1,\ldots,d}}$
	for all  $\mathbf{c}\in V\cap \prod_{i=1}^{n}\mathcal{C}(\mathcal{I}^{(i)})$ and 
	$(\mathcal{I}^{(i)})_{i=1,\ldots,d}\in \prod_{i=1}^{d}\mathcal{P}_{[k_{i}+1]}$.
	Namely, the unfolding diagram of $\mathbf{S}$ is realizable.
\end{proof}

Now we are ready to state the conjecture by Oshima.
\begin{conj}[Oshima \cite{Oshi2}]\label{conj:osh}\normalfont
	Let $\mathbf{S}=(S_{i})_{i=1,\ldots,d}$
	 be a 
	 collection of abstract spectral types.
	Then the following are equivalent.
	\begin{enumerate}
		\item The collection $\mathbf{S}$ is irreducibly realizable.
		\item The collection $\mathbf{S}$ is versally realizable.
		\item The finest unfolding $\mathbf{S}^{\mathrm{reg}}$ of $\mathbf{S}$ is irreducibly realizable.
	\end{enumerate}
\end{conj}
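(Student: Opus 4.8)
The plan is to derive the conjecture from the two principal theorems already at hand---Theorem~\ref{thm:addDS0} on the additive Deligne--Simpson problem for unfolding families, and Theorem~\ref{thm:specdiag} on realizability of unfoldings---and then to isolate the single implication, $3\Rightarrow 1$, that lies genuinely outside the deformation machinery developed here.

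First I would treat $1\Leftrightarrow 2$. The implication $2\Rightarrow 1$ is immediate: if $\mathbf{H}$ witnesses versal realizability, with a holomorphic family $d_{A(\mathbf{c})}$ solving the additive Deligne--Simpson problem for $(\mathbf{H}(\mathbf{c}))_{\mathbf{c}\in U}$, then the fiber at $\mathbf{c}=\mathbf{0}$ is an irreducible $G$-connection whose spectral type is $\mathrm{sp}(\mathbf{H}(\mathbf{0}))=\mathrm{sp}(\mathbf{H})=\mathbf{S}$, so $\mathbf{S}$ is irreducibly realizable. For $1\Rightarrow 2$, pick $\mathbf{H}$ with $\mathrm{sp}(\mathbf{H})=\mathbf{S}$ and a realization $d_A$; Theorem~\ref{thm:addDS0} then produces an open neighborhood $U$ of $\mathbf{0}$ and a holomorphic family $d_{A(\mathbf{c})}$ with $d_{A(\mathbf{0})}=d_A$ solving the additive Deligne--Simpson problem for $(\mathbf{H}(\mathbf{c}))_{\mathbf{c}\in U}$, which is exactly the definition of versal realizability.

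Next I would establish $1\Rightarrow 3$. By Theorem~\ref{thm:specdiag}, $\mathbf{S}$ is irreducibly realizable if and only if $\mathbf{S}^{(\mathcal{I}^{(i)})_{i=1,\ldots,d}}$ is irreducibly realizable for every $(\mathcal{I}^{(i)})_{i=1,\ldots,d}\in\prod_{i=1}^{d}\mathcal{P}_{[k_{i}+1]}$; specializing to the finest partitions $(\{0\}\sqcup\cdots\sqcup\{k_i\})_{i=1,\ldots,d}$ gives that $\mathbf{S}^{\mathrm{reg}}$ is irreducibly realizable, which is condition~3. Internally this relies on Lemma~\ref{lem:zeronbd}, which guarantees that the stratum of $\prod_{i=1}^{d}\mathbb{C}^{k_{a_i}+1}$ attached to the finest partition meets the neighborhood $U$, so that $\mathbf{S}^{\mathrm{reg}}$ is genuinely realized by a fiber of the family produced in Theorem~\ref{thm:specdiag} (via Corollary~\ref{cor:specdecomp}).

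The implication $3\Rightarrow 1$ is the main obstacle, and I do not expect it to follow from the deformation theory above: Theorems~\ref{thm:ourthm} and~\ref{thm:specdiag} propagate solvability of the Deligne--Simpson problem only in the direction of \emph{unfolding}, i.e.\ outward from the closed stratum $\mathbf{c}=\mathbf{0}$, whereas $3\Rightarrow 1$ requires running the \emph{confluence} backwards---manufacturing an irreducible connection with the prescribed unramified irregular singularity out of a Fuchsian one. The natural approach is to compare the two existence problems through Katz-type operations (middle convolutions and additions): since $\delta$-invariants, and hence the index of rigidity, are preserved by unfolding (Corollary~\ref{cor:deltaconst}), a reduction algorithm solving the problem for $\mathbf{S}^{\mathrm{reg}}$ should be transportable, operation by operation, to one for $\mathbf{S}$. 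For $G=\mathrm{GL}_n$ exactly this is carried out in \cite{H3}, so combining it with $1\Leftrightarrow 2$ and $1\Rightarrow 3$ above yields the full conjecture in that case; for general connected reductive $G$ the implication $3\Rightarrow 1$ remains open, the essential gap being the lack of a $G$-analogue of the Crawley-Boevey correspondence (\cite{C}) that detects which abstract spectral types admit irreducible realizations.
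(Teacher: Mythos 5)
Your treatment matches the paper's: the statement is a conjecture, and the paper's Corollary~\ref{cor:conjosh} establishes exactly the parts you prove ($1\Leftrightarrow 2$ from Theorem~\ref{thm:addDS} and $1\Rightarrow 3$ from Theorem~\ref{thm:specdiag}), while deferring $3\Rightarrow 1$ to \cite{H3} in the case $G=\mathrm{GL}_{n}$ just as you do. Your added remarks on why $3\Rightarrow 1$ cannot come from the unfolding machinery are a correct diagnosis consistent with the paper's discussion.
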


As a direct consequence of Theorem \ref{thm:specdiag},
we obtain the following.
\begin{cor}\label{cor:conjosh}
	In Conjecture \ref{conj:osh}, $1$ and $2$ are equivalent and 
	the implication $1 \Rightarrow 3$ holds true.
\end{cor}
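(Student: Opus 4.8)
The plan is to deduce everything directly from Theorems \ref{thm:addDS} and \ref{thm:specdiag} by carefully unwinding the definitions of \emph{irreducibly realizable} and \emph{versally realizable}. No genuinely new argument is needed; the content is matching the hypotheses and conclusions of those two theorems to the statements $1$, $2$, $3$ of Conjecture \ref{conj:osh}.

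First, for the equivalence $1\Leftrightarrow 2$: by definition $\mathbf{S}$ is irreducibly realizable precisely when there exists a collection $\mathbf{H}$ of unramified canonical forms with $\mathrm{sp}(\mathbf{H})=\mathbf{S}$ such that the additive Deligne-Simpson problem for $\mathbf{H}$ has a solution, and $\mathbf{S}$ is versally realizable precisely when there exists such $\mathbf{H}$ with $\mathrm{sp}(\mathbf{H})=\mathbf{S}$ for which, moreover, the additive Deligne-Simpson problem for the unfolding family $(\mathbf{H}(\mathbf{c}))_{\mathbf{c}\in U}$ has a solution over some open neighborhood $U$ of $\mathbf{0}$. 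For $1\Rightarrow 2$ I would take the $\mathbf{H}$ provided by irreducible realizability and apply Theorem \ref{thm:addDS}, which produces, for this very same $\mathbf{H}$, a holomorphic family $d_{A(\mathbf{c})}=d+A(\mathbf{c})\,dz$ solving the Deligne-Simpson problem for $(\mathbf{H}(\mathbf{c}))_{\mathbf{c}\in U}$; together with $\mathrm{sp}(\mathbf{H})=\mathbf{S}$ this is exactly a witness of versal realizability. For $2\Rightarrow 1$ I would restrict the versal family to $\mathbf{c}=\mathbf{0}$; since $H^{(a)}(\mathbf{0})=H^{(a)}$ for all $a\in|D|$, the connection $d_{A(\mathbf{0})}$ solves the Deligne-Simpson problem for $\mathbf{H}$ itself, and $\mathrm{sp}(\mathbf{H})=\mathbf{S}$, so $\mathbf{S}$ is irreducibly realizable. (This is precisely the ``in particular'' clause of Theorem \ref{thm:addDS} rephrased through the definitions.)

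Next, for $1\Rightarrow 3$: I would invoke Theorem \ref{thm:specdiag}, whose equivalence $(1)\Leftrightarrow(2)$ states that $\mathbf{S}$ is irreducibly realizable if and only if $\mathbf{S}^{(\mathcal{I}^{(i)})_{i=1,\ldots,d}}$ is irreducibly realizable for every $(\mathcal{I}^{(i)})_{i=1,\ldots,d}\in\prod_{i=1}^{d}\mathcal{P}_{[k_i+1]}$. Specializing to the finest partitions $(\{0\}\sqcup\cdots\sqcup\{k_i\})_{i=1,\ldots,d}$, whose associated unfolding is by definition $\mathbf{S}^{\mathrm{reg}}$, one immediately concludes that irreducible realizability of $\mathbf{S}$ implies irreducible realizability of $\mathbf{S}^{\mathrm{reg}}$, which is condition $3$.

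The only point requiring care --- and in this sense the main ``obstacle,'' though there is no real difficulty --- is bookkeeping the quantifier structure: versal realizability demands a single $\mathbf{H}$ that simultaneously has spectral type $\mathbf{S}$ and carries a versal family, and one must observe that the $\mathbf{H}$ coming out of irreducible realizability is exactly this same $\mathbf{H}$, which is how Theorem \ref{thm:addDS} is formulated (it deforms a given realization rather than producing a possibly different one). One should also record explicitly that $\mathbf{S}^{\mathrm{reg}}$ is literally the unfolding $\mathbf{S}^{(\{0\}\sqcup\cdots\sqcup\{k_i\})_{i=1,\ldots,d}}$ occurring in Theorem \ref{thm:specdiag}, so no extra step is needed. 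The converse implication $3\Rightarrow 1$, not claimed in this corollary, is precisely what lies beyond Theorems \ref{thm:addDS} and \ref{thm:specdiag}, and over $G=\mathrm{GL}_n$ it is supplied by \cite{H3}.
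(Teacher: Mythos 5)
Your proof is correct and follows exactly the paper's own route: the equivalence $1\Leftrightarrow 2$ is the ``in particular'' clause of Theorem \ref{thm:addDS} read through the definitions, and $1\Rightarrow 3$ is Theorem \ref{thm:specdiag} specialized to the finest partitions, whose unfolding is $\mathbf{S}^{\mathrm{reg}}$ by definition. The paper's proof is just a two-line citation of those theorems; your version supplies the same bookkeeping in more detail.
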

\begin{proof}
	The first assertion directly follows from Theorem \ref{thm:specdiag}.
	The direction $2 \Rightarrow 3$ holds obviously.
	Thus $1 \Rightarrow 3$ is also true.
\end{proof}

\section[Moduli spaces of meromorphic connections on $\mathbb{P}^{1}$]{Moduli spaces of meromorphic connections on $\mathbb{P}^{1}$ with unramified irregular singularities}\label{sec:defmoduli}
We recall the definition of moduli spaces of algebraic meromorphic connections
on a trivial bundle on the projective line $\mathbb{P}^{1}=\mathbb{P}^{1}(\mathbb{C})$ with
unramified irregular singularities.

\subsection{Lie groupoids, Orbifolds, and symplectic stratified spaces}\label{sec:symstra}
Before giving the definition of moduli spaces of meromorphic connections,
we recall some notions of Lie groupoids, orbifolds, and symplectic stratified spaces,
see \cite{Mack} by Mackenzie and its references 
for more detail.
\begin{df}[Complex Lie groupoid]\label{df:groupoid}\normalfont
	Let us consier a complex manifold $M$ and its submanifold $M_{0}$
	with the inclusion map $i\colon M_{0}\hookrightarrow M$. 
	Let us also consider holomorphic surjective submersions $s,t\colon M\rightarrow M_{0}$
	satisfying $t\circ i=s\circ i=\mathrm{id}_{M_{0}}$.
	Further, we consider a holomorphic map \index[cN]{$M^{(2)}$}
	$
	m\colon M^{(2)}:=\{(g_{1},g_{2})\in M\times M\mid s(g_{1})=t(g_{2})\}
	\rightarrow M	
	$ satisfying $s\circ m(g_{1},g_{2})=s(g_{2})$ and $t\circ m(g_{1},g_{2})=t(g_{1})$.
	Then if the following conditions are satisfied, the tuple 
	$(M,M_{0},s,t,m)$ is called a {\em complex Lie groupoid}.
	\begin{enumerate}
		\item \textbf{Asssociativity:} $m(m(g_{1},g_{2}),g_{3})=m(g_{1},m(g_{2},g_{3}))$ for all $(g_{1},g_{2},g_{3})\in M^{3}$
		satisfying $s(g_{1})=t(g_{2})$ and $s(g_{2})=t(g_{3})$.
		\item \textbf{Units:} $m(i\circ t(g),g)=g=m(g,i\circ s(g))$ for all $g\in M$.
		\item \textbf{Inverces:} For all $g\in M$, there exists $h\in M$ such that 
		$s(h)=t(g)$, $t(h)=s(g)$ and $m(g,h), m(h,g)\in M_{0}$.
	\end{enumerate}
	Here the maps $s$ and $t$ are called the {\em source map} and {\em target map}
	respectively, and $m$ is called the {\em multiplication map}. Also elements of the submanifold $i\colon M_{0}
	\hookrightarrow M$ are called {\em units}. 
\end{df}
\begin{df}[Bisection]\normalfont
	Let $M=(M,M_{0},s,t,m)$ be a Lie groupoid.
	A {\em bisection} of $M$ is a submanifold $S\subset M$ 
	such that both $s$ and $t$ restrict to biholomorphic map 
	$S\rightarrow M_{0}$.
\end{df}

Let us recall the notion of orbifold groupoids.
We refer \cite{ALR} by Adem-Leida-Ruan, for more details.
\begin{df}[Orbit space of a Lie groupoid]\normalfont
Let $M=(M,M_{0},s,t,m)$ be a complex Lie groupoid. 
For $m\in M_{0}$, the {\em orbit} of $m$ is the subset of $M_{0}$ defined by  
\[
	M(m):=\{t(g)\in M_{0}\mid s(g)=m \}=t(s^{-1}(m)).
\]
Then we can define the equivalent relation $m_{1}\sim m_{2}$ 
on $M_{0}$ by $m_{1}\in M(m_{2})$.
The quotient space by this equivalent relation 
$|M|:=M_{0}/\sim$
is called the {\em orbit space} of $M$.
\end{df}

As a typical example of Lie groupoids, we introduce action groupoids.
\begin{df}[Action groupoids]\normalfont
Let $X$ be a complex manifold and $H$ a complex Lie group acting 
holomorphically on $X$ from the left.
Then let $M:=H\times X$ and $M_{0}=X$. We define 
the source map $s\colon H\times X\rightarrow X$ 
by the second projection $\mathrm{pr}_{2}$
and the target map by  
\[
	t\colon H\times X\ni (h,x)\longmapsto h\cdot x\in X.
\]
Also define the inclusion by $i\colon X\ni x\mapsto (e,x)\in H\times X$.
Then the multiplication map 
\[
	m\colon M^{(2)}\ni ((h_{1},x),(h_{2},h_{2}^{-1}\cdot x))\longmapsto (h_{1}h_{2},h_{2}^{-1}\cdot x)\in M
\]
defines the groupoid structure on $(H\times X, X,s,t,m)$.
We call this Lie groupoid an {\em action groupoid}. 
In this case, the orbit space of the action groupoid
coincides with the usual orbit space $H\,\backslash X$. 
\end{df}
Let us recall the proper, \'etale and foliation groupoids.
\begin{df}[Proper, \'etale and foliation groupoids]\normalfont
Let us take a Lie groupoid $M=(M,M_{0},s,t,m)$.
Then $M$ is called a 
{\em proper groupoid} if 
the product map $s\times t\colon M\rightarrow M_{0}\times M_{0}$ is a proper map.
If both $s$ and $t$ are locally homeomorphisms, then $M$ is called an 
{\em \'etale groupoid}.
If {\em isotropy groups} $M_{m}:=s^{-1}(m)\cap t^{-1}(m)$
are discrete for all $m\in M_{0}$,
then $M$ is called a {\em foliation groupoid}.
\end{df} 
Then we can define orbifold groupoids.
\begin{df}[Orbifold groupoids]\normalfont
	We call a proper \'etale foliation groupoid 
	an {\em orbifold groupoid}.
\end{df}
If a Lie group $H$ acts on a complex manifold $X$ almost freely and properly,
then one can check that 
the action groupoid $H\times X$ is an example of orbifold groupoids.

\begin{df}[Orbifolds]\normalfont
	An {\em orbifold structure} on a paracompact Hausdorff space $X$ is given by an orbifold groupoid $M$ and a homeomorphism
	$|M|\rightarrow X$.
	Equivalent classes of orbifold structures are defined by Morita equivalence, see \cite{ALR}.
	An {\em orbifold} $\mathcal{X}$ is a paracompact Hausdorff space with an equivalence class of orbifold structures.
\end{df}
Therefore 
if $H$ acts on $X$ almost freely and properly as above,
the quotient space $H\,\backslash X$ becomes 
an orbifold with the orbifold structure coming from the 
action groupoid $H\times X$.

Next let us recall symplectic stratified spaces and 
symplectic reductions.
Firstly, according to the paper \cite{GM} by Goresky and MacPherson, recall topological stratified spaces.
\begin{df}[Stratified space]\normalfont
	Let $X$ be a $\mathcal{P}$-decomposed space with the pieces $(S_{i})_{i\in \mathcal{P}}$.
	Then $X$ is called a {\em stratified space} if the pieces 
	of $X$, called {\em strata}, satisfies the following 
	{\em local normal triviality}.
	Namely, for each point $x\in S_{i}$,
	there is a compact stratified space $L$,
	called the {\em link} of $x$, and a homeomorphism $h$
	of an open neighborhood $U$ of $x$ in X
	on the product $B\times \mathring{c}L$.
	Here $B$ is an open ball in $S_{i}$
	and $\mathring{c}L$ is the open cone 
	$L\times [0,1)/L\times \{0\}$ over $L$.
	Moreover $h$ preserves the decomposition.
\end{df}
As it is also well-known, 
a quotient orbifold $H\,\backslash X$ 
has the following natural stratification.
For a subgroup $\Gamma\subset H$,
we consider the subset $X_{(\Gamma)}$ of $X$
consisting of all points whose stabilizer group is conjugate 
to $\Gamma$.
Then the collection of the quotient spaces of 
these subset $(H\,\backslash X_{(\Gamma)})_{\Gamma\subset H}$
defines $H\,\backslash X$ as a stratified space.

The quotient orbifold $H\,\backslash X$ 
has the following natural holomorphic structure
\[
	\mathcal{O}_{H\,\backslash X}(U):=
	\{
		f\in C^{0}(U)\mid f\circ \pi|_{\pi^{-1}(U)}\text{ is holomorphic}
	\}
\]
for open subsets $U\subset H\,\backslash X$,
where $\pi \colon X\rightarrow H\,\backslash X$ is the quotient map.
In general,
for a stratified space $X$
with the strata $(S_{i})_{i\in \mathcal{I}}$
which are complex manifolds,
a {\em holomorphic structure} $\mathcal{O}_{X}$
is the sheaf 
generated by the correspondence
\[
	\mathcal{O}_{X}(U):=\{
		f\in C^{0}(U)\mid f|_{S_{i}\cap U} \text{ is holomorphic for every strata } S_{i}
	\}
\]
for open subsets $U\subset X$.

\begin{df}[Stratified holomorphic symplectic space]\normalfont
	Let $X$ be a stratified space with a holomorphic structure 
	$\mathcal{O}_{X}$.
	Then $X$ is called a {\em stratified holomorphic symplectic space}
	if the following are satisfied,
	\begin{enumerate}
		\item each stratum $S_{i}$ is a holomorphic symplectic manifolds,
		\item $\mathcal{O}_{X}$ is a sheaf of Poisson algebras,
		\item the embeddings $S_{i}\hookrightarrow X$ are Poisson.
	\end{enumerate}
\end{df}
Then the following is 
a generalization 
of the symplectic reduction theory by Marsden and Weinstein \cite{MarsWein}
to stratified spaces given by Sjammar and Lerman \cite{SL}.
\begin{thm}[Marsden-Weinstein and Sjamaar-Lerman]\label{thm:symplred}
Let $X$ be a holomorphic symplectic manifold on which 
a complex Lie group $H$ acts properly and almost freely.
Let $\mu\colon X\rightarrow \mathfrak{h}^{*}$
be a moment map.
Then the quotient space $H\,\backslash\,\mu^{-1}(0)$ 
becomes an orbifold whose canonical stratification 
gives the structure of stratified holomorphic symplectic space.
\end{thm}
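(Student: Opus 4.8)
The plan is to transfer the stratified symplectic reduction theory of Marsden--Weinstein \cite{MarsWein} and Sjamaar--Lerman \cite{SL} to the holomorphic category, using almost freeness to bypass the singularities of the zero fibre that occur in the general case. First I would note that, since the action is almost free, $\Lie(\mathrm{Stab}_{H}(x))=0$ for every $x\in X$; because the image of $d\mu_{x}$ is the annihilator of $\Lie(\mathrm{Stab}_{H}(x))$ in $\mathfrak{h}^{*}$, the moment map $\mu$ is a holomorphic submersion, so $0$ is a regular value and $Z:=\mu^{-1}(0)$ is a closed holomorphic submanifold of $X$. Equivariance of $\mu$ gives $H\cdot x\subset Z$ for $x\in Z$, so $H$ acts on $Z$, and the restricted action is again proper and almost free. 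By the remark just before the theorem, the action groupoid $H\times Z$ is then an orbifold groupoid, so $Z/H=H\backslash\mu^{-1}(0)$ carries an orbifold structure; I would equip it with the holomorphic structure $\mathcal{O}(Z/H)$ of continuous functions whose pullback to $Z$ is holomorphic, equivalently the $H$-invariant holomorphic functions on $Z$.

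Next I would treat the canonical stratification. For each conjugacy class $(\Gamma)$ of finite subgroups of $H$, let $Z_{(\Gamma)}$ be the locus of points of $Z$ whose stabilizer is conjugate to $\Gamma$; these are $H$-invariant, locally closed holomorphic submanifolds, and $Z/H=\bigsqcup_{(\Gamma)}(Z_{(\Gamma)}/H)$ is precisely the orbit-type decomposition that presents $Z/H$ as a decomposed space. Local normal triviality, so that this is a stratified space in the sense of \cite{GM}, follows from the slice theorem for proper actions: near $x\in Z$ with stabilizer $\Gamma$ there is an $H$-equivariant holomorphic tube modeled on $H\times_{\Gamma}V$ for a finite-dimensional $\Gamma$-representation $V$, and the link with its cone structure is read off from the $\Gamma$-orbit-type stratification of $V$. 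Since the slices can be chosen holomorphic, the argument of \cite{SL} applies without change.

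For the symplectic content I would work stratum by stratum. Fix $x\in Z$ with stabilizer $\Gamma$ (finite, by almost freeness). The fixed-point set $X^{\Gamma}$ is a holomorphic symplectic submanifold of $X$, invariant under the normalizer $N_{H}(\Gamma)$; the induced $N_{H}(\Gamma)/\Gamma$-action on $X^{\Gamma}$ is Hamiltonian with moment map induced by $\mu$ and is locally free along the relevant locus, so the stratum $Z_{(\Gamma)}/H$ is identified with the symplectic reduction of this action at $0$ and is thus a holomorphic symplectic manifold. The non-degeneracy is the standard computation that the null space of $\omega$ restricted to a level set of the moment map equals the tangent space to the group orbit, using the identities $T_{x}Z=\Ker(d\mu_{x})$ and $(T_{x}Z)^{\omega}=\mathfrak{h}\cdot x$; closedness is inherited from $\omega$. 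For the open principal stratum this is exactly the Marsden--Weinstein reduction \cite{MarsWein}.

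The last and hardest step is to endow $\mathcal{O}(Z/H)$ with a Poisson bracket for which every stratum embedding is a Poisson map. Given $H$-invariant holomorphic $f,g$ on $Z$ I would extend them to $H$-invariant holomorphic functions $F,G$ on an $H$-invariant neighborhood of $Z$ in $X$ and set $\{f,g\}:=\{F,G\}_{\omega}|_{Z}$. The verifications needed are: independence of the chosen extension, which rests on the vanishing ideal of $Z$ among invariant functions being a Poisson ideal (the moment-map constraints are first class); invariance and holomorphy of $\{f,g\}$, so that it descends to $\mathcal{O}(Z/H)$; and the compatibility that the restriction of $\{f,g\}$ to each symplectic stratum reproduces the Poisson bracket of the restrictions computed from the reduced form of the previous step. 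This last compatibility yields simultaneously the Jacobi identity, since it holds on each stratum and the strata are dense in their closures, and the Poisson property of the stratum embeddings; it is the real obstacle, and in \cite{SL} it is obtained from the symplectic slice (equivariant Darboux--Weinstein) normal form, so the work here is to complexify that local model and check the gluing holomorphically. Assembling these facts exhibits $H\backslash\mu^{-1}(0)$ as a stratified holomorphic symplectic space, as asserted.
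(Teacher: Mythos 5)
The paper does not actually prove this theorem: it is stated as a quoted result, and the accompanying remark simply observes that it follows from Theorem 2.1 of \cite{SL} (stated there for compact groups) together with the assertion in the introduction of \cite{SL} that the result persists for proper actions of arbitrary Lie groups. Your proposal is a faithful reconstruction of the Sjamaar--Lerman strategy in the holomorphic, proper, almost-free setting --- regularity of $0$ from discreteness of stabilizers, the orbit-type decomposition with local normal triviality via slices, symplectic reduction stratum by stratum, and the Poisson bracket on invariant functions by extension --- so it is consistent with, and considerably more detailed than, what the paper relies on. The one place where your argument remains a sketch is exactly the place you flag: verifying that the bracket of extended invariant functions is well defined and restricts correctly to each stratum, which in \cite{SL} is where the equivariant normal-form analysis is carried out; since the paper itself delegates this entirely to the citation, this is not a defect relative to the paper's own treatment.
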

\begin{rem}\normalfont
	This is a consequence of Theorem 2.1 in \cite{SL}
	in which the Lie group $H$ is assumed to be compact.
	However as it is stated in the introduction 
	of the same paper \cite{SL}, Theorem 2.1 is valid for proper actions of arbitrary Lie groups.
\end{rem}

\subsection{Moduli spaces of meromorphic connections on a trivial bundle over $\mathbb{P}^{1}(\mathbb{C})$}\label{sec:defmodl}
Let  $D=\sum_{a\in \{a_{1},a_{2},\ldots,a_{d}\}}(k_{a}+1)\cdot a$ be an effective divisor 
on $\mathbb{P}^{1}$. Under the projective transformation, we may assume  
$|D|\subset \mathbb{C}$ for simplicity.
Let us consider the space of meromorphic connections on the trivial $G$-bundles on $\mathbb{P}^{1}$\index[cN]{$\overline{\mathcal{M}}_{D}$}
\[
	\overline{\mathcal{M}}_{D}:=\left\{\nabla_{A}=A\,dz\,\middle|\, A\,dz\in \varOmega^{\mathfrak{g}}_{\mathbb{P}^{1},D}(\mathbb{P}^{1})\right\}.  
\]
Since each $A\,dz\in \overline{\mathcal{M}}_{D}$ is of the form 
\[
	A\,dz=\sum_{a\in |D|}\sum_{i=0}^{k_{a}}\frac{A^{(a)}_{i}}{(z-a)^{i}}\frac{dz}{z-a}	\quad (A^{(a)}_{i}\in \mathfrak{g})
\]
with 
 the relation 
$
	\sum_{a\in |D|}A^{(a)}_{0}=0,	
$
we can identify $\overline{\mathcal{M}}_{D}$ with the space 
\[
	\left\{\left(X^{(a)}_{i}\right)_{\substack{a\in |D|,\\ i=0,1,\ldots,k_{a}}}\in \prod_{a\in |D|}\mathfrak{g}^{k_{a}+1}\,\middle|\, 
	\sum_{a\in |D|}X^{(a)}_{0}=0\right\}.
\]	
The $G$-action on $\overline{\mathcal{M}}_{D}$ by the gauge transformation 
is translated into the diagonal adjoint
action, i.e.,
\[
	G\times \overline{\mathcal{M}}_{D}\ni \left(g, \left(X^{(a)}_{i}\right)_{\substack{a\in |D|,\\ i=0,1,\ldots,k_{a}}}\right)
	\longmapsto \left(\mathrm{Ad}(g)(X^{(a)}_{i})\right)_{\substack{a\in |D|,\\ i=0,1,\ldots,k_{a}}}\in \overline{\mathcal{M}}_{D}.
\]
Under this identification, we regard $\overline{\mathcal{M}}_{D}$ as a complex manifold with the algebraic $G$-action.

Then by following the paper \cite{Boa1} by Boalch, we define the moduli 
space of meromorphic $G$-connections on $\mathbb{P}^{1}$ with given 
local canonical forms, see also 
\cite{HY} by Hiroe-Yamakawa and \cite{Yam1} by Yamakawa,
and also see \cite{BS} by Bremer-Sage for ramified cases.
\begin{df}[Moduli spaces of meromorphic $G$-connections]\label{df:irredmod}\normalfont
	For a collection of canonical forms $\mathbf{H}=(H^{(a)})_{a\in |D|}\in \prod_{a\in |D|}\mathfrak{g}(\mathbb{C}[z_{a}^{-1}]_{k_{a}})$
	with $\sum_{a\in |D|}H^{(a)}_{\mathrm{res}}\in \mathfrak{g}_{\mathrm{ss}}$,
	we consider the quotient space\index[cN]{$\mathcal{M}_{\mathbf{H}}$}
	\begin{align*}
		&\mathcal{M}_{\mathbf{H}}:=\\
		&\quad G\,\Bigg\backslash \left\{
			\nabla_{A}=\sum_{a\in |D|}\sum_{i=0}^{k_{a}}\frac{A^{(a)}_{i}}{(z-a)^{i}}\frac{dz}{z-a}\in \overline{\mathcal{M}}_{D}
			\,\middle|\, 
					\sum_{i=0}^{k_{a}}\frac{A^{(a)}_{i}}{(z-a)^{i+1}}\in \mathbb{O}_{H_{a}}, a\in |D|
			\right\}
	\end{align*}
	and  call this space 
	 the
	{\em moduli spaces of connections} associated with $\mathbf{H}$.
	We also consider the subspace \index[cN]{$\mathcal{M}^{ir}_{\mathbf{H}}$}$\mathcal{M}^{ir}_{\mathbf{H}}$ of $\mathcal{M}_{\mathbf{H}}$ consists of irreducible connections
	and call this subspace the {\em moduli spaces of irreducible connections} associated with $\mathbf{H}$.
\end{df}
Although the moduli space $\mathcal{M}_{\mathbf{H}}$
itself is a naive quotient space, 
we shall later explain that 
the subspace $\mathcal{M}^{ir}_{\mathbf{H}}$ of $\mathcal{M}_{\mathbf{H}}$ 
has 
a structure of holomorphic symplectic orbifold.

\subsection{Stability of $G/Z$-action on the space of irreducible connections}\label{sec:stabirred}
Let us consider \index[cN]{$(\prod_{i=1}^{n}\mathfrak{g})^{\text{ir}}$}
\[
	\left(\prod_{i=1}^{n}\mathfrak{g}\right)^{\text{ir}}:=\left\{ (X_{i})_{i=1,\ldots,n}\in  \prod_{i=1}^{n}\mathfrak{g}\,\middle|\,(X_{i})_{i=1,\ldots,n}\text{ is irreducible}\right\},
\]
the subspace of $\prod_{i=1}^{n}\mathfrak{g}$
consisting of all irreducible elements,
where $(X_{i})_{i=1,\ldots,n}\in \prod_{i=1}^{n}\mathfrak{g}$
is said to be {\em irreducible} if there exists no proper parabolic subalgebra of $\mathfrak{g}$
containing all $X_{i}$ for $i=1,\ldots,n$.

Since the center $Z$ of $G$ acts trivially on $\prod_{i=1}^{n}\mathfrak{g}$,
we can consider the action of $G/Z$ on $\prod_{i=1}^{n}\mathfrak{g}$.
Let us explain that $\left(\prod_{i=1}^{n}\mathfrak{g}\right)^{\text{ir}}$
coincides with the space of stable points in $\prod_{i=1}^{n}\mathfrak{g}$
with respect to the 
$G/Z$-action. 

We follow the formulations in \cite{Kem} by Kempf. 
Let $\lambda\colon \mathbb{G}_{m}\rightarrow G/Z$ be a one-parameter subgroup of the algebraic group $G/Z$.
Let $\chi$ be a character of the image of $\lambda$. Then we can 
write 
\[
	\chi(\lambda(t))=t^{\chi(\lambda)}\quad (t\in \mathbb{G}_{m})	
\]
by some integer $\chi(\lambda)$ which characterizes $\chi$.

Let $V$ be a $\mathbb{C}$-vector space with a linear $G/Z$-action. Then
we consider the weight decomposition $V=\oplus V^{\chi}$
of $V$ where $\chi$ runs through the set of characters of the image of $\lambda$,
namely, $V^{\chi}$ are $\chi$-eigenspaces of $V$ under the action of $\mathbb{G}_{m}$ through $\lambda$.
Then we can associate the integer to each $v\in V$ defined by 
\[
	m(v,\lambda):=\mathrm{min}\{\chi(\lambda)\mid \text{projection of $v$ onto $V^\chi$ is non-zero}\}.
\]
Then we can see that 
the limit $\lim_{t\to 0}\lambda(t)\cdot v$ exists if and only if $m(v,\lambda)\ge 0$.

On the other hand, to a one-parameter subgroup $\lambda$, we can 
associate the parabolic subalgebra of $\mathfrak{g}_{\mathrm{ss}}=\mathrm{Lie}(G/Z)$ defined by 
\[
	\mathfrak{p}(\lambda):=\{X\in \mathfrak{g}_{\mathrm{ss}}\mid m(X,\lambda)\ge 0\}.	
\]
\begin{prop}\label{prop:HM}
	For 
	$X\in \prod_{i=1}^{n}\mathfrak{g}$, 
	$X$ is irreducible if and only if it is  
	stable under the action of $G/Z$, i.e., the $G/Z$-orbit of 
	$X$ is Zariski closed and the stabilizer $\mathrm{Stab}_{G/Z}(X)$
	of $X$ has the finite cardinality.
\end{prop}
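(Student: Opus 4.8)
The plan is to invoke the Hilbert--Mumford criterion for the action of the reductive group $G/Z$ on the affine space $V:=\prod_{i=1}^{n}\mathfrak{g}$, together with the standard Kempf--Ness style correspondence between one-parameter subgroups $\lambda$ and parabolic subalgebras $\mathfrak{p}(\lambda)$ of $\mathfrak{g}_{\mathrm{ss}}=\mathrm{Lie}(G/Z)$. Recall that a point $X\in V$ is \emph{stable} for the $G/Z$-action precisely when, for every nontrivial one-parameter subgroup $\lambda\colon \mathbb{G}_{m}\to G/Z$, the limit $\lim_{t\to 0}\lambda(t)\cdot X$ fails to exist; equivalently $m(X,\lambda)<0$ for all such $\lambda$. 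The first step is therefore to translate this numerical condition into a statement purely about which parabolic subalgebras contain the components $X_{i}$.

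The key observation is that $m(X,\lambda)\ge 0$ holds if and only if every component $X_{i}$ lies in $\mathfrak{p}(\lambda)$: indeed $m(X,\lambda)=\min_{i} m(X_{i},\lambda)$ by the weight decomposition, and $m(X_{i},\lambda)\ge 0$ is by definition the condition $X_{i}\in \mathfrak{p}(\lambda)$. Hence the limit $\lim_{t\to 0}\lambda(t)\cdot X$ exists if and only if all the $X_{i}$ are contained in the parabolic $\mathfrak{p}(\lambda)$. Since every proper parabolic subalgebra of $\mathfrak{g}_{\mathrm{ss}}$ arises as $\mathfrak{p}(\lambda)$ for some nontrivial $\lambda$ (and conversely), and since proper parabolic subalgebras of $\mathfrak{g}$ correspond to proper parabolic subalgebras of $\mathfrak{g}_{\mathrm{ss}}$ by intersecting with $[\mathfrak{g},\mathfrak{g}]$, one concludes: $X$ is reducible (some proper parabolic of $\mathfrak{g}$ contains all $X_{i}$) $\iff$ there exists a nontrivial $\lambda$ with $\lim_{t\to 0}\lambda(t)\cdot X$ existing $\iff$ $X$ is not stable, after one also checks the finiteness of the stabilizer. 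For the stabilizer: if $X$ is irreducible then $\mathrm{Stab}_{G}(X)$ contains $Z$ and no further positive-dimensional part, since its Lie algebra $\bigcap_i \mathfrak{g}_{X_i}$, being the reductive part of $\mathrm{Stab}_G(X)$ together with a possible unipotent radical, cannot contain a nonzero nilpotent or a noncentral toral element without forcing all $X_i$ into a common proper parabolic; so $\mathrm{Stab}_{G/Z}(X)$ is finite, and closedness of the orbit follows from the Hilbert--Mumford criterion once non-existence of all destabilizing limits is established.

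I expect the main obstacle to be the careful handling of two technical points. First, one must verify that closedness of the $G/Z$-orbit genuinely follows from the destabilization statement: the clean way is to cite that a point with no destabilizing one-parameter subgroup (in the sense $m(X,\lambda)<0$ for all $\lambda\ne 1$) has closed orbit, which is part of the Hilbert--Mumford / Kempf theory in \cite{Kem}, but one should make sure the version quoted applies to $V$ an affine space without a chosen linearization subtlety (here $V$ is linear, so the trivial line bundle works). Second, the passage between parabolic subalgebras of $\mathfrak{g}$ and those of $\mathfrak{g}_{\mathrm{ss}}$, and between the $G$-action and the $G/Z$-action, needs a brief but honest argument that $Z$ acts trivially and that reducibility is detected on the semisimple part; this is routine but is exactly where a careless proof would slip. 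Everything else is the short dictionary $m(X,\lambda)\ge 0 \iff X_i\in\mathfrak{p}(\lambda)\ \forall i$, which I would state as a displayed equivalence and then conclude.
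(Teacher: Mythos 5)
Your proposal is correct and follows essentially the same route as the paper: translate $m(X,\lambda)\ge 0$ into containment of all the $X_i$ in the parabolic $\mathfrak{p}(\lambda)$ (augmented by the center $\mathfrak{z}$, via Proposition 8.4.5 of \cite{Sp}), and then invoke the Hilbert--Mumford criterion (Theorem 2.1 of \cite{MFK}), which already packages both the orbit-closedness and the finiteness of the stabilizer that you treat separately.
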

\begin{proof}
Obviously any element $X$ in the center $\mathfrak{z}$ of $\mathfrak{g}$
satisfies $m(X,\lambda)=0$.
Thus by Proposition 8.4.5 in \cite{Sp},
any parabolic subalgebra of $\mathfrak{g}$ is of the form $\mathfrak{z}\oplus \mathfrak{p}(\lambda)$.
Therefore we can conclude that  
 $X\in \prod_{i=1}^{n}\mathfrak{g}$
is irreducible if and only if $X$ satisfies
$m(X,\lambda)<0$ for any one-parameter subgroup $\lambda$ of $G/Z$.
Then the desired result follows from the Hilbert-Mumford criterion (see Theorem 2.1 in \cite{MFK}).
\end{proof}
It follows from this
proposition that $\left(\prod_{i=1}^{n}\mathfrak{g}\right)^{\text{ir}}$
coincides with the space of stable points in $\prod_{i=1}^{n}\mathfrak{g}$
as desired.
In particular, $\left(\prod_{i=1}^{n}\mathfrak{g}\right)^{\text{ir}}$
is a Zariski open subset of $\prod_{i=1}^{n}\mathfrak{g}$.

\subsection{Symplectic orbifold $\mathcal{M}^{\mathrm{ir}}_{\mathbf{H}}$}\label{sec:symporb}
Let us consider the injective immersion\index[cN]{$\iota_{\mathbf{H}}$}
\[
	\iota_{\mathbf{H}}\colon \prod_{a\in |D|}\mathbb{O}_{H_{a}}\longrightarrow \prod_{a\in |D|}\mathfrak{g}(\mathbb{C}[z_{a}^{-1}]_{k_{a}})	
\]
defined 
as the product of the natural immersions\index[cN]{$\iota_{\mathbb{O}_{H_{a}}}$} 
\[
	\iota_{\mathbb{O}_{H_{a}}}\colon G(\mathbb{C}[z_{a}]_{k_{a}})/G(\mathbb{C}[z_{a}]_{k_{a}})_{H_{a}}\ni [g]
	\longmapsto \mathrm{Ad}^{*}(g)(H_{a})\in \mathfrak{g}(\mathbb{C}[z_{a}^{-1}]_{k_{a}})\quad (a\in |D|)
\]
under the identification $\mathbb{O}_{H_{a}}\cong G(\mathbb{C}[z_{a}]_{k_{a}})/G(\mathbb{C}[z_{a}]_{k_{a}})_{H_{a}}.$
We consider the open subset of $\prod_{a\in |D|}\mathbb{O}_{H_{a}}$ defined by\index[cN]{$(\prod_{a\in |D|}\mathbb{O}_{H_{a}})^{\text{ir}}$}
\[
	\left(\prod_{a\in |D|}\mathbb{O}_{H_{a}}\right)^{\text{ir}}:=\iota_{\mathbf{H}}^{-1}\left(\left(
		\prod_{a\in |D|}\mathfrak{g}(\mathbb{C}[z_{a}^{-1}]_{k_{a}})
	\right)^{\mathrm{ir}}\right).
\]

Under the identification $\mathfrak{g}(\mathbb{C}[z_{a}^{-1}]_{k_{a}})\cong \mathfrak{g}(\mathbb{C}[z_{a}]_{k_{a}})^{*}$
via the trace pairing,
the injective immersion \index[cN]{$\mu_{\mathbb{O}_{H_{a}}}$}
\[
	\mu_{\mathbb{O}_{H_{a}}}=\iota_{\mathbb{O}_{H_{a}}}\colon \mathbb{O}_{H_{a}} \longmapsto \mathfrak{g}(\mathbb{C}[z_{a}^{-1}]_{k_{a}})
\]
is considered as 
the moment map 
with respect to the coadjoint action of $G(\mathbb{C}[z]_{k})$.
Furthermore, since 
the residue map $\mathfrak{g}(\mathbb{C}[z_{a}^{-1}]_{k_{a}})
\ni X(z_{a})\mapsto \underset{z_{a}=0}{\mathrm{res\,}}X(z_{a})\in \mathfrak{g}$
is  
the dual map of the inclusion $\mathfrak{g}\hookrightarrow \mathfrak{g}(\mathbb{C}[z_{a}]_{k_{a}})$,
the map \index[cN]{$\mu_{\mathbb{O}_{H_{a}}\downarrow G}$}
\[
	\mu_{\mathbb{O}_{H_{a}}\downarrow G}:=\underset{z_{a}=0}{\mathrm{res\,}}\circ 
	\iota_{\mathbb{O}_{H_{a}}}\colon \mathbb{O}_{H_{a}}\longrightarrow \mathfrak{g}
\]
is the moment map with respect to the coadjoint action of $G$
under the identification $\mathfrak{g}\cong \mathfrak{g}^{*}$ via the trace pairing.
As the product of these moment maps, we obtain the moment map \index[cN]{$\mu_{\mathbf{H}}$}
\[
	\begin{array}{cccc}
		\mu_{\mathbf{H}}\colon &\prod_{a\in |D|}\mathbb{O}_{H_{a}}&\longrightarrow &\mathfrak{g}^{*}\\
		&(X_{a})_{a\in |D|}&\longmapsto &\sum_{a\in |D|}\mu_{\mathbb{O}_{H_{a}}\downarrow G}(X_{a})
	\end{array}
\]
with respect to the diagonal action of $G$.
Also we denote the restriction 
of this moment map $\mu_{\mathbf{H}}$
on $\left(\prod_{a\in |D|}\mathbb{O}_{H_{a}}\right)^{\text{ir}}$
by \index[cN]{$\mu_{\mathbf{H}}^{\mathrm{ir}}$}$\mu_{\mathbf{H}}^{\mathrm{ir}}$
which is still a moment map 
since $\left(\prod_{a\in |D|}\mathbb{O}_{H_{a}}\right)^{\text{ir}}$
is a open submanifold of $\prod_{a\in |D|}\mathbb{O}_{H_{a}}$
and closed under the $G$-action. 
Then
under the injective immersion $\iota_{\mathbf{H}}$,
we obtain the following identification,
\begin{align*}
	&\left\{
			\nabla_{A}=\sum_{a\in |D|}\sum_{i=0}^{k_{a}}\frac{A^{(a)}_{i}}{(z-a)^{i}}\frac{dz}{z-a}\in \overline{\mathcal{M}}_{D}
			\,\middle|\, \begin{array}{l}
				\nabla_{A}\text{ is irreducible},\\
					\sum_{i=0}^{k_{a}}\frac{A^{(a)}_{i}}{(z-a)^{i+1}}\in \mathbb{O}_{H_{a}}\text{ for all }a\in |D|
			\end{array}
			\right\}\\
	&=\left\{
			\left(\sum_{i=0}^{k_{a}}\frac{A^{(a)}_{i}}{(z-a)^{i+1}}\right)_{a\in |D|}\in \left(\prod_{a\in |D|}\mathbb{O}_{H_{a}}\right)^{\text{ir}}
			\,\middle|\,
			\sum_{a\in |D|}\underset{z_{a}=0}{\mathrm{res\,}}\sum_{i=0}^{k_{a}}\frac{A^{(a)}_{i}}{(z-a)^{i+1}}=0
	\right\}\\
	&=\left\{(\xi_{a})_{a\in |D|}\in \left(\prod_{a\in |D|}\mathbb{O}_{H_{a}}\right)^{\text{ir}}\,
	\middle|\, \mu_{\mathbf{H}}^{\mathrm{ir}}((\xi_{a})_{a\in |D|})=0\right\}
	=\left(\mu_{\mathbf{H}}^{\mathrm{ir}}\right)^{-1}(0).
\end{align*}
Namely,  
the moduli space $\mathcal{M}^{\text{ir}}_{\mathbf{H}}$
of irreducible connections is regarded as the symplectic reduction 
\[
	\mathcal{M}^{\text{ir}}_{\mathbf{H}}=G\,\backslash(\mu_{\mathbf{H}}^{\text{ir}})^{-1}(0).	
\]
Now let us look closely at the $G$-action on the level $0$ set $(\mu_{\mathbf{H}}^{\text{ir}})^{-1}(0)$.
\begin{lem}\label{lem:ss}
	Let us consider the decomposition $\mathfrak{g}^{*}
	=\mathfrak{z}^{*}\oplus \mathfrak{g}_{\mathrm{ss}}^{*}$
	induced by the 
	decomposition $\mathfrak{g}=\mathfrak{z}\oplus \mathfrak{g}_{\mathrm{ss}}$.
	Then we have $\mathrm{Im\,}\mu_{\mathbf{H}}\subset \mathfrak{g}_{\mathrm{ss}}^{*}$.
\end{lem}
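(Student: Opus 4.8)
The plan is to deduce Lemma~\ref{lem:ss} directly from the residue computation already carried out in the proof of Proposition~\ref{prop:resss}. First I would make explicit the identification $\mathfrak{g}^{*}\cong\mathfrak{g}$ coming from the trace pairing and observe that, under it, the subspace $\mathfrak{g}_{\mathrm{ss}}^{*}\subset\mathfrak{g}^{*}$ (the annihilator of $\mathfrak{z}$) corresponds to the orthogonal complement $\mathfrak{z}^{\bot}\subset\mathfrak{g}$. Since the proof of Proposition~\ref{prop:resss} already establishes $\mathfrak{g}_{\mathrm{ss}}^{\bot}=\mathfrak{z}$ and hence, $\mathfrak{g}=\mathfrak{z}\oplus\mathfrak{g}_{\mathrm{ss}}$ being an orthogonal direct sum for the non-degenerate trace pairing on $\mathfrak{g}$, also $\mathfrak{z}^{\bot}=\mathfrak{g}_{\mathrm{ss}}$, it suffices to show that for every $(X_{a})_{a\in|D|}\in\prod_{a\in|D|}\mathbb{O}_{H_{a}}$ the element $\mu_{\mathbf{H}}((X_{a})_{a\in|D|})$, which under this identification is $\sum_{a\in|D|}\underset{z_{a}=0}{\mathrm{res\,}}(X_{a})$, lies in $\mathfrak{z}^{\bot}$; equivalently, that $\tr(X\cdot\sum_{a\in|D|}\underset{z_{a}=0}{\mathrm{res\,}}(X_{a}))=0$ for every $X$ in the centre $\mathfrak{z}$ of $\mathfrak{g}$.

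Next I would fix $X\in\mathfrak{z}$ and, for each $a\in|D|$, write $X_{a}=\Ad^{*}(g_{a})(H^{(a)})$ with $g_{a}\in G(\mathbb{C}[z_{a}]_{k_{a}})$. Exactly as in the proof of Proposition~\ref{prop:resss}, the element $X$, viewed inside $\mathfrak{g}(\mathbb{C}[z_{a}]_{k_{a}})$, is central and hence fixed by $\Ad(g_{a}^{-1})$, so that $\underset{z_{a}=0}{\mathrm{res\,}}\tr(X\cdot\Ad(g_{a})(H^{(a)}))=\underset{z_{a}=0}{\mathrm{res\,}}\tr(\Ad(g_{a}^{-1})(X)\cdot H^{(a)})=\underset{z_{a}=0}{\mathrm{res\,}}\tr(X\cdot H^{(a)})=\tr(X\cdot H^{(a)}_{\mathrm{res}})$. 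Summing over $a\in|D|$ and invoking the standing hypothesis $\sum_{a\in|D|}H^{(a)}_{\mathrm{res}}\in\mathfrak{g}_{\mathrm{ss}}=\mathfrak{z}^{\bot}$ then yields $\tr(X\cdot\sum_{a\in|D|}\underset{z_{a}=0}{\mathrm{res\,}}(X_{a}))=\tr(X\cdot\sum_{a\in|D|}H^{(a)}_{\mathrm{res}})=0$, which is what we want.

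There is no real obstacle here; the statement is a short corollary of the residue computation in Proposition~\ref{prop:resss}. The only points requiring care are the two bookkeeping steps above: translating the inclusion $\mathrm{Im}\,\mu_{\mathbf{H}}\subset\mathfrak{g}_{\mathrm{ss}}^{*}$ into the corresponding statement inside $\mathfrak{g}$ via the trace-pairing identification, and checking that passing to the residue is compatible with the coadjoint action $\Ad^{*}(g_{a})$ once it is paired against a central element. I would therefore present the proof of Lemma~\ref{lem:ss} as essentially one line referring back to that computation, after first recording the identification $\mathfrak{g}_{\mathrm{ss}}^{*}\cong\mathfrak{z}^{\bot}$.
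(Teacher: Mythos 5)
Your proposal is correct and follows essentially the same route as the paper: both reduce the claim to showing $\tr\bigl(X\cdot\sum_{a\in|D|}\underset{z=a}{\mathrm{res\,}}(\mathrm{Ad}(g_{a})(H^{(a)}))\bigr)=0$ for central $X$, using $\mathrm{Ad}$-invariance of the trace pairing and the hypothesis $\sum_{a\in|D|}H^{(a)}_{\mathrm{res}}\in\mathfrak{g}_{\mathrm{ss}}=\mathfrak{z}^{\bot}$, exactly as in the computation of Proposition~\ref{prop:resss}. No gaps.
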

\begin{proof}
	As in Proposition \ref{prop:resss},
	for $X\in \mathfrak{g}_{\mathrm{ss}}^{\bot}=\mathfrak{z}$ and $(g_{a})\in \prod_{a\in |D|}G(\mathbb{C}[z_{a}]_{k_{a}})$,
	we have 
	\begin{align*}
		&\mathrm{tr}(X\cdot \sum_{a\in |D|}\underset{z=a}{\mathrm{res\,}}(\mathrm{Ad}(g_{a})(H^{(a)})))\\
		&\ =
		\mathrm{tr}\sum_{a\in |D|}\underset{z=a}{\mathrm{res\,}}(X\cdot \mathrm{Ad}(g_{a})(H^{(a)}))
		=\mathrm{tr}\sum_{a\in |D|}\underset{z=a}{\mathrm{res\,}}(\mathrm{Ad}(g_{a}^{-1})(X)H^{(a)})\\
		&\ =\mathrm{tr}\sum_{a\in |D|}\underset{z=a}{\mathrm{res\,}}(X\cdot H^{(a)})
		=\mathrm{tr}(X \sum_{a\in |D|}\underset{z=a}{\mathrm{res\,}}(H^{(a)}))=0,
	\end{align*}
	since $\sum_{a\in |D|}\underset{z=a}{\mathrm{res\,}}(H^{(a)})\in \mathfrak{g}_{\mathrm{ss}}.$
	Thus $\sum_{a\in |D|}\mathrm{res}_{z=a}(\mathrm{Ad}(g_{a})(H_{a}))\in \mathfrak{g}_{\mathrm{ss}}$.
	Then the correspondence between the residue map and 
	the moment map $\mu_{\mathbf{H}}$
	show the result.
\end{proof}
\begin{lem}\label{lem:denpan}
	Let $L$ be a locally compact topological group. 
	Let us consider locally compact topological spaces $V$ and $W$ with continuous $L$-actions.
	Suppose that $L$ acts on $W$ properly and almost freely, and there exists a $L$-equivariant 
	continuous map $\phi\colon V\rightarrow W$. Then the $L$-action on $V$ is also 
	proper and almost free. 
	In particular if $L$-action on $W$ is free, so is that on $V$.
\end{lem}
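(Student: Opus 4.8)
The plan is to transport both the properness and the almost-freeness of the $L$-action on $W$ back to $V$ along the equivariant map $\phi$, the point being that equivariance forces the action maps of $V$ and $W$ to fit into a commutative square. Write $\alpha_V\colon L\times V\to V\times V$, $(l,v)\mapsto(l\cdot v,v)$, and define $\alpha_W$ for $W$ in the same way; recall that an action is called proper precisely when the associated map $\alpha$ is a proper map. Equivariance of $\phi$ gives the identity $(\phi\times\phi)\circ\alpha_V=\alpha_W\circ(\mathrm{id}_L\times\phi)$, and this is essentially all that will be needed.

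First I would dispose of the statement about stabilizers. If $v\in V$ and $l\in L$ fixes $v$, then $l\cdot\phi(v)=\phi(l\cdot v)=\phi(v)$, so $\mathrm{Stab}_L(v)\subseteq\mathrm{Stab}_L(\phi(v))$; since the latter is finite (resp. trivial) by hypothesis, so is the former. This already shows that the $L$-action on $V$ is almost free, and that it is free whenever the action on $W$ is.

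For properness, let $K\subseteq V\times V$ be compact; I must show that $\alpha_V^{-1}(K)$ is compact. Its projection to the $V$-factor satisfies $\mathrm{pr}_V\bigl(\alpha_V^{-1}(K)\bigr)\subseteq\mathrm{pr}_2(K)$, a compact subset of $V$. For the $L$-factor, the commuting square gives $(\mathrm{id}_L\times\phi)\bigl(\alpha_V^{-1}(K)\bigr)\subseteq\alpha_W^{-1}\bigl((\phi\times\phi)(K)\bigr)$; here $(\phi\times\phi)(K)$ is compact, being a continuous image of a compact set, so $\alpha_W^{-1}\bigl((\phi\times\phi)(K)\bigr)$ is compact by properness of the $W$-action, and hence its image under projection to $L$ is a compact set $Q\subseteq L$ containing $\mathrm{pr}_L\bigl(\alpha_V^{-1}(K)\bigr)$. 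Thus $\alpha_V^{-1}(K)\subseteq Q\times\mathrm{pr}_2(K)$, which is compact; and $\alpha_V^{-1}(K)$ is closed, being the preimage of the closed set $K$ under the continuous map $\alpha_V$ (using that the spaces are Hausdorff, so that compact subsets are closed). A closed subset of a compact set is compact, so $\alpha_V^{-1}(K)$ is compact and $\alpha_V$ is proper.

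I do not expect a genuine obstacle here: this is the routine fact that properness and almost-freeness of a transformation group are inherited under equivariant pullback. The only things requiring a little care are bookkeeping — keeping track of which projection is confined to which compact set — and the harmless Hausdorff hypothesis needed so that a closed subset of a compact set is compact; in the intended applications $V$ and $W$ are (open subsets of) complex manifolds, so nothing is lost.
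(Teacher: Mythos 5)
Your proof is correct and follows essentially the same route as the paper: the stabilizer inclusion $\mathrm{Stab}_L(v)\subseteq\mathrm{Stab}_L(\phi(v))$ is identical, and your properness argument transports compactness along $\phi$ exactly as the paper does, differing only in that you use the proper-map formulation of properness via $\alpha_V^{-1}(K)$ while the paper uses the equivalent formulation that $L_K=\{g\in L\mid g\cdot K\cap K\neq\emptyset\}$ has compact closure, via the inclusion $L_K\subseteq L_{\phi(K)}$. No gap; the Hausdorff point you flag is harmless in the intended applications.
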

\begin{proof}
	Let us consider the stabilizer group $\mathrm{Stab}_{L}(v)$ of $v\in V$.
	If $g\in \mathrm{Stab}_{L}(v)$, then $g\cdot\phi(v)=\phi(g\cdot v)=\phi(v)$. Thus 
	we have $\mathrm{Stab}_{L}(v)\subset \mathrm{Stab}_{L}(\phi(v))$.
	Then 
	since the $L$-action on $W$ is  almost free, $\mathrm{Stab}_{L}(v)$ is a finite set.

	Let us take a compact subset $K\subset V$ and consider  $L_{K}:=\{g\in L\mid g\cdot K\cap K\neq \emptyset\}$.
	Since $\phi$ is continuous, the image $\phi(K)\subset W$ is a compact subset.
	Let us take $g\in L_{K}$. Then we have $g\cdot \phi(K)\cap \phi(K)=\phi(g\cdot K)\cap \phi(K)
	\supset \phi(g\cdot K\cap K)\neq \emptyset$ since $g\cdot K\cap K\neq \emptyset$.
	Hence $L_{K}$ is a subset of $L_{\phi(K)}:=\{g\in G\mid g\cdot \phi(K)\cap \phi(K)\neq \emptyset\}$
	which has the compact closure. 
	Therefore $L_{K}$ has the compact closure as well.
\end{proof}

\begin{prop}\label{prop:orbifold}
	The moduli space $\mathcal{M}^{\text{ir}}_{\mathbf{H}}$
	has a structure 
	of complex symplectic orbifold of dimension 
	\[
		\mathcal{M}^{\text{ir}}_{\mathbf{H}}=2\,\mathrm{dim\,}Z-\mathrm{rig\,}(\mathbf{H}),
	\]
	if it is nonempty.

\end{prop}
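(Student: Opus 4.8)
The plan is to identify $\mathcal{M}^{\mathrm{ir}}_{\mathbf{H}}$ with a symplectic reduction and invoke Theorem \ref{thm:symplred}. As recorded in Section \ref{sec:symporb}, under the injective immersion $\iota_{\mathbf{H}}$ one has
\[
\mathcal{M}^{\mathrm{ir}}_{\mathbf{H}}=G\,\backslash\,(\mu^{\mathrm{ir}}_{\mathbf{H}})^{-1}(0),
\]
where $\mu^{\mathrm{ir}}_{\mathbf{H}}$ is the moment map for the diagonal $G$-action on the open, $G$-stable submanifold $\big(\prod_{a\in|D|}\mathbb{O}_{H_a}\big)^{\mathrm{ir}}$ of $\prod_{a\in|D|}\mathbb{O}_{H_a}$. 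Since each $\mathbb{O}_{H_a}$ is a coadjoint orbit of the finite dimensional complex Lie group $G(\mathbb{C}[z_a]_{k_a})$ (Section \ref{sec:trucoad}), it is a holomorphic symplectic manifold, hence so are the product and its open submanifold $\big(\prod_{a\in|D|}\mathbb{O}_{H_a}\big)^{\mathrm{ir}}$. The centre $Z$ acts trivially, so the action factors through $G/Z$ and $G\,\backslash(-)=(G/Z)\,\backslash(-)$; moreover Lemma \ref{lem:ss} gives $\mathrm{Im}\,\mu_{\mathbf{H}}\subset\mathfrak{g}_{\mathrm{ss}}^{*}$, which we identify with $\mathrm{Lie}(G/Z)^{*}$, so $\mu^{\mathrm{ir}}_{\mathbf{H}}$ may be regarded as a moment map for $G/Z$ with the same zero level set.

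Next I would check the hypotheses of Theorem \ref{thm:symplred} for $H=G/Z$ acting on $X=\big(\prod_{a\in|D|}\mathbb{O}_{H_a}\big)^{\mathrm{ir}}$. Writing $\prod_{a\in|D|}\mathfrak{g}(\mathbb{C}[z_a^{-1}]_{k_a})\cong\prod_{i=1}^{N}\mathfrak{g}$ with $N=\sum_{a\in|D|}(k_a+1)$ and the diagonal adjoint $G/Z$-action, Proposition \ref{prop:HM} identifies the irreducible locus $\big(\prod_{a}\mathfrak{g}(\mathbb{C}[z_a^{-1}]_{k_a})\big)^{\mathrm{ir}}$ with the set of stable points, on which the reductive group $G/Z$ acts properly with finite stabilizers. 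The immersion $\iota_{\mathbf{H}}$ restricts to a $G/Z$-equivariant continuous map $X\to\big(\prod_{a}\mathfrak{g}(\mathbb{C}[z_a^{-1}]_{k_a})\big)^{\mathrm{ir}}$, so Lemma \ref{lem:denpan} transports properness and almost-freeness to $X$. Theorem \ref{thm:symplred} then shows $\mathcal{M}^{\mathrm{ir}}_{\mathbf{H}}=(G/Z)\,\backslash(\mu^{\mathrm{ir}}_{\mathbf{H}})^{-1}(0)$ is a complex symplectic orbifold (almost-freeness forces the stabilizer Lie algebras to vanish along the level set, so $0$ is a regular value of $\mu^{\mathrm{ir}}_{\mathbf{H}}$ and $(\mu^{\mathrm{ir}}_{\mathbf{H}})^{-1}(0)$ is a smooth complex submanifold carrying a proper almost free $G/Z$-action).

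Finally I would compute the dimension. Symplectic reduction by $G/Z$ lowers the dimension by $2\dim(G/Z)$, and $\dim\big(\prod_{a\in|D|}\mathbb{O}_{H_a}\big)=\sum_{a\in|D|}\dim\mathbb{O}_{H_a}=\sum_{a\in|D|}\delta(H^{(a)})$ by Corollary \ref{cor:specdimorb}, the open subset $X$ having the same dimension. Hence
\[
\dim\mathcal{M}^{\mathrm{ir}}_{\mathbf{H}}=\sum_{a\in|D|}\delta(H^{(a)})-2(\dim G-\dim Z)=2\dim Z-\Big(2\dim G-\sum_{a\in|D|}\delta(H^{(a)})\Big)=2\dim Z-\mathrm{rig}(\mathbf{H}),
\]
which is the asserted formula. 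The step I expect to be the main obstacle is the first half of the second paragraph: making precise the passage from ``all orbits closed with finite stabilizers on the stable locus'' (Proposition \ref{prop:HM}) to a genuinely proper $G/Z$-action, and then correctly pulling it back along $\iota_{\mathbf{H}}$ via Lemma \ref{lem:denpan}; once properness and almost-freeness are in hand, the remainder is a formal application of Theorem \ref{thm:symplred} together with the dimension bookkeeping above.
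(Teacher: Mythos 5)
Your proposal follows essentially the same route as the paper's own proof: it identifies $\mathcal{M}^{\mathrm{ir}}_{\mathbf{H}}$ with the symplectic reduction $(G/Z)\backslash(\mu^{\mathrm{ir}}_{\mathbf{H}})^{-1}(0)$ using Lemma \ref{lem:ss} to regard the moment map as one for $G/Z$, derives properness and almost-freeness of the $G/Z$-action from Proposition \ref{prop:HM} together with Lemma \ref{lem:denpan}, applies Theorem \ref{thm:symplred}, and does the same dimension bookkeeping via Corollary \ref{cor:specdimorb}. The step you flag as the main obstacle is handled in the paper at exactly the same level of detail, so no further work is needed.
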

\begin{proof}
	Let us identify $\mathfrak{g}^{*}\cong \mathfrak{g}$ via the trace pairing 
	and denote the dual map of the inclusion 
	$\iota_{\mathrm{ss}}\colon \mathfrak{g}_{\mathrm{ss}}\hookrightarrow \mathfrak{g}=\mathfrak{z}\oplus \mathfrak{g}_{\mathrm{ss}}$
	by $\iota_{\mathrm{ss}}^{*}\colon \mathfrak{g}^{*}\rightarrow \mathfrak{g}_{\mathrm{ss}}^{*}$.
	Then
	we obtain the moment map 
	\[
		\iota_{\mathrm{ss}}^{*}\circ \mu_{\mathbf{H}}^{\mathrm{ir}}\colon \left(\prod_{a\in |D|}\mathbb{O}_{H_{a}}\right)^{\mathrm{ir}}\rightarrow \mathfrak{g}_{\mathrm{ss}}^{*}
	\]
	of $G/Z$-action. 
	Since we know that $\iota_{\mathrm{ss}}^{*}\circ \mu_{\mathbf{H}}^{\mathrm{ir}}=\mu_{\mathbf{H}}^{\mathrm{ir}}$ by Lemma \ref{lem:ss}, 
	we obtain
	\[
		G\,\backslash(\mu_{\mathbf{H}}^{\mathrm{ir}})^{-1}(0)=(G/Z)\,\backslash(\iota_{\mathrm{ss}}^{*}\circ \mu_{\mathbf{H}}^{\mathrm{ir}})^{-1}(0).
	\]
	Thus it suffices to show that the right hand side has a structure 
	of complex symplectic orbifold.

	Proposition \ref{prop:HM} and Lemma \ref{lem:denpan} show that 
	$G/Z$ action on $\left(\prod_{a\in |D|}\mathbb{O}_{H_{a}}\right)^{\mathrm{ir}}$ is 
	proper and almost free. Therefore since $\iota_{\mathrm{ss}}^{*}\circ \mu_{\mathbf{H}}^{\mathrm{ir}}$
	is a moment map, the almost free-ness of $G/Z$-action 
	assures that $0$ is a regular value of $\iota_{\mathrm{ss}}^{*}\circ \mu_{\mathbf{H}}^{\mathrm{ir}}$.
	Namely,
	$ (\iota_{\mathrm{ss}}^{*}\circ \mu_{\mathbf{H}}^{\mathrm{ir}})^{-1}(0)$
	becomes a submanifold of $\left(\prod_{a\in |D|}\mathbb{O}_{H_{a}}\right)^{\mathrm{ir}}$.
	Thus 
	$(G/Z)\backslash(\iota_{\mathrm{ss}}^{*}\circ \mu_{\mathbf{H}}^{\mathrm{ir}})^{-1}(0)$
	is the quotient space of the complex manifold $(\iota_{\mathrm{ss}}^{*}\circ \mu_{\mathbf{H}}^{\mathrm{ir}})^{-1}(0)$
	by the proper and almost free action of $G/Z$, namely,
	it has the complex symplectic orbifold structure by Theorem \ref{thm:symplred}.

	Let us consider the dimension of $\mathcal{M}_{\mathbf{H}}^{\mathrm{ir}}$
	under the assumption $(\iota_{\mathrm{ss}}^{*}\circ \mu_{\mathbf{H}}^{\mathrm{ir}})^{-1}(0)\neq 
	\emptyset$.
	Since the $G/Z$-action is almost free, we have 
	\begin{align*}
		\mathrm{dim\,}\mathcal{M}_{\mathbf{H}}^{\mathrm{ir}}&=\mathrm{dim\,}(G/Z)\,\backslash(\iota_{\mathrm{ss}}^{*}\circ \mu_{\mathbf{H}}^{\mathrm{ir}})^{-1}(0)
		=\sum_{a\in |D|}\mathrm{dim\,}\mathbb{O}_{H^{(a)}}-2\,\mathrm{dim\,}G/Z\\
		&=\sum_{a\in |D|}\delta(H^{(a)})-2\,\mathrm{dim\,}G+2\,\mathrm{dim\,}Z
		=2\,\mathrm{dim\,}Z-\mathrm{rig}(\mathbf{H})
	\end{align*}
	as desired.
\end{proof}
\section{Triangular decompositions of  truncated orbits}\label{sec:triangle}
We shall recall the triangular decompositions of truncated orbits. 
This will play an important role to construct the deformation of 
truncated orbits in latter sections.
A similar decomposition 
can also be found in \cite{HY} by Hiroe-Yamakawa, \cite{H2} by Hiroe, and \cite{Yam} by Yamakawa.

Let us fix an unramified canonical form 
\[
	H=\left(
		\frac{H_{k}}{z^{k}}+\cdots +\frac{H_{1}}{z}+H_{\mathrm{res}}
	\right)\frac{1}{z}\in\mathfrak{g}(\mathbb{C}[z^{-1}]_{k}) 
\]
throughout this section.

\subsection{A fiber bundle 
with the fiber $\mathbb{O}^{L_{1}\ltimes G(\mathbb{C}[z]_{k})_{1}}_{H}$ 
and the truncated orbit}
For a positive integer $l$ with $0\le l\le k$ and a 
closed subgroup $S\subset G(\mathbb{C}[z]_{l})$,
we denote the $S$-orbit of $\xi \in \mathfrak{g}(\mathbb{C}[z]_{l})^{*}$ by\index[cN]{$\mathbb{O}_{\xi}^{S}$}
\[
	\mathbb{O}_{\xi}^{S}:=\mathrm{Ad}^{*}(S)(\xi).
\]
Now let us consider the orbit $\mathbb{O}_{H}^{L_{1}\ltimes G(\mathbb{C}[z]_{k})_{1}}$
of $H$ under the $L_{1}\ltimes G(\mathbb{C}[z]_{k})_{1}$-action.
Let us also consider the product manifold 
$
	G\times \mathbb{O}_{H}^{L_{1}\ltimes G(\mathbb{C}[z]_{k})_{1}}	
$
with the $L_{1}$-action defined by 
$h\cdot (g,\xi):=(gh^{-1},\mathrm{Ad}^{*}(h)(\xi))$
for $h\in L_{1}$, $g\in G$, and $\xi\in \mathbb{O}_{H}^{L_{1}\ltimes G(\mathbb{C}[z]_{k})_{1}}$.
Then the quotient space
\[
	G\times_{L_{1}}\mathbb{O}_{H}^{L_{1}\ltimes G(\mathbb{C}[z]_{k})_{1}}:=L_{1}\,\backslash(G\times \mathbb{O}_{H}^{L_{1}\ltimes G(\mathbb{C}[z]_{k})_{1}})	
\]
is the fiber bundle on $G/L_{1}$ with the fiber $\mathbb{O}_{H}^{L_{1}\ltimes G(\mathbb{C}[z]_{k})_{1}}$
which is associated with the principal $L_{1}$-bundle $G\rightarrow G/L_{1}$.
Then we shall show that  the total space 
$G\times_{L_{1}}\mathbb{O}_{H}^{L_{1}\ltimes G(\mathbb{C}[z]_{k})_{1}}$ is isomorphic 
to the truncated orbit $\mathbb{O}_{H}$.

Consider the multiplication map 
$\widetilde{\psi}\colon 
G\times (L_{1}\ltimes G(\mathbb{C}[z]_{k})_{1})\ni (g,(h,f))\mapsto 
ghf\in G(\mathbb{C}[z]_{k})$.
This is obviously $\mathrm{Stab}_{L_{1}\ltimes G(\mathbb{C}[z]_{k})_{1}}(H)$-equivariant 
under the multiplications from the right.
Therefore it induces the map 
\[
	\overline{\psi}\colon 	G\times \mathbb{O}_{H}^{L_{1}\ltimes G(\mathbb{C}[z]_{k})_{1}}
	\ni (g,\xi)\longmapsto \mathrm{Ad}^{*}(g)(\xi)\in \mathbb{O}_{H}.
\]
Moreover since this $\overline{\psi}$ is invariant under the $L_{1}$-action,
it factors through the map 
\[
	\psi\colon 	G\times_{L_{1}} \mathbb{O}_{H}^{L_{1}\ltimes G(\mathbb{C}[z]_{k})_{1}}
	\ni [(g,\xi)]\longmapsto \mathrm{Ad}^{*}(g)(\xi)\in \mathbb{O}_{H}
\] 
which does not depend on the choice of representatives of $[(g,\xi)]$.
Namely, we obtain 
the commutative diagram
\begin{equation}\label{eq::comglh}
	\begin{tikzcd}
		G\times (L_{1}\ltimes G(\mathbb{C}[z]_{k})_{1})
		\arrow[r,"\widetilde{\psi}"]
		\arrow[d,"\mathrm{id}_{G}\times \pi"]
		&G(\mathbb{C}[z]_{k})
		\arrow[d,"\pi"]\\
		G\times \mathbb{O}_{H}^{L_{1}\ltimes G(\mathbb{C}[z]_{k})_{1}}
		\arrow[r,"\overline{\psi}"]
		\arrow[d,"p"]
		&\mathbb{O}_{H}\\
		G\times_{L_{1}}\mathbb{O}_{H}^{L_{1}\ltimes G(\mathbb{C}[z]_{k})_{1}}
		\arrow[ru,"\psi"]&
	\end{tikzcd}.
\end{equation}
Here vertical arrows are natural projections.
\begin{prop}[cf. Proposition 4.9 in \cite{H2}, and also 
	Proposition 2.12 in \cite{HY}]\label{prop:pullback2form}
	The above map 
	\[
		\psi\colon G\times_{L_{1}}\mathbb{O}_{H}^{L_{1}\ltimes G(\mathbb{C}[z]_{k})_{1}}
		\overset{\sim}{\longrightarrow}
		\mathbb{O}_{H}
	\]
	is an isomorphism
	as complex manifolds.
\end{prop}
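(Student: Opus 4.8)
The plan is to show that $\psi$ is an isomorphism by establishing that it is bijective and an immersion (equivalently, a local biholomorphism), and then concluding via the holomorphic inverse function theorem. First I would check surjectivity: since $G(\mathbb{C}[z]_{k}) = G \ltimes G(\mathbb{C}[z]_{k})_{1}$ by the semidirect product decomposition recalled earlier, and since $L_{1} \subset G$, every $g \in G(\mathbb{C}[z]_{k})$ can be written as $g = g_{0}\, g_{1}$ with $g_{0}\in G$ and $g_{1}\in G(\mathbb{C}[z]_{k})_{1}$; hence $\mathrm{Ad}^{*}(g)(H) = \mathrm{Ad}^{*}(g_{0})(\mathrm{Ad}^{*}(g_{1})(H)) = \overline{\psi}(g_{0}, \mathrm{Ad}^{*}(g_{1})(H))$, which exhibits every point of $\mathbb{O}_{H}$ in the image. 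So the top square and the triangle in diagram $(\ref{eq::comglh})$ show $\overline{\psi}$, hence $\psi$, is surjective.

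Next I would treat injectivity of $\psi$. Suppose $\mathrm{Ad}^{*}(g)(\xi) = \mathrm{Ad}^{*}(g')(\xi')$ with $g, g' \in G$ and $\xi, \xi' \in \mathbb{O}_{H}^{L_{1}\ltimes G(\mathbb{C}[z]_{k})_{1}}$; writing $\xi = \mathrm{Ad}^{*}(h f)(H)$, $\xi' = \mathrm{Ad}^{*}(h' f')(H)$ with $h, h' \in L_{1}$ and $f, f' \in G(\mathbb{C}[z]_{k})_{1}$, one gets $(g'h'f')^{-1}(ghf) \in \mathrm{Stab}_{G(\mathbb{C}[z]_{k})}(H)$. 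The key input is Proposition \ref{prop:stabh}, which gives $\mathrm{Stab}_{G(\mathbb{C}[z]_{k})}(H) = \mathrm{Stab}_{G}(H) \ltimes \mathrm{Stab}_{G(\mathbb{C}[z]_{k})_{1}}(H)$, and since $\mathrm{Stab}_{G}(H) \subset L_{k} \subset L_{1}$ and $\mathrm{Stab}_{G(\mathbb{C}[z]_{k})_{1}}(H) \subset L_{1}\ltimes G(\mathbb{C}[z]_{k})_{1}$, the stabilizer is contained in $L_{1}\ltimes G(\mathbb{C}[z]_{k})_{1}$. Projecting the equation to $G = G(\mathbb{C}[z]_{k})/G(\mathbb{C}[z]_{k})_{1}$ shows $g^{-1}g' \in L_{1}$, and then a short computation recovers that $(g,\xi)$ and $(g',\xi')$ lie in the same $L_{1}$-orbit, i.e.\ $[(g,\xi)] = [(g',\xi')]$.

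For the immersion property I would compare dimensions and the differentials. On one hand $\dim \mathbb{O}_{H} = \dim G(\mathbb{C}[z]_{k}) - \dim \mathrm{Stab}_{G(\mathbb{C}[z]_{k})}(H)$; on the other $\dim \bigl(G\times_{L_{1}}\mathbb{O}_{H}^{L_{1}\ltimes G(\mathbb{C}[z]_{k})_{1}}\bigr) = \dim G - \dim L_{1} + \bigl(\dim(L_{1}\ltimes G(\mathbb{C}[z]_{k})_{1}) - \dim \mathrm{Stab}_{L_{1}\ltimes G(\mathbb{C}[z]_{k})_{1}}(H)\bigr)$. Using $\dim(L_{1}\ltimes G(\mathbb{C}[z]_{k})_{1}) = \dim L_{1} + \dim G(\mathbb{C}[z]_{k})_{1} = \dim L_{1} + \dim G(\mathbb{C}[z]_{k}) - \dim G$, and the fact from Proposition \ref{prop:stabh} that $\mathrm{Stab}_{L_{1}\ltimes G(\mathbb{C}[z]_{k})_{1}}(H) = \mathrm{Stab}_{G(\mathbb{C}[z]_{k})}(H)$ (both equal $\mathrm{Stab}_{G}(H)\ltimes \mathrm{Stab}_{G(\mathbb{C}[z]_{k})_{1}}(H)$, the former since $\mathrm{Stab}_{G}(H)\subset L_{1}$), the two dimensions agree. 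It then remains to check that $d\psi$ is injective at one point, say at the class of $(e, H)$; this follows because the tangent space to the source at that point is $(\mathfrak{g}/\mathfrak{l}_{1}) \oplus T_{H}\mathbb{O}_{H}^{L_{1}\ltimes G(\mathbb{C}[z]_{k})_{1}}$, the tangent space to $\mathbb{O}_{H}$ is the image of $\mathrm{ad}^{*}(\mathfrak{g}(\mathbb{C}[z]_{k}))(H)$, and the decomposition $\mathfrak{g}(\mathbb{C}[z]_{k}) = \mathfrak{g} \oplus \mathfrak{g}(\mathbb{C}[z]_{k})_{1}$ together with $\mathfrak{g} = \mathfrak{l}_{1}\oplus(\text{complement})$ matches the two sides; $L_{1}$-equivariance of $\psi$ then propagates this to all points. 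I expect the main obstacle to be bookkeeping the differential computation cleanly — ensuring no tangent directions are double-counted between the base $G/L_{1}$ and the fiber and that the kernel of $d\psi$ is exactly trivial — but the dimension count from Proposition \ref{prop:stabh} makes this a matter of identifying an injective linear map between equidimensional spaces rather than a genuine difficulty. Finally, bijectivity plus local biholomorphism yields that $\psi$ is an isomorphism of complex manifolds.
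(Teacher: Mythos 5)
Your argument is correct and follows essentially the same route as the paper: surjectivity from the semidirect product decomposition $G(\mathbb{C}[z]_{k})=G\ltimes G(\mathbb{C}[z]_{k})_{1}$, and injectivity by feeding the stabilizing element into Proposition \ref{prop:stabh} to conclude that its $G$-component lies in $\mathrm{Stab}_{G}(H)\subset L_{1}$. The paper in fact stops at bijectivity, implicitly using that a holomorphic bijection of complex manifolds is automatically biholomorphic, so your dimension count and differential check are redundant (though harmless and correct, given that $\mathrm{Stab}_{L_{1}\ltimes G(\mathbb{C}[z]_{k})_{1}}(H)=\mathrm{Stab}_{G(\mathbb{C}[z]_{k})}(H)$). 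One small slip: you write $\mathrm{Stab}_{G}(H)\subset L_{k}\subset L_{1}$, but in the paper's conventions the Levi subgroups are nested as $L_{0}\subset L_{1}\subset\cdots\subset L_{k}$, so the middle inclusion is backwards; the fact you actually need, $\mathrm{Stab}_{G}(H)=\mathrm{Stab}_{L_{1}}(H_{\mathrm{res}})\subset L_{1}$, is nonetheless correct and is exactly what Proposition \ref{prop:stabh} provides.
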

\begin{proof}
	Obviously the map  
	$\overline{\psi}\colon G\times \mathbb{O}_{H}^{L_{1}\ltimes G(\mathbb{C}[z]_{k})_{1}}
	\rightarrow \mathbb{O}_{H}$
	is surjective, and thus $\psi$ is surjective as well.
	Then let us see the injectivity.
	Suppose that  
	$(g_{1},\xi_{1}),(g_{2},\xi_{2})\in G\times \mathbb{O}_{H}^{L_{1}\ltimes G(\mathbb{C}[z]_{k})_{1}}$
	are sent to the same image by $\overline{\psi}$, i.e.,
	$\mathrm{Ad}^{*}(g_{1})(\xi_{1})=\mathrm{Ad}^{*}(g_{2})(\xi_{2})$.
	Since
	there exist $(h_{i},f_{i})\in L_{1}\ltimes G(\mathbb{C}[z]_{k})_{1}$, $i=1,2$,
	such that $\mathrm{Ad}^{*}(h_{i}f_{i})(H)=\xi_{i}$,
	we obtain 
	\[	
		H=\mathrm{Ad}^{*}((h_{1}f_{1})^{-1})\circ \mathrm{Ad}^{*}(g_{1}^{-1})\circ
		\mathrm{Ad}^{*}(g_{2})\circ \mathrm{Ad}^{*}(h_{2}f_{2})(H).
	\]
	This implies that 
	\[
		(h_{1}^{-1}g_{1}^{-1}g_{2}h_{2}, \mathrm{Ad}(h_{2}^{-1}g_{2}^{-1}g_{1}h_{1})(f_{1}^{-1})\cdot f_{2})
		\in G\ltimes G(\mathbb{C}[z]_{k})_{1}	
	\]
	stabilizes $H$.
	Then Proposition \ref{prop:stabh}
	shows that  $h_{1}^{-1}g_{1}^{-1}g_{2}h_{2}\in \mathrm{Stab}_{G}(H)\subset L_{1}$.
	Thus we have $h:=g_{1}^{-1}g_{2}\in L_{1}$
	and $(g_{2},\xi_{2})=(g_{1}h,\mathrm{Ad}^{*}(h^{-1})(\xi_{1}))$
	which show the injectivity of $\psi$.
\end{proof}
In the remaining of this section, we shall give a proof 
of the following triangular decomposition theorem of the fiber 
$\mathbb{O}_{H}^{L_{1}\ltimes G(\mathbb{C}[z]_{k})_{1}}$.
\begin{thm}[cf. Theorem 3.6 \cite{HY}, Proposition 4.15 \cite{H2}, and Corollary 3.2 \cite{Yam}]\label{thm:triangdecompfb}
There exists an isomorphism 
	\[
		\Psi\colon \left(\prod_{l=1}^{k}\left(N_{l,l+1}(\mathbb{C}[z]_{l})_{1}\times \mathfrak{u}_{l,l+1}(\mathbb{C}[z^{-1}]_{l-1})\right)\right)
		\times \mathbb{O}_{H_{\mathrm{res}}}^{L_{1}}
		\longrightarrow \mathbb{O}_{H}^{L_{1}\ltimes G(\mathbb{C}[z]_{k})_{1}}
	\]
	as complex manifolds.
\end{thm}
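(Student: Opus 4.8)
The plan is to construct the isomorphism $\Psi$ by peeling off the leading coefficients one at a time, using an induction on the pole order $k$. The key structural input is the triangular (Gauss-type) decomposition of the groups $G(\mathbb{C}[z]_{l})_{1}$ relative to the parabolic subgroups $P_{l,l+1}$: writing $\mathfrak{l}_{l+1}=\mathfrak{n}_{l,l+1}\oplus\mathfrak{l}_{l}\oplus\mathfrak{u}_{l,l+1}$, the graded piece $\mathfrak{g}^{[l]}$ of $\mathfrak{g}(\mathbb{C}[z]_{k})_{1}$ lying over the degree-$l$ part splits accordingly, and by Proposition \ref{prop:stabh} the stabilizer of $H$ in $G(\mathbb{C}[z]_{k})_{1}$ is exactly the product of the $\mathfrak{l}_{l}$-parts. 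So the orbit $\mathbb{O}_{H}^{L_{1}\ltimes G(\mathbb{C}[z]_{k})_{1}}$ should be coordinatized by the \emph{complementary} directions: the $\mathfrak{n}_{l,l+1}$-directions at each level $l$ (which survive in $N_{l,l+1}(\mathbb{C}[z]_{l})_{1}$), the $\mathfrak{u}_{l,l+1}$-directions (which, after acting, show up as a shift of the lower-order coefficients, recorded in $\mathfrak{u}_{l,l+1}(\mathbb{C}[z^{-1}]_{l-1})$), and finally the residual $L_{1}$-orbit $\mathbb{O}_{H_{\mathrm{res}}}^{L_{1}}$ of the residue term.

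Concretely, first I would treat the top coefficient: an element $g\in G(\mathbb{C}[z]_{k})_{1}$ can be written, via Proposition \ref{prop:explicitdesc}, as $e^{X_{k}z^{k}}\cdots e^{X_{1}z}$; acting on $H$, only $X_{1}$ modulo $\mathfrak{l}_{k}$ affects the $z^{-k}$-coefficient, and because $\mathfrak{g}=\mathfrak{l}_{k}\oplus\mathfrak{u}_{k,k+1}\oplus\mathfrak{n}_{k,k+1}$ one can use the $\mathfrak{u}_{k,k+1}$-part of $X_{1}$ to normalize while the $\mathfrak{n}_{k,k+1}$-part parametrizes genuinely new points of the orbit. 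Iterating down the levels $l=k,k-1,\dots,1$ — at each stage the already-fixed higher coefficients constrain the admissible group elements to the Levi $\mathfrak{l}_{l+1}$-directions, inside which $\mathfrak{l}_{l+1}=\mathfrak{n}_{l,l+1}\oplus\mathfrak{l}_{l}\oplus\mathfrak{u}_{l,l+1}$ gives the next splitting — produces the factors $N_{l,l+1}(\mathbb{C}[z]_{l})_{1}\times\mathfrak{u}_{l,l+1}(\mathbb{C}[z^{-1}]_{l-1})$. The $\mathfrak{u}_{l,l+1}(\mathbb{C}[z^{-1}]_{l-1})$ factor arises because the unused $\mathfrak{u}$-directions in the group act on $H$ by translating the coefficients of $z^{-1},\dots,z^{-l}$ by an arbitrary element of $\mathfrak{u}_{l,l+1}$ in each of those $l$ slots, i.e. by an element of $\mathfrak{u}_{l,l+1}\otimes\mathbb{C}[z^{-1}]_{l-1}$. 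After all irregular coefficients are accounted for, what remains is the freedom in the residue, i.e. the $L_{1}$-orbit $\mathbb{O}_{H_{\mathrm{res}}}^{L_{1}}$, giving the last factor. Defining $\Psi$ as the composite of these successive actions and checking that it is a morphism is routine; bijectivity follows by reversing the normalization procedure step by step (each step is the analysis already carried out in the proof of Proposition \ref{prop:stabh}), and biholomorphy follows since all maps involved are built from $\exp$, $\mathrm{Ad}^{*}$, and the polynomial product decompositions, all of which are biholomorphic by Proposition \ref{prop:explicitdesc}.

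For the induction it is cleanest to set up the statement for a canonical form of pole order $k$ and reduce to pole order $k-1$: the quotient map $G(\mathbb{C}[z]_{k})_{1}\to G(\mathbb{C}[z]_{k-1})_{1}$ and the truncation $H\mapsto \bar H$ (dropping the $z^{-(k+1)}$ term, i.e. $H_{k}$) fit into a fibration whose fiber over $\mathbb{O}_{\bar H}^{L_{1}\ltimes G(\mathbb{C}[z]_{k-1})_{1}}$ is governed by the level-$k$ data $N_{k,k+1}(\mathbb{C}[z]_{k})_{1}\times\mathfrak{u}_{k,k+1}(\mathbb{C}[z^{-1}]_{k-1})$; one then invokes the inductive hypothesis on $\bar H$, whose spectral sequence $\Pi_{k-1}\supset\cdots\supset\Pi_{0}$ and residue $H_{\mathrm{res}}$ are unchanged. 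A subtle point to verify here is that the splitting $\mathfrak{g}^{[l]}=\mathfrak{n}_{l,l+1}^{[l]}\oplus\mathfrak{l}_{l}^{[l]}\oplus\mathfrak{u}_{l,l+1}^{[l]}$ is compatible with the group multiplication in the sense needed for the product decomposition to hold at the group level, not just at the level of Lie algebras — this uses that $N_{l,l+1}$, $L_{l}$, $U_{l,l+1}$ generate a parabolic and the Campbell–Baker–Hausdorff argument of Proposition \ref{prop:explicitdesc}.

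\textbf{Main obstacle.} The hard part will be bookkeeping the interaction between different graded levels: acting with a group element $e^{X_{l}z^{l}}$ affects not only the coefficient of $z^{-(k-l+1)}$ but, through the $\mathrm{ad}$-exponential, all lower coefficients, and one must show that after the top-down normalization these "lower-order corrections" do not spoil the independence of the parameters, i.e. that the map $\Psi$ so constructed is injective with the claimed domain. This is precisely the content (in inverse form) of the iterative argument in the proof of Proposition \ref{prop:stabh}, so the obstacle is really one of organizing that argument into a clean statement about the orbit rather than about the stabilizer; once the bookkeeping is set up with the $\mathbb{C}[z^{-1}]_{l-1}$-valued $\mathfrak{u}$-parameters recording exactly the residual lower-order freedom, the verification becomes mechanical.
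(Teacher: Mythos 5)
Your overall strategy coincides with the paper's: peel off one level at a time using the triangular decomposition $\mathfrak{l}_{l+1}=\mathfrak{n}_{l,l+1}\oplus\mathfrak{l}_{l}\oplus\mathfrak{u}_{l,l+1}$, with Proposition \ref{prop:stabh} pinning down the stabilizer, the $N_{l,l+1}(\mathbb{C}[z]_{l})_{1}$-factor coming from an LU-type decomposition of the group and the $\mathfrak{u}_{l,l+1}(\mathbb{C}[z^{-1}]_{l-1})$-factor recording how the unipotent radical translates the lower-order coefficients (these are exactly the maps $\phi$, $a_{H^{[l]}}$ and $X$ of Propositions \ref{prop:NP} and \ref{prop:lu}). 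The picture in your second paragraph --- at each stage the remaining freedom is constrained to the Levi $\mathfrak{l}_{l+1}$-directions of the \emph{original} $H$ --- is the correct one.

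The inductive framework you then set up, however, would fail as stated. You truncate $H\mapsto\bar H=H^{[k-1]}$ by dropping $H_{k}$, fibre the orbit over $\mathbb{O}_{\bar H}^{L_{1}\ltimes G(\mathbb{C}[z]_{k-1})_{1}}$, and invoke the inductive hypothesis for $\bar H$, claiming its spectral sequence $\Pi_{k-1}\supset\cdots\supset\Pi_{0}$ is unchanged. Two things go wrong. First, the spectral data of $\bar H$ computed from scratch is \emph{not} that of $H$: for $H$ one has $\Pi_{j}=\{\alpha\in\Pi\mid\alpha(H_{k})=\cdots=\alpha(H_{j})=0\}$, whereas for $\bar H$ the condition $\alpha(H_{k})=0$ disappears. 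For instance, if $H_{k}$ is regular and $H_{k-1}=0$, then every $\Pi_{j}=\emptyset$ for $H$, while $\bar H$ has $\bar\Pi_{k-1}=\Pi$; the inductive hypothesis for $\bar H$ then delivers a decomposition with entirely different factors $N$ and $\mathfrak{u}$, which cannot be assembled into the asserted one for $H$. Second, after the level-$k$ data is split off, the residual base is not the full orbit $\mathbb{O}_{\bar H}^{L_{1}\ltimes G(\mathbb{C}[z]_{k-1})_{1}}$ but only the orbit of $H^{[k-1]}$ under $L_{1}\ltimes L_{k}(\mathbb{C}[z]_{k-1})_{1}$, with $L_{k}$ the Levi determined by $H_{k}$: the $\mathfrak{n}_{k,k+1}$- and $\mathfrak{u}_{k,k+1}$-directions have been consumed and must not reappear at lower levels. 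The repair is to induct on the statement for $\mathbb{O}_{H^{[l]}}^{L_{1}\ltimes L_{l+1}(\mathbb{C}[z]_{l})_{1}}$ with all Levis taken from the original $H$ (so that $\Pi_{j}=\{\alpha\in\Pi_{l+1}\mid\alpha(H_{l})=\cdots=\alpha(H_{j})=0\}$ for $j\le l$ genuinely is unchanged inside $L_{l+1}$), using the truncation isomorphism $\mathbb{O}_{H^{[l]}}^{L_{1}\ltimes L_{l}(\mathbb{C}[z]_{l})_{1}}\cong\mathbb{O}_{H^{[l-1]}}^{L_{1}\ltimes L_{l}(\mathbb{C}[z]_{l-1})_{1}}$ of Proposition \ref{prop:smallorbit} after each splitting; this is precisely how the paper organizes the descent.
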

Here for the notations $N_{l,l+1}$ and $\mathfrak{n}_{l,l+1}$, see Section \ref{sec:spectral}.
\subsection{Decompositions of $G(\mathbb{C}[z]_{l})_{1}$}
Recall that $G(\mathbb{C}[z]_{l})_{1}$ is a connected and simply connected nilpotent 
Lie group and moreover it has the grading with respect to the degrees of $z^{m}$.
Therefore $G(\mathbb{C}[z]_{l})_{1}$ admits the following decomposition property.
\begin{prop}\label{prop:LU}
	Let $\mathfrak{h}_{1}$, $\mathfrak{h}_{2}$, and $\mathfrak{l}$ be Lie subalgebras of $\mathfrak{g}$,
	which satisfy 
	$\mathfrak{l}=\mathfrak{h}_{1}\oplus \mathfrak{h}_{2}$.
	Let us define the subgroups of $G(\mathbb{C}[z]_{l})_{1}$ by 
	$H_{i}(\mathbb{C}[z]_{l})_{1}:=\mathrm{exp}(\mathfrak{h}_{i}(\mathbb{C}[z]_{l})_{1})$
	for $i=1,2$, and 
	$L(\mathbb{C}[z]_{l})_{1}:=\mathrm{exp}(\mathfrak{l}(\mathbb{C}[z]_{l})_{1})$.
	Then we obtain the following decomposition,
	\[
		L(\mathbb{C}[z]_{l})_{1}=H_{1}(\mathbb{C}[z]_{l})_{1}\cdot H_{2}(\mathbb{C}[z]_{l})_{1}.
	\]
\end{prop}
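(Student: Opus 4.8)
The plan is to exploit the grading $\mathfrak{g}(\mathbb{C}[z]_l)_1=\bigoplus_{i=1}^{l}\mathfrak{g}^{[i]}$ recorded above, together with the fact (Proposition \ref{prop:explicitdesc}) that $\exp$ is a biholomorphism on the nilpotent Lie algebra $\mathfrak{l}(\mathbb{C}[z]_l)_1$. First I would set $\mathfrak{h}_j^{[i]}:=\{Xz^{i}\mid X\in\mathfrak{h}_j\}$; since $\mathfrak{l}=\mathfrak{h}_1\oplus\mathfrak{h}_2$ as vector spaces, one gets $\mathfrak{l}^{[i]}=\mathfrak{h}_1^{[i]}\oplus\mathfrak{h}_2^{[i]}$ for every $i$, hence $\mathfrak{l}(\mathbb{C}[z]_l)_1=\mathfrak{h}_1(\mathbb{C}[z]_l)_1\oplus\mathfrak{h}_2(\mathbb{C}[z]_l)_1$ as vector spaces, and $H_j(\mathbb{C}[z]_l)_1=\exp(\mathfrak{h}_j(\mathbb{C}[z]_l)_1)$ is a closed connected subgroup of the simply connected nilpotent group $L(\mathbb{C}[z]_l)_1$. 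Because $[\mathfrak{l}^{[i]},\mathfrak{l}^{[i']}]\subseteq\mathfrak{l}^{[i+i']}$, the subspaces $\mathfrak{l}(\mathbb{C}[z]_l)_1^{\geq m}:=\bigoplus_{i\geq m}\mathfrak{l}^{[i]}$ are ideals, so $G_{\geq m}:=\exp(\mathfrak{l}(\mathbb{C}[z]_l)_1^{\geq m})$ is a normal subgroup, with $G_{\geq 1}=L(\mathbb{C}[z]_l)_1$ and $G_{\geq l+1}=\{e\}$. The inclusion $H_1(\mathbb{C}[z]_l)_1\cdot H_2(\mathbb{C}[z]_l)_1\subseteq L(\mathbb{C}[z]_l)_1$ being obvious, only surjectivity of the multiplication map needs proof.

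The core of the argument is an induction on $m=1,\ldots,l+1$ of the statement $P(m)$: for every $g\in L(\mathbb{C}[z]_l)_1$ there exist $g_1\in H_1(\mathbb{C}[z]_l)_1$ and $g_2\in H_2(\mathbb{C}[z]_l)_1$ with $g_2^{-1}g_1^{-1}g\in G_{\geq m}$. The case $P(1)$ is trivial ($g_1=g_2=e$), and $P(l+1)$ states precisely that $g=g_1g_2$, which is the proposition. For the inductive step, given $g$ take $g_1,g_2$ from $P(m)$ and write $g_2^{-1}g_1^{-1}g=e^{W}$ with $W\in\mathfrak{l}(\mathbb{C}[z]_l)_1^{\geq m}$; decompose the leading piece $W\equiv A+B\pmod{\mathfrak{l}(\mathbb{C}[z]_l)_1^{\geq m+1}}$ with $A\in\mathfrak{h}_1^{[m]}$ and $B\in\mathfrak{h}_2^{[m]}$. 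Replacing $g_1$ by $g_1e^{A}\in H_1(\mathbb{C}[z]_l)_1$ and $g_2$ by $g_2e^{B}\in H_2(\mathbb{C}[z]_l)_1$, one computes $(g_2e^{B})^{-1}(g_1e^{A})^{-1}g=e^{-B}\,(g_2^{-1}e^{-A}g_2)\,e^{W}$ using $g_1^{-1}g=g_2e^{W}$; since $\mathrm{Ad}(g_2^{-1})(-A)+A\in\mathfrak{l}(\mathbb{C}[z]_l)_1^{\geq m+1}$ and every iterated bracket among elements of $\mathfrak{l}(\mathbb{C}[z]_l)_1^{\geq m}$ lies in $\mathfrak{l}(\mathbb{C}[z]_l)_1^{\geq 2m}\subseteq\mathfrak{l}(\mathbb{C}[z]_l)_1^{\geq m+1}$, the Campbell--Baker--Hausdorff formula gives $(g_2e^{B})^{-1}(g_1e^{A})^{-1}g=e^{Q}$ with $Q\equiv -A-B+W\equiv 0\pmod{\mathfrak{l}(\mathbb{C}[z]_l)_1^{\geq m+1}}$, i.e. $Q\in\mathfrak{l}(\mathbb{C}[z]_l)_1^{\geq m+1}$. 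This establishes $P(m+1)$.

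This argument runs parallel to the proof of Proposition \ref{prop:explicitdesc}, and the only real work is the degree bookkeeping in the inductive step: one must verify that conjugating $e^{-A}$ by $g_2$ and then merging the three exponentials via CBH produces corrections only in $\mathfrak{l}(\mathbb{C}[z]_l)_1^{\geq m+1}$, which follows mechanically from $[\mathfrak{l}^{[i]},\mathfrak{l}^{[i']}]\subseteq\mathfrak{l}^{[i+i']}$ and $m\geq 1$. I expect the main conceptual point, rather than an obstacle, to be the choice of the invariant $P(m)$ with correction factors arranged as $g_2^{-1}g_1^{-1}g$: it is exactly this pattern that allows the new degree-$m$ pieces $e^{A},e^{B}$ to be absorbed into $g_1,g_2$ without requiring $\mathfrak{h}_1$ and $\mathfrak{h}_2$ to normalize one another.
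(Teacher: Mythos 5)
Your proof is correct and follows essentially the same route as the paper's: an induction on the degree filtration of the graded nilpotent Lie algebra $\mathfrak{g}(\mathbb{C}[z]_{l})_{1}$, decomposing the leading coefficient via $\mathfrak{l}=\mathfrak{h}_{1}\oplus\mathfrak{h}_{2}$ and using the Campbell--Baker--Hausdorff formula to push the error into higher degree. The only difference is organizational --- the paper peels factors off the product expansion $e^{X_{l}z^{l}}\cdots e^{X_{1}z}$ and regroups them using commutativity modulo higher order, whereas you maintain the invariant $g_{2}^{-1}g_{1}^{-1}g\in G_{\geq m}$ and absorb corrections by right multiplication --- but the underlying idea is identical.
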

\begin{proof}
	Take $g\in L(\mathbb{C}[z]_{l})_{1}$, and write it as    
	$g=e^{X_{l}z^{l}}\cdots e^{X_{2}z^{2}}e^{X_{1}z}$
	by $X_{i}\in \mathfrak{l}$, $i=1,2,\ldots,l$
	according to Proposition \ref{prop:explicitdesc}.
	First let us decompose 
	$X_{1}=H_{1}^{(1)}+H^{(2)}_{1}$ by $H_{1}^{(i)}\in \mathfrak{h}_{i}$.
	Then the Campbell-Baker-Hausdorff formula shows that
	\[
		e^{X_{1}z}=e^{(H_{1}^{(1)}+H^{(2)}_{1})z}\equiv e^{H_{1}^{(1)}z}e^{H_{1}^{(2)}z}\quad (\mathrm{mod\,}z^{2}).
	\]
	Thus we can write 
	\[
	g\cdot (e^{H_{1}^{(1)}z}e^{H_{1}^{(2)}z})^{-1}=e^{X'_{l}z^{l}}\cdots e^{X'_{2}z^{2}}.
	\]
	Next, we decompose $X'_{2}=H_{2}^{(1)}+H^{(2)}_{2}$ by $H_{2}^{(i)}\in \mathfrak{h}_{i}$
	and we have 
	\[
		e^{X'_{2}z^{2}}=e^{(H_{2}^{(1)}+H^{(2)}_{2})z^{2}}\equiv e^{H_{2}^{(1)}z^{2}}e^{H_{2}^{(2)}z^{2}}\quad (\mathrm{mod\,}z^{3})
	\]
	as well.
	Since $e^{H_{1}^{(i)}z}$ and $e^{H_{2}^{(i')}z^{2}}$ are commutative modulo $z^{3}$
	by the Campbell-Baker-Hausdorff formula,
	we obtain
	\[
	g\cdot (e^{H_{2}^{(1)}z^{2}}e^{H_{1}^{(1)}z}e^{H_{2}^{(2)}z^{2}}e^{H_{1}^{(2)}z})^{-1}=e^{X''_{l}z^{l}}\cdots e^{X''_{3}z^{3}}.
	\] 
	Repeating this procedure, we obtain $h_{i}\in H_{i}(\mathbb{C}[z]_{l})_{1}$, $i=1,2,$
	such that $g=h_{1}h_{2}$.
	
	Let us notice that $H_{1}(\mathbb{C}[z]_{l})_{1}\cap H_{2}(\mathbb{C}[z]_{l})_{1}=\{e\}$
	since $\mathfrak{h}_{1}(\mathbb{C}[z]_{l})_{1}\cap \mathfrak{h}_{2}(\mathbb{C}[z]_{l})_{1}=\{0\}$
	and the exponential map is an analytic diffeomorphism.
	Thus the description $g=h_{1}h_{2}$ is unique.
\end{proof}

\subsection{Filtration of $\mathbb{O}^{L_{1}\ltimes G(\mathbb{C}[z]_{k})_{1}}_{H}$}\label{sec:filtext}
Let us consider the sequence of subgroups 
\[
	L_{1}\ltimes L_{1}(\mathbb{C}[z]_{k})_{1}\subset L_{1}\ltimes L_{2}(\mathbb{C}[z]_{k})_{1}
	\subset \cdots \subset L_{1}\ltimes L_{k+1}(\mathbb{C}[z]_{k})_{1}=L_{1}\ltimes G(\mathbb{C}[z]_{k})_{1} 	
\]
of $L_{1}\ltimes G(\mathbb{C}[z]_{k})_{1}$, and the induced 
filtration 
 of $\mathbb{O}^{L_{1}\ltimes G(\mathbb{C}[z]_{k})_{1}}_{H}$,
\[
	\mathbb{O}_{H}^{L_{1}\ltimes L_{1}(\mathbb{C}[z]_{k})_{1}}
		\subset \mathbb{O}_{H}^{L_{1}\ltimes L_{2}(\mathbb{C}[z]_{k})_{1}}\subset \cdots 
	\subset \mathbb{O}_{H}^{L_{1}\ltimes L_{k+1}(\mathbb{C}[z]_{k})_{1}}=\mathbb{O}^{L_{1}\ltimes G(\mathbb{C}[z]_{k})_{1}}_{H}.
\]
Here we notice that Lemma 3.3 in \cite{Yam} or 
computations carried out in the proof of Proposition \ref{prop:stabh}
shows that 
\[
	\mathbb{O}_{H}^{L_{1}\ltimes L_{1}(\mathbb{C}[z]_{k})_{1}}=H_{\mathrm{irr}}+\mathrm{Ad}^{*}(L_{1})(H_{\mathrm{res}})\cong 
	\mathbb{O}_{H_{\text{res}}}^{L_{1}},
\]
see also Lemma 4.10 in \cite{H2}.

Recall that for a pair of positive integers $0\le l<l'\le k$, there is the  natural projection map
\[
	\pi_{l',l}\colon \mathbb{C}[z]_{l'}\rightarrow \mathbb{C}[z]_{l'}/(\mathfrak{m}_{z}^{(l')})^{l+1}=\mathbb{C}[z]_{l}
\]
where $\mathfrak{m}_{z}^{(l')}=\langle z\rangle_{\mathbb{C}[z]_{l'}}$
is the unique maximal ideal of $\mathbb{C}[z]_{l'}$.
Then we obtain exact sequences of Lie algebras
\[
	0\rightarrow \mathfrak{h}((\mathfrak{m}_{z}^{(l')})^{l+1})
	\rightarrow \mathfrak{h}(\mathbb{C}[z]_{l'})
	\xrightarrow{\pi_{l',l}} \mathfrak{h}(\mathbb{C}[z]_{l})
	\rightarrow 0
\]
and nilpotent Lie groups 
\[
	1\rightarrow \mathrm{exp\,}(\mathfrak{h}((\mathfrak{m}_{z}^{(l')})^{l+1}))	
	\rightarrow H(\mathbb{C}[z]_{l'})_{1}
	\xrightarrow{\pi_{l',l}} H(\mathbb{C}[z]_{l})_{1}
	\rightarrow 1
\]
for  a Lie subgroup $H\subset G$ and its Lie algebra $\mathfrak{h}$. 

For a positive integer $0\le l\le k$, let us define \index[cN]{$H^{[l]}$}
	\[
		H^{[l]}:=
		\left(
			\frac{H_{l}}{z^{l}}+\cdots +H_{\mathrm{res}}
		\right)\frac{1}{z}\in \mathfrak{g}(\mathbb{C}[z^{-1}]_{l}).
	\]
\begin{prop}\label{prop:smallorbit}
	Let us take a pair of positive integers $l,\,l'$ with $0\le l\le l'\le k$.
	Then the projection map $\pi_{l',l}\colon L_{1}\ltimes L_{l+1}(\mathbb{C}[z]_{l'})_{1}
	\rightarrow L_{1}\ltimes L_{l+1}(\mathbb{C}[z]_{l})_{1}$ induces 
	the isomorphism
	\[
		\pi_{l',l}^{\mathbb{O}}\colon \mathbb{O}_{H^{[l']}}^{L_{1}\ltimes L_{l+1}(\mathbb{C}[z]_{l'})_{1}}\overset{\sim}{\longrightarrow} \mathbb{O}_{H^{[l]}}^{L_{1}\ltimes L_{l+1}(\mathbb{C}[z]_{l})_{1}}.
	\]
\end{prop}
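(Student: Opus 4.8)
The plan is to exploit the fact that the top-order coefficients $H_{l+1},\dots,H_{l'}$ of $H$ are invisible both to the group $L_{1}\ltimes L_{l+1}(\mathbb{C}[z]_{l'})_{1}$ and to the truncation $\pi_{l',l}$: they lie in the centre of $\mathfrak{l}_{l+1}$ and contribute only to poles of order strictly greater than $l+1$. So the orbit of $H^{[l']}$ will turn out to be a harmless translate of the orbit of $H^{[l]}$, while the latter will already be defined over $\mathbb{C}[z]_{l}$.

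First I would write $H^{[l']}=H^{[l]}+E$ with $E:=\sum_{j=l+1}^{l'}H_{j}z^{-j-1}$ and check that $E$ is fixed by every element of $L_{1}\ltimes L_{l+1}(\mathbb{C}[z]_{l'})_{1}$. Since $L_{1}\subseteq L_{l+1}$ and each $H_{j}$ with $l+1\le j\le k$ is central in $\mathfrak{l}_{l+1}$ by the very definition of $\mathfrak{l}_{l+1}$, the constant factor $L_{1}$ fixes $E$; for the factor $L_{l+1}(\mathbb{C}[z]_{l'})_{1}$ I would invoke the coadjoint-action formula from the proof of Proposition~\ref{prop:stabh}, by which $\mathrm{ad}^{*}(Xz^{m})$ sends the coefficient of $z^{-j-1}$ in an element of $\mathfrak{g}(\mathbb{C}[z^{-1}]_{l'})$ to a multiple of $[X,H_{j}]z^{m-j-1}$ when $j\ge m$; this vanishes whenever $X\in\mathfrak{l}_{l+1}$ and $j\ge l+1$. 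Hence $\mathrm{Ad}^{*}(g)(H^{[l']})=\mathrm{Ad}^{*}(g)(H^{[l]})+E$ for all such $g$, so $\mathbb{O}_{H^{[l']}}^{L_{1}\ltimes L_{l+1}(\mathbb{C}[z]_{l'})_{1}}$ is the translate by $E$ of $\mathbb{O}_{H^{[l]}}^{L_{1}\ltimes L_{l+1}(\mathbb{C}[z]_{l'})_{1}}$, and since $\pi_{l',l}$ annihilates $E$, the map $\pi_{l',l}^{\mathbb{O}}$ amounts to ``subtract $E$, then restrict $\pi_{l',l}$ to the orbit of $H^{[l]}$''.

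Next I would analyse the orbit of $H^{[l]}$ under $L_{1}\ltimes L_{l+1}(\mathbb{C}[z]_{l'})_{1}$. The element $H^{[l]}$ has pole order at most $l+1$, and neither $\mathrm{Ad}^{*}(g_{0})$ with $g_{0}\in L_{1}$ nor $\mathrm{Ad}^{*}(e^{Xz^{m}})$ with $m\ge 1$ raises pole order; so this orbit lies in $\mathfrak{g}(\mathbb{C}[z^{-1}]_{l})\subseteq\mathfrak{g}(\mathbb{C}[z^{-1}]_{l'})$, on which $\pi_{l',l}$ acts as the identity. By the same degree count, $\mathrm{ad}^{*}(Xz^{m})$ with $m\ge l+1$ annihilates every element of pole order at most $l+1$, so the kernel $\exp(\mathfrak{l}_{l+1}((\mathfrak{m}_{z}^{(l')})^{l+1}))$ of the surjection $\pi_{l',l}\colon L_{1}\ltimes L_{l+1}(\mathbb{C}[z]_{l'})_{1}\to L_{1}\ltimes L_{l+1}(\mathbb{C}[z]_{l})_{1}$ (being generated by such $e^{Xz^{m}}$) fixes every point of the orbit. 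Thus the action on this orbit factors through the quotient group, which both shows that $\pi_{l',l}$ restricts to a bijection $\mathbb{O}_{H^{[l]}}^{L_{1}\ltimes L_{l+1}(\mathbb{C}[z]_{l'})_{1}}\to\mathbb{O}_{H^{[l]}}^{L_{1}\ltimes L_{l+1}(\mathbb{C}[z]_{l})_{1}}$ and exhibits both sides as homogeneous spaces of $L_{1}\ltimes L_{l+1}(\mathbb{C}[z]_{l})_{1}$; a bijective equivariant holomorphic map between homogeneous spaces of one complex Lie group is biholomorphic, so this restriction is an isomorphism of complex manifolds. Combining with the previous paragraph, $\pi_{l',l}^{\mathbb{O}}$ is a biholomorphism.

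The argument is short; the only point requiring genuine care is the bookkeeping of the index ranges in the coadjoint-action formula and of which $H_{j}$ lie in which $\mathfrak{l}_{i}$ — exactly the computation already carried out in the proof of Proposition~\ref{prop:stabh}. I do not expect a serious obstacle, because centrality of the top-order coefficients in $\mathfrak{l}_{l+1}$ trivialises precisely the part of the group and of the dual space that distinguish level $l'$ from level $l$. If one prefers, one may instead reduce to the case $l'=l+1$ by composing projections, but the direct argument handles arbitrary $l'\ge l$.
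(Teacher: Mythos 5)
Your proof is correct, and it reaches the conclusion by a somewhat different mechanism than the paper. The paper's proof applies Proposition \ref{prop:stabh} to write down $\mathrm{Stab}_{L_{1}\ltimes L_{l+1}(\mathbb{C}[z]_{l'})_{1}}(H^{[l']})$ and $\mathrm{Stab}_{L_{1}\ltimes L_{l+1}(\mathbb{C}[z]_{l})_{1}}(H^{[l]})$ explicitly, observes that the two resulting short exact sequences share the kernel $\mathrm{exp}(\mathfrak{l}_{l+1}\otimes_{\mathbb{C}}(\mathfrak{m}_{z}^{(l')})^{l+1})$, and finishes with a diagram chase identifying the two quotients. You bypass the stabilizer computation altogether: you split $H^{[l']}=H^{[l]}+E$ and verify (i) that $E$ is fixed by the whole group, using centrality of $H_{l+1},\dots,H_{l'}$ in $\mathfrak{l}_{l+1}$ together with the index bookkeeping in the coadjoint-action formula, and (ii) that the kernel of $\pi_{l',l}$ acts trivially on everything of pole order at most $l+1$. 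Both routes ultimately rest on the same two facts, namely $\mathrm{Ker}\,\pi_{l',l}\subset\mathrm{Stab}(H^{[l']})$ and $\pi_{l',l}(\mathrm{Stab}(H^{[l']}))=\mathrm{Stab}(H^{[l]})$, but you derive them from the invariance of the top-order tail $E$ rather than from the full inductive stabilizer formula, which makes the argument slightly more self-contained (only the coadjoint-action formula, not the whole of Proposition \ref{prop:stabh}, is needed) at the cost of an extra homogeneous-space lemma at the end; the paper's diagram-chase packaging delivers the biholomorphy directly from the two quotient presentations. Your explicit remark that the two orbits have literally the same stabilizer in the larger group (since $E$ is invariant, $g$ fixes $H^{[l']}$ if and only if it fixes $H^{[l]}$) is a clean simplification that the paper leaves implicit.
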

\begin{proof}
	From
	Proposition \ref{prop:stabh}, we have 
	\begin{align*}
		&\mathrm{Stab}_{L_{l+1}(\mathbb{C}[z]_{l'})_{1}}(H^{[l']})=\\
		&\quad\left\{
			e^{X_{l'}z^{l'}}\cdots e^{X_{2}z^{2}}e^{X_{1}z}\in G(\mathbb{C}[z]_{l'})_{1}\,\middle|\,
			X_{i}\in \mathfrak{l}_{i}\cap \mathfrak{l}_{l+1},\ i=1,\ldots,l'
		\right\},\\
		&\mathrm{Stab}_{L_{1}\ltimes L_{l+1}(\mathbb{C}[z]_{l'})_{1}}(H^{[l']})
		=\mathrm{Stab}_{L_{1}}(H_{\mathrm{res}})\ltimes \mathrm{Stab}_{L_{l+1}(\mathbb{C}[z]_{l'})_{1}}(H^{[l']}).
	\end{align*}	
	Here we note that the above condition 
	$X_{i}\in \mathfrak{l}_{i}\cap \mathfrak{l}_{l+1},\ i=1,\ldots,l'$
	is equivalent to 
	\[
		X_{i}\in \begin{cases}
			\mathfrak{l}_{i}& i=1,2,\ldots,l,\\
			\mathfrak{l}_{l+1}&i=l+1,\ldots,l'
		\end{cases}.
	\]
	Since we moreover have 
	\begin{align*}
			\mathrm{Stab}_{L_{l+1}(\mathbb{C}[z]_{l})_{1}}(H^{[l]})&=
			\left\{
				e^{X_{l}z^{l}}\cdots e^{X_{2}z^{2}}e^{X_{1}z}\in G(\mathbb{C}[z]_{l})_{1}\,\middle|\,
				X_{i}\in \mathfrak{l}_{i},\ i=1,\ldots,l
			\right\},\\
		\mathrm{Stab}_{L_{1}\ltimes L_{l+1}(\mathbb{C}[z]_{l})_{1}}(H^{[l]})
		&=\mathrm{Stab}_{L_{1}}(H_{\mathrm{res}})\ltimes \mathrm{Stab}_{L_{l+1}(\mathbb{C}[z]_{l})_{1}}(H^{[l]}),
	\end{align*}	
	we obtain the commutative diagram
	\scriptsize
	\[
		\begin{tikzcd}
			1\arrow[r]& \mathrm{exp}(\mathfrak{l}_{l+1}\otimes_{\mathbb{C}}(\mathfrak{m}_{z}^{(l')})^{l+1})
	\arrow[r]& 	L_{1}\ltimes L_{l+1}(\mathbb{C}[z]_{l'})_{1}
	\arrow[r,"\pi_{l',l}"]& L_{1}\ltimes L_{l+1}(\mathbb{C}[z]_{l})_{1}
	\arrow[r]&1\\
	1\arrow[r] &\mathrm{exp}(\mathfrak{l}_{l+1}\otimes_{\mathbb{C}}(\mathfrak{m}_{z}^{(l')})^{l+1})
	\arrow[r]\arrow[u,equal]& 	\mathrm{Stab}_{L_{1}\ltimes L_{l+1}(\mathbb{C}[z]_{l'})_{1}}(H^{[l']})
	\arrow[r,"\pi_{l',l}"]\arrow[u,hookrightarrow]& \mathrm{Stab}_{L_{1}\ltimes L_{l+1}(\mathbb{C}[z]_{l})_{1}}(H^{[l]})\arrow[u,hookrightarrow]
	\arrow[r]&1
		\end{tikzcd}
	\]
	\normalsize
	whose vertical arrows are natural inclusions and horizontal sequences are exact.
	By a diagram chase shows $\pi_{l,l'}$ induces the bijective holomorphic map 
	\begin{multline*}
	\mathbb{O}_{H^{[l']}}^{L_{1}\ltimes L_{l+1}(\mathbb{C}[z]_{l'})_{1}} = L_{l+1}(\mathbb{C}[z]_{l'})_{1}/\mathrm{Stab}_{L_{1}\ltimes L_{l+1}(\mathbb{C}[z]_{l'})_{1}}(H^{[l']})
		\overset{\pi_{l',l}}{\longrightarrow}\\
	 L_{1}\ltimes L_{l+1}(\mathbb{C}[z]_{l})_{1}/\mathrm{Stab}_{L_{1}\ltimes L_{l+1}(\mathbb{C}[z]_{l})_{1}}(H^{[l]})=\mathbb{O}_{H^{[l]}}^{L_{1}\ltimes L_{l+1}(\mathbb{C}[z]_{l})_{1}}.
	\end{multline*} 
\end{proof}
By the isomorphisms in this proposition, the above filtration 
is replaced by 
\[
	\mathbb{O}_{H_{\mathrm{res}}}^{L_{1}\ltimes L_{1}(\mathbb{C}[z]_{0})_{1}}
		\subset \mathbb{O}_{H^{[1]}}^{L_{1}\ltimes L_{2}(\mathbb{C}[z]_{1})_{1}}\subset \cdots 
	\subset\mathbb{O}_{H^{[l]}}^{L_{1}\ltimes L_{l+1}(\mathbb{C}[z]_{l})_{1}}
	\subset \cdots \subset 
	\mathbb{O}^{L_{1}\ltimes G(\mathbb{C}[z]_{k})_{1}}_{H}.
\]

\subsection{LU-type decomposition and Levi-type decomposition}\label{sec:ludecomp}
For each pair $1\le l< l'\le k$
of positive integers,
Proposition \ref{prop:LU} gives us the Levi-type decomposition
\[
	P_{l,l'}(\mathbb{C}[z]_{l})_{1}=L_{l}(\mathbb{C}[z]_{l})_{1}\ltimes U_{l,l'}(\mathbb{C}[z]_{l})_{1}
\]
and the LU-type 
decomposition 
\[
	L_{l'}(\mathbb{C}[z]_{l})_{1}=N_{l,l'}(\mathbb{C}[z]_{l})_{1}\cdot P_{l,l'}(\mathbb{C}[z]_{l})_{1}.
\]
Since $L_{1}$ normalizes $N_{l,l'}(\mathbb{C}[z]_{l})_{1}$, we further obtain 
\begin{align*}
	L_{1}\ltimes 	L_{l'}(\mathbb{C}[z]_{l})_{1}
	&=N_{l,l'}(\mathbb{C}[z]_{l})_{1}\cdot \left(L_{1}\ltimes P_{l,l'}(\mathbb{C}[z]_{l})_{1}\right)\\
	&=N_{l,l'}(\mathbb{C}[z]_{l})_{1}\cdot \left((L_{1}\ltimes L_{l}(\mathbb{C}[z]_{l})_{1})\ltimes U_{l,l'}(\mathbb{C}[z]_{l})_{1}\right).
\end{align*}

\begin{prop}\label{prop:NP}
	Let us take a pair $1\le l\le l'\le k$
	of positive integers.
	The multiplication map
	\[
		\overline{\phi}\colon N_{l,l'}(\mathbb{C}[z]_{l})_{1}\times 	(L_{1}\ltimes P_{l,l'}(\mathbb{C}[z]_{l})_{1})
		\longrightarrow L_{1}\ltimes 	L_{l'}(\mathbb{C}[z]_{l})_{1}
	\] 
	induces the 
	isomorphism 
	\[
		\phi\colon N_{l,l'}(\mathbb{C}[z]_{l})_{1}\times \mathbb{O}_{H^{[l]}}^{L_{1}\ltimes P_{l,l'}(\mathbb{C}[z]_{l})_{1}}
		\longrightarrow \mathbb{O}_{H^{[l]}}^{L_{1}\ltimes L_{l'}(\mathbb{C}[z]_{l})_{1}}
	\]
	as complex manifolds.
\end{prop}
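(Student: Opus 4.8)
The plan is to realize $\phi$ as a bijective holomorphic map admitting a holomorphic inverse, following the same mechanism as the proofs of Propositions \ref{prop:pullback2form} and \ref{prop:smallorbit}: the LU-type decomposition of the acting group descends to a product decomposition of the orbit, once one knows that $\mathrm{Stab}(H^{[l]})$ sits inside the ``parabolic'' factor. First I would record two facts about the decomposition $L_{1}\ltimes L_{l'}(\mathbb{C}[z]_{l})_{1}=N_{l,l'}(\mathbb{C}[z]_{l})_{1}\cdot(L_{1}\ltimes P_{l,l'}(\mathbb{C}[z]_{l})_{1})$ recalled just before the statement. It is \emph{unique}: any element of $N_{l,l'}(\mathbb{C}[z]_{l})_{1}\cap(L_{1}\ltimes P_{l,l'}(\mathbb{C}[z]_{l})_{1})$ lies in $G(\mathbb{C}[z]_{l})_{1}$, so its image in $L_{1}$ under the projection $G(\mathbb{C}[z]_{l})\to G$ is trivial, whence it already lies in $P_{l,l'}(\mathbb{C}[z]_{l})_{1}$ and hence is trivial by the uniqueness clause at the end of Proposition \ref{prop:LU}. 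Moreover the associated factorization map $g\mapsto(n,p)$ is holomorphic, by the same graded Campbell--Baker--Hausdorff bookkeeping used in Propositions \ref{prop:explicitdesc} and \ref{prop:LU}.

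The crucial input is the containment $\mathrm{Stab}_{L_{1}\ltimes L_{l'}(\mathbb{C}[z]_{l})_{1}}(H^{[l]})\subset L_{1}\ltimes P_{l,l'}(\mathbb{C}[z]_{l})_{1}$. Exactly as in the proof of Proposition \ref{prop:smallorbit}, this stabilizer splits as $\mathrm{Stab}_{L_{1}}(H_{\mathrm{res}})\ltimes\mathrm{Stab}_{L_{l'}(\mathbb{C}[z]_{l})_{1}}(H^{[l]})$, where the first factor obviously lies in the $L_{1}$-part, and Proposition \ref{prop:stabh} describes the second factor as the set of products $e^{X_{l}z^{l}}\cdots e^{X_{1}z}$ with $X_{i}\in\mathfrak{l}_{i}$; since $\mathfrak{l}_{i}\subset\mathfrak{l}_{l}\subset\mathfrak{p}_{l,l'}$ for $i\le l$, this factor lies in $P_{l,l'}(\mathbb{C}[z]_{l})_{1}$, which gives the claim.

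Granting these, surjectivity of $\phi$ is immediate: for any $g\in L_{1}\ltimes L_{l'}(\mathbb{C}[z]_{l})_{1}$, factoring $g=np$ gives $\mathrm{Ad}^{*}(g)(H^{[l]})=\mathrm{Ad}^{*}(n)\big(\mathrm{Ad}^{*}(p)(H^{[l]})\big)=\phi\big(n,\mathrm{Ad}^{*}(p)(H^{[l]})\big)$. For injectivity, suppose $\mathrm{Ad}^{*}(n_{1})\mathrm{Ad}^{*}(p_{1})(H^{[l]})=\mathrm{Ad}^{*}(n_{2})\mathrm{Ad}^{*}(p_{2})(H^{[l]})$ with $n_{i}\in N_{l,l'}(\mathbb{C}[z]_{l})_{1}$ and $p_{i}\in L_{1}\ltimes P_{l,l'}(\mathbb{C}[z]_{l})_{1}$; then $p_{2}^{-1}n_{2}^{-1}n_{1}p_{1}$ stabilizes $H^{[l]}$, hence lies in $L_{1}\ltimes P_{l,l'}(\mathbb{C}[z]_{l})_{1}$, so that $n_{2}^{-1}n_{1}\in N_{l,l'}(\mathbb{C}[z]_{l})_{1}\cap(L_{1}\ltimes P_{l,l'}(\mathbb{C}[z]_{l})_{1})=\{e\}$, forcing $n_{1}=n_{2}$ and then $\mathrm{Ad}^{*}(p_{1})(H^{[l]})=\mathrm{Ad}^{*}(p_{2})(H^{[l]})$. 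To upgrade $\phi$ to an isomorphism of complex manifolds I would exhibit its inverse explicitly, $\mathrm{Ad}^{*}(g)(H^{[l]})\mapsto\big(n,\mathrm{Ad}^{*}(p)(H^{[l]})\big)$ with $g=np$ the factorization; this is well defined by the same stabilizer containment and holomorphic because the factorization map is.

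The main obstacle is the stabilizer containment of the second step: it requires transporting the explicit stabilizer description of Proposition \ref{prop:stabh}, stated there for $H\in\mathfrak{g}(\mathbb{C}[z]_{k})^{*}$ and the full group $G(\mathbb{C}[z]_{k})$, to the truncation $H^{[l]}$ inside the Levi subgroup $L_{l'}(\mathbb{C}[z]_{l})_{1}$. This is precisely the computation already carried out in the proof of Proposition \ref{prop:smallorbit}, so in practice it can simply be quoted, but it is the one place where the combinatorics of the filtration $\mathfrak{l}_{1}\subset\cdots\subset\mathfrak{l}_{l'}$ and of the parabolic $\mathfrak{p}_{l,l'}$ genuinely enters.
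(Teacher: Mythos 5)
Your proposal is correct and follows essentially the same route as the paper: surjectivity from the LU-type factorization $L_{1}\ltimes L_{l'}(\mathbb{C}[z]_{l})_{1}=N_{l,l'}(\mathbb{C}[z]_{l})_{1}\cdot(L_{1}\ltimes P_{l,l'}(\mathbb{C}[z]_{l})_{1})$, and injectivity from the containment $\mathrm{Stab}_{L_{1}\ltimes L_{l'}(\mathbb{C}[z]_{l})_{1}}(H^{[l]})\subset L_{1}\ltimes P_{l,l'}(\mathbb{C}[z]_{l})_{1}$ supplied by Proposition \ref{prop:stabh}. The paper phrases the conclusion as a diagram chase through the quotient maps rather than via an explicit inverse, but the mathematical content is identical.
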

\begin{proof}
	Let 
	\[
	\pi_{1} \colon L_{1}\ltimes 	L_{l'}(\mathbb{C}[z]_{l})_{1}
	\rightarrow \mathbb{O}_{H^{[l]}}^{L_{1}\ltimes L_{l'}(\mathbb{C}[z]_{l})_{1}},
	\pi_{2}\colon 
	L_{1}\ltimes P_{l,l'}(\mathbb{C}[z]_{l})_{1}
	\rightarrow 
	\mathbb{O}_{H^{[l]}}^{L_{1}\ltimes P_{l,l'}(\mathbb{C}[z]_{l})_{1}}
	\]
	be natural quotient maps.
	Since we obviously have 
	\[
	\mathrm{Stab}_{L_{1}\ltimes 	P_{l,l'}(\mathbb{C}[z]_{l})_{1}}(H^{[l]})\subset \mathrm{Stab}_{L_{1}\ltimes 	L_{l'}(\mathbb{C}[z]_{l})_{1}}(H^{[l]}),
	\]
	the composition map 
	\[
	\pi_{1}\circ \overline{\phi}\colon 
	N_{l,l'}(\mathbb{C}[z]_{l})_{1}\times 	(L_{1}\ltimes P_{l,l'}(\mathbb{C}[z]_{l})_{1})
	\rightarrow \mathbb{O}_{H^{[l]}}^{L_{1}\ltimes L_{l'}(\mathbb{C}[z]_{l})_{1}}
	\]
	factors through the 
	quotient map 
	\[
	N_{l,l'}(\mathbb{C}[z]_{l})_{1}\times 	(L_{1}\ltimes P_{l,l'}(\mathbb{C}[z]_{l})_{1})
	\xrightarrow{\mathrm{id}_{N_{l,l'}(\mathbb{C}[z]_{l})_{1}}\times \pi_{2}}
	N_{l,l'}(\mathbb{C}[z]_{l})_{1}\times \mathbb{O}_{H^{[l]}}^{L_{1}\ltimes P_{l,l'}(\mathbb{C}[z]_{l})_{1}}.
	\]
	Namely,
	we obtain the commutative diagram 
	\[
		\begin{tikzcd}
			N_{l,l'}(\mathbb{C}[z]_{l})_{1}\times 	(L_{1}\ltimes P_{l,l'}(\mathbb{C}[z]_{l})_{1})
			\arrow[r,"\overline{\phi}"]\arrow[d,"\mathrm{id}_{N_{l,l'}(\mathbb{C}[z]_{l})_{1}}\times \pi_{2}"]&
			L_{1}\ltimes 	L_{l'}(\mathbb{C}[z]_{l})_{1}\arrow[d,"\pi_{1}"]\\
			N_{l,l'}(\mathbb{C}[z]_{l})_{1}\times \mathbb{O}_{H^{[l]}}^{L_{1}\ltimes P_{l,l'}(\mathbb{C}[z]_{l})_{1}}
			\arrow[r,"\phi"]
			&\mathbb{O}_{H^{[l]}}^{L_{1}\ltimes L_{l'}(\mathbb{C}[z]_{l})_{1}}
		\end{tikzcd}.	
	\]
	Since $\bar{\phi}$ is isomorphism, $\phi$ is surjective.
	Let us notice that Proposition \ref{prop:stabh}
	shows that 
	$\mathrm{Stab}_{L_{1}\ltimes 	L_{l'}(\mathbb{C}[z]_{l})_{1}}(H^{[l]})
	\subset  L_{1}\ltimes P_{l,l'}(\mathbb{C}[z]_{l})_{1}.$
	Thus the above inclusion becomes the equation
	$\mathrm{Stab}_{L_{1}\ltimes P_{l,l'}(\mathbb{C}[z]_{l})_{1}}(H^{[l]})= \mathrm{Stab}_{L_{1}\ltimes 	L_{l'}(\mathbb{C}[z]_{l})_{1}}(H^{[l]})$
	which shows the injectivity of $\phi$ by a 
	diagram chase.
\end{proof}

Let us focus on the case $l'=l+1$ and consider  
the multiplication map 
\[
	\chi\colon (L_{1}\ltimes L_{l}(\mathbb{C}[z]_{l})_{1})\times 	U_{l,l+1}(\mathbb{C}[z]_{l})_{1}\longrightarrow L_{1}\ltimes P_{l,l+1}(\mathbb{C}[z]_{l})_{1}.
\]
Also
we 
 define the \index[cN]{$a_{H^{[l]}}$}
map
\[
	\begin{array}{cccc}
	a_{H^{[l]}}\colon &(L_{1}\ltimes L_{l}(\mathbb{C}[z]_{l})_{1})\times U_{l,l+1}(\mathbb{C}[z]_{l})_{1}&\longrightarrow &(L_{1}\ltimes L_{l}(\mathbb{C}[z]_{l})_{1})
	\times \mathfrak{u}_{l,l+1}(\mathbb{C}[z^{-1}]_{l-1})\\
	&(g,u)&\longmapsto &(g,\mathrm{Ad}^{*}(gu)(H^{[l]})-\mathrm{Ad}^{*}(g)(H^{[l]}))
	\end{array}.
\]\normalsize
\begin{lem}\label{lem:prep2}
	The above map $a_{H^{[l]}}$ is an isomorphism as complex manifolds.
\end{lem}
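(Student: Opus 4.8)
The plan is to peel off the two parts of $a_{H^{[l]}}$ that are manifestly harmless and reduce the statement to the invertibility of a single polynomial map between a unipotent group and its tangent space, which can then be solved by a triangular recursion. First I would note that the first coordinate of $a_{H^{[l]}}$ is the identity and that $\mathrm{Ad}^{*}(gu)(H^{[l]})-\mathrm{Ad}^{*}(g)(H^{[l]})=\mathrm{Ad}^{*}(g)\bigl(\mathrm{Ad}^{*}(u)(H^{[l]})-H^{[l]}\bigr)$, so that $a_{H^{[l]}}$ factors as $\tau\circ(\mathrm{id}\times b)$, where $b\colon U_{l,l+1}(\mathbb{C}[z]_{l})_{1}\to\mathfrak{u}_{l,l+1}(\mathbb{C}[z^{-1}]_{l-1})$ is $b(u)=\mathrm{Ad}^{*}(u)(H^{[l]})-H^{[l]}$ and $\tau(g,v)=(g,\mathrm{Ad}^{*}(g)v)$. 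The map $\tau$ is a biholomorphism of $(L_{1}\ltimes L_{l}(\mathbb{C}[z]_{l})_{1})\times\mathfrak{u}_{l,l+1}(\mathbb{C}[z^{-1}]_{l-1})$ with inverse $(g,v)\mapsto(g,\mathrm{Ad}^{*}(g^{-1})v)$; the only thing to verify here is that $L_{1}\ltimes L_{l}(\mathbb{C}[z]_{l})_{1}$ preserves $\mathfrak{u}_{l,l+1}(\mathbb{C}[z^{-1}]_{l-1})$, which holds because $L_{l}$ normalizes the subalgebra $\mathfrak{u}_{l,l+1}$ and $\mathbb{C}[z^{-1}]_{l-1}=\mathfrak{m}_{z}^{(l)}\cdot\mathbb{C}[z^{-1}]_{l}$ is a $\mathbb{C}[z]_{l}$-submodule of $\mathbb{C}[z^{-1}]_{l}$. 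Thus everything comes down to showing that $b$ is a biholomorphism.

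Next I would introduce the holomorphic coordinates $u=e^{z^{l}X_{l}}\cdots e^{zX_{1}}$, $X_{i}\in\mathfrak{u}_{l,l+1}$, on $U_{l,l+1}(\mathbb{C}[z]_{l})_{1}$ (Propositions \ref{prop:explicitdesc} and \ref{prop:LU}) and write $\mathrm{Ad}^{*}(u)(H^{[l]})-H^{[l]}=\sum_{i=0}^{l}(C_{i}-B_{i})z^{-i-1}$, where the coefficients $C_{i}-B_{i}$ are given by the coadjoint formula recalled in the proof of Proposition \ref{prop:stabh}, with $B_{i}=H_{i}$ for $i\ge1$, $B_{0}=H_{\mathrm{res}}$, and $B_{i}=0$ for $i>l$. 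Two observations make this manageable: every nonzero summand of $C_{i}-B_{i}$ applies at least one $\mathrm{ad}(X_{j})$ with $X_{j}\in\mathfrak{u}_{l,l+1}$ to an element of $\mathfrak{l}_{l}$, and since $\mathfrak{l}_{l}$ normalizes $\mathfrak{u}_{l,l+1}$ the whole summand lies in $\mathfrak{u}_{l,l+1}$; and the $z^{-l-1}$-coefficient $C_{l}-B_{l}$ vanishes because each index occurring there exceeds $l$, where $B$ is zero. Hence $b$ maps into $\mathfrak{u}_{l,l+1}(\mathbb{C}[z^{-1}]_{l-1})$, and it is polynomial in the $X_{i}$, hence holomorphic.

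To invert $b$ I would isolate, for each $i$ with $0\le i\le l-1$, the unique summand above involving $X_{l-i}$: the constraint $(l-i)l_{l-i}\le l-i$ forces $l_{l-i}=1$ and all other multiplicities to vanish, so that summand is $\mathrm{ad}(X_{l-i})(B_{l})=[X_{l-i},H_{l}]$, while all remaining summands involve only $X_{1},\dots,X_{l-i-1}$. Therefore
\[
C_{i}-B_{i}=[X_{l-i},H_{l}]+F_{i}(X_{1},\dots,X_{l-i-1})
\]
with $F_{i}$ an $\mathfrak{u}_{l,l+1}$-valued polynomial in the earlier variables. Since $\mathfrak{l}_{l}=\{X\in\mathfrak{l}_{l+1}\mid[X,H_{l}]=0\}$ (Section \ref{sec:spectral}), $\mathrm{ad}(H_{l})$ annihilates no root space of $\mathfrak{u}_{l,l+1}$ and restricts to a linear automorphism of $\mathfrak{u}_{l,l+1}$, so given $\eta=\sum_{i=0}^{l-1}\eta_{i}z^{-i-1}\in\mathfrak{u}_{l,l+1}(\mathbb{C}[z^{-1}]_{l-1})$ one solves the equations $C_{i}-B_{i}=\eta_{i}$ recursively for $i=l-1,l-2,\dots,0$, determining $X_{1},\dots,X_{l}$ in turn by $X_{l-i}=-\mathrm{ad}(H_{l})^{-1}\bigl(\eta_{i}-F_{i}(X_{1},\dots,X_{l-i-1})\bigr)$; this is a holomorphic inverse to $b$. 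Consequently $b$, and therefore $a_{H^{[l]}}=\tau\circ(\mathrm{id}\times b)$, is an isomorphism of complex manifolds.

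I expect the genuine content to be the bookkeeping in the expansion of $\mathrm{Ad}^{*}(u)(H^{[l]})-H^{[l]}$: establishing both that it lands in $\mathfrak{u}_{l,l+1}(\mathbb{C}[z^{-1}]_{l-1})$ and that its $z^{-i-1}$-coefficient is $[X_{l-i},H_{l}]$ plus a function of the strictly earlier $X_{j}$'s. These are combinatorial consequences of the explicit coadjoint formula together with the root-space description of the chain $\mathfrak{l}_{l}\subset\mathfrak{l}_{l+1}$, but that is where the argument really lives; the reduction to $b$ and the recursion around it are routine.
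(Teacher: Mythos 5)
Your proof is correct and follows essentially the same route as the paper's: both rest on the explicit coadjoint expansion of $\mathrm{Ad}^{*}(u)(H^{[l]})$ and on the invertibility of $\mathrm{ad}(H_{l})$ on $\mathfrak{u}_{l,l+1}$, solved by the same triangular recursion in $X_{1},\dots,X_{l}$. The only (cosmetic) differences are that you first split off the $g$-component via $\tau$ and produce an explicit two-sided inverse, whereas the paper keeps the map whole and proves injectivity separately by appealing to the stabilizer computation of Proposition \ref{prop:stabh}.
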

\begin{proof}
	Take
	$u=e^{z^{l}X_{l}}\cdots e^{zX_{1}}\in U_{l,l+1}(\mathbb{C}[z]_{l})_{1}$
	and $B=\sum_{i=0}^{l}B_{i}z^{-i-1}\in \mathfrak{l}_{l}(\mathbb{C}[z^{-1}]_{l})$.
	Then 
	we have $\mathrm{Ad}^{*}(u)(B)=\sum_{i=0}^{l}C_{i}z^{-i-1}$ with 
	\begin{align*}
	&C_{i}
	=\\
	&B_{i}+\sum_{\substack{m_{1},m_{2},\ldots,m_{l}\ge 0\\
	(m_{1},m_{2},\ldots,m_{l})\neq \mathbf{0}}}
	\frac{\mathrm{ad}(X_{l})^{m_{l}}\circ \cdots\circ \mathrm{ad}(X_{2})^{m_{2}}\circ \mathrm{ad}(X_{1})^{m_{1}}
	(B_{i+m_{1}+2m_{2}+\cdots +lm_{l}})}
	{m_{1}!m_{2}!\cdots m_{l}!}.
	\end{align*}
	Then we notice that that the second term in the right hand side is contained in $\mathfrak{u}_{l,l+1}$,
	and $C_{l}=B_{l}$.
	Thus $a_{H^{[l]}}$ is well-defined.
	
	Next we show that $a_{H^{[l]}}$ is injective.
	Suppose that $(g,u)$ and $(g',u')$ have the same image by $a_{H^{[l]}}$.
	Then obviously $g=g'$ and also
	$\mathrm{Ad}^{*}(gu)(H^{[l]})=\mathrm{Ad}^{*}(gu')(H^{[l]})$.
	Thus $u^{-1}u'\in U_{l,l+1}(\mathbb{C}[z]_{l})_{1}$ belongs to $\mathrm{Stab}_{L_{l+1}(\mathbb{C}[z]_{l})_{1}}(H^{[l]})$ which means 
	that $u^{-1}u'=e$ by Proposition \ref{prop:stabh}, i.e., $u=u'$ as desired.

	Next we show that $a_{H^{[l]}}$ is surjective.
	Recall that we have $\mathfrak{u}_{l,l+1}=\mathrm{Ker\,}(\mathrm{ad}(H_{m})|_{\mathfrak{u}_{l,l+1}})\oplus \mathrm{ad}(H_{m})(\mathfrak{u}_{l,l+1})
	=\mathrm{ad}(H_{m})(\mathfrak{u}_{l,l+1})$ for $m=0,1,\ldots,l$,
	since $\mathrm{Ker\,}(\mathrm{ad}(H_{m})|_{\mathfrak{u}_{l,l+1}})=0$.
	Then we can see that 
	the above equations for given $C_{l}=B_{l}$ and 
	$C_{i}\in B_{i}+\mathfrak{u}_{l,l+1}$ for $i=0,1,\ldots,l-1$
	can be solved inductively from $i=l-1$ to $i=0$ and we find $u\in U_{l,l+1}(\mathbb{C}[z]_{l})_{1}$
	such that $\mathrm{Ad}^{*}(u)(B)=\sum_{i=0}^{l}C_{i}z^{-i-1}$.
	In particular when $B\in \mathbb{O}_{H}^{L_{1}\ltimes L_{l}(\mathbb{C}[z]_{l})_{1}}$,
	there exists $g\in L_{1}\ltimes L_{l}(\mathbb{C}[z]_{l})_{1}$
	such that $\mathrm{Ad}^{*}(g)(H^{[l]})=B$.
	Thus in this case we have 
	\[
		\mathrm{Ad}^{*}(g(g^{-1}ug))(H^{[l]})=\mathrm{Ad}^{*}(ug)(H^{[l]})=\mathrm{Ad}^{*}(u)(B)=C.
	\]
	This shows that $a_{H^{[l]}}$ is surjective.
\end{proof}
\begin{prop}\label{prop:lu}
	The map \index[cN]{$X$}
	\[
		\begin{array}{cccc}	
		X\colon 	&\mathbb{O}_{H^{[l]}}^{L_{1}\ltimes L_{l}(\mathbb{C}[z]_{l})_{1}}\times \mathfrak{u}_{l,l+1}(\mathbb{C}[z^{-1}]_{l-1})&
		\longrightarrow &\mathbb{O}_{H^{[l]}}^{L_{1}\ltimes P_{l,l+1}(\mathbb{C}[z]_{l})_{1}}\\
		&(\eta,\xi)&\longmapsto 
		&\eta+\xi
		\end{array}
	\]
	is an isomorphism of complex manifolds.
\end{prop}
\begin{proof}
	One can check that the following diagram is commutative,
	\[
		\begin{tikzcd}
			(L_{1}\ltimes L_{l}(\mathbb{C}[z]_{l})_{1})\times U_{l,l+1}(\mathbb{C}[z]_{l})_{1}
			\arrow[r,"\chi"]
			\arrow[d,"a_{H^{[l]}}"]
			&L_{1}\ltimes P_{l,l+1}(\mathbb{C}[z]_{l})_{1}
			\arrow[dd,"\pi_{P}"]\\
			(L_{1}\ltimes L_{l}(\mathbb{C}[z]_{l})_{1})
	\times \mathfrak{u}_{l,l+1}(\mathbb{C}[z^{-1}]_{l-1})
			\arrow[d,"\pi_{L}\times \mathrm{id}"]&\\
			\mathbb{O}_{H^{[l]}}^{L_{1}\ltimes L_{l}(\mathbb{C}[z]_{l})_{1}}\times \mathfrak{u}_{l,l+1}(\mathbb{C}[z^{-1}]_{l-1})
			\arrow[r,"X"]
			&\mathbb{O}_{H^{[l]}}^{L_{1}\ltimes P_{l,l+1}(\mathbb{C}[z]_{l})_{1}}
		\end{tikzcd}.
	\]
	Here 
	\[\pi_{L}\colon L_{1}\ltimes L_{l}(\mathbb{C}[z]_{l})_{1}\rightarrow \mathbb{O}_{H^{[l]}}^{L_{1}\ltimes L_{l}(\mathbb{C}[z]_{l})_{1}},
	\pi_{P}\colon L_{1}\ltimes P_{l,l+1}(\mathbb{C}[z]_{l})_{1}\rightarrow \mathbb{O}_{H^{[l]}}^{L_{1}\ltimes P_{l,l+1}(\mathbb{C}[z]_{l})_{1}}
	\]
	are natural projections.
	Since $\chi$ and $a_{H^{[l]}}$ are isomorphisms, and $\pi_{L}\times \mathrm{id}$ and $\pi_{P}$ are surjective,
	it follows that $X$ is surjective.

	Equip the action of $L_{1}\ltimes L_{l}(\mathbb{C}[z]_{l})_{1}$
	with $(L_{1}\ltimes L_{l}(\mathbb{C}[z]_{l})_{1})
	\times \mathfrak{u}_{l,l+1}(\mathbb{C}[z^{-1}]_{l-1})$ by the multiplication on the first component.
	Then the isomorphism $a_{H^{[l]}}$ becomes $(L_{1}\ltimes L_{l}(\mathbb{C}[z]_{l})_{1})_{H}$-equivariant
	and also 
	the isomorphism $\chi\circ a_{H^{[l]}}^{-1}$ becomes $(L_{1}\ltimes L_{l}(\mathbb{C}[z]_{l})_{1})_{H}$-equivariant.
	Since $(L_{1}\ltimes L_{l}(\mathbb{C}[z]_{l})_{1})_{H}=(L_{1}\ltimes P_{l,l+1}(\mathbb{C}[z]_{l})_{1})_{H}$,
	we can see that $X$ is injective.
\end{proof}
\subsection{A proof of Theorem \ref{thm:triangdecompfb}}\label{sec:proofthm}
Now we are ready to give a proof of Theorem \ref{thm:triangdecompfb}.
\begin{proof}[Proof of Theorem \ref{thm:triangdecompfb}]
	As a combination of Propositions \ref{prop:NP} and \ref{prop:lu},
	we obtain isomorphisms
	\[
		\overline{\psi}_{l}\colon N_{l,l+1}(\mathbb{C}[z]_{l})_{1}\times \mathfrak{u}_{l,l+1}(\mathbb{C}[z^{-1}]_{l-1})
		\times \mathbb{O}_{H^{[l]}}^{L_{1}\ltimes L_{l}(\mathbb{C}[z]_{l})_{1}}
		\longrightarrow \mathbb{O}_{H^{[l]}}^{L_{1}\ltimes L_{l+1}(\mathbb{C}[z]_{l})_{1}}
	\]
	for 
	$l=1,2,\ldots,k$.
	Since $\mathbb{O}_{H^{[l]}}^{L_{1}\ltimes L_{l}(\mathbb{C}[z]_{l})_{1}}
	\cong \mathbb{O}_{H^{[l-1]}}^{L_{1}\ltimes L_{l}(\mathbb{C}[z]_{l-1})_{1}}$
	by Proposition \ref{prop:smallorbit},
	above isomorphisms moreover give us following isomorphisms
	\[
		\psi_{l}\colon (N_{l,l+1}(\mathbb{C}[z]_{l})_{1}\times \mathfrak{u}_{l,l+1}(\mathbb{C}[z^{-1}]_{l-1}))
		\times \mathbb{O}_{H^{[l-1]}}^{L_{1}\ltimes L_{l}(\mathbb{C}[z]_{l-1})_{1}}
		\longrightarrow \mathbb{O}_{H^{[l]}}^{L_{1}\ltimes L_{l+1}(\mathbb{C}[z]_{l})_{1}}
	\]
	for $l=1,2,\ldots,k$.
	Therefore, as the combination of these isomorphisms $\psi_{l}$	
	we obtain the desired isomorphism
	\[
		\Psi\colon \left(\prod_{l=1}^{k}\left(N_{l,l+1}(\mathbb{C}[z]_{l})_{1}\times \mathfrak{u}_{l,l+1}(\mathbb{C}[z^{-1}]_{l-1})\right)\right)
		\times \mathbb{O}_{H_{\mathrm{res}}}^{L_{1}}
		\longrightarrow \mathbb{O}_{H}^{L_{1}\ltimes G(\mathbb{C}[z]_{k})_{1}}
	\]
	by recalling $\mathbb{O}_{H^{[0]}}^{L_{1}\ltimes L_{1}(\mathbb{C}[z]_{0})_{1}}=\mathbb{O}_{H_{\mathrm{res}}}^{L_{1}}$.
\end{proof}
\subsection{Triangular decomposition of $\mathbb{O}_{H}$}\label{sec:tridec}
As a consequence of Theorem \ref{thm:triangdecompfb},
we moreover obtain the triangular decomposition of the truncated orbit $\mathbb{O}_{H}$ as follows.

Define an $L_{1}$-action on 
$\left(\prod_{l=1}^{k}(N_{l,l+1}(\mathbb{C}[z]_{l})_{1}\times \mathfrak{u}_{l,l+1}(\mathbb{C}[z^{-1}]_{l-1}))\right)
	\times \mathbb{O}_{H_{\mathrm{res}}}^{L_{1}}$
	as follows,
	\[
	g\cdot ((n_{l,l+1},\xi_{l})_{l=1,\ldots,k},\xi):=
	((\mathrm{Ad}(g)(n_{l}),\mathrm{Ad}^{*}(g)(\xi_{l}))_{l=1,2,\ldots,k},\mathrm{Ad}^{*}(g)(\xi))
	\]
	for $g\in L_{1}$ and $((n_{l},\xi_{l})_{l=1,\ldots,k},\xi)
	\in \left(\prod_{l=1}^{k}(N_{l,l+1}(\mathbb{C}[z]_{l})_{1}\times \mathfrak{u}_{l,l+1}(\mathbb{C}[z^{-1}]_{l-1}))\right)
	\times \mathbb{O}_{H_{\mathrm{res}}}^{L_{1}}$.
Then   
\begin{align*}
	&\mathrm{id}_{G}\times \Psi\colon\\
	&G\times \left(\left(\prod_{l=1}^{k}(N_{l,l+1}(\mathbb{C}[z]_{l})_{1}\times \mathfrak{u}_{l,l+1}(\mathbb{C}[z^{-1}]_{l-1}))\right)
	\times \mathbb{O}_{H_{\mathrm{res}}}^{L_{1}}\right)
	\longrightarrow G\times \mathbb{O}_{H}^{L_{1}\ltimes G(\mathbb{C}[z]_{k})_{1}}
\end{align*}
is $L_{1}$-equivariant isomorphism, and therefore Proposition \ref{prop:pullback2form} shows that 
the map 
\begin{multline*}
	G\times \left(\left(\prod_{l=1}^{k}(N_{l,l+1}(\mathbb{C}[z]_{l})_{1}\times \mathfrak{u}_{l,l+1}(\mathbb{C}[z^{-1}]_{l-1}))\right)
	\times \mathbb{O}_{H_{\mathrm{res}}}^{L_{1}}\right)
	\xrightarrow{\mathrm{id}_{G}\times \Psi} \\
	G\times \mathbb{O}_{H}^{L_{1}\ltimes G(\mathbb{C}[z]_{k})_{1}}
	\xrightarrow{\psi}
	\mathbb{O}_{H}
\end{multline*}
factors through 
the isomorphism 
\[
		L_{1}\Bigg\backslash \left(
			G\times \left(\left(\prod_{l=1}^{k}(N_{l,l+1}(\mathbb{C}[z]_{l})_{1}\times \mathfrak{u}_{l,l+1}(\mathbb{C}[z^{-1}]_{l-1}))\right)
	\times \mathbb{O}_{H_{\mathrm{res}}}^{L_{1}}\right)
		\right)
		\cong \mathbb{O}_{H}.
\] 

According to this triangular decomposition of $\mathbb{O}_{H}$,
let us give a description of the moment map 
\[
	\mu_{\mathbb{O}_{H}\downarrow G}\colon \mathbb{O}_{H}
	\longrightarrow \mathfrak{g}
\]
defined in Section \ref{sec:symporb},
which will play an important role in the latter sections.
Let 
us take 
$\Xi\in \mathbb{O}_{H}$ and 
choose its representative 
$$(g,(n_{l},\nu_{l})_{l=1,\ldots,k},\eta)
\in G\times \prod_{l=1}^{k}\left(
	N_{l,l+1}(\mathbb{C}[z]_{l})_{1}\times 
	\mathfrak{u}_{l,l+1}(\mathbb{C}[z^{-1}]_{l-1})	
	\right)
\times \mathbb{O}_{H_{\mathrm{res}}}^{L_{1}}.$$
Then 
under the injective immersion $\iota_{\mathbb{O}_{H}}\colon \mathbb{O}_{H}\hookrightarrow \mathfrak{g}(\mathbb{C}[z^{-1}]_{k})$,
the image $\iota_{\mathbb{O}_{H}}(\Xi)$ is computed by the following steps.
\\
\noindent
\underline{Step 1}. 
Let us define 
\[
	\iota_{\mathbb{O}_{H}}^{(1)}(\Xi):=
	\mathrm{Ad}^{*}(n_{1})(\nu_{1}+\eta)\in \mathfrak{g}(\mathbb{C}[z^{-1}]_{k}).
\]
\\
\noindent
\underline{Step $l$ for $1<l\le k$}.
Let us define
\[
	\iota_{\mathbb{O}_{H}}^{(l)}(\Xi):=
	\mathrm{Ad}^{*}(n_{l})(\nu_{l}+\iota_{\mathbb{O}_{H}}^{(l-1)}(\Xi))\in \mathfrak{g}(\mathbb{C}[z^{-1}]_{k}).
\] 
\\
Then we obtain 
\[
	\iota_{\mathbb{O}_{H}}(\Xi)=
	\mathrm{Ad}^{*}(g)(\iota_{\mathbb{O}_{H}}^{(k)}(\Xi))\in \mathfrak{g}(\mathbb{C}[z^{-1}]_{k}).
\] 
Therefore, we finally  
obtain the inductive formula for the moment map, 
\[
	\mu_{\mathbb{O}_{H}\downarrow G}(\Xi)=
	\underset{z=0}{\mathrm{res\,}}\circ \mathrm{Ad}^{*}(g)(\iota_{\mathbb{O}_{H}}^{(k)}(\Xi))\in \mathfrak{g}.
\]

Another representative of $\Xi$ leads to the same result. 
Indeed for another representative there exists $h\in L_{1}$
and it is written by 
\[
(gh^{-1},(\mathrm{Ad}(h)(n_{l}),\mathrm{Ad}^{*}(h)(\nu_{l}))_{l=1,\ldots,k},\mathrm{Ad}^{*}(h)(\eta)).
\]
Then we can also inductively define 
\begin{align*}
	\widetilde{\iota}_{\mathbb{O}_{H}}^{(1)}(\Xi)&:=
	\mathrm{Ad}^{*}(\mathrm{Ad}(h)(n_{1}))(\mathrm{Ad}^{*}(h)(\nu_{1})+\mathrm{Ad}^{*}(h)(\eta))\\
	&=
	\mathrm{Ad}^{*}(\mathrm{Ad}(h)(n_{1})h)(\nu_{1}+\eta)\\
	&=\mathrm{Ad}^{*}(h\cdot n_{1})(\nu_{1}+\eta),
\end{align*}
\begin{align*}
	\widetilde{\iota}_{\mathbb{O}_{H}}^{(2)}(\Xi)&:=
	\mathrm{Ad}^{*}(\mathrm{Ad}(h)(n_{2}))(\mathrm{Ad}^{*}(h)(\nu_{2})+\widetilde{\iota}_{\mathbb{O}_{H}}^{(1)}(\Xi))\\
	&=
	\mathrm{Ad}^{*}(\mathrm{Ad}(h)(n_{2}))(\mathrm{Ad}^{*}(h)(\nu_{2})+\mathrm{Ad}^{*}(h\cdot n_{1})(\nu_{1}+\eta))\\
	&=\mathrm{Ad}^{*}(\mathrm{Ad}(h)(n_{2})\cdot h)(\nu_{2}+\mathrm{Ad}^{*}(n_{1})(\nu_{1}+\eta))\\
	&=\mathrm{Ad}^{*}(h\cdot n_{2})(\nu_{2}+\iota_{\mathbb{O}_{H}}^{(1)}(\Xi)),
\end{align*}
\begin{align*}
	\widetilde{\iota}_{\mathbb{O}_{H}}^{(l)}(\Xi)&:=
	\mathrm{Ad}^{*}(\mathrm{Ad}(h)(n_{l}))(\mathrm{Ad}^{*}(h)(\nu_{l})+\widetilde{\iota}_{\mathbb{O}_{H}}^{(l-1)}(\Xi))\\
	&=
	\mathrm{Ad}^{*}(\mathrm{Ad}(h)(n_{l}))(\mathrm{Ad}^{*}(h)(\nu_{l})+\mathrm{Ad}^{*}(h\cdot n_{l-1})(\nu_{l-2}+\iota_{\mathbb{O}_{H}}^{(l-2)}(\Xi)))\\
	&=
	\mathrm{Ad}^{*}(h\cdot n_{l})(\nu_{l}+\iota_{\mathbb{O}_{H}}^{(l-1)}(\Xi))
\end{align*}
for $l\le k$, and finally we obtain 
\begin{align*}
	\mathrm{Ad}^{*}(gh^{-1})(\widetilde{\iota}_{\mathbb{O}_{H}}^{(l)}(\Xi))&=
	\mathrm{Ad}^{*}(gh^{-1})(\mathrm{Ad}^{*}(h\cdot n_{k})(\nu_{k}+\iota_{\mathbb{O}_{H}}^{(k-1)}(\Xi)))\\
	&=
	\mathrm{Ad}^{*}(g)(\mathrm{Ad}^{*}(n_{k})(\nu_{k}+\iota_{\mathbb{O}_{H}}^{(k-1)}(\Xi)))\\
	&=\mathrm{Ad}^{*}(g)(\iota_{\mathbb{O}_{H}}^{(l)}(\Xi))=\iota_{\mathbb{O}_{H}}(\Xi)
\end{align*}
as well.

\subsection[A Zariski open subset of the truncated orbit]{A Zariski open subset of the truncated orbit of a semisimple canonical form}\label{sec:zaropsemi}
In this section, we particularly consider a semisimple canonical form
\[
	H\,dz=\left(\frac{H_{k}}{z^{k}}+\cdots+\frac{H_{1}}{z}+H_{0}\right)\frac{dz}{z}\in
	\mathfrak{g}(\mathbb{C}[z^{-1}]_{k})\,dz,
\]
$H_{i}\in \mathfrak{t}$, $i=0,1,\ldots,k$.
Then we have $\mathrm{Stab}_{G}(H)=L_{0}$ and
$
	\mathrm{Stab}_{G(\mathbb{C}[z]_{k})}(H)=L_{0}\ltimes \mathrm{Stab}_{G(\mathbb{C}[z]_{k})_{1}}(H).	
$
Let us consider the  
Zariski open subset of $G$ defined by\index[cN]{$G_{0}$}
\[	
	G_{0}:=N_{0}\cdot P_{0}
\]
which is the open cell of the Bruhat decomposition of $G$ with respect to $P_{0}$.
Then for a non negative integer $j$,
we can also define the Zariski open subset of $G(\mathbb{C}[z]_{j})$ by \index[cN]{$G_{0}(\mathbb{C}[z]_{j})$}
\[
	G_{0}(\mathbb{C}[z]_{j}):=\{g_{0}\cdot g_{1}\in G\ltimes G(\mathbb{C}[z]_{j})_{1}\mid g_{0}\in G_{0}\}.
\]
Then we can consider the Zariski open subset \index[cN]{$\mathbb{O}^{G_{0}(\mathbb{C}[z]_{k})}_{H}$}
\[
	\mathbb{O}^{G_{0}(\mathbb{C}[z]_{k})}_{H}:=\{
		\mathrm{Ad}^{*}(g)(H)\in \mathfrak{g}(\mathbb{C}[z]_{k})^{*}\mid g\in G_{0}(\mathbb{C}[z]_{k}) 
	\}
\]
of the truncated orbit $\mathbb{O}_{H}$
and we shall show an analogue of Theorem \ref{thm:triangdecompfb} for $\mathbb{O}^{G_{0}(\mathbb{C}[z]_{k})}_{H}$.

Recall that $G_{0}(\mathbb{C}[z]_{k})$ contains $\mathrm{Stab}_{G(\mathbb{C}[z]_{k})}(H)$
and moreover $\mathrm{Stab}_{G(\mathbb{C}[z]_{k})}(H)$ acts on $G_{0}(\mathbb{C}[z]_{k})$
via the multiplication from the right. Then
we have 
\[
	\mathbb{O}^{G_{0}(\mathbb{C}[z]_{k})}_{H}\cong G_{0}(\mathbb{C}[z]_{k})/\mathrm{Stab}_{G(\mathbb{C}[z]_{k})}(H).
\]

\begin{prop}\label{prop:decompzero}
	Let $j$ be a non negative integer.
	We have the following decomposition
\begin{multline*}
	G_{0}(\mathbb{C}[z]_{j})=N_{k}(\mathbb{C}[z]_{j})\cdot N_{k-1,k}(\mathbb{C}[z]_{j})\cdot 
	\cdots N_{0,1}(\mathbb{C}[z]_{j})\cdot L_{0}(\mathbb{C}[z]_{j})\\
	\cdot U_{0,1}(\mathbb{C}[z]_{j})\cdot 
	\cdots \cdot U_{k-1,k}(\mathbb{C}[z]_{j})\cdot U_{k}(\mathbb{C}[z]_{j}).
\end{multline*}
\end{prop}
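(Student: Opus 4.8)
\textbf{Proof proposal for Proposition \ref{prop:decompzero}.}

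The plan is to reduce the statement to two things: the graded structure of $G(\mathbb{C}[z]_{j})_{1}$ established in Proposition \ref{prop:explicitdesc}, and the corresponding decomposition of the reductive group $G$ itself in degree zero. First I would observe that $G_{0}(\mathbb{C}[z]_{j}) = G_{0}\ltimes G(\mathbb{C}[z]_{j})_{1}$ by definition, so it suffices to decompose each factor and then interleave. For the constant part, the Bruhat open cell gives $G_{0}=N_{0}\cdot P_{0}$, and iterating the parabolic filtration $\mathfrak{g}=\mathfrak{l}_{k+1}\supset \cdots \supset \mathfrak{l}_{0}$ together with the decompositions $\mathfrak{l}_{m}=\mathfrak{n}_{l,m}\oplus\mathfrak{l}_{l}\oplus\mathfrak{u}_{l,m}$ from Section \ref{sec:spectral} yields, on the Lie algebra level,
\[
	\mathfrak{g}=\mathfrak{n}_{k}\oplus\mathfrak{n}_{k-1,k}\oplus\cdots\oplus\mathfrak{n}_{0,1}\oplus\mathfrak{l}_{0}\oplus\mathfrak{u}_{0,1}\oplus\cdots\oplus\mathfrak{u}_{k-1,k}\oplus\mathfrak{u}_{k}.
\]
The group-level statement $G_{0}=N_{k}\cdot N_{k-1,k}\cdots N_{0,1}\cdot L_{0}\cdot U_{0,1}\cdots U_{k-1,k}\cdot U_{k}$ then follows from the standard theory of parabolic subgroups (each $N_{l,l+1}$ and $U_{l,l+1}$ is unipotent and the products are over the nilpotent radicals of a descending chain of parabolics), applied inductively.

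Next I would handle $G(\mathbb{C}[z]_{j})_{1}$. Here the key tool is Proposition \ref{prop:LU}, which says that whenever $\mathfrak{l}=\mathfrak{h}_{1}\oplus\mathfrak{h}_{2}$ as Lie subalgebras of $\mathfrak{g}$, one has $L(\mathbb{C}[z]_{j})_{1}=H_{1}(\mathbb{C}[z]_{j})_{1}\cdot H_{2}(\mathbb{C}[z]_{j})_{1}$ with uniqueness. Applying this repeatedly to the direct sum decomposition of $\mathfrak{g}$ displayed above, I obtain
\[
	G(\mathbb{C}[z]_{j})_{1}=N_{k}(\mathbb{C}[z]_{j})_{1}\cdot N_{k-1,k}(\mathbb{C}[z]_{j})_{1}\cdots N_{0,1}(\mathbb{C}[z]_{j})_{1}\cdot L_{0}(\mathbb{C}[z]_{j})_{1}\cdot U_{0,1}(\mathbb{C}[z]_{j})_{1}\cdots U_{k}(\mathbb{C}[z]_{j})_{1}.
\]
Here I should be a little careful: Proposition \ref{prop:LU} is stated for two summands, so the multi-factor version requires an induction on the number of summands, splitting off one factor at a time and using that the remaining factors span a Lie subalgebra (which they do, being built from root spaces of a common $\mathfrak{t}$ forming a closed subsystem). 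The semidirect product structure $N_{l,l+1}(\mathbb{C}[z]_{j})=N_{l,l+1}\ltimes N_{l,l+1}(\mathbb{C}[z]_{j})_{1}$ and likewise for $U$, $L_{0}$, then lets me merge the constant factor $g_{0}\in G_{0}$ with the degree-$\geq 1$ factors: writing $g_{0}=n_{k}^{0}\cdots n_{0,1}^{0}\ell_{0}^{0}u_{0,1}^{0}\cdots u_{k}^{0}$ and $g_{1}=n_{k}^{1}\cdots u_{k}^{1}$, I commute pieces past one another. The commutation is harmless because $N_{l,l+1}$ normalizes $N_{l',l'+1}(\mathbb{C}[z]_{j})_{1}$ for the appropriate ordering, and more generally the products $N_{k}(\mathbb{C}[z]_{j})\cdot N_{k-1,k}(\mathbb{C}[z]_{j})\cdots$ are groups (semidirect products of the group and the truncation ideal), so absorbing constant factors into them stays inside the claimed product.

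I expect the main obstacle to be bookkeeping rather than a conceptual difficulty: verifying that the chain of products $N_{k}\cdot N_{k-1,k}\cdots U_{k}$ can be reordered so that each constant factor is absorbed into the matching truncated factor without disturbing the factors already placed. The cleanest way around this is to prove the purely group-theoretic lemma that for a Lie subalgebra $\mathfrak{l}\subset\mathfrak{g}$ with decomposition $\mathfrak{l}=\bigoplus_{i}\mathfrak{h}_{i}$ into subalgebras arranged so that every partial sum $\bigoplus_{i\le m}\mathfrak{h}_{i}$ is a subalgebra normalized appropriately, the analytic group $L(\mathbb{C}[z]_{j})=L\ltimes L(\mathbb{C}[z]_{j})_{1}$ factors as $\prod_{i}H_{i}(\mathbb{C}[z]_{j})$; then specialize to $\mathfrak{l}=\mathfrak{g}$ with the parabolic-radical ordering and intersect with $G_{0}(\mathbb{C}[z]_{j})$, using that $G_{0}$ already carries the degree-zero Bruhat cell factorization. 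With that lemma in hand the proposition is immediate. Uniqueness of the factors, if needed later, follows from the uniqueness in Proposition \ref{prop:LU} together with $N_{l,l+1}(\mathbb{C}[z]_{j})\cap P_{l,l+1}(\mathbb{C}[z]_{j})=\{e\}$, which in turn comes from $\mathfrak{n}_{l,l+1}\cap\mathfrak{p}_{l,l+1}=\{0\}$ and the fact that the exponential map on each truncated nilpotent group is a biholomorphism.
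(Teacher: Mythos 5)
Your ingredients are the same as the paper's: the Bruhat open cell $G_{0}=N_{0}\cdot P_{0}$ and its refinements for the degree-zero part, Proposition \ref{prop:LU} for the degree-$\geq 1$ part, and the semidirect product $G_{0}(\mathbb{C}[z]_{j})=G_{0}\ltimes G(\mathbb{C}[z]_{j})_{1}$. The step that does not survive scrutiny as you first describe it is the interleaving. After fully factoring $g_{0}=n_{k}^{0}\cdots u_{k}^{0}$ and $g_{1}=n_{k}^{1}\cdots u_{k}^{1}$, you must move, for instance, $u_{k}^{0}\in U_{k}$ past $n_{k}^{1}\in N_{k}(\mathbb{C}[z]_{j})_{1}$, and $U_{k}$ does \emph{not} normalize $N_{k}(\mathbb{C}[z]_{j})_{1}$: for $\alpha\in\Phi^{+}\setminus\Phi_{k}$ one has $[\mathfrak{g}_{\alpha},\mathfrak{g}_{-\alpha}]\subset\mathfrak{t}\subset\mathfrak{l}_{k}$, so $[\mathfrak{u}_{k},\mathfrak{n}_{k}]\not\subset\mathfrak{n}_{k}$. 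Your normalization claim holds only among the $N$-type factors (and separately among the $U$-type factors); it fails for every cross term between the $U$- or $L_{0}$-block of the constant part and the $N$-block of the truncated part. The conjugate $u_{k}^{0}\,n_{k}^{1}\,(u_{k}^{0})^{-1}$ only lands in the full normal subgroup $G(\mathbb{C}[z]_{j})_{1}$ and must be re-decomposed from scratch, which is exactly the bookkeeping you flag as the main obstacle; ``the commutation is harmless'' is therefore not a proof.

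The paper's argument avoids this by never separating the constant part into its fine factors before treating the truncated part: writing $g=g_{0}g_{1}$ with $g_{0}=n_{0}p_{0}$, it sets $g=n_{0}\,(p_{0}g_{1}p_{0}^{-1})\,p_{0}$, uses only the normality of the whole $G(\mathbb{C}[z]_{j})_{1}$ so that the conjugate stays there, applies Proposition \ref{prop:LU} once to split it as an element of $N_{0}(\mathbb{C}[z]_{j})_{1}\cdot P_{0}(\mathbb{C}[z]_{j})_{1}$, and absorbs the pieces into $N_{0}(\mathbb{C}[z]_{j})$ and $P_{0}(\mathbb{C}[z]_{j})$; the finer factorizations of $N_{0}(\mathbb{C}[z]_{j})$ and $P_{0}(\mathbb{C}[z]_{j})$ are then obtained by repeating the same two-factor step inside each. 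This is precisely the ``purely group-theoretic lemma'' you propose at the end (split off the leftmost factor; the complementary tail is again a subalgebra --- here the chain of opposite parabolics and parabolics), so your fallback does close the gap. But the proof of that lemma is the conjugation-into-a-normal-subgroup argument, not the commutation argument, and your write-up should lead with it rather than treat it as an optional clean-up.
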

\begin{proof}
	If $j=0$, this follows from the decompositions of $N_{0}$ and $P_{0}=L_{0}\cdot U_{0}.$
	Let us consider the case $j\ge 1$.
	Let us take $g\in G_{0}(\mathbb{C}[z]_{j})$
	and write $g=g_{0}g_{1}$ by $g_{0}\in G_{0}$ and $g_{1}\in G(\mathbb{C}[z]_{j})_{1}$.
	Then along the decomposition $G_{0}=N_{0}\cdot P_{0}$
	we can write $g_{0}=n_{0}p_{0}$
	and thus we have $g=n_{0}g'_{1}p_{0}$
	with $g'_{1}:=p_{0}g_{1}p_{0}^{-1}\in G(\mathbb{C}[z]_{j})_{1}$. 
	Proposition \ref{prop:LU}
	gives us the decomposition 
	$g'_{1}=n_{1}p_{1}$
	by $n_{1}\in N_{0}(\mathbb{C}[z]_{j})_{1}$
	and $p_{1}\in P_{0}(\mathbb{C}[z]_{j})_{1}$.
	By putting $n:=n_{0}n_{1}\in N_{0}(\mathbb{C}[z]_{j})$ and 
	$p:=p_{0}(p_{0}^{-1}p_{1}p_{0})\in P_{0}(\mathbb{C}[z]_{j})$
	we obtain the decomposition 
	$g=np$.
	Namely we obtain that 
	the multiplication map 
	$N_{0}(\mathbb{C}[z]_{k})\times P_{0}(\mathbb{C}[z]_{j})
	\ni (n,p)\mapsto np\in G_{0}(\mathbb{C}[z]_{j})$
	is an isomorphism as complex manifolds, 
	where the injectivity comes from the fact
	$N_{0}(\mathbb{C}[z]_{j})\cap P_{0}(\mathbb{C}[z]_{j})=\{e\}$.

	Similarly from the decompositions 
	$N_{0}=N_{k}\cdot N_{k-1,k}\cdot \cdots N_{0,1}$
	and $P_{0}=L_{0}\cdot U_{0,1}\cdot \cdots U_{k-1,k}\cdot U_{k}$
	together with Proposition \ref{prop:LU},
	we obtain the decompositions 
	\begin{align*}
		N_{0}(\mathbb{C}[z]_{j})&=N_{k}(\mathbb{C}[z]_{j})\cdot N_{k-1,k}(\mathbb{C}[z]_{j})\cdot 
		\cdots N_{0,1}(\mathbb{C}[z]_{j}),\\
		P_{0}(\mathbb{C}[z]_{j})&=L_{0}(\mathbb{C}[z]_{j})
		\cdot U_{0,1}(\mathbb{C}[z]_{j})\cdot 
		\cdots \cdot U_{k-1,k}(\mathbb{C}[z]_{j})\cdot U_{k}(\mathbb{C}[z]_{j}).
	\end{align*}
	Thus we obtain the desired decomposition of $G_{0}(\mathbb{C}[z]_{j})$.
\end{proof}
Let us set $L_{l}^{G_{0}}:=L_{l}\cap G_{0}$\index[cN]{$L_{l}^{G_{0}}$}
and define 
\[
	L_{l}^{G_{0}}(\mathbb{C}[z]_{j}):=\{g_{0}\cdot g_{1}\in L_{l}\ltimes L_{l}(\mathbb{C}[z]_{j})_{1}
	\mid g_{0}\in L_{l}^{G_{0}}\}
\]
for each $0\le l\le k+1$.
Also define \label{cN}{$P_{l,m}^{G_{0}}(\mathbb{C}[z]_{j})$}
\begin{multline*}
	P_{l,m}^{G_{0}}(\mathbb{C}[z]_{j}):=
	N_{l-1,l}(\mathbb{C}[z]_{j})\cdot N_{l-2,l-1}(\mathbb{C}[z]_{j})\cdot 
	\cdots N_{0,1}(\mathbb{C}[z]_{j})\cdot L_{0}(\mathbb{C}[z]_{j})\\
	\cdot U_{0,1}(\mathbb{C}[z]_{j})\cdot 
	\cdots \cdot U_{l'-2,l'-1}(\mathbb{C}[z]_{j})\cdot U_{l'-1,l'}(\mathbb{C}[z]_{j})
\end{multline*}
for each pair $0\le l<l'\le k$.
Then we obtain the Levi-type decomposition
\[
	P_{l,l'}^{G_{0}}(\mathbb{C}[z]_{j})=L_{l}^{G_{0}}(\mathbb{C}[z]_{j})\cdot U_{l,l'}(\mathbb{C}[z]_{j})	
\]
and also
the $\mathrm{LU}$-type decomposition 
\[
	L_{l'}^{G_{0}}(\mathbb{C}[z]_{j})=N_{l,l'}(\mathbb{C}[z]_{j})\cdot P_{l,l'}^{G_{0}}(\mathbb{C}[z]_{j})
\]
by Proposition \ref{prop:decompzero}.

Put $\mathbb{O}^{L_{l}^{G_{0}}(\mathbb{C}[z]_{k})}_{H}:=\{
	\mathrm{Ad}^{*}(g)(H)\in \mathfrak{g}(\mathbb{C}[z]_{k})^{*}\mid g\in L_{l}^{G_{0}}(\mathbb{C}[z]_{l'}) 
\}$, and then we obtain a filtration \index[cN]{$\mathbb{O}^{L_{l}^{G_{0}}(\mathbb{C}[z]_{k})}_{H}$}
\[
	\{H\}=\mathbb{O}^{L_{0}^{G_{0}}(\mathbb{C}[z]_{k})}_{H}
	\subset \mathbb{O}^{L_{1}^{G_{0}}(\mathbb{C}[z]_{k})}_{H}
	\subset \cdots 
	\subset \mathbb{O}^{L_{k+1}^{G_{0}}(\mathbb{C}[z]_{k})}_{H}=\mathbb{O}^{G_{0}(\mathbb{C}[z]_{k})}_{H}
\] 
of $\mathbb{O}^{G_{0}(\mathbb{C}[z]_{k})}_{H}$ as well as that of $\mathbb{O}_{H}^{L_{1}\ltimes G(\mathbb{C}[z]_{k})_{1}}$
in Section \ref{sec:filtext}.

\begin{prop}\label{prop:smallorbit2}
	For each pair of integers $0\le l\le l' \le k$,
	the projection $\pi_{l',l}\colon \mathbb{C}[z]_{l'}\rightarrow \mathbb{C}[z]_{l}$
	induces the isomorphism 
	\[
		\pi^{\mathbb{O}}_{l',l}\colon \mathbb{O}_{H^{[l']}}^{L_{l+1}^{G_{0}}(\mathbb{C}[z]_{l'})}\overset{\sim}{\longrightarrow}
		\mathbb{O}_{H^{[l]}}^{L_{l+1}^{G_{0}}(\mathbb{C}[z]_{l})}.
	\]
\end{prop}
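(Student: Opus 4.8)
The plan is to follow the proof of Proposition \ref{prop:smallorbit} almost verbatim, with the group $L_{1}\ltimes L_{l+1}(\mathbb{C}[z]_{m})_{1}$ there replaced by the Bruhat cell $L_{l+1}^{G_{0}}(\mathbb{C}[z]_{m})$ here. First I would set up the two quotient presentations. Since $H^{[m]}$ is semisimple we have $\mathrm{Stab}_{G}(H^{[m]})=L_{0}$, and Proposition \ref{prop:stabh} gives
\[
	\mathrm{Stab}_{L_{l+1}(\mathbb{C}[z]_{m})}(H^{[m]})=L_{0}\ltimes\left\{e^{X_{m}z^{m}}\cdots e^{X_{1}z}\,\middle|\,X_{i}\in\mathfrak{l}_{i}\cap\mathfrak{l}_{l+1}\right\}
\]
for $m=l,l'$. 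As in the discussion of $\mathbb{O}^{G_{0}(\mathbb{C}[z]_{k})}_{H}$ in Section \ref{sec:zaropsemi}, one checks that $L_{l+1}^{G_{0}}(\mathbb{C}[z]_{m})$ contains this stabilizer and is stable under right multiplication by it (right multiplication by $L_{l+1}(\mathbb{C}[z]_{m})_{1}$ leaves the constant term unchanged, and $L_{0}\subset P_{0,l+1}\subset L_{l+1}^{G_{0}}$), whence $\mathbb{O}_{H^{[m]}}^{L_{l+1}^{G_{0}}(\mathbb{C}[z]_{m})}\cong L_{l+1}^{G_{0}}(\mathbb{C}[z]_{m})/\mathrm{Stab}_{L_{l+1}(\mathbb{C}[z]_{m})}(H^{[m]})$.

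The heart of the argument is to prove two preimage identities for the reduction map $\pi_{l',l}\colon L_{l+1}(\mathbb{C}[z]_{l'})\to L_{l+1}(\mathbb{C}[z]_{l})$, which drops the homogeneous components of degree $>l$ and has kernel $\exp(\mathfrak{l}_{l+1}\otimes_{\mathbb{C}}(\mathfrak{m}_{z}^{(l')})^{l+1})$. The first is $L_{l+1}^{G_{0}}(\mathbb{C}[z]_{l'})=\pi_{l',l}^{-1}(L_{l+1}^{G_{0}}(\mathbb{C}[z]_{l}))$: membership in either cell is a condition on the constant term alone, and $\ker\pi_{l',l}$ lies in $L_{l+1}(\mathbb{C}[z]_{l'})_{1}$. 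The second is $\mathrm{Stab}_{L_{l+1}(\mathbb{C}[z]_{l'})}(H^{[l']})=\pi_{l',l}^{-1}(\mathrm{Stab}_{L_{l+1}(\mathbb{C}[z]_{l})}(H^{[l]}))$. For this I would first show $\ker\pi_{l',l}\subset\mathrm{Stab}(H^{[l']})$: for an element $e^{X_{l'}z^{l'}}\cdots e^{X_{l+1}z^{l+1}}$ with $X_{i}\in\mathfrak{l}_{l+1}$, the coefficient formula from the proof of Proposition \ref{prop:stabh} shows that the only way it could alter $H^{[l']}$ is through terms in which $\mathrm{ad}(X_{i})$ is applied to some $H_{j}$ with $j\ge l+1$, and these vanish since $H_{l+1},\ldots,H_{k}$ lie in the centre of $\mathfrak{l}_{l+1}$. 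Combined with the explicit stabilizer descriptions above — where truncation sends the condition $X_{i}\in\mathfrak{l}_{i}\cap\mathfrak{l}_{l+1}$ for $i\le l$ to $X_{i}\in\mathfrak{l}_{i}$, using $\mathfrak{l}_{i}\cap\mathfrak{l}_{l+1}=\mathfrak{l}_{i}$ since $\Pi_{i}\subset\Pi_{l+1}$ — this shows $\pi_{l',l}$ maps $\mathrm{Stab}(H^{[l']})$ onto $\mathrm{Stab}(H^{[l]})$, and since the kernel is contained in $\mathrm{Stab}(H^{[l']})$ the former is exactly the full preimage of the latter.

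With both preimage identities in hand, the square of reduction maps relating $L_{l+1}^{G_{0}}(\mathbb{C}[z]_{l'})\supset\mathrm{Stab}(H^{[l']})$ to $L_{l+1}^{G_{0}}(\mathbb{C}[z]_{l})\supset\mathrm{Stab}(H^{[l]})$ has all its fibres equal to $\exp(\mathfrak{l}_{l+1}\otimes(\mathfrak{m}_{z}^{(l')})^{l+1})$; so passing to quotients produces a bijective holomorphic map $\pi_{l',l}^{\mathbb{O}}\colon\mathbb{O}_{H^{[l']}}^{L_{l+1}^{G_{0}}(\mathbb{C}[z]_{l'})}\to\mathbb{O}_{H^{[l]}}^{L_{l+1}^{G_{0}}(\mathbb{C}[z]_{l})}$, which is an isomorphism of complex manifolds exactly as in Proposition \ref{prop:smallorbit} (both sides are smooth quotients, and $\pi_{l',l}$ is a locally trivial fibration with unipotent fibre, so dimensions match). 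I expect the only genuine difficulty to be bookkeeping: pinning down the second preimage identity — in particular that $\ker\pi_{l',l}$ stabilizes $H^{[l']}$ — and handling with care the fact that $L_{l+1}^{G_{0}}$ is a Bruhat cell rather than a subgroup, so the quotient presentations and the statement ``$\mathrm{Stab}=\pi_{l',l}^{-1}(\mathrm{Stab})$'' must be read set-theoretically, resting on the right $\mathrm{Stab}$-invariance from Section \ref{sec:zaropsemi} together with the cell decompositions of Proposition \ref{prop:decompzero}, rather than on an exact sequence of groups.
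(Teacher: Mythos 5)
Your proposal is correct and takes exactly the route the paper intends: the published proof of this proposition is literally ``the same argument as in Proposition \ref{prop:smallorbit}'', and you have carried that argument out, correctly isolating the extra points (right $\mathrm{Stab}$-invariance of the Bruhat cell, the two preimage identities under $\pi_{l',l}$, and the fact that $\ker\pi_{l',l}$ stabilizes $H^{[l']}$) needed to transfer it from the subgroup $L_{1}\ltimes L_{l+1}(\mathbb{C}[z]_{m})_{1}$ to the cell $L_{l+1}^{G_{0}}(\mathbb{C}[z]_{m})$. One harmless imprecision: the constant-term stabilizer that equals $L_{0}$ is $\mathrm{Stab}_{L_{l+1}}(H^{[m]})$ rather than $\mathrm{Stab}_{G}(H^{[m]})$ (the latter can be strictly larger when $m<k$, since the fundamental-domain normalization of $H_{0},\ldots,H_{m}$ is only relative to $\Pi_{m+1}$), but the $L_{l+1}$-stabilizer is the one your argument actually uses, so nothing breaks.
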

\begin{proof}
	This directly follows from the same argument as in Proposition \ref{prop:smallorbit}.
\end{proof}
Thus the above filtration becomes as follows,
\[
	\{H_{\mathrm{res}}\}=\mathbb{O}^{L_{1}^{G_{0}}(\mathbb{C}[z]_{0})}_{H^{[0]}}
	\subset \mathbb{O}^{L_{2}^{G_{0}}(\mathbb{C}[z]_{1})}_{H^{[1]}}
	\subset \cdots 
	\subset \mathbb{O}^{L_{l+1}^{G_{0}}(\mathbb{C}[z]_{l})}_{H^{[l]}}
	\subset \cdots 
	\subset \mathbb{O}^{L_{k+1}^{G_{0}}(\mathbb{C}[z]_{k})}_{H^{[k]}}=\mathbb{O}^{G_{0}(\mathbb{C}[z]_{k})}_{H}.
\]

As we saw in Section \ref{sec:ludecomp},
we can obtain decompositions of $\mathbb{O}_{H^{[l]}}^{L_{l}^{G_{0}}(\mathbb{C}[z]_{l})}$
along the Levi-type decomposition of $P_{l,l'}^{G_{0}}(\mathbb{C}[z]_{l})$
and the $\mathrm{LU}$-type one of $L_{l}^{G_{0}}(\mathbb{C}[z]_{l})$ as follows.
\begin{prop}\label{prop:NPII}
	Let us take a pair $0\le l\le l'\le k$
	of positive integers.
	The multiplication map
	\[
		\overline{\phi}\colon N_{l,l'}(\mathbb{C}[z]_{l})\times 	P^{G_{0}}_{l,l'}(\mathbb{C}[z]_{l})
		\longrightarrow L_{l'}^{G_{0}}(\mathbb{C}[z]_{l})
	\] 
	induces the 
	isomorphism 
	\[
		\phi\colon N_{l,l'}(\mathbb{C}[z]_{l})_{1}\times \mathbb{O}_{H^{[l]}}^{P_{l,l'}^{^{G_{0}}}(\mathbb{C}[z]_{l})}
		\longrightarrow \mathbb{O}_{H^{[l]}}^{ L_{l'}^{G_{0}}(\mathbb{C}[z]_{l})}
	\]
	as complex manifolds.
	Here $\mathbb{O}_{H^{[l]}}^{P_{l,l'}^{^{G_{0}}}(\mathbb{C}[z]_{l})}
	:=\{
		\mathrm{Ad}^{*}(g)(H^{[l]})\in \mathfrak{g}(\mathbb{C}[z]_{l})^{*}\mid g\in P_{l,l'}^{^{G_{0}}}(\mathbb{C}[z]_{l})
	\}$.
\end{prop}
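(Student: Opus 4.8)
The plan is to run the argument from the proof of Proposition~\ref{prop:NP} essentially verbatim, working now over the Bruhat cell $L_{l'}^{G_{0}}(\mathbb{C}[z]_{l})$ in place of the subgroup $L_{1}\ltimes L_{l'}(\mathbb{C}[z]_{l})_{1}$. First I would record that, by the $\mathrm{LU}$-type decomposition $L_{l'}^{G_{0}}(\mathbb{C}[z]_{l})=N_{l,l'}(\mathbb{C}[z]_{l})\cdot P_{l,l'}^{G_{0}}(\mathbb{C}[z]_{l})$ obtained above from Proposition~\ref{prop:decompzero}, the multiplication map $\overline{\phi}$ is bijective and in fact an isomorphism of complex manifolds; the product is direct because $N_{l,l'}(\mathbb{C}[z]_{l})\cap P_{l,l'}^{G_{0}}(\mathbb{C}[z]_{l})=\{e\}$, which follows from $\mathfrak{n}_{l,l'}\cap\mathfrak{p}_{l,l'}=0$ at every graded level together with Proposition~\ref{prop:explicitdesc}, exactly as in the proof of Proposition~\ref{prop:decompzero}.

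Next, let $\pi_{1}\colon L_{l'}^{G_{0}}(\mathbb{C}[z]_{l})\to\mathbb{O}^{L_{l'}^{G_{0}}(\mathbb{C}[z]_{l})}_{H^{[l]}}$ and $\pi_{2}\colon P_{l,l'}^{G_{0}}(\mathbb{C}[z]_{l})\to\mathbb{O}^{P_{l,l'}^{G_{0}}(\mathbb{C}[z]_{l})}_{H^{[l]}}$ be the quotient maps. Since $\mathrm{Stab}_{P_{l,l'}^{G_{0}}(\mathbb{C}[z]_{l})}(H^{[l]})\subseteq\mathrm{Stab}_{L_{l'}^{G_{0}}(\mathbb{C}[z]_{l})}(H^{[l]})$, the composite $\pi_{1}\circ\overline{\phi}$ is constant along the fibres of $\mathrm{id}\times\pi_{2}$ and descends to a holomorphic map $\phi$ fitting into a commutative square identical to the one in the proof of Proposition~\ref{prop:NP}; surjectivity of $\phi$ is then immediate from that of $\overline{\phi}$ and $\pi_{1}$. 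For injectivity, the same diagram chase as in Proposition~\ref{prop:NP} reduces matters to the equality $\mathrm{Stab}_{P_{l,l'}^{G_{0}}(\mathbb{C}[z]_{l})}(H^{[l]})=\mathrm{Stab}_{L_{l'}^{G_{0}}(\mathbb{C}[z]_{l})}(H^{[l]})$, that is, to the containment $\mathrm{Stab}_{L_{l'}^{G_{0}}(\mathbb{C}[z]_{l})}(H^{[l]})\subseteq P_{l,l'}^{G_{0}}(\mathbb{C}[z]_{l})$. Finally, a bijective holomorphic map between complex manifolds whose inverse is holomorphic --- obtained by transporting the corresponding property of $\overline{\phi}$ through the submersions $\pi_{1}$ and $\mathrm{id}\times\pi_{2}$ --- is an isomorphism.

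The step I expect to be the main obstacle is exactly this stabiliser containment, which is more delicate than its analogue in Proposition~\ref{prop:NP} because $L_{l'}^{G_{0}}$ is only the big cell of $L_{l'}$ rather than a subgroup. Here I would exploit the semisimplicity of $H^{[l]}$ together with the nested choice of $H_{l},H_{l-1},\dots,H_{0}$ in the fundamental domains of Section~\ref{sec:spectral}, which gives $\mathfrak{l}_{m}=\{X\in\mathfrak{g}\mid [X,H_{j}]=0\text{ for all }j\ge m\}$ and hence $\mathrm{Stab}_{L_{l'}}(H^{[l]})=\bigcap_{j=0}^{l}\mathrm{Stab}_{L_{l'}}(H_{j})=L_{0}$ (when $l'=l+1$, which is the case relevant to the filtration of Section~\ref{sec:zaropsemi}, the index sets $\{0,\dots,l\}$ and $\{l',\dots,k\}$ together exhaust $\{0,\dots,k\}$); since $L_{0}\subseteq L_{l}^{G_{0}}\subseteq P_{l,l'}^{G_{0}}(\mathbb{C}[z]_{l})$, the constant part of the stabiliser lies in $P_{l,l'}^{G_{0}}(\mathbb{C}[z]_{l})$. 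By Proposition~\ref{prop:stabh} the higher-level part of $\mathrm{Stab}_{L_{l'}^{G_{0}}(\mathbb{C}[z]_{l})}(H^{[l]})$ consists of products $e^{X_{l}z^{l}}\cdots e^{X_{1}z}$ with $X_{i}\in\mathfrak{l}_{i}$, and since $\mathfrak{l}_{i}\subseteq\mathfrak{l}_{l}\subseteq\mathfrak{p}_{l,l'}$ each such factor lies in $P_{l,l'}(\mathbb{C}[z]_{l})_{1}\subseteq P_{l,l'}^{G_{0}}(\mathbb{C}[z]_{l})$ (using Proposition~\ref{prop:LU} for $\mathfrak{p}_{l,l'}=\mathfrak{l}_{l}\oplus\mathfrak{u}_{l,l'}$). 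Assembling these via the semidirect-product description of the stabiliser in Proposition~\ref{prop:stabh} gives the containment, and the remaining bookkeeping runs parallel to the proof of Proposition~\ref{prop:NP}.
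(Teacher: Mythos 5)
Your proposal is correct and takes essentially the same route as the paper, whose entire proof of this proposition is the sentence ``This directly follows from the same argument as in Proposition \ref{prop:NP}.'' Your explicit verification of the stabiliser containment $\mathrm{Stab}_{L_{l'}^{G_{0}}(\mathbb{C}[z]_{l})}(H^{[l]})\subseteq P_{l,l'}^{G_{0}}(\mathbb{C}[z]_{l})$ --- using $\mathrm{Stab}_{G}(H)=L_{0}\subseteq L_{l}^{G_{0}}$ in the semisimple setting together with Proposition \ref{prop:stabh} for the positive-degree part --- supplies exactly the detail the paper leaves implicit, and correctly identifies the only point where the big cell requires any extra care.
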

\begin{proof}
	This directly follows from the same argument as in Proposition \ref{prop:NP}.
\end{proof}

\begin{prop}\label{prop:lu2}
	The map 
	\[
		\begin{array}{cccc}	
		X\colon 	&\mathbb{O}_{H^{[l]}}^{L_{l}^{G_{0}}(\mathbb{C}[z]_{l})}\times \mathfrak{u}_{l,l+1}(\mathbb{C}[z^{-1}]_{l})&
		\longrightarrow &\mathbb{O}_{H^{[l]}}^{P^{G_{0}}_{l,l+1}(\mathbb{C}[z]_{l})}\\
		&(\eta,\xi)&\longmapsto 
		&\eta+\xi
		\end{array}
	\]
	is an isomorphism of complex manifolds.
\end{prop}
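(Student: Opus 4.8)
The plan is to run the argument in exact parallel with the proof of Proposition \ref{prop:lu}, substituting everywhere the Bruhat–cell data of Section \ref{sec:zaropsemi}: $L_{1}\ltimes L_{l}(\mathbb{C}[z]_{l})_{1}$ becomes $L_{l}^{G_{0}}(\mathbb{C}[z]_{l})$, $U_{l,l+1}(\mathbb{C}[z]_{l})_{1}$ becomes $U_{l,l+1}(\mathbb{C}[z]_{l})$, $L_{1}\ltimes P_{l,l+1}(\mathbb{C}[z]_{l})_{1}$ becomes $P^{G_{0}}_{l,l+1}(\mathbb{C}[z]_{l})$, the r\^ole of $\chi$ is played by the multiplication map given by the Levi-type decomposition $P^{G_{0}}_{l,l+1}(\mathbb{C}[z]_{l})=L_{l}^{G_{0}}(\mathbb{C}[z]_{l})\cdot U_{l,l+1}(\mathbb{C}[z]_{l})$ of Section \ref{sec:zaropsemi} (a consequence of Proposition \ref{prop:decompzero}, and an isomorphism of complex manifolds by the same trivial-intersection argument used there), and Lemma \ref{lem:prep2} is replaced by the $G_{0}$-version stated below. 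Throughout I would use the $\mathbb{C}[z]_{l}$-version of Proposition \ref{prop:stabh} for the semisimple canonical form $H^{[l]}$ — the same computation verbatim, already tacitly invoked in the proofs of Propositions \ref{prop:smallorbit2} and \ref{prop:NPII} — which yields $\mathrm{Stab}_{G(\mathbb{C}[z]_{l})}(H^{[l]})=L_{0}\ltimes\mathrm{Stab}_{G(\mathbb{C}[z]_{l})_{1}}(H^{[l]})$, hence (since $\mathfrak{u}_{l,l+1}\cap\mathfrak{l}_{l}=0$) both $\mathrm{Stab}_{U_{l,l+1}(\mathbb{C}[z]_{l})}(H^{[l]})=\{e\}$ and $\mathrm{Stab}_{L_{l}^{G_{0}}(\mathbb{C}[z]_{l})}(H^{[l]})\subset P^{G_{0}}_{l,l+1}(\mathbb{C}[z]_{l})$.

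First I would establish the $G_{0}$-analogue of Lemma \ref{lem:prep2}, namely that
\[
\begin{array}{cccc}
a^{G_{0}}_{H^{[l]}}\colon & L_{l}^{G_{0}}(\mathbb{C}[z]_{l})\times U_{l,l+1}(\mathbb{C}[z]_{l}) & \longrightarrow & L_{l}^{G_{0}}(\mathbb{C}[z]_{l})\times\mathfrak{u}_{l,l+1}(\mathbb{C}[z^{-1}]_{l})\\
& (g,u) & \longmapsto & (g,\ \mathrm{Ad}^{*}(gu)(H^{[l]})-\mathrm{Ad}^{*}(g)(H^{[l]}))
\end{array}
\]
is an isomorphism of complex manifolds. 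Writing $u=u_{0}g_{1}$ with $u_{0}\in U_{l,l+1}$ and $g_{1}=e^{z^{l}X_{l}}\cdots e^{zX_{1}}\in U_{l,l+1}(\mathbb{C}[z]_{l})_{1}$, $X_{i}\in\mathfrak{u}_{l,l+1}$, and expanding $\mathrm{Ad}^{*}(u)(H^{[l]})=\sum_{i=0}^{l}C_{i}z^{-i-1}$ as in Lemma \ref{lem:prep2}, one checks — using that the $H_{i}$ lie in $\mathfrak{t}$ and $[\mathfrak{u}_{l,l+1},\mathfrak{u}_{l,l+1}]\subseteq\mathfrak{u}_{l,l+1}$ — that $C_{i}\in H_{i}+\mathfrak{u}_{l,l+1}$ for every $i=0,\ldots,l$, giving well-definedness. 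Surjectivity is then obtained by solving for $u$ recursively: the level $i=l$ component is matched by the constant factor $u_{0}$ (this is exactly why the target is $\mathfrak{u}_{l,l+1}(\mathbb{C}[z^{-1}]_{l})$ rather than $\mathfrak{u}_{l,l+1}(\mathbb{C}[z^{-1}]_{l-1})$), and the components $i=l-1,\ldots,0$ are matched in turn by linear equations whose leading term is $\mathrm{ad}(X_{i})(H_{l})$, invertible in $X_{i}\in\mathfrak{u}_{l,l+1}$ since $\alpha(H_{l})\neq 0$ for all $\alpha\in\Phi_{l+1}^{+}\setminus\Phi_{l}$. Injectivity is immediate from $\mathrm{Stab}_{U_{l,l+1}(\mathbb{C}[z]_{l})}(H^{[l]})=\{e\}$.

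With this in hand I would form the commutative square
\[
\begin{tikzcd}
L_{l}^{G_{0}}(\mathbb{C}[z]_{l})\times U_{l,l+1}(\mathbb{C}[z]_{l})\arrow[r]\arrow[d,"a^{G_{0}}_{H^{[l]}}"]&P^{G_{0}}_{l,l+1}(\mathbb{C}[z]_{l})\arrow[dd,"\pi_{P}"]\\
L_{l}^{G_{0}}(\mathbb{C}[z]_{l})\times\mathfrak{u}_{l,l+1}(\mathbb{C}[z^{-1}]_{l})\arrow[d,"\pi_{L}\times\mathrm{id}"]&\\
\mathbb{O}_{H^{[l]}}^{L_{l}^{G_{0}}(\mathbb{C}[z]_{l})}\times\mathfrak{u}_{l,l+1}(\mathbb{C}[z^{-1}]_{l})\arrow[r,"X"]&\mathbb{O}_{H^{[l]}}^{P^{G_{0}}_{l,l+1}(\mathbb{C}[z]_{l})}
\end{tikzcd}
\]
where the top arrow is the multiplication map (an isomorphism, as noted above) and $\pi_{L},\pi_{P}$ are the orbit projections; commutativity is immediate from the definition of $X$. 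Since the top arrow and $a^{G_{0}}_{H^{[l]}}$ are isomorphisms and $\pi_{L}\times\mathrm{id}$, $\pi_{P}$ are surjective, $X$ is surjective. For injectivity I would let $L_{l}^{G_{0}}(\mathbb{C}[z]_{l})$ act on $L_{l}^{G_{0}}(\mathbb{C}[z]_{l})\times\mathfrak{u}_{l,l+1}(\mathbb{C}[z^{-1}]_{l})$ by left multiplication on the first factor; then $a^{G_{0}}_{H^{[l]}}$ and the composite of the multiplication map with $(a^{G_{0}}_{H^{[l]}})^{-1}$ are equivariant for $\mathrm{Stab}_{L_{l}^{G_{0}}(\mathbb{C}[z]_{l})}(H^{[l]})$, and since this stabilizer equals $\mathrm{Stab}_{P^{G_{0}}_{l,l+1}(\mathbb{C}[z]_{l})}(H^{[l]})$ (by the inclusion $\mathrm{Stab}_{L_{l}^{G_{0}}(\mathbb{C}[z]_{l})}(H^{[l]})\subset P^{G_{0}}_{l,l+1}(\mathbb{C}[z]_{l})$), the same diagram chase as in the proof of Proposition \ref{prop:lu} forces $X$ to be injective.

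The one place I expect to need genuine care is the index bookkeeping inside $a^{G_{0}}_{H^{[l]}}$: unlike in Proposition \ref{prop:lu}, here the unipotent group carries a nontrivial constant term $U_{l,l+1}$ — precisely because in the $G_{0}$-setting the constant part of the ambient group lies in the Bruhat cell $G_{0}=N_{0}\cdot P_{0}$, which contains every $U_{l,l+1}$ — and one must track exactly how that extra factor produces $\mathfrak{u}_{l,l+1}(\mathbb{C}[z^{-1}]_{l})$ in place of $\mathfrak{u}_{l,l+1}(\mathbb{C}[z^{-1}]_{l-1})$. Re-deriving the stabilizer statements of Proposition \ref{prop:stabh} over $\mathbb{C}[z]_{l}$ for $H^{[l]}$ is purely routine (it is the argument already given there, word for word), so apart from this constant-term matching the proof is a transcription of that of Proposition \ref{prop:lu}.
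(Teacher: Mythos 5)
Your proposal is correct and follows the same route as the paper's proof: the paper likewise defines the map $a_{H^{[l]}}$ on $L_{l}^{G_{0}}(\mathbb{C}[z]_{l})\times U_{l,l+1}(\mathbb{C}[z]_{l})$ with target $\mathfrak{u}_{l,l+1}(\mathbb{C}[z^{-1}]_{l})$, cites the argument of Lemma \ref{lem:prep2} for its bijectivity, and then runs the same commutative square and stabilizer-equality argument as in Proposition \ref{prop:lu}. Your explicit accounting of how the constant factor $U_{l,l+1}$ forces the target up from $\mathfrak{u}_{l,l+1}(\mathbb{C}[z^{-1}]_{l-1})$ to $\mathfrak{u}_{l,l+1}(\mathbb{C}[z^{-1}]_{l})$ is exactly the point the paper leaves implicit, and you handle it correctly.
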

\begin{proof}
	Let us 
 define the 
map
\[
	\begin{array}{cccc}
	a_{H^{[l]}}\colon &L_{l}^{G_{0}}(\mathbb{C}[z]_{l})\times U_{l,l+1}(\mathbb{C}[z]_{l})&\longrightarrow & L_{l}^{G_{0}}(\mathbb{C}[z]_{l})
	\times \mathfrak{u}_{l,l+1}(\mathbb{C}[z^{-1}]_{l})\\
	&(g,u)&\longmapsto &(g,\mathrm{Ad}^{*}(gu)(H^{[l]})-\mathrm{Ad}^{*}(g)(H^{[l]}))
	\end{array}
\]
and the argument in Lemma \ref{lem:prep2} tells us that this $a_{H^{[l]}}$ is an isomorphism.
	Then as well as in the proof of Proposition \ref{prop:lu}, 
	the multiplication map 
	$\chi \colon L_{l}^{G_{0}}(\mathbb{C}[z]_{l})\times U_{l,l+1}(\mathbb{C}[z])
	\rightarrow P^{G_{0}}_{l,l+1}(\mathbb{C}[z]_{l})$
	induces the commutative diagram
	\[
		\begin{tikzcd}
			L^{G_{0}}_{l}(\mathbb{C}[z]_{l})_{1}\times U_{l,l+1}(\mathbb{C}[z]_{l})
			\arrow[r,"\chi"]
			\arrow[d,"a_{H^{[l]}}"]
			&P^{G_{0}}_{l,l+1}(\mathbb{C}[z]_{l})
			\arrow[dd,"\pi_{P}"]\\
			L^{G_{0}}_{l}(\mathbb{C}[z]_{l})
	\times \mathfrak{u}_{l,l+1}(\mathbb{C}[z^{-1}]_{l})
			\arrow[d,"\pi_{L}\times \mathrm{id}"]&\\
			\mathbb{O}_{H^{[l]}}^{L^{G_{0}}_{l}(\mathbb{C}[z]_{l})}\times \mathfrak{u}_{l,l+1}(\mathbb{C}[z^{-1}]_{l})
			\arrow[r,"X"]
			&\mathbb{O}_{H^{[l]}}^{P^{G_{0}}_{l,l+1}(\mathbb{C}[z]_{l})}
		\end{tikzcd}
	\]
	which shows the surjectivity of $X$.
	
	Moreover since 
	 $\mathrm{Stab}_{L^{G_{0}}_{l}(\mathbb{C}[z]_{l})}(H^{[l]})=\mathrm{Stab}_{P^{G_{0}}_{l,l+1}(\mathbb{C}[z]_{l})}(H^{[l]})$,
	 $X$ is injective by the same argument as in the proof of Proposition \ref{prop:lu}.
\end{proof}
Then we finally obtain an analogue of 
Theorem \ref{thm:triangdecompfb}.
\begin{thm}\label{thm:triangdecompfb2}
	There exists an isomorphism 
		\[
			\Psi\colon \prod_{l=1}^{k}\left(N_{l,l+1}(\mathbb{C}[z]_{l})\times \mathfrak{u}_{l,l+1}(\mathbb{C}[z^{-1}]_{l})\right)
			\overset{\sim}{\longrightarrow} \mathbb{O}_{H}^{G_{0}(\mathbb{C}[z]_{k})}
		\]
		as complex manifolds.
\end{thm}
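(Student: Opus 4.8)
The plan is to mirror, essentially verbatim, the proof scheme of Theorem~\ref{thm:triangdecompfb}, but now with every ``$(\mathbb{C}[z]_{\bullet})_{1}$'' replaced by ``$(\mathbb{C}[z]_{\bullet})$'' (i.e.\ including the constant term living in the open cell) and every ``$\mathbb{O}^{L_{1}\ltimes \bullet(\mathbb{C}[z]_{\bullet})_{1}}_{\bullet}$'' replaced by ``$\mathbb{O}^{\bullet^{G_{0}}(\mathbb{C}[z]_{\bullet})}_{\bullet}$''. All the structural inputs are already in place: the decomposition of $G_{0}(\mathbb{C}[z]_{j})$ into a product of the $N$- and $U$-factors and $L_{0}(\mathbb{C}[z]_{j})$ (Proposition~\ref{prop:decompzero}), the resulting Levi-type and $\mathrm{LU}$-type decompositions of $P^{G_{0}}_{l,l'}(\mathbb{C}[z]_{l})$ and $L^{G_{0}}_{l'}(\mathbb{C}[z]_{l})$, the stabilization isomorphism $\pi^{\mathbb{O}}_{l',l}$ (Proposition~\ref{prop:smallorbit2}), the $N$-fibration isomorphism $\phi$ (Proposition~\ref{prop:NPII}), and the $\mathrm{LU}$-splitting isomorphism $X$ (Proposition~\ref{prop:lu2}).

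Concretely, first I would combine Propositions~\ref{prop:NPII} and \ref{prop:lu2} (applied with $l'=l+1$) to obtain, for each $l=1,2,\ldots,k$, an isomorphism
\[
	\overline{\psi}_{l}\colon N_{l,l+1}(\mathbb{C}[z]_{l})\times \mathfrak{u}_{l,l+1}(\mathbb{C}[z^{-1}]_{l})
	\times \mathbb{O}_{H^{[l]}}^{L_{l}^{G_{0}}(\mathbb{C}[z]_{l})}
	\overset{\sim}{\longrightarrow} \mathbb{O}_{H^{[l]}}^{L_{l+1}^{G_{0}}(\mathbb{C}[z]_{l})},
\]
exactly as in the proof of Theorem~\ref{thm:triangdecompfb}: $\phi$ peels off the $N_{l,l+1}$-factor and $X$ splits $P^{G_{0}}_{l,l+1}$ into the Levi piece and $\mathfrak{u}_{l,l+1}(\mathbb{C}[z^{-1}]_{l})$. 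Then, using Proposition~\ref{prop:smallorbit2} to identify $\mathbb{O}_{H^{[l]}}^{L_{l}^{G_{0}}(\mathbb{C}[z]_{l})}\cong \mathbb{O}_{H^{[l-1]}}^{L_{l}^{G_{0}}(\mathbb{C}[z]_{l-1})}$, I would rewrite $\overline{\psi}_{l}$ as an isomorphism $\psi_{l}$ whose target $\mathbb{O}_{H^{[l]}}^{L_{l+1}^{G_{0}}(\mathbb{C}[z]_{l})}$ is the next term of the filtration and whose source involves $\mathbb{O}_{H^{[l-1]}}^{L_{l}^{G_{0}}(\mathbb{C}[z]_{l-1})}$, the previous term. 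Iterating $\psi_{k}\circ\psi_{k-1}\circ\cdots\circ\psi_{1}$ along the filtration
\[
	\{H_{\mathrm{res}}\}=\mathbb{O}^{L_{1}^{G_{0}}(\mathbb{C}[z]_{0})}_{H^{[0]}}\subset \cdots \subset \mathbb{O}^{L_{k+1}^{G_{0}}(\mathbb{C}[z]_{k})}_{H^{[k]}}=\mathbb{O}^{G_{0}(\mathbb{C}[z]_{k})}_{H}
\]
and using the base case $\mathbb{O}^{L_{1}^{G_{0}}(\mathbb{C}[z]_{0})}_{H^{[0]}}=\{H_{\mathrm{res}}\}$ (since $H$ is semisimple, $H_{\mathrm{res}}=H_{0}\in\mathfrak{t}$, so $L_{1}^{G_{0}}(\mathbb{C}[z]_{0})$ stabilizes it and the orbit is a point), collapses the entire product onto $\prod_{l=1}^{k}\bigl(N_{l,l+1}(\mathbb{C}[z]_{l})\times\mathfrak{u}_{l,l+1}(\mathbb{C}[z^{-1}]_{l})\bigr)$ and produces the desired $\Psi$.

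I do not anticipate a serious obstacle, since the present statement is genuinely simpler than Theorem~\ref{thm:triangdecompfb}: there is no outer $L_{1}$-twisting to keep track of, and the semisimplicity of $H$ kills the $\mathbb{O}_{H_{\mathrm{res}}}^{L_{1}}$ factor entirely, leaving a clean inductive telescope. The one point requiring a little care is bookkeeping of the polynomial truncation levels: in Proposition~\ref{prop:lu2} the cotangent-type factor is $\mathfrak{u}_{l,l+1}(\mathbb{C}[z^{-1}]_{l})$ rather than $\mathfrak{u}_{l,l+1}(\mathbb{C}[z^{-1}]_{l-1})$ as in the unipotent-only case, because here we retain the constant term, so I must check that $a_{H^{[l]}}$ (the map $(g,u)\mapsto(g,\mathrm{Ad}^{*}(gu)(H^{[l]})-\mathrm{ad}^{*}(g)(H^{[l]}))$ of Proposition~\ref{prop:lu2}) indeed lands in $L_{l}^{G_{0}}(\mathbb{C}[z]_{l})\times\mathfrak{u}_{l,l+1}(\mathbb{C}[z^{-1}]_{l})$ and is invertible there — but this is exactly the content of the argument cited from Lemma~\ref{lem:prep2}, adapted by replacing $\mathfrak{l}_{l}(\mathbb{C}[z^{-1}]_{l})$ reasoning with its $G_{0}$-analogue, where the key fact $\mathfrak{u}_{l,l+1}=\mathrm{ad}(H_{m})(\mathfrak{u}_{l,l+1})$ for all $m=0,1,\ldots,l$ (including $m=0$, valid precisely because $H_{0}$ now acts invertibly on $\mathfrak{u}_{l,l+1}$ when $\alpha\notin\Pi_{0}$ for the relevant roots) guarantees solvability of the recursion. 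Finally I would note that, just as in Section~\ref{sec:tridec}, this $\Psi$ descends compatibly through the identification $\mathbb{O}^{G_{0}(\mathbb{C}[z]_{k})}_{H}\cong G_{0}(\mathbb{C}[z]_{k})/\mathrm{Stab}_{G(\mathbb{C}[z]_{k})}(H)$, which is how the result will be used in the construction of the deformation in the later sections.
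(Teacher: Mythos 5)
Your proposal is correct and follows essentially the same route as the paper: the paper's proof is exactly the combination of Propositions \ref{prop:NPII} and \ref{prop:lu2} (with $l'=l+1$) into isomorphisms $\psi_{l}$, reindexed via Proposition \ref{prop:smallorbit2} and telescoped along the filtration down to the base case $\mathbb{O}^{L_{1}^{G_{0}}(\mathbb{C}[z]_{0})}_{H^{[0]}}=\{H_{\mathrm{res}}\}$. Your additional remarks on the truncation level $\mathfrak{u}_{l,l+1}(\mathbb{C}[z^{-1}]_{l})$ versus $\mathfrak{u}_{l,l+1}(\mathbb{C}[z^{-1}]_{l-1})$ and on the invertibility of $\mathrm{ad}(H_{0})$ on $\mathfrak{u}_{l,l+1}$ correctly identify the only point where this case differs from Theorem \ref{thm:triangdecompfb}.
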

\begin{proof}
	As a combination of Propositions \ref{prop:smallorbit2}, \ref{prop:NPII}, and \ref{prop:lu2},
	we obtain isomorphisms
	\[
		\psi_{l}\colon N_{l,l+1}(\mathbb{C}[z]_{l})\times \mathfrak{u}_{l,l+1}(\mathbb{C}[z^{-1}]_{l})
		\times \mathbb{O}_{H^{[l-1]}}^{L^{G_{0}}_{l}(\mathbb{C}[z]_{l})}
		\longrightarrow \mathbb{O}_{H^{[l]}}^{L^{G_{0}}_{l+1}(\mathbb{C}[z]_{l})}
	\]
	for 
	$l=1,2,\ldots,k$.
	Therefore, as the combination of these isomorphisms $\psi_{l}$	
	we obtain the desired isomorphism
	\[
		\Psi\colon \prod_{l=1}^{k}\left(N_{l,l+1}(\mathbb{C}[z]_{l})\times \mathfrak{u}_{l,l+1}(\mathbb{C}[z^{-1}]_{l})\right)
		\longrightarrow \mathbb{O}_{H}^{G_{0}(\mathbb{C}[z]_{k})_{1}}
	\]
	by recalling $\mathbb{O}_{H^{[0]}}^{L_{1}^{G_{0}}(\mathbb{C}[z]_{0})}=\{H_{\mathrm{res}}\}$.
\end{proof}
	
\section{Deformation of $N_{l,l+1}(\mathbb{C}[z]_{l})_{1}$ and $\mathfrak{u}_{l,l+1}(\mathbb{C}[z^{-1}]_{l-1})$}\label{sec:deformn}
We saw in Section \ref{sec:triangle} that
$N_{l,l+1}(\mathbb{C}[z]_{l})_{1}\times \mathfrak{u}(\mathbb{C}[z^{-1}]_{l-1})$ are main
building blocks of truncated orbit $\mathbb{O}_{H}$.
In this section, we shall introduce deformations of these $N_{l,l+1}(\mathbb{C}[z]_{l})_{1}\times \mathfrak{u}(\mathbb{C}[z^{-1}]_{l-1})$
which will define the deformation of the truncated orbit $\mathbb{O}_{H}$
later.

\subsection{Deformation of $\mathbb{C}[z]_{l}$ and $\mathbb{C}[z^{-1}]_{l}$}
To each point $\mathbf{c}=(c_{0},c_{1},\ldots,c_{k})\in \mathbb{C}^{k+1}$, we associate 
the effective divisor of $\mathbb{A}^{1}(\mathbb{C})=\mathbb{C}$ defined by \index[cN]{$D(\mathbf{c})$}
\[
	D(\mathbf{c}):=c_{0}+c_{1}+\cdots +c_{k}.
\]
Let $|D(\mathbf{c})|$ be the underlying 
set of points in $\mathbb{C}$.
Then we can consider the subspace of rational functions\index[cN]{$\mathrm{L}(D(\mathbf{c}))$}
\begin{equation*}
	\mathrm{L}(D(\mathbf{c})):=\left\{f\in \mathbb{C}(z)\,\middle|\,
	\mathrm{ord}_{P}(f)\ge -n_{P}\text{ for all }P\in \mathbb{C}\right\}\\
\end{equation*}
where $n_{P}$ is the coefficient of $P\in \mathbb{C}$ in 
the formal sum $D(\mathbf{c})=\sum_{P\in\mathbb{C}}n_{P}P$, and 
$\mathrm{ord}_{P}(f)$ is the order of $f$ at $P$.
We can regard 
$\mathrm{L}(D(\mathbf{c}))$ as $\mathbb{C}[z]$-submodule of $\mathbb{C}(z)$
containing 
$\mathbb{C}[z]$. Thus we can consider 
the quotient module\index[cN]{$\widehat{\mathrm{L}}(\mathbf{c})$} 
\[
	\widehat{\mathrm{L}}(\mathbf{c}):=\mathrm{L}(D(\mathbf{c}))/\mathbb{C}[z].	
\]
We also consider 
\[
	\mathrm{L}(-D(\mathbf{c})):=\left\{f\in \mathbb{C}(z)\mid \mathrm{ord}_{P}(f)
	\ge n_{P}\text{ for all }P\in \mathbb{C}\right\}.
\]
which may be regarded
as the ideal of $\mathbb{C}[z]$ generated by
$
\prod_{i=0}^{k}(z-c_{i}). 	
$
Thus we can consider the 
quotient ring \index[cN]{$\widehat{\Gamma}(\mathbf{c})$}
\[
	\widehat{\Gamma}(\mathbf{c}):=\mathbb{C}[z]/\mathrm{L}(-D(\mathbf{c}))
	=\mathbb{C}[z]/\langle \prod_{i=0}^{k}(z-c_{i})\rangle_{\mathbb{C}[z]}.	
\]
Then $\widehat{\Gamma}(\mathbf{c})$ is 
a finite-dimensional $\mathbb{C}$-algebra with 
maximal ideals \index[cN]{$\mathfrak{m}_{z_{c_{i}}}^{\widehat{\Gamma}(\mathbf{c})}$}$\mathfrak{m}_{z_{c_{i}}}^{\widehat{\Gamma}(\mathbf{c})}:=\langle z-c_{i}\rangle_{\widehat{\Gamma}(\mathbf{c})}$
for $i=0,1,\ldots,k$.
Let us note that these maximal ideals $\mathfrak{m}_{z_{c_{i}}}^{\widehat{\Gamma}(\mathbf{c})}$
are not nilpotent in general, unlike the maximal ideal $\mathfrak{m}_{z}^{(l)}$
of the local ring $\mathbb{C}[z]_{l}$.
We notice that since $\mathrm{L}(-D(\mathbf{c}))
\subset\mathbb{C}[z]$ acts trivially on $\widehat{\mathrm{L}}(\mathbf{c})$,
$\widehat{\mathrm{L}}(\mathbf{c})$ has the  
$\widehat{\Gamma}(\mathbf{c})$-module structure.
Also we note that 
if $\mathbf{c}=\mathbf{0}$, then  
\[
	\widehat{\Gamma}(\mathbf{0})\cong\mathbb{C}[z]_{k},\quad\quad
	\widehat{\mathrm{L}}(\mathbf{0})\cong \mathbb{C}[z^{-1}]_{k}.	
\]
Therefore $\widehat{\Gamma}(\mathbf{c})$ and $\widehat{\mathrm{L}}(\mathbf{c})$
can be regarded  
as deformations of $\mathbb{C}[z]_{k}$ and $\mathbb{C}[z^{-1}]_{k}$ respectively.

Let us define a filtration on  $\widehat{\mathrm{L}}(\mathbf{c})$
and cofiltration on $\widehat{\Gamma}(\mathbf{c})$ as follows.
Consider projection maps $\mathrm{pr}^{(l)}\colon \mathbb{C}^{k+1}\ni(x_{0},x_{1},\ldots,x_{k})\mapsto 
(x_{0},x_{1},\ldots,x_{l})\in \mathbb{C}^{l+1}$
and define divisors \index[cN]{$D(\mathbf{c})_{l}$}
$D(\mathbf{c})_{l}:=D(\mathrm{pr}^{(l)}(\mathbf{c}))$  for $l=0,1,\ldots,k$.
Then we obtain the filtration
\[
	\mathrm{L}(D(\mathbf{c})_{0})\subset \mathrm{L}(D(\mathbf{c})_{1})\subset 
	\cdots \subset \mathrm{L}(D(\mathbf{c})_{k})=\mathrm{L}(D(\mathbf{c})).	
\]
\begin{df}[Standard filtration and basis of $\widehat{\mathrm{L}}(\mathbf{c})$]\normalfont
	Let \index[cN]{$\widehat{\mathrm{L}}(\mathbf{c})_{l}$}
	\[
		\widehat{\mathrm{L}}(\mathbf{c})_{l}:=\mathrm{L}(D(\mathbf{c})_{l})/
		\mathbb{C}[z]\text{ for } l=0,1,\ldots,k.	
	\] The 
	filtration 
	\[
		\widehat{\mathrm{L}}(\mathbf{c})_{0}\subset \widehat{\mathrm{L}}(\mathbf{c})_{1}
		\subset \cdots \subset \widehat{\mathrm{L}}(\mathbf{c})_{k}=\widehat{\mathrm{L}}(\mathbf{c})
	\]
	is called the {\em standard filtration} of $\widehat{\mathrm{L}}(\mathbf{c})$.
	The associated basis of $\widehat{\mathrm{L}}(\mathbf{c})$,
	\[
		\frac{1}{z-c_{0}},\,\frac{1}{(z-c_{0})(z-c_{1})},\ldots,\frac{1}{(z-c_{0})(z-c_{1})\cdots(z-c_{k})}	
	\]
	as $\mathbb{C}$-vector space is called the {\em standard basis} of $\widehat{\mathrm{L}}(\mathbf{c})$.
\end{df} 
Notice that $$\frac{1}{z-c_{0}},\,\frac{1}{(z-c_{0})(z-c_{1})},\ldots,\frac{1}{(z-c_{0})(z-c_{1})\cdots(z-c_{l})}$$
becomes a basis of the $l$-th component $\widehat{\mathrm{L}}(\mathbf{c})_{l}$
of the standard filtration for each $l=0,1,\ldots,k$.

Similarly, we can consider the filtration 
\[
	\mathrm{L}(-D(\mathbf{c})_{0})
	\supset 	\mathrm{L}(-D(\mathbf{c})_{1})
	\supset \cdots 
	\supset 
	\mathrm{L}(-D(\mathbf{c})_{k})=\mathrm{L}(-D(\mathbf{c})).
\]
\begin{df}[Standard cofiltration and basis of $\widehat{\Gamma}(\mathbf{c})$]\normalfont
	Let \index[cN]{$\widehat{\Gamma}(\mathbf{c})_{l}$}
	\[
		\widehat{\Gamma}(\mathbf{c})_{l}:=\mathbb{C}[z]/L(-D(\mathbf{c})_{l})
		=\mathbb{C}[z]/\langle\prod_{i=0}^{l}(z-c_{i})\rangle_{\mathbb{C}[z]}
		\text{ for }l=0,1,\ldots,k.
	\]
	Then the sequence of projection maps 
	\[
		\widehat{\Gamma}(\mathbf{c})_{0}\rightarrow \widehat{\Gamma}(\mathbf{c})_{1}
		\rightarrow\cdots\rightarrow \widehat{\Gamma}(\mathbf{c})_{k}=\widehat{\Gamma}(\mathbf{c})
	\]
	induced by the above filtration is called the 
	{\em standard cofiltration} of $\widehat{\Gamma}(\mathbf{c})$.
	The basis of $\widehat{\Gamma}(\mathbf{c})$,
	\[
	1, (z-c_{0}), (z-c_{0})(z-c_{1}),\ldots, (z-c_{0})(z-c_{1})\cdots (z-c_{k-1})
	\]
	as $\mathbb{C}$-vector space is called the {\em standard basis} of $\widehat{\Gamma}(\mathbf{c})$.
\end{df}
We can see that
$$1, (z-c_{0}), (z-c_{0})(z-c_{1}),\ldots, (z-c_{0})(z-c_{1})\cdots (z-c_{l-1})$$
becomes a basis of $\widehat{\Gamma}(\mathbf{c})_{l}$ for each $l=0,1,\ldots,k$.

Next we introduce the pairing of $\widehat{\Gamma}(\mathbf{c})_{l}$ and $\widehat{\mathrm{L}}(\mathbf{c})_{l}$
as $\mathbb{C}$-vector spaces defined by \index[cN]{$\langle\ ,\ \rangle_{\mathbf{c},l}$}
\[
	\begin{array}{cccc}
	\langle\ ,\ \rangle_{\mathbf{c},l}\colon 	&\widehat{\Gamma}(\mathbf{c})_{l} \times \widehat{\mathrm{L}}(\mathbf{c})_{l}&
	\longrightarrow &\mathbb{C}\\
	&(f(z),g(z))&\longmapsto &\displaystyle \sum_{c\,\in |D(\mathbf{c})|}\underset{z=c}{\mathrm{res\,}}(f(z)g(z))
	\end{array}
\]
for each $l=0,1,\ldots,k$. 
\begin{lem}[Residue formula]\label{lem:residue}
	Let us take $f(z)\in \widehat{\mathrm{L}}(\mathbf{c})_{l}$ and write  
	$$f(z)=\sum_{i=0}^{l}\frac{f_{i}}{\prod_{\nu=0}^{i}(z-c_{\nu})}$$
	under the standard basis.
	Then we have 
	\[
		\sum_{c\,\in |D(\mathbf{c})|}\underset{z=c}{\mathrm{res\,}}(f(z))=f_{0}.
	\]
\end{lem}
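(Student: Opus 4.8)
The plan is to invoke the global residue theorem on $\mathbb{P}^{1}(\mathbb{C})$, which asserts that for any rational $1$-form $\omega$ the sum of its residues over all points of $\mathbb{P}^{1}(\mathbb{C})$, including $z=\infty$, equals zero. First I would note that if $f(z)\in\widehat{\mathrm{L}}(\mathbf{c})_{l}$ is represented by an element of $\mathrm{L}(D(\mathbf{c})_{l})\subset\mathrm{L}(D(\mathbf{c}))$, then $f$ has poles in $\mathbb{C}$ only at the points of $|D(\mathbf{c})|$; hence applying the residue theorem to $\omega=f(z)\,dz$ gives
\[
	\sum_{c\,\in|D(\mathbf{c})|}\underset{z=c}{\mathrm{res\,}}(f(z))
	=-\underset{z=\infty}{\mathrm{res\,}}(f(z)\,dz).
\]
Thus the lemma is reduced to the computation $\underset{z=\infty}{\mathrm{res\,}}(f(z)\,dz)=-f_{0}$.

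To carry this out I would work termwise along the standard basis. Set $e_{i}(z):=1/\prod_{\nu=0}^{i}(z-c_{\nu})$, so that $f=\sum_{i=0}^{l}f_{i}e_{i}$. The Laurent expansion of $e_{i}$ at $z=\infty$ begins with $z^{-(i+1)}$, namely $e_{i}(z)=z^{-(i+1)}+(\text{lower order in }1/z)$. For $i\ge 1$ the coefficient of $z^{-1}$ in this expansion is therefore $0$, so $\underset{z=\infty}{\mathrm{res\,}}(e_{i}(z)\,dz)=0$; for $i=0$ we have $e_{0}(z)=z^{-1}+c_{0}z^{-2}+\cdots$, whose $z^{-1}$-coefficient is $1$, so $\underset{z=\infty}{\mathrm{res\,}}(e_{0}(z)\,dz)=-1$. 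By linearity $\underset{z=\infty}{\mathrm{res\,}}(f(z)\,dz)=-f_{0}$, and combining with the displayed identity yields the asserted formula $\sum_{c\in|D(\mathbf{c})|}\underset{z=c}{\mathrm{res\,}}(f(z))=f_{0}$.

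I do not anticipate any real difficulty here. The only point warranting a moment's care is that the $c_{i}$ need not be distinct, but this is harmless because the left-hand side is a sum over the underlying set $|D(\mathbf{c})|$ and the two steps above only use that every finite pole of $f$ lies in $|D(\mathbf{c})|$, together with the explicit expansions of the $e_{i}$ at infinity. If one prefers to avoid the point at infinity entirely, an equivalent route is to observe that $f(z)-f_{0}e_{0}(z)=\sum_{i=1}^{l}f_{i}e_{i}(z)$ decays like $O(z^{-2})$, so that the sum of its residues over $\mathbb{C}$ vanishes, while the residues of $f_{0}e_{0}(z)$ sum to $f_{0}$; this is equally routine.
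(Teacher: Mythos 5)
Your proof is correct. The paper's own ``proof'' is just the phrase ``It follows from a direct computation,'' and your argument---sum of finite residues equals minus the residue at infinity, where $e_{i}(z)=1/\prod_{\nu=0}^{i}(z-c_{\nu})=O(z^{-2})$ for $i\ge 1$ while $e_{0}$ contributes $f_{0}$---is exactly the natural computation being alluded to; your closing remark that one can avoid $\infty$ entirely by noting $f-f_{0}e_{0}$ decays like $O(z^{-2})$ is an equally valid variant. You also correctly handle the only delicate point, namely that the sum runs over the underlying set $|D(\mathbf{c})|$ (which contains all finite poles of $f$) even when the $c_{i}$ coincide.
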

\begin{proof}
	It follows from a direct computation.
\end{proof}
\begin{prop}\label{prop:nondeg}
	The pairing $\langle\ ,\ \rangle_{\mathbf{c},l}$ is non-degenerate.
	Moreover, the bases 
	$$1, (z-c_{0}), (z-c_{0})(z-c_{1}),\ldots, (z-c_{0})(z-c_{1})\cdots (z-c_{l-1})$$ 
	of 
	$\widehat{\Gamma}(\mathbf{c})_{l}$
	and 
	$$\frac{1}{z-c_{0}},\,\frac{1}{(z-c_{0})(z-c_{1})},\ldots,\frac{1}{(z-c_{0})(z-c_{1})\cdots(z-c_{l})}$$
	of 
	$\widehat{\mathrm{L}}(\mathbf{c})_{l}$
	are dual bases with respect to this pairing.
\end{prop}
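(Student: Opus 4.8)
The plan is to prove the (stronger) dual-bases statement by a direct residue computation, from which non-degeneracy is immediate. Write $e_{j}:=\prod_{\nu=0}^{j-1}(z-c_{\nu})$ for $j=0,1,\ldots,l$ (so $e_{0}=1$) for the standard basis of $\widehat{\Gamma}(\mathbf{c})_{l}$, and $f_{i}:=1/\prod_{\nu=0}^{i}(z-c_{\nu})$ for $i=0,1,\ldots,l$ for the standard basis of $\widehat{\mathrm{L}}(\mathbf{c})_{l}$. The target is $\langle e_{j},f_{i}\rangle_{\mathbf{c},l}=\delta_{ij}$; once this is shown, the Gram matrix of $\langle\ ,\ \rangle_{\mathbf{c},l}$ in these bases is the identity $I_{l+1}$, hence invertible, so the pairing is non-degenerate and the two bases are mutually dual.

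First I would form the product $e_{j}f_{i}=\prod_{\nu=0}^{j-1}(z-c_{\nu})/\prod_{\nu=0}^{i}(z-c_{\nu})$ in $\mathbb{C}(z)$ and distinguish two cases. If $j>i$, then $e_{j}f_{i}=\prod_{\nu=i+1}^{j-1}(z-c_{\nu})$ is a polynomial, hence its class in $\widehat{\mathrm{L}}(\mathbf{c})_{l}=\mathrm{L}(D(\mathbf{c})_{l})/\mathbb{C}[z]$ is zero and $\langle e_{j},f_{i}\rangle_{\mathbf{c},l}=0$ (equivalently, a polynomial has no residue at any finite point). If $j\le i$, then $e_{j}f_{i}=1/\prod_{\nu=j}^{i}(z-c_{\nu})$; its pole divisor $\sum_{\nu=j}^{i}c_{\nu}$ is dominated by $D(\mathbf{c})_{l}$, so it does define an element of $\widehat{\mathrm{L}}(\mathbf{c})_{l}$ and the pairing value equals $\sum_{c\in|D(\mathbf{c})|}\underset{z=c}{\mathrm{res\,}}(e_{j}f_{i})$.

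To evaluate this last sum I would apply the residue theorem on $\mathbb{P}^{1}$: the residues of a rational function over all points of $\mathbb{P}^{1}$ sum to zero, so $\sum_{c\in|D(\mathbf{c})|}\underset{z=c}{\mathrm{res\,}}(e_{j}f_{i})=-\underset{z=\infty}{\mathrm{res\,}}(e_{j}f_{i})$. Since the denominator of $e_{j}f_{i}=1/\prod_{\nu=j}^{i}(z-c_{\nu})$ has degree $i-j+1\ge 1$ while the numerator is constant, $e_{j}f_{i}$ vanishes at $\infty$ to order $i-j+1$, so $\underset{z=\infty}{\mathrm{res\,}}(e_{j}f_{i})=0$ unless $i-j+1=1$, i.e.\ $i=j$, in which case $e_{j}f_{i}=1/(z-c_{j})$ and $\underset{z=\infty}{\mathrm{res\,}}(1/(z-c_{j}))=-1$. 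Combining the cases gives $\langle e_{j},f_{i}\rangle_{\mathbf{c},l}=\delta_{ij}$, completing the proof. One could instead appeal to Lemma \ref{lem:residue} and reduce to identifying the coefficient of $1/(z-c_{0})$ in the standard-basis expansion of $e_{j}f_{i}$, but that coefficient is awkward to pin down when $\mathbf{c}$ has coincident coordinates, so the symmetric residue-theorem argument above is preferable.

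There is no real obstacle here: every step is an explicit computation with rational functions that is valid for \emph{arbitrary} $\mathbf{c}\in\mathbb{C}^{k+1}$, including the case of repeated coordinates where the maximal ideals of $\widehat{\Gamma}(\mathbf{c})$ are not nilpotent. The only point needing a moment's care is verifying, in the case $j\le i$, that $e_{j}f_{i}$ genuinely lies in $\widehat{\mathrm{L}}(\mathbf{c})_{l}$, which follows from the inclusion of pole divisors $\sum_{\nu=j}^{i}c_{\nu}\le D(\mathbf{c})_{l}$.
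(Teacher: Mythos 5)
Your proposal is correct and follows essentially the same route as the paper: both establish the dual-basis claim by computing $\langle e_{j},f_{i}\rangle_{\mathbf{c},l}=\delta_{ij}$ directly, from which non-degeneracy is immediate. The only (minor) difference is that you evaluate the total finite residue of $e_{j}f_{i}$ via the residue theorem on $\mathbb{P}^{1}$ (i.e.\ as $-\underset{z=\infty}{\mathrm{res}}$), whereas the paper invokes its Lemma \ref{lem:residue}; your variant is equally valid and sidesteps the basis-expansion bookkeeping you rightly flag as awkward when coordinates of $\mathbf{c}$ coincide.
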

\begin{proof}
	It follows that 
	\[
		\left\langle 1, \frac{1}{(z-c_{0})(z-c_{1})\cdots(z-c_{j'})}\right\rangle_{\mathbf{c},l}=\delta_{0,j'},
	\] for $0\le j'\le l$ and 
	\[
		\left\langle (z-c_{0})(z-c_{1})\cdots (z-c_{j-1}),	\frac{1}{(z-c_{0})(z-c_{1})\cdots(z-c_{j'})}\right\rangle_{\mathbf{c},l}=\delta_{j,j'},
	\]
	for $0<j\le l,\,0\le j'\le l,$
	from Lemma \ref{lem:residue}. Here $\delta_{j,j'}$ is Kronecker delta.
\end{proof}
\begin{cor}\label{cor:nondeg}
	Let us take $\mathbf{c}=(c_{i})_{i=0,1,\ldots,l}$, $\mathbf{x}=(x_{i})_{i=0,1,\ldots,l}$, and $\mathbf{y}=(y_{i})_{i=0,1,\ldots,l}$
	from $\mathbb{C}^{\oplus (l+1)}$.
	Define 
	\begin{align*}
		f(\mathbf{x},\mathbf{c})&:=x_{0}+x_{1}(z-c_{0})+\cdots+ x_{l}(z-c_{0})(z-c_{1})\cdots (z-c_{l-1}),\\
		g(\mathbf{x},\mathbf{c})&:=\frac{x_{0}}{z-c_{0}}+\frac{x_{1}}{(z-c_{0})(z-c_{1})}+\cdots+ \frac{x_{l}}{(z-c_{0})(z-c_{1})\cdots (z-c_{l})}.
	\end{align*}
	Then  
	\[
		\langle f(\mathbf{x},\mathbf{c}),g(\mathbf{x},\mathbf{c})\rangle_{\mathbf{c},l}
	\]
	is constant as a function of $\mathbf{c}\in \mathbb{C}^{\oplus (l+1)}$.  
\end{cor}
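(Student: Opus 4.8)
The plan is to deduce the statement directly from the dual-basis property established in Proposition \ref{prop:nondeg}. First I would observe that, by the definitions of the standard bases, the polynomial $f(\mathbf{x},\mathbf{c})$ is exactly the expansion of a (generic) element of $\widehat{\Gamma}(\mathbf{c})_{l}$ in its standard basis $1,(z-c_{0}),\ldots,(z-c_{0})(z-c_{1})\cdots(z-c_{l-1})$ with coordinates $x_{0},x_{1},\ldots,x_{l}$, while $g(\mathbf{x},\mathbf{c})$ is the expansion of a (generic) element of $\widehat{\mathrm{L}}(\mathbf{c})_{l}$ in its standard basis $\frac{1}{z-c_{0}},\frac{1}{(z-c_{0})(z-c_{1})},\ldots,\frac{1}{(z-c_{0})(z-c_{1})\cdots(z-c_{l})}$ with the same coordinates $x_{0},\ldots,x_{l}$. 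In particular both representatives lie in the respective $l$-th filtered pieces, so $\langle\,\cdot\,,\,\cdot\,\rangle_{\mathbf{c},l}$ is defined on them (this pairing was already checked to descend to the quotients $\widehat{\Gamma}(\mathbf{c})_{l}$ and $\widehat{\mathrm{L}}(\mathbf{c})_{l}$, so evaluating it on these representatives is legitimate).

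Next I would expand the pairing by bilinearity over the two standard bases. Writing $p_{i}(\mathbf{c})$ for the $i$-th standard basis vector of $\widehat{\Gamma}(\mathbf{c})_{l}$ and $q_{j}(\mathbf{c})$ for the $j$-th standard basis vector of $\widehat{\mathrm{L}}(\mathbf{c})_{l}$ (indexed by $0,\ldots,l$ in both cases), Proposition \ref{prop:nondeg} gives $\langle p_{i}(\mathbf{c}),q_{j}(\mathbf{c})\rangle_{\mathbf{c},l}=\delta_{i,j}$. Hence
\[
	\langle f(\mathbf{x},\mathbf{c}),g(\mathbf{x},\mathbf{c})\rangle_{\mathbf{c},l}
	=\sum_{i=0}^{l}\sum_{j=0}^{l}x_{i}x_{j}\,\langle p_{i}(\mathbf{c}),q_{j}(\mathbf{c})\rangle_{\mathbf{c},l}
	=\sum_{i=0}^{l}x_{i}^{2},
\]
which manifestly does not depend on $\mathbf{c}$. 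The same computation, carried out with two independent coordinate vectors, shows more generally that $\langle f(\mathbf{x},\mathbf{c}),g(\mathbf{y},\mathbf{c})\rangle_{\mathbf{c},l}=\sum_{i=0}^{l}x_{i}y_{i}$ is independent of $\mathbf{c}$, which is presumably the form actually used later.

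Since the assertion is a formal consequence of the dual-basis property, there is no genuine obstacle here; the only point needing a little care is the index bookkeeping, namely matching the coefficient $x_{i}$ of $(z-c_{0})\cdots(z-c_{i-1})$ in $f$ against the coefficient $x_{i}$ of $1/\bigl((z-c_{0})\cdots(z-c_{i})\bigr)$ in $g$, which is precisely the pairing of the $i$-th basis vectors recorded in Proposition \ref{prop:nondeg} via the residue formula of Lemma \ref{lem:residue}.
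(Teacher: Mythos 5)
Your proof is correct and is exactly the argument the paper intends: Corollary \ref{cor:nondeg} is stated there as a direct consequence of the dual-basis property in Proposition \ref{prop:nondeg}, and your bilinear expansion $\sum_{i,j}x_{i}x_{j}\delta_{i,j}=\sum_{i}x_{i}^{2}$ just makes that explicit. Your side remark that the same computation gives $\langle f(\mathbf{x},\mathbf{c}),g(\mathbf{y},\mathbf{c})\rangle_{\mathbf{c},l}=\sum_{i}x_{i}y_{i}$ also correctly accounts for the otherwise unused vector $\mathbf{y}$ in the statement.
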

\begin{proof}
	This is a direct consequence of the above proposition.
\end{proof}
\subsection{Partial fraction decomposition and Chinese remainder theorem}\label{sec:partialfrac}
Let us take a partition $\mathcal{I}\colon I_{0},I_{1},\ldots,I_{r}$
of $\{0,1,\ldots,k\}$. . 
For each $l=0,1,\ldots,k$, define a partition\index[cN]{$\mathcal{I}^{(l)}$} \index[cN]{$I_{j}^{(l)}$}
 $\mathcal{I}^{(l)}\colon I_{0}^{(l)},I_{1}^{(l)},\ldots,I_{r}^{(l)}$ of $\{0,1,\ldots,l\}\subset
\{0,1,\ldots,k\}$ by 
$I^{(l)}_{j}:=I_{j}\cap \{0,1,\ldots,l\}$.
Here we allow the case $I^{(l)}_{j}=\emptyset$.
Let us set \index[cN]{$l_{j}$}
$
	l_{j}:=|I^{(l)}_{j}|-1.	
$

Then for  $\mathbf{c}=(c_{0},c_{1},\ldots,c_{k})\in C(\mathcal{I})\cong 
\{(c_{I_{0}},\ldots,c_{I_{r}})\in \mathbb{C}^{r+1}\}$,
we can write  
$D(\mathbf{c})_{l}=\sum_{\nu=0}^{r}(l_{j}+1)\cdot c_{i_{[j,0]}}$.
The algorithm of the partial fractional decomposition of rational functions gives us the direct sum decomposition
\[
	\widehat{\mathrm{L}}(\mathbf{c})_{l}=
\bigoplus_{j=0}^{r}\mathbb{C}[z_{c_{I_{j}}}^{-1}]_{l_{j}}.
\]
Here we formally put $\mathbb{C}[z_{c_{I_{j}}}^{-1}]_{l_{j}}:=\{0\}$
if
$l_{j}<0$, i.e., $I_{j}^{(l)}=\emptyset$.
We denote natural projection maps 
by $\mathrm{pr}_{c_{I_{j}}}\colon \widehat{\mathrm{L}}(\mathbf{c})_{l}\rightarrow \mathbb{C}[z_{c_{I_{j}}}^{-1}]_{l_{j}}$
for $j=0,1,\ldots,r.$

We also have the following decomposition of $\widehat{\Gamma}(\mathbf{c})_{l}$.
Since $c_{I_{j}}\neq c_{I_{j'}}$ for $j\neq j'$, 
it follows that 
\begin{align*}
	&\langle (z-c_{I_{j}})^{l_{j}} \rangle_{\mathbb{C}[z]}+\langle (z-c_{I_{j'}})^{l_{j'}} \rangle_{\mathbb{C}[z]}
	=\mathbb{C}[z],\quad j\neq j'\text{ with }I_{j}\neq \emptyset, I_{j'}\neq \emptyset,\\
	&\bigcap_{j=0}^{r}\langle (z-c_{I_{j}})^{l_{j}} \rangle_{\mathbb{C}[z]}
	=\mathrm{L}(-D(\mathbf{c})_{l}).
\end{align*}
Therefore the Chinese remainder theorem implies that 
natural projection maps 
\[
	\mathrm{pr}_{c_{I_{j}}}\colon \widehat{\Gamma}(\mathbf{c})_{l}\longrightarrow
\mathbb{C}[z]/\langle(z-c_{I_{j}})^{l_{j}+1}\rangle_{\mathbb{C}[z]}=
\mathbb{C}[z_{c_{I_{j}}}]_{l_{j}}
\]
define the algebra isomorphism
\[
	\begin{array}{cccc}
	\prod_{j=0}^{r}\mathrm{pr}_{c_{I_{j}}}\colon&\widehat{\Gamma}(\mathbf{c})_{l}&\longrightarrow &\prod_{j=0}^{r}\mathbb{C}[z_{c_{I_{j}}}]_{l_{j}}\\
	&f(z)& \longmapsto & (\mathrm{pr}_{c_{I_{j}}}(f(z)))_{j=0,1,\ldots,r}
	\end{array}.
\]
Here 
we formally put $\mathbb{C}[z_{c_{I_{j}}}]_{l_{j}}:=\{0\}$
if
$l_{j}<0$, i.e., $I_{j}^{(l)}=\emptyset$.

Under this isomorphism, maximal ideals $\mathfrak{m}_{z_{c_{I_{j}}}}^{\widehat{\Gamma}(\mathbf{c})_{l}}$
for $j=0,1,\ldots,r$ correspond to 
\[
	\mathbb{C}[z_{c_{I_{0}}}]_{l_{0}}\times \cdots \times \mathbb{C}[z_{c_{I_{j-1}}}]_{l_{j-1}}\times \mathfrak{m}_{z_{c_{I_{j}}}}^{(l_{j})}	
	\times \mathbb{C}[z_{c_{I_{j+1}}}]_{l_{j+1}}\times \cdots \times \mathbb{C}[z_{c_{I_{r}}}]_{l_{r}}
\]
in $\prod_{j=0}^{r}\mathbb{C}[z_{c_{I_{j}}}]_{l_{j}}$.

These decompositions of $\widehat{\Gamma}(\mathbf{c})_{l}$ and $\widehat{\mathrm{L}}(\mathbf{c})_{l}$
are compatible with 
the pairing $\langle\ ,\ \rangle_{\mathbf{c},l}$.
Namely, 
for each $j=0,1,\ldots,r$, we define the non-degenerate pairing 
\[
	\begin{array}{cccc}
	\langle\ ,\ \rangle_{c_{I_{j}},l}&\mathbb{C}[z_{c_{I_{j}}}]_{l_{j}}\times \mathbb{C}[z_{c_{I_{j}}}^{-1}]_{l_{j}}	&\longrightarrow &\mathbb{C}\\
	&(f(z_{c_{I_{j}}}),g(z_{c_{I_{j}}}))&\longmapsto &
	\underset{z_{c_{I_{j}}}=0}{\mathrm{res}}(f(z_{c_{I_{j}}})g(z_{c_{I_{j}}}))
	\end{array}.
\]
Then a direct computation shows the following.
\begin{prop}
	Let us take $\mathcal{I}\in \mathcal{P}_{[k+1]}$,
	 $\mathbf{c}\in C(\mathcal{I})$, and an integer $0\le l\le k$ as above.
	Then for $(f(z),g(z))\in \widehat{\Gamma}(\mathbf{c})_{l} \times \widehat{\mathrm{L}}(\mathbf{c})_{l}$,
	we have 
	\[
		\langle f(z),g(z)\rangle_{\mathbf{c},l}=\sum_{j=0}^{r}\langle\mathrm{pr}_{c_{I_{j}}}(f(z)),\mathrm{pr}_{c_{I_{j}}}(g(z)) \rangle_{c_{I_{j}},l}.
	\]
\end{prop}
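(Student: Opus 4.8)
The plan is to peel the pairing apart point by point. By definition $\langle f(z),g(z)\rangle_{\mathbf{c},l}=\sum_{c\in |D(\mathbf{c})_{l}|}\underset{z=c}{\mathrm{res\,}}(f(z)g(z))$, and for $\mathbf{c}\in C(\mathcal{I})$ the underlying set $|D(\mathbf{c})_{l}|$ is precisely $\{c_{i_{[j,0]}}\mid I^{(l)}_{j}\neq\emptyset\}$, the points $c_{i_{[j,0]}}$ being pairwise distinct. So the first step is simply to rewrite $\langle f,g\rangle_{\mathbf{c},l}=\sum_{j=0}^{r}\underset{z=c_{i_{[j,0]}}}{\mathrm{res\,}}(f(z)g(z))$, where the summands with $I^{(l)}_{j}=\emptyset$ (equivalently $l_{j}<0$) are zero and agree with the formal convention $\mathbb{C}[z_{c_{i_{[j,0]}}}]_{l_{j}}:=\{0\}$ on the right-hand side.

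The second step is to localize each residue. Fix $j$ with $I^{(l)}_{j}\neq\emptyset$ and decompose $g(z)=\sum_{j'=0}^{r}g_{j'}(z)$ according to the partial fraction decomposition $\widehat{\mathrm{L}}(\mathbf{c})_{l}=\bigoplus_{j'}\mathbb{C}[z_{c_{i_{[j',0]}}}^{-1}]_{l_{j'}}$, so that $g_{j'}=\pr_{c_{i_{[j',0]}}}(g)$ is the principal part of $g$ at $c_{i_{[j',0]}}$. I would then observe that $f$ may be represented by an honest polynomial, hence is holomorphic at every point of $\mathbb{C}$, and that $g_{j'}$ is holomorphic at $c_{i_{[j,0]}}$ whenever $j'\neq j$; therefore $f\,g_{j'}$ contributes nothing to the residue at $c_{i_{[j,0]}}$ and we obtain $\underset{z=c_{i_{[j,0]}}}{\mathrm{res\,}}(f g)=\underset{z=c_{i_{[j,0]}}}{\mathrm{res\,}}(f\,g_{j})$.

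The third step identifies this with the local pairing. Writing $z_{c_{i_{[j,0]}}}=z-c_{i_{[j,0]}}$, the element $g_{j}$ lies in $\mathbb{C}[z_{c_{i_{[j,0]}}}^{-1}]_{l_{j}}$, i.e.\ is a $\mathbb{C}$-linear combination of $z_{c_{i_{[j,0]}}}^{-m}$ with $1\le m\le l_{j}+1$; hence the residue $\underset{z_{c_{i_{[j,0]}}}=0}{\mathrm{res\,}}(f\,g_{j})$ depends on $f$ only through its Taylor coefficients up to order $l_{j}$ at $c_{i_{[j,0]}}$, that is, only through the class of $f$ modulo $\langle z_{c_{i_{[j,0]}}}^{l_{j}+1}\rangle_{\mathbb{C}[z]}$. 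Under the Chinese remainder isomorphism $\widehat{\Gamma}(\mathbf{c})_{l}\xrightarrow{\sim}\prod_{j}\mathbb{C}[z_{c_{i_{[j,0]}}}]_{l_{j}}$ this class is exactly $\pr_{c_{i_{[j,0]}}}(f)$. Consequently $\underset{z=c_{i_{[j,0]}}}{\mathrm{res\,}}(fg)=\underset{z_{c_{i_{[j,0]}}}=0}{\mathrm{res\,}}\bigl(\pr_{c_{i_{[j,0]}}}(f)\,\pr_{c_{i_{[j,0]}}}(g)\bigr)=\langle\pr_{c_{i_{[j,0]}}}(f),\pr_{c_{i_{[j,0]}}}(g)\rangle_{c_{i_{[j,0]}},l}$, and summing over $j$ yields the claim.

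I do not expect a genuine obstacle here: the statement is a bookkeeping consequence of the locality of residues together with the compatibility of the two quotient/projection constructions (the CRT quotient on $\widehat{\Gamma}(\mathbf{c})_{l}$ and the partial-fraction decomposition on $\widehat{\mathrm{L}}(\mathbf{c})_{l}$). The only points demanding a little care are handling the empty blocks $I^{(l)}_{j}=\emptyset$ consistently with the formal conventions, and phrasing precisely the assertion that $\underset{z_{c_{i_{[j,0]}}}=0}{\mathrm{res\,}}(f g_{j})$ factors through $f\bmod \langle z_{c_{i_{[j,0]}}}^{l_{j}+1}\rangle$; both become routine once the coordinate change $z_{c_{i_{[j,0]}}}=z-c_{i_{[j,0]}}$ is in place.
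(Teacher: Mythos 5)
Your argument is correct and is exactly the ``direct computation'' the paper leaves implicit: it gives no proof beyond that phrase, and the natural verification is precisely your chain of observations (the residues localize at the distinct points $c_{i_{[j,0]}}$, only the principal part $g_{j}=\pr_{c_{i_{[j,0]}}}(g)$ contributes at each point, and the residue there factors through $f$ modulo $\langle (z-c_{i_{[j,0]}})^{l_{j}+1}\rangle$, i.e.\ through $\pr_{c_{i_{[j,0]}}}(f)$). The only cosmetic discrepancy is that the paper's definition of $\langle\ ,\ \rangle_{\mathbf{c},l}$ sums residues over all of $|D(\mathbf{c})|$ rather than $|D(\mathbf{c})_{l}|$, but this is harmless since $g$ has no poles outside $|D(\mathbf{c})_{l}|$.
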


Let us notice that the decomposition $\widehat{\mathrm{L}}(\mathbf{c})_{l}=
\bigoplus_{j=0}^{r}\mathbb{C}[z_{c_{I_{j}}}^{-1}]_{l_{j}}
$ is not just as a vector space but as a $\widehat{\Gamma}(\mathbf{c})_{l}$-module.
Furthermore, this is compatible with the decomposition of the algebra
$\widehat{\Gamma}(\mathbf{c})_{l}=\prod_{j=0}^{r}\mathbb{C}[z_{c_{I_{j}}}]_{l_{j}}$
in the following sense. Namely, we can easily check that
the diagrams
\begin{equation}\label{eq:fracchin}
	\begin{tikzcd}
		\widehat{\Gamma}(\mathbf{c})_{l}\times \widehat{\mathrm{L}}(\mathbf{c})_{l}
		\arrow[d,"\mathrm{pr}_{c_{I_{j}}}\times \mathrm{pr}_{c_{I_{j}}}"]\arrow[r,"\cdot "] &\widehat{\mathrm{L}}(\mathbf{c})_{l} \arrow[d,"\mathrm{pr}_{c_{I_{j}}}"]\\
			\mathbb{C}[z_{c_{I_{j}}}]_{l_{j}}\times\mathbb{C}[z_{c_{I_{j}}}^{-1}]_{l_{j}}  \arrow[r,"\cdot "]&\mathbb{C}[z_{c_{I_{j}}}^{-1}]_{l_{j}}
	\end{tikzcd}	
\end{equation}
are commutative for $j=0,1,\ldots,r$ and $l=0,1,\ldots,k$. 
Here horizontal arrows are scalar multiplications of left modules.

\subsection{Lie groupoid $N_{l,l+1}(\widehat{\Gamma}(\mathbb{B})_{l})$}\label{sec:liegroupoid}
Let us  recall the notion of Lie algebroids.
\begin{df}[Complex Lie algebroid]\normalfont
	A {\em complex Lie algebroid over a complex manifold }M is a holomorphic vector bundle 
	$A\rightarrow M$, together with a Lie bracket $[\cdot,\cdot]$ on its space of holomorphic 
	sections, such that there exists a vector bundle map
	$\mathbf{a}\colon A\rightarrow TM$ called the {\em anchor map} satisfying the Leibniz rule
	\[
		[\sigma,f\tau]=f[\sigma,\tau]+(\mathbf{a}(\sigma)f)\tau	
	\]
	for all global sections $\sigma,\tau$ of $A\rightarrow M$ and holomorphic functions $f$ on $M$.
\end{df}
As well as the theory of Lie groups, we can associate the Lie algebroid to a Lie groupoid as follows.
\begin{df}[Lie algebroid of a Lie groupoid]\normalfont
	 For a complex Lie groupoid $M=(M,M_{0},s,t,m)$,
	 we consider the vector bundle 
	 \[
		\mathrm{Lie}(M):=TM|_{M_{0}}/TM_{0}
	 \]
	 defined as the normal bundle to $M_{0}$ in $M$. 
	 As the anchor map, we consider the unique map
	 $\mathbf{a}\colon \mathrm{Lie}(M)\rightarrow TM_{0}$
	 which makes the following diagram commutative, 
	 \[
	\begin{tikzcd}
		TM \arrow[d] \arrow[r,"s_{*}-t_{*} "] &TM_{0}\\
		\mathrm{Lie}(M)\arrow[ru,"\mathbf{a}"]&
	\end{tikzcd}.
	\]

	The Lie bracket is defined as follows.
	Let $(S_{t})$ and $(R_{t})$ be families of bisections of $M$ 
	such that 	
	$\sigma=\frac{\partial}{\partial t}|_{t=0}S$ 
	of bisections such that $\tau=\frac{\partial}{\partial t}|_{t}R_{t}$.
	Then the Lie bracket of $\sigma$ and $\tau$ is defined by 
	\[
		[\sigma,\tau]:=\frac{\partial}{\partial t_{2}}\Big|_{t_{2}=0}\left(
			\frac{\partial}{\partial t_{1}}\Big|_{t_{1}=0}(S_{t_{2}}R_{t_{1}}S^{-1}_{t_{2}})
		\right).
	\]
\end{df}

Now let us fix a positive integer $1\le l\le k$
and  an open neighborhood $\mathbb{B}\subset \mathbb{C}^{k+1}$  of $\mathbf{0}$.
Then we consider the trivial vector bundle 
\[
	\mathfrak{g}^{\oplus (l+1)}\times \mathbb{B} \xrightarrow[]{\mathrm{pr}_{2}}	\mathbb{B}.
\]
This can be identified with the family 
$\displaystyle \bigsqcup_{\mathbf{c}\in \mathbb{B}}\mathfrak{g}(\widehat{\Gamma}(\mathbf{c})_{l})$
of Lie algebras by the following bijection,
\[
	\begin{array}{ccc}
	\mathfrak{g}^{\oplus (l+1)}\times \mathbb{B}&\longmapsto &\bigsqcup_{\mathbf{c}\in \mathbb{B}}\mathfrak{g}(\widehat{\Gamma}(\mathbf{c})_{l})\\
	((X_{i})_{i=0,1,\ldots,l},\mathbf{c})&\longmapsto&
	X_{0}+X_{1}(z-c_{0})+\cdots +X_{l}(z-c_{0})\cdots(z-c_{l-1})
	\end{array}.
\]
Then the Lie algebra structure of each direct summand $\mathfrak{g}(\widehat{\Gamma}(\mathbf{c})_{l})$
allows us to 
regard the trivial bundle $\mathfrak{g}^{\oplus (l+1)}\times \mathbb{B}\rightarrow \mathbb{B}$
as a Lie algebroid with the trivial anchor map.
We denote this Lie algebroid by \index[cN]{$\mathfrak{g}(\widehat{\Gamma}(\mathbb{B})_{l})$}
\[
	\mathfrak{g}(\widehat{\Gamma}(\mathbb{B})_{l}).	
\]
Furthermore, let us 
consider the nilpotent Lie subalgebra 
$\mathfrak{n}_{l,l+1}\subset \mathfrak{g}$ and 
the subbundle $\left(\mathfrak{n}_{l,l+1}\right)^{\oplus (l+1)}\times \mathbb{B} \xrightarrow[]{\mathrm{pr}_{2}}	\mathbb{B}$.
Then it also has a Lie algebroid structure.
Then we denote the Lie algebroid $\left(\mathfrak{n}_{l,l+1}\right)^{\oplus (l+1)}\times \mathbb{B} \xrightarrow[]{\mathrm{pr}_{2}}	\mathbb{B}$
by  \index[cN]{$\mathfrak{n}_{l,l+1}(\widehat{\Gamma}(\mathbb{B})_{l})$}
\[
	\mathfrak{n}_{l,l+1}(\widehat{\Gamma}(\mathbb{B})_{l}).
\]
Since each fiber of $\mathfrak{n}_{l,l+1}(\widehat{\Gamma}(\mathbb{B})_{l})$
 is nilpotent Lie algebra $\mathfrak{n}_{l,l+1}(\widehat{\Gamma}(\mathbf{c})_{l})$,
we can moreover equip this trivial bundle with a Lie groupoid structure as follows.
As the source $s$ and target $t$, we take the projection $\mathrm{pr}_{2}\colon \left(\mathfrak{n}_{l,l+1}\right)^{\oplus (l+1)}\times \mathbb{B}\rightarrow 
\mathbb{B}$, i.e., $s=t=\mathrm{pr}_{2}$.
As the unit $i$, we take the zero section $\mathbf{0}\colon \mathbb{B}\ni \mathbf{c}\mapsto (\mathbf{0},\mathbf{c})\in \left(\mathfrak{n}_{l,l+1}\right)^{\oplus (l+1)}\times \mathbb{B}.$
Then for each $\mathbf{c}\in \mathbb{B}$
we identify the fiber $\left(\mathfrak{n}_{l,l+1}\right)^{\oplus (l+1)}\times \{\mathbf{c}\}$
with the nilpotent Lie algebra $\mathfrak{n}_{l,l+1}(\widehat{\Gamma}(\mathbf{c}))$,
and define 
the multiplication of $X_{\mathbf{c}}, Y_{\mathbf{c}}\in \left(\mathfrak{n}_{l,l+1}\right)^{\oplus (l+1)}\times \{\mathbf{c}\}$ 
by Dynkin's formula 
\begin{equation*}
	m(X_{\mathbf{c}},Y_{\mathbf{c}}):=
		\sum_{n=1}^{\infty}\frac{(-1)^{n-1}}{n}\sum_{(p_{1},q_{1})\in (\mathbb{Z}_{\ge 0})^{2}\backslash\{0\}}
	\cdots \sum_{(p_{n},q_{n})\in (\mathbb{Z}_{\ge 0})^{2}\backslash\{0\}}
	\frac{[X_{\mathbf{c}}^{p_{1}}Y_{\mathbf{c}}^{q_{1}}\cdots X_{\mathbf{c}}^{p_{n}}Y_{\mathbf{c}}^{q_{n}}]}
	{(\sum_{i=1}^{n}(p_{i}+q_{i}))\prod_{i=1}^{n}p_{i}!q_{i}!},
\end{equation*}
where 
\[
	[X_{\mathbf{c}}^{p_{1}}Y_{\mathbf{c}}^{q_{1}}\cdots X_{\mathbf{c}}^{p_{n}}Y_{\mathbf{c}}^{q_{n}}]:=
	(\mathrm{ad}X_{\mathbf{c}})^{p_{1}}(\mathrm{ad}Y_{\mathbf{c}})^{q_{1}}
	\cdots (\mathrm{ad}X_{\mathbf{c}})^{p_{n}}(\mathrm{ad}Y_{\mathbf{c}})^{q_{n}-1}Y_{\mathbf{c}}.
\]
Since $\mathfrak{n}_{l,l+1}(\widehat{\Gamma}(\mathbf{c})_{l})$ is nilpotent,
Dynkin's formula becomes a finite sum, namely, the multiplication is well-defined.
This multiplication map obviously depends holomorphically on $\mathbf{c}\in \mathbb{B}$,
and therefore the Campbell-Baker-Hausdorff formula shows that three properties in Definition \ref{df:groupoid} are satisfied. 
We denote this Lie groupoid by \index[cN]{$N_{l,l+1}(\widehat{\Gamma}(\mathbb{B})_{l})$}
\[
	N_{l,l+1}(\widehat{\Gamma}(\mathbb{B})_{l}).
\]
The fiber at each $\mathbf{c}\in \mathbb{B}$
is isomorphic to the nilpotent Lie group $N_{l,l+1}(\widehat{\Gamma}(\mathbf{c})_{l})$ by definition.
Obviously the Lie algebroid of this Lie groupoid is $\mathfrak{n}_{l,l+1}(\widehat{\Gamma}(\mathbb{B})_{l})$.
We denote the identity map 
by $\mathrm{exp}\colon \mathfrak{n}_{l,l+1}(\widehat{\Gamma}(\mathbb{B})_{l}) \rightarrow N_{l,l+1}(\widehat{\Gamma}(\mathbb{B})_{l})$
for the later convenience.

The adjoint action of the Lie group $N_{l,l+1}(\widehat{\Gamma}(\mathbf{c})_{l})$
on the Lie algebra $\mathfrak{g}(\widehat{\Gamma}(\mathbf{c})_{l})$ at each $\mathbf{c}\in \mathbb{B}$
defines 
the adjoint action of the Lie groupoid $N_{l,l+1}(\widehat{\Gamma}(\mathbb{B})_{l})$
on the Lie algebroid $\mathfrak{g}(\widehat{\Gamma}(\mathbb{B})_{l})$.
Namely we obtain the bundle map 
\[
	\begin{array}{cccc}
		\mathrm{Ad}\colon &N_{l,l+1}(\widehat{\Gamma}(\mathbb{B})_{l})\times \mathfrak{g}(\widehat{\Gamma}(\mathbb{B})_{l})
		&\longrightarrow &\mathfrak{g}(\widehat{\Gamma}(\mathbb{B})_{l})\\
		&(n_{\mathbf{c}},X_{\mathbf{c}})&
		\longmapsto&\mathrm{Ad}(n_{\mathbf{c}})(X_{\mathbf{c}})
	\end{array}	
\]
where $N_{l,l+1}(\widehat{\Gamma}(\mathbb{B})_{l})\times\mathfrak{g}(\widehat{\Gamma}(\mathbb{B})_{l})$
denotes the product bundle over $\mathbb{B}$.

Let us introduce another realization of the trivial bundle $\mathfrak{g}^{\oplus (l+1)}\times 
\mathbb{B}\rightarrow \mathbb{B}$. We 
identify it with the family
$\bigsqcup_{\mathbf{c}\in \mathbb{B}}\mathfrak{g}(\widehat{\mathrm{L}}(\mathbf{c})_{l})$
by 
\[
	\begin{array}{ccc}
	\mathfrak{g}^{\oplus (l+1)}\times \mathbb{B}&\longmapsto &\bigsqcup_{\mathbf{c}\in \mathbb{B}}\mathfrak{g}(\widehat{\mathrm{L}}(\mathbf{c})_{l})\\
	((X_{i})_{i=0,1,\ldots,l},\mathbf{c})&\longmapsto&
	\displaystyle \frac{X_{0}}{z-c_{0}}+\frac{X_{1}}{(z-c_{0})(z-c_{1})}+\cdots +\frac{X_{l}}{(z-c_{0})\cdots(z-c_{l})}
	\end{array}.
\]
Then we denote this trivial bundle by \index[cN]{$\mathfrak{g}(\widehat{\mathrm{L}}(\mathbb{B})_{l})$}
\[
	\mathfrak{g}(\widehat{\mathrm{L}}(\mathbb{B})_{l}).	
\]
For each $\mathbf{c}\in \mathbb{B}$, we have the non-degenerate 
trace pairing 
\[
	\begin{array}{cccc}
		\langle\,,\,\rangle_{\mathbf{c},l}
		\colon &\mathfrak{g}(\widehat{\Gamma}(\mathbf{c})_{l})
		\times \mathfrak{g}(\widehat{\mathrm{L}}(\mathbf{c})_{l})
		&\longrightarrow &\mathbb{C}\\
		&(f(z),g(z))&\longmapsto&\displaystyle
		\mathrm{tr\,}\left(\sum_{c\in |D(\mathbf{c})|}\underset{z=c}{\mathrm{res\,}}(f(z)g(z))\right)
	\end{array}	.
\]
Thus we can regard $\mathfrak{g}(\widehat{\mathrm{L}}(\mathbb{B})_{l})$
as the dual bundle of the Lie algebroid
$\mathfrak{g}(\widehat{\Gamma}(\mathbb{B})_{l})$
through these pairings.

The residue map 
\[
	\begin{array}{cccc}
		\mathrm{res}\colon &\mathfrak{g}(\widehat{\mathrm{L}}(\mathbf{c})_{l})&
		\longrightarrow &\mathfrak{g}\\
		&f(z)&\longmapsto&\displaystyle \sum_{c\in |D(\mathbf{c})|}\underset{z=c}{\mathrm{res\,}}(f(z))
	\end{array}	
\]
at each $\mathbf{c}\in \mathbb{B}$ defines the 
residue map for the family  
\[
	\mathrm{res}\colon \mathfrak{g}(\widehat{\mathrm{L}}(\mathbb{B})_{l})\longrightarrow 
	\mathfrak{g}.
\]

The fiber-wise coadjoint action defines the 
action of $N_{l,l+1}(\widehat{\Gamma}(\mathbb{B})_{l})$
on  $\mathfrak{g}(\widehat{\mathrm{L}}(\mathbb{B})_{l})
\cong \mathfrak{g}(\widehat{\Gamma}(\mathbb{B})_{l})^{*}$,
\[
	\begin{array}{cccc}
		\mathrm{Ad}^{*}\colon &N_{l,l+1}(\widehat{\Gamma}(\mathbb{B})_{l})\times \mathfrak{g}(\widehat{\mathrm{L}}(\mathbb{B})_{l})
		&\longrightarrow &\mathfrak{g}(\widehat{\mathrm{L}}(\mathbb{B})_{l})\\
		&(n_{\mathbf{c}},\xi_{\mathbf{c}})&
		\longmapsto&\mathrm{Ad}^{*}(n_{\mathbf{c}})(\xi_{\mathbf{c}})
	\end{array}.
\]
Here $N_{l,l+1}(\widehat{\Gamma}(\mathbb{B})_{l})\times \mathfrak{g}(\widehat{\mathrm{L}}(\mathbb{B})_{l})$
denotes product bundle over $\mathbb{B}$.

Moreover
take the opposite $\mathfrak{u}_{l,l+1}$ of $\mathfrak{n}_{l,l+1}$. Then we consider
the subbundle $\left(\mathfrak{u}_{l,l+1}\right)^{\oplus (l+1)}\times 
\mathbb{B}\rightarrow \mathbb{B}$ of $\mathfrak{g}(\widehat{\mathrm{L}}(\mathbb{B})_{l})$
and denote it by \index[cN]{$\mathfrak{u}_{l,l+1}(\widehat{\mathrm{L}}(\mathbb{B})_{l})$}
\[
	\mathfrak{u}_{l,l+1}(\widehat{\mathrm{L}}(\mathbb{B})_{l}).	
\]

\subsection{Decompositions of fibers of $N_{l,l+1}(\widehat{\Gamma}(\mathbb{B})_{l})$}\label{sec:decompfib}
Let us 
take a partition $\mathcal{I}\colon I_{0},I_{1},\ldots,I_{r}$ of $\{0,1,\ldots,k\}$
 as in Section \ref{sec:partstra}.
Then as we saw in Section \ref{sec:partialfrac},
we can define the partition $\mathcal{I}^{(l)}\colon I_{0}^{(l)},I_{1}^{(l)},\ldots,I_{r}^{(l)}$
of $\{0,1,\ldots,l\}$ from $\mathcal{I}$. Also we take a fixed 
$\mathbf{c}\in C(\mathcal{I})\cap \mathbb{B}$.

Then
the isomorphism 
$\prod_{j=0}^{r}\mathrm{pr}_{c_{I_{j}}}\colon \widehat{\Gamma}(\mathbf{c})_{l}\overset{\sim}{\rightarrow} 
\prod_{j=0}^{r}\mathbb{C}[z_{c_{I_{j}}}]_{l_{j}}$
given in Section \ref{sec:partialfrac}
induces the isomorphism 
\[
	\prod_{j=0}^{r}\mathrm{pr}_{c_{I_{j}}}\colon \mathfrak{g}(\widehat{\Gamma}(\mathbf{c})_{l})
	\overset{\sim}{\longrightarrow}
	\prod_{j=0}^{r}\mathfrak{g}(\mathbb{C}[z_{c_{I_{j}}}]_{l_{j}})
\]
of Lie algebras which restricts to the isomorphism
\[
	\prod_{j=0}^{r}\mathrm{pr}_{c_{I_{j}}}\colon \mathfrak{n}_{l,l+1}(\widehat{\Gamma}(\mathbf{c})_{l})
	\overset{\sim}{\longrightarrow}
	\prod_{j=0}^{r}\mathfrak{n}_{l,l+1}(\mathbb{C}[z_{c_{I_{j}}}]_{l_{j}}).
\]
Under the natural isomorphisms 
\[
	\mathfrak{n}_{l,l+1}(\widehat{\Gamma}(\mathbf{c})_{l})
\cong N_{l,l+1}(\widehat{\Gamma}(\mathbf{c})_{l}),\quad
\mathfrak{n}_{l,l+1}(\mathbb{C}[z_{c_{I_{j}}}]_{l_{j}})\cong 
N_{l,l+1}(\mathbb{C}[z_{c_{I_{j}}}]_{l_{j}}) 	
\]
by exponential maps, we also obtain 
the isomorphism 
\[
	\prod_{j=0}^{r}\mathrm{pr}_{c_{I_{j}}}\colon N_{l,l+1}(\widehat{\Gamma}(\mathbf{c})_{l})
	\overset{\sim}{\longrightarrow}
	\prod_{j=0}^{r}N_{l,l+1}(\mathbb{C}[z_{c_{I_{j}}}]_{l_{j}})
\]
of complex manifolds.
\begin{prop}\label{prop:lieisom}
	The above map $\prod_{j=0}^{r}\mathrm{pr}_{c_{I_{j}}}$
	is an isomorphism as Lie groups. 
\end{prop}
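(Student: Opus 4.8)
The plan is to reduce the statement to the algebra isomorphism already established in Section \ref{sec:partialfrac} and to use the fact that, in the nilpotent setting at hand, the group law is completely determined by the Lie bracket through Dynkin's formula. I would first note that the bijectivity and holomorphy of $\prod_{j=0}^{r}\mathrm{pr}_{c_{i_{[j,0]}}}\colon N_{l,l+1}(\widehat{\Gamma}(\mathbf{c})_{l})\to\prod_{j=0}^{r}N_{l,l+1}(\mathbb{C}[z_{c_{i_{[j,0]}}}]_{l_{j}})$ are already in hand (it was constructed as an isomorphism of complex manifolds from the exponential maps and the Chinese remainder isomorphism $\prod_{j}\mathrm{pr}_{c_{i_{[j,0]}}}\colon\widehat{\Gamma}(\mathbf{c})_{l}\xrightarrow{\sim}\prod_{j}\mathbb{C}[z_{c_{i_{[j,0]}}}]_{l_{j}}$), so the only thing left to verify is that it is a group homomorphism.

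Next I would record that, since $\mathfrak{n}_{l,l+1}(\mathcal{R})=\mathfrak{n}_{l,l+1}\otimes_{\mathbb{C}}\mathcal{R}$ with bracket $[X_{1}\otimes r_{1},X_{2}\otimes r_{2}]=[X_{1},X_{2}]\otimes r_{1}r_{2}$, applying $\mathfrak{n}_{l,l+1}\otimes_{\mathbb{C}}(-)$ to the Chinese remainder \emph{algebra} isomorphism shows that $\varphi:=\prod_{j}\mathrm{pr}_{c_{i_{[j,0]}}}\colon\mathfrak{n}_{l,l+1}(\widehat{\Gamma}(\mathbf{c})_{l})\xrightarrow{\sim}\prod_{j}\mathfrak{n}_{l,l+1}(\mathbb{C}[z_{c_{i_{[j,0]}}}]_{l_{j}})$ is a \emph{Lie algebra} isomorphism. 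Under the exponential identifications fixed in Section \ref{sec:liegroupoid}, the multiplications on both sides are given by the Dynkin formula, which is a finite sum of iterated brackets because $\mathfrak{n}_{l,l+1}$ tensored with a finite dimensional $\mathbb{C}$-algebra is nilpotent. A bracket-preserving linear isomorphism commutes with every iterated bracket occurring in that formula, hence $\varphi(m(X,Y))=m(\varphi(X),\varphi(Y))$ for all $X,Y$; transporting this back through $\mathrm{exp}$ says precisely that $\prod_{j}\mathrm{pr}_{c_{i_{[j,0]}}}$ is a group homomorphism, and combined with the already known bijectivity and holomorphy this finishes the proof.

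Alternatively, one may argue functorially: for the affine unipotent group scheme $N_{l,l+1}$ the functor $\mathrm{Hom}_{\mathbb{C}\text{-}\mathrm{alg}}(\mathcal{O}(N_{l,l+1}),-)$ carries finite products of $\mathbb{C}$-algebras to finite products of groups, so the algebra isomorphism $\widehat{\Gamma}(\mathbf{c})_{l}\cong\prod_{j}\mathbb{C}[z_{c_{i_{[j,0]}}}]_{l_{j}}$ induces a group isomorphism $N_{l,l+1}(\widehat{\Gamma}(\mathbf{c})_{l})\cong\prod_{j}N_{l,l+1}(\mathbb{C}[z_{c_{i_{[j,0]}}}]_{l_{j}})$, and unwinding the definition of $\mathbb{C}$-points identifies it with $\prod_{j}\mathrm{pr}_{c_{i_{[j,0]}}}$. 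I do not expect a genuine obstacle here: the content is a compatibility check, and the only point requiring mild care is to observe that the exponential coordinates and the algebra projections entering the construction are the ones already fixed, so that the abstract product isomorphism of groups is literally the concrete map $\prod_{j}\mathrm{pr}_{c_{i_{[j,0]}}}$.
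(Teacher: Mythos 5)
Your proof is correct and matches the paper's own argument essentially verbatim: the paper also first invokes the functoriality of linear algebraic groups over $\mathbb{C}$-algebras and then gives the same direct verification via Dynkin's formula, using that $\prod_{j}\mathrm{pr}_{c_{i_{[j,0]}}}$ is a Lie algebra isomorphism and hence commutes with the iterated brackets. Both routes you sketch appear in the paper's proof, so there is nothing to add.
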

\begin{proof}
	It suffices to check that the above map 
	is a group homomorphism.
	This  
	follows from the fact 
	that linear algebraic groups defined over $\mathbb{C}$ are 
	functors form the category of $\mathbb{C}$-algebras
	to that of groups, which preserves finite products.  
	
	Nevertheless, we give a direct proof of it, which will be useful 
	for Proposition \ref{prop:liedecomp2} appearing later.
	Let us take $X,Y\in \mathfrak{n}_{l,l+1}(\widehat{\Gamma}(\mathbf{c})_{l})$.
	Then Dynkin's formula gives us 
	\begin{multline*}
		\prod_{j=0}^{r}\mathrm{pr}_{c_{I_{j}}}(\mathrm{exp\,}(X)\cdot \mathrm{exp\,}(Y))
		=\\
		\mathrm{exp\,}\left(
			\prod_{j=0}^{r}\mathrm{pr}_{c_{I_{j}}}\left(
				\sum_{n=1}^{\infty}\frac{(-1)^{n-1}}{n}\sum_{\substack{(p_{1},q_{1})\in (\mathbb{Z}_{\ge 0})^{2}\backslash\{0\}\\\vdots\\\substack{(p_{n},q_{n})\in (\mathbb{Z}_{\ge 0})^{2}\backslash\{0\}}}}
				\frac{[X^{p_{1}}Y^{q_{1}}\cdots X^{p_{n}}Y^{q_{n}}]}
				{(\sum_{i=1}^{n}(p_{i}+q_{i}))\prod_{i=1}^{n}p_{i}!q_{i}!}	
			\right)
		\right).
	\end{multline*}
Then since $\prod_{j=0}^{r}\mathrm{pr}_{c_{I_{j}}}$ is a Lie algebra isomorphism,
we moreover have
\scriptsize
\begin{multline*}
	\prod_{j=0}^{r}\mathrm{pr}_{c_{I_{j}}}\left(
				\sum_{\substack{(p_{1},q_{1})\in (\mathbb{Z}_{\ge 0})^{2}\backslash\{0\}\\\vdots\\\substack{(p_{n},q_{n})\in (\mathbb{Z}_{\ge 0})^{2}\backslash\{0\}}}}
				\frac{[X^{p_{1}}Y^{q_{1}}\cdots X^{p_{n}}Y^{q_{n}}]}
				{(\sum_{i=1}^{n}(p_{i}+q_{i}))\prod_{i=1}^{n}p_{i}!q_{i}!}	
			\right)=\\
	\left(
		\sum_{\substack{(p_{1},q_{1})\in (\mathbb{Z}_{\ge 0})^{2}\backslash\{0\}\\\vdots\\\substack{(p_{n},q_{n})\in (\mathbb{Z}_{\ge 0})^{2}\backslash\{0\}}}}
		\frac{[\mathrm{pr}_{c_{I_{j}}}(X)^{p_{1}}\mathrm{pr}_{c_{I_{j}}}(Y)^{q_{1}}\cdots \mathrm{pr}_{c_{I_{j}}}(X)^{p_{n}}\mathrm{pr}_{c_{I_{j}}}(Y)^{q_{n}}]}
		{(\sum_{i=1}^{n}(p_{i}+q_{i}))\prod_{i=1}^{n}p_{i}!q_{i}!}
	\right)_{j=0,\ldots,r}.
\end{multline*}\normalsize 
Therefore we obtain 
\begin{align*}
	\prod_{j=0}^{r}\mathrm{pr}_{c_{I_{j}}}(\mathrm{exp\,}(X)\cdot \mathrm{exp\,}(Y))
	&=\left(\mathrm{exp\,}\left(\mathrm{pr}_{c_{I_{j}}}(X)\right)\cdot \mathrm{exp\,}\left(\mathrm{pr}_{c_{I_{j}}}(Y)\right)\right)_{j=0,\ldots,r}\\
	&=\left(\prod_{j=0}^{r}\mathrm{pr}_{c_{I_{j}}}(\mathrm{exp\,}(X))\right)\cdot \left(\prod_{j=0}^{r}\mathrm{pr}_{c_{I_{j}}}(\mathrm{exp\,}(X))\right)
\end{align*}
as desired.	
\end{proof}

Let us also recall the  isomorphism 
$\prod_{j=0}^{r}\mathrm{pr}_{c_{I_{j}}}\colon \widehat{\mathrm{L}}(\mathbf{c})_{l}\overset{\sim}{\rightarrow}
\prod_{j=0}^{r}\mathbb{C}[z^{-1}_{c_{I_{j}}}]$
given in Section \ref{sec:partialfrac}.
Then this isomorphism induces
another isomorphism 
\[
	\prod_{j=0}^{r}\mathrm{pr}_{c_{I_{j}}}
\colon \mathfrak{g}(\widehat{\mathrm{L}}(\mathbf{c})_{l})
\overset{\sim}{\longrightarrow }
\prod_{j=0}^{r}\mathfrak{g}(\mathbb{C}[z^{-1}_{c_{I_{j}}}])
\]
which restricts to the isomorphism
\[
	\prod_{j=0}^{r}\mathrm{pr}_{c_{I_{j}}}
\colon \mathfrak{u}_{l,l+1}(\widehat{\mathrm{L}}(\mathbf{c})_{l})
\overset{\sim}{\longrightarrow }
\prod_{j=0}^{r}\mathfrak{u}_{l,l+1}(\mathbb{C}[z^{-1}_{c_{I_{j}}}]).
\]
\begin{prop}\label{prop:commmult}
	The following diagram is commutative.
	\[
	\begin{tikzcd}
		N_{l,l+1}(\widehat{\Gamma}(\mathbf{c})_{l})\times \mathfrak{g}(\widehat{\mathrm{L}}(\mathbf{c})_{l})
		\arrow[d,"\prod_{j=0}^{r}(\mathrm{pr}_{c_{I_{j}}}\times \mathrm{pr}_{c_{I_{j}}})"]
		\arrow[r,"\mathrm{Ad}^{*}"]
		&\mathfrak{g}(\widehat{\mathrm{L}}(\mathbf{c})_{l})
		\arrow[d,"\prod_{j=0}^{r}\mathrm{pr}_{c_{I_{j}}}"]
		\\
		\prod_{j=0}^{r}
		\left(
			N_{l,l+1}(\mathbb{C}[z_{c_{I_{j}}}]_{l_{j}})\times \mathfrak{g}(\mathbb{C}[z^{-1}_{c_{I_{j}}}]_{l_{j}})
		\right)
		\arrow[r,"\prod_{j=0}^{r}\mathrm{Ad}^{*}"]
		&
		\prod_{j=0}^{r}\mathfrak{g}(\mathbb{C}[z^{-1}_{c_{I_{j}}}]_{l_{j}})
	\end{tikzcd}
	\]
\end{prop}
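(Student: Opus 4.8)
The plan is to deduce the commutativity of the square from three compatibilities already established in Sections~\ref{sec:partialfrac} and~\ref{sec:decompfib}. First, the product of the projections $\prod_{j=0}^{r}\pr_{c_{i_{[j,0]}}}$ is an isomorphism of Lie algebras $\mathfrak{g}(\widehat{\Gamma}(\mathbf{c})_{l})\xrightarrow{\sim}\prod_{j}\mathfrak{g}(\mathbb{C}[z_{c_{i_{[j,0]}}}]_{l_{j}})$ and an isomorphism of Lie groups on the corresponding unipotent groups (Proposition~\ref{prop:lieisom} together with the Lie-algebra isomorphism recorded just before it); in particular, being an algebra homomorphism on the matrix level, it intertwines $\Ad$. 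Second, $\prod_{j}\pr_{c_{i_{[j,0]}}}$ intertwines the scalar multiplication $\widehat{\Gamma}(\mathbf{c})_{l}\times\widehat{\mathrm{L}}(\mathbf{c})_{l}\to\widehat{\mathrm{L}}(\mathbf{c})_{l}$ with the componentwise ones, i.e.\ the square~(\ref{eq:fracchin}). Third, $\prod_{j}\pr_{c_{i_{[j,0]}}}$ carries the residue pairing to the orthogonal sum of the local ones, so that after applying $\tr$ one has $\langle f,g\rangle_{\mathbf{c},l}=\sum_{j}\langle\pr_{c_{i_{[j,0]}}}(f),\pr_{c_{i_{[j,0]}}}(g)\rangle_{c_{i_{[j,0]}},l}$ for $\mathfrak{g}$-valued $f,g$ (the Proposition at the end of Section~\ref{sec:partialfrac}, upgraded by the trace as in Section~\ref{sec:liegroupoid}).

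The cleanest write-up uses duality. By construction the fibrewise coadjoint action on $\mathfrak{g}(\widehat{\mathrm{L}}(\mathbf{c})_{l})\cong\mathfrak{g}(\widehat{\Gamma}(\mathbf{c})_{l})^{*}$ is the transpose of $\Ad$ with respect to $\langle\,,\,\rangle_{\mathbf{c},l}$, and likewise on each factor $\mathbb{C}[z^{-1}_{c_{i_{[j,0]}}}]_{l_{j}}$. Fix $n\in N_{l,l+1}(\widehat{\Gamma}(\mathbf{c})_{l})$, $\xi\in\mathfrak{g}(\widehat{\mathrm{L}}(\mathbf{c})_{l})$ and an arbitrary test element $X\in\mathfrak{g}(\widehat{\Gamma}(\mathbf{c})_{l})$, and put $Y_{j}:=\pr_{c_{i_{[j,0]}}}(X)$. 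Using in turn the pairing decomposition, the fact that $\pr_{c_{i_{[j,0]}}}$ intertwines $\Ad$ (so $\pr_{c_{i_{[j,0]}}}(\Ad(n^{-1})X)=\Ad(\pr_{c_{i_{[j,0]}}}(n)^{-1})Y_{j}$, cf.\ Proposition~\ref{prop:lieisom}), and the pairing decomposition once more, one obtains
\begin{align*}
\langle\Ad^{*}(n)\xi,\,X\rangle_{\mathbf{c},l}
&=\langle\xi,\,\Ad(n^{-1})X\rangle_{\mathbf{c},l}
=\sum_{j=0}^{r}\bigl\langle\pr_{c_{i_{[j,0]}}}(\xi),\,\Ad(\pr_{c_{i_{[j,0]}}}(n)^{-1})Y_{j}\bigr\rangle_{c_{i_{[j,0]}},l}\\
&=\sum_{j=0}^{r}\bigl\langle\Ad^{*}(\pr_{c_{i_{[j,0]}}}(n))\,\pr_{c_{i_{[j,0]}}}(\xi),\,Y_{j}\bigr\rangle_{c_{i_{[j,0]}},l}.
\end{align*}
On the other hand the pairing decomposition also gives $\langle\Ad^{*}(n)\xi,X\rangle_{\mathbf{c},l}=\sum_{j}\langle\pr_{c_{i_{[j,0]}}}(\Ad^{*}(n)\xi),Y_{j}\rangle_{c_{i_{[j,0]}},l}$; comparing the two expressions for all $X$ and invoking non-degeneracy of each local pairing (as in Proposition~\ref{prop:nondeg}) yields $\pr_{c_{i_{[j,0]}}}(\Ad^{*}(n)\xi)=\Ad^{*}(\pr_{c_{i_{[j,0]}}}(n))(\pr_{c_{i_{[j,0]}}}(\xi))$ for every $j$, which is precisely the asserted commutativity.

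An alternative, more hands-on route is to expand $\Ad^{*}(\exp N)(\xi)=\sum_{m\ge 0}\tfrac{1}{m!}(\ad^{*}N)^{m}(\xi)$ (a finite sum by nilpotency), where $\ad^{*}(N)(\xi)=[N,\xi]$ is computed from the Lie bracket of $\mathfrak{g}$ and the $\widehat{\Gamma}(\mathbf{c})_{l}$-module structure of $\widehat{\mathrm{L}}(\mathbf{c})_{l}$ — equivalently $\Ad^{*}(n)(\xi)$ is the class of $n\xi n^{-1}$, a polynomial lift of $n$ being available since $N_{l,l+1}$ is unipotent — and then to observe that $\prod_{j}\pr_{c_{i_{[j,0]}}}$ intertwines each ingredient of this formula by the Lie-algebra isomorphism and the square~(\ref{eq:fracchin}) and sends $\exp$ to $\prod_{j}\exp$. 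The only genuinely delicate point in either approach is to pin down the definition of the fibrewise $\Ad^{*}$ in the deformed setting and to check carefully that the global residue pairing on $\mathfrak{g}$-valued elements is the orthogonal sum of the local ones under the Chinese-remainder and partial-fraction identifications; once that bookkeeping (and the sign/inverse conventions in $\Ad^{*}$) is settled, I expect no real obstruction, the proof being a matter of functoriality of dualization.
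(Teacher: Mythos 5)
Your proposal is correct and follows essentially the same route as the paper: the paper first checks that $\prod_{j}\mathrm{pr}_{c_{i_{[j,0]}}}$ intertwines the adjoint action $\mathrm{Ad}(\exp X)=\sum_{n}(\mathrm{ad}\,X)^{n}/n!$ using that the projections are Lie algebra isomorphisms, and then passes to the dual diagram, which is exactly the dualization you carry out explicitly via the decomposition of the residue pairing and its non-degeneracy. Your write-up merely makes the paper's one-line "take the dual of this diagram" step explicit, which is a harmless (indeed helpful) elaboration rather than a different argument.
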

\begin{proof}
	Let us take $X\in \mathfrak{n}_{l,l+1}(\widehat{\Gamma}(\mathbf{c})_{l})$
	and $Y\in \mathfrak{g}(\widehat{\Gamma}(\mathbf{c})_{l})$.
	Then since 
	\begin{align*}
		&\prod_{j=0}^{r}\mathrm{pr}_{c_{I_{j}}}\colon \mathfrak{g}(\widehat{\Gamma}(\mathbf{c})_{l})
		\rightarrow
		\prod_{j=0}^{r}\mathfrak{g}(\mathbb{C}[z_{c_{I_{j}}}]_{l_{j}}),\\
		&\prod_{j=0}^{r}\mathrm{pr}_{c_{I_{j}}}\colon \mathfrak{n}_{l,l+1}(\widehat{\Gamma}(\mathbf{c})_{l})
		\overset{\sim}{\longrightarrow}
		\prod_{j=0}^{r}\mathfrak{n}_{l,l+1}(\mathbb{C}[z_{c_{I_{j}}}]_{l_{j}})
	\end{align*} are Lie algebra isomorphisms, 
	we have 
	\begin{align*}
		\prod_{j=0}^{r}\mathrm{pr}_{c_{I_{j}}}\left(\mathrm{Ad}(\mathrm{exp\,}(X))(Y)\right)
		&=\prod_{j=0}^{r}\mathrm{pr}_{c_{I_{j}}}\left(
			\sum_{n=0}^{\infty}\frac{(\mathrm{ad}(X))^{n}(Y)}{n!}
			\right)\\
		&=\left(
			\sum_{n=0}^{\infty}\frac{(\mathrm{ad}(\mathrm{pr}_{c_{I_{j}}}(X)))^{n}(\mathrm{pr}_{c_{I_{j}}}(Y))}{n!}
			\right)_{j=0,\ldots,r}	\\
		&=\left(\mathrm{Ad}\left(\mathrm{pr}_{c_{I_{j}}}(\mathrm{exp\,}(X))\right)
			\left(\mathrm{pr}_{c_{I_{j}}}(Y)\right)
			\right)_{j=0,\ldots,r}.
	\end{align*}
	Namely, the following diagram is commutative,
	\[
	\begin{tikzcd}
		N_{l,l+1}(\widehat{\Gamma}(\mathbf{c})_{l})\times \mathfrak{g}(\widehat{\Gamma}(\mathbf{c})_{l})
		\arrow[d,"\prod_{j=0}^{r}(\mathrm{pr}_{c_{I_{j}}}\times \mathrm{pr}_{c_{I_{j}}})"]
		\arrow[r,"\mathrm{Ad}"]
		&\mathfrak{g}(\widehat{\Gamma}(\mathbf{c})_{l})
		\arrow[d,"\prod_{j=0}^{r}\mathrm{pr}_{c_{I_{j}}}"]
		\\
		\prod_{j=0}^{r}
		\left(
			N_{l,l+1}(\mathbb{C}[z_{c_{I_{j}}}]_{l_{j}})\times \mathfrak{g}(\mathbb{C}[z_{c_{I_{j}}}]_{l_{j}})
		\right)
		\arrow[r,"\prod_{j=0}^{r}\mathrm{Ad}"]
		&
		\prod_{j=0}^{r}\mathfrak{g}(\mathbb{C}[z_{c_{I_{j}}}]_{l_{j}})
	\end{tikzcd}.
	\]
	Then since vertical arrows are isomorphisms, 
	the dual of this diagram gives our desired commutative diagram.
\end{proof}

Notice that we have \begin{equation}\label{eq:rescom}
	\mathrm{res\,}(X)=\sum_{j=0}^{r}\underset{z=c_{I_{j}}}{\mathrm{res\,}}(\mathrm{pr}_{c_{I_{j}}}(X))	
\end{equation}
for $X\in \mathfrak{g}(\widehat{\mathrm{L}}(\mathbf{c})_{l})$,
since 
$\prod_{j=0}^{r}\mathrm{pr}_{c_{I_{j}}}
\colon \mathfrak{g}(\widehat{\mathrm{L}}(\mathbf{c})_{l})
\rightarrow
\prod_{j=0}^{r}\mathfrak{g}(\mathbb{C}[z^{-1}_{c_{I_{j}}}])$
is defined by the partial fractional decomposition.

\subsection{Lie subgroupoid $N_{l,l+1}(\widehat{\Gamma}(\mathbb{B})_{l})_{1}$}
Let us consider the subbundle of $\mathfrak{n}_{l,l+1}(\widehat{\Gamma}(\mathbb{B})_{l})=\bigsqcup_{\mathbf{c}\in \mathbb{B}}\mathfrak{n}_{l,l+1}(\widehat{\Gamma}(\mathbf{c})_{l})$
defined by \index[cN]{$\mathfrak{n}_{l,l+1}(\widehat{\Gamma}(\mathbb{B})_{l})_{1}$}
\begin{align*}
	&\mathfrak{n}_{l,l+1}(\widehat{\Gamma}(\mathbb{B})_{l})_{1}:=\\
	&\left\{X_{0}+X_{1}(z-c_{0})+\cdots +X_{l}(z-c_{0})\cdots(z-c_{l-1})
	\in \bigsqcup_{\mathbf{c}\in \mathbb{B}}\mathfrak{g}(\widehat{\Gamma}(\mathbf{c}))\,\middle|\, X_{0}=0\right\}
\end{align*}
which has the natural Lie algebroid structure induced from that of $\mathfrak{n}_{l,l+1}(\widehat{\Gamma}(\mathbb{B})_{l})$.
Then as well as for the Lie groupoid $N_{l,l+1}(\widehat{\Gamma}(\mathbb{B})_{l})$,
Dynkin's formula defines the groupoid structure on this vector bundle $\mathfrak{n}_{l,l+1}(\widehat{\Gamma}(\mathbb{B})_{l})_{1}$,
and we denote this Lie groupoid by \index[cN]{$N_{l,l+1}(\widehat{\Gamma}(\mathbb{B})_{l})_{1}$}
\[
	N_{l,l+1}(\widehat{\Gamma}(\mathbb{B})_{l})_{1}
\]
which is a Lie subgroupoid of $N_{l,l+1}(\widehat{\Gamma}(\mathbb{B})_{l})$.

By definition,
the fiber of the Lie algebroid $\mathfrak{n}_{l,l+1}(\widehat{\Gamma}(\mathbb{B})_{l})_{1}$
at each $\mathbf{c}\in \mathbb{B}$ is isomorphic to 
$\mathfrak{n}_{l,l+1}\otimes_{\mathbb{C}}\mathfrak{m}_{z_{c_{0}}}^{\widehat{\Gamma}(\mathbf{c})}$
where $\mathfrak{m}_{z_{c_{0}}}^{\widehat{\Gamma}(\mathbf{c})}=\langle z-c_{0}\rangle_{\widehat{\Gamma}(\mathbf{c})}$
is a maximal ideal of $\widehat{\Gamma}(\mathbf{c})$.
We denote this nilpotent Lie algebra by \index[cN]{$\mathfrak{n}_{l,l+1}(\widehat{\Gamma}(\mathbf{c})_{l})_{1}$}
\[
	\mathfrak{n}_{l,l+1}(\widehat{\Gamma}(\mathbf{c})_{l})_{1}:=\mathfrak{n}_{l,l+1}\otimes_{\mathbb{C}}\mathfrak{m}_{z_{c_{0}}}^{\widehat{\Gamma}(\mathbf{c})}
\]
and also denote the corresponding nilpotent Lie group by 
\[
	N_{l,l+1}(\widehat{\Gamma}(\mathbf{c})_{l})_{1}.
\]
Then $\mathfrak{n}_{l,l+1}(\widehat{\Gamma}(\mathbb{B})_{l})_{1}$ and $N_{l,l+1}(\widehat{\Gamma}(\mathbb{B})_{l})_{1}$ 
can be 
regarded as the holomorphic families of nilpotent Lie algebras $\bigsqcup_{\mathbf{c}\in \mathbb{B}}\left(\mathfrak{n}_{l,l+1}(\widehat{\Gamma}(\mathbf{c})_{l})_{1} \right)$
and groups $\bigsqcup_{\mathbf{c}\in \mathbb{B}}\left(N_{l,l+1}(\widehat{\Gamma}(\mathbf{c})_{l})_{1} \right)$ respectively.

Now we take a partition $\mathcal{I}\colon I_{0},\ldots,I_{r}$ of $\{0,1,\ldots,k\}$
and an element
$\mathbf{c}\in \mathcal{C}(\mathcal{I})\cap \mathbb{B}$ of $\mathbb{B}$.
Recall that the 
algebra isomorphism $\prod_{j=0}^{r}\mathrm{pr}_{c_{I_{j}}}\colon \widehat{\Gamma}(\mathbf{c})_{l}\overset{\sim}{\rightarrow}
\prod_{j=0}^{r}\mathbb{C}[z_{c_{I_{j}}}]_{l_{j}}$
restricts to the isomorphism 
\[
	\prod_{j=0}^{r}\mathrm{pr}_{c_{I_{j}}}\colon \mathfrak{m}_{z_{c_{0}}}^{\widehat{\Gamma}(\mathbf{c})}\overset{\sim}{\longrightarrow}
\mathfrak{m}_{z_{c_{0}}}^{(l_{0})}\times \prod_{j=1}^{r}\mathbb{C}[z_{c_{i_{[j,0]}}}]_{l_{j}}	
\]
of non-unital $\mathbb{C}$-algebras. Then
we see that the 
isomorphism 
\[
\prod_{j=0}^{r}\mathrm{pr}_{c_{I_{j}}}\colon \mathfrak{n}_{l,l+1}(\widehat{\Gamma}(\mathbf{c})_{l})\overset{\sim}{\rightarrow}
\prod_{j=0}^{r}\mathfrak{n}_{l,l+1}(\mathbb{C}[z_{c_{I_{j}}}]_{l_{j}})
\]
of Lie algebras 
restricts to the isomorphism 
\[
	\prod_{j=0}^{r}\mathrm{pr}_{c_{I_{j}}}\colon \mathfrak{n}_{l,l+1}(\widehat{\Gamma}(\mathbf{c})_{l})_{1}\overset{\sim}{\longrightarrow}
\mathfrak{n}_{l,l+1}(\mathbb{C}[z_{c_{0}}]_{l_{0}})_{1}\times \prod_{j=1}^{r}\mathfrak{n}_{l,l+1}(\mathbb{C}[z_{c_{I_{j}}}]_{l_{j}}).
\]
Here we note that $c_{0}=c_{I_{0}}$ since we assumed $0\in I_{0}$.
Then through the exponential maps, we obtain the isomorphism as complex manifolds,
\[
	\prod_{j=0}^{r}\mathrm{pr}_{c_{i_{[j,0]}}}\colon N_{l,l+1}(\widehat{\Gamma}(\mathbf{c})_{l})_{1}\overset{\sim}{\longrightarrow}
N_{l,l+1}(\mathbb{C}[z_{c_{0}}]_{l_{0}})_{1}\times \prod_{j=1}^{r}N_{l,l+1}(\mathbb{C}[z_{c_{i_{[j,0]}}}]_{l_{j}}).
\]
\begin{prop}\label{prop:liedecomp2}
	The map
	\[
	\prod_{j=0}^{r}\mathrm{pr}_{c_{i_{[j,0]}}}\colon N_{l,l+1}(\widehat{\Gamma}(\mathbf{c})_{l})_{1}\overset{\sim}{\longrightarrow}
N_{l,l+1}(\mathbb{C}[z_{c_{0}}]_{l_{0}})_{1}\times \prod_{j=1}^{r}N_{l,l+1}(\mathbb{C}[z_{c_{i_{[j,0]}}}]_{l_{j}}).
\]
is a Lie group isomorphism. 
\end{prop}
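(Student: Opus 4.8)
The plan is to deduce the group-homomorphism property from the Lie algebra isomorphism already established in the text, exactly along the lines of the proof of Proposition~\ref{prop:lieisom}. Indeed, the discussion preceding the statement has already exhibited the map in question as the biholomorphism obtained by conjugating, with the exponential maps on source and target, a Lie algebra isomorphism $\mathfrak{n}_{l,l+1}(\widehat{\Gamma}(\mathbf{c})_{l})_{1}\xrightarrow{\sim}\mathfrak{n}_{l,l+1}(\mathbb{C}[z_{c_{0}}]_{l_{0}})_{1}\times\prod_{j=1}^{r}\mathfrak{n}_{l,l+1}(\mathbb{C}[z_{c_{i_{[j,0]}}}]_{l_{j}})$, so the only remaining point is that this biholomorphism intertwines the two groupoid (here, group) multiplications, which on both sides are defined by Dynkin's formula.

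First I would record the two facts already in place: (i) the Chinese remainder isomorphism $\prod_{j=0}^{r}\mathrm{pr}_{c_{i_{[j,0]}}}\colon\widehat{\Gamma}(\mathbf{c})_{l}\xrightarrow{\sim}\prod_{j=0}^{r}\mathbb{C}[z_{c_{i_{[j,0]}}}]_{l_{j}}$ carries the non-unital ideal $\mathfrak{m}_{z_{c_{0}}}^{\widehat{\Gamma}(\mathbf{c})}$ isomorphically onto $\mathfrak{m}_{z_{c_{0}}}^{(l_{0})}\times\prod_{j=1}^{r}\mathbb{C}[z_{c_{i_{[j,0]}}}]_{l_{j}}$, because $z-c_{0}$ is a unit modulo $(z-c_{i_{[j,0]}})^{l_{j}+1}$ for $j\ge 1$ while it generates the maximal ideal at $c_{0}$, and hence, after tensoring with $\mathfrak{n}_{l,l+1}$, restricts to the Lie algebra isomorphism above; and (ii) all the Lie algebras in play are nilpotent, so Dynkin's formula is a finite $\mathbb{C}$-linear combination of iterated brackets $[X^{p_{1}}Y^{q_{1}}\cdots X^{p_{n}}Y^{q_{n}}]$. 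Then, given $X,Y\in\mathfrak{n}_{l,l+1}(\widehat{\Gamma}(\mathbf{c})_{l})_{1}$, since the restricted map is a morphism of Lie algebras it sends each such bracket to the same expression in the images $\mathrm{pr}_{c_{i_{[j,0]}}}(X)$, $\mathrm{pr}_{c_{i_{[j,0]}}}(Y)$; summing and applying the exponential maps on both sides yields $\bigl(\prod_{j}\mathrm{pr}_{c_{i_{[j,0]}}}\bigr)(\exp X\cdot\exp Y)=\bigl(\prod_{j}\mathrm{pr}_{c_{i_{[j,0]}}}\bigr)(\exp X)\cdot\bigl(\prod_{j}\mathrm{pr}_{c_{i_{[j,0]}}}\bigr)(\exp Y)$ in the product group, which is precisely the homomorphism property. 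This is the computation of Proposition~\ref{prop:lieisom}, now carried out componentwise with the bookkeeping that only the $c_{0}$-factor carries the subscript $1$.

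An even shorter route, which I would at least mention, is functoriality: the linear algebraic subgroup $N_{l,l+1}\subset\mathrm{GL}_{N}$ defines a functor from commutative $\mathbb{C}$-algebras to groups, so applying it to the $\mathbb{C}$-algebra isomorphism $\widehat{\Gamma}(\mathbf{c})_{l}\xrightarrow{\sim}\prod_{j}\mathbb{C}[z_{c_{i_{[j,0]}}}]_{l_{j}}$ and restricting to the subgroups cut out by reduction modulo the relevant maximal ideals at $c_{0}$ gives the asserted group isomorphism directly, the holomorphic structure being automatic. I do not anticipate a genuine obstacle here; the only point deserving care is the asymmetric role of the index $0$—equivalently of the point $c_{0}=c_{i_{[0,0]}}$—which is exactly why only the first factor of the target carries the subscript $1$, and this has already been worked out immediately before the statement.
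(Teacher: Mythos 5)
Your proposal is correct and follows essentially the same route as the paper, whose proof of this proposition consists precisely of the remark that the argument of Proposition \ref{prop:lieisom} (Dynkin's formula plus the Lie algebra isomorphism, or alternatively functoriality of the algebraic group) applies verbatim after restricting the Chinese remainder isomorphism to the ideal $\mathfrak{m}_{z_{c_{0}}}^{\widehat{\Gamma}(\mathbf{c})}$. Your extra bookkeeping about the asymmetric role of the index $0$ is consistent with the decomposition the paper establishes immediately before the statement.
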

\begin{proof}
	As well as Proposition \ref{prop:lieisom}, we can see  that 
	this map is compatible with the group multiplications.
\end{proof}
\subsection{Subbundle $\mathfrak{u}_{l,l+1}(\widehat{\mathrm{L}}(\mathbb{B})^{\mathrm{rev}}_{[l-1]})$}
In section \ref{sec:liegroupoid},
we gave a realization of the family $\bigsqcup_{\mathbf{c}\in \mathbb{B}}\mathfrak{u}_{l,l+1}(\widehat{\mathrm{L}}(\mathbf{c}))$
as the trivial vector bundle $\mathfrak{u}_{l,l+1}^{\oplus(l+1)}\times \mathbb{B}\rightarrow \mathbb{B}$
with respect to the standard basis
\[
	\frac{1}{z-c_{0}},\frac{1}{(z-c_{0})(z-c_{1})},\ldots,\frac{1}{(z-c_{0})(z-c_{1})\cdots(z-c_{l})}	
\]
of $\widehat{\mathrm{L}}(\mathbf{c})$.
This vector bundle was denoted by $\mathfrak{u}_{l,l+1}(\widehat{\mathrm{L}}(\mathbb{B})_{l}).$
Now by reversing the order of $c_{0},c_{1},\ldots,c_{l}$,
we obtain another basis
\[
	\frac{1}{(z-c_{l})},\frac{1}{(z-c_{l})(z-c_{l-1})},\ldots,\frac{1}{(z-c_{l})(z-c_{l-1})\cdots(z-c_{1})}		
\]
of $\widehat{\mathrm{L}}(\mathbf{c})$ for $\mathbf{c}\in \mathbb{B}$
and 
consider another realization of the family $\bigsqcup_{\mathbf{c}\in \mathbb{B}}\mathfrak{u}_{l,l+1}(\widehat{\mathrm{L}}(\mathbf{c}))$
as the vector bundle $\mathfrak{u}_{l,l+1}^{\oplus(l+1)}\times \mathbb{B}\rightarrow \mathbb{B}$
with respect to this new basis.
We denote this vector bundle by\index[cN]{$\mathfrak{u}_{l,l+1}(\widehat{\mathrm{L}}(\mathbb{B})_{l})^{\mathrm{rev}}$} $\mathfrak{u}_{l,l+1}(\widehat{\mathrm{L}}(\mathbb{B})_{l})^{\mathrm{rev}}.$
There uniquely exists 
base change matrix $M_{\mathbf{c}}\in \mathrm{GL}_{l+1}(\mathbb{C})$
between above two basis of $\widehat{\mathrm{L}}(\mathbf{c})$
for each $\mathbf{c}\in \mathbb{B}$,
and  $M_{\mathbf{c}}$
depends holomorphically on $\mathbf{c}\in \mathbb{B}$.
Thus we obtain the holomorphic map $\mathbb{B}\ni \mathbf{c}\mapsto M_{\mathbf{c}}\in\mathrm{GL}_{l+1}(\mathbb{C})$
and then it defines 
 the standard isomorphism 
\[
	\phi_{M}\colon \mathfrak{u}_{l,l+1}(\widehat{\mathrm{L}}(\mathbb{B})_{l})^{\mathrm{rev}}\overset{\sim}{\longrightarrow}\mathfrak{u}_{l,l+1}(\widehat{\mathrm{L}}(\mathbb{B})_{l})
\]
as vector bundles over $\mathbb{B}$.

Let us consider the subbundle of $\mathfrak{u}_{l,l+1}(\widehat{\mathrm{L}}(\mathbb{B}))^{\mathrm{rev}}$ defined by 
\begin{equation*}
	\mathfrak{u}_{l,l+1}(\widehat{\mathrm{L}}(\mathbb{B})_{[l-1]})^{\mathrm{rev}}:=
	\left\{
	\frac{X_{l}}{z-c_{l}}+\cdots+
	\frac{X_{0}}{(z-c_{l})\cdots(z-c_{0})}
	\in \bigsqcup_{\mathbf{c}\in \mathbb{B}}\mathfrak{u}_{l,l+1}(\widehat{\mathrm{L}}(\mathbf{c})_{l})\,\middle|\,
	X_{0}=0		
	\right\}.
\end{equation*}
Then we denote the corresponding subbundle of $\mathfrak{u}_{l,l+1}(\widehat{\mathrm{L}}(\mathbb{B}))$ under the isomorphism $\phi_{M}$ by \index[cN]{$\mathfrak{u}_{l,l+1}(\widehat{\mathrm{L}}(\mathbb{B})^{\mathrm{rev}}_{[l-1]})$}
\[
	\mathfrak{u}_{l,l+1}(\widehat{\mathrm{L}}(\mathbb{B})^{\mathrm{rev}}_{[l-1]}):=\phi_{M}(\mathfrak{u}_{l,l+1}(\widehat{\mathrm{L}}(\mathbb{B})_{l-1})^{\mathrm{rev}}).
\]
\begin{prop}\label{dualliedecomp}
	Let us take a partition $\mathcal{I}\colon I_{0},\ldots,I_{r}$ of $\{0,1,\ldots,k\}$
	and assume $0\in I_{0}$.
	Also take $\mathbf{c}\in C(\mathcal{I})\cap \mathbb{B}$.
	Let $\prod_{j=0}^{r}\mathrm{pr}_{c_{I_{j}}}\colon 
	\mathfrak{g}(\widehat{\mathrm{L}}(\mathbf{c})_{l})\xrightarrow{\sim}
	\prod_{j=0}^{r}\mathfrak{g}(\mathbb{C}[z^{-1}_{c_{I_{j}}}]_{l_{j}})$
	be the isomorphism introduced in Section \ref{sec:decompfib}.
	Let us denote the fiber of the vector bundle $\mathfrak{u}_{l,l+1}(\widehat{\mathrm{L}}(\mathbb{B})^{\mathrm{rev}}_{[l-1]})$
	at $\mathbf{c}$ by $\mathfrak{u}_{l,l+1}(\widehat{\mathrm{L}}(\mathbf{c})^{\mathrm{rev}}_{[l-1]})$.
	
	By restricting the map $\prod_{j=0}^{r}\mathrm{pr}_{c_{I_{j}}}$
	to $\mathfrak{u}_{l,l+1}(\widehat{\mathrm{L}}(\mathbf{c})^{\mathrm{rev}}_{[l-1]})$, we obtain the isomorphism 
	\[
		\prod_{j=0}^{r}\mathrm{pr}_{c_{I_{j}}}\colon \mathfrak{u}_{l,l+1}(\widehat{\mathrm{L}}(\mathbf{c})^{\mathrm{rev}}_{[l-1]})\overset{\sim}{\longrightarrow}
		\mathfrak{u}_{l,l+1}(\mathbb{C}[z^{-1}_{c_{0}}]_{l_{0}-1})\times \prod_{j=1}^{r}\mathfrak{u}_{l,l+1}(\mathbb{C}[z^{-1}_{c_{I_{j}}}]_{l_{j}}).
	\]
\end{prop}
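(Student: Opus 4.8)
The plan is to deduce Proposition \ref{dualliedecomp} as a restriction of the already-established isomorphism $\prod_{j=0}^{r}\mathrm{pr}_{c_{i_{[j,0]}}}\colon \mathfrak{g}(\widehat{\mathrm{L}}(\mathbf{c})_{l})\xrightarrow{\sim}\prod_{j=0}^{r}\mathfrak{g}(\mathbb{C}[z^{-1}_{c_{i_{[j,0]}}}]_{l_{j}})$ from Section \ref{sec:decompfib}. Since $\mathfrak{u}_{l,l+1}\subset\mathfrak{g}$ is a $\mathbb{C}$-vector subspace, the partial fraction decomposition respects the $\mathfrak{u}_{l,l+1}$-component by construction, so the stated isomorphism already restricts to $\mathfrak{u}_{l,l+1}(\widehat{\mathrm{L}}(\mathbf{c})_{l})\xrightarrow{\sim}\prod_{j=0}^{r}\mathfrak{u}_{l,l+1}(\mathbb{C}[z^{-1}_{c_{i_{[j,0]}}}]_{l_{j}})$. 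The only real content is to verify that the subbundle $\mathfrak{u}_{l,l+1}(\widehat{\mathrm{L}}(\mathbf{c})^{\mathrm{rev}}_{[l-1]})$, defined via the \emph{reversed} standard basis and the base-change isomorphism $\phi_{M}$, is carried to the asserted product $\mathfrak{u}_{l,l+1}(\mathbb{C}[z^{-1}_{c_{0}}]_{l_{0}-1})\times\prod_{j=1}^{r}\mathfrak{u}_{l,l+1}(\mathbb{C}[z^{-1}_{c_{i_{[j,0]}}}]_{l_{j}})$, i.e.\ that exactly one ``slot'' — the one attached to $c_{0}$ — gets its top-order term truncated.

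First I would unwind the definition of $\mathfrak{u}_{l,l+1}(\widehat{\mathrm{L}}(\mathbf{c})^{\mathrm{rev}}_{[l-1]})$: it consists of those $f(z)=\sum_{i=0}^{l}\frac{X_{i}}{(z-c_{l})(z-c_{l-1})\cdots(z-c_{i})}$ with $X_{i}\in\mathfrak{u}_{l,l+1}$ and $X_{0}=0$, i.e.\ the span of the first $l$ reversed-basis vectors. Via $\phi_{M}$ this is simply the $\mathbb{C}[z]$-submodule $\mathrm{L}(D')/\mathbb{C}[z]\otimes\mathfrak{u}_{l,l+1}$ where $D'=c_{l}+c_{l-1}+\cdots+c_{1}$ is the divisor omitting $c_{0}$; put differently, it is the subspace of $\mathfrak{u}_{l,l+1}(\widehat{\mathrm{L}}(\mathbf{c})_{l})$ of rational functions whose pole at $c_{0}$ has order $\le l$ minus... more precisely of pole divisor bounded by $D(\mathbf{c})_{l}-c_{0}$. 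This is the coordinate-free characterization I would use, since it is manifestly intrinsic and behaves well under the Chinese Remainder decomposition. Then, under the partial fraction isomorphism, a function with pole divisor $\le D(\mathbf{c})_{l}-c_{0}=\sum_{j}(l_{j}+1)c_{i_{[j,0]}}-c_{i_{[0,0]}}$ maps precisely to a tuple whose $c_{0}$-component lies in $z_{c_{0}}^{-l_{0}}\mathbb{C}[[z_{c_{0}}]]/\mathbb{C}[[z_{c_{0}}]]=\mathbb{C}[z^{-1}_{c_{0}}]_{l_{0}-1}$ and whose remaining components are unconstrained, i.e.\ lie in $\mathbb{C}[z^{-1}_{c_{i_{[j,0]}}}]_{l_{j}}$ for $j\ge 1$; tensoring with $\mathfrak{u}_{l,l+1}$ throughout gives the claim. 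Here I would cite the commutativity of diagram (\ref{eq:fracchin}) from Section \ref{sec:partialfrac} to justify that pole-order conditions at individual points translate into the componentwise conditions.

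The main obstacle, and the step deserving genuine care, is the bookkeeping connecting the \emph{reversed} basis $\{\tfrac{1}{z-c_{l}},\dots,\tfrac{1}{(z-c_{l})\cdots(z-c_{1})}\}$ to the divisor-theoretic description: one must check that dropping the single reversed-basis vector $\tfrac{1}{(z-c_{l})\cdots(z-c_{0})}$ is the same as imposing ``pole order at $c_{0}$ drops by one while all other pole orders are unchanged,'' and in particular that this is the slot indexed by the block $I_{0}\ni 0$ and not some other block. This is where the ordering conventions $i_{[j,0]}<i_{[j,1]}<\cdots$ and the normalization $0\in I_{0}$ must be invoked: since $0=i_{[0,0]}$ is the smallest index overall, the factor $(z-c_{0})$ in the denominator of the top reversed-basis element is precisely the ``last'' factor appended, and its removal is detected in the $c_{0}=c_{i_{[0,0]}}$-component of the Chinese Remainder decomposition. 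Once this identification of blocks is pinned down, the rest is the routine verification that a linear isomorphism restricting to a direct-sum decomposition restricts further to any subspace that is a sum of ``coordinate'' subspaces, together with the observation that $\phi_{M}$ and $\prod_{j}\mathrm{pr}_{c_{i_{[j,0]}}}$ both depend holomorphically on $\mathbf{c}$ so that the restriction is an isomorphism of (trivial) subbundles fiberwise, hence globally.
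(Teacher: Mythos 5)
Your proposal is correct and follows essentially the same route as the paper: you identify $\mathfrak{u}_{l,l+1}(\widehat{\mathrm{L}}(\mathbf{c})^{\mathrm{rev}}_{[l-1]})$ intrinsically as $\mathfrak{u}_{l,l+1}\otimes\bigl(\mathrm{L}(c_{1}+\cdots+c_{l})/\mathbb{C}[z]\bigr)$ (the paper phrases this as $\widehat{\mathrm{L}}(\mathbf{c}_{\ge 1})_{l-1}$ for the truncated tuple $\mathbf{c}_{\ge 1}=(c_{1},\ldots,c_{k})$, which lies in $C(\mathcal{I}_{\ge 1})$ with $I_{0}$ shrunk by one element) and then apply the partial fraction decomposition of Section \ref{sec:partialfrac}, which drops exactly the $c_{0}$-slot's top order to give $\mathbb{C}[z_{c_{0}}^{-1}]_{l_{0}-1}$. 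Your block-identification discussion via $0=i_{[0,0]}\in I_{0}$ matches the paper's reasoning, so nothing essential is missing.
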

\begin{proof}
Let us consider the vector subspace of $\widehat{\mathrm{L}}(\mathbf{c})_{l}$
defined by 
\begin{equation*}
	\widehat{\mathrm{L}}(\mathbf{c})^{\mathrm{rev}}_{[l-1]}:=
	\left\{
	\frac{x_{l}}{z-c_{l}}+\frac{x_{l-1}}{(z-c_{l})(z-c_{l-1})}+\cdots+
	\frac{x_{0}}{(z-c_{l})\cdots(z-c_{0})}
	\in \widehat{\mathrm{L}}(\mathbf{c})_{l}\,\middle|\,
	x_{0}=0		
	\right\}.
\end{equation*}
Since the zero-th component $c_{0}$ of $\mathbf{c}$ does not appear 
in this definition, by setting  
$\mathbf{c}_{\ge 1}:=(c_{1},c_{2},\ldots,c_{k})\in \mathbb{C}^{k}$,
we can regard  
\[
	\widehat{\mathrm{L}}(\mathbf{c})^{\mathrm{rev}}_{[l-1]}=
	\widehat{\mathrm{L}}(\mathbf{c}_{\ge 1})_{l-1}.
\]

For the partition
$\mathcal{I}\colon I_{0},I_{1},\ldots,I_{r}$, let $I^{\ge 1}_{j}:=I_{j}\cap \{1,2,\ldots,k\}$
for $j=0,1,\ldots,r$ and define the partition    
$\mathcal{I}_{\ge 1}\colon I_{0}^{\ge 1},I_{1}^{\ge 1},\ldots,I_{r}^{\ge 1}$
of $\{1,2,\ldots,k\}$ where we allow $I^{\ge 1}_{j}=\emptyset$. 
Recall that in our convention we always assume 
$0\in I_{0}$. Thus $I_{0}^{\ge 1}=I_{0}\backslash\{0\}$ and 
$I_{j}^{\ge 1}=I_{j}$ for $j\ge 1$, and we can write  
$\mathcal{I}_{\ge 1}\colon I_{0}^{\ge 1},I_{1},\ldots,I_{r}$.
Then we can regard $\mathbf{c}_{\ge 1}\in C(\mathcal{I}_{\ge 1})
\subset \{(a_{1},a_{2},\ldots,a_{k})\in \mathbb{C}^{k}\}$.
Therefore as we saw in Section \ref{sec:partialfrac},
we have the direct sum decomposition 
\[
	\widehat{\mathrm{L}}(\mathbf{c})^{\mathrm{rev}}_{[l-1]}=
\widehat{\mathrm{L}}(\mathbf{c}_{\ge 1})_{l-1}
=\mathbb{C}[z_{c_{0}}]_{l_{j}-1}\oplus \bigoplus_{j=1}^{r}\mathbb{C}[z_{c_{i_{[j,0]}}}]_{l_{j}}.
\]
This decomposition induces our desired isomorphism.
\end{proof}
\section{Deformation of truncated orbit}
Let us consider an unramified canonical normal form
\[
	H\,dz=\left(\frac{H_{k}}{z^{k}}+\cdots+\frac{H_{1}}{z}+H_{\mathrm{res}}\right)\frac{dz}{z}
	\in \mathfrak{g}(\mathbb{C}[z^{-1}]_{k})dz.
\]
Let $\mathbb{B}_{H}\subset \mathbb{C}^{k+1}$ be 
the complement of hypersurfaces defined in Section \ref{sec:openset}.
In this section, we shall construct a deformation 
of the truncated orbit $\mathbb{O}_{H}$ over the parameter space $\mathbb{B}_{H}.$

\subsection{Deformation of truncated orbit}\label{sec:deformtrunc}
Let us consider the product 
\[
	\prod_{l=1}^{k}\left(
		N_{l,l+1}(\widehat{\Gamma}(\mathbb{B}_{H})_{l})_{1}\times 
		\mathfrak{u}_{l,l+1}(\widehat{\mathrm{L}}(\mathbb{B}_{H})_{[l-1]}^{\mathrm{rev}})
	\right)
\]
of vector bundles over $\mathbb{B}_{H}$ defined in the previous section.
Let us define a left $L_{1}$-action on  this bundle.
Recall that each component of this product is a subbundle of 
the trivial bundles $(\mathfrak{n}_{l,l+1})^{\oplus (l+1)}\times \mathbb{B}_{H}$
or $(\mathfrak{u}_{l,l+1})^{\oplus (l+1)}\times \mathbb{B}_{H}$.
Then since
 $\mathfrak{n}_{l,l+1}$
and $\mathfrak{u}_{l,l+1}$ have the adjoint $L_{1}$-actions, 
we can extends these adjoint actions diagonally 
on $(\mathfrak{n}_{l,l+1})^{\oplus (l+1)}$ and $(\mathfrak{u}_{l,l+1})^{\oplus (l+1)}$.
Thus we obtain the $L_{1}$-actions on 
$(\mathfrak{n}_{l,l+1})^{\oplus (l+1)}\times \mathbb{B}_{H}$
and $(\mathfrak{u}_{l,l+1})^{\oplus (l+1)}\times \mathbb{B}_{H}$
where $L_{1}$ acts trivially on $\mathbb{B}_{H}$.
It can be directly checked that 
subbundles $N_{l,l+1}(\widehat{\Gamma}(\mathbb{B}_{H})_{l})_{1}$ and 
$\mathfrak{u}_{l,l+1}(\widehat{\mathrm{L}}(\mathbb{B}_{H})_{[l-1]}^{\mathrm{rev}})$
are preserved under these $L_{1}$-actions.
Therefore we can extend these $L_{1}$-actions to the whole product bundle
on which $L_{1}$ acts diagonally on each component through the above action.

Let us also consider the  
trivial fiber bundle $\mathbb{O}_{H_{\mathrm{res}}}^{L_{1}}\times \mathbb{B}_{H}
\rightarrow \mathbb{B}_{H}$
with the left $L_{1}$-action 
defined by multiplication from the left 
on $L_{1}/\mathrm{Stab}_{L_{1}}(H_{\mathrm{res}})\cong  \mathbb{O}_{H_{\mathrm{res}}}^{L_{1}}.$
We regard this bundle 
as a subspace of the vector bundle $\mathfrak{g}(\widehat{\mathrm{L}}(\mathbb{B}_{H}))$
by the injective immersion 
\begin{multline*}
	\left(L_{1}/\mathrm{Stab}_{L_{1}}(H_{\mathrm{res}})\right)\times \mathbb{B}_{H}\ni ([g],\mathbf{c})\longmapsto \\
		\frac{H_{k}}{\prod_{j=0}^{k}(z-c_{j})}
		+\frac{H_{k-1}}{\prod_{j=0}^{k-1}(z-c_{j})}
		+\cdots +
		\frac{\mathrm{Ad}(g)(H_{\mathrm{res}})}{z-c_{0}}
	\in \bigsqcup_{\mathbf{c}\in \mathbb{B}_{H}}\mathfrak{g}(\widehat{\mathrm{L}}(\mathbf{c}))
	=\mathfrak{g}(\widehat{\mathrm{L}}(\mathbb{B}_{H})).
\end{multline*}
Here we notice that $\mathrm{Ad}(g)(H_{i})=H_{i}$ for $g\in L_{1}$ and $i=1,2,\ldots,k$.

Let us moreover consider the 
trivial bundle $G\times \mathbb{B}_{H}\rightarrow \mathbb{B}_{H}$.
By the multiplication from the right, we define the left  
$L_{1}$-action on $G$, i.e.,
$l\cdot g:=gl^{-1}$ for $g\in G$ and $l\in L_{1}$,
and thus $G\times \mathbb{B}_{H}\rightarrow \mathbb{B}_{H}$
is equipped with the left $L_{1}$-action as well.

Now we define the product of all these fiber bundles over $\mathbb{B}_{H}$
with the left $L_{1}$-action by
\[
	G\times \prod_{l=1}^{k}\left(
		N_{l,l+1}(\widehat{\Gamma}(\mathbb{B}_{H})_{l})_{1}\times 
		\mathfrak{u}_{l,l+1}(\widehat{\mathrm{L}}(\mathbb{B}_{H})_{[l-1]}^{\mathrm{rev}})	
		\right)
		\times \mathbb{O}_{H_{\mathrm{res}}}^{L_{1}}.
\]

We shall consider the quotient of the total space of this 
bundle under the $L_{1}$-action.
Since $L_{1}$ acts freely and properly on $G$,  
Lemma \ref{lem:denpan} assures that 
the $L_{1}$-action on the total space of this fiber bundle is free and proper.
Thus
we can consider the quotient manifold\index[cN]{$\mathbb{O}_{H,\mathbb{B}_{H}}$}
\[
	\mathbb{O}_{H,\mathbb{B}_{H}}:=L_{1}\Bigg\backslash 
	\left(
		G\times \prod_{l=1}^{k}\left(
			N_{l,l+1}(\widehat{\Gamma}(\mathbb{B}_{H})_{l})_{1}\times 
			\mathfrak{u}_{l,l+1}(\widehat{\mathrm{L}}(\mathbb{B}_{H})_{[l-1]}^{\mathrm{rev}})	
			\right)
			\times \mathbb{O}_{H_{\mathrm{res}}}^{L_{1}}
	\right)
\]
with the projection map 
\[
	\pi_{\mathbb{B}_{H}}\colon \mathbb{O}_{H,\mathbb{B}_{H}}\longrightarrow \mathbb{B}_{H}
\]
induced by the 
bundle projection.

\begin{prop}\label{prop:deformorb}
	The fiber at $\mathbf{0}\in \mathbb{B}_{H}$ is isomorphic to the 
	truncated orbit $\mathbb{O}_{H}$, i.e.,
	\[
		\pi_{\mathbb{B}_{H}}^{-1}(\mathbf{0})\cong \mathbb{O}_{H}.	
	\]
\end{prop}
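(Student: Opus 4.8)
The plan is to evaluate the defining construction of $\mathbb{O}_{H,\mathbb{B}_{H}}$ at the single point $\mathbf{c}=\mathbf{0}\in\mathbb{B}_{H}$ and identify the resulting fibre with the triangular decomposition of $\mathbb{O}_{H}$ recalled in Section \ref{sec:tridec}. First I would observe that the fibre $\pi_{\mathbb{B}_{H}}^{-1}(\mathbf{0})$ is, by construction, the $L_{1}$-quotient of the fibre over $\mathbf{0}$ of the product bundle, that is,
\[
	\pi_{\mathbb{B}_{H}}^{-1}(\mathbf{0})\cong L_{1}\Big\backslash\left(G\times\prod_{l=1}^{k}\left(N_{l,l+1}(\widehat{\Gamma}(\mathbf{0})_{l})_{1}\times\mathfrak{u}_{l,l+1}(\widehat{\mathrm{L}}(\mathbf{0})^{\mathrm{rev}}_{[l-1]})\right)\times\mathbb{O}_{H_{\mathrm{res}}}^{L_{1}}\right).
\]
So the task reduces to producing, fibrewise at $\mathbf{0}$, $L_{1}$-equivariant isomorphisms of the building blocks: $N_{l,l+1}(\widehat{\Gamma}(\mathbf{0})_{l})_{1}\cong N_{l,l+1}(\mathbb{C}[z]_{l})_{1}$ and $\mathfrak{u}_{l,l+1}(\widehat{\mathrm{L}}(\mathbf{0})^{\mathrm{rev}}_{[l-1]})\cong\mathfrak{u}_{l,l+1}(\mathbb{C}[z^{-1}]_{l-1})$.

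Next I would supply these identifications concretely. For the groupoid factor, when $\mathbf{c}=\mathbf{0}$ the ring $\widehat{\Gamma}(\mathbf{0})$ is literally $\mathbb{C}[z]_{k}$ and its truncation $\widehat{\Gamma}(\mathbf{0})_{l}$ is $\mathbb{C}[z]/\langle z^{l+1}\rangle=\mathbb{C}[z]_{l}$ (this is already recorded just after the definition of $\widehat{\Gamma}(\mathbf{c})$), and the maximal ideal $\mathfrak{m}_{z_{c_{0}}}^{\widehat{\Gamma}(\mathbf{c})}$ specializes to $\mathfrak{m}_{z}^{(l)}$; hence $\mathfrak{n}_{l,l+1}(\widehat{\Gamma}(\mathbf{0})_{l})_{1}=\mathfrak{n}_{l,l+1}\otimes_{\mathbb{C}}\mathfrak{m}_{z}^{(l)}=\mathfrak{n}_{l,l+1}(\mathbb{C}[z]_{l})_{1}$ and, applying the exponential map, $N_{l,l+1}(\widehat{\Gamma}(\mathbf{0})_{l})_{1}=N_{l,l+1}(\mathbb{C}[z]_{l})_{1}$. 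For the dual factor, at $\mathbf{c}=\mathbf{0}$ the reversed standard basis of $\widehat{\mathrm{L}}(\mathbf{0})$ coincides with the ordinary monomial basis $z^{-1},z^{-2},\dots$, so that $\widehat{\mathrm{L}}(\mathbf{0})\cong\mathbb{C}[z^{-1}]_{k}$ (again recorded in the excerpt), and the subbundle $\mathfrak{u}_{l,l+1}(\widehat{\mathrm{L}}(\mathbb{B}_{H})^{\mathrm{rev}}_{[l-1]})$, defined by killing the component of lowest pole order in the reversed basis, has fibre at $\mathbf{0}$ equal to $\mathfrak{u}_{l,l+1}(\mathbb{C}[z^{-1}]_{l-1})$. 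All of these isomorphisms are manifestly $L_{1}$-equivariant because the $L_{1}$-action on every block is the diagonal adjoint action on the $\mathfrak{g}$-coefficients, untouched by the specialization $\mathbf{c}=\mathbf{0}$, and the $L_{1}$-action on $G$ (right multiplication) and on $\mathbb{O}_{H_{\mathrm{res}}}^{L_{1}}$ is independent of $\mathbf{c}$ throughout.

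Combining these equivariant isomorphisms yields an $L_{1}$-equivariant isomorphism of the fibre over $\mathbf{0}$ of the product bundle with
\[
	G\times\left(\prod_{l=1}^{k}\left(N_{l,l+1}(\mathbb{C}[z]_{l})_{1}\times\mathfrak{u}_{l,l+1}(\mathbb{C}[z^{-1}]_{l-1})\right)\right)\times\mathbb{O}_{H_{\mathrm{res}}}^{L_{1}},
\]
so that after taking the $L_{1}$-quotient we obtain exactly the space appearing in the triangular decomposition of $\mathbb{O}_{H}$ displayed in Section \ref{sec:tridec}, which is isomorphic to $\mathbb{O}_{H}$ by Theorem \ref{thm:triangdecompfb} together with Proposition \ref{prop:pullback2form}. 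I expect the only genuine point requiring care to be the bookkeeping for the reversed basis: one must check that the embedding used in the definition of $\mathfrak{u}_{l,l+1}(\widehat{\mathrm{L}}(\mathbb{B}_{H})^{\mathrm{rev}}_{[l-1]})$ via the base-change map $\phi_{M}$ degenerates correctly at $\mathbf{c}=\mathbf{0}$ — i.e. that $M_{\mathbf{0}}$ is the permutation reversing the order of basis vectors and hence identifies the ``$X_{0}=0$'' reversed condition with the span of $z^{-1},\dots,z^{-l}$. This is a routine linear-algebra verification, but it is the step where an off-by-one in the truncation index $l-1$ versus $l$ would show up, so I would write it out explicitly. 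Everything else is a direct substitution of $\mathbf{c}=\mathbf{0}$ into already-established structures.
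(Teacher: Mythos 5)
Your proposal is correct and follows essentially the same route as the paper: specialize the construction at $\mathbf{c}=\mathbf{0}$, observe that each building block degenerates to its undeformed counterpart ($\widehat{\Gamma}(\mathbf{0})_{l}=\mathbb{C}[z]_{l}$, $\widehat{\mathrm{L}}(\mathbf{0})_{l}=\mathbb{C}[z^{-1}]_{l}$), and invoke the triangular decomposition from Section \ref{sec:tridec} and Theorem \ref{thm:triangdecompfb}. One small correction to a peripheral remark: at $\mathbf{c}=\mathbf{0}$ the reversed and standard bases of $\widehat{\mathrm{L}}(\mathbf{0})_{l}$ coincide term by term (both are $z^{-1},\dots,z^{-(l+1)}$), so $M_{\mathbf{0}}$ is the identity rather than the reversal permutation; your conclusion that the ``$X_{0}=0$'' condition cuts out the span of $z^{-1},\dots,z^{-l}$, i.e.\ $\mathfrak{u}_{l,l+1}(\mathbb{C}[z^{-1}]_{l-1})$, is nevertheless correct.
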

\begin{proof}
	From the definitions of 
	$N_{l,l+1}(\widehat{\Gamma}(\mathbb{B}_{H})_{l})_{1}$ and 
	$\mathfrak{u}_{l,l+1}(\widehat{\mathrm{L}}(\mathbb{B}_{H})_{[l-1]}^{\mathrm{rev}})$,
	we obtain 
	\[
		\pi_{\mathbb{B}_{H}}^{-1}(\mathbf{0})=
		L_{1}\Bigg\backslash 
	\left(
		G\times \prod_{l=1}^{k}\left(
			N_{l,l+1}(\mathbb{C}[z]_{l})_{1}\times 
			\mathfrak{u}_{l,l+1}(\mathbb{C}[z^{-1}]_{l-1})	
			\right)
			\times \mathbb{O}_{H_{\mathrm{res}}}^{L_{1}}
	\right).
	\]
	As we saw in Section \ref{sec:proofthm},
	the right hand side is isomorphic to $\mathbb{O}_{H}$. 
\end{proof}

As a fiber bundle, $\mathbb{O}_{H,\mathbb{B}_{H}}$
is trivial, and no deformation occurs to each fiber.
However group structures of $N_{l,l+1}(\widehat{\Gamma}(\mathbf{c})_{l})$
and their actions on $\mathfrak{u}_{l,l+1}(\widehat{\mathrm{L}}(\mathbf{c})_{[l-1]}^{\mathrm{rev}})$
are deformed according to 
the change of parameter $\mathbf{c}\in \mathbb{B}_{H}$.
Therefore the moment map on this bundle which will be defined in the next section,
gives a non-trivial deformation of the moment map
on the truncated orbit $\mathbb{O}_{H}$.	
\subsection{Deformation of the moment map $\mu_{\mathbb{O}_{H}\downarrow G}$}
We shall define the map 
\[
	\mu_{\mathbb{O}_{H}\downarrow G,\mathbb{B}_{H}}\colon \mathbb{O}_{H,\mathbb{B}_{H}}
	\longrightarrow \mathfrak{g}
\]
which gives a deformation of the moment map 
$\mu_{\mathbb{O}_{H}\downarrow G}\colon \mathbb{O}_{H}\rightarrow \mathfrak{g}$.
For this purpose we shall define the bundle map 
\[
	\iota_{\mathbb{O}_{H,\mathbb{B}_{H}}}
	\colon \mathbb{O}_{H,\mathbb{B}_{H}}
	\longrightarrow \mathfrak{g}(\widehat{\mathrm{L}}(\mathbb{B}_{H}))
\] 
which will be a deformation of the immersion 
$\iota_{\mathbb{O}_{H}}\colon \mathbb{O}_{H}\hookrightarrow 
\mathfrak{g}(\mathbb{C}[z^{-1}]_{k})$,
and the map $\mu_{\mathbb{O}_{H}\downarrow G,\mathbb{B}_{H}}$
will be defined as the composition 
of $\iota_{\mathbb{O}_{H,\mathbb{B}_{H}}}$
and 
$\mathrm{res\,}\colon \mathfrak{g}(\widehat{\mathrm{L}}(\mathbb{B}_{H}))
\rightarrow \mathfrak{g}$ defined in Section \ref{sec:liegroupoid}.

Let us take 
$\Xi\in \mathbb{O}_{H,\mathbb{B}_{H}}$ and 
choose its representative 
\[(g,(n_{l},\nu_{l})_{l=1,\ldots,k},\eta)
\in G\times \prod_{l=1}^{k}\left(
	N_{l,l+1}(\mathbb{C}[z]_{l})_{1}\times 
	\mathfrak{u}_{l,l+1}(\mathbb{C}[z^{-1}]_{l-1})	
	\right)
\times \mathbb{O}_{H_{\mathrm{res}}}^{L_{1}}.\]
Then let us define 
$\iota_{\mathbb{O}_{H,\mathbb{B}_{H}}}(\Xi)$ inductively as follows.
\\
\noindent
\underline{Step 1}. 
Let us define 
\[
	\iota_{\mathbb{O}_{H,\mathbb{B}_{H}}}^{(1)}(\Xi):=
	\mathrm{Ad}^{*}(n_{1})(\nu_{1}+\eta)\in \mathfrak{g}(\widehat{\mathrm{L}}(\mathbb{B}_{H}))
\]
where we regard $\nu_{1},\eta\in \mathfrak{g}(\widehat{\mathrm{L}}(\mathbb{B}_{H}))$
and $\mathrm{Ad}^{*}$ is the coadjoint action of $N_{1}(\widehat{\Gamma}(\mathbb{B}_{H}))$
on $\mathfrak{g}(\widehat{\mathrm{L}}(\mathbb{B}_{H}))$.
\\
\noindent
\underline{Step $l$ for $1<l\le k$}.
Let us define
\[
	\iota_{\mathbb{O}_{H,\mathbb{B}_{H}}}^{(l)}(\Xi):=
	\mathrm{Ad}^{*}(n_{l})(\nu_{l}+\iota_{\mathbb{O}_{H,\mathbb{B}_{H}}}^{(l-1)}(\Xi))\in \mathfrak{g}(\widehat{\mathrm{L}}(\mathbb{B}_{H})).
\] 
where we regard $\nu_{l}\in \mathfrak{g}(\widehat{\mathrm{L}}(\mathbb{B}_{H}))$
and $\mathrm{Ad}^{*}$ is the coadjoint action of $N_{l}(\widehat{\Gamma}(\mathbb{B}_{H}))$
on $\mathfrak{g}(\widehat{\mathrm{L}}(\mathbb{B}_{H}))$.
\\
\noindent
\underline{Step $k+1$}.
Let us regard the trivial bundle $G\times \mathbb{B}_{H}\rightarrow \mathbb{B}_{H}$
as a Lie groupoid in the trivial way.
Then the natural $G$-action on 
the each fiber $\mathfrak{g}(\widehat{\mathrm{L}}(\mathbf{c}))$
at $\mathbf{c}\in \mathbb{B}_{H}$
defines the coadjoint action $\mathrm{Ad}^{*}$
of $G\times \mathbb{B}_{H}$
on $\mathfrak{g}(\widehat{\mathrm{L}}(\mathbb{B}_{H})).$

Then we define
\[
	\iota_{\mathbb{O}_{H,\mathbb{B}_{H}}}^{(k+1)}(\Xi):=
	\mathrm{Ad}^{*}(g)(\iota_{\mathbb{O}_{H,\mathbb{B}_{H}}}^{(k)}(\Xi))\in \mathfrak{g}(\widehat{\mathrm{L}}(\mathbb{B}_{H})).
\] 

We can see that the definition of $\iota_{\mathbb{O}_{H,\mathbb{B}_{H}}}^{(k+1)}(\Xi)$
does not depend on the choice of representatives 
of $\Xi$ as we saw in Section \ref{sec:tridec}. Thus we obtain the bundle map \index[cN]{$\iota_{\mathbb{O}_{H,\mathbb{B}_{H}}}$}
\[
	\iota_{\mathbb{O}_{H,\mathbb{B}_{H}}}:=\iota^{(k+1)}_{\mathbb{O}_{H,\mathbb{B}_{H}}}
	\colon \mathbb{O}_{H,\mathbb{B}_{H}}
	\longrightarrow \mathfrak{g}(\widehat{\mathrm{L}}(\mathbb{B}_{H}))
\]
and the desired map \index[cN]{$\mu_{\mathbb{O}_{H}\downarrow G,\mathbb{B}_{H}}$}
\[
	\mu_{\mathbb{O}_{H}\downarrow G,\mathbb{B}_{H}}\colon
	\mathbb{O}_{H,\mathbb{B}_{H}}
	\overset{\iota_{\mathbb{O}_{H,\mathbb{B}_{H}}}}{\longrightarrow }
	\mathfrak{g}(\widehat{\mathrm{L}}(\mathbb{B}_{H}))
	\overset{\mathrm{res\,}}{\longrightarrow }
	\mathfrak{g}.
\]
Let us consider the left $G$-action on $\mathbb{O}_{H,\mathbb{B}_{H}}$
by the multiplication 
from the left on the first component of 
$G\times \left(\left(\prod_{l=1}^{k}(N_{l,l+1}(\mathbb{C}[z]_{l})_{1}\times \mathfrak{u}_{l,l+1}(\mathbb{C}[z^{-1}]_{l})_{1})\right)
\times \mathbb{O}_{H_{\mathrm{res}}}^{L_{1}}\right)$.
Then we can see that the map $\mu_{\mathbb{O}_{H}\downarrow G,\mathbb{B}_{H}}$
is $G$-equivariant. 

\begin{prop}\label{prop:momentspecfib}
	The map $\mu_{\mathbb{O}_{H}\downarrow G,\mathbb{B}_{H}}$
	is a deformation of the moment map $\mu_{\mathbb{O}_{H}\downarrow G}$,
	namely we have 
	\[
		\mu_{\mathbb{O}_{H}\downarrow G,\mathbb{B}_{H}}|_{\pi_{\mathbb{B}_{H}}^{-1}(\mathbf{0})}
		=\mu_{\mathbb{O}_{H}\downarrow G}
	\]
	under the identification $\pi_{\mathbb{B}_{H}}^{-1}(\mathbf{0})
	\cong \mathbb{O}_{H}$ given in Proposition \ref{prop:deformorb}.
\end{prop}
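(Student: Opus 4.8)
The plan is to unwind both sides of the claimed identity at the single point $\mathbf{c}=\mathbf{0}$ and check that the two constructions literally coincide. First I would use Proposition \ref{prop:deformorb} to identify $\pi_{\mathbb{B}_{H}}^{-1}(\mathbf{0})$ with $\mathbb{O}_{H}$; by its proof this identification is exactly the isomorphism coming from Theorem \ref{thm:triangdecompfb} combined with Proposition \ref{prop:pullback2form}, i.e.\ it sends the class of $(g,(n_{l},\nu_{l})_{l=1,\ldots,k},\eta)$ in $L_{1}\backslash(G\times\prod_{l}(N_{l,l+1}(\mathbb{C}[z]_{l})_{1}\times\mathfrak{u}_{l,l+1}(\mathbb{C}[z^{-1}]_{l-1}))\times\mathbb{O}_{H_{\mathrm{res}}}^{L_{1}})$ to the corresponding point of $\mathbb{O}_{H}$ with the same representative data. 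So it suffices to compare the two recipes for the moment map on representatives.

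Next I would observe that when $\mathbf{c}=\mathbf{0}$ the bundles $N_{l,l+1}(\widehat{\Gamma}(\mathbb{B}_{H})_{l})_{1}$ and $\mathfrak{u}_{l,l+1}(\widehat{\mathrm{L}}(\mathbb{B}_{H})_{[l-1]}^{\mathrm{rev}})$ have fibers $N_{l,l+1}(\mathbb{C}[z]_{l})_{1}$ and $\mathfrak{u}_{l,l+1}(\mathbb{C}[z^{-1}]_{l-1})$ respectively, and the coadjoint action of the Lie groupoid $N_{l,l+1}(\widehat{\Gamma}(\mathbb{B}_{H})_{l})_{1}$ on $\mathfrak{g}(\widehat{\mathrm{L}}(\mathbb{B}_{H}))$ restricts on the fiber over $\mathbf{0}$ to the ordinary coadjoint action of $N_{l,l+1}(\mathbb{C}[z]_{l})_{1}$ on $\mathfrak{g}(\mathbb{C}[z^{-1}]_{k})$; likewise the ``Step $k+1$'' $G$-action over $\mathbf{0}$ is the usual coadjoint action of $G$. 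Consequently the inductive definition of $\iota_{\mathbb{O}_{H,\mathbb{B}_{H}}}^{(1)},\ldots,\iota_{\mathbb{O}_{H,\mathbb{B}_{H}}}^{(k+1)}$ in Section \ref{sec:deformtrunc} specializes, term by term, to the inductive definition of $\iota_{\mathbb{O}_{H}}^{(1)},\ldots,\iota_{\mathbb{O}_{H}}^{(k)}$ followed by $\mathrm{Ad}^{*}(g)$ given in Section \ref{sec:tridec}. Hence $\iota_{\mathbb{O}_{H,\mathbb{B}_{H}}}|_{\pi_{\mathbb{B}_{H}}^{-1}(\mathbf{0})}=\iota_{\mathbb{O}_{H}}$ under the identification above, where I must note that the residue / standard-basis conventions used to embed $\mathfrak{g}(\widehat{\mathrm{L}}(\mathbf{0})_{k})$ into $\mathfrak{g}(\mathbb{C}(\!(z)\!)/\mathbb{C}[\![z]\!])$ agree with the ones fixed in Section \ref{sec:trucoad}, which is immediate since $\widehat{\mathrm{L}}(\mathbf{0})\cong\mathbb{C}[z^{-1}]_{k}$ with matching standard bases $1/z^{i+1}$.

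Finally I would compose with the residue map: on the fiber over $\mathbf{0}$ the family residue map $\mathrm{res}\colon\mathfrak{g}(\widehat{\mathrm{L}}(\mathbb{B}_{H}))\to\mathfrak{g}$ of Section \ref{sec:liegroupoid} becomes $\underset{z=0}{\mathrm{res}}\colon\mathfrak{g}(\mathbb{C}[z^{-1}]_{k})\to\mathfrak{g}$, which is exactly the map used in Section \ref{sec:symporb} to define $\mu_{\mathbb{O}_{H}\downarrow G}$. Combining the last two paragraphs gives
\[
	\mu_{\mathbb{O}_{H}\downarrow G,\mathbb{B}_{H}}|_{\pi_{\mathbb{B}_{H}}^{-1}(\mathbf{0})}
	=\mathrm{res}\circ\iota_{\mathbb{O}_{H,\mathbb{B}_{H}}}|_{\pi_{\mathbb{B}_{H}}^{-1}(\mathbf{0})}
	=\underset{z=0}{\mathrm{res}}\circ\iota_{\mathbb{O}_{H}}
	=\mu_{\mathbb{O}_{H}\downarrow G},
\]
as desired. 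The only genuinely delicate point — the ``main obstacle'' — is bookkeeping: one must check carefully that the $L_{1}$-equivariance used to declare $\iota_{\mathbb{O}_{H,\mathbb{B}_{H}}}^{(k+1)}$ well-defined restricts correctly over $\mathbf{0}$ and matches the corresponding independence-of-representative computation of Section \ref{sec:tridec} verbatim; but since the two computations are formally the same chain of $\mathrm{Ad}^{*}$-identities (just with ``groupoid fiber over $\mathbf{0}$'' replaced by ``ordinary group''), this is routine and I would only spell out the first inductive step and indicate that the rest is identical.
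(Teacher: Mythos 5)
Your proposal is correct and follows essentially the same route as the paper's own proof: identify the fiber over $\mathbf{0}$ via Proposition \ref{prop:deformorb}, observe that the inductive definition of $\iota_{\mathbb{O}_{H,\mathbb{B}_{H}}}$ specializes term by term to the description of $\iota_{\mathbb{O}_{H}}$ from Section \ref{sec:tridec} because the groupoid fibers and their coadjoint actions over $\mathbf{0}$ reduce to the ordinary ones, and then note that the family residue map restricts to $\underset{z=0}{\mathrm{res}}$. The paper's proof is just a terser version of the same argument.
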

\begin{proof}
	By recalling the description of $\iota_{\mathbb{O}_{H}}$
	given in Section \ref{sec:tridec},
	we can see that  
	the restriction map $\iota_{\mathbb{O}_{H,\mathbb{B}_{H}}}|_{\pi_{\mathbb{B}_{H}}^{-1}(\mathbf{0})}$
	coincides with 
	the isomorphism 
	\begin{multline*}
		L_{1}\Bigg\backslash \left(
			G\times \left(\left(\prod_{l=1}^{k}(N_{l,l+1}(\mathbb{C}[z]_{l})_{1}\times \mathfrak{u}_{l,l+1}(\mathbb{C}[z^{-1}]_{l})_{1})\right)
	\times \mathbb{O}_{H_{\mathrm{res}}}^{L_{1}}\right)
		\right)\\
		\overset{\sim}{\longrightarrow } \mathbb{O}_{H}
		\subset \mathfrak{g}(\mathbb{C}[z^{-1}]_{k}).
	\end{multline*}
	Also the map $\mathrm{res\,}\colon \mathfrak{g}(\widehat{\mathrm{L}}(\mathbb{B}_{H}))
	\rightarrow \mathfrak{g}$
	coincides with $\mathrm{res\,}\colon \mathfrak{g}(\mathbb{C}[z^{-1}]_{k})
	\rightarrow \mathfrak{g}$
	on the fiber at $\mathbf{0}\in \mathbb{B}_{H}$.
	Thus we obtain the desired equation.
\end{proof}
\subsection{Fibers on each stratum of $\mathbb{B}_{H}$}
Let us take a partition $\mathcal{I}\colon I_{0},I_{1},\ldots,I_{r}$
of $\{0,1,\ldots,k\}$
and suppose $0\in I_{0}$ as usual.
Also take $\mathbf{c}\in C(\mathcal{I})\cap \mathbb{B}_{H}$
and consider 
$H(\mathbf{c})$.
Then we obtain the partial fraction decomposition 
\[
	H(\mathbf{c})=H(\mathbf{c})_{0}+H(\mathbf{c})_{1}+\cdots +H(\mathbf{c})_{r}
	\text{ with }H(\mathbf{c})_{j}\in \mathfrak{g}(\mathbb{C}[z_{c_{I_{j}}}^{-1}]_{k_{j}})
\]
and $H(\mathbf{c})$ is regarded as the collection of canonical forms 
$(H(\mathbf{c})_{j})_{j=0,\ldots,r}$.
\begin{prop}\label{prop:deformtrunc}
	Let $\pi_{\mathbb{B}_{H}}\colon \mathbb{O}_{H,\mathbb{B}_{H}}
	\rightarrow \mathbb{B}_{H}$
	be the deformation of $\mathbb{O}_{H}$.
	Then the fiber $\pi_{\mathbb{B}_{H}}^{-1}(\mathbf{c})$
	at $\mathbf{c}\in C(\mathcal{I})\cap \mathbb{B}_{H}$
	is isomorphic to 
	an open dense subset of 
	$\prod_{j=0}^{r}\mathbb{O}_{H(\mathbf{c})_{j}}$.
\end{prop}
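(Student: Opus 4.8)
The plan is to transport the explicit triangular description of $\mathbb{O}_{H,\mathbb{B}_{H}}$ from Section \ref{sec:deformtrunc} through the Chinese remainder / partial fraction isomorphisms of Section \ref{sec:decompfib}, and to match the result against the triangular decomposition of each factor $\mathbb{O}_{H(\mathbf{c})_{j}}$ coming from Theorem \ref{thm:triangdecompfb2}. Concretely, fix a partition $\mathcal{I}\colon I_{0},\ldots,I_{r}$ of $\{0,1,\ldots,k\}$ and a point $\mathbf{c}\in \mathcal{C}(\mathcal{I})\cap \mathbb{B}_{H}$. By definition of $\mathbb{O}_{H,\mathbb{B}_{H}}$ the fiber $\pi_{\mathbb{B}_{H}}^{-1}(\mathbf{c})$ is the quotient by $L_{1}$ of
\[
	G\times \prod_{l=1}^{k}\left(
		N_{l,l+1}(\widehat{\Gamma}(\mathbf{c})_{l})_{1}\times
		\mathfrak{u}_{l,l+1}(\widehat{\mathrm{L}}(\mathbf{c})_{[l-1]}^{\mathrm{rev}})
		\right)
		\times \mathbb{O}_{H_{\mathrm{res}}}^{L_{1}}.
\]
First I would apply Propositions \ref{prop:liedecomp2} and \ref{dualliedecomp}: the map $\prod_{j}\mathrm{pr}_{c_{i_{[j,0]}}}$ decomposes each $N_{l,l+1}(\widehat{\Gamma}(\mathbf{c})_{l})_{1}$ as a Lie group into $N_{l,l+1}(\mathbb{C}[z_{c_{0}}]_{l_{0}})_{1}\times \prod_{j\ge 1}N_{l,l+1}(\mathbb{C}[z_{c_{i_{[j,0]}}}]_{l_{j}})$, and decomposes each $\mathfrak{u}_{l,l+1}(\widehat{\mathrm{L}}(\mathbf{c})_{[l-1]}^{\mathrm{rev}})$ into $\mathfrak{u}_{l,l+1}(\mathbb{C}[z^{-1}_{c_{0}}]_{l_{0}-1})\times \prod_{j\ge 1}\mathfrak{u}_{l,l+1}(\mathbb{C}[z^{-1}_{c_{i_{[j,0]}}}]_{l_{j}})$; here $l_{j}$ is the truncation index $|I_{j}^{(l)}|-1$ of Section \ref{sec:partialfrac}.

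The next step is to re-index the product over $l$ by the ``position inside the block $I_{j}$'' rather than by the global index, i.e. to group the factors according to which block $I_{j}$ contains the relevant index. After this bookkeeping — and after using Proposition \ref{prop:commmult} to check that the coadjoint actions used to glue the pieces respect the decomposition — the $j$-th cluster of factors is exactly the building blocks $\prod_{\nu}\bigl(N_{\nu,\nu+1}(\mathbb{C}[z_{c_{i_{[j,0]}}}]_{\nu})\times \mathfrak{u}_{\nu,\nu+1}(\mathbb{C}[z^{-1}_{c_{i_{[j,0]}}}]_{\nu})\bigr)$ that Theorem \ref{thm:triangdecompfb2} identifies with the Zariski open subset $\mathbb{O}_{H(\mathbf{c})_{j}}^{(G_{0})_{j}(\mathbb{C}[z_{c_{i_{[j,0]}}}]_{k_{j}})}$ of the truncated orbit $\mathbb{O}_{H(\mathbf{c})_{j}}$ (the spectral type of $H(\mathbf{c})_{j}$ being given by Corollary \ref{cor:specdecomp}, the stabilizers by Proposition \ref{prop:stabdecomp}). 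The block $j=0$ is the one carrying the extra $\mathbb{O}_{H_{\mathrm{res}}}^{L_{1}}$ factor and the residual $N_{l,l+1}(\cdots)_{1}$, $\mathfrak{u}_{l,l+1}(\cdots)$ with a $z_{c_{0}}$-subscript, so it must be treated by Theorem \ref{thm:triangdecompfb} (the fiber-bundle version with the $L_{1}$-quotient) rather than Theorem \ref{thm:triangdecompfb2}; this is where the surviving $G$ and $L_{1}$ in the above display get absorbed, giving the full $\mathbb{O}_{H(\mathbf{c})_{0}}$ up to the open-cell restriction on the $G/L_{1}$-direction. Combining, $\pi_{\mathbb{B}_{H}}^{-1}(\mathbf{c})$ is isomorphic to $\prod_{j=0}^{r}\mathbb{O}_{H(\mathbf{c})_{j}}^{(G_{0})_{j}}$, a Zariski open (hence open dense, since each $\mathbb{O}_{H(\mathbf{c})_{j}}$ is irreducible as a homogeneous space) subset of $\prod_{j=0}^{r}\mathbb{O}_{H(\mathbf{c})_{j}}$.

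The main obstacle I anticipate is purely organizational rather than conceptual: keeping the three overlapping indexing schemes straight — the global pole index $l\in\{1,\ldots,k\}$, the block label $j\in\{0,\ldots,r\}$, and the intra-block position $\nu\in\{0,\ldots,k_{j}\}$ — and verifying that the order in which the $N$- and $\mathfrak{u}$-factors are multiplied in the definition of $\mathbb{O}_{H,\mathbb{B}_{H}}$ matches, block by block, the order prescribed in the proofs of Theorems \ref{thm:triangdecompfb} and \ref{thm:triangdecompfb2}. A secondary point to be careful about is the ``reversed'' basis used in $\mathfrak{u}_{l,l+1}(\widehat{\mathrm{L}}(\mathbb{B}_{H})_{[l-1]}^{\mathrm{rev}})$: one must check that under $\prod_{j}\mathrm{pr}_{c_{i_{[j,0]}}}$ the condition ``$X_{0}=0$'' (which distinguishes $\widehat{\mathrm{L}}(\mathbf{c})_{[l-1]}^{\mathrm{rev}}$ inside $\widehat{\mathrm{L}}(\mathbf{c})_{l}$) lands precisely on the truncation $\mathfrak{u}_{l,l+1}(\mathbb{C}[z^{-1}_{c_{0}}]_{l_{0}-1})$ in the $j=0$ slot and on the full $\mathfrak{u}_{l,l+1}(\mathbb{C}[z^{-1}_{c_{i_{[j,0]}}}]_{l_{j}})$ for $j\ge 1$ — which is exactly the content of Proposition \ref{dualliedecomp}, so it reduces to invoking that lemma with the correct reindexing. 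Finally, the open-denseness claim is immediate once the isomorphism onto $\prod_{j}\mathbb{O}_{H(\mathbf{c})_{j}}^{(G_{0})_{j}}$ is established, because Zariski-open nonempty subsets of irreducible varieties are dense and a finite product of such is dense in the product.
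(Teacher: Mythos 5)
Your proposal follows essentially the same route as the paper's proof: decompose the fiber blockwise via Propositions \ref{prop:liedecomp2} and \ref{dualliedecomp}, identify the $j=0$ block with $\mathbb{O}_{H(\mathbf{c})_{0}}^{L_{1}\ltimes G(\mathbb{C}[z_{c_{0}}]_{k_{0}})_{1}}$ via Theorem \ref{thm:triangdecompfb} and the $j\ge 1$ blocks with the open cells $\mathbb{O}_{H(\mathbf{c})_{j}}^{G_{0}(\mathbb{C}[z_{c_{i_{[j,0]}}}]_{k_{j}})}$ via Theorem \ref{thm:triangdecompfb2}, and then pass to the $L_{1}$-quotient. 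The only imprecision is your final identification of the fiber with a literal product $\prod_{j}\mathbb{O}_{H(\mathbf{c})_{j}}^{(G_{0})_{j}}$: after the twisted product $G\times_{L_{1}}(-)$ is untwisted by Proposition \ref{prop:prodisom} (whose injectivity needs $\mathrm{Stab}_{G}(\mathbf{H}(\mathbf{c}))\subset L_{1}$, i.e.\ Proposition \ref{prop:stabdecomp}), the image is the $G$-sweep of that product of open cells --- still an open dense subset of $\prod_{j=0}^{r}\mathbb{O}_{H(\mathbf{c})_{j}}$, as required, but not itself a product of open subsets of the factors.
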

Before giving a proof of this proposition,
we make some preparations.
Let us consider the direct product of Lie groups 
$\prod_{j=0}^{r}G(\mathbb{C}[z_{c_{I_{j}}}]_{k_{j}}).$
which naturally acts on the 
product space 
$\prod_{j=0}^{r}\mathfrak{g}(\mathbb{C}[z^{-1}_{c_{I_{j}}}]_{k_{j}}).$
By regarding 
$\mathbf{H}(\mathbf{c})=(H(\mathbf{c})_{j})_{j=0,\ldots,r}\in \prod_{j=0}^{r}\mathfrak{g}(\mathbb{C}[z^{-1}_{c_{I_{j}}}]_{k_{j}})$,
we obtain 
\[
	\prod_{j=0}^{r}\mathbb{O}_{H(\mathbf{c})_{j}}=\mathbb{O}_{\mathbf{H}(\mathbf{c})}^{\prod_{j=0}^{r}G(\mathbb{C}[z_{c_{I_{j}}}]_{k_{j}})}
\] 
where the right hand side is the $\prod_{j=0}^{r}G(\mathbb{C}[z_{c_{I_{j}}}]_{k_{j}})$-orbit 
through $\mathbf{H}(\mathbf{c})$.
We shall give a similar description of the orbit $\mathbb{O}_{\mathbf{H}(\mathbf{c})}^{\prod_{j=0}^{r}G(\mathbb{C}[z_{c_{I_{j}}}]_{k_{j}})}$
as that of the truncated orbit $\mathbb{O}_{H}$ given in Proposition \ref{prop:pullback2form}.

Let us consider the normal subgroup
\[
	G(\mathbb{C}[z_{c_{0}}]_{k_{0}})_{1}\times \prod_{j=1}^{r}G(\mathbb{C}[z_{c_{I_{j}}}]_{k_{j}})
\]
of $\prod_{j=0}^{r}G(\mathbb{C}[z_{c_{I_{j}}}]_{k_{j}})$
and the short exact sequence 
\[
	1\rightarrow G(\mathbb{C}[z_{c_{0}}]_{k_{0}})_{1}\times \prod_{j=1}^{r}G(\mathbb{C}[z_{c_{I_{j}}}]_{k_{j}})
	\rightarrow \prod_{j=0}^{r}G(\mathbb{C}[z_{c_{I_{j}}}]_{k_{j}})
	\rightarrow G\rightarrow 1	
\]
induced by the inclusion.
Then the diagonal embedding 
\[
	G\ni g\longmapsto (g,\ldots,g)\in 	\prod_{j=0}^{r}G(\mathbb{C}[z_{c_{I_{j}}}]_{k_{j}})
\]
gives a right splitting of this exact sequence and 
we obtain the semidirect product decomposition 
\[
	\prod_{j=0}^{r}G(\mathbb{C}[z_{c_{I_{j}}}]_{k_{1}})
	=G\ltimes \left(
	G(\mathbb{C}[z_{c_{0}}]_{k_{0}})_{1}\times \prod_{j=1}^{r}G(\mathbb{C}[z_{c_{I_{j}}}]_{k_{j}})
	\right).
\]
\begin{lem}\label{lem:prodquot}
	Let us consider 
	the subgroup 
	\[
	L_{1}\ltimes \left(
		G(\mathbb{C}[z_{c_{0}}]_{k_{0}})_{1}\times \prod_{j=1}^{r}G(\mathbb{C}[z_{c_{I_{j}}}]_{k_{j}})
		\right)
	\]
	of $\prod_{j=0}^{r}G(\mathbb{C}[z_{c_{I_{j}}}]_{k_{j}})$.
	Then the orbit of $\mathbf{H}(\mathbf{c})$
	under the action of  this subgroup is 
	\[
		\mathbb{O}_{H(\mathbf{c})_{0}}^{L_{1}\ltimes G(\mathbb{C}[z_{c_{0}}])_{1}}
		\times \prod_{j=1}^{r}\mathbb{O}_{H(\mathbf{c})_{j}}.
	\]
\end{lem}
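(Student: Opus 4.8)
\textbf{Proof plan for Lemma \ref{lem:prodquot}.}
The plan is to compute the orbit directly, exploiting the semidirect product decomposition $\prod_{j=0}^{r}G(\mathbb{C}[z_{c_{i_{[j,0]}}}]_{k_{j}}) = G\ltimes \left(G(\mathbb{C}[z_{c_{0}}]_{k_{0}})_{1}\times \prod_{j=1}^{r}G(\mathbb{C}[z_{c_{i_{[j,0]}}}]_{k_{j}})\right)$ that was just established, together with the analogous semidirect decompositions $G(\mathbb{C}[z_{c_{i_{[j,0]}}}]_{k_{j}}) = G\ltimes G(\mathbb{C}[z_{c_{i_{[j,0]}}}]_{k_{j}})_{1}$ from Section \ref{sec:trucoad}. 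First I would observe that the subgroup under consideration, $L_{1}\ltimes \left(G(\mathbb{C}[z_{c_{0}}]_{k_{0}})_{1}\times \prod_{j=1}^{r}G(\mathbb{C}[z_{c_{i_{[j,0]}}}]_{k_{j}})\right)$, acts componentwise on $\prod_{j=0}^{r}\mathfrak{g}(\mathbb{C}[z_{c_{i_{[j,0]}}}^{-1}]_{k_{j}})$: an element $(h; f_{0}, g_{1}, \ldots, g_{r})$ with $h\in L_{1}$, $f_{0}\in G(\mathbb{C}[z_{c_{0}}]_{k_{0}})_{1}$, $g_{j}\in G(\mathbb{C}[z_{c_{i_{[j,0]}}}]_{k_{j}})$ sends $\mathbf{H}(\mathbf{c})=(H(\mathbf{c})_{j})_{j}$ to $(\mathrm{Ad}^{*}(h f_{0})(H(\mathbf{c})_{0}), \mathrm{Ad}^{*}(h g_{1})(H(\mathbf{c})_{1}), \ldots, \mathrm{Ad}^{*}(h g_{r})(H(\mathbf{c})_{r}))$, where the diagonal $L_{1}$ acts simultaneously on all components via the diagonal embedding.

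The containment ``$\subset$'' is then immediate: the $0$-th component lands in $\mathbb{O}_{H(\mathbf{c})_{0}}^{L_{1}\ltimes G(\mathbb{C}[z_{c_{0}}])_{1}}$ (noting $G(\mathbb{C}[z_{c_{0}}]_{k_{0}})_{1}$ is the relevant truncation level for $H(\mathbf{c})_{0}$, whose pole order is $k_{0}$), and for $j\ge 1$ the $j$-th component lands in $\mathbb{O}_{H(\mathbf{c})_{j}}$ since $hg_{j}\in G(\mathbb{C}[z_{c_{i_{[j,0]}}}]_{k_{j}})$. For the reverse containment ``$\supset$'', I would take an arbitrary point $(\xi_{0}, \xi_{1}, \ldots, \xi_{r})$ with $\xi_{0}=\mathrm{Ad}^{*}(h_{0} f_{0})(H(\mathbf{c})_{0})$ for some $h_{0}\in L_{1}$, $f_{0}\in G(\mathbb{C}[z_{c_{0}}])_{1}$, and $\xi_{j}=\mathrm{Ad}^{*}(g_{j})(H(\mathbf{c})_{j})$ for some $g_{j}\in G(\mathbb{C}[z_{c_{i_{[j,0]}}}]_{k_{j}})$, $j=1,\ldots,r$; then I need to produce a single group element of the subgroup realizing this. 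The element $(h_{0}; f_{0}, h_{0}^{-1}g_{1}, \ldots, h_{0}^{-1}g_{r})$ does the job, since its action sends the $0$-th component to $\mathrm{Ad}^{*}(h_{0} f_{0})(H(\mathbf{c})_{0})=\xi_{0}$ and the $j$-th component to $\mathrm{Ad}^{*}(h_{0}\cdot h_{0}^{-1}g_{j})(H(\mathbf{c})_{j})=\mathrm{Ad}^{*}(g_{j})(H(\mathbf{c})_{j})=\xi_{j}$; one must only check $h_{0}^{-1}g_{j}$ still lies in $G(\mathbb{C}[z_{c_{i_{[j,0]}}}]_{k_{j}})$, which is clear since $L_{1}\subset G$ embeds in $G(\mathbb{C}[z_{c_{i_{[j,0]}}}]_{k_{j}})$ as constants.

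The only genuinely delicate point — and the step I expect to require the most care — is the identification of the $0$-th component orbit as $\mathbb{O}_{H(\mathbf{c})_{0}}^{L_{1}\ltimes G(\mathbb{C}[z_{c_{0}}])_{1}}$ rather than something larger or smaller: here one uses that the stabilizer structure from Proposition \ref{prop:stabh} (and its refinement in Proposition \ref{prop:stabdecomp} for the decomposed canonical forms) guarantees that the relevant group for $H(\mathbf{c})_{0}$, whose spectral type is $(\Pi^{I_{0}};[J_{0}])$ by Corollary \ref{cor:specdecomp}, is exactly $L_{1}\ltimes G(\mathbb{C}[z_{c_{0}}])_{1}$ where $L_{1}$ is the Levi attached to $\Pi_{1}$ — the same $L_{1}$ appearing in the spectral type of the original $H$, because $\Pi_{1}\subset \Pi^{I_{0}}$ is shared. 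I would cite Proposition \ref{prop:stabdecomp}(2) for the $j=0$ case to confirm $\mathrm{Stab}_{G}(H(\mathbf{c})_{0})=\mathrm{Stab}_{G}(H)\subset L_{1}$, so that the $L_{1}$-factor is indeed the correct ambient group, and then the componentwise computation above closes the argument.
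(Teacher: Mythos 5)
Your proof is correct and follows essentially the same route as the paper: both arguments come down to absorbing the diagonal $L_{1}$ into the unrestricted factors $G(\mathbb{C}[z_{c_{i_{[j,0]}}}]_{k_{j}})$ for $j\ge 1$, so that only the $j=0$ slot retains the constraint $L_{1}\ltimes G(\mathbb{C}[z_{c_{0}}]_{k_{0}})_{1}$. The paper phrases this in one line via the containment $L_{1}\subset \mathrm{Stab}_{G(\mathbb{C}[z_{c_{i_{[j,0]}}}]_{k_{j}})}(H(\mathbf{c})_{j})$ from Proposition \ref{prop:stabdecomp}, while you make the same absorption explicit with the substitution $g_{j}\mapsto h_{0}^{-1}g_{j}$; your closing worry about identifying the $0$-th orbit is harmless but unnecessary, since $\mathbb{O}_{H(\mathbf{c})_{0}}^{L_{1}\ltimes G(\mathbb{C}[z_{c_{0}}])_{1}}$ is by definition the orbit under that subgroup.
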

\begin{proof}
	Recall that 
	$L_{1}\subset \mathrm{Stab}_{G(\mathbb{C}[z_{c_{I_{j}}}]_{k_{j}})}(H(\mathbf{c})_{j})$
	for $j=1,\ldots,r$ by Proposition \ref{prop:stabdecomp}.
	Then it follows that 
	the orbit of $\mathbf{H}(\mathbf{c})$
	under the action of 
	$L_{1}\ltimes \left(G(\mathbb{C}[z_{c_{0}}]_{k_{0}})_{1}\times \prod_{j=1}^{r}G(\mathbb{C}[z_{c_{I_{j}}}]_{k_{j}})\right)$
	coincides with that 
	of 
	$ \left(L_{1}\ltimes G(\mathbb{C}[z_{c_{0}}]_{k_{0}})_{1}\right)\times \prod_{j=1}^{r}G(\mathbb{C}[z_{c_{I_{j}}}]_{k_{j}})$,
	that is,
	$\mathbb{O}_{H(\mathbf{c})_{0}}^{L_{1}\ltimes G(\mathbb{C}[z_{c_{0}}])_{1}}
		\times \prod_{j=1}^{r}\mathbb{O}_{H(\mathbf{c})_{j}}.$
\end{proof}

Let us consider the left $L_{1}$-action on $G$
by the multiplication from the right.
Also on $\mathbb{O}_{H(\mathbf{c})_{0}}^{L_{1}\ltimes G(\mathbb{C}[z_{c_{0}}])_{1}}
\times \prod_{j=1}^{r}\mathbb{O}_{H(\mathbf{c})_{j}}$
we can consider the $L_{1}$-action by the diagonal coadjoint action.
Then we can consider 
\begin{equation*}
	G\times_{L_{1}}
	\left(
		\mathbb{O}_{H(\mathbf{c})_{0}}^{L_{1}\ltimes G(\mathbb{C}[z_{c_{0}}])_{1}}
\times \prod_{j=1}^{r}\mathbb{O}_{H(\mathbf{c})_{j}}
	\right)	
	:=
	L_{1}\bigg\backslash
	\left(G\times 
		\left(
			\mathbb{O}_{H(\mathbf{c})_{0}}^{L_{1}\ltimes G(\mathbb{C}[z_{c_{0}}])_{1}}
			\times \prod_{j=1}^{r}\mathbb{O}_{H(\mathbf{c})_{j}}
		\right)
	\right),
\end{equation*}
the fiber bundle over $G/L_{1}$.
Then as well as the diagram $(\ref{eq::comglh})$
we obtain the commutative diagram
\begin{equation}
	\begin{tikzcd}
		G\times \left(L_{1}\ltimes \left(
			G(\mathbb{C}[z_{c_{0}}]_{k_{0}})_{1}\times \prod_{j=1}^{r}G(\mathbb{C}[z_{c_{I_{j}}}]_{k_{j}})
			\right)\right)
		\arrow[r,"\widetilde{\psi}"]
		\arrow[d,"\mathrm{id}_{G}\times \pi"]
		&\prod_{j=0}^{r}G(\mathbb{C}[z_{c_{I_{j}}}]_{k_{j}})
		\arrow[d,"\pi"]\\
		G\times 
		\left(
			\mathbb{O}_{H(\mathbf{c})_{0}}^{L_{1}\ltimes G(\mathbb{C}[z_{c_{0}}])_{1}}
			\times \prod_{j=1}^{r}\mathbb{O}_{H(\mathbf{c})_{j}}
		\right)
		\arrow[r,"\overline{\psi}"]
		\arrow[d,"p"]
		&\mathbb{O}_{\mathbf{H}(\mathbf{c})}^{\prod_{j=0}^{r}G(\mathbb{C}[z_{c_{I_{j}}}]_{k_{j}})}\\
		G\times_{L_{1}}
	\left(
		\mathbb{O}_{H(\mathbf{c})_{0}}^{L_{1}\ltimes G(\mathbb{C}[z_{c_{0}}])_{1}}
\times \prod_{j=1}^{r}\mathbb{O}_{H(\mathbf{c})_{j}}
	\right)
		\arrow[ru,"\psi"]&
	\end{tikzcd},
\end{equation}
where $\widetilde{\psi}$, $\overline{\psi}$ and $\psi$
are similarly defined as them in the diagram $(\ref{eq::comglh})$,
and the upper left vertical arrow 
is the quotient map 
\[
	\pi
	\colon L_{1}\ltimes \left(
		G(\mathbb{C}[z_{c_{0}}]_{k_{0}})_{1}\times \prod_{j=1}^{r}G(\mathbb{C}[z_{c_{I_{j}}}]_{k_{j}})
		\right)
		\longrightarrow 
		\mathbb{O}_{H(\mathbf{c})_{0}}^{L_{1}\ltimes G(\mathbb{C}[z_{c_{0}}])_{1}}
		\times \prod_{j=1}^{r}\mathbb{O}_{H(\mathbf{c})_{j}}	
\]
defined by Lemma \ref{lem:prodquot}.
\begin{prop}\label{prop:prodisom}
The above map 
\[
	\psi\colon 
	G\times_{L_{1}}
	\left(
		\mathbb{O}_{H(\mathbf{c})_{0}}^{L_{1}\ltimes G(\mathbb{C}[z_{c_{0}}])_{1}}
\times \prod_{j=1}^{r}\mathbb{O}_{H(\mathbf{c})_{j}}
	\right)
	\longrightarrow 
	\mathbb{O}_{\mathbf{H}(\mathbf{c})}^{\prod_{j=0}^{r}G(\mathbb{C}[z_{c_{I_{j}}}]_{k_{j}})}
\]
is an isomorphism as complex manifolds.
\end{prop}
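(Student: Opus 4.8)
The plan is to mimic exactly the proof of Proposition \ref{prop:pullback2form}, since the statement of Proposition \ref{prop:prodisom} is the natural analogue of Proposition \ref{prop:pullback2form} with the single group $G(\mathbb{C}[z]_{k})$ replaced by the product $\prod_{j=0}^{r}G(\mathbb{C}[z_{c_{i_{[j,0]}}}]_{k_{j}})$, the subgroup $L_{1}\ltimes G(\mathbb{C}[z]_{k})_{1}$ replaced by $L_{1}\ltimes\bigl(G(\mathbb{C}[z_{c_{0}}]_{k_{0}})_{1}\times\prod_{j=1}^{r}G(\mathbb{C}[z_{c_{i_{[j,0]}}}]_{k_{j}})\bigr)$, and the canonical form $H$ replaced by the collection $\mathbf{H}(\mathbf{c})$. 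First I would record the commutative diagram already displayed in the paper (the analogue of $(\ref{eq::comglh})$), whose existence is established by the construction of $\widetilde\psi$, $\overline\psi$, $\psi$ together with Lemma \ref{lem:prodquot}; this reduces the claim to showing that $\psi$ is bijective, since it is automatically holomorphic and its source is a complex manifold (a fiber bundle over $G/L_1$).

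For surjectivity I would note that $\overline\psi\colon G\times\bigl(\mathbb{O}_{H(\mathbf{c})_{0}}^{L_{1}\ltimes G(\mathbb{C}[z_{c_{0}}])_{1}}\times\prod_{j=1}^{r}\mathbb{O}_{H(\mathbf{c})_{j}}\bigr)\to\mathbb{O}_{\mathbf{H}(\mathbf{c})}^{\prod_{j=0}^{r}G(\mathbb{C}[z_{c_{i_{[j,0]}}}]_{k_{j}})}$ is surjective by construction — every element of the product orbit is obtained by acting by some $(g_0,\dots,g_r)$, which after the semidirect product decomposition $\prod_j G(\mathbb{C}[z_{c_{i_{[j,0]}}}]_{k_j})=G\ltimes(G(\mathbb{C}[z_{c_0}]_{k_0})_1\times\prod_{j\ge1}G(\mathbb{C}[z_{c_{i_{[j,0]}}}]_{k_j}))$ and Lemma \ref{lem:prodquot} factors through $\overline\psi$; hence $\psi$ is surjective as well. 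For injectivity I would copy the argument of Proposition \ref{prop:pullback2form}: suppose $(g_1,\xi_1)$ and $(g_2,\xi_2)$ have the same image under $\overline\psi$, write $\xi_i=\mathrm{Ad}^{*}(h_i f_i)(\mathbf{H}(\mathbf{c}))$ with $h_i\in L_1$ and $f_i$ in the product unipotent-type factor, and deduce that the element $(h_1^{-1}g_1^{-1}g_2h_2,\ \mathrm{Ad}(h_2^{-1}g_2^{-1}g_1h_1)(f_1^{-1})\cdot f_2)$ of $G\ltimes(G(\mathbb{C}[z_{c_0}]_{k_0})_1\times\prod_{j\ge1}G(\mathbb{C}[z_{c_{i_{[j,0]}}}]_{k_j}))$ stabilizes $\mathbf{H}(\mathbf{c})$. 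The key input here is a description of the stabilizer of $\mathbf{H}(\mathbf{c})$ in $G\ltimes(\cdots)$, which follows component-wise from Proposition \ref{prop:stabdecomp} (giving $\mathrm{Stab}_{G}(H(\mathbf{c})_0)=\mathrm{Stab}_G(H)\subset L_1$ and $\mathrm{Stab}_G(H(\mathbf{c})_j)=L_{i_{[j,0]}}\supset L_1$ for $j\ge1$): the $G$-component of a stabilizing element lies in $\bigcap_j\mathrm{Stab}_G(H(\mathbf{c})_j)=\mathrm{Stab}_G(H)\subset L_1$. Thus $h_1^{-1}g_1^{-1}g_2h_2\in L_1$, so $h:=g_1^{-1}g_2\in L_1$ and $(g_2,\xi_2)=(g_1h,\mathrm{Ad}^{*}(h^{-1})(\xi_1))$, which is precisely the $L_1$-equivalence relation defining $G\times_{L_1}(\cdots)$; this gives injectivity of $\psi$.

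Finally I would remark that $\psi$, being a holomorphic bijection between complex manifolds which is moreover a morphism of the associated fiber bundles over $G/L_1$ (or, more simply, which admits a holomorphic local inverse by the inverse function theorem once one checks the differential is an isomorphism — this again reduces to the fiberwise statement and to Theorem \ref{thm:triangdecompfb} type computations on each factor), is biholomorphic. The main obstacle I anticipate is making the stabilizer computation for the \emph{collection} $\mathbf{H}(\mathbf{c})$ inside the semidirect product completely clean: one must be careful that the unipotent-type factor $G(\mathbb{C}[z_{c_0}]_{k_0})_1\times\prod_{j\ge1}G(\mathbb{C}[z_{c_{i_{[j,0]}}}]_{k_j})$ is not itself purely unipotent (the factors for $j\ge1$ are full groups, not their congruence subgroups), so the reduction to "the $G$-part lands in $L_1$" must be argued by first projecting to the reductive quotient $G$ and invoking Proposition \ref{prop:stabdecomp}(2) component-wise, rather than by a single application of Proposition \ref{prop:stabh}. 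Everything else is a faithful transcription of the proof of Proposition \ref{prop:pullback2form}.
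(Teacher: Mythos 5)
Your proposal is correct and follows essentially the same route as the paper, which simply remarks that the argument of Proposition \ref{prop:pullback2form} carries over, with surjectivity coming from the construction and injectivity from the key fact $\mathrm{Stab}_{G}(\mathbf{H}(\mathbf{c}))=\mathrm{Stab}_{G}(H)\subset L_{1}$ — exactly the stabilizer input you extract from Proposition \ref{prop:stabdecomp}. The subtlety you flag about the non-unipotent factors for $j\ge 1$ is real but harmless, since the $j=0$ component alone (where the factor is the congruence subgroup) already forces the $G$-part into $\mathrm{Stab}_{G}(H)\subset L_{1}$.
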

\begin{proof}
	We can prove similarly as Proposition \ref{prop:pullback2form}.
	The surjectivity comes from the definition.
	The injectivity comes from the fact 
	$\mathrm{Stab}_{G}(\mathbf{H}(\mathbf{c}))=\mathrm{Stab}_{G}(H)
	\subset L_{1}$.
\end{proof}
Now we are ready to give a proof of Proposition \ref{prop:deformtrunc}.
\begin{proof}[Proof of Proposition \ref{prop:deformtrunc}]
	By regarding 
	the trivial bundle $\mathbb{O}_{H_{\mathrm{res}}}^{L_{1}}\times \mathbb{B}_{H}$
	as a subbundle of $\mathfrak{g}(\widehat{\mathrm{L}}(\mathbb{B}_{H}))$
	as in Section \ref{sec:deformtrunc},
	we can see that  
	under the partial fraction decomposition 
	$\mathfrak{g}(\widehat{\mathrm{L}}(\mathbf{c}))
	\cong \prod_{j=0}^{r}\mathfrak{g}(\mathbb{C}[z_{c_{I_{j}}}^{-1}]_{k_{j}})$,
	the fiber $\mathbb{O}_{H_{\mathrm{res}}}^{L_{1}}\times \{\mathbf{c}\}
	\subset \mathfrak{g}(\widehat{\mathrm{L}}(\mathbf{c}))$
	corresponds to 
	\[
		\mathbb{O}_{H(\mathbf{c})_{0}}^{L_{1}}\times \prod_{j=1}^{r}\{H(\mathbf{c})_{j}\}
		\subset \prod_{j=0}^{r}\mathfrak{g}(\mathbb{C}[z_{c_{I_{j}}}^{-1}]_{k_{j}})
	\]	 	
	since $L_{1}\subset \mathrm{Stab}_{G(\mathbb{C}[z_{c_{I_{j}}}]_{k_{j}})}(H(\mathbf{c})_{j})$
	for $j=1,\ldots,r$ by Proposition \ref{prop:stabdecomp}.

	Also consider the fiber 
	$\prod_{l=1}^{k}(N_{l,l+1}(\widehat{\Gamma}(\mathbf{c})_{l})\times 
		\mathfrak{u}_{l,l+1}(\widehat{\mathrm{L}}(\mathbf{c})^{\mathrm{rev}}_{[l-1]}))$
	of the vector bundle $\prod_{l=1}^{k}(N_{l,l+1}(\widehat{\Gamma}(\mathbb{B}_{H})_{l})\times 
	\mathfrak{u}_{l,l+1}(\widehat{\mathrm{L}}(\mathbb{B}_{H})^{\mathrm{rev}}_{[l-1]})).$
	By combining the isomorphisms 
	in Propositions \ref{prop:liedecomp2} and \ref{dualliedecomp},
	we obtain the isomorphism
	\begin{multline*}
		\prod_{l=1}^{k}(N_{l,l+1}(\widehat{\Gamma}(\mathbf{c})_{l})\times 
		\mathfrak{u}_{l,l+1}(\widehat{\mathrm{L}}(\mathbf{c})^{\mathrm{rev}}_{[l-1]}))
		\overset{\sim}{\longrightarrow}\\
		\prod_{l=1}^{k_{0}}
			(N_{l,l+1}(\mathbb{C}[z_{c_{0}}]_{l_{0}})_{1}
			\times 
			\mathfrak{u}_{l,l+1}(\mathbb{C}[z^{-1}_{c_{0}}]_{l_{0}-1}))\\
			\times
		\prod_{j=1}^{r}
		\left(
			\prod_{l=1}^{k_{j}}
			(N_{l,l+1}(\mathbb{C}[z_{c_{I_{j}}}]_{l_{j}})
			\times 
			\mathfrak{u}_{l,l+1}(\mathbb{C}[z^{-1}_{c_{I_{j}}}]_{l_{j}}))
		\right).
	\end{multline*}

	Therefore the fiber 
	$\prod_{l=1}^{k}(N_{l,l+1}(\widehat{\Gamma}(\mathbf{c})_{l})\times 
		\mathfrak{u}_{l,l+1}(\widehat{\mathrm{L}}(\mathbf{c})^{\mathrm{rev}}_{[l-1]}))
		\times \mathbb{O}_{H_{\mathrm{res}}}^{L_{1}}$
	of the fiber bundle \[\prod_{l=1}^{k}(N_{l,l+1}(\widehat{\Gamma}(\mathbb{B}_{H})_{l})\times 
	\mathfrak{u}_{l,l+1}(\widehat{\mathrm{L}}(\mathbb{B}_{H})^{\mathrm{rev}}_{[l-1]}))
	\times \mathbb{O}_{H_{\mathrm{res}}}^{L_{1}}\]
	is isomorphic to 
	\begin{multline*}
		\left(\prod_{l=1}^{k_{0}}
			(N_{l,l+1}(\mathbb{C}[z_{c_{0}}]_{l_{0}})_{1}
			\times 
			\mathfrak{u}_{l,l+1}(\mathbb{C}[z^{-1}_{c_{0}}]_{l_{0}-1}))
			\times
			\mathbb{O}_{H(\mathbf{c})_{0}}^{L_{1}}
			\right)
			\times \\
		\prod_{j=1}^{r}\left(
		\left(
			\prod_{l=1}^{k_{j}}
			(N_{l,l+1}(\mathbb{C}[z_{c_{I_{j}}}]_{l_{j}})
			\times 
			\mathfrak{u}_{l,l+1}(\mathbb{C}[z^{-1}_{c_{I_{j}}}]_{l_{j}}))
		\right)
		\times\{H(\mathbf{c})_{j}\}
		\right).
	\end{multline*}
	By Theorems \ref{thm:triangdecompfb} and \ref{thm:triangdecompfb2},
	this is isomorphic to the dense open subset  
	\[
		\mathbb{O}_{H(\mathbf{c})_{0}}^{L_{1}\ltimes G(\mathbb{C}[z_{c_{0}}]_{k_{0}})_{1}}\times\prod_{j=1}^{r}\mathbb{O}_{H(\mathbf{c})_{j}}^{G_{0}(\mathbb{C}[z_{c_{I_{j}}}]_{k_{j}})}
	\] 
	of $\mathbb{O}_{H(\mathbf{c})_{0}}^{L_{1}\ltimes G(\mathbb{C}[z_{c_{0}}]_{k_{0}})_{1}}\times\prod_{j=1}^{r}\mathbb{O}_{H(\mathbf{c})_{j}}$.

	In conclusion, we obtain the diagram 
	\scriptsize
	\[
		\begin{tikzcd}
			G\times 
			\left(
				\mathbb{O}_{H(\mathbf{c})_{0}}^{L_{1}\ltimes G(\mathbb{C}[z_{c_{0}}]_{k_{0}})_{1}}\times\prod_{j=1}^{r}\mathbb{O}_{H(\mathbf{c})_{j}}^{G_{0}(\mathbb{C}[z_{c_{I_{j}}}]_{k_{j}})}
			\right)
			\arrow[r,hookrightarrow]
			\arrow[d,"p"]
			&
			G\times 
			\left(
				\mathbb{O}_{H(\mathbf{c})_{0}}^{L_{1}\ltimes G(\mathbb{C}[z_{c_{0}}]_{k_{0}})_{1}}\times\prod_{j=1}^{r}\mathbb{O}_{H(\mathbf{c})_{j}}	
			\right)
			\arrow[d,"p"]
			\\
			L_{1}\bigg\backslash
			\left(
				G\times 
			\left(
				\mathbb{O}_{H(\mathbf{c})_{0}}^{L_{1}\ltimes G(\mathbb{C}[z_{c_{0}}]_{k_{0}})_{1}}\times\prod_{j=1}^{r}\mathbb{O}_{H(\mathbf{c})_{j}}^{G_{0}(\mathbb{C}[z_{c_{I_{j}}}]_{k_{j}})}
			\right)
			\right)
			\arrow[r,hookrightarrow]
			&
			G\times_{L_{1}} 
			\left(
				\mathbb{O}_{H(\mathbf{c})_{0}}^{L_{1}\ltimes G(\mathbb{C}[z_{c_{0}}]_{k_{0}})_{1}}\times\prod_{j=1}^{r}\mathbb{O}_{H(\mathbf{c})_{j}}	
			\right)
		\end{tikzcd}	
	\]
	\normalsize
	which is checked to be commutative
	by the $L_{1}$-equivariance of the upper horizontal arrow. 
	Here the horizontal arrows are open embeddings and 
	the vertical arrows are quotient maps.
	Therefore by recalling the isomorphism 
	$$G\times_{L_{1}} 
	\left(
		\mathbb{O}_{H(\mathbf{c})_{0}}^{L_{1}\ltimes G(\mathbb{C}[z_{c_{0}}]_{k_{0}})_{1}}\times\prod_{j=1}^{r}\mathbb{O}_{H(\mathbf{c})_{j}}	
	\right)\cong \mathbb{O}_{\mathbf{H}(\mathbf{c})}^{\prod_{j=0}^{r}G(\mathbb{C}[z_{c_{I_{j}}}]_{k_{j}})}
	$$ in Proposition \ref{prop:prodisom},
	we obtain the desired open embedding 
	\begin{multline*}
		\pi_{H}^{-1}(\mathbf{c})
		=L_{1}\bigg\backslash\left(
			G\times 
			\left(
				\prod_{l=1}^{k}
				\left(
					N_{l,l+1}(\widehat{\Gamma}(\mathbf{c})_{l})\times 
					\mathfrak{u}_{l,l+1}(\widehat{\mathrm{L}}(\mathbf{c})^{\mathrm{rev}}_{[l-1]})	
				\right)
				\times \mathbb{O}_{H_{\mathrm{res}}}^{L_{1}}
			\right)
		\right)\overset{\sim}{\longrightarrow}\\
		L_{1}\bigg\backslash
			\left(
				G\times 
			\left(
				\mathbb{O}_{H(\mathbf{c})_{0}}^{L_{1}\ltimes G(\mathbb{C}[z_{c_{0}}]_{k_{0}})_{1}}\times\prod_{j=1}^{r}\mathbb{O}_{H(\mathbf{c})_{j}}^{G_{0}(\mathbb{C}[z_{c_{I_{j}}}]_{k_{j}})}
			\right)
			\right)
			\hookrightarrow 
			\mathbb{O}_{\mathbf{H}(\mathbf{c})}^{\prod_{j=0}^{r}G(\mathbb{C}[z_{c_{I_{j}}}]_{k_{j}})}.
	\end{multline*}
\end{proof}

Let us see the compatibility of the map $\mu_{\mathbb{O}_{H,\mathbb{B}_{H}}}$
and the embedding $\iota\colon \pi_{\mathbb{B}_{H}}^{-1}(\mathbf{c})\hookrightarrow \prod_{j=0}^{r}\mathbb{O}_{H(\mathbf{c})_{j}}$
in Proposition \ref{prop:deformtrunc}.
Define the map\index[cN]{$\mu_{\mathbf{H}(\mathbf{c})}$}
\[
	\begin{array}{cccc}
		\mu_{\mathbf{H}(\mathbf{c})}\colon &\prod_{j=0}^{r}\mathbb{O}_{H(\mathbf{c})_{j}}&\longrightarrow &\mathfrak{g}\\
		&(\xi_{j})_{j=0,\ldots,r}&\longmapsto& \sum_{j=0}^{r}\mu_{\mathbb{O}_{H(\mathbf{c})_{j}}\downarrow G}(\xi_{j})
	\end{array}
\]
which is considered as 
the moment map with respect to the diagonal $G$-action under the identification $\mathfrak{g}\cong \mathfrak{g}^{*}$
via the trace pairing as we explained in Section \ref{sec:symporb}.
\begin{prop}\label{prop:momentcompat}
	Let $\iota\colon \pi_{\mathbb{B}_{H}}^{-1}(\mathbf{c})\hookrightarrow \prod_{j=0}^{r}\mathbb{O}_{H(\mathbf{c})_{j}}$ be the open 
	embedding in Proposition \ref{prop:deformtrunc}.
	Then the following diagram is commutative,
	\[
		\begin{tikzcd}
			\pi_{\mathbb{B}_{H}}^{-1}(\mathbf{c})\arrow[r,"\mu_{\mathbb{O}_{H}\downarrow G,\mathbb{B}_{H}}|_{\pi_{\mathbb{B}_{H}}^{-1}(\mathbf{c})}"]
			\arrow[d,hookrightarrow,"\iota"]
			&\mathfrak{g}\\
			\prod_{j=0}^{r}\mathbb{O}_{H(\mathbf{c})_{j}}\arrow[ur,"\mu_{\mathbf{H}(\mathbf{c})}"']&
		\end{tikzcd}.
	\]
\end{prop}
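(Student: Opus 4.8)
\textbf{Proof plan for Proposition \ref{prop:momentcompat}.}
The plan is to trace both composite maps through the explicit descriptions already established and observe that they literally agree. Recall that $\mu_{\mathbb{O}_{H}\downarrow G,\mathbb{B}_{H}}$ is the composition $\mathrm{res}\circ \iota_{\mathbb{O}_{H,\mathbb{B}_{H}}}$, where $\iota_{\mathbb{O}_{H,\mathbb{B}_{H}}}(\Xi)\in \mathfrak{g}(\widehat{\mathrm{L}}(\mathbb{B}_{H}))$ is built up by the inductive Steps $1,\dots,k+1$ from a representative $(g,(n_{l},\nu_{l})_{l},\eta)$ of $\Xi$, and $\mathrm{res}\colon \mathfrak{g}(\widehat{\mathrm{L}}(\mathbb{B}_{H}))\to\mathfrak{g}$ is the fiberwise residue map. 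On the other hand, $\mu_{\mathbf{H}(\mathbf{c})}$ is $\sum_{j}\mu_{\mathbb{O}_{H(\mathbf{c})_{j}}\downarrow G}$, and each $\mu_{\mathbb{O}_{H(\mathbf{c})_{j}}\downarrow G}$ has an inductive description of exactly the same shape (Section \ref{sec:tridec}), namely $\underset{z=c_{i_{[j,0]}}}{\mathrm{res}}\circ\iota_{\mathbb{O}_{H(\mathbf{c})_{j}}}$. So the statement should reduce to: applying the partial fraction isomorphism $\mathfrak{g}(\widehat{\mathrm{L}}(\mathbf{c}))\cong\prod_{j=0}^{r}\mathfrak{g}(\mathbb{C}[z_{c_{i_{[j,0]}}}^{-1}]_{k_{j}})$ to $\iota_{\mathbb{O}_{H,\mathbb{B}_{H}}}(\Xi)$ yields precisely $(\iota_{\mathbb{O}_{H(\mathbf{c})_{j}}}(\iota(\Xi)_{j}))_{j}$, and then summing the local residues recovers the global one.

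First I would fix $\mathbf{c}\in \mathcal{C}(\mathcal{I})\cap\mathbb{B}_{H}$ and choose a representative of $\Xi\in\pi_{\mathbb{B}_{H}}^{-1}(\mathbf{c})$ of the form used in the proof of Proposition \ref{prop:deformtrunc}; under the isomorphisms of Propositions \ref{prop:liedecomp2} and \ref{dualliedecomp} this representative decomposes into a tuple of representatives of the factors $\mathbb{O}_{H(\mathbf{c})_{0}}^{L_{1}\ltimes G(\mathbb{C}[z_{c_{0}}]_{k_{0}})_{1}}$ and $\mathbb{O}_{H(\mathbf{c})_{j}}^{G_{0}(\mathbb{C}[z_{c_{i_{[j,0]}}}]_{k_{j}})}$, $j\ge 1$. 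The key technical input is Proposition \ref{prop:commmult}, which says that the coadjoint action of $N_{l,l+1}(\widehat{\Gamma}(\mathbf{c})_{l})$ on $\mathfrak{g}(\widehat{\mathrm{L}}(\mathbf{c})_{l})$ is intertwined by $\prod_{j}\mathrm{pr}_{c_{i_{[j,0]}}}$ with the product of the coadjoint actions on the factors; together with the commutativity of the module-action diagram $(\ref{eq:fracchin})$ (so that the additions $\nu_{l}+(\cdots)$ in each Step are compatible with the projections) this lets me push the entire inductive computation of $\iota_{\mathbb{O}_{H,\mathbb{B}_{H}}}^{(k)}(\Xi)$ through the partial-fraction decomposition factor by factor. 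Step $k+1$, the action of $g\in G$, is diagonal on all factors, so it commutes with $\prod_{j}\mathrm{pr}_{c_{i_{[j,0]}}}$ as well; here I must be careful to remember that $L_{1}\subset\mathrm{Stab}_{G(\mathbb{C}[z_{c_{i_{[j,0]}}}]_{k_{j}})}(H(\mathbf{c})_{j})$ for $j\ge 1$ (Proposition \ref{prop:stabdecomp}), which is what makes the image of $\Xi$ under $\iota$ well-defined and matches the $j\ge 1$ slots of the decomposed representative with the basepoints $H(\mathbf{c})_{j}$.

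Finally I would invoke the residue identity $(\ref{eq:rescom})$, $\mathrm{res}(X)=\sum_{j=0}^{r}\underset{z=c_{i_{[j,0]}}}{\mathrm{res}}(\mathrm{pr}_{c_{i_{[j,0]}}}(X))$, applied to $X=\iota_{\mathbb{O}_{H,\mathbb{B}_{H}}}(\Xi)$; combined with the previous step this gives
\[
\mu_{\mathbb{O}_{H}\downarrow G,\mathbb{B}_{H}}(\Xi)=\sum_{j=0}^{r}\underset{z=c_{i_{[j,0]}}}{\mathrm{res}}\bigl(\iota_{\mathbb{O}_{H(\mathbf{c})_{j}}}(\iota(\Xi)_{j})\bigr)=\sum_{j=0}^{r}\mu_{\mathbb{O}_{H(\mathbf{c})_{j}}\downarrow G}(\iota(\Xi)_{j})=\mu_{\mathbf{H}(\mathbf{c})}(\iota(\Xi)),
\]
which is the claimed commutativity. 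The main obstacle I anticipate is purely bookkeeping: making sure that the order of operations in the Steps defining $\iota_{\mathbb{O}_{H,\mathbb{B}_{H}}}$ matches, slot by slot, the order in the $\iota_{\mathbb{O}_{H(\mathbf{c})_{j}}}$ of Section \ref{sec:tridec} after the index set $\{1,\dots,k\}$ is broken up into the blocks $I_{j}$ — i.e.\ verifying that the "reversed'' ordering encoded in $\mathfrak{u}_{l,l+1}(\widehat{\mathrm{L}}(\mathbb{B}_{H})^{\mathrm{rev}}_{[l-1]})$ is exactly what is needed so that the restriction to the $j$-th factor reproduces the local canonical form $H(\mathbf{c})_{j}$ with its correct pole order $k_{j}$. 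Once the indices are aligned, every step is a direct consequence of the cited propositions and no further computation is required; independence of the choice of representative follows from the same argument already given at the end of Section \ref{sec:tridec}, carried out fiberwise.
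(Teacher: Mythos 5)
Your proposal is correct and follows essentially the same route as the paper: the paper's proof likewise reduces the claim to the commutative square intertwining $\iota_{\mathbb{O}_{H,\mathbb{B}_{H}}}|_{\pi_{\mathbb{B}_{H}}^{-1}(\mathbf{c})}$ with $\prod_{j=0}^{r}\iota_{\mathbb{O}_{H(\mathbf{c})_{j}}}$ via the partial-fraction projections (using Proposition \ref{prop:commmult} and the inductive definition of $\iota_{\mathbb{O}_{H,\mathbb{B}_{H}}}$), and then concludes with the residue identity $(\ref{eq:rescom})$. The only difference is that you spell out the step-by-step bookkeeping that the paper leaves implicit.
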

\begin{proof}
	By Proposition \ref{prop:commmult} and the definition of $\iota_{\mathbb{O}_{H},\mathbb{B}_{H}}$, we obtain the commutative diagram,
	\footnotesize
	\[
        \begin{tikzcd}
            L_{1}\bigg\backslash\left(
            G\times 
            \left(
                \prod_{l=1}^{k}
                \left(
                    N_{l,l+1}(\widehat{\Gamma}(\mathbf{c})_{l})\times 
                    \mathfrak{u}_{l,l+1}(\widehat{\mathrm{L}}(\mathbf{c})^{\mathrm{rev}}_{[l-1]})   
                \right)
                \times \mathbb{O}_{H_{\mathrm{res}}}^{L_{1}}
            \right)
        \right)
        \arrow[r,"\iota_{\mathbb{O}_{H,\mathbb{B}_{H}}}|_{\pi_{\mathbb{B}_{H}}^{-1}(\mathbf{c})}"]
        \arrow[d,"\iota"]&
        \mathfrak{g}(\widehat{\mathrm{L}}(\mathbf{c}))\arrow[d,"\prod_{j=0}^{r}\mathrm{pr}_{c_{I_{j}}}"]\\
        \prod_{j=0}^{r}\mathbb{O}_{H(\mathbf{c})_{j}}
        \arrow[r,"\prod_{j=0}^{r}\iota_{\mathbb{O}_{H(\mathbf{c})_{j}}}"]
        &\prod_{j=0}^{r}\mathfrak{g}(\mathbb{C}[z^{-1}_{c_{i_{[j,0]}}}])
        \end{tikzcd}.
    \]\normalsize
	Then since 
	$\mu_{\mathbb{O}_{H}\downarrow G,\mathbb{B}_{H}}=\mathrm{res\,}\circ \iota_{\mathbb{O}_{H,\mathbb{B}_{H}}}$ and
	$\mu_{\mathbf{H}(\mathbf{c})}=\sum_{j=0}^{r}\underset{z=c_{I_{j}}}{\mathrm{res\,}}\circ \iota_{\mathbb{O}_{H(\mathbf{c})_{j}}}$,
	this diagram and the equation $(\ref{eq:rescom})$
	give us the desired commutative diagram.
\end{proof}

\section[Deformation of moduli spaces of $G$-connections]{Deformation of moduli spaces of $G$-connections
on $\mathbb{P}^{1}$ via unfolding of irregular singularities}\label{sec:deform}
Now we are ready to provide the precise statement of our main theorem 
and give a proof of it.

Let $D=\sum_{i=1}^{d}(k_{a_{i}}+1)\cdot a_{i}$
be an effective divisor of $\mathbb{P}^{1}$ and 
we may assume that the underlying set $|D|$ is contained 
in $\mathbb{C}$ under a projective transformation.
Let
$\mathbf{H}=(H^{(a)})_{a\in |D|}\in \prod_{a\in |D|}\mathfrak{g}(\mathbb{C}[z_{a}^{-1}]_{k_{a}})$
be a collection of canonical forms
with $\sum_{a\in |D|}\underset{z=a}{\mathrm{res}}(H^{(a)})\in \mathfrak{g}_{\mathrm{ss}}$.
Let $\mathbb{B}_{\mathbf{H}}=\prod_{a\in |D|}\mathbb{B}_{H^{(a)}}$.
We consider the 
product manifold $\prod_{a\in |D|}
\mathbb{O}_{H^{(a)}, \mathbb{B}_{H^{(a)}}}$
with the projection \index[cN]{$\pi_{\mathbb{B}_{\mathbf{H}}}$}
\[
	\pi_{\mathbb{B}_{\mathbf{H}}}
	\colon \prod_{a\in |D|}
	\mathbb{O}_{H^{(a)}, \mathbb{B}_{H^{(a)}}}\longrightarrow \prod_{a\in |D|}\mathbb{B}_{H^{(a)}}.
\]
Let 
\[
	\iota_{\mathbf{H},\mathbb{B}_{\mathbf{H}}}\colon \prod_{a\in |D|}
	\mathbb{O}_{H^{(a)}, \mathbb{B}_{H^{(a)}}}
	\longrightarrow
	\prod_{a\in |D|}\mathfrak{g}(\widehat{\mathrm{L}}(\mathbb{B}_{H^{(a)}})) 	
\]
be the product map of the immersions
$\iota_{\mathbb{O}_{H_{a}},\mathbb{B}_{H^{(a)}}}\colon 
\mathbb{O}_{H^{(a)},\mathbb{B}_{H^{(a)}}}\rightarrow \mathfrak{g}(\widehat{\mathrm{L}}(\mathbb{B}_{H^{(a)}}))$
for $a\in |D|$.
Let 
\[
	\left(\prod_{a\in |D|}\mathfrak{g}(\widehat{\mathrm{L}}(\mathbb{B}_{H^{(a)}}))\right)^{\text{ir}}	
\]
be the totality of stable points under the diagonal coadjoint action of $G$.
Here we notice that 
under the trivialization 
$\prod_{a\in |D|}\mathfrak{g}(\widehat{\mathrm{L}}(\mathbb{B}_{H^{(a)}}))\cong
\left(\bigoplus_{a\in |D|}\mathfrak{g}^{\oplus(k_{a}+1)}\right)\times \mathbb{B}_{\mathbf{H}}$,
we have 
\[
\left(\prod_{a\in |D|}\mathfrak{g}(\widehat{\mathrm{L}}(\mathbb{B}_{H^{(a)}}))\right)^{\text{ir}}
\cong\left(\bigoplus_{a\in |D|}\mathfrak{g}^{\oplus(k_{a}+1)}\right)^{\mathrm{ir}}\times \mathbb{B}_{\mathbf{H}}
\]
where $\left(\bigoplus_{a\in |D|}\mathfrak{g}^{\oplus(k_{a}+1)}\right)^{\mathrm{ir}}$
is the set of irreducible elements 
defined in Section \ref{sec:stabirred}.
We define the open subset of $\prod_{a\in |D|}
\mathbb{O}_{H^{(a)}, \mathbb{B}^{H_{(a)}}}$
by \index[cN]{$(\prod_{a\in |D|}
	\mathbb{O}_{H^{(a)}, \mathbb{B}_{H^{(a)}}})^{\text{ir}}$}
\[
	\left(\prod_{a\in |D|}
	\mathbb{O}_{H^{(a)}, \mathbb{B}_{H^{(a)}}}\right)^{\text{ir}}:=
	\iota_{\mathbf{H},\mathbb{B}_{H}}^{-1}\left(
		\left(\prod_{a\in |D|}\mathfrak{g}(\widehat{\mathrm{L}}(\mathbb{B}_{H^{(a)}}))\right)^{\text{ir}}	
	\right).
\]

For the moment map 
\[
	\mu_{\mathbf{H},\mathbb{B}_{\mathbf{H}}}\colon \prod_{a\in |D|}
	\mathbb{O}_{H_{a}, \mathbb{B}_{H^{(a)}}}\ni(X_{a})_{a\in |D|}\longmapsto 
	\sum_{a\in |D|}\mu_{\mathbb{O}_{H^{(a)}\downarrow G,\mathbb{B}_{H^{(a)}}}}(X_{a})\in \mathfrak{g}^{*},
\]
we denote the restriction of the map on 
$\left(\prod_{a\in |D|}
	\mathbb{O}_{H^{(a)},\mathbb{B}_{H^{(a)}}}\right)^{\text{ir}}$
by 
$\mu_{\mathbf{H},\mathbb{B}_{\mathbf{H}}}^{\text{ir}}$.

\begin{df}[Deformation of $\mathcal{M}^{\text{ir}}_{\mathbf{H}}$]\normalfont
	We denote the quotient spaces of the level zero sets of 
	$\mu_{\mathbf{H},\mathbb{B}_{\mathbf{H}}}$ and 
	$\mu_{\mathbf{H},\mathbb{B}_{\mathbf{H}}}^{\text{ir}}$
	under the $G$-action
	by 
	\begin{align*}
		\mathcal{M}_{\mathbf{H},\mathbb{B}_{\mathbf{H}}}&:=G\,\backslash \mu_{\mathbf{H},\mathbb{B}_{\mathbf{H}}}^{-1}(0),&
		\mathcal{M}_{\mathbf{H},\mathbb{B}_{\mathbf{H}}}^{\mathrm{ir}}&:=G\,\backslash (\mu_{\mathbf{H},\mathbb{B}_{\mathbf{H}}}^{\text{ir}})^{-1}(0).
	\end{align*}
\end{df}
The projection $\pi_{\mathbb{B}_{\mathbf{H}}}\colon \prod_{a\in |D|}\mathbb{O}_{H^{(a)},\mathbb{B}_{H^{(a)}}}\rightarrow 
\mathbb{B}_{\mathbf{H}}$ induces 
projections on the spaces defined above, $\pi_{\mathbb{B}_{\mathbf{H}}}\colon \mathcal{M}_{\mathbf{H},\mathbb{B}_{\mathbf{H}}}\rightarrow \mathbb{B}_{\mathbf{H}}$
and $\pi_{\mathbb{B}_{H}}\colon \mathcal{M}_{\mathbf{H},\mathbb{B}_{\mathbf{H}}}^{\mathrm{ir}}\rightarrow \mathbb{B}_{\mathbf{H}}.$ 
\begin{prop}\label{prop:main0}
	Suppose $\mathcal{M}_{\mathbf{H}}^{\text{ir}}\neq \emptyset$.
	Then  $\mathcal{M}_{\mathbf{H},\mathbb{B}_{\mathbf{H}}}^{\text{ir}}$ 
		is a complex orbifold and 
		the projection map 
		$\pi_{\mathbb{B}_{\mathbf{H}}}\colon \mathcal{M}_{\mathbf{H},\mathbb{B}_{\mathbf{H}}}^{\text{ir}}
		\rightarrow \mathbb{B}_{\mathbf{H}}$
		is a submersion.
\end{prop}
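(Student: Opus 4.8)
The plan is to reduce Proposition~\ref{prop:main0} to the already-established Proposition~\ref{prop:orbifold} by fibering the deformed moduli space over $\mathbb{B}_{\mathbf{H}}$ and analyzing each fiber. First I would observe that the product space $\prod_{a\in|D|}\mathbb{O}_{H^{(a)},\mathbb{B}_{H^{(a)}}}$ is, as explained in Section~\ref{sec:deformtrunc}, built as an $L_{1}$-quotient of a trivial bundle over $\mathbb{B}_{\mathbf{H}}$ whose fiber is (up to the $L_1$-action) a product of the building blocks $N_{l,l+1}(\widehat{\Gamma}(\mathbf{c})_l)_1\times\mathfrak{u}_{l,l+1}(\widehat{\mathrm{L}}(\mathbf{c})^{\mathrm{rev}}_{[l-1]})\times\mathbb{O}_{H_{\mathrm{res}}}^{L_1}$, so the total space is a genuine complex manifold and $\pi_{\mathbb{B}_{\mathbf{H}}}$ is a holomorphic submersion onto $\mathbb{B}_{\mathbf{H}}$. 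The irreducible locus $\left(\prod_{a\in|D|}\mathbb{O}_{H^{(a)},\mathbb{B}_{H^{(a)}}}\right)^{\mathrm{ir}}$ is open in it (it is the preimage of the Zariski-open stable locus under the holomorphic map $\iota_{\mathbf{H},\mathbb{B}_{\mathbf{H}}}$, using the description of irreducibility as stability from Section~\ref{sec:stabirred}), hence is again a complex manifold submersing onto $\mathbb{B}_{\mathbf{H}}$.

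Next I would handle the $G$-action and the moment map. By Lemma~\ref{lem:ss} the image of $\mu_{\mathbf{H},\mathbb{B}_{\mathbf{H}}}$ lies in $\mathfrak{g}_{\mathrm{ss}}^{*}$, so the $G$-action on the level-zero set factors through $G/Z$, and by Proposition~\ref{prop:HM} together with Lemma~\ref{lem:denpan} the $G/Z$-action on $\left(\prod_{a\in|D|}\mathbb{O}_{H^{(a)},\mathbb{B}_{H^{(a)}}}\right)^{\mathrm{ir}}$ is proper and almost free. Therefore $0$ is a regular value of $\iota_{\mathrm{ss}}^{*}\circ\mu_{\mathbf{H},\mathbb{B}_{\mathbf{H}}}^{\mathrm{ir}}$, so $(\mu_{\mathbf{H},\mathbb{B}_{\mathbf{H}}}^{\mathrm{ir}})^{-1}(0)$ is a submanifold on which $G/Z$ acts properly and almost freely, and the quotient $\mathcal{M}_{\mathbf{H},\mathbb{B}_{\mathbf{H}}}^{\mathrm{ir}}=(G/Z)\backslash(\mu_{\mathbf{H},\mathbb{B}_{\mathbf{H}}}^{\mathrm{ir}})^{-1}(0)$ is a complex orbifold in the sense of Section~\ref{sec:symstra}. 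This is essentially the relative version of the argument in the proof of Proposition~\ref{prop:orbifold}; the only genuinely new point to check is that the moment-map construction and almost-freeness are uniform in the parameter $\mathbf{c}\in\mathbb{B}_{\mathbf{H}}$, which follows because $\iota_{\mathbf{H},\mathbb{B}_{\mathbf{H}}}$ depends holomorphically on $\mathbf{c}$ and the stabilizers only shrink under the immersion (Lemma~\ref{lem:denpan} applied fiberwise).

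For the submersion statement I would argue that $\pi_{\mathbb{B}_{\mathbf{H}}}\colon\mathcal{M}_{\mathbf{H},\mathbb{B}_{\mathbf{H}}}^{\mathrm{ir}}\to\mathbb{B}_{\mathbf{H}}$ is the map induced on the symplectic quotient by the submersion $\pi_{\mathbb{B}_{\mathbf{H}}}$ on $\left(\prod_{a\in|D|}\mathbb{O}_{H^{(a)},\mathbb{B}_{H^{(a)}}}\right)^{\mathrm{ir}}$. The key point is that the relative moment map $\mu_{\mathbf{H},\mathbb{B}_{\mathbf{H}}}^{\mathrm{ir}}$ submerses onto $\mathfrak{g}_{\mathrm{ss}}^{*}\times\mathbb{B}_{\mathbf{H}}$ (combining the fiberwise moment-map submersion with the base submersion), so that $(\mu_{\mathbf{H},\mathbb{B}_{\mathbf{H}}}^{\mathrm{ir}})^{-1}(0)$ itself submerses onto $\mathbb{B}_{\mathbf{H}}$; passing to the $G/Z$-quotient preserves this because the $G/Z$-action is fiberwise (it commutes with $\pi_{\mathbb{B}_{\mathbf{H}}}$) and the quotient map is a submersion. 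The nonemptiness hypothesis $\mathcal{M}_{\mathbf{H}}^{\mathrm{ir}}\neq\emptyset$ ensures that the fiber over $\mathbf{0}$ is nonempty, and then openness of the condition "fiber nonempty" for a proper-enough map, or simply the fact that a submersion has open image, gives that $\pi_{\mathbb{B}_{\mathbf{H}}}$ is onto a neighborhood of $\mathbf{0}$; one then restricts $\mathbb{B}_{\mathbf{H}}$ if necessary. The main obstacle I anticipate is verifying carefully that $0$ is a regular value \emph{uniformly} in $\mathbf{c}$ — i.e.\ that the vertical derivative of $\mu_{\mathbf{H},\mathbb{B}_{\mathbf{H}}}^{\mathrm{ir}}$ along the fibers is surjective at every point of the zero set, not just generically — since the group structures on the $N_{l,l+1}(\widehat{\Gamma}(\mathbf{c})_l)$ vary with $\mathbf{c}$; this should follow from almost-freeness of the fiberwise $G/Z$-action (Proposition~\ref{prop:HM}, Lemma~\ref{lem:denpan}) applied to each fiber, but the bookkeeping tying the fiberwise statements together into a statement about the relative moment map is where the real work lies.
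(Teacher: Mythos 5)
Your proposal is correct and follows essentially the same route as the paper: the orbifold structure comes from regularity of $0$ for $\mu_{\mathbf{H},\mathbb{B}_{\mathbf{H}}}^{\mathrm{ir}}$ together with properness and almost-freeness of the $G/Z$-action (Proposition \ref{prop:HM} and Lemma \ref{lem:denpan}), and the submersion claim comes from combining the fiberwise regularity of the moment map with the base submersion of Lemma \ref{lem:subm0}. The ``uniformity in $\mathbf{c}$'' you flag as the real work is exactly what Proposition \ref{prop:momentcompat} supplies (the restriction of the relative moment map to each fiber is the genuine moment map $\mu_{\mathbf{H}(\mathbf{c})}^{\mathrm{ir}}$, so almost-freeness at irreducible points gives regularity pointwise, as in Lemma \ref{lem:regval0}); the paper packages your direct tangent-space argument as a snake-lemma diagram chase, but the linear algebra is identical.
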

We make some preparations for the proof of this proposition.
\begin{lem}\label{lem:subm0}
	The projection $\pi_{\mathbb{B}_{\mathbf{H}}}\colon \left(\prod_{a\in |D|}
	\mathbb{O}_{H^{(a)}, \mathbb{B}_{H^{(a)}}}\right)^{\text{ir}}
	\rightarrow \mathbb{B}_{\mathbf{H}}$ is a submersion.
\end{lem}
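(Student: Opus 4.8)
The plan is to reduce everything to the observation, already made in Section~\ref{sec:deformtrunc}, that as a fibre bundle the deformation $\pi_{\mathbb{B}_{H^{(a)}}}\colon \mathbb{O}_{H^{(a)},\mathbb{B}_{H^{(a)}}}\to\mathbb{B}_{H^{(a)}}$ is trivial: only the groupoid multiplications and the (co)adjoint actions vary with $\mathbf{c}$, not the underlying fibres. Concretely, I would recall that $\mathbb{O}_{H^{(a)},\mathbb{B}_{H^{(a)}}}$ is by definition the quotient of the bundle
\[
	G\times \prod_{l=1}^{k_{a}}\left(
		N_{l,l+1}(\widehat{\Gamma}(\mathbb{B}_{H^{(a)}})_{l})_{1}\times
		\mathfrak{u}_{l,l+1}(\widehat{\mathrm{L}}(\mathbb{B}_{H^{(a)}})^{\mathrm{rev}}_{[l-1]})
		\right)
		\times \mathbb{O}_{H^{(a)}_{\mathrm{res}}}^{L_{1}}
\]
over $\mathbb{B}_{H^{(a)}}$ by the fibrewise action of $L_{1}$, which is free and proper by Lemma~\ref{lem:denpan} (as $L_{1}$ acts freely and properly on the factor $G$). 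Each of the four tensor factors is a trivial bundle over $\mathbb{B}_{H^{(a)}}$ by construction, and the $L_{1}$-action commutes with the projection to $\mathbb{B}_{H^{(a)}}$; hence the quotient is again a locally trivial holomorphic fibre bundle over $\mathbb{B}_{H^{(a)}}$ and $\pi_{\mathbb{B}_{H^{(a)}}}$ is a submersion. Taking the product over $a\in|D|$ shows that $\pi_{\mathbb{B}_{\mathbf{H}}}\colon \prod_{a\in|D|}\mathbb{O}_{H^{(a)},\mathbb{B}_{H^{(a)}}}\to\prod_{a\in|D|}\mathbb{B}_{H^{(a)}}=\mathbb{B}_{\mathbf{H}}$ is a submersion.

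The remaining point is that $\left(\prod_{a\in|D|}\mathbb{O}_{H^{(a)},\mathbb{B}_{H^{(a)}}}\right)^{\mathrm{ir}}$ is an open subset of $\prod_{a\in|D|}\mathbb{O}_{H^{(a)},\mathbb{B}_{H^{(a)}}}$, after which the lemma is immediate: the restriction of a submersion to an open subset is a submersion, since surjectivity of the differential is a pointwise condition. For the openness I would note that, by definition, the irreducible locus is the preimage under the holomorphic map $\iota_{\mathbf{H},\mathbb{B}_{\mathbf{H}}}$ of $\left(\prod_{a\in|D|}\mathfrak{g}(\widehat{\mathrm{L}}(\mathbb{B}_{H^{(a)}}))\right)^{\mathrm{ir}}$, and under the trivialization $\prod_{a\in|D|}\mathfrak{g}(\widehat{\mathrm{L}}(\mathbb{B}_{H^{(a)}}))\cong\left(\bigoplus_{a\in|D|}\mathfrak{g}^{\oplus(k_{a}+1)}\right)\times\mathbb{B}_{\mathbf{H}}$ this set corresponds to $\left(\bigoplus_{a\in|D|}\mathfrak{g}^{\oplus(k_{a}+1)}\right)^{\mathrm{ir}}\times\mathbb{B}_{\mathbf{H}}$, which is open because $\left(\bigoplus_{a\in|D|}\mathfrak{g}^{\oplus(k_{a}+1)}\right)^{\mathrm{ir}}$ is Zariski open by Proposition~\ref{prop:HM} (see the end of Section~\ref{sec:stabirred}).

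There is no real obstacle in this argument; the only thing requiring a small amount of care is the verification that the $L_{1}$-quotient in the construction of $\mathbb{O}_{H^{(a)},\mathbb{B}_{H^{(a)}}}$ genuinely descends to a locally trivial fibration over $\mathbb{B}_{H^{(a)}}$, so that ``fibre bundle over $\mathbb{B}_{H^{(a)}}$'' does indeed force ``submersion onto $\mathbb{B}_{H^{(a)}}$''. This rests on the fibrewise nature of the $L_{1}$-action together with its freeness and properness, both of which are supplied by Lemma~\ref{lem:denpan}.
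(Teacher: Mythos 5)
Your proposal is correct and follows essentially the same route as the paper: the paper's proof likewise uses the triviality of the bundle $\mathbb{O}_{H^{(a)},\mathbb{B}_{H^{(a)}}}\cong \mathbb{O}_{H^{(a)}}\times\mathbb{B}_{H^{(a)}}$ to identify $\pi_{\mathbb{B}_{\mathbf{H}}}$ with a second projection, and then restricts to the open irreducible locus. Your extra care about why the $L_{1}$-quotient descends to a locally trivial fibration is a sound elaboration of what the paper takes for granted, not a different argument.
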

\begin{proof}
	Since the fiber bundle $\mathbb{O}_{H^{(a)},\mathbb{B}_{H^{(a)}}}$
	has the trivialization $\mathbb{O}_{H^{(a)}}\times \mathbb{B}_{H^{(a)}}$
	for each $a\in |D|$,
	we can identify 
	$\prod_{a\in |D|}\mathbb{O}_{H^{(a)},\mathbb{B}_{H^{(a)}}}
	\cong \left(\prod_{a\in |D|}\mathbb{O}_{H^{(a)}}\right)\times \mathbb{B}_{\mathbf{H}}$
	under which the projection $\pi_{\mathbb{B}_{\mathbf{H}}}\colon \prod_{a\in |D|}\mathbb{O}_{H^{(a)},\mathbb{B}_{H^{(a)}}}
	\rightarrow \mathbb{B}_{\mathbf{H}}$
	corresponds to the second projection
	that is obviously a submersion.
	Then since $\left(\prod_{a\in |D|}
	\mathbb{O}_{H^{(a)}, \mathbb{B}_{H^{(a)}}}\right)^{\text{ir}}$
	is an open submanifold of $\prod_{a\in |D|}
	\mathbb{O}_{H^{(a)}, \mathbb{B}_{H^{(a)}}}$, we can see that 
	the restriction map $\pi_{\mathbb{B}_{\mathbf{H}}}\colon \left(\prod_{a\in |D|}
	\mathbb{O}_{H^{(a)}, \mathbb{B}_{H^{(a)}}}\right)^{\text{ir}}
	\rightarrow \mathbb{B}_{\mathbf{H}}$ is still a submersion.
\end{proof}
\begin{lem}\label{lem:regval0}
	Suppose that $\mathcal{M}_{\mathbf{H}}^{\mathrm{ir}}\neq \emptyset$.
	Then $0\in \mathfrak{g}^{*}$ is a regular value of 
	$\mu_{\mathbf{H},\mathbb{B}_{\mathbf{H}}}^{\mathrm{ir}}$.
\end{lem}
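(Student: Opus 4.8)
The plan is to reduce Lemma~\ref{lem:regval0} to a fibrewise statement and then, on each fibre, to rerun the argument used in the proof of Proposition~\ref{prop:orbifold}. Write $\mathcal{E}:=\bigl(\prod_{a\in|D|}\mathbb{O}_{H^{(a)},\mathbb{B}_{H^{(a)}}}\bigr)^{\mathrm{ir}}$ for the total space of the deformed family and $\mathcal{E}_{\mathbf{c}}:=\pi_{\mathbb{B}_{\mathbf{H}}}^{-1}(\mathbf{c})$ for its fibre over $\mathbf{c}\in\mathbb{B}_{\mathbf{H}}$. First I would note that, on each stratum of $\mathbb{B}_{\mathbf{H}}$, the collection $\mathbf{H}(\mathbf{c})$ obtained by partial fractions again satisfies the residue relation $\sum\mathrm{res}(H^{(a)}(\mathbf{c}_a)_j)\in\mathfrak{g}_{\mathrm{ss}}$ (as recorded in Section~\ref{sec:riginv}), so the fibrewise version of Lemma~\ref{lem:ss} applies and $\mathrm{Im}\,\mu^{\mathrm{ir}}_{\mathbf{H},\mathbb{B}_{\mathbf{H}}}\subset\mathfrak{g}_{\mathrm{ss}}^{*}$. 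Consequently $(\mu^{\mathrm{ir}}_{\mathbf{H},\mathbb{B}_{\mathbf{H}}})^{-1}(0)=(\iota_{\mathrm{ss}}^{*}\circ\mu^{\mathrm{ir}}_{\mathbf{H},\mathbb{B}_{\mathbf{H}}})^{-1}(0)$, where $\iota_{\mathrm{ss}}^{*}\colon\mathfrak{g}^{*}\to\mathfrak{g}_{\mathrm{ss}}^{*}$ is dual to the inclusion, and it suffices to prove that $d(\iota_{\mathrm{ss}}^{*}\circ\mu^{\mathrm{ir}}_{\mathbf{H},\mathbb{B}_{\mathbf{H}}})_{p}$ is surjective onto $\mathfrak{g}_{\mathrm{ss}}^{*}$ at every $p\in\mathcal{E}$. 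In fact I expect surjectivity at every point of $\mathcal{E}$, so that every value is regular; the hypothesis $\mathcal{M}^{\mathrm{ir}}_{\mathbf{H}}\neq\emptyset$ then serves only to guarantee, via Proposition~\ref{prop:deformorb}, that $(\mu^{\mathrm{ir}}_{\mathbf{H},\mathbb{B}_{\mathbf{H}}})^{-1}(0)$ is nonempty, so the assertion has content.

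Next I would pass to fibres. Fix $p\in\mathcal{E}$ lying over $\mathbf{c}$. By Lemma~\ref{lem:subm0} the map $\pi_{\mathbb{B}_{\mathbf{H}}}\colon\mathcal{E}\to\mathbb{B}_{\mathbf{H}}$ is a submersion, so $\mathcal{E}_{\mathbf{c}}$ is a submanifold with $T_{p}\mathcal{E}_{\mathbf{c}}=\ker d(\pi_{\mathbb{B}_{\mathbf{H}}})_{p}$, and the restriction of $\iota_{\mathrm{ss}}^{*}\circ\mu^{\mathrm{ir}}_{\mathbf{H},\mathbb{B}_{\mathbf{H}}}$ to $\mathcal{E}_{\mathbf{c}}$ is the composition of the inclusion $\mathcal{E}_{\mathbf{c}}\hookrightarrow\mathcal{E}$ with the ambient map; hence it is enough to show the differential of this restriction at $p$ is surjective. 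Now Proposition~\ref{prop:deformtrunc} provides an open embedding $\iota$ of $\mathcal{E}_{\mathbf{c}}$ onto an open subset of $\prod_{a,j}\mathbb{O}_{H^{(a)}(\mathbf{c}_a)_j}$; because the immersion $\iota_{\mathbf{H},\mathbb{B}_{\mathbf{H}}}$ agrees, on this fibre and under the partial-fraction decomposition, with the product of the immersions $\iota_{\mathbb{O}}$ (this is the commutative diagram underlying Proposition~\ref{prop:momentcompat}), that open subset lies inside the irreducible, i.e.\ stable, locus $\bigl(\prod_{a,j}\mathbb{O}_{H^{(a)}(\mathbf{c}_a)_j}\bigr)^{\mathrm{ir}}$. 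Moreover Proposition~\ref{prop:momentcompat} gives $\mu^{\mathrm{ir}}_{\mathbf{H},\mathbb{B}_{\mathbf{H}}}|_{\mathcal{E}_{\mathbf{c}}}=\mu_{\mathbf{H}(\mathbf{c})}\circ\iota$, hence $\iota_{\mathrm{ss}}^{*}\circ\mu^{\mathrm{ir}}_{\mathbf{H},\mathbb{B}_{\mathbf{H}}}|_{\mathcal{E}_{\mathbf{c}}}=(\iota_{\mathrm{ss}}^{*}\circ\mu_{\mathbf{H}(\mathbf{c})})\circ\iota$, a moment map for the $G/Z$-action precomposed with a local biholomorphism.

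Finally I would invoke the almost-free-action argument exactly as in the proof of Proposition~\ref{prop:orbifold}. By Proposition~\ref{prop:HM} together with Lemma~\ref{lem:denpan}, applied along the $G$-equivariant injective immersion of $\prod_{a,j}\mathbb{O}_{H^{(a)}(\mathbf{c}_a)_j}$ into $\prod_{a,j}\mathfrak{g}(\mathbb{C}[z^{-1}]_{k_j})$ (whose irreducible locus is precisely the stable locus for $G/Z$), the group $G/Z$ acts properly and almost freely on $\bigl(\prod_{a,j}\mathbb{O}_{H^{(a)}(\mathbf{c}_a)_j}\bigr)^{\mathrm{ir}}$. For a holomorphic symplectic moment map $\mu$ of a group $K$ on $(M,\omega)$, the identity $\langle d\mu_{x}(v),\xi\rangle=\omega_{x}(X_{\xi}(x),v)$ and nondegeneracy of $\omega$ show that $\mathrm{coker}\,d\mu_{x}$ is canonically dual to $\mathrm{Lie}(\mathrm{Stab}_{K}(x))$; a discrete stabiliser therefore forces $d\mu_{x}$ to be surjective. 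Taking $K=G/Z$ and $x=\iota(p)$ yields surjectivity of $d(\iota_{\mathrm{ss}}^{*}\circ\mu_{\mathbf{H}(\mathbf{c})})_{\iota(p)}$ onto $\mathfrak{g}_{\mathrm{ss}}^{*}$; composing with the isomorphism $d\iota_{p}$ and then with the inclusion $T_{p}\mathcal{E}_{\mathbf{c}}\hookrightarrow T_{p}\mathcal{E}$ gives surjectivity of $d(\iota_{\mathrm{ss}}^{*}\circ\mu^{\mathrm{ir}}_{\mathbf{H},\mathbb{B}_{\mathbf{H}}})_{p}$, which completes the reduction in the first paragraph.

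I expect the bulk of the work, and the only genuine obstacle, to be the bookkeeping in the middle step: verifying that the identifications of Propositions~\ref{prop:deformtrunc} and \ref{prop:momentcompat} are compatible with the $G$- and $G/Z$-actions, with the passage to irreducible loci, and with the $\iota_{\mathrm{ss}}^{*}$-reduction, so that stability and almost-freeness genuinely descend to the fibres of the deformed family over the various strata. Once this is in place, the surjectivity of the differential is the standard moment-map computation and introduces nothing beyond what is already used in Proposition~\ref{prop:orbifold}.
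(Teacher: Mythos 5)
Your proposal is correct and follows essentially the same route as the paper: restrict to the fibre over $\mathbf{c}$ via Proposition \ref{prop:momentcompat}, identify the restriction of the moment map with $\mu_{\mathbf{H}(\mathbf{c})}^{\mathrm{ir}}$, and deduce surjectivity of the differential from the discreteness of stabilizers on the irreducible locus, then pass back to the ambient family using Lemma \ref{lem:subm0}. The only difference is that you carry out the reduction to $G/Z$ and $\mathfrak{g}_{\mathrm{ss}}^{*}$ explicitly (as in the proof of Proposition \ref{prop:orbifold}), whereas the paper's proof argues directly with $\mathrm{Stab}_{G}(m)$ and $\mathfrak{g}^{*}$; your version is in fact the more careful formulation when the center $Z$ has positive dimension, since then the image of the moment map lies in the proper subspace $\mathfrak{g}_{\mathrm{ss}}^{*}$.
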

\begin{proof}
	Note that $(\mu_{\mathbf{H},\mathbb{B}_{\mathbf{H}}}^{\mathrm{ir}})^{-1}(0)\neq \emptyset$
	by the assumption.
	Let us take $m\in (\mu_{\mathbf{H},\mathbb{B}_{\mathbf{H}}}^{\mathrm{ir}})^{-1}(0)$
	and put $\mathbf{c}=\pi_{\mathbb{B}_{\mathbf{H}}}(m)\in \mathbb{B}_{\mathbf{H}}$.
	Then we have  
	\[
	\mu_{\mathbf{H},\mathbb{B}_{\mathbf{H}}}^{\mathrm{ir}}|_{\pi_{\mathbb{B}_{\mathbf{H}}}^{-1}(\mathbf{c})}
	=\mu_{\mathbf{H}(\mathbf{c})}^{\mathrm{ir}}
	\]
	by Proposition \ref{prop:momentcompat}.
	Since $\mathrm{dim\,}\mathrm{Stab}_{G}(m)=0$
	and $\mu_{\mathbf{H}(\mathbf{c})}^{\mathrm{ir}}$ is
	a moment map with respect to the 
	$G$-action,
	$m\in (\mu_{\mathbf{H}(\mathbf{c})}^{\mathrm{ir}})^{-1}(0)$ 
	is a regular point of $\pi_{\mathbb{B}_{\mathbf{H}}}^{-1}(\mathbf{c})$
	with respect to $\mu_{\mathbf{H}(\mathbf{c})}^{\mathrm{ir}}$.
	Since  
	$\pi_{\mathbb{B}_{\mathbf{H}}}^{-1}(\mathbf{c})$
	is a closed submanifold of $\left(\prod_{a\in |D|}
	\mathbb{O}_{H_{a}, \mathbb{B}_{H_{a}}}\right)^{\mathrm{ir}}$
	by Lemma \ref{lem:subm0},
	$m$ is also a regular point of  $\left(\prod_{a\in |D|}
	\mathbb{O}_{H_{a}, \mathbb{B}_{H_{a}}}\right)^{\mathrm{ir}}$
	with respect to $\mu_{\mathbf{H},\mathbb{B}_{\mathbf{H}}}^{\mathrm{ir}}$.
\end{proof}
\begin{proof}[Proof of Proposition \ref{prop:main0}]
	The first assertion follows from 
	Lemma \ref{lem:regval0} and the 
	fact that the $G$-action on 
	$\left(\prod_{a\in |D|}
	\mathbb{O}_{H_{a}, \mathbb{B}_{H_{a}}}\right)^{\text{ir}}$
	is proper and almost free by Lemma \ref{lem:denpan}.
	For the second assertion,
	it suffices to show that 
	the projection $\pi:=\pi_{\mathbb{B}_{\mathbf{H}}}|_{(\mu_{\mathbf{H},\mathbb{B}_{\mathbf{H}}}^{\text{ir}})^{-1}(0)}\colon (\mu_{\mathbf{H},\mathbb{B}_{\mathbf{H}}}^{\text{ir}})^{-1}(0)
	\rightarrow \mathbb{B}_{\mathbf{H}}$
	is a submersion.
	Let us take $m\in (\mu_{\mathbf{H},\mathbb{B}_{\mathbf{H}}}^{\text{ir}})^{-1}(0)$
	and put $\mathbf{c}=\pi(m)$.
	Then 
	by Proposition \ref{prop:momentcompat},
	we obtain the commutative diagram
	\[
		\begin{tikzcd}
			(\mu_{\mathbf{H}(\mathbf{c})}^{\mathrm{ir}})^{-1}(0)
			\arrow[r,hookrightarrow,"\iota_{\mathbf{c}}"]
			\arrow[d,hookrightarrow]&
			\pi_{\mathbb{B}_{\mathbf{H}}}^{-1}(\mathbf{c})
			\arrow[d,hookrightarrow]\\
			(\mu_{\mathbf{H},\mathbb{B}_{\mathbf{H}}}^{\mathrm{ir}})^{-1}(0)
			\arrow[r,hookrightarrow,"\iota_{\mathbb{B}_{\mathbf{H}}}"]
			&\prod_{a\in |D|}\mathbb{O}_{H^{(a)},\mathbb{B}_{H^{(a)}}}
		\end{tikzcd}	
	\]
	where all the arrows are natural inclusions.
	This diagram induces the following commutative diagram, 
	\[
		\begin{tikzcd}
			&&0\arrow[d]&0\arrow[d]&\\
			0\arrow[r]&T_{m}(\mu_{\mathbf{H}(\mathbf{c})}^{\text{ir}})^{-1}(0)
			\arrow[r,"T_{m}\iota_{\mathbf{c}}"]\arrow[d]&
			T_{m}\pi_{\mathbb{B}_{\mathbf{H}}}^{-1}(\mathbf{c})
			\arrow[r,"T_{m}\mu_{\mathbf{H}(\mathbf{c})}^{\text{ir}}"]
			\arrow[d]&
			T_{0}\mathfrak{g}\arrow[r]\arrow[d,equal]&0\\
			0\arrow[r]&T_{m}(\mu_{\mathbf{H},\mathbb{B}_{\mathbf{H}}}^{\text{ir}})^{-1}(0)
			\arrow[r,"T_{m}\iota_{\mathbb{B}_{\mathbf{H}}}"']&
			T_{m}\left(\prod_{a\in |D|}
			\mathbb{O}_{H_{a}, \mathbb{B}_{H_{a}}}\right)^{\text{ir}}
			\arrow[r,"T_{m}\mu_{\mathbf{H},\mathbb{B}_{\mathbf{H}}}^{\text{ir}}"']
			\arrow[d,"T_{m}\pi_{\mathbb{B}_{\mathbf{H}}}"]&
			T_{0}\mathfrak{g}\arrow[r]\arrow[d]&0\\
			&&T_{\mathbf{c}}\mathbb{B}_{\mathbf{H}}\arrow[d]&0\arrow[d]&\\
			&&0&0&
		\end{tikzcd}.	
	\]
	Here the horizontal sequences are exact, since as we saw in Lemma \ref{lem:regval0},
	$m$ is a regular point of $\pi_{\mathbb{B}_{\mathbf{H}}}^{-1}(\mathbf{c})$
	with respect to the map $\mu_{\mathbf{H}(\mathbf{c})}^{\mathrm{ir}}$ 
	and also of $\left(\prod_{a\in |D|}
	\mathbb{O}_{H_{a}, \mathbb{B}_{H_{a}}}\right)^{\text{ir}}$
	with respect to the map $\mu_{\mathbf{H},\mathbb{B}_{\mathbf{H}}}$.
	Also the middle vertical sequence is exact
	by Lemma \ref{lem:subm0}.
	The right vertical sequence is obviously exact.
	By the snake lemma, we obtain 
	$T_{\mathbf{c}}\mathbb{B}_{H}\cong 
	\mathrm{Coker\,}(T_{m}(\mu_{\mathbf{H}(\mathbf{c})}^{\text{ir}})^{-1}(0)
	\rightarrow T_{m}(\mu_{\mathbf{H},\mathbb{B}_{\mathbf{H}}}^{\text{ir}})^{-1}(0)).$

	Since $T_{m}\pi=T_{m}\pi_{\mathbb{B}_{\mathbf{H}}}\circ T_{m}\iota_{\mathbb{B}_{\mathbf{H}}}$,
	the above diagram shows that 
	the composition map 
	\[
		T_{m}(\mu_{\mathbf{H}(\mathbf{c})}^{\text{ir}})^{-1}(0)
	\rightarrow T_{m}(\mu_{\mathbf{H},\mathbb{B}_{\mathbf{H}}}^{\text{ir}})^{-1}(0)
	\overset{T_{m}\pi}{\rightarrow} T_{\mathbf{c}}\mathbb{B}_{\mathbf{H}}
	\]
	is the zero map.
	Thus 
	the map $T_{m}\pi\colon T_{m}(\mu_{\mathbf{H},\mathbb{B}_{\mathbf{H}}}^{\text{ir}})^{-1}(0)
	\rightarrow T_{\mathbf{c}}\mathbb{B}_{\mathbf{H}}$
	factors through 
	the natural projection 
	\[
		T_{m}(\mu_{\mathbf{H},\mathbb{B}_{\mathbf{H}}}^{\text{ir}})^{-1}(0)
	\rightarrow \mathrm{Coker\,}(T_{m}(\mu_{\mathbf{H}(\mathbf{c})}^{\text{ir}})^{-1}(0)
	\rightarrow T_{m}(\mu_{\mathbf{H},\mathbb{B}_{\mathbf{H}}}^{\text{ir}})^{-1}(0)).
	\]
	Namely, we have the commutative diagram
	\[
		\begin{tikzcd}
			T_{m}(\mu_{\mathbf{H},\mathbb{B}_{\mathbf{H}}}^{\text{ir}})^{-1}(0)
			\arrow[r,"T_{m}\pi"]
			\arrow[d]
			&T_{\mathbf{c}}\mathbb{B}_{\mathbf{H}}\\
			\mathrm{Coker\,}(T_{m}(\mu_{\mathbf{H}(\mathbf{c})}^{\text{ir}})^{-1}(0)
	\rightarrow T_{m}(\mu_{\mathbf{H},\mathbb{B}_{\mathbf{H}}}^{\text{ir}})^{-1}(0))
	\arrow[ur]&
		\end{tikzcd}	
	\]
	where the arrow from the bottom to the upper right is 
	the above isomorphism.
	Thus $T_{m}\pi\colon T_{m}(\mu_{\mathbf{H},\mathbb{B}_{\mathbf{H}}}^{\text{ir}})^{-1}(0)
	\rightarrow T_{\mathbf{c}}\mathbb{B}_{\mathbf{H}}$
	is a surjection.
	
	Thus we conclude that
	$\pi\colon (\mu_{\mathbf{H},\mathbb{B}_{\mathbf{H}}}^{\text{ir}})^{-1}(0)
	\rightarrow \mathbb{B}_{\mathbf{H}}$
	is a submersion. 
\end{proof}

Suppose $\mathcal{M}_{\mathbf{H}}^{\text{ir}}\neq \emptyset$.
	Let $\pi_{\mathbb{B}_{\mathbf{H}}}\colon \mathcal{M}_{\mathbf{H},\mathbb{B}_{\mathbf{H}}}^{\text{ir}}
	\rightarrow \mathbb{B}_{\mathbf{H}}$ be the projection map.
	Let $\widetilde{\mathbb{B}}_{\mathbf{H}}:=\mathrm{Im\,}\pi\neq \emptyset$
	and $\pi_{\widetilde{\mathbb{B}}_{\mathbf{H}}}\colon \mathcal{M}_{\mathbf{H},\mathbb{B}_{\mathbf{H}}}^{\text{ir}}
	\rightarrow \widetilde{\mathbb{B}}_{\mathbf{H}}$
	be the projection map induced from $\pi_{\mathbb{B}_{\mathbf{H}}}$.
\begin{thm}\label{thm:main1}
	Suppose $\mathcal{M}_{\mathbf{H}}^{\mathrm{ir}}\neq \emptyset$.
	\begin{enumerate}
		\item The reduced space  $\mathcal{M}_{\mathbf{H},\mathbb{B}_{\mathbf{H}}}^{\mathrm{ir}}$ 
		is a complex orbifold.
		\item The subspace $\widetilde{\mathbb{B}}_{\mathbf{H}}$ is an open submanifold 
		of $\mathbb{C}^{k+1}$ containing $\mathbf{0}$.
		\item The projection map 
		$\pi_{\widetilde{\mathbb{B}}_{\mathbf{H}}}\colon \mathcal{M}_{\mathbf{H},\mathbb{B}_{\mathbf{H}}}^{\mathrm{ir}}
		\rightarrow \widetilde{\mathbb{B}}_{\mathbf{H}}$
		is a surjective submersion.
		\item Every stratum $\prod_{a\in |D|}\mathcal{C}(\mathcal{I}^{(a)})$
        of $\prod_{a\in |D|}\mathbb{C}^{k_{a}+1}$ has the nonempty intersection with 
        the base space $\widetilde{\mathbb{B}}_{\mathbf{H}}$.
		
		\item The orbifold $\mathcal{M}_{\mathbf{H},\mathbb{B}_{\mathbf{H}}}^{\mathrm{ir}}$
		is a deformation of $\mathcal{M}_{\mathbf{H}}^{\mathrm{ir}}$, i.e.,
		we have 
		\[
			\mathcal{M}_{\mathbf{H}}^{\mathrm{ir}}\cong \pi_{\widetilde{\mathbb{B}}_{\mathbf{H}}}^{-1}(\mathbf{0}).
		\] 
		\item For each $\mathbf{c}\in \widetilde{\mathbb{B}}_{\mathbf{H}}$
		the fiber $\pi_{\widetilde{\mathbb{B}}_{\mathbf{H}}}^{-1}(\mathbf{c})$ is isomorphic to an open dense 
		subspace of $\mathcal{M}_{\mathbf{H}(\mathbf{c})}^{\mathrm{ir}}$.
	\end{enumerate}
\end{thm}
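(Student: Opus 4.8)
The plan is to assemble Theorem \ref{thm:main1} from the structural results already in hand: Proposition \ref{prop:main0} (with Lemmas \ref{lem:regval0}, \ref{lem:subm0}, \ref{lem:denpan} and Proposition \ref{prop:HM}), the fiberwise identifications of Propositions \ref{prop:deformorb}, \ref{prop:momentspecfib}, \ref{prop:deformtrunc}, \ref{prop:momentcompat}, and the genericity Lemma \ref{lem:zeronbd}. Part (1) and the submersion assertion of (3) are precisely the two statements of Proposition \ref{prop:main0}, read for the collection $\mathbf{H}$ and the product base $\mathbb{B}_{\mathbf{H}}=\prod_{a\in|D|}\mathbb{B}_{H^{(a)}}$: the level-zero set $(\mu_{\mathbf{H},\mathbb{B}_{\mathbf{H}}}^{\mathrm{ir}})^{-1}(0)$ is a manifold since $0$ is a regular value (Lemma \ref{lem:regval0}), $G$ acts properly and almost freely on the irreducible locus (Proposition \ref{prop:HM}, Lemma \ref{lem:denpan}), so the quotient is a complex orbifold and $\pi_{\mathbb{B}_{\mathbf{H}}}$ is a submersion. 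For (2) and (3), a submersion is an open map, so $\widetilde{\mathbb{B}}_{\mathbf{H}}:=\mathrm{Im}\,\pi_{\mathbb{B}_{\mathbf{H}}}$ is an open submanifold of $\prod_{a}\mathbb{C}^{k_{a}+1}$, and the induced map $\pi_{\widetilde{\mathbb{B}}_{\mathbf{H}}}$ is by construction a surjective submersion; that $\mathbf{0}\in\widetilde{\mathbb{B}}_{\mathbf{H}}$ is equivalent to $\pi_{\widetilde{\mathbb{B}}_{\mathbf{H}}}^{-1}(\mathbf{0})\neq\emptyset$, which is part (5).

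Part (5) is where Propositions \ref{prop:deformorb} and \ref{prop:momentspecfib} enter. Applying \ref{prop:deformorb} in each factor identifies $\pi_{\mathbb{B}_{\mathbf{H}}}^{-1}(\mathbf{0})$ inside $\prod_{a}\mathbb{O}_{H^{(a)},\mathbb{B}_{H^{(a)}}}$ with $\prod_{a}\mathbb{O}_{H^{(a)}}$; on this fiber the immersion $\iota_{\mathbf{H},\mathbb{B}_{\mathbf{H}}}$ restricts to $\iota_{\mathbf{H}}$, so the irreducible locus restricts to $\bigl(\prod_{a}\mathbb{O}_{H^{(a)}}\bigr)^{\mathrm{ir}}$, and by \ref{prop:momentspecfib} the moment map restricts to $\mu_{\mathbf{H}}^{\mathrm{ir}}$. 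Hence $(\mu_{\mathbf{H},\mathbb{B}_{\mathbf{H}}}^{\mathrm{ir}})^{-1}(0)\cap\pi_{\mathbb{B}_{\mathbf{H}}}^{-1}(\mathbf{0})=(\mu_{\mathbf{H}}^{\mathrm{ir}})^{-1}(0)$, and dividing by $G$ gives $\pi_{\widetilde{\mathbb{B}}_{\mathbf{H}}}^{-1}(\mathbf{0})\cong\mathcal{M}_{\mathbf{H}}^{\mathrm{ir}}$, nonempty by hypothesis; this also completes (2). Part (4) then follows from the product form of Lemma \ref{lem:zeronbd}: an open neighborhood of $\mathbf{0}$ in $\prod_{a}\mathbb{C}^{k_{a}+1}$ contains a product of open neighborhoods of the $\mathbf{0}\in\mathbb{C}^{k_{a}+1}$, and the configuration-space density argument of \ref{lem:zeronbd} shows each stratum $\prod_{a}\mathcal{C}(\mathcal{I}^{(a)})$ meets it.

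For part (6), fix $\mathbf{c}\in\widetilde{\mathbb{B}}_{\mathbf{H}}$, lying on a stratum $\prod_{a}\mathcal{C}(\mathcal{I}^{(a)})$. Proposition \ref{prop:deformtrunc}, applied in each factor, identifies the fiber of $\mathbb{O}_{H^{(a)},\mathbb{B}_{H^{(a)}}}$ over $\mathbf{c}^{(a)}$, $G$-equivariantly, with a Zariski-open dense subset of $\prod_{j}\mathbb{O}_{H^{(a)}(\mathbf{c}^{(a)})_{j}}$; by Proposition \ref{prop:momentcompat} this identification intertwines the deformed residue map with $\mu_{\mathbf{H}(\mathbf{c})}$. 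Taking the product over $a\in|D|$, passing to the irreducible locus (an open condition), restricting to the level-zero set, and quotienting by the proper almost free $G$-action yields that $\pi_{\widetilde{\mathbb{B}}_{\mathbf{H}}}^{-1}(\mathbf{c})$ is an open subset of the orbifold $\mathcal{M}_{\mathbf{H}(\mathbf{c})}^{\mathrm{ir}}$ (Proposition \ref{prop:orbifold}), and a dimension count using $\delta$-constancy (Corollary \ref{cor:deltaconst}) together with Proposition \ref{prop:orbifold} shows the two have the same dimension $2\dim Z-\mathrm{rig}(\mathbf{H})$. The step I expect to be the main obstacle is upgrading ``open'' to ``open dense'': one must verify that the complement of the big-cell locus of Theorem \ref{thm:triangdecompfb2} (a proper $G$-invariant subvariety) does not contain a whole connected component of the reduced level set, i.e. that density of the Bruhat-type open cell $G_{0}=N_{0}\cdot P_{0}$ survives symplectic reduction — this needs a genuine argument with that decomposition rather than a citation.
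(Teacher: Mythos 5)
Your proof follows the paper's own argument essentially step for step: Proposition \ref{prop:main0} (via Lemmas \ref{lem:subm0} and \ref{lem:regval0}) for assertions (1) and (3), openness of the image of a submersion for (2), Lemma \ref{lem:zeronbd} for (4), Propositions \ref{prop:deformorb} and \ref{prop:momentspecfib} for (5), and Propositions \ref{prop:deformtrunc} and \ref{prop:momentcompat} for (6). The density issue you flag at the end is a fair concern, but the paper itself does not address it either --- its proof of (6) simply cites Propositions \ref{prop:momentspecfib}, \ref{prop:deformtrunc} and \ref{prop:momentcompat} --- so your extra caution goes beyond, rather than falls short of, the published argument.
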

\begin{proof}
	The assertion 1 is the first assertion of Proposition \ref{prop:main0}.
	By Proposition \ref{prop:main0}, the projection $\pi_{\mathbb{B}_{\mathbf{H}}}\colon \mathcal{M}_{\mathbf{H},\mathbb{B}_{\mathbf{H}}}^{\text{ir}}
	\rightarrow \mathbb{B}_{\mathbf{H}}$ is a submersion.
	Thus $\widetilde{\mathbb{B}}_{\mathbf{H}}=\mathrm{Im\,}\pi\neq \emptyset$
	is open since a submersion is an open map. Thus we have the assertion 2.
	The assertion 3 follows from the assertion $2$ and Proposition \ref{prop:main0}. 
	The assertion 4 is a consequence from 
	the assertion 2 and Lemma \ref{lem:zeronbd}.
	The assertion 6 and 7 directly follow from 
	the definition of $\mathcal{M}_{\mathbf{H},\mathbb{B}_{\mathbf{H}}}^{\mathrm{ir}}$
	and  Propositions \ref{prop:momentspecfib}, \ref{prop:deformtrunc}, and 
	\ref{prop:momentcompat}.
\end{proof}

Now we can give a proof of Theorem \ref{thm:addDS} as a corollary of Theorem \ref{thm:main1}.
\begin{proof}[Proof of Theorem \ref{thm:addDS}]
	The implication $2\Rightarrow 1$ is obvious. 
	
	Thus 
	we assume that there exists a solution $\nabla_{A}=A\,dz$
	to the additive Deligne-Simpson problem 
	for $\mathbf{H}$.
	Then there exists 
	the corresponding point $m_{A}\in (\mu_{\mathbf{H}}^{\mathrm{ir}})^{-1}(0)
	\subset
	 (\mu_{\mathbf{H},\mathbb{B}_{\mathbf{H}}}^{\text{ir}})^{-1}(0)$.
	Then since 
	$\pi_{\widetilde{\mathbb{B}}_{\mathbf{H}}}\colon (\mu_{\mathbf{H},\mathbb{B}_{\mathbf{H}}}^{\text{ir}})^{-1}(0)
	\rightarrow \widetilde{\mathbb{B}}_{\mathbf{H}}$
	is a submersion,
	there exists an open neighborhood $U$ of $\mathbf{0}\in \mathbb{B}_{\mathbf{H}}$
	and a local section $s\colon U\rightarrow (\mu_{\mathbf{H},\mathbb{B}_{\mathbf{H}}}^{\text{ir}})^{-1}(0)$
	of $\pi_{\widetilde{\mathbb{B}}_{\mathbf{H}}}$
	such that $s(\mathbf{0})=m_{A}$.
	Then as the composition map 
	\[
		U\overset{s}{\longrightarrow } (\mu_{\mathbf{H},\mathbb{B}_{\mathbf{H}}}^{\text{ir}})^{-1}(0)
		\overset{\iota_{\mathbf{H}}|_{(\mu_{\mathbf{H},\mathbb{B}_{\mathbf{H}}}^{\text{ir}})^{-1}(0)}}{\longrightarrow }
		\prod_{a\in |D|}\mathfrak{g}(\widehat{\mathrm{L}}(\mathbb{B}_{H^{(a)}})),
	\]
	we obtain the section $U\rightarrow \prod_{a\in |D|}\mathfrak{g}(\widehat{\mathrm{L}}(\mathbb{B}_{H^{(a)}}))$ of 
	 $\prod_{a\in |D|}\mathfrak{g}(\widehat{\mathrm{L}}(\mathbb{B}_{H^{(a)}}))\rightarrow \mathbb{B}_{\mathbf{H}}$
	and write this section as the $\prod_{a\in |D|}\mathfrak{g}(\widehat{\mathrm{L}}(\mathbb{B}_{H^{(a)}}))$-valued function
	on $U$, 
	\[
	\left(\frac{A^{(a)}_{i}(\mathbf{c})}{\prod_{j=0}^{i}(z_{a}-c^{(a)}_{j})}\right)_{a\in |D|}\in \prod_{a\in |D|}\mathfrak{g}(\widehat{\mathrm{L}}(\mathbb{B}_{H^{(a)}}))
	\quad (\mathbf{c}=((c^{(a)}_{0},c^{(a)}_{1},\ldots,c^{(a)}_{k_{a}}))_{a\in |D|}\in U).
	\]
	Then 
	the holomorphic family of $G$-connections
	\[
	\nabla_{A(\mathbf{c})}=\sum_{a\in |D|}\sum_{i=0}^{k_{a}}\frac{A^{(a)}_{i}(\mathbf{c})}{\prod_{j=0}^{i}(z_{a}-c^{(a)}_{j})}\,dz
	\]
	is the desired solution to the additive Deligne-Simpson problem for the family $(\mathbf{H}(\mathbf{c}))_{\mathbf{c}\in U}$
	with $\nabla_{A(\mathbf{0})}=\nabla_{A}$ by the assertion 6 in Theorem \ref{thm:main1}.
	As a result, we obtain the implication $1\Rightarrow 2$.
\end{proof}


\printindex[cN]


\begin{thebibliography}{48}
	\bibitem{ALR}Adem,~A., Leida,~L., Ruan,~Y.:
	Orbifolds and stringy topology.
	Cambridge Tracts in Math. {\bf 171},
	Cambridge University Press, Cambridge (2007)
	\bibitem{Ari}Arinkin,~D.:
	Rigid irregular connections on $\mathbb{P}^{1}$.
	Compos. Math. \textbf{146} (2010), no.5, 1323--1338.
	\bibitem{Ari0}Arinkin,~D.:
	Irreducible connections admit generic oper structures.
	preprint (2016) arXiv:1602.08989.
	\bibitem{BV}Babbitt,~D., Varadarajan,~V.:
	Formal reduction theory of meromorphic differential equations: a group theoretic view.
	Pacific J. Math. \textbf{109} (1983), no.1, 1--80.
	\bibitem{Boa1}Boalch,~P.:
		Symplectic manifolds and isomonodromic deformations.
		Adv. Math. \textbf{163} (2) (2001), 137--205.
	\bibitem{Boarx}Boalch,~P.:
		Irregular connections and Kac-Moody root systems.
		 2008, arXiv:0806.1050.
	\bibitem{Bo}Boalch,~P.:
        	Simply-laced isomonodromy systems.
		Publ. Math. IHES, \textbf{116}, no. 1 (2012), 1--68.
	\bibitem{BS}Bremer,~C., Sage,~D.:
	Moduli Spaces of Irregular Singular Connections.
	IMRN, \textbf{2013} Issue 8 (2013), 1800--1872.
	\bibitem{CorGre}Corwin,~L., Greenleaf,~F.:
	Representations of nilpotent Lie groups and their applications. Part I.
	Basic theory and examples.
	Cambridge Stud. Adv. Math., \textbf{18}, 
	Cambridge University Press, Cambridge, 1990. viii+269 pp.
    \bibitem{C}Crawley-Boevey,~W.:
         On matrices in prescribed conjugacy classes
         with no common invariant subspace and sum zero.
         \textrm{Duke Math. J.}  \textbf{118},  no. 2 (2003), 339--352.
	\bibitem{Del}Deligne,~P.:
	\'Equations diff\'erentielles \`a points singuliers r\'eguliers.
	Lecture Notes in Math., Vol. \textbf{163}
	Springer-Verlag, Berlin-New York, 1970. iii+133 pp.
	\bibitem{Gul}Glutsuk,~A.: 
	Stokes operators via limit monodromy of generic perturbation.
	J. Dynam. Control Systems \textbf{5} (1999), 101--135.
	\bibitem{GMR}Gaiur,~I., Mazzocco,~M. Rubtsov,~V.:
	Isomonodromic deformations: Confluence, Reduction and Quantisation.
	Comm. in Math. Phys., \textbf{400} (2023), 1385--1461.
	\bibitem{GM}Goresky,~M., MacPherson,~R.:
	\text{Intersection homology theory}. Topology \textbf{19} (1980), no. 2, 135--162.
    \bibitem{H2}Hiroe,~K.:
	     Linear differential equations on the Riemann sphere and
		 representations of quivers. Duke Math. J., \textbf{166}, 855--935,
		 (2017).
	\bibitem{H3}Hiroe,~K.:
	Unfolding of spectral types.
	Josai Math. Monographs \textbf{12} (2020), 53--67.
	\bibitem{H4}Hiroe,~K.:
	Index of rigidity of differential equations and Euler characteristic of their spectral curves.
	J. Geom. Phys. \textbf{162} (2021), Paper No. 104060, 16 pp.
	\bibitem{HKNS}Hiroe,~K., Kawakami,~H., Nakamura,~A., Sakai,~H.:
	4-dimensional Painlev\'e type equations. MSJ memoirs, \textbf{37},
	2018. 
	\bibitem{HY}Hiroe,~K., Yamakawa,~D.:
	  Moduli spaces of meromorphic connection and quiver
	  varieties. Adv. Math., \textbf{266} (2014), 120--151.
	\bibitem{HYp}Hiroe,~K., Yamakawa,~D.:
	Unfolding of wild  character varieties, arXiv:2511.01221, 2025.
	\bibitem{Huk}Hukuhara,~M.:
	Sur les points singuliers des \'equations dif\'erentielles lin\'eaires. III.
	Mem. Fac. Sci. Kyusyu Imp. Univ. A \textbf{2} (1942) 125--137.
	\bibitem{HuRo}Hurtubise,~J., Rousseau,~C.:
	Moduli space for generic unfolded differential linear systems.
	Adv. Math. \textbf{307} (2017), 1268--1323.
	\bibitem{Ina}Inaba,~M.:
	Unfolding of the unramified irregular singular
	generalized isomonodromic deformation.
	Bull. Sci. Math. \textbf{157} (2019), 102795, 121 pp.
	\bibitem{JY}Jakob,~K., Yun,~Z.:
	A Deligne-Simpson problem for irregular G-connections over $\mathbb{P}^{1}$.
	preprint (2023) arXiv:2301.10967.
 	\bibitem{Katz}Katz,~N.:
	Rigid local systems. Princeton University Press, Princeton, NJ, 1996. viii+223 pp.
	\bibitem{KNS}Kawakami,~H., Nakamura,~A., Sakai,~H.:
	Degeneration Scheme of 4-dimensional Painlev\'e-type Equations
	in \cite{HKNS}.
	\bibitem{Kem}Kempf,~G.:
	Instability in invariant theory (preprint version).
	preprint (1976), arXiv:1807.02890.
	\bibitem{Kl}Klime\v{s},~M.:
	Analytic classification of families
	of linear differential systems unfolding a resonant irregular singularity.
	SIGMA \textbf{16} (2020), 006, 46 pp.
	\bibitem{Kl2}Klime\v{s},~M.:
	Wild monodromy of the Fifth Painlev\'e equation 
	and its action on wild character variety: an approach of confluence.
	Ann. Inst. Fourier, Vol. \textbf{74} (2024) no. 1, 121-192.
	\bibitem{Kos}Kostov,~V.:
	The Deligne-Simpson problem-a survey.
	J. Algebra, \textbf{281} (2004), 83--108.
	\bibitem{KLMNS}Kulkarni,~M., Livesay,~N., Matherne,~J., Nguyen,~B., Sage,~D.: 
	The Deligne–Simpson problem for connections
	on $\mathbb{G}_{m}$ with a maximally ramified singularity. 
	Adv. Math. \textbf{408} (2022), 108596.
	\bibitem{LaRo}Lambert,~C., Rousseau,~C.:
	The Stokes phenomenon in the confluence of the hypergeometric equation using Riccati equation.
	J. Differential Equations \textbf{244} (2008), 2641--2664.
	\bibitem{Lev}Levelt,~G.:
	Jordan decomposition for a class of singular differential operators.
	Ark. Mat. \textbf{13} (1975) 1--27.
	\bibitem{LSN}Livesay,~N., Sage,~D., Nguyen,~B.:
	Explicit constructions of connections on the projective line with a maximally ramified irregular singularity,
	preprint (2023), arXiv:2303.06581.
	\bibitem{Mack}Mackenzie,~K.:
	General theory of Lie groupoids and Lie algebroids. London Mathematical Society Lecture
	Note Series, vol. 213, Cambridge University Press, Cambridge, 2005.
	\bibitem{Mal}Malgrange,~B.:
	\'Equations diff\'erentielles \`a coefficients polynomiaux.
	Progr. Math., \textbf{96}
	Birkh\"a user Boston, Inc., Boston, MA, 1991. vi+232 pp.
	\bibitem{MarsWein} Marsden,~J., Weinstein,~A.:
	Reduction of symplectic manifolds with symmetry.
	Rep. Mathematical Phys. \textbf{5} (1974), 121--130.
	\bibitem{MFK}Mumford,~D., Fogarty,~J., Kirwan,~F.:
	Geometric invariant theory. Third edition.
	Ergeb. Math. Grenzgeb. (2), \textbf{34}
	Springer-Verlag, Berlin, 1994. xiv+292 pp.
	\bibitem{Oshi1}Oshima,~T.: 
	Confluence and versal unfolding of Pfaffian equations. Josai Mathematical Monographs \textbf{12} (2020), 117--151.
	\bibitem{Oshi2}Oshima,~T.:
	Versal unfolding of irregular singurarities of a linear differential equation on the Riemann sphere.
	Publ. RIMS Kyoto Uinv. \textbf{57} (2021), 893--920.
	\bibitem{PR}Paul.,~E., Ramis.~J.-P.:
	Dynamics of the fifth Painlev\'e foliation. 
	Handbook of Geometry and Topology of Singularities VI: Foliations.
	Springer-Verlag, Berlin, 2024, 307--382.
	\bibitem{Ram}Ramis,~J.-P.:
	Confluence et r\'esurgence.
	J. Fac. Sci. Univ. Tokyo Sect. IA Math. \textbf{36} (1989), 703--716.
	\bibitem{Sak}Sakai,~H.:
	Isomonodromic Deformation and 4-dimensional Painlev\'e-type Equations
	in \cite{HKNS}.
	\bibitem{Scha}Sch\"afke,~R.:
	Confluence of several regular singular points into an irregular singular one.
	J. Dynam. Control Systems \textbf{4} (1998), 401--424.
	\bibitem{Sim}Simpson,~C.:
	Products of matrices.
	Differential Geometry, Global Analysis, and Topology (Halifax, 1990), CMS Conf. Proc. \textbf{12}, Amer. Math. Soc. Providence, 
	(1991), 157--185.
	\bibitem{SL}Sjamaar,~R., Lerman,~E.:
	Stratified symplectic spaces and reduction.
	Ann. Math. \textbf{134} (1991), 375--422.
	\bibitem{Sp}Springer,~T.:
	Linear algebraic groups. Second edition.
	Progr. Math., \textbf{9}
	Birkh\"auser Boston, Inc., Boston, MA, 1998. xiv+334 pp.
	\bibitem{Tur}Turrittin,~H.:
	Convergent solutions of ordinary linear homogeneous differential equations in the neighborhood of an 
	irregular singular point. Acta Math. \textbf{93} (1955) 27--66.
    \bibitem{Yam1}Yamakawa,~D.:
	Quiver Varieties with Multiplicities, Weyl Groups of Non-Symmetric Kac-Moody Algebras, and Painlev\'e Equations.
	SIGMA \textbf{6} (2010), 087.	 
	\bibitem{Yam}Yamakawa,~D.:
	Fundamental two-forms for isomonodromic deformations.
	J. Integrable Syst. \textbf{4} (2019), no.1, 1--35.
	\bibitem{Zhan}
	Zhang, ~C.:
	Confluence et ph\'enom\`ene de Stokes.
	J. Math. Sci. Univ. Tokyo \textbf{3} (1996), 91--107.
 	\end{thebibliography}
\end{document}